\newtheorem{thm}{Theorem}
\newtheorem{lemma}[thm]{Lemma}
\newtheorem{prop}[thm]{Proposition}
\newtheorem{cor}[thm]{Corollary}
\newtheorem{obs}[thm]{Observation}
\theoremstyle{definition}
\newtheorem{defn}[thm]{Definition}
\newtheorem{alg}[thm]{Algorithm}
\newtheorem{rmk}[thm]{Remark}
\numberwithin{thm}{section}
\setlist[itemize]{leftmargin=1cm,itemsep=4pt}
\setlist[enumerate]{leftmargin=1.5cm}
\newcommand\til{\kern -.1em\lower .8ex\hbox{\~{}}}
\DeclareRobustCommand{\gobblefive}[5]{}
\newcommand*{\SkipTocEntry}{\addtocontents{toc}{\gobblefive}}
\renewcommand\le{\leqslant}
\renewcommand\ge{\geqslant}
\newcommand\N{\mathbb{N}}
\newcommand\F{\mathbb{F}}
\newcommand\cA{\mathcal{A}}
\newcommand\cB{\mathcal{B}}
\newcommand\cC{\mathcal{C}}
\newcommand\cD{\mathcal{D}}
\newcommand\cE{\mathcal{E}}
\newcommand\cF{\mathcal{F}}
\newcommand\cG{\mathcal{G}}
\newcommand\cH{\mathcal{H}}
\newcommand\cK{\mathcal{K}}
\newcommand\cL{\mathcal{L}}
\newcommand\cM{\mathcal{M}}
\newcommand\cQ{\mathcal{Q}}
\newcommand\cR{\mathcal{R}}
\newcommand\cS{\mathcal{S}}
\newcommand\cT{\mathcal{T}}
\newcommand\cU{\mathcal{U}}
\newcommand\cW{\mathcal{W}}
\newcommand\cX{\mathcal{X}}
\newcommand\tA{\tilde A}
\newcommand\tV{\tilde V}
\newcommand\tE{\tilde E}
\newcommand\tX{\tilde X}
\newcommand\ts{\hat s}
\newcommand\te{\tilde e}
\newcommand\tf{\tilde f}
\newcommand\tj{\tilde\jmath}
\newcommand\tD{\tilde D}
\newcommand\tS{\tilde S}
\newcommand\tR{\tilde R}
\newcommand\tW{\tilde W}
\newcommand\tU{\tilde U}
\newcommand\tY{\tilde Y}
\newcommand\tZ{\tilde Z}
\newcommand\tcF{\tilde\cF}
\newcommand\tcS{\tilde\cS}
\newcommand\tcX{\tilde\cX}
\newcommand\tpsi{\widetilde\Psi}
\newcommand\tdelta{\tilde\delta}
\newcommand\tDelta{\tilde\Delta}
\newcommand\tp{\tilde p}
\newcommand\ds{\displaystyle}
\newcommand\Prb{\mathbb{P}}
\newcommand\E{\mathbb{E}}
\newcommand\eps{\varepsilon}
\newcommand\Ein{\textup{Ein}}
\newcommand\Bin{\textup{Bin}}
\newcommand\Var{\textup{Var}}
\newcommand\bmid{\,\big|\,}
\newcommand\Bmid{\;\Big|\;}
\newcommand\id{\mathbbm{1}}
\newcommand\dispqed{\vskip-10pt\qed}
\newcommand\Th[1]{\text{Theorem~\ref{thm:#1}}}
\newcommand\Co[1]{\text{Corollary~\ref{cor:#1}}}
\newcommand\Sc[1]{\text{Section~\ref{sec:#1}}}
\newcommand\Lm[1]{\text{Lemma~\ref{lem:#1}}}
\newcommand\Pp[1]{\text{Proposition~\ref{prop:#1}}}
\newcommand\Df[1]{\text{Definition~\ref{def:#1}}}
\newcommand\Ob[1]{\text{Observation~\ref{obs:#1}}}
\newcommand\Rk[1]{\text{Remark~\ref{rem:#1}}}
\newcommand\Al[1]{\text{Algorithm~\ref{alg:#1}}}
\newcommand\claim[1]{\medskip\noindent\textbf{#1}}
\title{The sharp threshold for making squares}
\author[P. Balister\and B. Bollob\'as\and R. Morris]
{Paul Balister\and B\'ela Bollob\'as\and Robert Morris}
\address{Department of Pure Mathematics and Mathematical Statistics,
 Wilberforce Road, Cambridge, CB3 0WA, UK,
 and Department of Mathematical Sciences, University of Memphis, Memphis, TN 38152, USA,
 and London Institute for Mathematical Sciences, 35a South Street, London, W1K 2XF, UK}
\email{b.bollobas@dpmms.cam.ac.uk}
\address{Department of Mathematical Sciences, University of Memphis, Memphis, TN 38152, USA}
\email{pbalistr@memphis.edu}
\address{IMPA, Estrada Dona Castorina 110, Jardim Bot\^anico, Rio de Janeiro, 22460-320, Brazil}
\email{rob@impa.br}
\thanks{The first two authors were partially supported by NSF grant DMS~1600742.
The second author was also supported by MULTIPLEX grant no.~317532.
The third author was partially supported by CNPq (Proc.~479032/2012-2 and Proc.~303275/2013-8).}
\date{\today}
\subjclass[2010]{Primary 11Y05; Secondary 60C05}
\keywords{integer factorization, perfect square, random graph process}
\begin{document}

\begin{abstract}
Consider a random sequence of $N$ integers, each chosen uniformly and independently from the set $\{1,\dots,x\}$. Motivated by applications to factorisation algorithms such as Dixon's algorithm, the quadratic sieve, and the number field sieve, Pomerance in 1994 posed the following problem: how large should $N$ be so that, with high probability, this sequence contains a subsequence, the product of whose elements is a perfect square? Pomerance determined asymptotically the logarithm of the threshold for this event, and conjectured that it in fact exhibits a \emph{sharp threshold} in $N$. More recently, Croot, Granville, Pemantle and Tetali determined the threshold up to a factor of $4/\pi + o(1)$ as $x \to \infty$, and made a conjecture regarding the location of the sharp threshold.  

In this paper we prove both of these conjectures, by determining the sharp threshold for making squares. Our proof combines techniques from combinatorics, probability and analytic number theory; in particular, we use the so-called method of self-correcting martingales in order to control the size of the 2-core of the random hypergraph that encodes the prime factors of our random numbers. Our method also gives a new (and completely different) proof of the upper bound in the main theorem of Croot, Granville, Pemantle and Tetali.
\end{abstract}

\maketitle

\tableofcontents

\section{Introduction}

Many of the fastest known algorithms for factoring large integers rely on finding subsequences of
randomly generated sequences of integers whose product is a perfect square. Examples include
Dixon's algorithm~\cite{Dixon}, the quadratic sieve~\cite{P82}, and the number field sieve (see,
e.g.,~\cite{LL}); an excellent elementary introduction to the area is given by
Pomerance~\cite{Pnotices}. In each of these algorithms one generates a sequence of congruences of
the form
\[
 a_i\equiv b_i^2\pmod n,\qquad i=1,2,\dots
\]
and then one aims to find subsets of the $a_i$ whose product is a perfect square, say
$\prod_{i\in I}a_i=X^2$, so then one has $X^2\equiv Y^2\pmod n$ with $Y =\prod_{i\in I} b_i$.
If one is lucky then $X\not\equiv\pm Y\pmod n$, in which case one can generate non-trivial
factors of $n$ as $\gcd(X\pm Y,n)$.

A useful heuristic, suggested by Schroeppel in the 1970s (see~\cite{Pnotices}), is to imagine that the
numbers $a_i$ are chosen independently and uniformly at random from the set $\{1,\dots,x\}$, for
some suitably chosen integer $x$. Motivated by this idea, Pomerance~\cite{Picm} posed in 1994 the
problem of determining the \emph{threshold} for the event that such a collection of random numbers
contains a subset whose product is a square. To be precise, given $x\in\N$, let us define a
probability space $\Omega(x)$ by choosing $a_1,a_2,\dots$ independently and uniformly at random
from $\{1,\dots,x\}$, and a random variable $T(x)$ by setting
\[
 T(x):=\min\bigg\{N\in\N:\prod_{i\in I}a_i\textup{ is a perfect square for some }
 I\subseteq\big\{1,\dots,N\big\},\,I\ne\emptyset\bigg\}.
\]
Pomerance~\cite{P96} proved that for all $\eps>0$,
\begin{equation}\label{eq:Pbounds}
 \exp\Big(\big(1-\eps\big)\sqrt{2\log x\log\log x}\Big)\le T(x)\le
 \exp\Big(\big(1+\eps\big)\sqrt{2\log x\log\log x}\Big)
\end{equation}
with high probability\footnote{We use the term \emph{with high probability} to mean with
probability tending to 1 as $x\to\infty$.}, and conjectured that $T(x)$ in fact exhibits
a \emph{sharp threshold}, i.e., that there exists a function $f(x)$ such that
$(1-\eps)f(x)\le T(x)\le (1+\eps)f(x)$ with high probability for all $\eps>0$.
Croot, Granville, Pemantle and Tetali~\cite{CGPT} significantly improved these bounds
(see~\eqref{eq:cgpt}, below), and stated a conjecture as to the location of the threshold, i.e.,
the value of the function~$f(x)$. In this paper we shall prove these two conjectures.

In order to state the theorem and conjecture of Croot, Granville, Pemantle and Tetali,
we will need to recall some standard notation. Let $\pi(y)$ denote the number of primes less than
or equal to $y$, let $\Psi(x,y)$ denote the number of $y$-\emph{smooth} integers in
$\{1,\dots,x\}$, that is, the number of integers with no prime factor strictly greater than~$y$,
and define
\begin{equation}\label{def:J} 
 J(x)=\min_{2\le y\le x}\frac{\pi(y)x}{\Psi(x,y)}.
\end{equation}
It can be shown (see Section~\ref{sec:NTfacts}) that the minimum in \eqref{def:J} occurs at
\[
 y_0=y_0(x)=\exp\Big(\big(1+o(1)\big)\sqrt{\tfrac{1}{2}\log x\log\log x}\Big)
\]
and that
\[
 J(x)=y_0^{2+o(1)}=\exp\Big(\big(1+o(1)\big)\sqrt{2\log x\log\log x}\Big),
\]
and an asymptotic formula for $J(x)$ was obtained by McNew~\cite{McNew}. 
We remark that a relatively straightforward argument due to Schroeppel (see~\cite{P96}) shows that, for all $\eps>0$,
\[
 T(x)\le\big(1+\eps\big)J(x)
\]
with high probability, which implies the upper bound in~\eqref{eq:Pbounds}. Indeed, if
$N\ge (1+\eps)J(x)$ then with high probability at least $\pi(y_0)+1$ of the numbers
$a_1,\dots,a_N$ will be $y_0$-smooth, since each $a_i$ is $y_0$-smooth with probability
$\Psi(x,y_0)/x=\pi(y_0)/J(x)$. Now, by simple linear algebra, it follows that the vectors
encoding the primes that divide $a_i$ an odd number of times are linearly dependent over~$\F_2$,
and hence there exists a subset whose product is a square, as required.

Pomerance's conjecture remained wide open for over ten years, until a fundamental breakthrough
was obtained by Croot, Granville, Pemantle and Tetali~\cite{CGPT}, who used a combination of
techniques from number theory, probability theory and combinatorics to dramatically improve both
the upper bound of Schroeppel and the lower bound of Pomerance~\cite{P96}, determining the
location of the threshold to within a factor of $4/\pi$. To be precise, they proved that
\begin{equation}\label{eq:cgpt}
 \frac{\pi}{4}\big(e^{-\gamma}-\eps\big)J(x)\le T(x)\le\big(e^{-\gamma}+\eps\big)J(x)
\end{equation}
with high probability, where $\gamma\approx 0.5772$ is the Euler--Mascheroni constant. Recall
that $e^{-\gamma}$ is (amongst other things) the limit as $y \to \infty$ of the ratio of the
density of integers with no prime divisor smaller than~$y$, to the proportion of elements
of $\{1,\dots,y\}$ that are prime. 

Croot, Granville, Pemantle and Tetali~\cite{CGPT} conjectured that the upper bound in~\eqref{eq:cgpt} is sharp. Our main
theorem confirms their conjecture.

\begin{thm}\label{thm:squares:sharp}
 For all\/ $\eps>0$ we have with high probability
 \[
  \big(e^{-\gamma}-\eps\big)J(x)\le T(x)\le \big(e^{-\gamma}+\eps\big)J(x).
 \]
\end{thm}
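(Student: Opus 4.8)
The plan is to recast the problem as one about linear algebra over $\F_2$ and then about a random hypergraph. Associate to each $a_i$ its \emph{parity vector} $v_i\in\F_2^{\mathcal P}$, indexed by the primes $\mathcal P$ up to $x$, recording the parity of each prime exponent in $a_i$; then $\prod_{i\in I}a_i$ is a perfect square exactly when $\sum_{i\in I}v_i=0$, so $T(x)$ is the first $N$ at which $v_1,\dots,v_N$ fail to be linearly independent over $\F_2$. Split the primes into \emph{small} ones (at most $y_0$, of which there are $\pi_0:=\pi(y_0)$) and \emph{large} ones, writing $v_i=(u_i,w_i)$ accordingly, and form the hypergraph $H$ on the large primes whose edge for $a_i$ is the set of large primes dividing $a_i$ to an odd power. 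Repeatedly delete any large prime of degree at most one together with the unique edge through it; this \emph{peeling} destroys no linear dependence among the $v_i$, so we may pass to the $2$-core $\cH$ of $H$ together with the "smooth-type" rows (those $a_i$ with $w_i=0$, of which there are $(1+o(1))N\Psi(x,y_0)/x=(1+o(1))N\pi_0/J(x)$ by standard smooth-number estimates). A linear dependence among the surviving $v_i$ corresponds precisely to a nonzero element of the cycle space $\cC(\cH)$ together with an arbitrary subset of the smooth-type rows whose small-prime parts sum to zero; hence $T(x)\le N$ once $\dim\cC(\cH)+\#\{\text{smooth rows}\}$ exceeds the rank of the induced map into $\F_2^{\pi_0}$, and $T(x)>N$ when that map is injective. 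Both bounds in Theorem~\ref{thm:squares:sharp} thus reduce to sharp two-sided control of $\dim\cC(\cH)+\#\{\text{smooth rows}\}$ against $\pi_0$.

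The number-theoretic input then pins down where the transition occurs. Using asymptotics for $\Psi(x,y)$, Mertens-type sums, and the fact (Section~\ref{sec:NTfacts}) that the minimum defining $J(x)$ is attained at $y_0$, one analyses the peeling cascade: a row is killed as soon as it has a "private" large prime, and one tracks how this propagates downward through the primes. I expect the outcome to be that the contribution $\dim\cC(\cH)$ crosses its critical level — the value making $\#\{\text{smooth rows}\}+\dim\cC(\cH)=\pi_0$ — exactly as $N$ passes $e^{-\gamma}J(x)$, with the constant $e^{-\gamma}$ entering through the classical comparison $\prod_{p<y}(1-1/p)\sim e^{-\gamma}/\log y$ between the density of integers free of prime factors below $y$ and the density of primes near $y$ (the ratio highlighted after \eqref{eq:cgpt}). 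For $N=(e^{-\gamma}-\eps)J(x)$ the total stays below $\pi_0$ and the map into $\F_2^{\pi_0}$ is injective, so $v_1,\dots,v_N$ are independent and no square exists; for $N$ above the transition the count exceeds $\pi_0$, and the complementary estimate — which is also the route to our new proof of the upper bound of Croot, Granville, Pemantle and Tetali — produces the required dependence.

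The main obstacle is obtaining all of this \emph{sharply}, i.e.\ to within a factor $1+o(1)$, on both sides. Expectation estimates for the $2$-core are not enough: one needs the number of vertices and edges of $\cH$, and hence $\dim\cC(\cH)$, concentrated to relative error $o(1)$, and the natural peeling/exposure martingales are far from Lipschitz enough for a crude Azuma bound. This is exactly where the method of self-correcting martingales enters: one runs the peeling as a dynamic process, shows that the relevant statistics follow a deterministic trajectory (the solution of the associated differential equation / fixed-point system) up to an error term that is itself driven back toward zero, and bootstraps tight control all the way through the cascade. A second, logically separate difficulty — and the point at which the earlier work lost its factor of $4/\pi$ — is the linear algebra at the end: once $\cC(\cH)$ and the smooth rows are identified, one must show the induced $\F_2$-map has the generic rank $\min(\dim\cC(\cH)+\#\{\text{smooth}\},\pi_0)$, i.e.\ that the relevant sparse random matrices over $\F_2$ have (nearly) full rank. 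I expect this to require an anticoncentration argument carried out on the $2$-core itself rather than on the whole hypergraph, exploiting the structure left after peeling, and it is this step, together with the sharp $2$-core estimate, that upgrades the $4/\pi$-approximate threshold to the exact constant $e^{-\gamma}$.
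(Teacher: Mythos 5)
Your high-level framing --- parity vectors over $\F_2$, passing to the $2$-core of the hypergraph on large primes, and tracking the peeling process via self-correcting martingales --- matches the paper's strategy, and the decomposition of a square into (an even subhypergraph of the $2$-core) $\cup$ (a subset of $y_0$-smooth rows) with vanishing small-prime parity is also the right way to see the structure. The gap is in the final step, where you claim that one must show the induced map $\cC(\cH)\times\F_2^{\{\text{smooth rows}\}}\to\F_2^{\pi_0}$ has generic rank, and that this requires an anticoncentration argument for sparse random matrices over $\F_2$. No such argument is needed, and the paper does not make one.

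For the upper bound one never needs exact rank: if the set of non-zero active columns has size smaller than the number of rows available (rows of the $2$-core plus a handful of fresh smooth numbers), rank--nullity already forces a dependence. The content of the sharp upper bound is the two-sided concentration of $m(z_0)$ and $\sum_k s_k(z_0)$ (via \Th{track:m}, \Th{track:s} and \Lm{number:of:columns}), which makes this dimension count close exactly as $\eta\to e^{-\gamma}$; no structural claim about the rank being as large as possible is required. For the lower bound, the paper again sidesteps any injectivity/rank statement: \Th{track:m} gives $m(z_-)\le z_-$, so \emph{every} even subhypergraph of $\cC_A(z_-)$ (hence every linear relation) uses at most $z_-\approx z_0\exp(-\Theta(\sqrt{\log z_0}))$ rows, and one then invokes the CGPT first-moment bound (\Pp{CGPT:lower}) that no set of at most $z_0\exp(-c\sqrt{\log z_0})$ rows has a square product. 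In other words, the improvement over $4/\pi$ comes entirely from the sharp tracking of the $2$-core size, not from an $\F_2$-rank/anticoncentration argument; the first-moment method still does the last step, but only after the $2$-core has restricted attention to small sets of rows where it is effective. Minor but worth noting: in the paper $e^{-\gamma}$ enters through the identity $w\,e^{-\Ein(w)}\to e^{-\gamma}$ as $w\to\infty$ (equivalently, via the incomplete gamma function in \eqref{def:Ein}), which governs when the dimension count closes up, rather than directly through a Mertens-type product.
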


As a simple corollary, we also deduce the following asymptotic expression for the expected
value of $T(x)$.
\begin{cor}\label{cor:expectation}
 $\E\big[T(x)\big]=\big(e^{-\gamma}+o(1)\big)J(x)$ as\/ $x\to\infty$.
\end{cor}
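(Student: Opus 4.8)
The plan is to deduce both halves of the estimate from Theorem~\ref{thm:squares:sharp}; the only real work is to upgrade its ``with high probability'' conclusion to something strong enough to control an expectation. Note first that $\E[T(x)]$ is finite for trivial reasons, since $T(x)\le\pi(x)+1$ always: among any $\pi(x)+1$ of the $a_i$, the $\F_2$-vectors recording the primes dividing them to an odd power are linearly dependent.

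For the lower bound, fix $\eps>0$. Since $T(x)\ge 0$ and, by Theorem~\ref{thm:squares:sharp}, $\Prb\big(T(x)\ge(e^{-\gamma}-\eps)J(x)\big)=1-o(1)$, we have $\E[T(x)]\ge(e^{-\gamma}-\eps)J(x)\big(1-o(1)\big)$, whence $\liminf_{x\to\infty}\E[T(x)]/J(x)\ge e^{-\gamma}-\eps$; as this holds for every $\eps>0$, $\liminf_{x\to\infty}\E[T(x)]/J(x)\ge e^{-\gamma}$.

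For the upper bound the subtlety is that the fact that $T(x)\le(e^{-\gamma}+\eps)J(x)$ with high probability is not by itself enough: on the complementary event $T(x)$ could a priori be as large as $\pi(x)+1$, which dwarfs $J(x)$, so multiplying a probability that merely tends to $0$ by $\pi(x)$ need not give $o(J(x))$. I would fix this by extracting an exponential tail bound via blocking. Set $K=K(x,\eps):=\lceil(e^{-\gamma}+\eps)J(x)\rceil$ and split $a_1,a_2,\dots$ into consecutive blocks of length $K$. If $T(x)>mK$ then each of the first $m$ blocks fails to contain a nonempty sub-collection with square product. These $m$ events depend on disjoint sets of the independent variables $a_i$, so they are independent, and since the $a_i$ are i.i.d.\ each has probability exactly $\Prb\big(T(x)>K\big)\le\Prb\big(T(x)>(e^{-\gamma}+\eps)J(x)\big)$, which by Theorem~\ref{thm:squares:sharp} is at most $\eps$ once $x$ is large. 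Hence $\Prb\big(T(x)>mK\big)\le\eps^{m}$ for all $m\ge 1$ and all large $x$. (Alternatively, the same tail bound can be read off from Schroeppel's linear-algebra argument together with a Chernoff bound for the number of $y_0$-smooth entries in each block.)

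Finally, since $T(x)$ is a non-negative integer,
\[
 \E[T(x)]=\sum_{N\ge 0}\Prb\big(T(x)>N\big)\le K\sum_{m\ge 0}\Prb\big(T(x)>mK\big)\le\frac{K}{1-\eps}=\frac{(e^{-\gamma}+\eps+o(1))J(x)}{1-\eps},
\]
where we used monotonicity of $N\mapsto\Prb(T(x)>N)$ for the second step. Given $\delta>0$, pick $\eps>0$ small enough that $(e^{-\gamma}+\eps)/(1-\eps)<e^{-\gamma}+\delta$; then $\E[T(x)]\le(e^{-\gamma}+\delta)J(x)$ for all large $x$, so $\limsup_{x\to\infty}\E[T(x)]/J(x)\le e^{-\gamma}$. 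Combined with the lower bound this is precisely $\E[T(x)]=(e^{-\gamma}+o(1))J(x)$. The one genuine obstacle is the point flagged above — turning a high-probability statement into a tail bound — and the blocking and independence trick disposes of it; everything else is routine bookkeeping.
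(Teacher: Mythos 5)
Your proof is correct and takes essentially the same approach as the paper: both use the observation that the events ``no square product among block $j$'' are independent across disjoint blocks, giving the geometric tail bound $\Prb\big(T(x)\ge kt\big)\le\Prb\big(T(x)\ge t\big)^k$, and then sum this geometric series to bound $\E[T(x)]$. The lower bound via $\E[T(x)]\ge t\,\Prb(T(x)\ge t)$ is also identical.
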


Since the upper bound in \Th{squares:sharp} was proved in~\cite{CGPT}, we are only required to
prove the lower bound. However, we will also obtain a new proof of the upper
bound, quite different from that given in~\cite{CGPT}, as a simple consequence of our method, see
Section~\ref{sec:squares:proof}. We would like to thank Jonathan Lee for pointing out to us a
particularly simple and natural way of deducing this from our proof.

Another significant advantage of our proof, which is outlined in \Sc{outline},
is that it gives detailed structural information about the typical properties of the set of numbers
that are left over after sieving and ``singleton removal'' (see, e.g.,~\cite{768bit}).
We plan to study this structure in a more general setting, and in greater detail,
in a follow-up paper together with Lee~\cite{BBLM}.

Croot, Granville, Pemantle and Tetali~\cite{CGPT} proved their lower bound in \eqref{eq:cgpt} via the first 
moment method, by counting the expected number of non-empty subsets $I\subseteq\{1,\dots,N\}$
such that $\prod_{i\in I} a_i$ is a square. Unfortunately, there exists a constant $c>0$ such
that this expected number blows up when $N\ge(e^{-\gamma}-c)J(x)$, which implies that a
sharp lower bound cannot be obtained by this method (see the comments after the proof
of \Th{squares:sharp} in \Sc{squares:proof}).

Instead, we shall use the method of self-correcting martingales\footnote{This technique is based
on the so-called `differential equations method' (see e.g.~\cite{K,W}), and involves the use of martingales
to control a collection of interacting random variables that exhibit `self-correction' in a certain natural sense
(see Sections~\ref{sec:big:z} and~\ref{sec:critical}). It was introduced in~\cite{BFL1,BP,TWZ}, and has
more recently been further developed in~\cite{BFL2,BK,FGM}; our approach is in particular based on
that used in~\cite{FGM}.} to follow a random process which removes numbers from the set\footnote{This
is, strictly speaking, a multi-set, since the numbers $a_i$ are chosen independently with replacement.
However, since we are very unlikely to choose the same number twice
(indeed, if we do so we immediately have a square), we shall ignore this possibility for
the sake of this discussion.} $\{ a_1,\dots, a_N\}$ as soon as we can guarantee that they are
not contained in a subset whose product is a square. This is in one sense very simple: a number
$a_i$ can be discarded if there exists a prime for which $a_i$ is the only remaining number that
it divides an odd number of times. However, this apparent simplicity is deceiving, and the
technical challenges involved in tracking the process are substantial. For example, we will
need to reveal the random numbers $\{a_1,\dots,a_N\}$ gradually (roughly speaking, prime by prime,
in decreasing order), and the amount of information we are allowed to reveal at each step is
rather delicate. Moreover, the removal of a number can trigger an avalanche, causing many other
numbers to be removed in the same step. Fortunately, however, self-correction (which is partly
a result of these avalanches) will allow us to show that the process remains subcritical (in a
certain natural sense), which will in turn allow us to control the upper tail of the size of the
avalanches, see Section~\ref{sec:branching}. In order to do so, we will need good control over the
dependence between the prime factors of the numbers $\{a_1,\dots, a_N\}$ conditioned on the
information we have observed so far. This is achieved in Section~\ref{sec:compare}, where we
obtain strong bounds on the ratio between the (conditional) probability of certain `basic' events,
and the corresponding probabilities in a simpler independent model. These bounds require some
number-theoretic estimates (stated in Section~\ref{sec:NTfacts}), most of which follow easily
from the fundamental work of Hildebrand and Tenenbaum~\cite{HT} on smooth numbers.

Using the method described above, we shall be able to show that with high probability the number of
`active' numbers (i.e., elements of $\{a_1,\dots,a_N\}$ that we have not yet discarded) tracks a
deterministic function (see \Th{track:m}, below) until there are very few numbers
remaining (roughly $e^{-C\sqrt{\log y_0}}y_0$ for some large constant~$C$), at which point we
can apply the first moment calculation from~\cite{CGPT}. The heuristic reason for the
appearance of the formula in \Th{track:m} is that the number of $y$-smooth numbers
is concentrated (e.g., by Chernoff's inequality) for all reasonably large values of~$y$, since the $a_i$
are chosen independently, and is equal to the number of isolated vertices in a certain natural
(random) hypergraph (see Definition~\ref{def:cS}). We will control the average degree of
this hypergraph (see \Th{track:s}), and show (using \Th{compare}) that
its edges are chosen \emph{almost} independently, so in particular its degree distribution is
close to Poisson. Equating these two estimates for the number of isolated vertices
gives~\eqref{eq:track:m}. The Euler--Mascheroni constant $\gamma$ appears in our proof at this 
point, since when we reveal the $z$th smallest prime, the (typical) average degree of the hypergraph is close to $\Ein(m(z)/z)$,  where $m(z)$ is the number of active numbers at this point, and
$\Ein$ is the exponential integral
\[
 \Ein(w):=\int_0^w\frac{1-e^{-t}}{t}\,dt.
\]
Finally, in order to prove the upper bound in \Th{squares:sharp}, we observe
(\Lm{number:of:columns}) that the ratio of the number of active numbers and active
primes (that is, primes which could still appear in some square) approaches 1 when $z=\pi(y_0)$
and $N / J(x)$ approaches $e^{-\gamma}$. Thus, by adding just a few extra $y_0$-smooth numbers,
we can apply the linear algebra approach of Schroeppel to obtain a subset whose product is a
square, as required.

The rest of the paper is organised as follows. In \Sc{outline} we give a detailed outline of
the proof, state our main auxiliary results, and define precisely the random process mentioned
above. In \Sc{NTfacts} we deduce the number-theoretic estimates we need from known results
on smooth numbers, and in \Sc{events} we recall some basic results from probability theory,
define some useful events, and use the results of \Sc{NTfacts} to prove various useful properties of
these events. In \Sc{compare} we shall again apply the results of \Sc{NTfacts}, this time to
control the dependence between the prime divisors of our random numbers in the probability space
obtained by conditioning on the information revealed in the random process so far, and then in
\Sc{branching} we apply the main theorem of \Sc{compare} to control the size of avalanches
in the process. In \Sc{big:z} we use these results and the method of self-correcting
martingales to control the process for large primes, and in \Sc{critical} we do the same in
the critical range $z=e^{O(\sqrt{\log y_0})}y_0$. Finally, in Sections~\ref{sec:proof:tracking}
and~\ref{sec:squares:proof}, we will deduce the main auxiliary theorems stated in \Sc{outline},
as well as \Th{squares:sharp} and \Co{expectation}.

\subsection{Notation and basic definitions}

Let us conclude this introduction by collecting together for convenience some of the basic
definitions and notation that we shall use throughout the paper. We shall denote the primes by
$q_1,q_2,\dots$, so $q_z$ is the $z$th prime (we use $q$ here to avoid overusing the letter~$p$,
which will often denote a probability). We shall write $[n] =\{1,\dots,n\}$ for the first $n$
positive integers, and $[m,n]$ for the set $\{m,\dots,n\}$. We shall also use the notation $a\in b\pm c$ to mean that
\[
 b-c\le a\le b+c.
\]

In this paper, a \emph{hypergraph} $\cH$ will consist of a set $V(\cH)$ of \emph{vertices} and a
multi-set $E(\cH)$ of \emph{hyperedges} (which we will usually refer to simply as \emph{edges}). 
A hyperedge is just a subset of $V(\cH)$, a $k$-edge is a hyperedge of size $k$. Note that 
we allow multiple copies of the same edge; all edge counts are taken
with multiplicity. A hypergraph $\cH'=(V',E')$ is a sub-hypergraph of $\cH=(V,E)$ if
$V'\subseteq V(\cH)$ and $E'\subseteq E(\cH)$ (so that each $e\in E'$ is a subset of $V'$). 
The \emph{degree} of a vertex $v\in V(\cH)$ in $\cH$ is the number of hyperedges containing it,
counted with multiplicity. An \emph{isolated vertex} is a vertex of degree~0. An \emph{even}
hypergraph is one in which all vertices have even degree.

The 2-\emph{core} of a hypergraph $\cH$ is the hypergraph obtained by repeatedly removing any
vertex of degree at most~1, along with the corresponding edge when the degree is exactly one. 
Clearly, the 2-core is the union of all sub-hypergraphs of minimum vertex degree at least 2.

Finally, let us recall the standard Landau notation, which we shall use throughout the paper. Given
functions $f(x)$ and $g(x)$, we write $f(x)=O(g(x))$ if $|f(x)|\le C |g(x)|$ for some constant
$C$ and all sufficiently large~$x$; and $f(x)=\Theta(g(x))$ if $f(x)=O(g(x))$,
$g(x)=O(f(x))$, and $f(x)/g(x)$ is positive for all sufficiently large~$x$.
We write $f(x)=o(g(x))$ if $f(x)/g(x)\to 0$ as $x\to\infty$, and
$f(x)=\omega(g(x))$ if $g(x)=o(f(x))$. Unless stated otherwise, all limits are as
$x\to\infty$, where $\{1,\dots,x\}$ is the set from which the random numbers~$a_i$ are chosen.
We shall avoid the notations $\Omega(f(x))$, $\ll$, and $\gg$, as these may mean different things
to different mathematical communities.

\section{An outline of the proof}\label{sec:outline}

In this section we shall define precisely the random process that we shall use to prove
\Th{squares:sharp}, and state our key results about this process, Theorems~\ref{thm:track:m}
and~\ref{thm:track:s}. Throughout the proof we fix a constant $\eta>0$ and a sufficiently large
integer $x$.\footnote{In the definitions below, we shall suppress the dependence on $x$ and $\eta$.} 
We set $N=\eta J(x)$ and define an $N$-tuple $(a_1,\dots,a_N)$ by choosing $N$
elements of $[x]$ independently and uniformly at random (with replacement), and form an
$N\times\pi(x)$ 0-1 matrix $A$ by setting $A_{ij}=1$ if and only if the $j$th prime $q_j$ occurs
an odd number of times in the prime factorisation of~$a_i$. Thus, to find a subset
$I\subseteq [N]$ such that $\prod_{i\in I} a_i$ is a square, it is enough to find a set of rows
of $A$ such that all column sums within these rows are even.

Note that the rows of $A$ are chosen independently, but the columns are not. For example, the
condition $a_i\le x$ puts a limit on the number of times a 1 can occur in row $i$ of~$A$.
More precisely, let $\tpsi(x,y)$ be the number of integers in $[x]$ all of whose prime factors that
are strictly greater than $y$ occur to an even power. Thus
\begin{equation}\label{def:tpsi}
 \tpsi(x,y)=\sum_{t\in P(y)}\Psi\big(x/t^2,y\big)
\end{equation}
where $P(y)$ is the set of all $t\ge 1$ that have no prime factor less than or equal to~$y$.
Define
\begin{equation}\label{def:pzx}
 p_j(x):=\frac{\tpsi(x,q_j)-\tpsi(x,q_{j-1})}{\tpsi(x,q_j)},
\end{equation}
for each $j\in [\pi(x)]$, and observe that $p_j(x)$ is equal to the conditional probability that
$A_{ij}=1$ if $A_{ij'}=0$ for every $j'>j$. Indeed, more generally we have
\begin{equation}\label{eq:probs}
 \Prb\Big(A_{ij}=1\bmid (A_{ij'})_{j'=j+1}^{\pi(x)}\Big)
 =p_j\bigg(x\prod_{j'>j,\,A_{ij'}=1}\frac{1}{q_{j'}}\bigg).
\end{equation}
Typically, $p_j(x)$ will be only slowly varying with~$x$, and so the entries in a row of $A$
will depend only mildly on one another. Nevertheless, this dependency is a major technicality
that we shall need to overcome.

We can also think of the matrix $A$ as a hypergraph whose vertices are the primes and whose edges
correspond to the set of primes dividing $a_i$ an odd number of times. We shall often wish to
ignore small primes here, so a precise definition is as follows.

\begin{defn}\label{def:cH}
 For each $z\in [0,\pi(x)]$, define $\cH_A(z)$ to be the hypergraph with vertex set
 $V(\cH_A(z))=[z+1,\pi(x)]$ and hyperedge set $E(\cH_A(z))=\{e'_i:i\in [N]\}$, where
 \[
  e'_i:=\big\{j\in [z+1,\pi(x)]:A_{ij}=1\big\}.
 \]
 In particular, when $z=\pi(x)$ all of the edges of $\cH_A(z)$ are empty.
\end{defn}

Croot, Granville, Pemantle and Tetali~\cite{CGPT} proved the upper bound in \Th{squares:sharp}
by counting the number of  \emph{acyclic} (also called \emph{Berge-acyclic}) even sub-hypergraphs of this
hypergraph. An acyclic hypergraph is one in which there does not exist, for any $k\ge 2$, 
a cycle of $k$ distinct hyperedges $e_0,e_1,\dots,e_k=e_0$ and distinct vertices $v_1,\dots,v_k$ with
each $v_i\in e_{i-1}\cap e_i$, $i=1,\dots,k$.  They showed that if $\eta>e^{-\gamma}$ then, for a suitable~$z$,
$\cH_A(z)$ contains more than $z$ edge-disjoint acyclic even 
sub-hypergraphs\footnote{Note that an empty hyperedge is an acyclic even sub-hypergraph
corresponding to a $q_z$-smooth integer~$a_i$; however, in order to find sufficiently many
relations for all $\eta>e^{-\gamma}$, the authors of~\cite{CGPT} needed to consider even
sub-hypergraphs with an arbitrarily large (but bounded) number of hyperedges.}
with high probability. This guarantees more than $z$ disjoint sets of rows of~$A$, each of which
has a sum in the subspace (taken over $\F_2$) of vectors that are supported on the first $z$
columns. As any set of more than $z$ vectors in this $z$-dimensional subspace is linearly
dependent, this guarantees a linear relation between the rows of~$A$. As noted in the
introduction, the authors of \cite{CGPT} used the first moment method to prove their lower bound,
counting the expected number of even sub-hypergraphs of $\cH_A(0)$ or, equivalently, the
number of sets of rows of $A$ that sum to zero over~$\F_2$. However, as mentioned in the
introduction, this method does not yield a sharp lower bound as this expected number blows up
before the threshold for the existence of a single such set given by \Th{squares:sharp}.

\subsection{The 2-core of $\cH_A(z)$}

Instead of counting the even sub-hypergraphs of $\cH_A(0)$, we shall instead study the 2-core
$\cC_A(z)$ of the hypergraph $\cH_A(z)$ for each $z_-\le z\le\pi(x)$, where
$z_-=e^{-\Theta(\sqrt{\log y_0})}y_0$ is defined below, see~\eqref{def:zpm}. Since all even
sub-hypergraphs (after removing isolated vertices) are sub-hypergraphs of the 2-core, it is
enough to restrict attention to $\cC_A(z)$. As noted in the introduction, this is equivalent to
iteratively removing any $a_i$ for which there exists a prime $q>q_z$ that occurs to an odd power
in~$a_i$, but to an even power in all other remaining~$a_j$. We shall show
that if $\eta<e^{-\gamma}$ then the 2-core $\cC_A(z_-)$ of $\cH_A(z_-)$ is (with high probability)
small by tracking the size of $\cC_A(z)$ throughout the range $z\in [z_-,\pi(x)]$. In particular
we shall show that $\cC_A(z_-)$ has fewer than $z_-$ edges with high probability. As a
consequence, any linear relation between the rows of $A$ must involve fewer than $z_-$ rows.
This however is ruled out by a result in \cite{CGPT} which shows (via a first moment calculation)
that any linear relation between the rows of $A$ must involve many rows.
We remark that this approach was partly inspired by the work of Pittel and Sorkin~\cite{PS} in a
closely related setting, where again a direct first moment calculation fails to find the correct
threshold for the appearance of linear relations in the rows of a random matrix, but succeeds
once restricted to the 2-core.

The following theorem tracks the size of the 2-core of $\cH_A(z)$ for all $z\in [z_-,\pi(x)]$,
and is the key technical statement we will need in order to prove \Th{squares:sharp}.
Let $M(z)$ be the set of rows of $A$ corresponding to the set $E(\cC_A(z))=\{e'_i:i\in M(z)\}$
of hyperedges of the 2-core, and let $m(z)=|M(z)|=|E(\cC_A(z))|$.

\begin{thm}\label{thm:track:m}
 If\/ $\eta<e^{-\gamma}$ and\/ $\eps_0>0$, then with high probability,
 \begin{equation}\label{eq:track:m}
  \frac{m(z)}{z}\exp\Big(-\Ein\Big(\frac{m(z)}{z}\Big)\Big)
  \in (1\pm\eps_0)\eta J(x)\frac{\Psi(x,q_z)}{xz}
 \end{equation}
 for every\/ $z\in [z_-,\pi(x)]$, where\/ $z_-$ is defined in\/~\eqref{def:zpm} below.
\end{thm}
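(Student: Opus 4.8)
The plan is to track the pair of random variables $(m(z), s(z))$ jointly as $z$ decreases from $\pi(x)$ to $z_-$, where $m(z)=|E(\cC_A(z))|$ is the number of edges in the 2-core and $s(z)$ is (essentially) the average degree of $\cH_A(z)$ --- more precisely the total number of $(i,q_j)$ incidences with $j>z$ among rows $i\in M(z)$, divided by $m(z)$ (this is the quantity controlled by \Th{track:s}). The heuristic to be made rigorous is the following: if the edges of $\cH_A(z)$ were genuinely independent with a Poisson-like degree distribution of mean $\lambda$, then the number of isolated vertices among the $z$ vertices $[z+1,\pi(x)]$ would be about $z e^{-\lambda}$; on the other hand the isolated vertices of $\cH_A(z)$ correspond exactly to primes $q_j$ with $j\le z$ dividing no active number to an odd power, and standard branching/exploration analysis of a Poisson-edge hypergraph identifies the mean degree $\lambda$ of the surviving 2-core in terms of the function $\Ein$: one has $m(z)/z \cdot e^{-\Ein(m(z)/z)}$ as the ``effective density'' governing the number of vertices that survive. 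Equating the two expressions for the number of isolated vertices, together with the Chernoff concentration of the count of $q_z$-smooth numbers among $a_1,\dots,a_N$ (which equals the number of isolated vertices of $\cH_A(z)$ plus lower-order terms and has mean $\eta J(x)\Psi(x,q_z)/x$), yields exactly~\eqref{eq:track:m}.

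The mechanics of the proof are then: (i) Set up the process formally by revealing the matrix $A$ ``prime by prime'' in decreasing order of $j$, so that at stage $z$ we have observed all columns $j>z$ and hence know $\cH_A(z)$ and its 2-core $\cC_A(z)$. Identify the one-step change $(m(z-1)-m(z), s(z-1)-s(z))$ as $z\mapsto z-1$: revealing column $z$ attaches the vertex $q_z$ to some of the currently active rows; if $q_z$ has degree $\ge 2$ in the active hypergraph nothing leaves the 2-core, but if it has degree $0$ or $1$, deletions (and possible avalanches) occur, each avalanche removing a random number of further edges. (ii) Use \Th{compare} (the comparison with the independent model of \Sc{compare}) to show that, conditioned on the history, the new column's entries are \emph{almost} independent Bernoulli's, so the conditional expectation of the one-step change of $m$ and $s$ matches --- up to $(1+o(1))$ multiplicative errors and negligible additive errors --- the corresponding quantity in the idealized Poisson model, and the avalanche sizes have exponentially-decaying upper tails (\Sc{branching}). (iii) Derive the ``deterministic trajectory'': the pair $(m(z),s(z))$ should closely follow the solution of a system of differential(-difference) equations whose integral is precisely the relation~\eqref{eq:track:m} coupled with a companion relation for $s(z)$. (iv) Run the self-correcting martingale argument: define an error variable measuring the deviation of $(m(z),s(z))$ from the deterministic trajectory, show that this error is self-correcting (a drift pushing it back toward zero whenever it is large, which is where the avalanches help), build a supermartingale out of it via an exponential (Freedman-type) transform, and conclude that the deviation stays $o(z)$ throughout $[z_-,\pi(x)]$ with high probability. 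This is carried out in \Sc{big:z} for the bulk range and, with a more delicate analysis, in \Sc{critical} near $z\approx y_0$; the lower cutoff $z_-=e^{-\Theta(\sqrt{\log y_0})}y_0$ from~\eqref{def:zpm} is chosen as the point below which the self-correction is no longer strong enough to maintain control and we instead hand off to the first-moment bound of~\cite{CGPT}.

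The main obstacle, and the heart of the paper, is step (iv) in the critical window: once $m(z)/z$ is bounded away from $0$ and $\infty$, i.e. once $z$ is comparable to $y_0$, the process is genuinely near-critical --- the attachment degree of a new vertex is $\Theta(1)$, so a single new column can trigger an avalanche of super-constant size, and the fluctuations of $m(z)$ per step are no longer negligible compared with its drift. Controlling this requires (a) sharp enough comparison estimates (the $(1+o(1))$ in \Th{compare} must be quantitatively $1+O(\text{small})$, with the error summable over all $\pi(x)$ steps) to ensure the martingale increments are tightly controlled; (b) the correct choice of the variance proxy in the Freedman bound, for which the exponential-tail control of avalanches from \Sc{branching} is essential; and (c) verifying that the idealized hypergraph stays \emph{subcritical} in the relevant sense throughout, so that the avalanche sizes do in fact have the required exponential tails --- this subcriticality is itself a consequence of the self-correction, creating a slightly circular dependence that must be broken by a careful bootstrapping/stopping-time argument. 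A secondary technical point is reconciling the combinatorial identity ``number of isolated vertices of $\cH_A(z)$'' $\approx$ ``number of $q_z$-smooth $a_i$'' with the exact definition of $\tpsi$ and $\Psi$, and controlling the contribution of primes dividing some $a_i$ to an even power $\ge 2$; these are handled by the number-theoretic estimates of \Sc{NTfacts}.
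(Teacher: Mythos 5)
The overall architecture you describe — the exploration process, the ``reveal one column at a time'' filtration, the comparison \Th{compare} with an independent column model, the exponential-tail bounds on avalanches in \Sc{branching}, the dual hypergraph $\cS_A(z)$ and its isolated-vertex count, and the handoff to the first-moment bound of~\cite{CGPT} below $z_-$ — all match the paper. But there is a genuine gap in your step (iv), and it is exactly at the point you yourself flag as the crux. You propose to show that the joint error in $(m(z),s(z))$ is self-correcting. The paper explicitly does \emph{not} do this for $m(z)$, and for good reason: the expected step size $\E[D(z)\mid\cF_z,d(z)=1]=(1-2s_2(z)/m(z)+o(1))^{-1}$ (\Th{branching}$(a)$) depends on the \emph{ratio} $s_2/m$, so if $m$ drifts high while the $s_k$ stay in the correct proportion to $m$, there is no restoring drift on $m$ at all. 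The paper's workaround (end of \Sc{proof:tracking}) is a two-stage argument: an ordinary supermartingale/Azuma bound shows $m(z)$ cannot drift far over a dyadic block of $z$'s, and at $O(\log(z_+/z_-))=O(\sqrt{\log z_0})$ checkpoints the quantity $m(z)\exp(-\sum_k k s_k(z)/m(z))$ is re-anchored to $\eta\Lambda(z)z$ via \Lm{empty}, whose proof is a second-moment calculation on the isolated-vertex count $m_0(z)$ using \Th{compare} and then a union bound — that lemma has failure probability only $O(1/u_0)$ per application, which is why it can only be invoked at $o(u_0)$ checkpoints. Your plan has no substitute for this re-anchoring step.

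A second, smaller deviation: you track the scalar $s(z)$ (total incidences over $m(z)$), but the paper tracks the entire vector $(s_k(z))_{k\ge2}$, and this is load-bearing. The self-correction of $s_k$ involves the influx term $\Delta_{k+1}(z)$, which depends on $s_{k+1}$; the chosen error tolerance $\eps(k,z)=\eps_1^k k!/\Lambda(z)$ decays geometrically in $k$ precisely so that the error in $s_{k+1}$ contributes only a small fraction of the permitted error in $s_k$ (see the $\eps_1 C_0(k+1)\le k/8$ bound in \Lm{deltas:deterministic} and its analogue \eqref{def:sigmak} in \Sc{big:z}). Collapsing to the aggregate $s(z)=\sum_k k s_k(z)/m(z)$ loses this hierarchy: knowing the aggregate is on track does not control, e.g., $s_2(z)$ individually, which is the quantity that governs the avalanche multiplier $(1-2s_2/m)^{-1}$ and hence the subcriticality you correctly note must be maintained. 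Also a minor point: the concentration is via Azuma–Hoeffding on bounded-increment supermartingales (with the increment bound supplied by \Ob{onesinarow} plus the avalanche tail bound), not a Freedman-type variance-based transform.
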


Recall that the exponential integral $\Ein(w)$ is an entire function,
and is related to the incomplete gamma function via the relation
\begin{equation}\label{def:Ein}
 \Ein(w):=\int_0^w\frac{1-e^{-t}}{t}\,dt=\Gamma(0,w)+\log w+\gamma.
\end{equation}
Since $\Gamma(0,w)=\int_w^\infty e^{-t}\frac{dt}{t}$ decreases to 0 as $w\to\infty$, we see that
$w e^{-\Ein(w)}$ is a strictly increasing function of $w$ that converges to $e^{-\gamma}$ as
$w\to\infty$. Thus we can define $\alpha(\eta)\in [0,\infty)$ uniquely by the equation
\begin{equation}\label{def:alpha}
 \alpha(\eta)e^{-\Ein(\alpha(\eta))}=\eta
\end{equation}
for any $\eta\in [0,e^{-\gamma})$. Note that $\tfrac{d}{dw} w e^{-\Ein(w)}=e^{-w-\Ein(w)}$
and hence
\[
 \alpha'(\eta)=e^{\alpha(\eta)+\Ein(\alpha(\eta))}
\]
is an increasing function of~$\eta$. Thus $\alpha(\eta)$ is a convex function that strictly
increases from 0 to $\infty$ as $\eta$ increases from 0 to $e^{-\gamma}$.

Let us assume from now on that $0<\eta<e^{-\gamma}$, so that $\alpha(\eta)\in (0,\infty)$ is
well-defined by~\eqref{def:alpha}. We shall fix sufficiently small positive constants $\eps_0$,
$\eps_1$ and $\delta$ satisfying the following inequalities:
\begin{equation}\label{def:small}
 0<\eps_0<e^{-\gamma}-\eta,\qquad 0<\eps_1<\frac{\eps_0}{16}e^{-C_0}
 \qquad\textup{and}\qquad 0<\delta<\eps_1e^{-3/\eps_1},
\end{equation}
where
\begin{equation}\label{def:C0}
 C_0:=\alpha\big((1+\eps_0)\eta\big).
\end{equation}
Note that the upper bound on $\eps_0$ implies that $(1+\eps_0)\eta<e^{-\gamma}$ and
hence $C_0<\infty$. For convenience we shall also assume that $1/\eps_1$ is an integer.

The constant $\eps_0$ appears in \Th{track:m}, and determines the accuracy with which we
track~$m(z)$, while the constant $\eps_1$ will appear (via \Df{eps}) in
\Th{track:s} below, and will determine the accuracy to which we track various other parameters
of the process. The constant $\delta$ plays a different role: it determines the `critical'
range $[z_-,z_+]$, above which we shall have to use a slightly different approach,
and below which we will lose control of the process. To be precise, set
\begin{equation}\label{def:zpm}
 z_-:=\min\big\{z:\Lambda(z)\ge\delta\big\}\qquad\textup{and}\qquad
 z_+:=\max\big\{z:\Lambda(z)\ge\delta\big\},
\end{equation}
where
\begin{equation}\label{def:Lambda}
 \Lambda(z)=\Lambda_x(z):=J(x)\frac{\Psi(x,q_z)}{xz}
\end{equation}
for each $z\in [\pi(x)]$. As in the introduction, let $\pi(y)x/\Psi(x,y)$ be minimized at $y=y_0$, and define
$z_0=\pi(y_0)$. Observe that we can take $y_0$ to be prime, since $\pi(y)$ and $\Psi(x,y)$ only
change at prime~$y$. It follows from \eqref{def:J} that $\Lambda(z)\le 1$, and that
$\Lambda(z_0)=1$. \Th{track:m} and \eqref{def:alpha} together imply that
\begin{equation}\label{eq:mab}
 m(z)\in\alpha\big((1\pm\eps_0)\eta\Lambda(z)\big)z,
\end{equation}
so in particular, $m(z)\le C_0 z$ for all $z\in [z_-,\pi(x)]$ (see \Ob{m:bounded} below), and
moreover $m(z)\approx\alpha(\eta)z$ when $z\approx z_0$. We will show later
(see~\eqref{eq:zpm:precise}) that $z_\pm=e^{\pm\Theta(\sqrt{\log z_0})} z_0$.

\subsection{The hypergraph $\cS_A(z)$}

As mentioned in the introduction, the equation~\eqref{eq:track:m} comes from counting the number
of isolated vertices in a certain hypergraph in two different ways. This hypergraph is not
$\cC_A(z)$, but (in a certain sense) its `dual', defined as follows.

\begin{defn}\label{def:cS}
 For each $k\ge2$ and $z\in [0,\pi(x)]$, let $S_k(z)$ denote the collection of vertices of
 degree $k$ in $\cC_A(z)$,
 \[
  S_k(z):=\Big\{j\in [z+1,\pi(x)]:\big|\big\{i\in M(z):A_{ij}=1\big\}\big|=k\Big\}.
 \]
 In other words, $S_k(z)$ is the set of all columns of $A$ after column~$z$ that have column
 sum $k$ when restricted to the set of rows $M(z)$. Note for $z=\pi(x)$ we have 
 $S_k(z)=\emptyset$. Also, these column sums are zero for 
 $j\notin V(\cC_A(z))$, so $S_k(z)\subseteq V(\cC_A(z))$. Set $s_k(z):=|S_k(z)|$ and
 define $S(z):=\bigcup_{k \ge 2} S_k(z)$. 
\end{defn}

We shall think of $S_k(z)$ as labeling the $k$-edges $e_j:=\{i\in M(z):A_{ij}=1\}$
of a hypergraph $\cS_A(z)$ with vertex set $M(z)$ and edge set $\{e_j:j\in S(z)\}$.
Note that we are now thinking of the rows (corresponding to numbers~$a_i$) of
$A$ as being the vertices and the columns (primes) as hyperedges, where each prime $q$ corresponds
to the set of $i\in M(z)$ such that $q$ divides $a_i$ an odd number of times.

Later we shall show that $p_j(x)=(1+o(1))/j$ when $j$ is in the critical range
$z_-\le j\le z_+$. Thus heuristically one would expect that
\[
 \Prb\big(j\in S_k(z)\big)\approx\Prb\big(\Bin(m(z),1/j)=k\big)
\]
for $k\ge2$, where $\Bin(n,p)$ denotes a binomial random variable with $n$ trials and
success probability~$p$. Indeed, if a column has at least two 1s in active rows (i.e., rows in $M(z)$) then
this column has no effect on the construction of the 2-core. Also, one would expect that
the events $\big\{j\in S_k(z):j\in[z+1,\pi(x)]\big\}$ are `approximately independent'.
This leads one (after a short calculation) to predict that $s_k(z)$ tracks the following function.

\begin{defn}\label{def:tsk}
 For each $k\ge 2$, and every $z\in [\pi(x)]$, set
 \begin{equation}\label{eq:tsk}
  \ts_k(z):=\frac{m(z)}{k(k-1)}e^{-m(z)/z}
  \sum_{\ell=k-1}^\infty\frac{1}{\ell!}\Big(\frac{m(z)}{z}\Big)^\ell.
\end{equation}
\end{defn}

Note that $\ts_k(z)$ is a decreasing function of $k$ and that
\begin{equation}\label{eq:ts2}
 \ts_2(z)=\frac{m(z)}{2}\big(1-e^{-m(z)/z}\big).
\end{equation}
We next define a function that we shall use to bound the error in $s_k(z)$.

\begin{defn}\label{def:eps}
 For each $z\in [z_-,z_+]$ and each $k\ge 2$, define
 \[
  \eps(k,z):=\frac{\eps_1^k\cdot k!}{\Lambda(z)}.
 \]
\end{defn}

The function $\eps(k,z)$ decreases exponentially fast in $k$ while $k$ is relatively small, and then increases super-exponentially fast when $k$ is large. We will need the former
property in order to obtain the self-correction (see below) that will play a crucial role in our
proof, and the latter property in order to show that the bound~\eqref{eq:track:s} holds when $k$
is reasonably large.\footnote{More precisely, it will be important that $\eps(k,z)\ts_k(z)$ decreases only exponentially fast in $k$, see Observation~\ref{obs:eps:tsk:rough} and its applications in Section~\ref{sec:critical}.} We shall prove the following theorem.

\begin{thm}\label{thm:track:s}
 Suppose\/ $\eta<e^{-\gamma}$. Then, with high probability,
 \begin{equation}\label{eq:track:s}
  s_k(z)\in\big(1\pm\eps(k,z)\big)\ts_k(z)
 \end{equation}
 holds for every\/ $k\ge 2$ and every\/ $z\in [z_-,z_+]$.
\end{thm}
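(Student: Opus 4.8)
The plan is to track the vector $(s_k(z))_{k\ge 2}$ jointly with $m(z)$ as $z$ decreases from $\pi(x)$ down to $z_-$, using the method of self-correcting martingales. Since the hypergraph edges are revealed roughly prime by prime (in decreasing order of index $j$), decreasing $z$ by one corresponds to exposing the column $q_z$ of $A$ restricted to the currently active rows $M(z)$: a row $i\in M(z)$ with $A_{iz}=1$ will be removed from the 2-core precisely when this happens to make some other active prime a singleton, and this can trigger an avalanche. The first step is therefore to set up, for each $k$, a one-step difference equation for $\E[s_k(z-1)-s_k(z)\mid \mathcal F_z]$ (and similarly for $m$), where $\mathcal F_z$ is the $\sigma$-algebra generated by everything revealed so far. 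Using \Th{compare} (the comparison with the independent model) together with the fact, proved later in the paper, that $p_j(x)=(1+o(1))/j$ in the critical range, one shows that conditionally the column $q_z$ behaves like $\Bin(m(z),1/z)$ independently across primes, up to a multiplicative error controlled by \Th{compare}; the avalanche sizes are controlled by \Sc{branching} (subcriticality gives exponential tails). Plugging $m(z)\in\alpha((1\pm\eps_0)\eta\Lambda(z))z$ from \eqref{eq:mab} into this heuristic and solving the resulting deterministic recursion is exactly what produces the formula $\ts_k(z)$ in \eqref{eq:tsk}; so the "expected" trajectory matches the claimed one by construction.

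Next I would define, for each $k$ and each sign, the error martingale
\[
 Y_k^\pm(z):=\pm\big(s_k(z)-\ts_k(z)\big)-\big(\text{drift correction term}\big),
\]
engineered so that whenever $s_k(z)$ is near the boundary $(1\pm\eps(k,z))\ts_k(z)$ of its allowed window, the conditional one-step drift of $Y_k^\pm$ points strictly \emph{inward} — this is the self-correction. The source of the self-correction is precisely the avalanche mechanism: if $s_k$ is anomalously large then there are more singletons being created, more rows removed, hence $m$ (and downstream $s_k$ itself) is pushed down; conversely if $s_k$ is too small. One has to be careful that the correction for $s_k$ is driven by the errors in $s_2, s_3,\dots$ and in $m$, so the system is genuinely coupled; the key structural fact making this work is that $\eps(k,z)$ decreases exponentially in $k$ while $k$ is moderate (Definition~\ref{def:eps}), so the feedback from large-$k$ coordinates is negligible, while for large $k$ the bound $\eps(k,z)\ts_k(z)$ still only decreases exponentially (the footnote after Definition~\ref{def:eps}), which is what lets the crude tail estimate of \Sc{branching} close the argument in that regime. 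With the self-correcting property in hand, a supermartingale/Azuma–Hoeffding argument (in the style of \cite{FGM}) shows that, conditioned on all the windows holding at step $z$, the probability that the window for any fixed $(k,z-1)$ fails is super-polynomially small; a union bound over the $O(\mathrm{poly})$ relevant pairs $(k,z)$ — truncating $k$ at the point where $\ts_k(z)$ drops below $1$ — then yields the statement with high probability.

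Finally, the whole argument runs only for $z\ge z_-$ because below $z_-$ the quantity $\Lambda(z)$ is too small and the error function $\eps(k,z)=\eps_1^k k!/\Lambda(z)$ blows up, so one simply stops there; the base case $z=\pi(x)$ is trivial since all $s_k=0=\ts_k$ there. Two remarks on dependencies: \Th{track:s} and \Th{track:m} are really proved together — the bound $m(z)\le C_0 z$ feeds the estimate of $s_k$, and the removal count $m(z)-m(z-1)$ is expressed through the $s_k$'s — so in the actual write-up the two tracking statements are maintained by a single simultaneous induction on $z$ (decreasing), which is presumably carried out in Sections~\ref{sec:big:z} and~\ref{sec:critical} and then assembled in \Sc{proof:tracking}.

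\textbf{Main obstacle.} The hard part will be establishing the self-correction with enough quantitative slack to survive the coupling between the infinitely many coordinates $s_k$ and $m$: one needs the inward drift at the window boundary to dominate not just the martingale fluctuations but also the accumulated errors propagated from every other coordinate, and to do so uniformly over the long range $z\in[z_-,z_+]$ where $m(z)/z$ varies from nearly $0$ up to the $\Theta(1)$ value $\alpha(\eta)$ and back down. Getting the constants in \eqref{def:small} to line up — in particular choosing $\eps_1$ small enough relative to $e^{-C_0}$ and $\eps_0$, and $\delta$ small enough relative to $\eps_1$ — is exactly the bookkeeping that makes this feedback contractive, and verifying it will require the sharp form of the comparison estimate from \Th{compare} rather than a crude one.
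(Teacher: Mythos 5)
Your outline captures the right high-level strategy — self-correcting supermartingales for the normalized errors $s_k^*(z)$, Azuma--Hoeffding after bounding one-step drift and maximum jump, and a union bound over $(k,z)$ — and the ingredients you name (Theorem~\ref{thm:compare}, the branching/avalanche control of \Sc{branching}, the joint induction with $m(z)$) are all used in the paper. However, there is a genuine gap in how you propose to handle $m(z)$.

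You propose to track $m(z)$ jointly with the $s_k(z)$ \emph{via the same self-correcting mechanism}, writing that if $s_k$ is too large then $m$ is ``pushed down'' and conversely. But $m(z)$ is monotone decreasing and has \emph{no} restoring force: there is no way for the process to gain rows back, so once $m(z)$ has drifted too low it stays low. The paper is explicit about this: ``we will not show that $m(z)$ is self-correcting; instead we shall show that $m(z)$ is unlikely to go off track before any of the $s_k(z)$.'' Concretely, a supermartingale argument alone only shows that $m(z)$ drifts by $O(\eps_1)$ over the interval; without a correction mechanism, errors accumulate over the length-$z_0^{1+o(1)}$ interval $[z_-,z_+]$ and eventually overwhelm $\cM(z)$. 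The missing ingredient is Lemma~\ref{lem:empty}: a second-moment estimate (using Theorem~\ref{thm:compare} applied to pairs of rows) on the number $m_0(z)$ of isolated vertices in the dual hypergraph, together with the exact Chernoff concentration of $m_0(z)$ from Lemma~\ref{lem:m0}. This lets one \emph{re-anchor} $m(z)$ at a sparse grid $W$ of $O(\sqrt{\log z_0})$ points spanning $[z_-,z_+]$; between grid points a supermartingale shows only that the drift is $O(\eps_1)$, and the grid spacing $w_i\le 2w_{i-1}$ is exactly what keeps that drift small relative to the reset accuracy. Lemma~\ref{lem:empty} has failure probability only $O(1/u_0)$, so it can be applied at $O(\sqrt{\log z_0})=o(u_0)$ points but not at every $z$, which is why the grid is needed. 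None of this appears in your proposal, and without it the induction cannot close, since $\cT^*(z)$ needs $\cQ(z)\supseteq\cM(z)$ as part of the hypothesis under which Lemmas~\ref{lem:deltaSstar} and~\ref{lem:maxstep} apply.

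Two smaller points. First, the mechanism of self-correction for $s_k$ is more direct than you describe: by Theorem~\ref{thm:branching}(b), the expected number of $k$-edges destroyed per step is proportional to $s_k(z)$ itself, so an excess in $s_k$ feeds back negatively into its own drift without needing to be routed through $m(z)$; the coupling to $s_{k+1}$ (via $\Delta_{k+1}(z)$) is the part that requires the exponential decay of $\eps(k,z)$ in $k$. Second, ``the base case $z=\pi(x)$ is trivial'' is true, but between $\pi(x)$ and $z_+$ one is outside the regime where $\eps(k,z)$ and $\ts_k(z)$ are even meaningful ($\Lambda(z)<\delta$ there), and the paper has to run a separate weaker tracking argument (Proposition~\ref{prop:largez} with the bound $s_k(z)\le\sigma_k m(z)$) down to $z_+$ before switching to the tighter windows of Theorem~\ref{thm:track:s}; the ``base case'' for the critical-range martingale is $z_+$, established via Corollary~\ref{cor:zp}, not $\pi(x)$.
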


Note that $\eps(k,z)>1$ for all sufficiently large $k$, so to prove that~\eqref{eq:track:s}
holds for these values of $k$ it will suffice to show that $s_k(z)=0$. We shall in fact show
that, with high probability, $s_k(z)=0$ for all $k\ge 5\log z_0/\log\log z_0$ and for all
$z\in [z_-,\pi(x)]$ (see \Lm{skeasy} below).

As mentioned in the introduction, we shall prove \Th{track:s} using the method of self-correcting
martingales (see, e.g.,~\cite[Section~3]{FGM}). Roughly speaking, we shall show (see
Lemmas~\ref{lem:deltaSstar:bigz} and~\ref{lem:deltaSstar}) that if nothing has yet gone wrong,
then the (expected) drift in the \emph{relative} error of $s_k(z)$ depends mainly on the current
error, and (unless it is already quite small) tends to push the error towards zero. We emphasize
that the function $\eps(k,z)$ was chosen with exactly these lemmas in mind; in particular, it will
be important that $\eps(k,z)$ decreases rapidly for small values of~$k$, since we shall use this
fact to bound the influence of the error in $s_{k+1}(z)$ on the drift of $s_k(z)$. Combining these
lemmas with bounds on the probability of a large jump in the relative error (see
Lemmas~\ref{lem:maxstep:bigz} and~\ref{lem:maxstep}), it will be relatively easy to deduce
sufficiently strong bounds on the probability that $s_k(z)$ is the first variable to `go~astray'.

\Th{track:m} will be proved simultaneously with \Th{track:s}, but we will not show that $m(z)$ is self-correcting;
instead we shall show that $m(z)$ is unlikely to go off track before any of the $s_k(z)$. More
precisely, we shall use a martingale approach to show that $m(z)$ does not drift off course too
quickly, together with an occasional application of \Lm{empty} to put it back on track.
Since the probability of failure in \Lm{empty} is relatively large, we
can only apply it a small number of times; however, this will be sufficient to prove \Th{track:m}
over the `critical' interval $[z_-,z_+]$, while larger values of $z$ are easier to deal with.

\subsection{The random process}

Let us finish this section by defining the random process we shall use to reveal the 2-core
$\cC_A(z)$. In each step we reveal just enough information to proceed; in particular, and
crucially, we shall not reveal the exact locations of the $1$s in a column until it has only a
single non-zero element in an \emph{active} row, that is, a row of~$M(z)$.

\begin{alg}\label{alg:algorithm}
 We start with $z:=\pi(x)$, $M(z):=[N]$ and $S_k(z):=\emptyset$ for each $k\ge2$.
 Now repeat the following steps until $z=0$:
 \begin{itemize}
  \item[$1.$] Set $M:=M(z)$, $S_k:=S_k(z)$ for each $k\ge 2$ and $S_1:=\emptyset$.
  \item[$2.$] Reveal the (random) quantity $d(z):=\big|\big\{i\in M(z):A_{iz}=1\big\}\big|$,
   that is, the number of active non-zero entries of column~$z$.
  \item[$3.$] If $d(z)=d>0$, set $S_d:=S_d\cup\{z\}$.
  \item[$4.$] While $S_1\ne\emptyset$ do:\smallskip
  \begin{itemize}
   \item[$(a)$] Pick the smallest\footnote{An arbitrary $z'\in S_1$ would do here, but we shall
    later wish to ensure that the order in which rows are removed by the algorithm is uniquely
    specified.} $z'\in S_1$, observe which row $i$ is such that $i\in M$, $A_{iz'}=1$.
   \item[$(b)$] Set $M:=M\setminus\{i\}$, $S_1:=S_1\setminus\{z'\}$.
   \item[$(c)$] For each $k\ge1$ and each $j\in S_k$, reveal whether column $j$ has a 1 in
    row~$i$; if it does, remove $j$ from $S_k$ and (if $k>1$) add it to $S_{k-1}$.
  \end{itemize}
  \item[$5.$] Set $M(z-1):=M$ and $S_k(z-1):=S_k$ for each $k\ge 2$.
  \item[$6.$] Set $z:=z-1$; if $z>0$ then return to Step~1, otherwise stop.
\end{itemize}
\end{alg}

It is easy to see that this algorithm tracks the 2-core $\cC_A(z)$ as $z$ decreases from $\pi(x)$
to~0. Define a filtration $\cF_{\pi(x)}\subseteq\cF_{\pi(x)-1}\subseteq\dots$ by taking $\cF_y$ to
be the information observed at the moment the index $z$ is set equal to~$y$. More precisely,
$\cF_{y}$ reveals which rows and columns of $A$ correspond to the edges and vertices of the
2-core $\cC_A(y)$, as well as the degrees (column sums) of all the vertices in the 2-core. The 
only other information revealed by $\cF_y$ concerns rows of $A$ outside of~$M(y)$ (as a result 
of earlier steps of the algorithm), which will be irrelevant for our purposes.

Define the $\sigma$-algebra $\cF^+_y$ to include $\cF_y$ and also the information observed at
Step~2 when $z=y$, namely the value of $d(y)$. Thus $\cF^+_y$ specifies the column sums of $A$ of
all columns in $[y,\pi(x)]$, summing only over rows in $M(y)$. The matrix $A$ conditioned on
$\cF^+_y$ and restricted to $M(y)\times [y,\pi(x)]$ can be constructed with the correct
probability distribution by taking a uniform distribution on all choices of the multi-set of
numbers $\{a_i:i\in M(y)\}$ whose column sums are compatible with $\cF^+_y$. Indeed, any such
multiset, combined with the original $a_i$ for all $i \notin M(y)$, would result in the algorithm
constructing the same 2-core, and all such choices of the $a_i$ are equally likely.
Understanding this probability space will be the main aim of the next three sections,
and will be key to the proof of \Th{squares:sharp}.

\section{Number-theoretic facts}\label{sec:NTfacts}

In order to understand the distribution of the numbers $\{a_i:i\in M(z)\}$ conditioned on the
information observed in $\cF_z$ or $\cF_z^+$, we shall need some detailed information about the
smooth number counting function $\Psi(x,y)$ and its close relative $\tpsi(x,y)$. The first result
in this direction was obtained by Dickman~\cite{Dickman} in 1930, who proved that if $u$ is fixed
then
\[
 \lim_{x\to\infty}\frac{\Psi(x,x^{1/u})}{x}=\rho(u),
\]
where $\rho$ is the (unique) continuous solution to the delay differential equation
\begin{equation}\label{eq:rho:ddiff}
 u\rho'(u)+\rho(u-1)=0
\end{equation}
for $u>1$, with the boundary condition $\rho(u)=1$ for all $0\le u\le 1$.
This function is now known as the Dickman--de Bruijn function, and is asymptotically of the form
\begin{equation}\label{eq:rho:rough}
 \rho(u)=u^{-(1+o(1))u}
\end{equation}
as $u\to\infty$. Further important breakthroughs
were made in 1938, by Rankin~\cite{Rankin}, and in 1951, by de Bruijn~\cite{Bru51}, who
determined $\Psi(x,y)$ asymptotically when $y\ge\exp\big((\log x)^{5/8+\eps}\big)$ for some
$\eps>0$. Upper and lower bounds in a much wider range were later proved by de Bruijn~\cite{Bru66}
and by Canfield, Erd\H{o}s and Pomerance~\cite{CEP}, respectively. We will use the following
asymptotic result, due to Hildebrand~\cite{H86}.

\begin{thm}[Hildebrand, 1986]\label{thm:Psi}
 Let\/ $\exp\big((\log\log x)^2\big)\le y\le x$, and set\/ $u=\frac{\log x}{\log y}$. Then
 \[
  \frac{\Psi(x,y)}{x}=\rho(u)\bigg(1+O\bigg(\frac{\log(u+1)}{\log y}\bigg)\bigg)
 \]
 uniformly in\/ $x$ and\/ $y$.
\end{thm}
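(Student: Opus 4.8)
The plan is to derive the estimate from the Dickman-type integral identity for $\Psi(\cdot,y)$, using as analytic input precise information about $\rho$ obtained from its Laplace transform, and then to bootstrap from bounded $u$ up to $u=\log x/\log y$. I would begin by recording the facts about $\rho$ that are needed. From the integrated form $u\rho(u)=\int_{u-1}^{u}\rho(t)\,dt$ of~\eqref{eq:rho:ddiff} one obtains the (entire) Laplace transform
\[
 \hat\rho(s)=\int_0^\infty\rho(t)e^{-st}\,dt=\exp\bigl(\gamma-\Ein(s)\bigr);
\]
in particular $\hat\rho(0)=e^{\gamma}$, the very constant that governs the main theorem. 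Inverting this transform by the saddle-point method along the vertical line through the real saddle $s=-\xi(u)$, defined by $\Ein'(-\xi)=(e^{\xi}-1)/\xi=u$ so that $\xi(u)=\bigl(1+o(1)\bigr)\log(u\log u)$, yields both the crude size~\eqref{eq:rho:rough} and, crucially, the ratio estimates
\[
 \frac{\rho(u-t)}{\rho(u)}=\exp\bigl((1+o(1))\,t\log u\bigr)\qquad(0\le t\le1)
\]
together with $-\rho'(u)/\rho(u)=\bigl(1+o(1)\bigr)\xi(u)$, which quantify the distortion of $\rho$ under a unit delay.

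Next I would write down the identity on which everything rests. Expanding $\log n=\sum_{d\mid n}\Lambda(d)$ (von Mangoldt), summing over $y$-smooth $n\le x$, and applying Abel summation gives
\[
 \Psi(x,y)\log x=\int_1^x\frac{\Psi(t,y)}{t}\,dt+\sum_{\substack{p\le y,\ m\ge1\\ p^m\le x}}(\log p)\,\Psi\bigl(x/p^m,y\bigr).
\]
Feeding in the ansatz $\Psi(t,y)\approx t\rho(\log t/\log y)$, discarding the negligible prime-power terms $m\ge2$, and using the prime number theorem $\sum_{p\le t}\log p=t+O\bigl(t\,e^{-c\sqrt{\log t}}\bigr)$ to replace the remaining prime sum by $\int_1^y (x/t)\,\rho\bigl(u-\tfrac{\log t}{\log y}\bigr)\,dt$, the right-hand side reproduces $x\rho(u)\log x$ precisely by virtue of $u\rho(u)=\int_{u-1}^u\rho$. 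Thus $t\mapsto t\rho(\log t/\log y)$ is an approximate fixed point of the identity, the discrepancy at each level being of relative size $O\bigl(e^{-c\sqrt{\log y}}+(\log y)^{-1}\bigr)$.

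For bounded $u$ the asymptotic $\Psi(x,x^{1/u})=x\rho(u)\bigl(1+O(1/\log x)\bigr)$ is classical (for $1\le u\le2$ it is immediate from Mertens' theorem and $\rho(u)=1-\log u$), and this serves as the base case. The inductive step runs over the dyadic-in-$u$ ranges $y^{j}\le x<y^{j+1}$: assuming the desired bound for all smaller $x$, one substitutes $\Psi(t,y)=t\rho(\log t/\log y)(1+E(t))$ into the identity at $x$; the main terms cancel against $x\rho(u)\log x$, leaving an estimate for $E$ at level $u$ in terms of a weighted average of the values of $E$ at levels in $[u-1,u)$ (the delay term) plus the per-level discrepancy above. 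Solving this recursion from bounded $u$ upward produces the bound $O\bigl(\log(u+1)/\log y\bigr)$, the logarithm appearing as a harmonic sum over the $\Theta(u)$ levels.

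The main obstacle is to carry out this last step without losing a factor of $u$: a crude accumulation of the per-level discrepancy over the $\Theta(u)$ steps would give $O(u/\log y)$, which is useless in the range of interest, where $u$ may be as large as $\log x/(\log\log x)^2$. One must exploit cancellation — both the smoothing effect of the averaging operator $g\mapsto\int_{u-1}^u g$, which damps the oscillatory component of $E$, and the fact that the discrepancy terms are, up to lower order, a total derivative in $\log x$ and hence telescope rather than add. Converting this into a rigorous Grönwall-type inequality for $\max_{t\le x}|E(t)|$ — obtained by differentiating the identity in $\log x$ and comparing with the differentiated equation for $\rho$, and controlled using the ratio estimates for $\rho$ above — is the technical heart of the argument. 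It is precisely at this stage that the hypothesis $y\ge\exp\bigl((\log\log x)^2\bigr)$ is needed: it is the threshold below which the prime-number-theorem error $e^{-c\sqrt{\log y}}$ ceases to be controllable against the target accuracy $1/\log y$, and indeed below which the conclusion of the theorem fails altogether.
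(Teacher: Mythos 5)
The paper does not prove this statement --- it is cited as a black box from Hildebrand~\cite{H86} --- so there is no internal argument to compare with; your sketch has to be measured against Hildebrand's original proof.

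Your outline reproduces the skeleton of that proof. The identity $\Psi(x,y)\log x=\int_1^x\Psi(t,y)\,\frac{dt}{t}+\sum_{p\le y,\,p^m\le x}(\log p)\,\Psi(x/p^m,y)$, obtained by computing $\sum_{n\le x,\ n\ y\text{-smooth}}\log n$ in two ways, is indeed the engine Hildebrand uses; the observation that, after replacing the prime sum by an integral via PNT, the delay relation $u\rho(u)=\int_{u-1}^u\rho$ makes $t\mapsto t\rho(\log t/\log y)$ an approximate fixed point is the right one; and the Laplace transform $\hat\rho(s)=e^{\gamma-\Ein(s)}$ with real saddle at $s=-\xi(u)$ is correct and consistent with \Th{rho} (Hildebrand--Tenenbaum), which the paper cites separately.

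The genuine gap is the error propagation, which you locate correctly but do not close. The recursion replaces $E$ at level $u$ by a convex combination of its values on $[u-1,u)$ plus a local discrepancy, and you rightly note that naive summation over $\Theta(u)$ levels gives only $O(u/\log y)$. Your resolution --- that the discrepancies are ``up to lower order a total derivative in $\log x$ and hence telescope,'' so that a Gr\"onwall inequality obtained ``by differentiating the identity'' yields $\log(u+1)$ in place of $u$ --- is asserted rather than demonstrated, and I do not see a literal telescoping structure in the per-step error. This is precisely where the content of Hildebrand's theorem lies: the estimate needed (roughly, that the contribution at level $k$ can be taken $O(1/(k\log y))$ rather than $O(1/\log y)$ uniformly, so the harmonic sum gives $\log(u+1)$) depends on the log-concavity of $\rho$ (\Th{rho:log:concave}, proved in the same paper) and a careful weighted comparison argument, and occupies most of Hildebrand's paper. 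Naming the difficulty is not the same as solving it; as it stands the sketch is a plausible blueprint, but the theorem is not proved.

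A minor inaccuracy: the hypothesis $y\ge\exp((\log\log x)^2)$ is a convenience for this paper, not the boundary of validity. Hildebrand's theorem holds for $y\ge\exp((\log\log x)^{5/3+\eps})$, and the asymptotic $\Psi(x,y)\sim x\rho(u)$ only fails much lower, near $y=(\log x)^{1+o(1)}$; so the remark that the hypothesis marks ``the threshold below which the conclusion fails altogether'' is not right.
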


We remark that the main result of~\cite{H86} is even more general than \Th{Psi}, but the
version above is a little simpler to state, and more than sufficient for our purposes. Indeed,
it follows from \Th{Psi} and \eqref{eq:rho:rough}
(see, for example,~\cite[Section~2.1]{CGPT}) that\footnote{In fact, to prove~\eqref{eq:J:rough}
one only needs the results of de Bruijn~\cite{Bru66} and Canfield,
Erd\H{o}s and Pomerance~\cite{CEP}, which imply that $\Psi(x,y)=xu^{-(1+o(1))u}$ as $u\to\infty$
for all $y \ge (\log x)^{1+\eps}$.}
\begin{equation}\label{eq:J:rough}
 J(x)=y_0^{2+o(1)}=\exp\Big(\big(1+o(1)\big)\sqrt{2\log x\log\log x}\Big)
\end{equation}
as $x\to\infty$, as claimed in the introduction. It also follows that
\begin{equation}\label{eq:Psi:rough}
 \Psi(x,y_0^\beta)=xy_0^{-1/\beta+o(1)}
\end{equation}
for every $\beta=\beta(x)$ bounded away from~0, which
implies the following crude estimate for $\Lambda(z)$.

\begin{cor}\label{cor:Lambda:rough}
 Let\/ $\beta=\beta(x)$ be bounded away from~$0$,
 and set\/ $z=z_0^\beta$, where\/ $z_0=\pi(y_0)$. Then
 \[
  \Lambda(z)=z_0^{2-\beta-1/\beta+o(1)}
 \]
 as\/ $x\to\infty$.
\end{cor}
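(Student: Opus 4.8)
The plan is to unwind the definition $\Lambda(z)=J(x)\Psi(x,q_z)/(xz)$ and estimate each of the three factors $J(x)$, $\Psi(x,q_z)$, and $z$ in terms of $z_0$ and $\beta$, absorbing all lower-order terms into the $z_0^{o(1)}$ error. First I would record the basic dictionary between primes and their indices: by the prime number theorem, $q_z=z\log z\,(1+o(1))$, and in particular $\log q_z=(1+o(1))\log z$ for $z$ growing with $x$ (which is the case here, since $z=z_0^\beta$ with $\beta$ bounded away from $0$ and $z_0\to\infty$). Likewise $\log y_0=(1+o(1))\log z_0$, so from $z=z_0^\beta$ we get $\log q_z=(1+o(1))\beta\log z_0=(1+o(1))\beta\log y_0$, hence $q_z=y_0^{\beta+o(1)}$. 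This is the step that lets me convert the index-scaling $z=z_0^\beta$ into the size-scaling $q_z=y_0^{\beta+o(1)}$ needed to apply the smooth-number estimates.

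Next I would substitute into $\Lambda$. We have $J(x)=y_0^{2+o(1)}$ by~\eqref{eq:J:rough}, and $\Psi(x,q_z)=\Psi(x,y_0^{\beta+o(1)})=x\,y_0^{-1/\beta+o(1)}$ by~\eqref{eq:Psi:rough} (applied with exponent $\beta+o(1)$, which is still bounded away from $0$; note the $o(1)$ in the exponent is swallowed by the $o(1)$ in~\eqref{eq:Psi:rough} since $1/(\beta+o(1))=1/\beta+o(1)$ when $\beta$ is bounded away from $0$). Finally $z=z_0^\beta=y_0^{\beta+o(1)}$. Combining,
\[
 \Lambda(z)=\frac{J(x)\Psi(x,q_z)}{xz}
 =\frac{y_0^{2+o(1)}\cdot x\,y_0^{-1/\beta+o(1)}}{x\cdot y_0^{\beta+o(1)}}
 =y_0^{\,2-\beta-1/\beta+o(1)}=z_0^{\,2-\beta-1/\beta+o(1)},
\]
where in the last step I again use $\log y_0=(1+o(1))\log z_0$, together with the fact that the exponent $2-\beta-1/\beta$ is bounded (since $\beta$ is bounded away from $0$, and $\beta\le \log x/\log y_0$ gives an upper bound too, so the $(1+o(1))$ multiplying a bounded exponent only contributes another $o(1)$). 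This is the claimed formula.

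The only genuine point requiring care — and the step I would expect a referee to scrutinise — is the bookkeeping of where the various $o(1)$'s live: an $o(1)$ in the \emph{exponent of $y_0$} is harmless, but one must check that converting exponents of $y_0$ to exponents of $z_0$ (and passing through $1/\beta$) does not inflate a multiplicative $(1+o(1))$ on an \emph{a priori} unbounded exponent into something non-negligible. Since $\beta$ is bounded away from $0$ by hypothesis, $1/\beta$ is bounded, $2-\beta-1/\beta$ is bounded, and $\log y_0/\log z_0\to 1$, so each conversion costs only an additive $o(1)$ in the exponent; there is no hidden unboundedness. Everything else is a direct substitution of~\eqref{eq:J:rough} and~\eqref{eq:Psi:rough} together with the prime number theorem, so no further input beyond the facts already established in this section is needed.
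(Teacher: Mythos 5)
Your overall strategy --- unwind the definition of $\Lambda$, substitute~\eqref{eq:J:rough} and~\eqref{eq:Psi:rough}, and translate between $y_0$-exponents and $z_0$-exponents via the prime number theorem --- is exactly what the paper leaves implicit, and it is the right approach. However, the final paragraph, which you correctly identify as the delicate step, contains an error that invalidates the chain of equalities as written.

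You claim that ``$\beta\le\log x/\log y_0$ gives an upper bound too, so \ldots\ $2-\beta-1/\beta$ is bounded.'' But $\log x/\log y_0=u_0$, and by~\eqref{eq:J:rough} we have $u_0=(1+o(1))\sqrt{2\log x/\log\log x}\to\infty$. So $\beta$ is \emph{not} bounded above --- it may grow like $u_0$, and the corollary is in fact invoked in that regime (e.g.\ in \Ob{m:rough}, for $z$ up to $\pi(x)$) --- and therefore $2-\beta-1/\beta$ is unbounded. Moreover, writing $\log y_0/\log z_0=1+\eta$ with $\eta=\Theta(\log\log y_0/\log y_0)$, a short computation using~\eqref{eq:ulogu} shows that $u_0\eta=1+o(1)$, i.e.\ $\Theta(1)$ rather than $o(1)$. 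Consequently, when $\beta\asymp u_0$, neither $z_0^\beta=y_0^{\beta+o(1)}$ nor $y_0^{2-\beta-1/\beta+o(1)}=z_0^{2-\beta-1/\beta+o(1)}$ holds: each is off by a factor $z_0^{\Theta(\beta\eta)}=z_0^{\Theta(1)}$. (It so happens that these two errors have opposite sign and cancel, so your final formula is correct, but the individual steps of your display are not, and the stated justification is false.)

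The clean fix is never to rewrite $z$ as a power of $y_0$. Instead write
\[
 \Lambda(z)=\frac{J(x)\,\Psi(x,q_z)}{x}\cdot\frac{1}{z}
 =y_0^{2-1/\beta+o(1)}\cdot z_0^{-\beta},
\]
where $J(x)\Psi(x,q_z)/x=y_0^{2-1/\beta+o(1)}$ follows essentially as you argue: by the prime number theorem $q_z=y_0^{\beta'}$ with $\beta'/\beta\to1$ (note it is $\beta'/\beta\to1$, not $\beta'=\beta+o(1)$), hence $1/\beta'=1/\beta+o(1)$ because $1/\beta$ is bounded --- this is precisely where the hypothesis ``$\beta$ bounded away from $0$'' is used, and it is all that is needed --- and then one applies~\eqref{eq:Psi:rough} and~\eqref{eq:J:rough}. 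Since $2-1/\beta$ \emph{is} bounded, the conversion $y_0^{2-1/\beta+o(1)}=z_0^{(2-1/\beta+o(1))(1+\eta)}=z_0^{2-1/\beta+o(1)}$ is harmless, and dividing by $z_0^\beta$ gives $\Lambda(z)=z_0^{2-\beta-1/\beta+o(1)}$ with no unboundedness issue.
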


Note that \Co{Lambda:rough} also follows from~\cite[Lemma~2.1]{CGPT}, and implies that
\begin{equation}\label{eq:zpm:rough}
 z_\pm=z_0^{1+o(1)}.
\end{equation}
Let us define $u_0 := \frac{\log x}{\log y_0}$, so that $y_0=x^{1/u_0}$, and note here for future
reference the following immediate and useful consequence of~\eqref{eq:J:rough}:
\begin{equation}\label{eq:ulogu}
 \log z_0=(1+o(1))u_0\log u_0.
\end{equation}
In order to obtain more detailed information about the function $\Lambda(z)$, we shall need some
fundamental results of Hildebrand and Tenenbaum~\cite[Theorem 3]{HT}, which control the
`local' dependence of $\Psi(x,y)$ on the variable~$x$. (We remark that the idea of using these to
understand the matrix $A$ was one of the key innovations of~\cite{CGPT}.)
Instead of quoting these results directly, we shall prove a form (\Th{rho:ratio}) that
will be more convenient for our purposes. We will need the following two results
on the Dickman--de Bruijn function $\rho(u)$.

\begin{thm}[Hildebrand~{\cite[proof of Lemma~1]{H86}}]\label{thm:rho:log:concave}
 The function\/ $\rho(u)$ is log-concave for $u\ge1$. Equivalently,
 $\frac{\rho(u-1)}{u\rho(u)}$ is increasing in~$u$.
\end{thm}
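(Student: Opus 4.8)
The claim is that $\rho(u)$ is log-concave on $[1,\infty)$, equivalently that $\rho(u-1)/(u\rho(u))$ is increasing in $u$; the equivalence comes from differentiating $\log\rho$ and using the delay differential equation $u\rho'(u) + \rho(u-1) = 0$, which gives $(\log\rho)'(u) = \rho'(u)/\rho(u) = -\rho(u-1)/(u\rho(u))$, so log-concavity of $\rho$ is exactly the statement that this quantity decreases, i.e. $\rho(u-1)/(u\rho(u))$ increases. The plan is to prove the increasing statement directly by an induction on the unit intervals $[n, n+1]$, since $\rho$ is defined piecewise by the delay equation and on $[0,1]$ we have $\rho \equiv 1$.

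First I would set $g(u) := \rho(u-1)/(u\rho(u))$ and observe that on $(1,2)$ one has $\rho(u-1) = 1$ and $\rho(u) = 1 - \log u$ (integrating $u\rho'(u) = -1$ from $1$), so $g(u) = 1/(u(1-\log u))$, which one checks by hand is increasing on $(1,2)$; this is the base case. For the inductive step, suppose $g$ is increasing (equivalently $\rho$ is log-concave) on $[1,n]$; I want to extend to $[n, n+1]$. The key is a differential inequality for $g$: differentiating $\log g(u) = \log\rho(u-1) - \log u - \log\rho(u)$ gives
\[
 \frac{g'(u)}{g(u)} = \frac{\rho'(u-1)}{\rho(u-1)} - \frac{1}{u} - \frac{\rho'(u)}{\rho(u)} = (\log\rho)'(u-1) - \frac1u - (\log\rho)'(u),
\]
so $g$ is increasing on an interval precisely when $(\log\rho)'(u-1) - (\log\rho)'(u) \ge 1/u$ there. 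Now one uses the delay equation twice: $(\log\rho)'(u) = -\rho(u-1)/(u\rho(u)) = -g(u)$ and $(\log\rho)'(u-1) = -g(u-1)$, reducing the target inequality to $g(u) - g(u-1) \ge 1/u$ — but that is circular on its own, so instead I would feed in the inductive hypothesis on $[n-1,n]$ (where $u-1$ lives) together with the known sign of $\rho'$ and the ODE to get a genuine one-sided bound, transporting log-concavity from one interval to the next. Concretely, the standard trick here (this is essentially Hildebrand's argument) is to show that $h(u) := (\log\rho)'(u-1) - (\log\rho)'(u) - 1/u$ satisfies $h \ge 0$ by checking $h(n) \ge 0$ at the left endpoint using the inductive hypothesis, and then showing $h$ cannot become negative inside $[n,n+1]$ by differentiating once more and invoking the ODE $u\rho'(u) + \rho(u-1) = 0$ to express $(\log\rho)''$ in terms of lower-order data that is already controlled.

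The main obstacle I expect is making the inductive step rigorous at the junction points $u = n$, where $\rho$ is only $C^0$ across integer breakpoints in a naive piecewise description but is actually smooth enough (the delay equation forces $\rho \in C^1$ globally and smoother on each open interval), and ensuring the one-sided derivative bounds glue correctly; care is also needed because $g$ itself involves $\rho$ evaluated at $u-1$, so the induction must be organized so that the hypothesis on $[n-1,n]$ supplies exactly the information about $(\log\rho)'(u-1)$ needed on $[n,n+1]$. A clean alternative, which I would fall back on if the direct induction gets delicate, is to cite that $\rho$ is the density-type object whose Laplace transform is $\exp(\gamma + \int_0^\infty \frac{e^{-s} - 1}{s}\,ds)$-like and argue log-concavity via a representation of $\rho$ as a decreasing limit or via the known integral formula $\rho(u) = \rho(n) - \int_n^u \rho(t-1)/t\, dt$ on $[n,n+1]$, combined with the fact that $\rho(u-1)/u$ inherits monotonicity from the inductive hypothesis — but the endpoint bookkeeping remains the crux either way.
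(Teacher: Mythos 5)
The paper gives no proof of Theorem~\ref{thm:rho:log:concave}: it is quoted with a citation to Hildebrand~\cite{H86}, so there is no internal argument to compare your attempt against, and it must stand on its own.

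Your reduction and base case are fine. From the delay equation $u\rho'(u)+\rho(u-1)=0$ one has $(\log\rho)'(u)=-g(u)$ with $g(u):=\rho(u-1)/(u\rho(u))$, so log-concavity of $\rho$ on $[1,\infty)$ is exactly $g$ nondecreasing there. On $(1,2)$, $\rho(u)=1-\log u$ and $\rho(u-1)=1$, so $g(u)=1/(u(1-\log u))$ and $g'(u)=\log u/\big(u(1-\log u)\big)^2>0$, as you say.

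The inductive step, however, has a genuine gap, and it is not merely endpoint bookkeeping. Differentiating the delay equation twice shows that $g'(u)=g(u)\,h(u)$, where $h(u)=g(u)-g(u-1)-1/u$ is exactly your target quantity; so $g$ increasing on $[n,n+1]$ is equivalent to $h\ge 0$ there, as you observe. Differentiating once more,
\[
 h'(u)\;=\;g(u)h(u)\;-\;g'(u-1)\;+\;\frac{1}{u^2}.
\]
Your plan is to check $h(n)\ge 0$ and then argue that $h$ ``cannot become negative'' on $(n,n+1]$. But at a hypothetical first zero $u_0\in(n,n+1]$ this identity gives $h'(u_0)=-g'(u_0-1)+1/u_0^2$, and your inductive hypothesis is precisely $g'\ge 0$ on the previous interval, which has the \emph{wrong} sign: it cannot rule out $h'(u_0)<0$. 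Worse, the quantity really can be negative. On $(1,2)$, $g'(v)=\log v/\big(v(1-\log v)\big)^2$, so for example $g'(1.5)\approx 0.51$, which comfortably exceeds $1/(2.5)^2=0.16$. So the one-sided information you carry in the induction simply does not produce a contradiction at a first zero. To close the argument this way you would need a \emph{quantitative upper bound} on $g'$ over the previous interval --- a strictly stronger inductive statement than ``$g$ is nondecreasing'' --- and you have given no mechanism to supply it. The fallback via a Laplace-transform or integral representation is an indication of where one might look rather than an argument; the step from that representation to log-concavity is exactly the content that would need to be filled in.
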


Define the function $\xi=\xi(u)$ for $u>1$ to be the unique positive solution of the equation
\begin{equation}\label{def:xi}
 e^{\xi(u)}=1+u\xi(u).
\end{equation}

\begin{thm}[Hildebrand and Tenenbaum~{\cite[equation (2.1)$'$]{HTS}}]\label{thm:rho}
 For\/ $u>1$ we have
 \[
  \rho(u)=\bigg(1+\frac{O(1)}{u}\bigg)\sqrt{\frac{\xi'(u)}{2\pi}}
  \exp\bigg(\gamma-\int_1^u \xi(t)\,dt\bigg).
 \]
\end{thm}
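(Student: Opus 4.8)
This is a classical saddle-point estimate, so I would derive it from the Laplace inversion formula for the Dickman--de Bruijn function. First I would record the transform identity
\[
 \hat\rho(s):=\int_0^\infty\rho(u)e^{-su}\,du=e^{\gamma-\Ein(s)},
\]
valid for all $s\in\R$ (and, since $\Ein$ is entire, for all $s\in\mathbb C$). To see this, take Laplace transforms in the delay-differential equation $u\rho'(u)+\rho(u-1)=0$ of~\eqref{eq:rho:ddiff}: using $\rho\equiv1$ on $[0,1]$ and $\mathcal L[uf](s)=-\tfrac{d}{ds}\mathcal L[f](s)$, one gets $s\hat\rho'(s)=(e^{-s}-1)\hat\rho(s)$, hence $\log\hat\rho(s)=\log\hat\rho(0)+\int_0^s\frac{e^{-v}-1}{v}\,dv=\log\hat\rho(0)-\Ein(s)$; and letting $s\to+\infty$ along the reals, $s\hat\rho(s)\to\rho(0)=1$ while $s\,e^{-\Ein(s)}\to e^{-\gamma}$ by~\eqref{def:Ein} and $\Gamma(0,s)\to0$, forcing $\hat\rho(0)=e^\gamma$. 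Laplace inversion then gives, for any $\kappa\in\R$,
\[
 \rho(u)=\frac{e^\gamma}{2\pi i}\int_{\kappa-i\infty}^{\kappa+i\infty}e^{\phi(s)}\,ds,\qquad\phi(s):=us-\Ein(s).
\]

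Next I would locate the saddle. On the real axis $\phi'(s)=u-\Ein'(s)=0$ iff $\tfrac{1-e^{-s}}{s}=u$; since the left-hand side equals $1$ at $s=0$, decreases to $0$ as $s\to+\infty$ and increases to $+\infty$ as $s\to-\infty$, for $u>1$ this has a unique real solution $s=-\xi(u)$, where $\xi(u)>0$ solves $e^{\xi}=1+u\xi$ as in~\eqref{def:xi}. I would then shift the contour to the vertical line $\operatorname{Re}s=-\xi(u)$. From $\phi''(s)=-\Ein''(s)=\tfrac{1-(s+1)e^{-s}}{s^2}$ and $e^{\xi}=1+u\xi$ one gets $\phi''(-\xi(u))=\tfrac{e^{\xi(u)}-u}{\xi(u)}=\tfrac{1-u+u\xi(u)}{\xi(u)}$, which is strictly positive for $u>1$ since $e^{\xi}-u=\big((\xi-1)e^{\xi}+1\big)/\xi>0$; and differentiating $e^{\xi}=1+u\xi$ implicitly gives $\xi'(u)=\tfrac{\xi}{e^{\xi}-u}$, so that $\phi''\big(-\xi(u)\big)=1/\xi'(u)$.

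The analytic core is the Laplace-type evaluation of $\int e^{\phi(s)}\,ds$ along $\operatorname{Re}s=-\xi(u)$. Parametrising $s=-\xi(u)+it$, the quantity $\operatorname{Re}\phi(-\xi+it)$ is maximised at $t=0$, where $\phi(-\xi+it)=\phi(-\xi)-\tfrac12\phi''(-\xi)t^2+O\big(|\phi'''(-\xi)|\,|t|^3\big)$; integrating over a window $|t|\le\delta_0$ and absorbing the higher-order Taylor terms contributes $\tfrac1{2\pi}e^{\phi(-\xi)}\sqrt{2\pi/\phi''(-\xi)}\,\big(1+O(1/u)\big)$, while the part $|t|>\delta_0$ is shown to be negligible via a lower bound on $\operatorname{Re}\Ein(-\xi(u)+it)$. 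Since $1/\phi''(-\xi(u))=\xi'(u)$, this yields $\rho(u)=\big(1+\tfrac{O(1)}{u}\big)e^\gamma\,e^{\phi(-\xi(u))}\sqrt{\xi'(u)/2\pi}$. It remains to identify the exponent: with $g(u):=u\xi(u)+\Ein(-\xi(u))=-\phi(-\xi(u))$ and $\Ein'(-\xi(u))=\tfrac{e^{\xi(u)}-1}{\xi(u)}=u$, one computes $g'(u)=\xi(u)+u\xi'(u)-u\xi'(u)=\xi(u)$ and $g(1)=0+\Ein(0)=0$, so $g(u)=\int_1^u\xi(t)\,dt$ and $e^\gamma e^{\phi(-\xi(u))}=e^{\gamma-\int_1^u\xi(t)\,dt}$, which is the asserted formula.

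The hard part is making the saddle-point step \emph{uniform} in $u$ with the precise relative error $O(1/u)$: this needs bounds on $\phi'''(-\xi(u))$ (indeed on the higher derivatives) near the saddle, a lower bound for $\operatorname{Re}\Ein(-\xi(u)+it)-\Ein(-\xi(u))$ that is strong enough to kill the tails of the (only conditionally convergent) contour integral and uniform enough in $u$ — the growth there being essentially logarithmic in $|t|$, plus a nonnegative correction of order $O(u)$, so that oscillation must also be exploited — and a separate, more elementary treatment of $u$ in a bounded range, where $\xi(u)\to0$ and the Gaussian rescaling degenerates. All of this is the content of the estimates of Hildebrand and Tenenbaum in~\cite{HTS}, which is why the result is quoted rather than reproved here.
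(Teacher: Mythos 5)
The paper does not prove this theorem: it is quoted verbatim from Hildebrand and Tenenbaum~\cite{HTS}, so there is no internal proof to compare against. Your sketch is, however, an accurate outline of the standard saddle-point argument that underlies the cited result, and all the algebraic reductions you carry out are correct: the Laplace transform identity $\hat\rho(s)=e^{\gamma-\Ein(s)}$ (including the determination of $\hat\rho(0)=e^\gamma$ via the initial-value theorem and $se^{-\Ein(s)}\to e^{-\gamma}$), the location of the saddle at $s=-\xi(u)$ via $\Ein'(-\xi)= (e^{\xi}-1)/\xi = u$ together with $e^\xi=1+u\xi$, the identity $\phi''(-\xi(u))=1/\xi'(u)$, and the reduction of the exponent via $g'(u)=\xi(u)$, $g(1)=0$. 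You are also right that the conditionally convergent tail of the inversion integral, the uniform bound $O(1/u)$ on the relative error, and the degenerate range of bounded $u$ are exactly where the real work of~\cite{HTS} lies, and you flag this honestly rather than glossing over it. In short: the proposal is a correct high-level reconstruction of the proof in the cited source, but since the paper treats the statement as a black box, the only fair assessment is that your sketch is consistent with and faithful to the result being invoked, with the genuinely hard uniformity estimates correctly identified as the part you are not reproducing.
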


We now state our key estimate on the rate of change of the function $\rho(u)$. This essentially
follows from~\cite[Corollary~2.4]{HTS}, but since the precise version we need is not an immediate
consequence of the results stated in~\cite{HTS}, we shall give the proof for completeness.

\begin{thm}\label{thm:rho:ratio}
 Let\/ $u>1$ and\/ $a,b\ge 0$ satisfy\/ $a+b\le u$. Then
 \begin{equation}\label{eq:rho:ineq}
  \frac{\rho(u-a-b)}{\rho(u-a)}\le\exp\bigg(b\xi(u)+\frac{O(1)}{u}\bigg).
 \end{equation}
 If in addition\/ $a+b\le u/2$ then
 \begin{equation}\label{eq:rho:eq}
  \frac{\rho(u-a-b)}{\rho(u-a)}=\exp\bigg(b\xi(u)+\frac{O(b^2 + ab + 1)}{u}\bigg).
 \end{equation}
\end{thm}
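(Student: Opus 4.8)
The plan is to deduce everything from the two structural facts we have at hand: the asymptotic formula for $\rho$ in \Th{rho}, and the definition~\eqref{def:xi} of $\xi$. Writing
\[
 \log\frac{\rho(u-a-b)}{\rho(u-a)}
 = \int_{u-a-b}^{u-a}\xi(t)\,dt
 + \frac12\log\frac{\xi'(u-a-b)}{\xi'(u-a)}
 + O\!\bigg(\frac1{u-a-b}\bigg),
\]
valid by \Th{rho} provided $u-a-b>1$ (the boundary cases $u-a-b\le 1$, where $\rho$ is constant or the ratio degenerates, must be checked separately but are easy since then $\rho(u-a-b)=1\ge\rho(u-a)$ and $b\xi(u)\ge 0$), the task reduces to controlling each of the three terms. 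The error term is $O(1/u)$ once $a+b\le u/2$, say, but for the inequality~\eqref{eq:rho:ineq} in the full range $a+b\le u$ one has to be slightly more careful near $u-a-b\to 1^+$; there one uses that $\xi(t)\to 0$ and $\xi'(t)\to\infty$ as $t\to 1^+$ in a controlled way (from~\eqref{def:xi}, $\xi(t)\sim 2\sqrt{t-1}$, whence $\sqrt{\xi'}$ contributes a harmless bounded multiplicative factor), so that the whole right-hand side stays $\le\exp(b\xi(u)+O(1)/u)$ with the $O(1)$ absorbing these boundary contributions.

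For the main term $\int_{u-a-b}^{u-a}\xi(t)\,dt$, the key point is that $\xi$ is increasing and concave on $(1,\infty)$ — increasing is immediate from~\eqref{def:xi}, and concavity follows by implicit differentiation (differentiating $e^\xi = 1+u\xi$ twice and using $\xi'=\xi/(e^\xi-u)=\xi/(u\xi+1-u)$). Concavity gives, for the interval of length $b$ ending at $u-a\le u$,
\[
 \int_{u-a-b}^{u-a}\xi(t)\,dt \le b\,\xi(u-a) \le b\,\xi(u),
\]
which already yields~\eqref{eq:rho:ineq} after combining with the remarks above. For the sharper equality~\eqref{eq:rho:eq}, one writes $\xi(t) = \xi(u) + (t-u)\xi'(u) + O((t-u)^2 \sup|\xi''|)$ by Taylor expansion around $u$; integrating over $[u-a-b,\,u-a]$ gives $b\xi(u)$ plus a linear-in-displacement term of size $O((a+b)\,b\,\xi'(u))$ plus a quadratic remainder of size $O((a+b)^3\sup_{[u/2,u]}|\xi''|)$. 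The restriction $a+b\le u/2$ keeps the whole interval inside $[u/2,u]$, where standard estimates (again from~\eqref{def:xi}) give $\xi(u)=O(\log u)$, $\xi'(u)=O(1/u)$ and $|\xi''(u)|=O(1/u^2)$; this turns the linear term into $O((ab+b^2)/u)$ and the cubic remainder into $O((a+b)^3/u^2)=O((a+b)^2/u)$, which together with the $O(1/u)$ from $\rho$ gives the claimed error $O((b^2+ab+1)/u)$.

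The last loose end is the $\tfrac12\log(\xi'(u-a-b)/\xi'(u-a))$ term. Since $\xi'(t)=\Theta(1/t)$ uniformly on $[u/2,u]$ (and more generally $\xi'$ is monotone), this ratio is $\Theta(1)$, so its logarithm is $O(1)$; but to get it down to $O((\cdots)/u)$ one again Taylor-expands $\log\xi'$, whose derivative $\xi''/\xi'$ is $O(1/u)$ on $[u/2,u]$, so the difference is $O((a+b)/u)$, comfortably within the error budget. Near the boundary $u-a-b\to 1$ this term is instead controlled crudely as $O(1)$ as noted above. I expect the main obstacle to be precisely the bookkeeping of the error terms near $t=1$ for the first inequality — one must make sure the blow-up of $\xi'$ and the vanishing of $\xi$ as $t\to 1^+$ are handled so that the final bound is genuinely $\le\exp(b\xi(u)+O(1)/u)$ and not merely $O(1)$; the interior estimates, by contrast, are routine Taylor expansion once the size estimates $\xi=O(\log u)$, $\xi'=O(1/u)$, $\xi''=O(1/u^2)$ on $[u/2,u]$ are in place, and these follow directly from differentiating the defining relation~\eqref{def:xi}.
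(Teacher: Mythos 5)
Your proof of the equality~\eqref{eq:rho:eq} is essentially sound: Taylor-expanding $\xi$ around $u$, rather than around $u-a$ via a change of variables as the paper does, is a legitimate alternative, and the bookkeeping of the linear and cubic remainders does come out to the claimed $O((b^2+ab+1)/u)$ once one also Taylor-expands $\log\xi'$. But your argument for the inequality~\eqref{eq:rho:ineq} over the full range $a+b\le u$ has a real gap, and it is more than a matter of delicate bookkeeping near $t=1$.

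The monotonicity bound $\int_{u-a-b}^{u-a}\xi(t)\,dt\le b\xi(u)$ discards a quantitatively essential negative correction. The paper's integration by parts shows that, for $a=0$,
\[
 b\xi(u)-\int_{u-b}^u\xi(t)\,dt=\frac{\Theta(b^2)}{u},
\]
a definite \emph{positive} gap. When $u-a-b$ is close to $1$ and $u-a$ is close to $u$ (so $b$ is close to $u$), the other two terms in the formula from \Th{rho} do not stay small: the error $O(1/(u-a-b))$ is merely $O(1)$, and $\frac12\log(\xi'(u-a-b)/\xi'(u-a))$ is $\Theta(\log u)$ since $\xi'(t)=\Theta(1/t)$. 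Nothing in your argument absorbs a $\Theta(\log u)$ error into the $O(1)/u$ budget; the paper's proof absorbs it precisely because in that regime $b\ge u/2$, so the compensating term $\Theta(b^2)/u=\Theta(u)$ dominates $\log u$ — this is the content of the paper's derivation of~\eqref{eq:rho:ub} for $1<u-b\le u/2$. A one-sided concavity estimate has no access to this term, and the bound it produces is only $e^{b\xi(u)+O(\log u)}$, well off the claimed $e^{b\xi(u)+O(1)/u}$.

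Two further, related defects. First, the local asymptotics you quote are wrong: from $e^\xi=1+u\xi$ one obtains $\xi(t)=2(t-1)+O((t-1)^2)$ as $t\to 1^+$ and hence $\xi'(1^+)=2$, so $\xi'$ stays bounded (if it blew up, \Th{rho} would force $\rho(1^+)=\infty$). This does not help you, because $\xi'(u-a)$ can still be as small as $\Theta(1/u)$, so the $\log\xi'$ ratio is genuinely $\Theta(\log u)$. Second, disposing of the boundary case $u-a-b\le 1$ via ``$\rho(u-a-b)=1\ge\rho(u-a)$'' establishes only that the ratio is $\ge 1$, which is the wrong direction for an upper bound; the paper instead notes that the ratio is constant for $u-a-b\in[0,1]$ while $e^{b\xi(u)}$ is increasing in $b$, so the bound already proved at the endpoint $u-a-b=1$ transfers. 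And when $a>0$ with $u-a\le\eps u$ — a configuration you do not separately address — the paper uses an entirely different estimate, $-\rho'(t)/\rho(t)\le\xi(Ct)$ integrated over the interval, precisely because the ratio formula from \Th{rho} alone cannot control this range. In short, the equality part is fine, but the inequality requires either the explicit $-\Theta(b^2)/u$ from integration by parts or the direct $-\rho'/\rho$ bound; a one-sided concavity inequality is not enough.
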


We shall use the following simple facts about the function $\xi(u)$, which we collect
here for convenience:
\begin{equation}\label{eq:xi:facts}
 \xi(u) = \big(1+o(1) \big) \log u,\qquad \xi'(u)=\frac{\Theta(1)}{u}\qquad\textup{and}\qquad
 \xi''(u)=-\frac{\Theta(1)}{u^2}
\end{equation}
for all $u>1$. We remark that here the $o(1)$ is as $u \to \infty$. 

\begin{proof}
Suppose first that $a=0$ and $u-b>1$, and apply \Th{rho} to $u-b$ and $u$ to obtain
\[
 \log\frac{\rho(u-b)}{\rho(u)}
 =\frac{1}{2}\log\frac{\xi'(u-b)}{\xi'(u)}+\int_{u-b}^{u}\xi(t)\,dt+\frac{O(1)}{u-b}.
\]
We shall bound each of the terms on the right in turn. First, observe that
\[
 \log\frac{\xi'(u-b)}{\xi'(u)}
 =-\int_{u-b}^u\frac{\xi''(t)}{\xi'(t)}\,dt
 =O(1)\int_{u-b}^u\frac{dt}{t}=O\Big(\log\frac{u}{u-b}\Big),
\]
where the first step follows by differentiating $\log\xi'(t)$, and the second follows
from~\eqref{eq:xi:facts}. Next, note that integration by parts gives
\begin{align*}
 b\xi(u)-\int_{u-b}^u\xi(t)\,dt
 &=\int_{u-b}^u\xi'(t)(t-u+b)\,dt\\
 &=\Theta(1)\int_{u-b}^u\frac{t-u+b}{t}\,dt=\frac{\Theta(b^2)}{u},
\end{align*}
where the second step follows from~\eqref{eq:xi:facts}, and the last equality holds
for $u-b\ge\eps u$ as $\int_{u-b}^u(t-u-b)\,dt=b^2/2$ and $t=\Theta(u)$
for $t\in[\eps u,u]$. On the other hand, the last integral increases to $b$ as $u-b\to 0^+$,
so it also holds for $1<u-b\le\eps u$.

Combining the three equations above, we obtain
\[
 \log\frac{\rho(u-b)}{\rho(u)}=b\xi(u)+O\Big(\log\frac{u}{u-b}\Big)
 -\frac{\Theta(b^2)}{u}+\frac{O(1)}{u-b}.
\]
Now for $0\le b\le u/2$ we have $\log\frac{u}{u-b}=O(b/u)$ and $1/(u-b)=O(1/u)$, so
\begin{equation}\label{eq:dlogrho}
 \frac{\rho(u-b)}{\rho(u)}=\exp\bigg(b\xi(u)+\frac{O(b+1)-\Theta(b^2)}{u}\bigg).
\end{equation}
This clearly also holds for $0\le u-b\le 1$ as then $u\le 2$ is bounded.
Thus \eqref{eq:rho:eq} holds for $a=0$.

Now suppose $1<u-b\le u/2$. Then $b^2/u=\Theta(u)$, while $\log\frac{u}{u-b}\le\log u$
and $1/(u-b)=O(1)$. Thus we obtain
\begin{equation}\label{eq:rho:ub}
 \frac{\rho(u-b)}{\rho(u)}\le\exp\bigg(b\xi(u)+\frac{O(1)}{u}\bigg).
\end{equation}
However, \eqref{eq:rho:ub} also follows from \eqref{eq:dlogrho} when $u-b\ge u/2$ as
the $O(b+1)-\Theta(b^2)$ term is bounded above by a constant for all $b\ge0$.
Finally, note that $\xi(u)\ge 0$ and $\rho(u-b)=1$ for every $0\le u-b\le 1$, so it
follows that~\eqref{eq:rho:ub} in fact holds for all $0\le b\le u$.
In particular, \eqref{eq:rho:ineq} holds for $a=0$.

In order to deduce~\eqref{eq:rho:ineq} and~\eqref{eq:rho:eq} in the case when $a>0$,
we substitute $u-a$ for $u$ in~\eqref{eq:dlogrho} and~\eqref{eq:rho:ub}. In the former
case, this gives
\[
 \frac{\rho(u-a-b)}{\rho(u-a)}=\exp\bigg(b\xi(u-a)+\frac{O(b+1)-\Theta(b^2)}{u-a}\bigg),
\]
for all $a+b\le u/2$, which implies~\eqref{eq:rho:eq} since $\xi(u)=\xi(u-a)+O(a/u)$,
by~\eqref{eq:xi:facts}, and $u-a=\Theta(u)$. Similarly, substituting $u-a$ for $u$ in
\eqref{eq:rho:ub} gives
\[
 \frac{\rho(u-a-b)}{\rho(u-a)}\le\exp\bigg(b\xi(u-a)+\frac{O(1)}{u-a}\bigg)
 \le\exp\bigg(b\xi(u)+\frac{O(1)}{u-a}\bigg)
\]
since $\xi$ is an increasing function. This is enough to prove~\eqref{eq:rho:ineq}
when $u-a\ge\eps u$, so let us assume instead that $u-a\le\eps u$. Now, observe that
\[
 -\frac{\rho'(u)}{\rho(u)}=\frac{\rho(u-1)}{u\rho(u)}\le\frac{e^{\xi(u)+O(1/u)}}{u}
 =\xi(u)+o(1)
\]
as $u\to\infty$, where the first step follows by the definition~\eqref{eq:rho:ddiff}
of~$\rho$, the second step follows by~\eqref{eq:rho:ub} applied with $b=1$, and the
third step follows by the definition~\eqref{def:xi} of $\xi$ and \eqref{eq:xi:facts}.
It follows that $-\rho'(u)/\rho(u)\le\xi(Cu)$ for some absolute constant $C>1$
(since $\xi'(u)=\Theta(1/u)$), and hence
\[
 \log\frac{\rho(u-a-b)}{\rho(u-a)}
 =-\int_{u-a-b}^{u-a}\frac{\rho'(t)}{\rho(t)}\,dt
 \le b\xi\big(C(u-a)\big)\le b\xi(u)
\]
if $\eps$ was chosen sufficiently small, since $\xi$ is increasing.
Thus~\eqref{eq:rho:ineq} also holds in this case, and the proof is complete.
\end{proof}

The probabilities in our model are given in terms of the modified smooth number counting function
$\tpsi(x,y)$. We now show that there is little difference between $\Psi(x,y)$ and $\tpsi(x,y)$.

\begin{lemma}\label{lem:tpsi}
 If\/ $y\ge\exp\big((\log\log x)^2\big)$, then
 \[
  \Psi(x,y)\le\tpsi(x,y)\le\big(1+y^{-1+o(1)}\big)\Psi(x,y).
 \]
 uniformly in\/ $y$ as\/ $x\to\infty$.
\end{lemma}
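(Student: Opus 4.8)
The lower bound $\Psi(x,y)\le\tpsi(x,y)$ is immediate from the definition~\eqref{def:tpsi}, since the $t=1$ term of the sum is exactly $\Psi(x,y)$ and every other term is nonnegative. So the work is entirely in the upper bound, i.e.\ in showing that the ``extra'' terms $\sum_{t\in P(y),\,t\ge 2}\Psi(x/t^2,y)$ contribute at most $y^{-1+o(1)}\Psi(x,y)$. The plan is to split this sum according to the size of $t$, using the trivial bound $\Psi(x/t^2,y)\le x/t^2$ for all $t$, and the asymptotic formula for $\Psi$ (Hildebrand, \Th{Psi}, together with~\eqref{eq:rho:rough}) to control the cumulative contribution. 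The smallest possible $t\in P(y)$ with $t\ge 2$ is the smallest prime exceeding $y$, which is $(1+o(1))y$ by the prime number theorem; this is what produces the saving of a factor $\sim y^{-2}\cdot y = y^{-1}$ relative to $\Psi(x,y)\ge\Psi(x/t^2,y)$, once one checks that $\Psi$ doesn't drop too fast when $x$ is divided by $t^2$.

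\smallskip\noindent First I would handle the tail where $t$ is large, say $t\ge x^{1/3}$: here $\sum_{t\ge x^{1/3}}\Psi(x/t^2,y)\le\sum_{t\ge x^{1/3}}x/t^2 = O(x^{2/3})$, which is negligible compared to $\Psi(x,y)=x^{1+o(1)}$ (valid since $y\ge\exp((\log\log x)^2)$ forces $u=\log x/\log y$ to be at most $(\log x)/(\log\log x)^2$, so $\rho(u)=u^{-(1+o(1))u}=x^{-o(1)}$). For the main range $2\le t\le x^{1/3}$ with $t\in P(y)$, every prime factor of $t$ exceeds $y$, so $t$ has at most $O(\log x/\log y)=O(u)$ prime factors and $t>y$ (indeed $t\ge q$ where $q$ is the least prime $>y$). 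I would bound $\Psi(x/t^2,y)\le x/t^2$ for these as well and estimate $\sum_{t\in P(y),\,t\le x^{1/3}} t^{-2}$. Since the elements of $P(y)$ other than $1$ all exceed $y$, this sum is at most $\sum_{n>y} n^{-2}=O(1/y)$, giving a contribution of $O(x/y)$. Comparing with $\Psi(x,y)=x^{1+o(1)}$ — more precisely $\Psi(x,y)=x\rho(u)(1+o(1))$ and $\rho(u)=y^{o(1)}$ in this range (because $u\log u=o(\log y\cdot\log\log x/\log\log x)$... here one uses $\log\rho(u)=-u\log u(1+o(1))=o(\log x)$ and also $o(\log y)$ when $y\ge\exp((\log\log x)^2)$) — the ratio is $O((x/y)/\Psi(x,y))=y^{-1+o(1)}$, as required.

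\smallskip\noindent The one point that needs slight care, and which I expect to be the main (though still minor) obstacle, is verifying that $\rho(u)=y^{o(1)}$ throughout the relevant range, i.e.\ that the main term $\Psi(x,y)=x\rho(u)(1+o(1))$ is not itself as small as $x\cdot y^{-1}$ or smaller, which would destroy the claimed ratio. This is where the hypothesis $y\ge\exp((\log\log x)^2)$ is used: it gives $\log y\ge(\log\log x)^2$, hence $u=\log x/\log y\le\log x/(\log\log x)^2$, hence $\log(1/\rho(u))=u\log u(1+o(1))\le (\log x/(\log\log x)^2)\cdot\log\log x\cdot(1+o(1))=O(\log x/\log\log x)=o(\log x)$; one also checks $\log(1/\rho(u))=o(\log y)$ since $u\log u=o(\log y)$ follows from $u=o((\log y)/\log u)$, which holds because $\log y\ge(\log\log x)^2$ grows faster than $u\log u$ does. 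This justifies both that $\Psi(x,y)=x^{1+o(1)}$ (so the large-$t$ tail is negligible) and that $(x/y)/\Psi(x,y)=y^{-1}\cdot\rho(u)^{-1}(1+o(1))=y^{-1+o(1)}$. The rest is the bookkeeping of the two sums above, all of which is elementary.
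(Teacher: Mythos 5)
The lower bound and the overall strategy (split the excess over $t\in P(y)$ and use that the smallest such $t$ exceeds $y$) are fine, but there is a genuine error in the reduction you use to compare the excess to $\Psi(x,y)$. You replace each $\Psi(x/t^2,y)$ by the trivial bound $x/t^2$ and arrive at a total contribution of $O(x/y)$, and then you assert that $x/y = y^{-1+o(1)}\Psi(x,y)$, which requires $\rho(u)=y^{-o(1)}$, i.e.\ $u\log u=o(\log y)$. This last claim is \emph{false} in much of the allowed range. If $\log y$ is close to the lower bound $(\log\log x)^2$, then $u=\log x/\log y\approx\log x/(\log\log x)^2$ and $u\log u\approx\log x/\log\log x$, which is far larger than $\log y=(\log\log x)^2$. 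Concretely, $\rho(u)$ can be as small as $\exp(-\log x/\log\log x)$, i.e.\ superpolynomially small in $y$, so $\Psi(x,y)/x=\rho(u)(1+o(1))$ is \emph{not} $y^{-o(1)}$, and your main estimate $O(x/y)\le y^{-1+o(1)}\Psi(x,y)$ fails. (Your second paragraph notices this is "the one point that needs slight care" but the stated justification --- that "$\log y\ge(\log\log x)^2$ grows faster than $u\log u$" --- is simply incorrect.)

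The trivial bound $\Psi(x/t^2,y)\le x/t^2$ is too lossy precisely because it throws away the factor $\rho(u')$ in the numerator, which is comparable in size to the factor $\rho(u)$ you keep in the denominator. The paper's proof instead compares $\Psi(x/t^2,y)$ to $\Psi(x,y)$ \emph{directly} via \Th{Psi} and \Th{rho:ratio}: with $b=2\log t/\log y$ one gets $t^2\Psi(x/t^2,y)/\Psi(x,y)\le\exp(b\xi(u)+O(1/u))$, and the crucial point is that $\xi(u)=(1+o(1))\log u=O(\log\log x)=o(\log y)$ does hold under the hypothesis on $y$ (unlike $u\log u=o(\log y)$, which does not). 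This gives $t^2\Psi(x/t^2,y)/\Psi(x,y)=t^{o(1)}$, and then $\sum_{1<t\in P(y)}\Psi(x/t^2,y)\le\Psi(x,y)\sum_{t\ge y}t^{-2+o(1)}\le y^{-1+o(1)}\Psi(x,y)$. The distinction is between the \emph{local} derivative $\xi(u)\sim\log u$ that controls small multiplicative changes in the first argument of $\Psi$, and the \emph{global} quantity $u\log u\sim\log(1/\rho(u))$ that you would need if you compared to $x$ itself; only the former is $o(\log y)$. A separate, smaller quibble: your tail split at $t\ge x^{1/3}$ is both unnecessary (the $O(x/y)$ bound already covers all $t>y$) and, if retained, the crude estimate $O(x^{2/3})$ is in general not $\le y^{-1+o(1)}\Psi(x,y)$ either, so it only works because one should restrict to $t\in P(y)$, i.e.\ $t>y$, throughout.
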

\begin{proof}
The lower bound holds trivially, by definition; we shall prove the upper bound.
We may assume $y\le x$ as otherwise $\Psi(x,y)=\tpsi(x,y)=x$. Recall
from~\eqref{def:tpsi} that
\[
 \tpsi(x,y)=\sum_{t\in P(y)}\Psi(x/t^2,y),
\]
where $P(y)$ is the set of positive integers whose prime factors are all strictly larger
than~$y$. Applying Theorems \ref{thm:Psi} and \ref{thm:rho:ratio} with
$a=0$, $b=2\frac{\log t}{\log y}$, we obtain
\[
 \frac{t^2\Psi(x/t^2,y)}{\Psi(x,y)}
 \le\bigg(1+\frac{O(\log(u+1))}{\log y}\bigg)\frac{\rho(u-b)}{\rho(u)}
 \le\exp\bigg(b\xi(u)+\frac{O(1)}{u}+\frac{O(\log(u+1))}{\log y}\bigg),
\]
for all $t\in[y,\sqrt{x}]$,
where $u=\frac{\log x}{\log y}\ge u-b=\frac{\log(x/t^2)}{\log y}\ge0$.
Note that in the case when $x/t^2<y$ we apply \Th{Psi} only to $\Psi(x,y)$,
as in this case $t^2\Psi(x/t^2,y)\le x$ and $\rho(u-b)=1$.
Now $\xi(u)=O(\log u)=o(\log y)$ for $y\ge\exp\big((\log\log x)^2\big)$. Hence
$t^2\Psi(x/t^2,y)/\Psi(x,y)=t^{o(1)}$, and so
\[
 \tpsi(x,y)-\Psi(x,y)=\sum_{1<t\in P(y)}\Psi(x/t^2,y)
 \le\Psi(x,y)\sum_{t\ge y}t^{-2+o(1)}\le y^{-1+o(1)}\Psi(x,y),
\]
as claimed.
\end{proof}

We can now state our estimates for the rate of change of the functions $\Psi(x,y)$
and $\tpsi(x,y)$. The form of the statement below is designed to facilitate its application
in \Sc{compare}.

\begin{thm}\label{thm:Psi:ratio}
 Let\/ $z \in [z_-,\pi(x)]$, set\/ $y=q_z$ and\/ $u=\frac{\log x}{\log y}$, and assume\/
 $0\le a\le u$ and\/ $b\ge1$. Then
 \begin{equation}\label{eq:Psi:ratio:ineq}
  \log\frac{\Psi(xy^{-(a+b)},y)}{\Psi(xy^{-a},y)} \le b\big(\xi(u)-\log y\big)+\frac{O(1)}{u}.
 \end{equation}
 Moreover, if\/ $a+b\le u/2$, then
  \begin{equation}\label{eq:Psi:ratio:eq}
  \log\frac{\Psi(xy^{-(a+b)},y)}{\Psi(xy^{-a},y)}
  =b\big(\xi(u)-\log y\big)+\frac{O(b^2+ab+1)}{u}.
 \end{equation}
 Also, both statements hold with\/ $\tpsi$ in place of\/ $\Psi$.
\end{thm}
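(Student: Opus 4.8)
The plan is to reduce \Th{Psi:ratio} directly to \Th{rho:ratio}, using \Th{Psi} to pass between $\Psi(x,y)/x$ and $\rho(u)$, and then to handle $\tpsi$ via \Lm{tpsi}. First I would write $x_1 = xy^{-a}$ and $x_2 = xy^{-(a+b)} = x_1 y^{-b}$, so that with $u_1 = \frac{\log x_1}{\log y} = u - a$ and $u_2 = \frac{\log x_2}{\log y} = u - a - b$ we have $\frac{\Psi(x_2,y)}{\Psi(x_1,y)} = \frac{x_2}{x_1}\cdot\frac{\Psi(x_2,y)/x_2}{\Psi(x_1,y)/x_1} = y^{-b}\cdot\frac{\Psi(x_2,y)/x_2}{\Psi(x_1,y)/x_1}$. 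Taking logarithms, the $y^{-b}$ factor contributes exactly $-b\log y$, which is where the $-\log y$ in the statement comes from; it remains to control $\log\frac{\Psi(x_2,y)/x_2}{\Psi(x_1,y)/x_1}$ and show it equals $b\xi(u)$ up to the claimed error.

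For that ratio I would apply \Th{Psi} to both $x_1$ and $x_2$ (legitimate since $z \ge z_-$ forces $y = q_z \ge \exp((\log\log x)^2)$ by~\eqref{eq:zpm:rough} and~\eqref{eq:ulogu}, and $x_i \le x$), picking up a multiplicative error $1 + O\big(\frac{\log(u+1)}{\log y}\big)$ from each, hence $\log\frac{\Psi(x_2,y)/x_2}{\Psi(x_1,y)/x_1} = \log\frac{\rho(u-a-b)}{\rho(u-a)} + O\big(\frac{\log(u+1)}{\log y}\big)$. The point is that this number-theoretic error term is absorbable: since $z \ge z_-$ and $u \le u_0 = O(\sqrt{\log x/\log\log x})$ one has $\log(u+1) = O(\log\log x)$ while $\log y \ge (\log\log x)^2$, so $\frac{\log(u+1)}{\log y} = O\big(\frac{1}{\log\log x}\big) = O(1/u)$ — absorbed into the $O(1/u)$ term in both~\eqref{eq:Psi:ratio:ineq} and~\eqref{eq:Psi:ratio:eq}. (In the degenerate case $x_2 < y$, i.e.\ $u - a - b < 1$, we have $\Psi(x_2,y) = x_2$ and apply \Th{Psi} only to $x_1$; the bound~\eqref{eq:Psi:ratio:ineq} still follows since $\log\frac{\Psi(x_2,y)/x_2}{\Psi(x_1,y)/x_1} = -\log(\rho(u-a)) + O(1/u) \le 0 + O(1/u)$ because $\rho \le 1$, and this case cannot arise under $a+b \le u/2$.) Finally, \Th{rho:ratio}~\eqref{eq:rho:ineq} bounds $\log\frac{\rho(u-a-b)}{\rho(u-a)} \le b\xi(u) + O(1/u)$, giving~\eqref{eq:Psi:ratio:ineq}, and under the extra hypothesis $a+b \le u/2$, \eqref{eq:rho:eq} gives the equality $\log\frac{\rho(u-a-b)}{\rho(u-a)} = b\xi(u) + O\big(\frac{b^2+ab+1}{u}\big)$, giving~\eqref{eq:Psi:ratio:eq}.

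For the $\tpsi$ versions, I would invoke \Lm{tpsi}: $\Psi(x_i,y) \le \tpsi(x_i,y) \le (1 + y^{-1+o(1)})\Psi(x_i,y)$, so $\log\frac{\tpsi(x_2,y)}{\tpsi(x_1,y)} = \log\frac{\Psi(x_2,y)}{\Psi(x_1,y)} + O(y^{-1+o(1)})$, and $y^{-1+o(1)} = O(1/u)$ trivially (indeed much smaller), so both~\eqref{eq:Psi:ratio:ineq} and~\eqref{eq:Psi:ratio:eq} transfer verbatim. One caveat: \Lm{tpsi} requires $x_i \ge$ something for its $o(1)$ to be uniform, but since $y^{-1+o(1)}$ is uniform in $y$ as $x \to \infty$ and we only need an $O(1/u)$ bound, this is harmless; if $x_2 < y$ then $\tpsi(x_2,y) = \Psi(x_2,y) = x_2$ and there is nothing to check there. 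The only genuinely delicate point — and the one I would be most careful about — is verifying that the $O\big(\frac{\log(u+1)}{\log y}\big)$ error from \Th{Psi} is genuinely $O(1/u)$ throughout the range $z \in [z_-,\pi(x)]$, rather than just for $z$ near $z_0$; this uses $z \ge z_-$ together with the crude estimates~\eqref{eq:zpm:rough}, \eqref{eq:ulogu} to pin down that $\log y$ is at least a fixed power of $\log\log x$ while $u$ stays polylogarithmic in $x$, so everything else is bookkeeping.
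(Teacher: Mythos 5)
Your main-case argument (for $a+b\le u-1$) follows the paper's proof exactly: extract the factor $y^{-b}$, apply \Th{Psi} to both $\Psi(xy^{-a},y)$ and $\Psi(xy^{-(a+b)},y)$, and invoke \Th{rho:ratio}. But there are two genuine gaps. The first is in your absorption of the Hildebrand error. You bound $\log y\ge(\log\log x)^2$, deduce $\log(u+1)/\log y = O(1/\log\log x)$, and then assert $O(1/\log\log x)=O(1/u)$. That last step requires $u=O(\log\log x)$, which fails: by~\eqref{eq:J:rough} one has $u_0=\Theta\big(\sqrt{\log x/\log\log x}\,\big)$, so near $z_0$ the value of $u$ is far larger than $\log\log x$ and $1/\log\log x$ is not $O(1/u)$. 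The repair is to use the much stronger lower bound on $\log y$ that actually holds: $z\ge z_-$ gives $y\ge y_0^{1+o(1)}$ by~\eqref{eq:zpm:rough}, hence $\log y\ge(1+o(1))\log y_0=\Theta\big(\sqrt{\log x\log\log x}\,\big)$, whence $\log(u+1)/\log y = O(1/u_0)=O(1/u)$. Equivalently, write $\log(u+1)/\log y=u\log(u+1)/\log x$ and use~\eqref{eq:ulogu} to check $u^2\log(u+1)\le(1+o(1))\,u_0^2\log u_0 = O(\log x)$.

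The second gap is in your degenerate case $x_2<y$, where your inequality runs the wrong way. You write $\log\dfrac{\Psi(x_2,y)/x_2}{\Psi(x_1,y)/x_1}=-\log\rho(u-a)+O(1/u)\le 0+O(1/u)$ ``because $\rho\le1$'', but $\rho(u-a)\le 1$ gives $-\log\rho(u-a)\ge 0$, and indeed this quantity can be large when $u-a$ is large. What you need is the nontrivial upper bound $-\log\rho(u-a)\le b\,\xi(u)+O(1/u)$. This is true, and can be obtained either by applying \Th{rho:ratio} directly to the given $(a,b)$ when $a+b\le u$ (using $\rho(u-a-b)=1$ there, since $0\le u-a-b<1$), or via the paper's reduction: apply the already-proved case with $b':=u-1-a$ and then use $\Psi(xy^{-(a+b)},y)\le xy^{-(a+b)}=y^{1+b'-b}$. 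You also need to note the remaining sub-case $a+b>u$, where $xy^{-(a+b)}<1$, $\Psi=0$, and the bound holds trivially as $\log 0=-\infty$; and that $\Psi(x_2,y)=\lfloor x_2\rfloor\le x_2$ rather than $=x_2$, though the inequality suffices here.
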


Note that we interpret $\log0=-\infty$ in the case when $a+b>u$ (and so $\Psi(xy^{-(a+b)},y)=0$).

\begin{proof}
The bound $y\ge\exp((\log\log x)^2)$ follows from~\eqref{eq:J:rough}, since $z\ge z_-$,
so $y\ge y_0^{1+o(1)}$, by~\eqref{eq:zpm:rough}. Thus by \Th{Psi}, we have
\[
 \frac{\Psi(xy^{-(a+b)},y)}{\Psi(xy^{-a},y)}
 =y^{-b}\cdot\frac{\rho(u-a-b)}{\rho(u-a)}\bigg(1+O\bigg(\frac{\log(u+1)}{\log y}\bigg)\bigg)
\]
if $a+b\le u-1$ and $b\ge 0$, since these inequalities imply that $y\le xy^{-(a+b)}\le xy^{-a}$.
Both~\eqref{eq:Psi:ratio:ineq} and~\eqref{eq:Psi:ratio:eq} now follow by \Th{rho:ratio},
since $\log(u+1)/\log y=O(1/u)$, by~\eqref{eq:ulogu}, and the corresponding bounds with $\Psi$
replaced by $\tpsi$ follow using \Lm{tpsi}.

The case when $a+b>u-1$ and $a+b\le u/2$ is ruled out by the assumption\footnote{Note that we need some
condition on the parameters in the statement of the theorem in order to rule out the case when, say,
$y^b\approx 2$, but $1\le xy^{-(a+b)}<xy^{-a}<2$, and $u\to\infty$.} that $b\ge1$, so
it only remains to prove that~\eqref{eq:Psi:ratio:ineq} holds when $u-1\le a+b\le u+b$ and $b\ge 1$.
If $a\le u-1$ then we can apply~\eqref{eq:Psi:ratio:ineq} with $b$ replaced by $b':=u-1-a$
(noting that we in fact proved it for all $a+b\le u-1$ and $b\ge 0$) to obtain
\[
 \log\frac{\Psi(xy^{-(a+b')},y)}{\Psi(xy^{-a},y)}=\log\frac{y}{\Psi(xy^{-a},y)}
 \le b'\big(\xi(u)-\log y\big)+\frac{O(1)}{u}.
\]
Noting that $\Psi(xy^{-(a+b)},y)\le xy^{-(a+b)}=y^{1+b'-b}$, it follows that
\[
 \log\frac{\Psi(xy^{-(a+b)},y)}{\Psi(xy^{-a},y)}
 \le\log\frac{y^{1+b'-b}}{\Psi(xy^{-a},y)}\le b'\xi(u)-b\log y+\frac{O(1)}{u},
\]
which implies~\eqref{eq:Psi:ratio:ineq} since $b' \le b$. The proof with $\Psi$ replaced by $\tpsi$
is identical. Finally, if $a>u-1$ and $b\ge 1$ then $xy^{-(a+b)}<1$, and so the claimed bound
holds trivially.
\end{proof}

We are now ready to bound the conditional probability of $A_{ij}=1$ that is given
by~\eqref{eq:probs}.

\begin{cor}\label{cor:pj}
 If\/ $z\in [z_-,z_+]$ and\/ $a=o(u_0)$, then
 \[
  p_z\big(xq_z^{-a}\big)=\frac{1+o(1)}{z}.
 \]
 Moreover, for any\/ $z\in [z_-,\pi(x)]$ and\/ $a\ge 1$, we have
 \[
  p_z\big(xq_z^{-a}\big) \le \big(1+o(1)\big)p_z(x) \le \frac{1+o(1)}{z}.
 \]
Furthermore, $p_z(x)=z^{-1+o(1)}$ for every\/ $z\in[z_-,\pi(x)]$.
\end{cor}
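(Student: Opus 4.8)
The proof reduces everything to estimates for ratios of the modified counting function $\tpsi$ with a \emph{fixed} smoothness parameter, the situation handled by \Th{Psi:ratio}. Note first that $\tpsi(x,q_{z-1})/\tpsi(x,q_z)$ cannot be attacked by \Th{Psi:ratio} directly, since its two factors have different smoothness parameters; the way around this is the combinatorial identity
\[
 \tpsi(w,q_z)-\tpsi(w,q_{z-1})=\tpsi\big(wq_z^{-1},q_{z-1}\big),
\]
valid for every $w\ge 1$: an integer counted on the left has every prime $>q_z$ to an even power and $q_z$ itself to an odd power (as $q_z$ is the only prime in $(q_{z-1},q_z]$), so dividing by $q_z$ is a bijection onto the integers counted on the right. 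By \eqref{def:pzx} this gives
\[
 p_z(w)=\frac{\tpsi(wq_z^{-1},q_{z-1})}{\tpsi(w,q_z)}
 =\bigg(1+\frac{\tpsi(w,q_{z-1})}{\tpsi(wq_z^{-1},q_{z-1})}\bigg)^{-1}.
\]
Feeding the identity back into itself, together with \eqref{eq:Psi:ratio:ineq} (the $\tpsi$ version) and the crude bound $e^{\xi(u)}=1+u\xi(u)=z_0^{o(1)}=o(q_z)$ (from \eqref{eq:xi:facts}, \eqref{eq:ulogu}, \eqref{eq:zpm:rough} and the prime number theorem), yields $\tpsi(v,q_{z-1})=(1+o(1))\tpsi(v,q_z)$ uniformly for $1\le v\le x$ and $z\in[z_-,\pi(x)]$; hence $p_z(w)=(1+o(1))R_z(w)$, where $R_z(w):=\tpsi(wq_z^{-1},q_z)/\tpsi(w,q_z)$ has a single smoothness parameter.

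For the first assertion ($z\in[z_-,z_+]$, $a=o(u_0)$) I apply \eqref{eq:Psi:ratio:eq} with $y=q_z$ and parameters $(a,1)$: since $z\in[z_-,z_+]$ forces $u=(1+o(1))u_0\to\infty$ and $a=o(u)$, the error term $O(a+1)/u$ is $o(1)$, so $R_z(xq_z^{-a})=(1+o(1))e^{\xi(u)}/q_z$. Now $e^{\xi(u)}=1+u\xi(u)=(1+o(1))u\xi(u)=(1+o(1))u_0\log u_0=(1+o(1))\log z_0$ by \eqref{def:xi}, \eqref{eq:xi:facts} and \eqref{eq:ulogu}, while $q_z=(1+o(1))z\log z$ by the prime number theorem and $\log z=(1+o(1))\log z_0$ by \eqref{eq:zpm:rough}; combining these gives $R_z(xq_z^{-a})=\frac{1+o(1)}{z}$, and the same for $p_z(xq_z^{-a})$.

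For $p_z(xq_z^{-a})\le(1+o(1))p_z(x)$ I use $p_z(w)=\big(1+\tpsi(w,q_{z-1})/\tpsi(wq_z^{-1},q_{z-1})\big)^{-1}$: it suffices to show that $p_z(w)$ is nondecreasing in $w$ up to a factor $1+o(1)$, equivalently that $w\mapsto\tpsi(w,q_{z-1})/\tpsi(wq_z^{-1},q_{z-1})$ is nonincreasing up to such a factor. By \Th{Psi} and \Lm{tpsi} this ratio equals $(1+o(1))\,q_z\,\rho(u'_w)/\rho(u'_w-c_z)$, where $u'_w=\log w/\log q_{z-1}$ increases in $w$ and $c_z=\log q_z/\log q_{z-1}$ is fixed; and $s\mapsto\rho(s)/\rho(s-c_z)$ is nonincreasing, since by the log-concavity of $\rho$ (\Th{rho:log:concave}) the quantity $\log\rho(s)-\log\rho(s-c_z)=\int_{s-c_z}^{s}(\log\rho)'$ decreases as the interval slides right. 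Taking $w=xq_z^{-a}\le x$ gives the claim (the boundary cases $wq_z^{-1}<1$, where $p_z(w)=0$, and $wq_z^{-1}=O(1)$ being treated directly).

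It remains to prove $p_z(x)\le\frac{1+o(1)}{z}$ for all $z\in[z_-,\pi(x)]$; together with the matching lower bound $p_z(x)\ge\Omega(1/q_z)=z^{-1+o(1)}$ — from \eqref{eq:Psi:ratio:eq} when $q_z\le\sqrt x$, and from an elementary count when $q_z>\sqrt x$ — this also gives the last assertion. I split according to the size of $u$. If $q_z=x^{o(1)}$, so $u\to\infty$: for $z\in[z_-,z_+]$ this is the first assertion, while for $z>z_+$ I use $R_z(x)\le e^{\xi(u)-\log q_z+O(1/u)}$ from \eqref{eq:Psi:ratio:ineq} and the fact that $z\mapsto ze^{\xi(u)}/q_z$ is decreasing (because $q_z\uparrow$, $u\downarrow$, $\xi(u)\downarrow$, $z/q_z\downarrow$), so it is bounded by its value at $z=z_+$, which by the first assertion equals $1+o(1)$. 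If instead $q_z=x^{\Theta(1)}$, so $u$ is bounded (say $u\le K$), then the identity gives $p_z(x)\le \tpsi(xq_z^{-1},q_{z-1})/\tpsi(x,q_z)\le (x/q_z)/\Psi(x,x^{1/K})=O(1/q_z)=o(1/z)$. The main obstacle throughout is uniformity: the error terms in \Th{Psi:ratio} are only $o(1)$ once $u\to\infty$, so the sharp inequalities genuinely need to be proved in the range $q_z=x^{o(1)}$ — where, near $z\approx z_0$, the bound $p_z(x)\le\frac{1+o(1)}{z}$ is tight and one must track the vanishing error terms with care — whereas in the range $q_z=x^{\Theta(1)}$ one has the luxury that every quantity involved is already $o(1/z)$, so only crude elementary estimates are required.
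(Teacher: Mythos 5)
Your proof is correct and uses the same ingredients as the paper's (the combinatorial identity for $\tpsi$, Theorem~\ref{thm:Psi:ratio}, Observation~\ref{obs:exi}, and log-concavity of~$\rho$), but it is organized differently. Where the paper works directly with the sandwich bound
\[
 \frac{\tpsi(x'q_z^{-1},q_z)-\tpsi(x'q_z^{-2},q_z)}{\tpsi(x',q_z)}
 \le p_z(x')\le\frac{\tpsi(x'q_z^{-1},q_z)}{\tpsi(x',q_z)},
\]
you instead prove the uniform estimate $\tpsi(v,q_{z-1})=(1+o(1))\tpsi(v,q_z)$ and rewrite $p_z(w)$ as $(1+\tpsi(w,q_{z-1})/\tpsi(wq_z^{-1},q_{z-1}))^{-1}$, reducing everything to the single-parameter ratio $R_z(w)$. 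Both routes work; the paper's sandwich avoids having to justify that uniform $(1+o(1))$ reduction (which, as you point out, needs a careful application of~\eqref{eq:Psi:ratio:ineq} with the argument, not $x$, in the role of the variable, and attention to tiny arguments), at the cost of carrying an extra subtracted term. For the middle inequality your sliding-window-integral reading of log-concavity is the same fact the paper uses via the monotonicity of $\rho(u-1)/\rho(u)$. For the final inequality, the paper bounds $e^{\xi(u)+O(1/u)}$ directly by $(1+o(1))e^{\xi(u_0)}=(1+o(1))q_{z_0}/z_0\le(1+o(1))q_z/z$; your monotonicity claim for $z\mapsto ze^{\xi(u)}/q_z$ accomplishes the same thing but should be stated ``up to $1+o(1)$ factors,'' since $q_z$ is a step function and one is really comparing asymptotic orders. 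Similarly, the split ``$q_z=x^{o(1)}$ vs.\ $q_z=x^{\Theta(1)}$'' should formally read ``$u>K$ vs.\ $u\le K$ for arbitrary fixed $K$,'' followed by letting $K\to\infty$, so as to get a genuinely uniform $o(1)$; as written there is a gap when $u$ is neither $\to\infty$ nor $O(1)$. These are routine repairs and do not affect the substance of the argument.
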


We shall use the following observation in the proof of Corollary~\ref{cor:pj}.

\begin{obs}\label{obs:exi}
 If\/ $z=z_0^{1+o(1)}$ and\/ $x=q_z^u$, then\/ $e^{\xi(u)}=(1+o(1))q_z/z$.
\end{obs}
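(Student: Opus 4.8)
The plan is to unwind the asymptotic relations already established in this section. We are given $z = z_0^{1+o(1)}$ and $x = q_z^u$, i.e. $u = \frac{\log x}{\log q_z}$, and we must show $e^{\xi(u)} = (1+o(1))q_z/z$. First I would use the prime number theorem to replace $q_z$ by $z$: since $q_z = (1+o(1))z\log z$, we have $\log q_z = (1+o(1))\log z = (1+o(1))\log z_0$ by the hypothesis on $z$. Next, since $z = z_0^{1+o(1)}$ we may write $z = z_0^\beta$ with $\beta = 1+o(1)$, and then apply Corollary~\ref{cor:Lambda:rough}, which gives $\Lambda(z) = z_0^{2-\beta-1/\beta+o(1)} = z_0^{o(1)}$ (the exponent $2-\beta-1/\beta$ vanishes to first order at $\beta = 1$). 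Recalling from~\eqref{def:Lambda} that $\Lambda(z) = J(x)\Psi(x,q_z)/(xz)$, this says $\Psi(x,q_z)/x = z_0^{o(1)} \cdot z/J(x)$, and since $J(x) = z_0^{2+o(1)}$ by~\eqref{eq:J:rough} and $z = z_0^{1+o(1)}$, we get $\Psi(x,q_z)/x = z_0^{-1+o(1)} = q_z^{-1+o(1)}$ as well. In particular $u = \frac{\log x}{\log q_z}$ is of order $\frac{\log z_0}{\log q_z}$, so by~\eqref{eq:ulogu} we have $\log u = (1+o(1))\log u_0$ and $u = (1+o(1))u_0$; in any case $u \to \infty$.

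Now the main computation: by Hildebrand's theorem (\Th{Psi}), since $q_z \ge \exp((\log\log x)^2)$ (which holds because $q_z = y_0^{1+o(1)}$ by~\eqref{eq:zpm:rough}), we have $\frac{\Psi(x,q_z)}{x} = \rho(u)\big(1+o(1)\big)$, the error being $O\big(\log(u+1)/\log q_z\big) = o(1)$ by~\eqref{eq:ulogu}. Combined with the previous paragraph, $\rho(u) = q_z^{-1+o(1)}$. I would then invoke \Th{rho} (Hildebrand--Tenenbaum): $\rho(u) = \big(1+O(1/u)\big)\sqrt{\xi'(u)/2\pi}\,\exp\big(\gamma - \int_1^u \xi(t)\,dt\big)$. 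Taking logarithms and using $\xi'(u) = \Theta(1/u)$ from~\eqref{eq:xi:facts} (so $\log\sqrt{\xi'(u)/2\pi} = O(\log u) = o(\log q_z)$), we obtain $\int_1^u \xi(t)\,dt = (1+o(1))\log q_z = (1+o(1))\log z_0$. On the other hand, by~\eqref{eq:xi:facts} we have $\xi(t) = (1+o(1))\log t$, so $\int_1^u \xi(t)\,dt = (1+o(1))\int_1^u \log t\,dt = (1+o(1))(u\log u - u) = (1+o(1))u\log u$, consistent with~\eqref{eq:ulogu}.

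The final step is to pin down $e^{\xi(u)}$ precisely. From the defining equation~\eqref{def:xi}, $e^{\xi(u)} = 1 + u\xi(u) = (1+o(1))u\xi(u) = (1+o(1))u\log u$, again using $\xi(u) = (1+o(1))\log u$ and $u\to\infty$. It therefore suffices to show $u\log u = (1+o(1))q_z/z$. For this I use $q_z/z = (1+o(1))\log z = (1+o(1))\log z_0$ (prime number theorem plus $z = z_0^{1+o(1)}$), together with $u\log u = (1+o(1))\log z_0$, which is exactly~\eqref{eq:ulogu} rewritten via $u = (1+o(1))u_0$ and $\log u = (1+o(1))\log u_0$. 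Hence $e^{\xi(u)} = (1+o(1))u\log u = (1+o(1))\log z_0 = (1+o(1))q_z/z$, as required.

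I expect the only delicate point to be the bookkeeping of which $o(1)$ terms compound multiplicatively versus additively (in the exponent) — in particular, confirming that the $z_0^{o(1)}$ slack coming from Corollary~\ref{cor:Lambda:rough} is genuinely harmless here, since everything is measured on the scale $\log z_0$ and the quantities $u\log u$, $\log q_z$, $\log z$ are all asymptotic to $\log z_0$. No single step is hard; the proof is essentially a chain of substitutions using~\eqref{eq:J:rough}, \eqref{eq:ulogu}, \eqref{eq:xi:facts}, \Th{Psi}, \Th{rho}, Corollary~\ref{cor:Lambda:rough}, and the prime number theorem.
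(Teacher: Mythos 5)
Your final paragraph is exactly the paper's proof: from the defining equation $e^{\xi(u)}=1+u\xi(u)$, the asymptotics $\xi(u)=(1+o(1))\log u$, the chain $u\log u = (1+o(1))u_0\log u_0 = (1+o(1))\log z_0$ via~\eqref{eq:ulogu} and $u=(1+o(1))u_0$, and finally $\log z_0 = (1+o(1))\log q_z = (1+o(1))q_z/z$ by the prime number theorem. The first two paragraphs of your proposal are dead weight, though: $u=(1+o(1))u_0$ follows directly from $z=z_0^{1+o(1)}$ and the prime number theorem (so the detour through Corollary~\ref{cor:Lambda:rough} is unnecessary), and the middle paragraph's conclusions about $\rho(u)$ and $\int_1^u\xi(t)\,dt$ are never used in the final computation.
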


\begin{proof}
Note that $z=z_0^{1+o(1)}$ implies that $u=(1+o(1))u_0$, and hence
\begin{align*}
 e^{\xi(u)}&=1+u\xi(u)=(1+o(1))u\log u=(1+o(1))u_0\log u_0\\
 &=(1+o(1))\log z_0=(1+o(1))\log q_z=(1+o(1))q_z/z,
\end{align*}
by~\eqref{eq:ulogu},~\eqref{def:xi} and the prime number theorem, as claimed.
\end{proof}

\begin{proof}[Proof of Corollary~\ref{cor:pj}]
Set $x'=xq_z^{-a}$, and observe that
\[
 \tpsi(x',q_z)-\tpsi(x',q_{z-1})=\tpsi(x' q_z^{-1},q_{z-1})=\tpsi(x' q_z^{-1},q_z)-\tpsi(x' q_z^{-2},q_{z-1})
\]
and $0\le\tpsi(x' q_z^{-b},q_{z-1})\le\tpsi(x' q_z^{-b},q_z)$ for $b\in\{1,2\}$. Therefore, recalling
the definition~\eqref{def:pzx} of $p_z(x)$,
\begin{equation}\label{eq:pj:bounds}
 \frac{\tpsi(x' q_z^{-1},q_z)-\tpsi(x' q_z^{-2},q_z)}{\tpsi(x',q_z)}
 \le p_z(x')\le\frac{\tpsi(x' q_z^{-1},q_z)}{\tpsi(x',q_z)}.
\end{equation}
Let $x=q_z^u$, and note that if $z\in[z_-,z_+]$ then $u=(1+o(1))u_0$, since $z_\pm=z_0^{1+o(1)}$,
by~\eqref{eq:zpm:rough}. Thus, if $a=o(u_0)$ then, by \Th{Psi:ratio}, we have
\[
 \log\frac{\tpsi(x'q_z^{-b},q_z)}{\tpsi(x',q_z)}=b\big(\xi(u)-\log q_z\big)+o(1)
\]
for $b\in\{1,2\}$. Moreover, $e^{\xi(u)}=(1+o(1))q_z/z$ by \Ob{exi}, since $z_\pm=z_0^{1+o(1)}$.
It follows that
\[
 \frac{\tpsi(x'q_z^{-b},q_z)}{\tpsi(x',q_z)}=\bigg(\frac{1+o(1)}{z}\bigg)^b,
\]
which together with~\eqref{eq:pj:bounds} implies that $p_z(x')=(1+o(1))/z$
when $a=o(u_0)$, as claimed.

The bounds $p_z(x)\le(1+o(1))/z$ and $p_z(x)=z^{-1+o(1)}$ follow by a similar argument.
Indeed, by \Th{Psi:ratio} (applied with $a=0$), we have
\begin{equation}\label{eq:pj:azero:upper}
 \log\frac{\tpsi(x q_z^{-b},q_z)}{\tpsi(x,q_z)}\le b\big(\xi(u)-\log q_z\big)+\frac{O(1)}{u}
\end{equation}
for $b\in\{1,2\}$, and moreover a matching lower bound holds when $u\ge 2b$.
Note also that, since $1\le u\le(1+o(1))u_0$, we have
\[
 e^{\xi(u)+O(1/u)}\le\big(1+o(1)\big)e^{\xi(u_0)}=\big(1+o(1)\big)\frac{q_{z_0}}{z_0}
 \le\big(1+o(1)\big)\frac{q_z}{z}
\]
by~\eqref{eq:xi:facts}, \Ob{exi} and the prime number theorem. Thus by~\eqref{eq:pj:azero:upper},
\begin{equation}\label{eq:tpsi:ratio:bound}
 \frac{\tpsi(x q_z^{-b},q_z)}{\tpsi(x,q_z)}\le\bigg(\frac{1+o(1)}{z}\bigg)^b,
\end{equation}
and \eqref{eq:pj:bounds} then implies $p_z(x)\le (1+o(1))/z$. Moreover, the lower bound
$p_z(x)\ge e^{O(1)}/q_z$ holds for $u\ge 2$ by~\eqref{eq:pj:bounds}, using the matching lower bound
in~\eqref{eq:pj:azero:upper} when $b=1$. For $1\le u<2$,
$\tpsi(x/q_z,q_z)=\lfloor x/q_z\rfloor=\Theta(x/q_z)$ and $\tpsi(x,q_z) \le x$, so again
$p_z(x)\ge c/q_z$ for some $c>0$. In particular, $p_z(x)\ge z^{-1+o(1)}$.

Finally, the bound $p_z(xq_z^{-a})\le(1+o(1))p_z(x)$ is also similar, though since it does not
always hold when $a<1$, we shall need to be a little careful. Note first that if $u < a+1$
then the claimed bounds hold trivially, since $xq_z^{-a}<q_z$, and hence
$p_z(xq_z^{-a})=0$. We may therefore assume that $u \ge a+1$, which implies that $xq_z^{-1} \ge q_z$ and $x' \ge q_z$. We claim that
\begin{equation}\label{eq:cor:pj:finalclaim}
 p_z(x')\le\frac{\tpsi(x'q_z^{-1},q_z)}{\tpsi(x',q_z)}
 \le \big( 1+o(1) \big)\frac{\tpsi(xq_z^{-1},q_z)}{\tpsi(x,q_z)}\le \big( 1 + o(1) \big) p_z(x).
\end{equation}
Indeed, the first inequality follows by~\eqref{eq:pj:bounds}, and the second follows since \Th{rho:log:concave} implies that $\frac{\rho(u-1)}{u\rho(u)}$, and hence $\frac{\rho(u-1)}{\rho(u)}$, is an increasing function of~$u$, so if $u \ge a + 2$ then
\begin{align*}
 \frac{\tpsi(x'q_z^{-1},q_z)}{\tpsi(x',q_z)} & \le \big( 1 + o(1) \big) q_z^{-1} \cdot\frac{\rho(u-a-1)}{\rho(u-a)} \\
 & \le \big( 1 + o(1) \big) q_z^{-1} \cdot \frac{\rho(u-1)}{\rho(u)} = \big( 1 + o(1) \big) \frac{\tpsi(xq_z^{-1},q_z)}{\tpsi(x,q_z)}
\end{align*}
by Theorem~\ref{thm:Psi} and \Lm{tpsi}. On the other hand, if $a+1 \le u \le a+2$ then $x'q_z^{-1} < q_z$, so in this case we may replace the bound on $\tpsi(x'q_z^{-1},q_z)$ given by Theorem~\ref{thm:Psi} by the equality $\tpsi(x'q_z^{-1},q_z) = x'q_z^{-1} = \rho(u - a - 1) x'q_z^{-1}$, since by definition $\rho(u) = 1$ for all $0 \le u \le 1$.

To deduce~\eqref{eq:cor:pj:finalclaim} from~\eqref{eq:pj:bounds}, it therefore only remains to observe that, by~\eqref{eq:tpsi:ratio:bound} and two applications of \Th{Psi:ratio},
\[
 \frac{\tpsi(xq_z^{-2},q_z)}{\tpsi(x,q_z)} \le O(1) \cdot \bigg(\frac{\tpsi(xq_z^{-1},q_z)}{\tpsi(x,q_z)}\bigg)^2
 = o(1) \cdot \frac{\tpsi(xq_z^{-1},q_z)}{\tpsi(x,q_z)}.
\]
where we again used the fact that $u \ge a + 1 \ge 2$ when applying \Th{Psi:ratio} with $b = 1$. This proves~\eqref{eq:cor:pj:finalclaim}, and hence completes the proof of the corollary.
\end{proof}

We next deduce some more refined estimates concerning the function $\Lambda(z)$.

\begin{lemma}\label{lem:Lambda:onestep}
 If\/ $z=z_0^{1+o(1)}$, then
 \[
  \frac{\Lambda(z-1)}{\Lambda(z)}=1+\frac{o(1)}{z}.
 \]
\end{lemma}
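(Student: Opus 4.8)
The plan is to write the ratio $\Lambda(z-1)/\Lambda(z)$ explicitly using the definition~\eqref{def:Lambda} and then estimate the two factors that actually depend on $z$. From $\Lambda(z)=J(x)\Psi(x,q_z)/(xz)$ we get immediately
\[
 \frac{\Lambda(z-1)}{\Lambda(z)}=\frac{z}{z-1}\cdot\frac{\Psi(x,q_{z-1})}{\Psi(x,q_z)},
\]
so there are exactly two things to control: the elementary factor $z/(z-1)=1+O(1/z)$, which is already of the desired form since $z=z_0^{1+o(1)}\to\infty$; and the smooth-number ratio $\Psi(x,q_{z-1})/\Psi(x,q_z)$, which I claim equals $1+o(1/z)$.

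For the smooth-number ratio I would appeal to Theorem~\ref{thm:Psi:ratio}. Set $y=q_z$ and $u=\tfrac{\log x}{\log y}$; since $z=z_0^{1+o(1)}$ we have $u=(1+o(1))u_0$, and in particular $u\to\infty$. The point is that passing from $q_z$ to $q_{z-1}$ changes the smoothness parameter by a tiny multiplicative amount: by the prime number theorem $q_z/q_{z-1}=1+o(1)$, indeed $q_z-q_{z-1}=o(q_z/\log q_z)$, so $\log q_{z-1}=\log q_z + o(1/\log q_z)$ and hence, writing $q_{z-1}=q_z\cdot q_z^{-b}$, the exponent satisfies $b = o\!\big(1/(\log q_z)^2\big)$, which is in particular $o(1)$ and $\le u/2$. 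Applying the equality case~\eqref{eq:Psi:ratio:eq} of Theorem~\ref{thm:Psi:ratio} with $a=0$ and this value of $b$ — or, more cleanly, arguing directly from Hildebrand's Theorem~\ref{thm:Psi} together with Theorem~\ref{thm:rho:ratio} — gives
\[
 \log\frac{\Psi(x,q_{z-1})}{\Psi(x,q_z)} = -b\log q_z + b\,\xi(u) + \frac{O(b^2+1)}{u} + O\!\Big(\frac{\log(u+1)}{\log y}\Big).
\]
Now $b\log q_z = \log(q_z/q_{z-1}) = o(1/\log q_z) = o(1/\log z_0)$, and by~\eqref{eq:xi:facts} $\xi(u)=O(\log u)=o(\log q_z)$ so $b\,\xi(u)=o(1/(\log q_z)^2)\cdot o(\log q_z) = o(1/\log q_z)$; the remaining error terms are $O(1/u)+O(\log(u+1)/\log y)=o(1/\log q_z)$ by~\eqref{eq:ulogu}. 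Thus $\log\big(\Psi(x,q_{z-1})/\Psi(x,q_z)\big)=o(1/\log q_z)=o(1/\log z_0)$, so the ratio itself is $1+o(1/\log z_0)$. Since $\log z_0 = o(z)$ (indeed $z=z_0^{1+o(1)}$ is much larger than $\log z_0$), this is certainly $1+o(1/z)$.

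Combining the two factors, $\Lambda(z-1)/\Lambda(z) = \big(1+O(1/z)\big)\big(1+o(1/z)\big) = 1+o(1/z)$, as claimed. The only mild subtlety — and the step I'd be most careful about — is the treatment of the prime gap: one must check that $q_z$ and $q_{z-1}$ are close enough, on the relevant scale, that the $-b\log q_z$ term (which is genuinely the dominant contribution to the log-ratio) is itself $o(1/\log z_0)$ rather than merely $o(1)$; this needs nothing deeper than the prime number theorem in the form $q_z = z\log z\,(1+o(1))$, which gives $q_z/q_{z-1}-1 = O(1/z)$, hence $b\log q_z = O(1/z)$, comfortably $o(1/z\cdot\log z_0/\log z_0)$ — and in fact already directly of the required order. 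One should also note that there is no issue with the hypotheses of Theorem~\ref{thm:Psi:ratio}: $z=z_0^{1+o(1)}$ lies in $[z_-,\pi(x)]$ by~\eqref{eq:zpm:rough}, $a=0\le u$, and although $b<1$ here we are in the regime $a+b\le u/2$ so we may invoke~\eqref{eq:Psi:ratio:eq} (or simply Theorems~\ref{thm:Psi} and~\ref{thm:rho:ratio} directly, since $xq_z^{-(a+b)}=xq_{z-1}^{-1}\cdot\text{(something)}\ge y$ easily holds in this range).
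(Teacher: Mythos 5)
Your decomposition $\Lambda(z-1)/\Lambda(z)=\tfrac{z}{z-1}\cdot\tfrac{\Psi(x,q_{z-1})}{\Psi(x,q_z)}$ is fine, but the two claims you make about the factors are both false, and in a way that misses the whole point of the lemma. First, $z/(z-1)=1+\tfrac{1}{z-1}$ is $1+O(1/z)$ but \emph{not} $1+o(1/z)$ — the $1/z$ term is genuinely there. Second, $\Psi(x,q_{z-1})/\Psi(x,q_z)$ is \emph{not} $1+o(1/z)$; from the exact identity $\Psi(x,q_z)=\Psi(x,q_{z-1})+\Psi(xq_z^{-1},q_z)$, together with \Th{Psi:ratio} (with $a=0$, $b=1$) and \Ob{exi}, one gets $\Psi(xq_z^{-1},q_z)/\Psi(x,q_z)=(1+o(1))/z$, so the ratio is $1-(1+o(1))/z$, a genuine $-1/z$. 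The lemma is true only because the $+1/z$ from the first factor and the $-1/z$ from the second \emph{cancel}; that cancellation is the content of the statement (it encodes $\Lambda'(z)/\Lambda(z)=o(1/z)$ near the maximum $z_0$), and your write-up never sees it — you instead claim each factor is separately close to $1$ to accuracy $o(1/z)$, which is simply not so.

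Your route to estimating $\Psi(x,q_{z-1})/\Psi(x,q_z)$ also cannot work as written. You treat it as a small change in the smoothness parameter $y$ and apply \Th{Psi} / \Th{rho:ratio}, but the relative error term in \Th{Psi} is $O(\log(u+1)/\log y)=O(1/u_0)$, and since $u_0=(1+o(1))\log z_0/\log\log z_0$ by~\eqref{eq:ulogu} this is far \emph{larger} than $1/z$ (indeed $1/z\ll 1/\log z_0\ll 1/u_0$), so there is no way to squeeze $o(1/z)$ accuracy out of comparing $\Psi$ at two distinct $y$-values this way. This is precisely why the paper uses the exact identity $\Psi(x,q_{z-1})=\Psi(x,q_z)-\Psi(xq_z^{-1},q_z)$: the quantity $\Psi(xq_z^{-1},q_z)/\Psi(x,q_z)$ is a ratio at the \emph{same} $y=q_z$, where \Th{Psi:ratio} gives $1+o(1)$ multiplicative accuracy, and that $1+o(1)$ is enough because the main $1/z$ terms cancel algebraically before you need any estimate at all. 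Finally, your closing inference is backwards: you deduce $1+o(1/\log z_0)$ and then say "since $\log z_0=o(z)$ this is certainly $1+o(1/z)$" — but $\log z_0=o(z)$ means $1/\log z_0\gg 1/z$, so $o(1/\log z_0)$ is a \emph{weaker} bound than $o(1/z)$, not a stronger one.
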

\begin{proof}
Noting that $\Psi(x,q_z)=\Psi(x,q_{z-1})+\Psi(x q_z^{-1},q_z)$, and recalling the
definition~\eqref{def:Lambda} of $\Lambda(z)$, we have
\[
 \frac{\Lambda(z)-\Lambda(z-1)}{\Lambda(z)}
 =\frac{(z-1)\Psi(x,q_z)-z\Psi(x,q_{z-1})}{(z-1)\Psi(x,q_z)}
 =\frac{z\Psi(x q_z^{-1},q_z)-\Psi(x,q_z)}{(z-1)\Psi(x,q_z)}.
\]
Now, applying \Th{Psi:ratio} with $y=q_z$, $a=0$ and $b=1$, we obtain
\[
 \frac{\Psi(x q_z^{-1},q_z)}{\Psi(x,q_z)}=\exp\big(\xi(u)-\log q_z+o(1)\big),
\]
where $u=\frac{\log x}{\log q_z}=(1+o(1))u_0\to\infty$. Hence
\begin{equation}\label{eq:onestep}
 \frac{\Lambda(z)-\Lambda(z-1)}{\Lambda(z)}
 =\frac{1}{z-1}\bigg(\frac{ze^{\xi(u)}}{q_z}\big(1+o(1)\big)-1\bigg)=\frac{o(1)}{z},
\end{equation}
by \Ob{exi}.
\end{proof}

\begin{lemma}\label{lem:Lambda:precise}
 Write\/ $z=z_0\exp\big(c\sqrt{\log z_0}\big)$ for some\/ $c=c(x)$. Then
 \[
  \Lambda(z)=e^{-c^2}+o(1).
 \]
\end{lemma}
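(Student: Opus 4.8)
The plan is to reduce the statement to the asymptotic formula for $\rho(u)$ from \Th{rho}, via the expression $\Lambda(z) = J(x)\Psi(x,q_z)/(xz)$, and to compare the value at $q_z$ to the value at $y_0$, where $\Lambda(z_0) = 1$. Write $y = q_z$ and $u = \frac{\log x}{\log y}$, and recall from the introduction that $u_0 = \frac{\log x}{\log y_0}$ and $z_0 = \pi(y_0)$. Since $z = z_0\exp(c\sqrt{\log z_0})$, the prime number theorem gives $\log y = (1+o(1))\log q_z = (1+o(1))\log z_0 \cdot (1 + c/\sqrt{\log z_0})$, and hence $u = u_0\big(1 - c/\sqrt{\log z_0} + o(1/\sqrt{\log z_0})\big)$; in particular $u = (1+o(1))u_0$, so $z = z_0^{1+o(1)}$ and all the earlier estimates (\Co{Lambda:rough}, \Ob{exi}, \Th{Psi:ratio}) apply.

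First I would use $\Lambda(z_0) = 1$ to write
\[
 \Lambda(z) = \frac{\Lambda(z)}{\Lambda(z_0)} = \frac{z_0}{z}\cdot\frac{\Psi(x,q_z)}{\Psi(x,q_{z_0})}.
\]
By \Th{Psi} (applicable since $q_z \ge y_0^{1+o(1)} \ge \exp((\log\log x)^2)$), this equals $\frac{z_0}{z}\cdot\frac{\rho(u)}{\rho(u_0)}\big(1 + O(\log(u_0+1)/\log y_0)\big) = \frac{z_0}{z}\cdot\frac{\rho(u)}{\rho(u_0)}(1+o(1))$, using $\log(u_0+1)/\log y_0 = o(1)$ from~\eqref{eq:ulogu}. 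Now $\frac{z_0}{z} = \exp(-c\sqrt{\log z_0})$, so it remains to show that $\frac{\rho(u)}{\rho(u_0)} = \exp\big(c\sqrt{\log z_0} - c^2 + o(1)\big)$. For this I would apply \Th{rho} to both $u$ and $u_0$, giving
\[
 \log\frac{\rho(u)}{\rho(u_0)} = \frac{1}{2}\log\frac{\xi'(u)}{\xi'(u_0)} - \int_{u}^{u_0}\xi(t)\,dt + \frac{O(1)}{u_0}.
\]
The prefactor term is $O(\log(u_0/u)) = o(1)$ by~\eqref{eq:xi:facts} since $u/u_0 = 1+o(1)$, and the error term is $o(1)$. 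For the integral, write $u_0 - u = u_0 c/\sqrt{\log z_0} + o(u_0/\sqrt{\log z_0})$ and Taylor-expand: $\int_u^{u_0}\xi(t)\,dt = (u_0 - u)\xi(u_0) - \frac{1}{2}(u_0-u)^2\xi'(u_0) + O\big((u_0-u)^3 \xi''(\cdot)\big)$, using $\xi'' = -\Theta(1/t^2)$ so the cubic error is $O\big((u_0-u)^3/u_0^2\big) = O\big(u_0/(\log z_0)^{3/2}\big) = o(1)$ since $u_0 = o(\log z_0)$ (indeed $u_0 \approx \sqrt{\tfrac12 \log x/\log\log x}$ while $\log z_0 \approx \sqrt{2\log x\log\log x}$, so $u_0/\log z_0 \to 0$). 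By \Ob{exi}, $e^{\xi(u_0)} = (1+o(1))q_{z_0}/z_0 = (1+o(1))\log z_0$, so $\xi(u_0) = (1+o(1))\log\log z_0$; combined with~\eqref{eq:ulogu} (which gives $u_0\log u_0 = (1+o(1))\log z_0$, hence $u_0\xi(u_0) = (1+o(1))\log z_0$ since $\xi(u_0) = (1+o(1))\log u_0$), this yields $(u_0-u)\xi(u_0) = \frac{c}{\sqrt{\log z_0}}\cdot u_0\xi(u_0) + o(1) = c\sqrt{\log z_0} + o(1)$. For the quadratic term, $\xi'(u_0) = \Theta(1/u_0)$, more precisely differentiating~\eqref{def:xi} gives $\xi'(u_0) = \frac{\xi(u_0)}{e^{\xi(u_0)} - u_0} = \frac{\xi(u_0)}{u_0\xi(u_0) + 1 - u_0} = \frac{1+o(1)}{u_0}$; hence $\frac{1}{2}(u_0-u)^2\xi'(u_0) = \frac{1}{2}\cdot\frac{c^2 u_0^2}{\log z_0}(1+o(1))\cdot\frac{1}{u_0} = \frac{c^2}{2}\cdot\frac{u_0}{\log z_0}(1+o(1)) = o(1)$, again since $u_0 = o(\log z_0)$. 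Wait — that makes the quadratic term negligible, which would give $\log\frac{\rho(u)}{\rho(u_0)} = c\sqrt{\log z_0} + o(1)$ and hence $\Lambda(z) = 1 + o(1)$ rather than $e^{-c^2} + o(1)$; so the linear approximation of the integral is too crude and I must keep the next order in the expansion of $(u_0 - u)\xi(u_0)$ itself, not just of the integral.

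The main obstacle, then, is getting the expansion of $(u_0 - u)\xi(u_0)$ to the right precision: one needs $(u_0 - u)\xi(u_0) = c\sqrt{\log z_0} - c^2 + o(1)$, which requires resolving $u_0 - u$ to relative accuracy $o(1/\sqrt{\log z_0})$ and knowing $u_0\xi(u_0)$ to additive accuracy $o(\sqrt{\log z_0})$ — i.e. a second-order version of~\eqref{eq:ulogu}. Concretely, from $\log y = \log q_z$ and the prime number theorem $\log q_z = \log z_0 + c\sqrt{\log z_0} + \log\log z_0 \cdot(1+o(1))/\log z_0 \cdot(\dots)$, so $\frac{\log y}{\log y_0} = 1 + \frac{c}{\sqrt{\log z_0}} + o\big(\tfrac{1}{\sqrt{\log z_0}}\big)$ provided the correction is controlled — and then $u = u_0/(1 + c/\sqrt{\log z_0} + o(\cdot)) = u_0\big(1 - \tfrac{c}{\sqrt{\log z_0}} + \tfrac{c^2}{\log z_0} + o(\tfrac{1}{\log z_0})\big)$. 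Thus $u_0 - u = \tfrac{c u_0}{\sqrt{\log z_0}} - \tfrac{c^2 u_0}{\log z_0} + o(\tfrac{u_0}{\log z_0})$, and multiplying by $\xi(u_0)$ and using $u_0\xi(u_0) = (1+o(1))\log z_0$: the first piece gives $c\sqrt{\log z_0}(1+o(1))$ — but I need this to additive precision $o(1)$, so I actually need $u_0\xi(u_0) = \log z_0 + o(\sqrt{\log z_0})$, which I would extract by using the sharper form of~\eqref{eq:ulogu} coming directly from \eqref{eq:J:rough} together with the prime number theorem $\log z_0 = \log\pi(y_0) = \log y_0 - \log\log y_0 + o(1)$ and $\log y_0 = u_0^{-1}\log x$; the second piece gives $-\tfrac{c^2 u_0}{\log z_0}\xi(u_0) = -c^2(1+o(1))$. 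Combining everything, $\log\frac{\rho(u)}{\rho(u_0)} = c\sqrt{\log z_0} - c^2 + o(1)$, whence $\Lambda(z) = \exp(-c\sqrt{\log z_0})\cdot\exp(c\sqrt{\log z_0} - c^2 + o(1)) = e^{-c^2} + o(1)$, as claimed. I would organise the write-up so that the delicate second-order bookkeeping for $u_0 - u$ and $u_0\xi(u_0)$ is done once and cleanly, since that is where all the content sits; everything else is substitution into \Th{rho}.
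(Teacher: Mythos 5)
Your reduction of $\Lambda(z)$ to $\frac{z_0}{z}\cdot\frac{\rho(u)}{\rho(u_0)}$ matches the paper's opening step, and you have correctly identified the real crux: the first-order estimate $u_0\xi(u_0)=(1+o(1))\log y_0$ from~\eqref{eq:ulogu} is one order too crude, and one needs something like $u_0\xi(u_0)=\log y_0+o(\sqrt{\log y_0})$ to nail the $-c^2$ term. But the way you propose to obtain this sharper estimate — ``from~\eqref{eq:J:rough} together with the prime number theorem'' — is a genuine gap, not a routine calculation. Equation~\eqref{eq:J:rough} only gives $J(x)$ to multiplicative $(1+o(1))$ accuracy in the exponent, which translates into a $(1+o(1))$ factor for $u_0\log u_0$, i.e.\ an additive error of order $o(u_0\log u_0)=o(\log y_0)$. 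That is far weaker than the $o(\sqrt{\log y_0})$ you need. The underlying difficulty is that $y_0$ is defined implicitly as the minimiser of $\pi(y)x/\Psi(x,y)$, and there is no closed form to Taylor-expand: a direct second-order asymptotic for $u_0\xi(u_0)$ would require a careful analysis of the stationarity condition, which is precisely the hard part.

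The paper sidesteps this with a self-improving trick that is worth internalising. Rather than computing $u_0\xi(u_0)$, it introduces the unknown $\kappa_0$ via $u_0\xi(u_0)=(1+\kappa_0)\log y_0$, carries it through the exact same $\rho$-ratio computation you do (the paper uses \Th{rho:ratio} in place of your two applications of \Th{rho}, which is a cosmetic difference), and arrives at the identity
$$\Lambda(z)=\exp\Big(\big(1+o(1)\big)\kappa_0 c\sqrt{\log y_0}-\big(1+o(1)\big)c^2+o(1)\Big).$$
At this point, instead of needing to know $\kappa_0$ in advance, the paper observes that $\Lambda(z)\le \Lambda(z_0)=1$ for \emph{every} $c$, since $z_0$ minimises $\pi(y)x/\Psi(x,y)$. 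If $\kappa_0\sqrt{\log y_0}$ did not tend to $0$, one could choose a small $c$ of the appropriate sign to make the exponent positive and bounded away from zero, contradicting $\Lambda(z)\le 1$. So the minimality of $y_0$ — which is the very thing that makes a direct expansion hard — is instead used a posteriori to conclude $\kappa_0\sqrt{\log y_0}=o(1)$, and hence $\Lambda(z)=e^{-c^2}+o(1)$. If you want to complete your proof, you should replace your final paragraph with this argument; the direct route you sketch would otherwise require you to prove a sharp second-order asymptotic for the minimiser $y_0$, which is considerably harder than the lemma itself.
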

\begin{proof}
We may assume without loss of generality that $z=z_0^{1+o(1)}$, and hence
$c=o(\sqrt{\log z_0})$, as otherwise the result follows immediately from \Co{Lambda:rough} with
$\Lambda(z)=o(1)$. Set $y=q_z$ and $u=\frac{\log x}{\log y}$. Since
$\Lambda(z)=J(x)\Psi(x,q_z)/xz$ and $\Lambda(z_0)=1$, it follows by \Th{Psi} that
\[
 \Lambda(z)=\frac{\Lambda(z)}{\Lambda(z_0)}=\frac{z_0}{z}\cdot\frac{\Psi(x,y)}{\Psi(x,y_0)}
 =\big(1+o(1)\big)\frac{z_0}{z}\cdot\frac{\rho(u)}{\rho(u_0)}.
\]
Set $b=u_0-u$ and note that $b=o(u_0)$, which by \Th{rho:ratio} implies that
\[
 \frac{\rho(u)}{\rho(u_0)} = \exp\bigg(b\xi(u_0)+\frac{O(b^2+1)}{u_0}\bigg).
\]
Thus, defining $\kappa_0$ by $u_0\xi(u_0) = ( 1 + \kappa_0 ) \log y_0$, we obtain
\[
 \Lambda(z) = \big( 1+o(1) \big) \frac{z_0}{z}\cdot y_0^{b(1 + \kappa_0)/u_0+o(b^2/u_0^2)},
\]
where we have used~\eqref{eq:ulogu} (in particular, the fact that $u_0=o(\log y_0)$) to replace
the error factor $e^{O(b^2+1)/u_0}$ by $(1+o(1))y_0^{o(b^2/u_0^2)}$.

Now, by the prime number theorem we have
\[
 \frac{y_0}{y}=\big(1+o(1)\big)\frac{z_0\log y_0}{z\log y}=\big(1+o(1)\big)\frac{z_0}{z},
\]
and by the definitions of $u$, $u_0$ and $b$, and the fact that $b=o(u_0)$,
\[
 \frac{y_0}{y}=x^{1/u_0-1/(u_0-b)}=y_0^{-b/(u_0-b)}=y_0^{-b/u_0-(1+o(1))b^2/u_0^2}.
\]
Thus, we obtain
\[
 \Lambda(z)=\big(1+o(1)\big)y_0^{b\kappa_0/u_0-(1+o(1))b^2/u_0^2}
 =\exp\bigg(\frac{b}{u_0}\kappa_0\log y_0-\big(1+o(1)\big)\frac{b^2}{u_0^2}\log y_0+o(1)\bigg),
\]
and hence, noting that
\[
 c\sqrt{\log z_0}=\log \frac{z}{z_0}=\log\frac{y}{y_0}+o(1)
 =\big(1+o(1)\big)\frac{b}{u_0}\log y_0+o(1),
\]
and that $\kappa_0=o(1)$, by~\eqref{eq:ulogu} and~\eqref{eq:xi:facts}, it follows that
\[
 \Lambda(z)=\exp\Big(\big(1+o(1)\big)\kappa_0 c\sqrt{\log y_0}-\big(1+o(1)\big)c^2+o(1)\Big).
\]
But $\Lambda(z)$ is maximized at $c=0$, so this implies that $\kappa_0\sqrt{\log y_0}=o(1)$, which
in turn implies that $\Lambda(z) = e^{-c^2} + o(1)$, as required.
\end{proof}

Note that as an immediate corollary of \Lm{Lambda:precise} we have
\begin{equation}\label{eq:zpm:precise}
 z_\pm=z_0\exp\Big(\pm\big(\sqrt{\log(1/\delta)}+o(1)\big)\sqrt{\log z_0}\Big)
\end{equation}
as well as
\begin{equation}\label{eq:Lambda:delta:plus:littleoone}
 \Lambda(z)\ge\delta+o(1)\qquad\text{for all }z\in[z_-,z_+],
\end{equation}
and
\begin{equation}\label{eq:Lambda:delta:plusminus:littleoone}
 \Lambda(z_\pm)=\delta+o(1).
\end{equation}

Finally, let us make a trivial observation.

\begin{obs}\label{obs:onesinarow}
 $\ds\sum_{j\ge z_-}A_{ij}\le 2u_0$ for every\/ $i\in [N]$.
\end{obs}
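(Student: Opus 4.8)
The plan is to prove the stronger, purely deterministic statement that the bound holds for \emph{every} choice of the numbers $a_1,\dots,a_N$; no probabilistic input is needed, only the constraint $a_i \le x$. First I would observe that, by the definition of the matrix $A$, each index $j$ with $A_{ij}=1$ contributes a distinct prime $q_j$ that divides $a_i$ to an odd, hence positive, power. Thus if we set $K := \sum_{j \ge z_-} A_{ij}$, then $a_i$ is divisible by a product of $K$ distinct primes each of which is at least $q_{z_-}$, and therefore $q_{z_-}^{\,K} \le a_i \le x$. Taking logarithms gives $K \le (\log x)/(\log q_{z_-})$.

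It then remains only to check that $\log q_{z_-} \ge \tfrac12 \log y_0$ for all sufficiently large $x$, since $u_0 = (\log x)/(\log y_0)$ and so this yields $K \le 2u_0$. Here the only ingredients are \eqref{eq:zpm:rough} and the prime number theorem: by \eqref{eq:zpm:rough} we have $z_- = z_0^{1+o(1)}$, and $z_0 = \pi(y_0)$, so the prime number theorem gives $q_{z_-} = (1+o(1))\,z_- \log z_-$ and $z_0 = (1+o(1))\,y_0/\log y_0$. Since $z_- \to \infty$ and $y_0 \to \infty$ (the latter by \eqref{eq:J:rough}), taking logarithms yields
\[
 \log q_{z_-} = (1+o(1))\log z_- = (1+o(1))\log z_0 = (1+o(1))\log y_0,
\]
so in particular $\log q_{z_-} \ge \tfrac12 \log y_0$ once $x$ is large. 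Combining this with the previous paragraph, $\sum_{j \ge z_-} A_{ij} \le (\log x)/(\log q_{z_-}) = (1+o(1))u_0 \le 2u_0$, as required.

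There is no real obstacle here: the statement is an elementary size estimate, and the only mildly non-trivial point is the asymptotic $q_{z_-} = y_0^{1+o(1)}$, which follows immediately from the crude bound $z_\pm = z_0^{1+o(1)}$ in \eqref{eq:zpm:rough} together with the prime number theorem. (One could equally invoke the sharper \eqref{eq:zpm:precise}, but this is not needed, and indeed any constant strictly larger than $1$ would do in place of the $2$.)
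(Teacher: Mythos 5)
Your proof is correct and is essentially the paper's own argument: in both cases the key step is the elementary bound that a product of $K$ distinct primes, each at least $q_{z_-} > z_-$, divides $a_i \le x$, so $z_-^K \le x$, and then $z_- = y_0^{1+o(1)} = x^{(1+o(1))/u_0}$ gives $K \le (1+o(1))u_0 \le 2u_0$. The paper reaches the asymptotic $z_- = x^{(1+o(1))/u_0}$ a little more directly from~\eqref{eq:zpm:rough} and~\eqref{eq:J:rough}, whereas you detour through the prime number theorem for $q_n$ and $\pi(y)$, but this is only a cosmetic difference.
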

\begin{proof}
If some number $a_i$ is divisible by $k$ distinct primes, each larger than $z_-$, then $z_-^k\le x$.
Since $z_-=z_0^{1+o(1)}=y_0^{1+o(1)}=x^{(1+o(1))/u_0}$, this implies that $k\le(1+o(1))u_0$.
\end{proof}

\section{Probabilistic facts and preliminary results}\label{sec:events}

In this section we shall recall some standard probabilistic tools, define some events that will be
important in later sections, and prove some basic facts about these events. Let us begin by
defining the events that encode our induction hypothesis. Recall that $\eta<e^{-\gamma}$ was
fixed in \Sc{outline}, and that $\eps_0$ and $\eps(k,z)$ were defined in~\eqref{def:small}
and \Df{eps}.

\begin{defn}\label{def:MandTk}
 For each $z\in [z_-,\pi(x)]$, let $\cM(z)$ denote the event that
 \begin{equation}\label{eq:Mevent}
  \frac{m(z)}{z}\exp\bigg(-\Ein\bigg(\frac{m(z)}{z}\bigg)\bigg)\in (1\pm\eps_0)\eta\Lambda(z)
\end{equation}
holds, and let $\cM^*(z)$ denote the event that $\cM(w)$ holds for every $w\in [z,\pi(x)]$.
For each $z\in [z_-,z_+]$ and $k\ge 2$, let $\cT_k(z)$ denote the event that
\[
 s_k(z)\in\big(1\pm\eps(k,z)\big)\ts_k(z).
\]
\end{defn}

Note that \Th{track:m} states that the event $\cM^*(z_-)$ holds with high probability, and
\Th{track:s} states that with high probability the event $\cT_k(z)$ holds for every $k\ge 2$ and
every $z\in [z_-,z_+]$. We shall also need the following slightly more technical events.

\begin{defn}\label{def:Q}
For each $z\in [z_-,\pi(x)]$, let $\cQ(z)$ denote the event that
 \begin{itemize}
  \item $\cM(z)$ holds;
  \item $S(z) =\bigcup_{k\ge 2} S_k(z)$ contains no element $w$ with $w>z_0^5$;
  \item $s_k(z)=0$ for all $k\ge 4u_0$.
 \end{itemize}
Let $\cK(z)$ denote the event that $\cQ(z)$ holds and also
 \begin{equation}\label{def:K}
  \sum_{k\ge 2} 2^k s_k(z)\le 2 e^{C_0} m(z),
 \end{equation}
where $C_0$ was defined in~\eqref{def:C0}.
\end{defn}

We shall prove the following two lemmas, which allow us to deduce the technical events (which we
shall need in the sections below) from our induction hypothesis.

\begin{lemma}\label{lem:easy}
 With high probability, $\cQ(z)\cup\cM^*(z)^c$ holds for every\/ $z\in [z_-,\pi(x)]$.
\end{lemma}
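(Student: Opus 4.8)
The event $\cQ(z)$ asks for three things: that $\cM(z)$ holds, that $S(z)$ contains no vertex $w>z_0^5$, and that $s_k(z)=0$ for all $k\ge 4u_0$. Since we are conditioning on $\cM^*(z)$ — and hence in particular on $\cM(w)$ for all $w\ge z$ — the first requirement is immediate. The plan is therefore to show that, conditional on $\cM^*(z)$ (or unconditionally, ideally), the second and third requirements fail for \emph{some} $z\in[z_-,\pi(x)]$ with probability $o(1)$. Both are statements about large vertex degrees in the hypergraph $\cH_A(z)$ (equivalently, about primes dividing many of the active numbers $a_i$ to an odd power), so they should be amenable to a union bound over $z$ and over the offending vertex $w$, using the number-theoretic input from \Sc{NTfacts}.

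First I would handle the condition ``$S(z)$ contains no $w>z_0^5$''. A vertex $w>z_0^5$ lies in $S(z)$ precisely when the prime $q_w$ divides at least two of the \emph{active} numbers to an odd power; since the 2-core only shrinks, it suffices to bound the probability that $q_w$ divides at least two of $a_1,\dots,a_N$ to an odd power at all. The probability that a single $a_i$ is divisible by $q_w$ to an odd power is at most $1/q_w \le z_w^{-1+o(1)} \le z_0^{-5+o(1)}$ (using $p_z(x)=z^{-1+o(1)}$ from \Co{pj}, or more elementarily $1/q_w$); so the expected number of pairs $\{i,i'\}$ with $q_w \mid a_i, a_{i'}$ to odd powers is at most $N^2 q_w^{-2}$. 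Summing over $w$ with $z_0^5 < w \le \pi(x)$ and using $N = \eta J(x) = z_0^{2+o(1)}$, together with $\sum_{q>z_0^5} q^{-2} = z_0^{-5+o(1)}$ by partial summation and the prime number theorem, gives a total bound of $z_0^{4+o(1)}\cdot z_0^{-5+o(1)} = o(1)$. (If one is slightly more careful, one should note that each such $a_i$, being divisible by $q_w$, then has $a_i/q_w \le x/z_0^5$ elements to distribute; this only helps, via \Th{Psi:ratio}, and is not needed for the crude bound.) Crucially this is an unconditional statement, so no subtlety about the filtration enters.

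Next I would handle ``$s_k(z)=0$ for all $k\ge 4u_0$''. Here I would invoke \Lm{skeasy} (referenced in the outline as showing $s_k(z)=0$ for all $k\ge 5\log z_0/\log\log z_0$ and all $z\in[z_-,\pi(x)]$, with high probability) — and since $5\log z_0/\log\log z_0 = (5+o(1))u_0$ by~\eqref{eq:ulogu}, this is stronger than what we need, \emph{provided} $\Lm{skeasy}$ is established before this point; if not, I would prove the weaker bound directly. A vertex in $S_k(z)$ corresponds to a prime $q$ dividing $\ge k$ of the active numbers to an odd power, hence dividing $\ge k$ of $a_1,\dots,a_N$ to an odd power; but by \Ob{onesinarow} any single $a_i$ has at most $2u_0$ prime factors exceeding $z_-$, so the total number of (number, large-prime) incidences is at most $2u_0 N$, and hence the number of large primes $q > z_-$ of degree $\ge 4u_0$ is at most $2u_0 N / (4u_0) = N/2$ — this crude counting does not give $o(1)$ directly, so instead I would use a first-moment bound: the expected number of primes $q\in(z_-,\pi(x)]$ with $\Bin$-degree $\ge 4u_0$ is at most $\sum_q \binom{N}{4u_0} q^{-4u_0} \le N \cdot (eN/(4u_0 q_{z_-}))^{4u_0}$, and since $N/q_{z_-} = z_0^{2+o(1)}/z_0^{1+o(1)} = z_0^{1+o(1)}$ while $(4u_0)^{4u_0} = z_0^{(4+o(1))u_0 \cdot \log(4u_0)/\log z_0}$... this needs care. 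The cleaner route, and the one I expect to be the main obstacle, is precisely to get the exponent right: one wants the per-prime probability $\binom{N}{k}p^k$ with $p\le 1/q$ to beat the number of primes, and the natural cutoff is exactly where $k\log(k)$ overtakes $k\log(N/q) \approx k \cdot \tfrac12 \log z_0 \approx k u_0 \log u_0 / (2)$... so $k \gtrsim u_0$ is indeed the threshold, and pinning down the constant $4$ (versus the sharp $\sim u_0$ or the safe $5u_0/\log$-type bound) is where the real work lies. I would resolve this by citing \Lm{skeasy} if available, and otherwise by the first-moment computation above with the cutoff $k \ge 4u_0$, using $p_z(xq_z^{-a}) \le (1+o(1))/z$ from \Co{pj} to control the conditional probabilities along the revealing process rather than just the static bound.

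Finally, I would assemble the pieces: the two bad events each have probability $o(1)$ (the first unconditionally, the second by \Lm{skeasy} or the first-moment argument), and $\cM(z)$ is handed to us for free whenever $\cM^*(z)$ holds. Hence on the complement of an $o(1)$-probability event, for every $z\in[z_-,\pi(x)]$ we have: either $\cM^*(z)$ fails — in which case $\cM^*(z)^c$ holds and we are done — or $\cM^*(z)$ holds, whence $\cM(z)$ holds, and the other two conditions of $\cQ(z)$ hold by the above, so $\cQ(z)$ holds. This gives $\cQ(z)\cup\cM^*(z)^c$ for all $z$ simultaneously with high probability, as required.
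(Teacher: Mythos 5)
Your primary route is exactly the paper's: Lemma~\ref{lem:easy} is deduced from \Lm{skzero} (which handles the ``no $w>z_0^5$'' condition, by the same unconditional first-moment count over primes that you sketch) and from \Lm{skeasy} (which handles ``$s_k(z)=0$ for $k\ge 4u_0$''), plus the trivial observation that $\cM(z)$ is free under $\cM^*(z)$. So citing \Lm{skeasy} is not a cop-out; it is the intended argument, and your decomposition matches the paper's.

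Your fallback computation for the ``$k\ge 4u_0$'' part, however, genuinely fails, and you are right to be nervous about it. The bound $\sum_q \binom{N}{4u_0}q^{-4u_0}$ with $q\ge q_{z_-}=z_0^{1+o(1)}$ and $N=z_0^{2+o(1)}$ gives a summand of order $\big(eN/(4u_0 q)\big)^{4u_0}\approx (z_0^{1+o(1)}/u_0)^{4u_0}$, whose logarithm is roughly $4u_0\log z_0 - 4u_0\log u_0\approx 4u_0^2\log u_0$, which tends to $+\infty$. The missing ingredient is precisely the hypothesis $\cM^*(z)$ (equivalently $\cM^*(w)$, $w\ge z$): it gives $m(w)\le C_0 w$, so the relevant binomial is on $m(w)$ trials, not $N$ trials, and the mean is $m(w)\cdot O(1/w)=O(1)$ rather than $N/q\approx z_0$. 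With this replacement the per-prime probability becomes $\le (2C_0)^{4u_0}/(4u_0)!$, which by Stirling and $\log z_0=(1+o(1))u_0\log u_0$ beats the union bound over the $\le z_0^3$ primes in $[z_-,z_0^3]$ (the range $w>z_0^3$ is handled separately via \Lm{skzero}(c)). You gesture at this when you say one should ``control the conditional probabilities along the revealing process rather than just the static bound,'' but the specific point --- replace $N$ by $m(w)\le C_0 w$, which is what $\cM^*$ buys you --- is what makes the computation close, and it is not spelled out in your proposal.
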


In other words, the lemma above says that the probability that there exists $z\in [z_-,\pi(x)]$
such that $\cM^*(z)$ holds but $\cQ(z)$ does not is $o(1)$ as $x\to\infty$.

\begin{lemma}\label{lem:track:K}
 Let\/ $z\in [z_-, z_+]$. If\/ $\cQ(z)$ holds and\/ $\cT_k(z)$ holds for all\/ $k\ge 2$,
 then\/ $\cK(z)$ holds.
\end{lemma}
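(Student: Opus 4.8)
The plan is to bound the sum $\sum_{k\ge 2} 2^k s_k(z)$ by replacing each $s_k(z)$ with the upper bound $(1+\eps(k,z))\ts_k(z)$ guaranteed by the events $\cT_k(z)$, and then summing the resulting series. First I would note that $\cQ(z)$ tells us $s_k(z) = 0$ for all $k \ge 4u_0$, so the sum is finite and we only need to control $2 \le k < 4u_0$. For these $k$, writing $r := m(z)/z$, we have from \Df{eps} that $\eps(k,z)\ts_k(z) = \dfrac{\eps_1^k k!}{\Lambda(z)} \cdot \dfrac{m(z)}{k(k-1)}e^{-r}\sum_{\ell \ge k-1}\frac{r^\ell}{\ell!}$, and from~\eqref{eq:track:m}/\eqref{eq:mab} (which follow from $\cM(z)$, part of $\cQ(z)$) that $r = m(z)/z \le C_0$ and that $\Lambda(z) \ge \delta'$ for some fixed $\delta' > 0$ depending only on $\eta, \eps_0, \delta$ (here using~\eqref{eq:Lambda:delta:plus:littleoone}, valid since $z \in [z_-,z_+]$). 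The point is that $\Lambda(z)$ is bounded below and $m(z)/z$ is bounded above by absolute constants on the critical interval, so $\eps(k,z)$ is at most an absolute constant times $\eps_1^k k!$.

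Next I would estimate the two pieces. For the main term, $\sum_{k\ge 2} 2^k \ts_k(z)$: since $\ts_k(z) = \dfrac{m(z)}{k(k-1)} e^{-r} \sum_{\ell \ge k-1}\frac{r^\ell}{\ell!} \le m(z)\, e^{-r}\sum_{\ell\ge k-1}\frac{r^\ell}{\ell!} \le m(z) \dfrac{r^{k-1}}{(k-1)!}$ (bounding the tail of the exponential series by its first term times $e^r$, or more crudely $\sum_{\ell\ge k-1} r^\ell/\ell! \le e^r r^{k-1}/(k-1)!$ is false — rather use $\sum_{\ell \ge k-1} r^\ell/\ell! \le \frac{r^{k-1}}{(k-1)!} e^r$), we get $\sum_{k\ge2} 2^k \ts_k(z) \le m(z)\sum_{k \ge 2} 2^k \frac{r^{k-1}}{(k-1)!} = 2m(z) e^{2r} \le 2 e^{2C_0} m(z)$; this is already not quite of the right shape, so I would instead be more careful and use the exact identity $\sum_{k\ge 2}\ts_k(z) = $ (number of vertices of degree $\ge 2$) together with the explicit form to get $\sum_k 2^k \ts_k(z)$ close to $m(z)(e^r-1)e^{\text{something}}$, aiming for a clean bound like $\le (1 + o(1)) e^{C_0} m(z)$ or better — the exponent $C_0$ in~\eqref{def:K} is $\alpha((1+\eps_0)\eta) \ge r$, which is exactly what makes this work. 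For the error term, $\sum_{k \ge 2} 2^k \eps(k,z)\ts_k(z) \le \dfrac{1}{\delta'}\sum_{2 \le k < 4u_0} 2^k \eps_1^k k! \cdot \ts_k(z)$, and since $\ts_k(z) \le m(z) r^{k-1}/(k-1)!$ the $k!$ cancels against $1/(k-1)!$ leaving $\le \dfrac{m(z)}{\delta' r}\sum_{k\ge 2}(2\eps_1 r)^k k$, which converges (and is small) provided $2\eps_1 C_0 < 1$ — and indeed $\eps_1 < \frac{\eps_0}{16} e^{-C_0}$ from~\eqref{def:small} forces $\eps_1$ very small relative to $C_0$, so this error sum is $O(\eps_1 m(z)) = o(m(z))$, easily absorbed.

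Combining the two estimates gives $\sum_{k\ge2} 2^k s_k(z) \le \sum_{k\ge2}2^k(1+\eps(k,z))\ts_k(z) \le 2e^{C_0} m(z)$ for $x$ large, which is precisely~\eqref{def:K}; together with $\cQ(z)$ (which holds by hypothesis) this is the definition of $\cK(z)$. The main obstacle I anticipate is getting the constant in the main-term bound $\sum_k 2^k \ts_k(z) \lesssim e^{C_0} m(z)$ to come out with exactly the factor $2e^{C_0}$ rather than something larger like $e^{2C_0}$: the crude bound $\ts_k(z) \le m(z) r^{k-1}/(k-1)!$ loses a factor $e^r$, so one must instead exploit the precise summation $\sum_{\ell \ge k-1} r^\ell/\ell!$ and the $\frac{1}{k(k-1)}$ prefactor more cleverly — perhaps by swapping the order of summation over $k$ and $\ell$, writing $\sum_{k\ge 2} \frac{2^k}{k(k-1)}\sum_{\ell\ge k-1}\frac{r^\ell}{\ell!} = \sum_{\ell \ge 1}\frac{r^\ell}{\ell!}\sum_{k=2}^{\ell+1}\frac{2^k}{k(k-1)}$ and bounding the inner sum by $\sum_{k\ge 2}\frac{2^k}{k(k-1)} = 2(2\log 2 - 1) < 1$... actually that series equals $\sum_{k \ge 2}\frac{2^k}{k(k-1)}$ which diverges, so one genuinely needs the cutoff at $k = \ell+1$ and a bound like $\sum_{k=2}^{\ell+1}\frac{2^k}{k(k-1)} \le \frac{2^{\ell+2}}{\ell(\ell+1)}\cdot C$, giving $\sum_\ell \frac{r^\ell}{\ell!}\cdot\frac{C 2^{\ell+2}}{\ell(\ell+1)} = 4C\sum_\ell \frac{(2r)^\ell}{(\ell+1)!} = \frac{2C}{r}(e^{2r}-1-2r)$, times $m(z)$ — still $e^{2r}$. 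So honestly the cleanest route is probably to accept $e^{2C_0}$ in an intermediate step and then observe that~\eqref{def:K} as stated may already have enough slack, or to re-examine whether the $2^k$ weighting combined with the $\frac{1}{k(k-1)}$ damping and the \emph{full} Poisson tail (not the first-term bound) yields $e^{C_0}$; I would allocate most of the write-up effort to pinning this constant down, and the rest is routine.
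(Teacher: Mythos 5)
Your overall strategy is exactly the paper's: split $s_k(z)\le(1+\eps(k,z))\ts_k(z)$ using $\cT_k(z)$, bound the error term $\sum_k 2^k\eps(k,z)\ts_k(z)$ by $O(\eps_1 m(z))$ (the paper isolates this as \Lm{sum:eps:tsk}, getting $\le\eps_1 m(z)$), and bound $\sum_k 2^k\ts_k(z)$ by swapping the order of summation. You got stuck on the main term, and the place you got stuck has a short fix.

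There are two interacting issues. First, in your final swapped expression you dropped the $e^{-r}$ prefactor coming from the definition~\eqref{eq:tsk}: the quantity you want to bound is
\[
\sum_{k\ge 2}2^k\ts_k(z)
= m(z)\,e^{-r}\sum_{\ell\ge 1}\frac{r^\ell}{\ell!}\sum_{k=2}^{\ell+1}\frac{2^k}{k(k-1)},
\]
and even with your somewhat awkward bound $\sum_{k=2}^{\ell+1}\frac{2^k}{k(k-1)}\le C\cdot\frac{2^{\ell+2}}{\ell(\ell+1)}$, the $e^{-r}$ out front cancels one factor of $e^r$ and you land at $O(m(z)e^{r})$, not $m(z)e^{2r}$ as you conclude. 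Second, and more cleanly, the right bound for the inner sum is simply
\[
\sum_{k=2}^{\ell+1}\frac{2^k}{k(k-1)}\le 2^\ell\qquad(\ell\ge1),
\]
which is an easy induction: equality at $\ell=1$, and the increment from $\ell-1$ to $\ell$ is $\frac{2^{\ell+1}}{\ell(\ell+1)}\le 2^{\ell-1}$ whenever $\ell(\ell+1)\ge4$, i.e.\ for $\ell\ge2$. Plugging this in gives
\[
\sum_{k\ge 2}2^k\ts_k(z)\le m(z)\,e^{-r}\sum_{\ell\ge1}\frac{(2r)^\ell}{\ell!}\le m(z)\,e^{-r}e^{2r}=m(z)e^{r}\le e^{C_0}m(z),
\]
using $\cM(z)$ (via \Ob{m:bounded}) in the last step. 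Combined with the error term $\le\eps_1 m(z)$, this is well within the target $2e^{C_0}m(z)$, and since $\cQ(z)$ holds by hypothesis, $\cK(z)$ follows. So the ``constant-chasing'' you were worried about is resolved by the clean inner-sum bound together with not losing the $e^{-r}$; neither requires more than a line.
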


Before proceeding to the proofs of Lemmas~\ref{lem:easy} and~\ref{lem:track:K}, let us give a
simple but important application of the event $\cK(z)$. Recall from \Al{algorithm}
that $d(z)$ denotes the number of rows of $M(z)$ that contain a 1 in column~$z$.
The following lemma shows that the distribution of $d(z)$ is close to that of a Poisson
random variable with mean $m(z)/z$.

\begin{lemma}\label{lem:dz}
 Let\/ $z\in [z_-,z_+]$. If\/ $\cK(z)$ holds, then
 \begin{equation}\label{eq:lem:dz}
  \Prb\big(d(z)=k\mid\cF_z\big)
  =\frac{\big(1+o(1)\big)^k}{k!}e^{-m(z)/z}\bigg(\frac{m(z)}{z}\bigg)^k+\frac{O(1)}{z}
 \end{equation}
 for every\/ $k\ge 0$.
\end{lemma}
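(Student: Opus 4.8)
The plan is to understand the conditional law of the multiset $\{a_i : i \in M(z)\}$ given $\cF_z$, and to show that, up to the small errors we can afford, the events $\{A_{iz}=1\}$ for $i\in M(z)$ behave like independent events of probability $p_z(xq_z^{-a_i})$ for suitable exponents $a_i$, which by \Co{pj} are each $(1+o(1))/z$. Concretely, $d(z)=\sum_{i\in M(z)}A_{iz}$, so if the $A_{iz}$ were genuinely independent Bernoulli$((1+o(1))/z)$ variables, then $d(z)$ would be distributed as $\Bin(m(z),(1+o(1))/z)$, whose point probabilities are exactly the right-hand side of~\eqref{eq:lem:dz} (the Poisson approximation to the binomial introduces a further multiplicative $(1+o(1))^k$ and an additive $O(1/z)$, using $m(z)\le C_0 z$ from~\eqref{eq:mab} and \Ob{m:bounded}, so that the mean $m(z)/z$ is bounded). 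So the substance of the lemma is controlling the dependence between the rows.

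First I would invoke the description given at the end of \Sc{outline}: conditioned on $\cF_z$ (or $\cF_z^+$), the matrix $A$ restricted to $M(z)\times[z,\pi(x)]$ is uniform over all choices of the numbers $\{a_i : i\in M(z)\}$ whose active column sums are compatible with the revealed data. Crucially, $\cF_z$ does \emph{not} reveal $d(z)$, only the structure of $\cC_A(z)$ and the column sums of its vertices (all in $[z+1,\pi(x)]$). Thus for $i\in M(z)$ the row $(A_{ij})_{j\ge z}$ is constrained only by: (i) the identity of the set $e'_i=\{j>z : A_{ij}=1\}$ being fixed (this is part of the 2-core data), and (ii) nothing further regarding column $z$. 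Actually the cleanest route is to condition further on the exact rows $e'_i$ for $i\in M(z)$ — this is $\cF_z$-measurable up to relabelling — and then ask, for each $i$, whether $A_{iz}=1$. Using~\eqref{eq:probs}, the conditional probability that $A_{iz}=1$ given the higher-index entries of row $i$ is exactly $p_z(xq_z^{-a_i})$ where $a_i=\sum_{j>z, A_{ij}=1}\log q_j/\log q_z$; by \Ob{onesinarow} each such $a_i$ satisfies $a_i\le 2u_0=o(u_0)$ only if... — here one must be slightly careful, since \Ob{onesinarow} gives at most $2u_0$ prime factors above $z_-$, so $a_i$ can be as large as $\Theta(u_0)$; but the second part of \Co{pj} gives $p_z(xq_z^{-a}) \le (1+o(1))/z$ for \emph{all} $a\ge 1$, and $p_z(x)=(1+o(1))/z$ for $a=0$, so in every case $p_z(xq_z^{-a_i}) \le (1+o(1))/z$, and moreover $p_z(xq_z^{-a_i}) = (1+o(1))/z$ whenever $a_i = o(u_0)$, which holds for the vast majority of rows since only $O(1)$ of them can have $a_i = \omega(1)$ (as $\sum_i a_i$ is bounded by the total, which... ). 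The honest statement to prove is a two-sided bound on $d(z)$ via stochastic domination.

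The main obstacle, and the heart of the argument, is that the entries $A_{iz}$ across different rows $i\in M(z)$ are \emph{not} independent once we fix the rows $e'_i$: the joint law is uniform over numbers $a_i$ with prescribed large-prime part, and there is no interaction through column $z$ at all — so in fact, conditioned on the large-prime parts $\{e'_i\}_{i\in M(z)}$, the small-prime parts (including the bit $A_{iz}$) \emph{are} independent across $i$, since the $a_i$ were drawn independently and the conditioning is row-by-row. So the real issue is only the \emph{within-row} dependence captured by~\eqref{eq:probs}, which is already handled by \Co{pj}, plus the fact that we have conditioned on $\cF_z$ rather than on the full rows $e'_i$ — but $\cF_z$ is coarser, so this only helps. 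Hence the plan reduces to: (a) write $d(z) = \sum_{i\in M(z)} X_i$ with $X_i = A_{iz}$ conditionally independent given the enriched $\sigma$-algebra, each $X_i\sim\Be(p_i)$ with $p_i\le(1+o(1))/z$ and $p_i=(1+o(1))/z$ outside an $O(1)$-size exceptional set of rows; (b) deduce $d(z)$ is stochastically sandwiched between $\Bin(m(z)-O(1),(1-o(1))/z)$ and $\Bin(m(z),(1+o(1))/z)$ plus an independent $O(1)$-bounded contribution; (c) compute the point probabilities of these binomials via the Poisson approximation, absorbing the $O(1)$ exceptional rows and the binomial-to-Poisson discrepancy into the claimed $(1+o(1))^k$ factor and the additive $O(1/z)$ term; and (d) check that the hypothesis $\cK(z)$ enters only to guarantee $\cM(z)$ (hence $m(z)/z$ bounded via~\eqref{eq:mab}, and $m(z)/z$ close to $\alpha(\eta\Lambda(z))$) — the bound~\eqref{def:K} and the conditions in $\cQ(z)$ are presumably used to control the averaged error $\sum_i(p_i - 1/z)$, ensuring that the number of exceptional rows with $a_i$ large really is negligible. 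I would phrase step~(c) as: for bounded $\lambda$ and any $n$ with $n=(1+o(1))m(z)$ and $p=(1+o(1))/z$, $\binom{n}{k}p^k(1-p)^{n-k} = (1+o(1))^k e^{-\lambda}\lambda^k/k!$ uniformly in $k\le n$, with the $O(1/z)$ needed only to cover the transition between the regime $k=O(1)$ and the tail, exactly as in the statement.
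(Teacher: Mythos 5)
Your overall strategy matches the paper's: condition on the full matrix $A_M[\,\cdot\,,(z,\pi(x)]]$ to the right of column $z$ (which refines $\cF_z$ and is row-by-row data), observe that the entries $\{A_{iz}:i\in M(z)\}$ are then conditionally independent Bernoullis with parameters $p_z(xq_z^{-\alpha_i})$, use \Co{pj} to see that each parameter is $\le(1+o(1))/z$ and equals $(1+o(1))/z$ for the non-exceptional rows, and use $\cK(z)$ to control how many rows are exceptional. The paper does exactly this and then applies Le Cam's inequality (\Lm{LeCam}). However, your plan has two real problems.

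First, step (b) of your plan — deduce that $d(z)$ is \emph{stochastically sandwiched} between two binomials and then read off point probabilities — does not work. Stochastic domination $X\le_{\mathrm{st}}Y$ controls distribution functions, not point masses: it gives $\Prb(X\ge k)\le\Prb(Y\ge k)$ but tells you nothing about $\Prb(X=k)$ versus $\Prb(Y=k)$. What you actually need is a total-variation bound between $d(z)$ and a Poisson, and that is precisely what \Lm{LeCam} supplies, with error $2\sum_i p_i^2$. Once you note that $\sum_i p_i=(1+o(1))m(z)/z$ and $\sum_i p_i^2\le\max_i p_i\cdot\sum_i p_i=(1+o(1))m(z)/z^2=O(1/z)$ (using $m(z)=O(z)$ from \Ob{m:bounded}), Le Cam gives $\Prb(d(z)=k\mid\cF_z)=e^{-\mu}\mu^k/k!+O(1/z)$ with $\mu=(1+o(1))m(z)/z$, and the claimed form follows by rewriting $e^{-\mu}\mu^k$. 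There is no need to separate out the exceptional rows at all.

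Second, the claim that ``only $O(1)$ of them can have $a_i=\omega(1)$'' is wrong, and you seem to sense this since you trail off mid-sentence. The event $\cK(z)$ gives $\sum_i\alpha_i=O(m(z))$ (via $\cQ(z)$ restricting $S(z)$ to primes below $z_0^{5+o(1)}\le q_z^6$, so each $\alpha_i\le 6\sum_{w>z}A_{iw}$, and then \eqref{def:K}), and pigeonhole only shows that the proportion of rows with $\alpha_i\ne o(u_0)$ is $o(1)$ — which could be as large as $m(z)/\log\log m(z)$, say, nowhere near $O(1)$. If you insist on splitting $d(z)=d_1+d_2$ with $d_2$ the exceptional contribution, you find $\E[d_2]=o(m(z))\cdot O(1/z)=o(1)$, hence $\Prb(d_2\ge1)=o(1)$ — an error that is $o(1)$ but \emph{not} $O(1/z)$, so it cannot be absorbed into the additive error term. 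This is a second reason the decomposition-plus-sandwiching plan cannot close: the exceptional rows genuinely cannot be removed at the cost of only $O(1/z)$. The Le Cam approach sidesteps this entirely because the exceptional rows only perturb $\mu$ by an $o(1)$ multiplicative factor (absorbed into $(1+o(1))^k$), and perturb $\sum_i p_i^2$ by a harmless amount.
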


In the proof of \Lm{dz} we shall use the following bound on the sum of independent Bernoulli random
variables due to Le Cam~\cite{Lc}.

\begin{lemma}[Le Cam, 1960]\label{lem:LeCam}
 Let\/ $X_1,\dots,X_n$ be independent Bernoulli random variables, and let\/ $X:=\sum_{i=1}^n X_i$.
 Then
 \[
  \sum_{k\ge 0}\bigg|\Prb(X=k)-\frac{e^{-\mu}\mu^k}{k!}\bigg|\le 2\sum_{i=1}^n p_i^2,
 \]
 where\/ $p_i:=\Prb(X_i=1)$ and\/ $\mu:=\E[X]=\sum_{i=1}^n p_i$.
\end{lemma}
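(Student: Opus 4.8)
The plan is to recognise the left-hand side as twice a total variation distance and to bound it by a coupling argument. Recall that for two probability distributions $\mu,\nu$ supported on $\{0,1,2,\dots\}$ one has $d_{\mathrm{TV}}(\mu,\nu)=\tfrac12\sum_{k\ge0}|\mu(k)-\nu(k)|$, so the quantity to be bounded is exactly $2\,d_{\mathrm{TV}}\big(\mathcal{L}(X),\Po(\mu)\big)$, where $\mathcal{L}(X)$ denotes the law of $X$. I would then introduce independent random variables $Y_1,\dots,Y_n$ with $Y_i\sim\Po(p_i)$ and set $Y:=\sum_{i=1}^n Y_i$; since a sum of independent Poisson variables is Poisson with the sum of the means, $Y\sim\Po(\mu)$, and it suffices to prove $d_{\mathrm{TV}}\big(\mathcal{L}(X),\mathcal{L}(Y)\big)\le\sum_{i=1}^n p_i^2$.

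For the coupling step I would use two standard facts. First, $d_{\mathrm{TV}}\big(\mathcal{L}(U),\mathcal{L}(V)\big)\le\Prb(U\ne V)$ for every coupling of $U$ and $V$ on a common probability space. Second, for each parameter $p_i$ there is a coupling (the maximal coupling) of a $\Be(p_i)$ variable $X_i$ and a $\Po(p_i)$ variable $Y_i$ with $\Prb(X_i\ne Y_i)=d_{\mathrm{TV}}\big(\Be(p_i),\Po(p_i)\big)$. Realising these couplings independently over $i$ on one probability space — so that the pairs $(X_i,Y_i)$ are independent across $i$ — the pair $(X,Y)=\big(\sum_i X_i,\sum_i Y_i\big)$ is a coupling of $\mathcal{L}(X)$ and $\mathcal{L}(Y)$, and a union bound gives $\Prb(X\ne Y)\le\sum_i\Prb(X_i\ne Y_i)=\sum_i d_{\mathrm{TV}}\big(\Be(p_i),\Po(p_i)\big)$. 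This subadditivity of total variation under independent convolution is the only conceptual point in the argument.

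It then remains to compute $d_{\mathrm{TV}}\big(\Be(p),\Po(p)\big)$ for a single $p\in[0,1]$. Here $\Be(p)$ puts mass $1-p$ on $0$ and $p$ on $1$, while $\Po(p)$ puts mass $e^{-p}$ on $0$, $pe^{-p}$ on $1$, and $1-e^{-p}-pe^{-p}$ on $\{2,3,\dots\}$. Using $e^{-p}\ge1-p$ and $e^{-p}\le1$, the sum $\sum_{k\ge0}\big|\Be(p)(k)-\Po(p)(k)\big|$ equals $(e^{-p}-1+p)+p(1-e^{-p})+(1-e^{-p}-pe^{-p})=2p(1-e^{-p})$, so $d_{\mathrm{TV}}\big(\Be(p),\Po(p)\big)=p(1-e^{-p})\le p^2$ by $1-e^{-p}\le p$. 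Combining this with the coupling bound yields $d_{\mathrm{TV}}\big(\mathcal{L}(X),\mathcal{L}(Y)\big)\le\sum_{i=1}^n p_i^2$, and hence $\sum_{k\ge0}\big|\Prb(X=k)-e^{-\mu}\mu^k/k!\big|=2\,d_{\mathrm{TV}}\big(\mathcal{L}(X),\mathcal{L}(Y)\big)\le2\sum_{i=1}^n p_i^2$, as required. The main (and rather mild) obstacle is just to keep the independence structure of the coupling straight when passing to the sums; there is no analytic difficulty, only the elementary per-coordinate computation above.
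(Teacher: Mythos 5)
Your argument is correct. Note that the paper does not prove this lemma at all --- it is quoted directly from Le Cam's 1960 paper --- so there is no in-text proof to compare against; what you have written is the standard coupling proof of the Poisson approximation bound, and every step checks out: the identification of the left-hand side as $2\,d_{\mathrm{TV}}$, the existence of a maximal coupling achieving $\Prb(X_i\ne Y_i)=d_{\mathrm{TV}}(\Be(p_i),\Po(p_i))$, the subadditivity under independent convolution via the union bound, and the per-coordinate computation $d_{\mathrm{TV}}(\Be(p),\Po(p))=p(1-e^{-p})\le p^2$ (the sign resolutions using $e^{-p}\ge 1-p$ and $e^{p}\ge 1+p$ are both valid). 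This yields exactly the constant $2$ in the stated inequality.
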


We shall also use the following fact on numerous occasions throughout the paper.

\begin{obs}\label{obs:m:bounded}
 If\/ $\cM(z)$ holds then\/ $m(z)/z\le C_0\Lambda(z)\le C_0$.
\end{obs}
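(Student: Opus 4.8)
The plan is to use only two facts recalled in the text: that the function $g(w) := w e^{-\Ein(w)}$ is strictly increasing on $[0,\infty)$ with $g(w) \to e^{-\gamma}$ (so $g$ maps $[0,\infty)$ bijectively onto $[0,e^{-\gamma})$, with inverse $\alpha$ on that range, by~\eqref{def:alpha}), and that $\alpha$ is convex with $\alpha(0)=0$ and increasing from $0$ to $\infty$ on $[0,e^{-\gamma})$. I will also use $\Lambda(z)\le 1$, which is immediate from the definitions~\eqref{def:J} and~\eqref{def:Lambda}: taking $y=q_z$ in~\eqref{def:J} gives $\pi(q_z)x/\Psi(x,q_z)=zx/\Psi(x,q_z)\ge J(x)$, i.e.\ $\Lambda(z)\le 1$.

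Now suppose $\cM(z)$ holds. By~\eqref{eq:Mevent} and $\Lambda(z)\le1$ we have
\[
 g\!\left(\frac{m(z)}{z}\right) \;\le\; (1+\eps_0)\eta\,\Lambda(z) \;\le\; (1+\eps_0)\eta \;<\; e^{-\gamma},
\]
the last inequality by~\eqref{def:small}. Since $(1+\eps_0)\eta\,\Lambda(z)$ lies in $[0,e^{-\gamma})$, which is the range of $g$ on $[0,\infty)$, I may apply $\alpha=g^{-1}$, which is increasing, to the first inequality to obtain
\[
 \frac{m(z)}{z} \;=\; \alpha\!\left(g\!\left(\frac{m(z)}{z}\right)\right) \;\le\; \alpha\big((1+\eps_0)\eta\,\Lambda(z)\big).
\]
Finally, since $\alpha$ is convex with $\alpha(0)=0$ and $\Lambda(z)\in[0,1]$, convexity gives $\alpha(\Lambda(z)\cdot t)\le \Lambda(z)\,\alpha(t)$ for every $t\ge 0$; applying this with $t=(1+\eps_0)\eta$ and recalling $C_0=\alpha((1+\eps_0)\eta)$ from~\eqref{def:C0} yields
\[
 \frac{m(z)}{z} \;\le\; \alpha\big((1+\eps_0)\eta\,\Lambda(z)\big) \;\le\; \Lambda(z)\,\alpha\big((1+\eps_0)\eta\big) \;=\; C_0\,\Lambda(z) \;\le\; C_0,
\]
using $\Lambda(z)\le 1$ once more. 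This is exactly the claimed chain of inequalities.

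There is no real obstacle here: the statement is essentially a rephrasing of the definition of the event $\cM(z)$ together with the monotonicity and convexity of $\alpha$ already established after~\eqref{def:alpha}. The only point requiring a moment's care is checking that the argument $(1+\eps_0)\eta\,\Lambda(z)$ at which we invert $g$ lies in the interval $[0,e^{-\gamma})$ on which $\alpha$ is defined, which follows from $\Lambda(z)\le 1$ and the constraint $(1+\eps_0)\eta<e^{-\gamma}$ in~\eqref{def:small}.
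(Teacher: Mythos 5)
Your argument is correct and is essentially the paper's own proof: apply the increasing inverse $\alpha$ to the bound $g(m(z)/z)\le(1+\eps_0)\eta\Lambda(z)$ supplied by $\cM(z)$, use convexity of $\alpha$ with $\alpha(0)=0$ to pull the factor $\Lambda(z)\le 1$ outside, and identify $C_0$. You have merely spelled out the invertibility and domain checks that the paper leaves implicit.
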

\begin{proof}
Recall from \eqref{def:Ein} and~\eqref{def:alpha} that $\alpha(w)$ and $w e^{-\Ein(w)}$ are
strictly increasing functions, that $\alpha(w)$ is convex, and that
$\alpha(w) e^{-\Ein(\alpha(w))}=w$. It follows that, if $\cM(z)$ holds, then
\[
 \frac{m(z)}{z}\le\alpha\big((1+\eps_0)\eta\Lambda(z)\big)
 \le\alpha\big((1+\eps_0)\eta\big)\Lambda(z)=C_0\Lambda(z)\le C_0,
\]
as claimed, since $C_0=\alpha\big((1+\eps_0)\eta\big)$ and $\Lambda(z)\le 1$.
\end{proof}

\begin{proof}[Proof of \Lm{dz}]
Recall that
\[
 p_z=p_z(x)=\frac{\tpsi(x,q_z)-\tpsi(x,q_{z-1})}{\tpsi(x,q_z)}
\]
denotes the probability that a uniformly chosen random number in $[x]$ is divisible by $q_z$
to an odd power, conditioned on the event that all larger prime factors occur to an even power.
We shall prove that the lemma holds even when, instead of conditioning on~$\cF_z$, we in fact
condition on the entire matrix to the right of column~$z$. In this case,
$d(z)$ is a sum of independent Bernoulli random variables $\{X_i:i\in M(z)\}$ with
$\Prb(X_i=1)=p_z(xq_z^{-\alpha_i})$, where
\[
 q_z^{\alpha_i} =\prod_{w>z,\,A_{iw}=1}q_w
\]
is the product of the primes $q_w$ greater than $q_z$ that divide $a_i$ an odd number of times.

However, the event $\cK(z)$ implies that
\[
 \sum_{i\in M(z)}\alpha_i\le\sum_{i\in M(z)\ }\sum_{w>z,\,A_{iw}=1}6=\sum_{k\ge 2} 6k s_k(z)=O(m(z)),
\]
where the first step follows since $S(z)$ contains no prime $q_w$ with $q_w\ge q_z^6$ (since
$q_z^6\ge z_-^6\ge q_{z_0^5}=z_0^{5+o(1)}$), and the last follows from~\eqref{def:K}.
By the pigeonhole principle, it
follows that $\alpha_i=o(u_0)$ for all but a $o(1)$-proportion
of the $i\in M(z)$. Now by \Co{pj} we have $\Prb(X_i=1)=(1+o(1))/z$ whenever
$\alpha_i=o(u_0)$, and $\Prb(X_i=1)\le(1+o(1))/z$ for every $i\in M(z)$. Thus
\[
 \sum_{i\in M(z)}\Prb(X_i=1)=\big(1+o(1)\big)\frac{m(z)}{z}\quad\text{and}\quad
 \sum_{i\in M(z)}\Prb(X_i=1)^2=\big(1+o(1)\big)\frac{m(z)}{z^2}
\]
whenever $\cK(z)$ holds. As $m(z)=O(z)$ by \Ob{m:bounded} (since $\cK(z)$ implies $\cM(z)$),
it follows by \Lm{LeCam} that
\[
 \sum_{k\ge 0}\bigg|\Prb\big(d(z)= k\mid\cF_z\big)-\frac{e^{-\mu}\mu^k}{k!}\bigg|=\frac{O(1)}{z},
\]
where $\mu:=\E[d(z)]=(1+o(1))m(z)/z$, as required.
\end{proof}

For the next result it will be useful to have the following simple estimate
on the function $\alpha(w)$ defined in \eqref{def:alpha}.
\begin{obs}
For $0\le w\le 0.2$ we have
\begin{equation}\label{eq:alpha:bounds}
 w\le\alpha(w)\le w+2w^2.
\end{equation}
\end{obs}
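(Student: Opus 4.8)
The plan is to analyze the function $g(w) := w e^{-\Ein(w)}$, whose inverse is $\alpha$, on a neighbourhood of $0$, and to convert estimates on $g$ into the claimed two-sided bound on $\alpha(w)$ for $0 \le w \le 0.2$. Recall from~\eqref{def:Ein} that $\Ein(w) = \int_0^w \frac{1-e^{-t}}{t}\,dt$, which for small $w$ has the Taylor expansion $\Ein(w) = w - \frac{w^2}{4} + O(w^3)$ obtained by integrating $\frac{1-e^{-t}}{t} = 1 - \frac{t}{2} + O(t^2)$ termwise. Hence $g(w) = w \exp\big(-w + \frac{w^2}{4} + O(w^3)\big)$, so that $\log g(w) = \log w - w + O(w^2)$.

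First I would record that $\alpha$ maps $[0, \infty)$ bijectively onto $[0, e^{-\gamma})$ (as noted after~\eqref{def:alpha}), so $\alpha(w) \le 0.2$ corresponds to $w = g(\alpha(w))$ lying in $[0, g(0.2)]$, and a quick numerical check shows $g(0.2) = 0.2\, e^{-\Ein(0.2)} > 0.2$ by enough margin that the two-sided bound~\eqref{eq:alpha:bounds} is meaningful throughout $0 \le w \le 0.2$; alternatively one can just verify the inequality at the endpoint and use monotonicity. The lower bound $w \le \alpha(w)$ is essentially immediate: since $\Ein$ is nonnegative and increasing for $w \ge 0$ (its integrand is nonnegative), we have $g(w) = w e^{-\Ein(w)} \le w$ for all $w \ge 0$, and applying the increasing function $\alpha = g^{-1}$ gives $\alpha(g(w)) = w \le \alpha(w)$ — wait, more directly: from $g(\alpha(w)) = w$ and $g(v) \le v$ we get $w = g(\alpha(w)) \le \alpha(w)$, which is exactly the left inequality.

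For the upper bound $\alpha(w) \le w + 2w^2$, I would argue by monotonicity: since $g$ is strictly increasing, it suffices to show $g(w + 2w^2) \ge w$ for all $0 \le w \le 0.2$, i.e.\ that $\alpha$ evaluated at $g(w+2w^2)$, namely $w + 2w^2$, dominates $\alpha(w)$. Concretely, writing $v = w + 2w^2$ we need $v\, e^{-\Ein(v)} \ge w$, equivalently $e^{-\Ein(v)} \ge \frac{w}{v} = \frac{1}{1 + 2w}$, equivalently $\Ein(v) \le \log(1 + 2w)$. Now $\Ein(v) \le v$ (again since the integrand is at most $1$), so it is enough to check $w + 2w^2 \le \log(1+2w)$ — but this is \emph{false} for small $w$ since $\log(1+2w) = 2w - 2w^2 + O(w^3) < w + 2w^2$. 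So the crude bound $\Ein(v) \le v$ is too lossy; instead I would use the sharper estimate $\Ein(v) \le v - \frac{v^2}{4} + \frac{v^3}{18}$ (or simply $\Ein(v) \le v - cv^2$ for a suitable constant $c$ valid on the relevant range), which follows from bounding the integrand $\frac{1-e^{-t}}{t} \le 1 - \frac{t}{2} + \frac{t^2}{6}$ and integrating. With $v = w + 2w^2 \le 0.2 + 0.08 = 0.28$, one then has $\Ein(v) \le v - \tfrac14 v^2 + \tfrac1{18} v^3 \le w + 2w^2 - \tfrac14 w^2 + (\text{lower order}) $, and a direct comparison with $\log(1+2w) = 2w - 2w^2 + \tfrac{8}{3}w^3 - \cdots$ shows the required inequality $\Ein(w+2w^2) \le \log(1+2w)$ holds on $[0,0.2]$.

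The main obstacle is purely one of careful real-analytic bookkeeping: the naive bounds on $\Ein$ are not tight enough, so one must expand both $\Ein(w + 2w^2)$ and $\log(1+2w)$ to second order (in fact the $w^2$ coefficients are what decide the inequality — $\Ein$ contributes a favourable $-\tfrac14 w^2$ while the chain rule through $v = w+2w^2$ contributes $+2w^2$ from the linear term, and $\log(1+2w)$ contributes $-2w^2$, so the margin is $-\tfrac14 w^2$ against $0$, safely negative) and then control the cubic and higher remainders uniformly on the compact interval $[0, 0.28]$. Since everything is explicit and the interval is bounded away from any singularity, the remainder terms are trivially controlled, e.g.\ by noting all the relevant functions are smooth with bounded derivatives there; so the proof reduces to writing out these Taylor estimates and a one-line monotonicity argument, and checking the resulting polynomial inequality on $[0,0.2]$, which can be done by hand or by observing it holds at the endpoints with a sign-definite derivative in between.
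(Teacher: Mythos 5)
Your overall strategy — reduce the upper bound to the inequality $g(w+2w^2)\ge w$ where $g(v)=v e^{-\Ein(v)}$, using monotonicity of $g$ — is exactly the paper's, and your lower bound is also exactly the paper's. However, you have a genuine arithmetic error in the middle which sends you down an unnecessarily complicated path. You claim that after reducing to $\Ein(v)\le\log(1+2w)$ (with $v=w+2w^2$), the crude bound $\Ein(v)\le v$ would require $w+2w^2\le\log(1+2w)$, and that this ``is false for small $w$ since $\log(1+2w)=2w-2w^2+O(w^3)<w+2w^2$.'' That last comparison is wrong: $2w-2w^2>w+2w^2$ iff $w<\tfrac14$, so for all $0\le w\le 0.2$ the inequality $w+2w^2\le\log(1+2w)$ does hold (e.g.\ at $w=0.2$: $0.28<\log 1.4\approx 0.3365$, and the difference $\log(1+2w)-w-2w^2$ is $0$ at $w=0$ with positive derivative $1$ there). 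So the crude bound on $\Ein$ suffices and the whole detour through the sharper estimate $\Ein(v)\le v-\tfrac14 v^2+\tfrac1{18}v^3$ is not needed; moreover, the passage that was supposed to motivate the sharper bound misstates which term ``decides'' the inequality.

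For comparison, the paper avoids logarithms entirely: from $\Ein(v)\le v$ it passes to $e^{-\Ein(v)}\ge e^{-v}\ge 1-v$, and then checks the resulting polynomial inequality
\[
 (w+2w^2)\big(1-(w+2w^2)\big)=w+w^2(1-4w-4w^2)\ge w
\]
directly, which is immediate for $0\le w\le 0.2$ since $1-4w-4w^2\ge 0.04>0$ there. This is shorter and avoids the Taylor bookkeeping you worry about. Your fallback argument via the sharper bound on $\Ein$ is correct if carried out carefully, but it is more work than the problem requires, and it was motivated by a false claim; you should either fix the sign error and use the crude bound, or switch to the paper's $e^{-v}\ge 1-v$ route.
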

\begin{proof}
The lower bound follows from \eqref{def:alpha} as $we^{-\Ein(w)}\le w$ and hence $\alpha(w)\ge w$.
The upper bound also follows from \eqref{def:alpha} as $\Ein(w)\le w$, and
\begin{align*}
 (w+2w^2)e^{-\Ein(w+2w^2)}&\ge (w+2w^2)e^{-(w+2w^2)}\ge (w+2w^2)\big(1-(w+2w^2)\big)\\
 &=w+w^2(1-4w-4w^2)\ge w,
\end{align*}
so $\alpha(w)\le w+2w^2$ for $0\le w\le 0.2$.
\end{proof}

In \Sc{big:z} we shall also need the following weaker bounds on the distribution of $d(z)$
in the range $[z_+,\pi(x)]$.

\begin{lemma}\label{lem:dkvsd1:bigz}
 Let\/ $z\in [z_+,\pi(x)]$. If\/ $\cK(z)$ holds, then
 \[
  \Prb\big(d(z)\ge 1\mid\cF_z\big)\le\big(1+o(1)\big)m(z)p_z(x),
 \]
 and moreover
 \[
  \Prb\big(d(z)=k\mid\cF_z\big)\le\frac{(2\delta\eta)^{k-1}}{k!}\ \Prb\big(d(z)=1\mid\cF_z\big)
 \]
 for every\/ $k\ge 2$.
\end{lemma}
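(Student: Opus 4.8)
The plan is to follow the same route as in the proof of \Lm{dz}: condition on the entire matrix to the right of column~$z$, so that $d(z) = \sum_{i\in M(z)} X_i$ where the $X_i$ are independent Bernoulli with $\Prb(X_i = 1) = p_z(xq_z^{-\alpha_i})$ and $q_z^{\alpha_i} = \prod_{w>z,\,A_{iw}=1}q_w$. For the first bound I would simply use the union bound $\Prb(d(z)\ge 1\mid\cF_z) \le \sum_{i\in M(z)}\Prb(X_i = 1)$, together with the estimate $p_z(xq_z^{-a})\le(1+o(1))p_z(x)$ from \Co{pj} (valid for all $a\ge 1$, hence for all the relevant $\alpha_i$, noting $\alpha_i = 0$ contributes exactly $p_z(x)$). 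This gives $\Prb(d(z)\ge1\mid\cF_z)\le(1+o(1))m(z)p_z(x)$ directly, with no need for a Poisson approximation — which is essential, since in the range $[z_+,\pi(x)]$ we no longer know $m(z)/z$ is bounded away from $0$ and the error term $O(1)/z$ in \Lm{dz} is not affordable.

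For the second bound, the key point is that for a sum of independent Bernoullis one has the elementary inequality
\[
 \Prb(d(z) = k\mid\cF_z) \le \frac{1}{k!}\bigg(\sum_{i\in M(z)}\frac{p_i}{1-p_i}\bigg)^{\!k-1}\Prb(d(z) = 1\mid\cF_z),
\]
or some such comparison: indeed $\Prb(d(z)=k)$ is the $k$th elementary symmetric polynomial evaluated at $p_i/(1-p_i)$ times $\prod(1-p_i)$, and $e_k \le \frac{1}{k!}(e_1)^{k-1}\cdot e_1$-type bounds relate consecutive symmetric functions; alternatively one can use a direct ratio argument, bounding $\Prb(d(z)=k)/\Prb(d(z)=k-1)$. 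So it suffices to show $\sum_{i\in M(z)}\frac{p_i}{1-p_i} \le 2\delta\eta$. Since each $p_i = p_z(xq_z^{-\alpha_i}) \le (1+o(1))p_z(x) = z^{-1+o(1)} = o(1)$ by \Co{pj}, we have $\frac{p_i}{1-p_i} = (1+o(1))p_i$, so $\sum_i \frac{p_i}{1-p_i} \le (1+o(1))m(z)p_z(x) \le (1+o(1))\frac{m(z)}{z}$. Now $\cK(z)$ implies $\cM(z)$, so by \Ob{m:bounded}, $m(z)/z \le C_0\Lambda(z)$; and for $z\ge z_+$ we have $\Lambda(z)\le\delta$ (or $\delta + o(1)$, by \eqref{eq:Lambda:delta:plusminus:littleoone} and the monotonicity implicit in the definition of $z_+$). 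Since $C_0 = \alpha((1+\eps_0)\eta)$ and, for $\delta$ small, $\alpha((1+\eps_0)\eta)\delta$ is comfortably below $2\delta\eta$ — indeed $(1+\eps_0)\eta < e^{-\gamma}$ keeps $C_0$ a fixed finite constant, and one can arrange the constant using \eqref{eq:alpha:bounds} if needed — we get $\sum_i \frac{p_i}{1-p_i} \le (1+o(1))C_0\delta \le 2\delta\eta$ for $x$ large.

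The main obstacle is getting the symmetric-function inequality cleanly and with the right constant: one must make sure the comparison $\Prb(d(z)=k)\le \frac{c^{k-1}}{k!}\Prb(d(z)=1)$ genuinely holds with $c = \sum_i p_i/(1-p_i)$ (or close to it), rather than with an extra factor that would spoil the bound. I would handle this by the ratio method — writing $\Prb(d(z)=k) = \prod_i(1-p_i)\cdot e_k(t_1,\dots,t_n)$ with $t_i = p_i/(1-p_i)$, and using the Newton-type estimate $e_k \le \frac{e_1\,e_{k-1}}{k}$ (which follows from $k\,e_k = \sum_i t_i\, e_{k-1}(\text{others}) \le e_1 e_{k-1}$), and iterating down to $e_1$ — which yields exactly $e_k\le \frac{(e_1)^{k-1}}{k!}e_1$, giving the stated inequality with $2\delta\eta$ in place of $e_1$ once the bound $e_1 \le 2\delta\eta$ is in hand. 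Everything else is a routine application of \Co{pj}, \Ob{m:bounded}, and the definition of $z_+$.
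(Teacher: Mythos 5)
Your argument for the first inequality matches the paper's exactly: condition on the whole matrix to the right of column~$z$, so that $d(z)$ is a sum of independent Bernoullis, and apply the union bound together with \Co{pj}. For the second inequality your symmetric-function route is a clean repackaging of the paper's computation: the paper bounds the ratio directly by $\binom{m-1}{k-1}\max_i\big(\tfrac{p_i}{1-p_i}\big)^{k-1}$, whereas you iterate $e_k \le e_1 e_{k-1}/k$ to get $e_k/e_1 \le e_1^{k-1}/k!$. Since $e_1 \le m\max_i\tfrac{p_i}{1-p_i}$ these are equivalent, and both yield $\Prb(d(z)=k\mid\cF_z)/\Prb(d(z)=1\mid\cF_z)\le \tfrac{1}{k!}\big((1+o(1))\tfrac{m(z)}{z}\big)^{k-1}$.

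The genuine gap is the last numerical step. You pass from $m(z)/z \le C_0\Lambda(z)$ to $(1+o(1))C_0\delta$ and then assert $C_0\delta \le 2\delta\eta$, i.e., $C_0 \le 2\eta$. This is false in general: $C_0 = \alpha\big((1+\eps_0)\eta\big)$ and $\alpha(w)\to\infty$ as $w\to e^{-\gamma}$, so $C_0$ can be arbitrarily large compared to $2\eta$ when $\eta$ is close to $e^{-\gamma}$. For instance $\eta = 0.5$, $\eps_0 = 0.05$ are admissible (since $\eps_0 < e^{-\gamma}-\eta$) and give $C_0 = \alpha(0.525) \approx 1.8 > 1 = 2\eta$. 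The estimate \eqref{eq:alpha:bounds} cannot rescue this, as it is only valid for arguments $\le 0.2$, and $(1+\eps_0)\eta$ need not be that small. What the paper actually does is use the sharper consequence of $\cM(z)$ recorded in \eqref{eq:mab}, namely $m(z)/z \le \alpha\big((1+\eps_0)\eta\Lambda(z)\big)$, so that $\alpha$ is applied to the genuinely small quantity $(1+\eps_0)\eta\Lambda(z) \le 1.6\eta(\delta+o(1))$; this argument falls below $0.2$ once $\delta$ is small, \eqref{eq:alpha:bounds} now applies, and one gets $\alpha(1.6\delta\eta) \le 1.6\delta\eta + 2(1.6\delta\eta)^2 < 2\delta\eta$. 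In short, the bound $m(z)/z \le C_0\Lambda(z)$ from \Ob{m:bounded} (which comes from convexity of $\alpha$) is correct but wasteful precisely in the regime $\Lambda(z)\le\delta+o(1)$ you are in; replace it by $\alpha\big((1+\eps_0)\eta\Lambda(z)\big)$ at that step and your proof closes.
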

\begin{proof}
Let the independent Bernoulli random variables $\{X_i:i\in M(z)\}$ be defined as in the proof
of \Lm{dz}, and observe that, since $\alpha_i \in \{0\} \cup [1,\infty)$ for every $i\in M(z)$, we have
\[
 \Prb\big(X_i=1\big)\le\big(1+o(1)\big)p_z(x)\le\frac{1+o(1)}{z}
\]
for every $i\in M(z)$, by \Co{pj}. The first claimed inequality now follows by the union bound.
For the second bound we note that in general if $p_i:=\Prb(X_i=1)$ then
\begin{align*}
 \frac{k\,\Prb\big(\sum_iX_i=k\big)}{\Prb\big(\sum_i X_i=1)}
 &=\frac{\sum_{i_0\in[m(z)]}\sum_{i_0\in S\subseteq[m(z)],\,|S|=k}\prod_{i\in S}p_i\prod_{i\notin S}(1-p_i)}
  {\sum_{i_0\in[m(z)]}p_{i_0}\prod_{i\ne i_0}(1-p_i)}\\
 &\le\max_{i_0}\sum_{S\subseteq[m(z)]\setminus\{i_0\},\,|S|=k-1}\prod_{i\in S}\frac{p_i}{1-p_i}\\
 &\le \binom{m(z)-1}{k-1}\max_i\bigg(\frac{p_i}{1-p_i}\bigg)^{k-1}\\
 &\le \frac{m(z)^{k-1}}{(k-1)!}\max_i\bigg(\frac{p_i}{1-p_i}\bigg)^{k-1}.
\end{align*}
Thus
\[
 \frac{\Prb\big(d(z)=k\mid\cF_z\big)}{\Prb\big(d(z)=1\mid\cF_z\big)}
 \le\frac{1}{k!}\bigg((1+o(1))\frac{m(z)}{z}\bigg)^{k-1}.
\]
The result follows as $\cM(z)$ and $z\in [z_+,\pi(x)]$ imply that
\[
 (1+o(1))\frac{m(z)}{z}\le(1+o(1))\alpha\big((1+\eps_0)\eta\Lambda(z)\big)
 \le \alpha(1.6\delta\eta)\le 2\delta\eta
\]
by \eqref{eq:alpha:bounds} as $\Lambda(z)\le\delta+o(1)$ and $\eps_0<e^{-\gamma}<0.6$.
\end{proof}

\subsection{The proof of \Lm{easy}}

The first step is to control $m_0(z)$, the number of isolated vertices in the hypergraph
$\cS_A(z)$. As noted above, this simple fact lies at the heart of our proof, and will be used
several times in later sections.

\begin{lemma}\label{lem:m0}
 For each\/ $z\in [z_-,\pi(x)]$,
 \[
  \Prb\Big(m_0(z)\notin\big(1\pm\eps_1\big)\eta\Lambda(z)z\Big)\le\frac{1}{x^2}.
 \]
 In particular, with high probability, $m_0(z)\in(1\pm\eps_1)\eta\Lambda(z)z$ for every\/
 $z\in [z_-,\pi(x)]$.
\end{lemma}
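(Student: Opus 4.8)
The plan is to recognise $m_0(z)$, the number of isolated vertices of $\cS_A(z)$, as the number of rows $i\in M(z)$ with $A_{ij}=0$ for all $j\in S(z)$; but in fact the key observation is that a vertex $i$ of $\cS_A(z)$ is isolated precisely when the number $a_i$ has no prime factor in $(q_z,q_{\pi(x)}]$ dividing it to an odd power — in other words, exactly when $a_i$ is (essentially) $q_z$-smooth in the modified sense measured by $\tpsi(x,q_z)$. More precisely, an isolated vertex of $\cS_A(z)$ corresponds to a row $i$ of the \emph{original} matrix $A$ with $e'_i=\emptyset$ in the sense of \Df{cH}, i.e. $A_{ij}=0$ for every $j>z$, since any such row survives into $M(z)$ and contributes no edge to $\cS_A(z)$, while conversely any row with some $A_{ij}=1$ for $j>z$ either is removed before stage $z$ or contributes a nonempty edge. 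Thus $m_0(z) = \big|\{i\in[N]: A_{ij}=0\text{ for all }j>z\}\big|$, which is a sum of $N$ \emph{independent} indicator variables (since the rows of $A$ are independent), each equal to $1$ with probability $\tpsi(x,q_z)/x$.

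With this reduction in hand the proof is a routine concentration argument. First I would compute the mean: $\E[m_0(z)] = N\,\tpsi(x,q_z)/x = \eta J(x)\,\tpsi(x,q_z)/x$, and then invoke \Lm{tpsi} (valid since $q_z\ge q_{z_-}=y_0^{1+o(1)}\ge\exp((\log\log x)^2)$ by~\eqref{eq:zpm:rough} and~\eqref{eq:J:rough}) to replace $\tpsi(x,q_z)$ by $\Psi(x,q_z)$ up to a factor $1+q_z^{-1+o(1)} = 1+o(1)$, giving $\E[m_0(z)] = (1+o(1))\eta\Lambda(z)z$ by the definition~\eqref{def:Lambda} of $\Lambda$. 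Next I would apply a Chernoff bound for sums of independent Bernoulli variables: the probability that $m_0(z)$ deviates from its mean by more than a factor $1\pm\eps_1/2$ is at most $2\exp(-c\,\eps_1^2\,\E[m_0(z)])$ for an absolute constant $c>0$. Since $\E[m_0(z)] = \Theta(\Lambda(z)z) \ge \Theta(\delta z_-)$ and $z_-=z_0^{1+o(1)}$ with $\log z_0 = (1+o(1))u_0\log u_0\to\infty$ (by~\eqref{eq:ulogu}), this mean grows faster than any power of $\log x$; in particular $2\exp(-c\,\eps_1^2\,\E[m_0(z)]) \le x^{-2}$ for $x$ large, which is the stated bound (absorbing the $1+o(1)$ and the factor $2$ into the $\eps_1$ in the statement versus $\eps_1/2$ in the Chernoff estimate).

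Finally, for the "in particular" clause I would take a union bound over all $z\in[z_-,\pi(x)]$: there are at most $\pi(x)\le x$ values of $z$, so the probability that the inclusion $m_0(z)\in(1\pm\eps_1)\eta\Lambda(z)z$ fails for \emph{some} such $z$ is at most $x\cdot x^{-2} = x^{-1} = o(1)$. The only point requiring any care — and the mild "obstacle" — is making sure the smallest possible value of $\E[m_0(z)]$, attained near $z=z_\pm$ where $\Lambda(z)$ is as small as $\delta+o(1)$ by~\eqref{eq:Lambda:delta:plusminus:littleoone} and smaller still (but then $z$ is large) for $z>z_+$, is nonetheless large enough that the Chernoff bound beats $x^{-2}$; this is handled by the observation above that $\Lambda(z)z\ge z_-^{1-o(1)}$ uniformly, which is a polynomial in $x$ and hence dwarfs any $\mathrm{polylog}(x)$, so in fact the tail bound $x^{-2}$ is wildly generous. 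No self-correction or martingale machinery is needed here — the independence of the rows of $A$ does all the work.
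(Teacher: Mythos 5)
Your proof is correct and follows essentially the same route as the paper's: identify $m_0(z)$ with the number of rows having no nonzero entry in columns $>z$ (noting these all survive into $M(z)$), observe this is a sum of $N$ independent indicators each with success probability $\tpsi(x,q_z)/x$, use \Lm{tpsi} to replace $\tpsi$ by $\Psi$ and get $\E[m_0(z)]=(1+o(1))\eta\Lambda(z)z$, then apply Chernoff and use the monotonicity of $\Lambda(z)z=J(x)\Psi(x,q_z)/x$ in $z$ to bound the mean below by $\Theta(\delta z_-)=z_0^{1+o(1)}$, followed by a union bound over $z$. The only cosmetic difference is that the paper phrases the minimum of the mean as being attained at $z=z_-$ by monotonicity of $\Psi(x,q_z)$, whereas your write-up briefly gestures at $z_\pm$ before landing on the same correct conclusion.
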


Note that since $m(z)\ge m_0(z)$, this implies in particular that with high probability we have
$m(z)\ge (1-\eps_1)\eta\Lambda(z)z$ for every $z\in [z_-,\pi(x)]$. We remark (and also note for
future reference) that the event $\cM(z)$ implies deterministically that
\begin{equation}\label{eq:lowerbound:m:using:M}
 m(z)\ge\alpha\big((1-\eps_0)\eta\Lambda(z)\big)z
 \ge (1-\eps_0)\eta\Lambda(z)z\ge (1-\eps_0)\eta\Lambda(z_-)z_-\ge z_0^{1+o(1)}
\end{equation}
for every $z\in[z_-,\pi(x)]$, by \eqref{eq:mab}, \eqref{eq:alpha:bounds}, \eqref{eq:zpm:rough},
and the fact that $\Lambda(z)z=J(x)\Psi(x,q_z)/x$ is increasing in~$z$.

\Lm{m0} is a straightforward consequence of the following special case of the well-known
inequality of Chernoff~\cite{Chernoff}.

\begin{lemma}[Chernoff's inequality]\label{lem:Chernoff}
 Let\/ $X_1,\dots,X_n$ be independent Bernoulli random variables, and let\/ $X:=\sum_{i=1}^n X_i$.
 Then for any\/ $\eps>0$,
 \[
  \Prb\big(|X-\mu|\ge\eps\mu\big)\le 2e^{-\eps^2\mu/(2+\eps)},
 \]
 where\/ $\mu:=\E[X]$.
\end{lemma}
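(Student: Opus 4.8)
The plan is to prove this two-sided bound by the standard exponential-moment (Bernstein--Chernoff) method applied to the upper and lower tails separately, and then to combine them by a union bound. The probabilistic content is entirely routine; the only thing that needs a little care is extracting the clean exponent $\eps^2\mu/(2+\eps)$ rather than a messier one.

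First I would treat the upper tail $\Prb\big(X \ge (1+\eps)\mu\big)$. For any $t>0$, Markov's inequality applied to $e^{tX}$ gives $\Prb\big(X \ge (1+\eps)\mu\big) \le e^{-t(1+\eps)\mu}\,\E\big[e^{tX}\big]$, and by independence $\E\big[e^{tX}\big] = \prod_i \big(1 + p_i(e^t-1)\big) \le \prod_i \exp\!\big(p_i(e^t-1)\big) = \exp\!\big(\mu(e^t-1)\big)$, where $p_i := \Prb(X_i=1)$, using $1+s \le e^s$ and $\sum_i p_i = \mu$. Choosing $t = \log(1+\eps)$ to minimise the exponent yields $\Prb\big(X \ge (1+\eps)\mu\big) \le \big(e^{\eps}(1+\eps)^{-(1+\eps)}\big)^{\mu}$.

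The lower tail $\Prb\big(X \le (1-\eps)\mu\big)$ is handled by the identical computation with $t<0$ (equivalently, by bounding $\E[e^{-tX}]$), giving $\big(e^{-\eps}(1-\eps)^{-(1-\eps)}\big)^{\mu}$ when $0<\eps<1$; when $\eps \ge 1$ the event is either empty or is $\{X=0\}$, and there $\Prb(X=0) = \prod_i(1-p_i) \le e^{-\mu}$ already suffices. I would then reduce both right-hand sides to $e^{-\eps^2\mu/(2+\eps)}$ using the elementary one-variable inequalities $\eps - (1+\eps)\log(1+\eps) \le -\eps^2/(2+\eps)$ for $\eps \ge 0$ and $-\eps - (1-\eps)\log(1-\eps) \le -\eps^2/2 \le -\eps^2/(2+\eps)$ for $0 \le \eps < 1$, each of which follows by elementary calculus (both sides agree at $\eps=0$, and the inequality between the derivatives is checked using the series for $\log(1\pm\eps)$). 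Finally $\Prb\big(|X-\mu|\ge\eps\mu\big) \le \Prb\big(X\ge(1+\eps)\mu\big) + \Prb\big(X\le(1-\eps)\mu\big) \le e^{-\eps^2\mu/(2+\eps)} + e^{-\eps^2\mu/2} \le 2e^{-\eps^2\mu/(2+\eps)}$, which is the claim.

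There is essentially no obstacle in this argument: the only genuine computation is the pair of scalar inequalities above, and these are standard. Indeed, rather than reproducing the calculation one may simply invoke this two-sided multiplicative form of Chernoff's inequality directly from~\cite{Chernoff} (or from any standard reference on concentration of sums of independent indicators), which is presumably what is intended here.
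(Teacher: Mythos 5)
The paper states this lemma without proof, simply citing it as a standard special case of Chernoff's inequality (reference~\cite{Chernoff}), which is exactly what you observe at the end of your proposal. Your proof itself is the standard exponential-moment argument and is correct: the MGF bound $\E[e^{tX}]\le e^{\mu(e^t-1)}$, the optimal choice $t=\log(1\pm\eps)$, and the two scalar inequalities $\eps-(1+\eps)\log(1+\eps)\le-\eps^2/(2+\eps)$ (which follows from $\log(1+\eps)\ge 2\eps/(2+\eps)$) and $-\eps-(1-\eps)\log(1-\eps)\le-\eps^2/2$ (which follows since the difference has derivative $\log(1-\eps)+\eps\le 0$ and vanishes at $\eps=0$) all check out, as does the handling of $\eps\ge1$ in the lower tail via $\Prb(X=0)\le e^{-\mu}$. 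So there is no gap; the only difference from the paper is that you have supplied the proof rather than just citing it.
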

\begin{proof}[Proof of \Lm{m0}]
The number of isolated vertices in $\cS_A(z)$ is precisely the number of rows of $A$ with no
non-zero entry to the right of column~$z$ (all of which will lie in $M(z)$).
This is also the same as the number of integers
$a_i$, $i\in [N]$ such that every prime $q>q_z$ divides $a_i$ an even number of times.
Let $\mu$ denote the expected number of $a_i$ with this property, and observe that
\[
 \mu=\eta J(x)\cdot\frac{\tpsi(x,q_z)}{x}
 =\big(1+o(1)\big)\eta J(x)\cdot\frac{\Psi(x,q_z)}{x}
 =\big(1+o(1)\big)\eta\Lambda(z)z
\]
by \Lm{tpsi} and the definition~\eqref{def:Lambda} of $\Lambda(z)$. Since the numbers $a_i$ are
independent, it follows by \Lm{Chernoff} that
\[
 \Prb\big(m_0(z)\notin\big(1\pm\eps_1\big)\eta\Lambda(z)z\big)
 \le\Prb\big(m_0(z)\notin\big(1\pm\eps_1/2\big)\mu\big)
 \le 2e^{-\eps_1^2\mu/(8+2\eps_1)}.
\]
Now simply note that $\mu$ is increasing in $z$, so (approximating very crudely) we have
$\mu\ge (1+o(1))\eta\Lambda(z_-)z_-\ge\eta\delta z_-/2\ge (\log x)^2$, and
hence the right-hand side of the above
inequality is at most $1/x^2$. The last part follows by the union bound over
$z\in [z_-,\pi(x)]$.
\end{proof}

We shall next use the lower bound on $m(z)$ given by \Lm{m0} to prove the following upper
bounds on the random variables $s_k(z)$.

\begin{lemma}\label{lem:skzero}
 With high probability, the following all hold:
 \begin{itemize}
  \item[$(a)$] $s_k(z)=0$ for every\/ $k\ge 2$ and every\/ $z\ge z_0^5$.
  \item[$(b)$] $s_2(z)+s_3(z)\le z_0^{-1/2}m(z)$ for every\/ $z\ge z_0^3$.
  \item[$(c)$] $s_k(z)=0$ for every\/ $k\ge 4$ and every\/ $z\ge z_0^3$.
 \end{itemize}
\end{lemma}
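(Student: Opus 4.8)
The plan is to prove the three bounds of \Lm{skzero} by essentially a first-moment argument, exploiting the fact that having a vertex (prime) of degree $k\ge 2$ in the 2-core $\cC_A(z)$ requires at least $k$ of the random numbers to share a common prime divisor $q_j$ with $j\ge z$, an event that becomes extremely unlikely once $z$ is larger than a power of $z_0$. The key point is that $s_k(z)$ counts primes $q_j$, $j>z$, that divide at least $k$ of the numbers $\{a_i:i\in M(z)\}$ to an odd power; since $M(z)\subseteq [N]$, it suffices to bound the number of primes $q_j>q_z$ dividing at least $k$ of $a_1,\dots,a_N$ to an odd power, and for this we may forget about the 2-core structure entirely and work with the full sequence.

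First I would fix $j$ with $z_j := j \ge z$ and estimate $\Prb(q_j \mid a_i \text{ to an odd power})$. By \Co{pj} (or directly from \Th{Psi:ratio} with $b=1$, $a=0$, noting $p_j(x) = z^{-1+o(1)}$) this probability is $q_j^{-1+o(1)}$, and more precisely at most $(1+o(1))/j$ when $j\ge z_-$; in the regime $j\ge z_0^3$ we in fact have $j = z_0^{3+o(1)}$ at the low end and larger above, so the probability is at most $j^{-1+o(1)}$. Since the $a_i$ are independent, the number $D_j$ of indices $i\in[N]$ with $q_j\mid a_i$ to an odd power is stochastically dominated by (indeed equals in the relevant sense) a sum of $N$ independent Bernoullis, so
\[
 \Prb\big(D_j\ge k\big)\le\binom{N}{k}\Big(\frac{1+o(1)}{j}\Big)^k\le\Big(\frac{eN}{kj}\Big)^k.
\]
Recalling $N=\eta J(x)=z_0^{2+o(1)}$ by \eqref{eq:J:rough}, this is $\big(z_0^{2+o(1)}/j\big)^k$ up to the $1/k^k$ factor, so for $j\ge z_0^{2+c}$ with $c>0$ fixed it is at most $z_0^{-ck+o(k)}$. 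Now sum over $j\in[z+1,\pi(x)]$ (a range of size at most $\pi(x)=x^{1+o(1)}$, which is still negligible against $z_0^{-ck}$ once $k$ is bounded below, since $\log x = (\log z_0)^{1+o(1)}$ is only polylogarithmic in $z_0$): for part $(a)$, take $z\ge z_0^5$ and $k\ge 2$, giving total probability at most $\sum_{j>z_0^5}\big(z_0^{2+o(1)}/j\big)^2 \le z_0^{-2+o(1)}=o(1)$, where the union over all such $(k,z)$ costs only another polylogarithmic factor (and for $k\ge 5\log z_0/\log\log z_0$ one can invoke \Lm{skeasy} or just note the bound is already summable). For part $(c)$, take $z\ge z_0^3$ and $k\ge 4$: then $\sum_{j>z_0^3}\big(z_0^{2+o(1)}/j\big)^4\le z_0^{-4+o(1)}=o(1)$, again after the cheap union bound over $z$ and $k$. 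For part $(b)$, with $z\ge z_0^3$, I would bound $\E[s_2(z)+s_3(z)]\le\E\big[\#\{j>z: D_j\ge 2\}\big]\le\sum_{j>z_0^3}\binom{N}{2}(1+o(1))^2 j^{-2}\le z_0^{-1+o(1)}$; since \Lm{m0} gives $m(z)\ge z_0^{1+o(1)}$ with high probability on the whole range (see \eqref{eq:lowerbound:m:using:M} and the remark after \Lm{m0}), Markov's inequality then yields $s_2(z)+s_3(z)\le z_0^{-1/2}m(z)$ with high probability, after a union bound over the (polylogarithmically many relevant) values of $z$ — or, more carefully, one proves $\sum_{z>z_0^3}\E[s_2(z)+s_3(z)]=o(z_0^{-1/2}\cdot z_0^{1+o(1)})$ directly and applies a single Markov step.

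The main obstacle — really the only subtlety — is making the union bound over $z$ rigorous, since $s_k(z)$ is not monotone in $z$ and there are $\pi(x)$ values of $z$ to handle. The clean way around this is to not union-bound over $z$ at all for each fixed $j$: instead observe that whether $q_j$ contributes to $s_k(z)$ for \emph{some} $z<j$ is controlled by the single event $\{D_j\ge k\}$ (if $q_j$ divides fewer than $k$ of the $a_i$ to an odd power at all, it can never be a degree-$k$ vertex of any $\cC_A(z)$ for $k\ge 2$ — the 2-core only removes numbers, never adds them). Hence the bad event ``$s_k(z)>0$ for some $z\ge z^*$ and some $k\ge k^*$'' is contained in ``$D_j\ge k^*$ for some $j> z^*$'', whose probability is $\sum_{j>z^*}\Prb(D_j\ge k^*)$ — a single sum, no union over $z$ needed. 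This reduces $(a)$ and $(c)$ to exactly the one-parameter sums above. For $(b)$ the analogous move is: $\max_{z\ge z_0^3}\big(s_2(z)+s_3(z)\big)\le \#\{j> z_0^3: D_j\ge 2\}$, whose expectation is $z_0^{-1+o(1)}$, so this maximum is $\le z_0^{1/2+o(1)}$ with high probability by Markov, and combined with $m(z)\ge z_0^{1-o(1)}$ (w.h.p., uniformly) this gives the claimed ratio bound. I would also double-check the $o(1)$ in the exponent of $p_j(x)$ is genuinely $o(1)$ uniformly for $j\ge z_0^3$ — this is exactly the content of the last sentence of \Co{pj}, $p_z(x)=z^{-1+o(1)}$ for all $z\in[z_-,\pi(x)]$ — so no extra work is needed there.
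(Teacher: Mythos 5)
Your overall strategy is the same as the paper's: a first-moment bound on the number of primes $q>q_z$ dividing at least $k$ of the $a_i$, noting that $j\in S_k(z)$ forces $D_j\ge k$ (so the 2-core can be ignored), and avoiding the union bound over $z$ by passing to the single $z$-independent event. Parts $(a)$ and $(c)$ are sound in outline, though your claimed exponents ($z_0^{-2+o(1)}$ and $z_0^{-4+o(1)}$) are too small — the correct sums are $z_0^{-1+o(1)}$ in both cases — but this is harmless since $z_0^{-1+o(1)}=o(1)$ still.

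Part $(b)$, however, has a genuine gap. First, the expectation you compute is off by two powers: $\sum_{j>z_0^3}\binom{N}{2}j^{-2}$ is of order $N^2\cdot z_0^{-3} = z_0^{4+o(1)}\cdot z_0^{-3+o(1)} = z_0^{1+o(1)}$, not $z_0^{-1+o(1)}$. Second, and more importantly, you then invoke the lower bound $m(z)\ge z_0^{1-o(1)}$ from~\eqref{eq:lowerbound:m:using:M}, which is the bound valid over the whole range $[z_-,\pi(x)]$. With $\E[\#\{j>z_0^3:D_j\ge 2\}]=z_0^{1+o(1)}$ and only $m(z)\ge z_0^{1+o(1)}$, Markov at the threshold $z_0^{-1/2}m(z)\approx z_0^{1/2}$ gives failure probability of order $z_0^{1/2}$, which is useless. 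The paper derives the sharper bound $m(z)\ge m_0(z)\ge (1-\eps_1)\eta\Lambda(z)z \ge (1-\eps_1)\eta\Lambda(z_0^3)z_0^3 = z_0^{5/3+o(1)}$ for all $z\ge z_0^3$, using the monotonicity of $\Lambda(z)z = J(x)\Psi(x,q_z)/x$ together with~\eqref{eq:J:rough} and~\eqref{eq:Psi:rough}. With $m(z)\ge z_0^{5/3+o(1)}$ the threshold becomes $z_0^{-1/2}m(z)\ge z_0^{7/6+o(1)}$ and Markov gives failure probability $z_0^{1-7/6+o(1)}=z_0^{-1/6+o(1)}=o(1)$, closing the argument. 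You need this improved lower bound on $m(z)$ in the range $z\ge z_0^3$; the uniform $z_0^{1+o(1)}$ bound is not enough.
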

\begin{proof}
Note that a prime $q$ divides a uniformly chosen random element of $[x]$ with probability
$\lfloor x/q\rfloor/x\le 1/q$. Recall that $A$ has $N$ rows, and that $N=\eta J(x)=z_0^{2+o(1)}$,
by~\eqref{eq:J:rough}. It follows that the expected number of primes $q\ge w$ that divide at least
$k$ of the integers $a_i$, $i\in [N]$, is at most
\begin{equation}\label{eq:largeprimes}
 \sum_{q\ge w}\frac{N^k}{q^k}\le\frac{z_0^{(2+o(1))k}}{w^{k-1}}.
\end{equation}
Now if $s_k(z)>0$ then there must be a prime $q>q_z$ that divides at least $k$ of the~$a_i$.
Hence applying \eqref{eq:largeprimes} with $k\ge2$ and $w=q_{z_0^5}\ge z_0^5$ gives part~$(a)$,
and applying it with $k\ge4$ and $w=q_{z_0^3}\ge z_0^3$ gives part~$(c)$.
To prove~$(b)$, observe first that with high probability
\[
 m(z)\ge m_0(z)\ge (1-\eps_1)\eta\Lambda(z)z
 \ge(1-\eps_1)\eta J(x)\cdot\frac{\Psi(x,q_{z_0^3})}{x}=z_0^{5/3+o(1)}
\]
for every $z\ge z_0^3$. Indeed, the first inequality is trivial, the second follows by \Lm{m0},
the third since $\Psi(x,y)$ is increasing in~$y$, and the fourth by \eqref{eq:J:rough}
and~\eqref{eq:Psi:rough}. Hence, applying \eqref{eq:largeprimes} with $k=2$ and
$w=q_{z_0^3}\ge z_0^3$, it follows that the expected number of primes that can contribute to the
value of $s_2(z)+s_3(z)$ for any $z\ge z_0^3$ is at most $z_0^{1+o(1)}\le z_0^{-2/3+o(1)}m(z)$.
Part~$(b)$ then follows by Markov's inequality.
\end{proof}

We similarly obtain the following bound on $s_k(z)$ for large~$k$. Note that
$u_0=(1+o(1))\log z_0/\log\log z_0$ by~\eqref{eq:ulogu}.

\begin{lemma}\label{lem:skeasy}
 With high probability, for every\/ $z\in [z_-,\pi(x)]$ either\/ $\cM^*(z)$ fails to hold,
 or\/ $s_k(z)=0$ for every\/ $k\ge 4u_0$.
\end{lemma}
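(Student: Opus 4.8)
The plan is to reduce the statement to a tail bound on the random variable $d(z)$ from \Al{algorithm}. Suppose that for some $z'\in[z_-,\pi(x)]$ and some $k\ge 4u_0$ we have $s_k(z')>0$ while $\cM^*(z')$ holds, and fix a prime $q_z$ with $z\in S_k(z')$; by \Df{cS} we have $z\in[z'+1,\pi(x)]$, so $\cM(z)$ holds. Since $q_z$ enters the sets $S_j$ (at step $z$ of the algorithm) in position $d(z)$ and its position thereafter never increases, we must have had $d(z)\ge k\ge 4u_0$. Moreover $q_z$ divides at least $k\ge 4$ of the numbers $\{a_i:i\in M(z')\}\subseteq\{a_1,\dots,a_N\}$, so by~\eqref{eq:largeprimes} applied with $k=4$ and $w=z_0^3$, the expected number of primes $q\ge z_0^3$ dividing at least four of the $a_i$ is at most $z_0^{8+o(1)}/z_0^9=o(1)$; hence, outside an event of probability $o(1)$, we have $q_z<z_0^3$ and so $z<z_0^3$. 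It therefore suffices to prove that
\[
 \sum_{z_-<z<z_0^3}\Prb\big(d(z)\ge 4u_0\text{ and }\cM(z)\big)=o(1).
\]

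To estimate a single term I would argue exactly as in the proof of \Lm{dz}: conditioning on all entries of $A$ in the columns to the right of column $z$ — an event on which both $M(z)$ and $\cM(z)$ are determined — the quantity $d(z)$ is a sum of independent Bernoulli random variables $X_i$, $i\in M(z)$, with $\Prb(X_i=1)=p_z(xq_z^{-\alpha_i})$ and $\alpha_i\in\{0\}\cup[1,\infty)$. By \Co{pj} this gives $\Prb(X_i=1)\le(1+o(1))/z$ for every such $i$, so $\mu:=\E\big[d(z)\bmid\text{columns to the right of }z\big]\le(1+o(1))m(z)/z$, and on $\cM(z)$ this is at most $(1+o(1))C_0$ by \Ob{m:bounded}. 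Using $\id\{d(z)\ge 4u_0\}\le\binom{d(z)}{4u_0}$ together with the identity $\E\binom{d(z)}{4u_0}=e_{4u_0}\big((\Prb(X_i=1))_{i\in M(z)}\big)\le\mu^{4u_0}/(4u_0)!$ for the elementary symmetric polynomial $e_{4u_0}$, we obtain
\[
 \Prb\big(d(z)\ge 4u_0\text{ and }\cM(z)\big)\le\frac{\big((1+o(1))C_0\big)^{4u_0}}{(4u_0)!}=z_0^{-4+o(1)},
\]
where the final equality uses Stirling's formula and the identity $u_0\log u_0=(1+o(1))\log z_0$ from~\eqref{eq:ulogu} (which in particular gives $u_0=o(\log z_0)$). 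Summing over the fewer than $z_0^3$ values of $z$ in question yields $z_0^3\cdot z_0^{-4+o(1)}=o(1)$, which completes the proof.

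The point that needs care is that one may not simply invoke \Lm{dz}, whose hypothesis $\cK(z)$ already presupposes that $s_k(z)=0$ for $k\ge 4u_0$ — exactly the conclusion being sought; instead one controls $\mu$ using only the unconditional bound $\Prb(X_i=1)\le(1+o(1))/z$ of \Co{pj} together with the (available) event $\cM(z)$. A second subtlety is that the per-column estimate $z_0^{-4+o(1)}$ is far too weak to survive a union bound over all $\pi(x)$ columns, so the first moment argument localising the offending prime to $q_z<z_0^3$ is what makes the final sum converge.
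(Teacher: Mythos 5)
Your proof is correct and takes essentially the same approach as the paper: localize the offending prime to $q_z$ with $z<z_0^3$ via the first‑moment bound~\eqref{eq:largeprimes}, then for $z\in[z_-,z_0^3)$ condition on the columns to the right of $z$ (so that $M(z)$, and hence $\cM(z)$, is determined), use \Co{pj} and \Ob{m:bounded} to get $\E[d(z)\mid\cdots]\le(1+o(1))C_0$ on $\cM(z)$, and conclude by a tail estimate and Stirling together with~\eqref{eq:ulogu}. The only cosmetic differences are that the paper cites \Lm{skzero}$(c)$ rather than re‑running~\eqref{eq:largeprimes}, and it bounds $\Prb(d(w)\ge 4u_0\mid\cF_w)$ by $\binom{m(w)}{4u_0}((1+o(1))/w)^{4u_0}\le(2C_0)^{4u_0}/(4u_0)!$ where you use the elementary‑symmetric‑function inequality to get the marginally sharper $((1+o(1))C_0)^{4u_0}/(4u_0)!$ — equivalent for the purposes of the estimate. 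Your closing remark about why \Lm{dz} cannot be invoked directly (since $\cK(z)$ already presupposes $s_k(z)=0$ for $k\ge 4u_0$) correctly identifies the circularity that both you and the paper avoid by using only $\cM(z)$.
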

\begin{proof}
Suppose that $\cM^*(z)$ holds, and recall that this implies $m(w)\le C_0 w$ for all $w\ge z$, by
\Ob{m:bounded}. Any element $w\in\bigcup_{k\ge 4u_0}S_k(z)$ must have been `born' with
$d(w)\ge 4u_0$. But, by \Lm{skzero}, with high probability no such $w$ exists in $[z_0^3+1,\pi(x)]$
as it would contribute to $s_k(w-1)$ for some $k\ge 4u_0$. Thus it is enough to show that with
high probability no $w$ exists in $[z_-,z_0^3]$ with $d(w)\ge 4u_0$ and $m(w)\le C_0 w$.

The probability that $A_{iw}=1$, conditioned on all entries of $A$ to the right of column~$w$,
is always at most $(1+o(1))/w$, by \Co{pj}, and is conditionally independent for each~$i$.
It follows that
\begin{equation}\label{eq:nottoomanyinacolumn}
 \Prb\big(d(w)\ge k\mid\cF_w\big)\le\binom{m(w)}{k}\bigg(\frac{1+o(1)}{w}\bigg)^k
 \le\frac{(2C_0)^k}{k!}
\end{equation}
when $m(w)\le C_0w$. Thus the expected number of $w$ in $[z_-,z_0^3]$ with $d(w)\ge4u_0$
and $m(w)\le C_0w$ is at most
\[
 \frac{(2C_0)^{4u_0}}{(4u_0)!}\cdot z_0^3\le
 \exp\big(-4u_0\log u_0+O(u_0)+3\log z_0\big)=o(1)
\]
by Stirling's formula and the fact that $\log z_0=(1+o(1))u_0\log u_0$, by~\eqref{eq:ulogu}.
Hence, with high probability, no such $w$ exists.
\end{proof}

\Lm{easy} follows immediately from Lemmas~\ref{lem:skzero} and~\ref{lem:skeasy}. Note that
if $w\in S_k(z)$ then $w\in S_{k'}(w-1)$ for some $k'\ge k$, so if $s_k(w)=0$
for all $k\ge2$, $w\ge z_0^5$, then $S_k(z)$ cannot contain any element $w>z_0^5$.

\subsection{The proof of \Lm{track:K}}

We shall next prove that the event $\cQ(z)\cap\bigcap_{k\ge 2}\cT_k(z)$ implies
(deterministically) that the event $\cK(z)$ holds. To do so, we need to prove that
$\sum_{k\ge 2}2^ks_k(z)\le 2e^{C_0}m(z)$. We begin with a simple but useful observation.

\begin{obs}\label{obs:stildes:ineq}
 $(k+1)\ts_{k+1}(z)\le\ds\frac{m(z)}{z}\cdot\ts_k(z)$.
\end{obs}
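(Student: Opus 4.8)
The plan is to verify the inequality $(k+1)\ts_{k+1}(z)\le\frac{m(z)}{z}\ts_k(z)$ directly from the explicit formula~\eqref{eq:tsk} for $\ts_k(z)$. Writing $\lambda:=m(z)/z$ for brevity, we have
\[
 (k+1)\ts_{k+1}(z)=\frac{m(z)}{k}e^{-\lambda}\sum_{\ell=k}^\infty\frac{\lambda^\ell}{\ell!}
 \qquad\text{and}\qquad
 \frac{m(z)}{z}\ts_k(z)=\frac{m(z)\lambda}{k(k-1)}e^{-\lambda}\sum_{\ell=k-1}^\infty\frac{\lambda^\ell}{\ell!}.
\]
After cancelling the common factor $m(z)e^{-\lambda}/k$, the claim reduces to
\[
 \sum_{\ell=k}^\infty\frac{\lambda^\ell}{\ell!}\le\frac{\lambda}{k-1}\sum_{\ell=k-1}^\infty\frac{\lambda^\ell}{\ell!}.
\]

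First I would reindex the left-hand sum by $\ell\mapsto\ell+1$, so that it becomes $\sum_{\ell=k-1}^\infty\frac{\lambda^{\ell+1}}{(\ell+1)!}=\lambda\sum_{\ell=k-1}^\infty\frac{\lambda^{\ell}}{(\ell+1)!}$. Now the inequality to prove is
\[
 \lambda\sum_{\ell=k-1}^\infty\frac{\lambda^{\ell}}{(\ell+1)!}\le\frac{\lambda}{k-1}\sum_{\ell=k-1}^\infty\frac{\lambda^{\ell}}{\ell!},
\]
and after dividing by $\lambda$ it suffices to compare the two series term by term: for every $\ell\ge k-1$ we have $\frac{1}{(\ell+1)!}\le\frac{1}{(k-1)\,\ell!}$, which is equivalent to $k-1\le\ell+1$, i.e.\ $\ell\ge k-2$, and this holds since $\ell\ge k-1$. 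Summing these termwise inequalities (all terms are nonnegative, as $\lambda\ge 0$) gives the claim. The edge case $k=2$ is fine since then $k-1=1$ and the bound is trivial; for $k\ge 2$ we also need $k-1\ge 1$ so that division is legitimate, which holds throughout.

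There is essentially no obstacle here: the statement is a soft consequence of the closed form~\eqref{eq:tsk}, and the only mild point to be careful about is the reindexing of the tail sum and checking that the termwise comparison $\frac{1}{(\ell+1)!}\le\frac{1}{(k-1)\ell!}$ is valid for precisely the range $\ell\ge k-1$ over which both series are summed. One could alternatively phrase the whole argument probabilistically, noting that $\sum_{\ell\ge k}\lambda^\ell/\ell!=e^{\lambda}\Prb(\Po(\lambda)\ge k)$ and that $\Prb(\Po(\lambda)\ge k)\le\frac{\lambda}{k}\Prb(\Po(\lambda)\ge k-1)$ by a one-line manipulation of Poisson tail ratios, but the direct termwise comparison above is cleaner and self-contained.
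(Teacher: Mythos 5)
Your proof is correct and is essentially the same as the paper's: both proceed by the closed form~\eqref{eq:tsk}, reindex the tail sum of $\ts_{k+1}$, and conclude by the termwise bound $\frac{1}{\ell+1}\le\frac{1}{k-1}$ for $\ell\ge k-1$.
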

\begin{proof}
This follows easily from \Df{tsk}. Indeed, writing $\lambda:=m(z)/z$, we have
\[
 (k+1)\ts_{k+1}(z)
 =\frac{m(z)}{k}e^{-\lambda}\sum_{\ell=k-1}^\infty\frac{\lambda^{\ell+1}}{(\ell+1)!}
 \le\lambda\cdot\frac{m(z)}{k(k-1)}e^{-\lambda}\sum_{\ell=k-1}^\infty\frac{\lambda^{\ell}}{\ell!}
 =\lambda\cdot\ts_k(z),
\]
as claimed.
\end{proof}

We shall first prove the following bound on the sum over $k$ of $2^k\eps(k,z)\ts_k(z)$.
We shall need this bound again in Sections~\ref{sec:proof:tracking} and~\ref{sec:squares:proof}.

\begin{lemma}\label{lem:sum:eps:tsk}
 Let\/ $z\in [z_-,z_+]$. If\/ $\cM(z)$ holds, then
 \[
  \sum_{k=2}^\infty 2^k\eps(k,z)\ts_k(z)\le\eps_1 m(z).
 \]
\end{lemma}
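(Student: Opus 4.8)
The goal is to bound $\sum_{k\ge 2} 2^k \eps(k,z)\ts_k(z)$, where $\eps(k,z) = \eps_1^k k!/\Lambda(z)$. The first step is to understand the product $2^k\eps(k,z)\ts_k(z)$ term by term. Writing $\lambda := m(z)/z$ and recalling from \Df{tsk} that
\[
 \ts_k(z) = \frac{m(z)}{k(k-1)}e^{-\lambda}\sum_{\ell=k-1}^\infty \frac{\lambda^\ell}{\ell!},
\]
I would first get a clean upper bound on $\ts_k(z)$. Since $\cM(z)$ holds, \Ob{m:bounded} gives $\lambda \le C_0\Lambda(z) \le C_0$, and the tail sum $\sum_{\ell \ge k-1}\lambda^\ell/\ell!$ is at most $\frac{\lambda^{k-1}}{(k-1)!}e^{\lambda}$ (crudely, or more precisely $\frac{\lambda^{k-1}}{(k-1)!}\cdot\frac{1}{1-\lambda/k}$ when $\lambda < k$). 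Combining, $e^{-\lambda}\ts_k(z) \le \frac{m(z)}{k(k-1)}\cdot\frac{\lambda^{k-1}}{(k-1)!}$, i.e. roughly $\ts_k(z) \lesssim m(z)\lambda^{k-1}/k!$ up to a bounded factor. The key cancellation is then that $\eps(k,z)\ts_k(z) \approx \eps_1^k k! / \Lambda(z) \cdot m(z)\lambda^{k-1}/k! = \eps_1^k m(z)\lambda^{k-1}/\Lambda(z)$: the factorials cancel, which is the whole point of the super-exponential growth of $\eps(k,z)$ being matched against the factorial decay of $\ts_k(z)$.

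Next I would insert the bound $\lambda = m(z)/z \le C_0\Lambda(z)$ from \Ob{m:bounded} to replace $\lambda^{k-1}/\Lambda(z)$ by $C_0^{k-1}\Lambda(z)^{k-2} \le C_0^{k-1}$ (using $\Lambda(z)\le 1$ and $k\ge 2$). This gives $2^k\eps(k,z)\ts_k(z) \le (\text{const})\cdot 2^k\eps_1^k C_0^{k-1} m(z)$ up to the bounded factors collected above; summing the geometric series in $k$ from $2$ to $\infty$ yields something of the form $O(\eps_1^2 C_0^2)\, m(z) / (1 - 2\eps_1 C_0)$. The constraint \eqref{def:small} that $\eps_1 < \frac{\eps_0}{16}e^{-C_0}$ is exactly what forces $2\eps_1 C_0 < 1/2$ (say) and, more importantly, makes the whole sum smaller than $\eps_1 m(z)$: the extra factors of $\eps_1$ and $e^{-C_0}$ in the definition of $\eps_1$ are there to absorb the powers of $C_0$ and the constants from the geometric series. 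I would keep careful track of constants here, since the final inequality is $\le \eps_1 m(z)$ with no slack in the statement — but the $e^{-C_0}$ and $\eps_0/16$ factors give plenty of room.

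Alternatively, and perhaps more cleanly, one can avoid re-deriving a tail bound on $\ts_k$ and instead run the argument directly off \Ob{stildes:ineq}, which gives $\ts_{k+1}(z) \le \frac{\lambda}{k+1}\ts_k(z)$. Iterating, $\ts_k(z) \le \frac{\lambda^{k-2}(k-1)!}{(k-1)!}\cdot\frac{\ts_2(z)}{?}$ — more precisely $\ts_k(z) \le \frac{2!}{k!}\lambda^{k-2}\ts_2(z)$, and $\ts_2(z) = \frac{m(z)}{2}(1 - e^{-\lambda}) \le \frac{m(z)\lambda}{2}$ by \eqref{eq:ts2}. Then $\eps(k,z)\ts_k(z) \le \frac{\eps_1^k k!}{\Lambda(z)}\cdot\frac{2}{k!}\lambda^{k-2}\cdot\frac{m(z)\lambda}{2} = \frac{\eps_1^k \lambda^{k-1}}{\Lambda(z)}m(z) \le \eps_1^k C_0^{k-1} m(z)$, again using $\lambda \le C_0\Lambda(z) \le C_0$ and $\Lambda(z)\le 1$. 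Summing: $\sum_{k\ge 2}2^k\eps(k,z)\ts_k(z) \le m(z)\sum_{k\ge 2}(2\eps_1)^k C_0^{k-1} = m(z)\cdot\frac{(2\eps_1)^2 C_0}{1 - 2\eps_1 C_0}$, and by \eqref{def:small} we have $2\eps_1 C_0 \le \frac{\eps_0}{8}e^{-C_0}C_0 \le \frac{1}{8}$ (using $\eps_0 < e^{-\gamma} < 1$ and $C_0 e^{-C_0} \le 1$), so the sum is at most $m(z)\cdot 4\eps_1^2 C_0 \cdot \frac{8}{7} \le 5\eps_1^2 C_0 \, m(z) \le \eps_1 m(z)$ since $5\eps_1 C_0 \le \frac{5\eps_0}{16}C_0 e^{-C_0} \le \frac{5}{16} < 1$.

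**The main obstacle.** There is no deep obstacle here; this is a routine estimate once the factorial cancellation between $\eps(k,z)$ and $\ts_k(z)$ is spotted. The only thing requiring care is bookkeeping the constants so that the final bound comes out below $\eps_1 m(z)$ rather than, say, $C\eps_1 m(z)$ — this is precisely what the somewhat baroque-looking choice $\eps_1 < \frac{\eps_0}{16}e^{-C_0}$ in \eqref{def:small} is engineered to guarantee, and I would make sure to invoke it explicitly at the end. I would also double-check the edge behaviour: the bound $\ts_k(z) \le \frac{2}{k!}\lambda^{k-2}\ts_2(z)$ via \Ob{stildes:ineq} needs $k \ge 2$, which is fine since the sum starts at $k=2$, and I should note that when $\lambda = 0$ (i.e. $m(z) = 0$) everything is trivially $0$.
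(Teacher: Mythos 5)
Your second (``cleaner'') argument is essentially identical to the paper's: both iterate \Ob{stildes:ineq} to get $k!\,\ts_k(z)\le 2(m(z)/z)^{k-2}\ts_2(z)$, bound $2\ts_2(z)\le C_0\Lambda(z)m(z)$ via \eqref{eq:ts2} and \Ob{m:bounded}, sum the resulting geometric series, and invoke \eqref{def:small} to see the constant is at most~$1$. The first ``plan'' paragraph is a looser version of the same computation, and your constant bookkeeping checks out.
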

\begin{proof}
Note that
\begin{equation}\label{eq:sum:eps:tsk}
 \sum_{k\ge 2}2^k\eps(k,z)\ts_k(z)
 =\sum_{k\ge 2}\frac{2^k\eps_1^k}{\Lambda(z)}k!\ts_k(z)
 \le\sum_{k\ge 2}\frac{2^k\eps_1^k}{\Lambda(z)}\bigg(\frac{m(z)}{z}\bigg)^{k-2}2\ts_2(z)
\end{equation}
by \Df{eps} and \Ob{stildes:ineq}. Now
\[
 2\ts_2(z)=m(z)\big(1-e^{-m(z)/z}\big)\le\frac{m(z)^2}{z}\le C_0\Lambda(z)m(z),
\]
and $m(z)\le C_0z$, by \eqref{eq:ts2} and \Ob{m:bounded}. Noting from~\eqref{def:small} that
$\eps_1 C_0<\eps_1 e^{C_0}\le 1/16$,
it follows that the right-hand side of~\eqref{eq:sum:eps:tsk} is at most
\[
 \sum_{k\ge 2}2^k\eps_1^k C_0^{k-1}m(z)=\frac{4\eps_1C_0}{1-2\eps_1 C_0}\eps_1 m(z)\le\eps_1 m(z)
\]
as required.
\end{proof}

\begin{proof}[Proof of \Lm{track:K}]
Since $\cT_k(z)$ holds for all $k\ge 2$, we have
\[
 \sum_{k\ge 2}2^k s_k(z)\le\sum_{k=2}^\infty 2^k\ts_k(z)+\sum_{k=2}^\infty 2^k\eps(k,z)\ts_k(z).
\]
The second term is at most $\eps_1 m(z)$, by \Lm{sum:eps:tsk}, and since $\cQ(z)$ implies that the
event $\cM(z)$ holds. To bound the first term, observe that, writing $\lambda=m(z)/z$, we have
\begin{align}
 \sum_{k\ge 2}2^k\ts_k(z)
 &=m(z)e^{-\lambda}\sum_{k\ge 2}\frac{2^k}{k(k-1)}\sum_{\ell=k-1}^\infty\frac{\lambda^\ell}{\ell!}
 =m(z)e^{-\lambda}\sum_{\ell=1}^\infty\frac{\lambda^\ell}{\ell!}\sum_{k=2}^{\ell+1}\frac{2^k}{k(k-1)}\notag\\
 &\le m(z)e^{-\lambda}\sum_{\ell=1}^\infty\frac{\lambda^\ell}{\ell!}\cdot 2^\ell
 \le m(z)e^{-\lambda}e^{2\lambda}=e^{m(z)/z}m(z)\le e^{C_0}m(z),\label{eq:sumsk}
\end{align}
as required, where in the final step we used \Ob{m:bounded}.
\end{proof}

Finally, let us make a simple observation which will play an important role in \Sc{critical}.

\begin{obs}\label{obs:eps:tsk:rough}
 If\/ $\cM(z)$ holds then\/ $\eps(k,z)\ts_k(z)=z_0^{1+o(1)}$ uniformly
 for every\/ $z\in [z_-,z_+]$ and\/ $2\le k\le 4u_0$.
\end{obs}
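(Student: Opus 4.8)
The plan is to start from the identity
\[
 \eps(k,z)\,\ts_k(z) \;=\; \frac{\eps_1^k}{\Lambda(z)}\cdot\frac{m(z)}{k(k-1)}\,e^{-\lambda}\cdot\bigg(k!\sum_{\ell=k-1}^\infty\frac{\lambda^\ell}{\ell!}\bigg),
\]
where $\lambda:=m(z)/z$, obtained by combining Definitions~\ref{def:eps} and~\ref{def:tsk}, and to show that, uniformly over $z\in[z_-,z_+]$ and $2\le k\le 4u_0$, every factor on the right is $z_0^{o(1)}$ except $m(z)$, which equals $z_0^{1+o(1)}$. Several of these are immediate. Since $\cM(z)$ holds, \Ob{m:bounded} gives $\lambda\le C_0$, while \eqref{eq:lowerbound:m:using:M} together with \eqref{eq:Lambda:delta:plus:littleoone} gives $\lambda\ge(1-\eps_0)\eta\Lambda(z)\ge c$ for some constant $c>0$ on $[z_-,z_+]$ (using $\Lambda(z)\ge\delta/2$ there); hence $\lambda=\Theta(1)$ and $e^{-\lambda}=\Theta(1)$, and likewise $\Lambda(z)\in[\delta/2,1]$, so $1/\Lambda(z)=\Theta(1)$. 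Also $m(z)=z_0^{1+o(1)}$ uniformly on $[z_-,z_+]$: the lower bound is the last inequality in \eqref{eq:lowerbound:m:using:M}, and the upper bound follows from $m(z)\le C_0z\le C_0z_+$ and $z_+=z_0^{1+o(1)}$, by \eqref{eq:zpm:rough}. Finally, since $2\le k\le 4u_0$ and $u_0=(1+o(1))\log z_0/\log\log z_0$ by \eqref{eq:ulogu}, both $\eps_1^k$ (a fixed constant raised to a power that is $o(\log z_0)$) and $1/(k(k-1))$ are $z_0^{o(1)}$.

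The one genuinely new ingredient is a two-sided estimate for the factorial-weighted tail,
\[
 \lambda^{k-1}\;\le\;k!\sum_{\ell=k-1}^\infty\frac{\lambda^\ell}{\ell!}\;\le\;k\lambda^{k-1}+e^\lambda\lambda^k.
\]
The lower bound is just the $\ell=k-1$ term. For the upper bound one writes the sum as $k\lambda^{k-1}+\sum_{j\ge0}\frac{k!}{(k+j)!}\lambda^{k+j}$ and uses $\frac{k!}{(k+j)!}\le\frac{1}{j!}$ (valid for $k\ge1$, since $(k+1)\cdots(k+j)\ge(j+1)!$), which bounds the second sum by $e^\lambda\lambda^k$. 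Because $c\le\lambda\le C_0$ are absolute constants and $k\le 4u_0$, both $\lambda^{k-1}$ and $k\lambda^{k-1}+e^\lambda\lambda^k$ are $z_0^{o(1)}$, splitting trivially into the cases $\lambda\ge1$ and $\lambda<1$. Multiplying the contributions of all the factors in the displayed identity then yields $\eps(k,z)\ts_k(z)=z_0^{1+o(1)}$, uniformly in the stated range. (Alternatively, the upper bound can be had by iterating \Ob{stildes:ineq} to get $\ts_k(z)\le 2\lambda^{k-2}\ts_2(z)/k!$, so that the $k!$ in $\eps(k,z)$ cancels and $\eps(k,z)\ts_k(z)\le 2\eps_1^k\lambda^{k-2}\ts_2(z)/\Lambda(z)$ with $\ts_2(z)=\Theta(m(z))$ by \eqref{eq:ts2}.)

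There is no substantive obstacle here; the only point requiring care is that the $o(1)$ be uniform over the whole range $2\le k\le 4u_0$, and this is precisely why one must exploit the cancellation of the factorial $k!$ appearing in $\eps(k,z)$ — either against the factorials $(k-1)!,k!,\dots$ in the tail of $\ts_k(z)$, or via \Ob{stildes:ineq} — rather than bounding it crudely by $(4u_0)!=z_0^{\Theta(1)}$, which would be far too lossy.
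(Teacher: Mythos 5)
Your proof is correct and follows essentially the same route as the paper, which simply observes (from $m(z)/z=\Theta(1)$) that the whole product equals $e^{O(k)}z$ and then uses $k\le 4u_0=o(\log z_0)$ and $z=z_0^{1+o(1)}$; you have merely unpacked the $e^{O(k)}$ bound on each factor, and in particular the two-sided estimate of the factorial-weighted tail, in more explicit detail. The only tiny imprecision is that the $\ell=k-1$ term of $k!\sum_{\ell\ge k-1}\lambda^\ell/\ell!$ equals $k\lambda^{k-1}$ rather than $\lambda^{k-1}$, but since $k\lambda^{k-1}\ge\lambda^{k-1}$ your stated lower bound is still valid.
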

\begin{proof}
Let $z\in [z_-,z_+]$, and observe that, since $\cM(z)$ implies $m(z)/z=\Theta(1)$, we have
\begin{equation}\label{eq:eps:tsk:rough}
 \eps(k,z)\ts_k(z)=\frac{\eps_1^k\cdot k!}{\Lambda(z)}\cdot\frac{m(z)}{k(k-1)}e^{-m(z)/z}
 \sum_{\ell=k-1}^\infty\frac{1}{\ell!}\bigg(\frac{m(z)}{z}\bigg)^\ell=e^{O(k)}z.
\end{equation}
Since $u_0=o(\log z_0)$, by~\eqref{eq:ulogu}, and $z=z_0^{1+o(1)}$ for every
$z\in [z_-,z_+]$, by~\eqref{eq:zpm:rough}, it follows that $\eps(k,z)\ts_k(z)=z_0^{1+o(1)}$
uniformly for every $z\in[z_-,z_+]$ and $k\le 4u_0$, as claimed.
\end{proof}

\subsection{Martingales}

We finish this section by recalling some standard results about martingales that we shall use in
later sections. Recall that a \emph{super-martingale} with respect to a filtration
$(\cF_t)_{t \ge 0}$ of $\sigma$-algebras, is a sequence of random variables $(X_t)_{t\ge 0}$ such
that the following hold for each $t\ge 0$: $X_t$ is $\cF_t$-measurable, $\E[|X_t|]<\infty$,
and $\E\big[X_{t+1}\mid\cF_t\big]\le X_t$.

The following inequality was proved by Azuma~\cite{Azuma} and Hoeffding~\cite{H} (see, e.g.,~\cite{BelaRG}).

\begin{lemma}[The Azuma--Hoeffding inequality]
 Let\/ $(X_t)_{t=0}^{\ell}$ be a super-martingale with respect to a filtration\/
 $(\cF_t)_{t=0}^{\ell}$, and assume\/ $\Prb(|X_t-X_{t-1}|>c_t)=0$
 for\/ $t=1,\dots,\ell$. Then
 \[
  \Prb\big(X_\ell-X_0\ge a\big)\le\exp\bigg(\frac{-a^2}{2\sum_{t=1}^\ell c_t^2}\bigg)
 \]
 for every\/ $a>0$.
\end{lemma}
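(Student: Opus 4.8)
The plan is to run the classical exponential-moment (Chernoff-type) argument on $X_\ell - X_0$, controlling its moment generating function one increment at a time by descending the filtration. Since $X_0$ is $\cF_0$-measurable, I would first prove the bound for the conditional probability $\Prb(X_\ell - X_0 \ge a \mid \cF_0)$ and then take expectations over $\cF_0$ to obtain the stated (unconditional) inequality. Write $D_t := X_t - X_{t-1}$ for the increments, so that $|D_t| \le c_t$ almost surely and, by the super-martingale hypothesis, $\E[D_t \mid \cF_{t-1}] \le 0$.

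The analytic heart of the proof is \emph{Hoeffding's lemma}: if $Y$ is a random variable with $|Y| \le c$ almost surely and $\E[Y] \le 0$, then $\E[e^{\lambda Y}] \le e^{\lambda^2 c^2 / 2}$ for every $\lambda > 0$. To prove it I would use convexity of $y \mapsto e^{\lambda y}$ on $[-c,c]$ to write $e^{\lambda y} \le \frac{c-y}{2c}e^{-\lambda c} + \frac{c+y}{2c}e^{\lambda c}$, take expectations, and note that the resulting coefficient of $\E[Y]$ equals $\frac{1}{c}\sinh(\lambda c) \ge 0$, so that (using $\E[Y] \le 0$) one gets $\E[e^{\lambda Y}] \le \cosh(\lambda c) \le e^{\lambda^2 c^2/2}$, the final inequality being immediate from the Taylor series. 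Applying exactly the same estimate conditionally to $Y = D_t$ and $\cF_{t-1}$ gives $\E[e^{\lambda D_t} \mid \cF_{t-1}] \le e^{\lambda^2 c_t^2/2}$ for each $t$.

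With this in hand I would peel off the increments successively: for $\lambda > 0$,
\[
 \E\big[e^{\lambda(X_\ell - X_0)} \bmid \cF_0\big]
 = \E\Big[e^{\lambda(X_{\ell-1}-X_0)}\,\E\big[e^{\lambda D_\ell}\bmid\cF_{\ell-1}\big]\Bmid\cF_0\Big]
 \le e^{\lambda^2 c_\ell^2/2}\,\E\big[e^{\lambda(X_{\ell-1}-X_0)}\bmid\cF_0\big],
\]
and iterating down to $t=1$ yields $\E[e^{\lambda(X_\ell - X_0)} \mid \cF_0] \le \exp\big(\tfrac{\lambda^2}{2}\sum_{t=1}^\ell c_t^2\big)$. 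Markov's inequality then gives $\Prb(X_\ell - X_0 \ge a \mid \cF_0) \le \exp\big(-\lambda a + \tfrac{\lambda^2}{2}\sum_{t=1}^\ell c_t^2\big)$, and the choice $\lambda = a / \sum_{t=1}^\ell c_t^2$ makes the exponent equal to $-a^2/(2\sum_{t=1}^\ell c_t^2)$; taking expectations over $\cF_0$ completes the proof.

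The only point requiring any care is that we have a super-martingale rather than a martingale, i.e.\ $\E[D_t \mid \cF_{t-1}] \le 0$ instead of $=0$. This inequality is used in precisely the direction compatible with $\lambda > 0$, so the upper-tail bound goes through unchanged; it is exactly why the statement is one-sided, a bound on $\Prb(X_\ell - X_0 \ge a)$ rather than a two-sided estimate. Everything else — Hoeffding's lemma, the telescoping of conditional expectations, and the optimization over $\lambda$ — is routine.
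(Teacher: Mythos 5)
The paper does not prove this lemma; it is cited as a classical result of Azuma and Hoeffding (with a reference to Bollob\'as's book). Your argument is a complete and correct rendition of the standard Chernoff--Hoeffding proof: Hoeffding's lemma via convexity gives $\E[e^{\lambda D_t}\mid\cF_{t-1}]\le e^{\lambda^2 c_t^2/2}$ (the super-martingale inequality $\E[D_t\mid\cF_{t-1}]\le 0$ entering with the correct sign for $\lambda>0$), the tower property telescopes this bound over $t$, and Markov's inequality with the optimal choice $\lambda=a/\sum_{t}c_t^2$ yields the stated one-sided tail estimate.
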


Recall that a \emph{stopping time} $T$ with respect to the filtration $(\cF_t)_{t\ge0}$
is a non-negative integer valued random variable such that the event $\{T\le t\}$ is
$\cF_t$-measurable. A stopping time $T$ is called \emph{bounded} if there exists a
deterministic $C>0$ such that $\Prb(T\le C)=1$. We shall require the following
well known theorem (see, for example,~\cite{DW}).

\begin{lemma}[Optional Stopping Theorem]\label{lem:OST}
 Suppose\/ $(X_t)_{t\ge0}$ is a super-martingale and\/ $T$ is a bounded stopping time.
 Then\/ $\E[X_T]\le\E[X_0]$.
\end{lemma}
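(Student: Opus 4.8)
The plan is to use the standard telescoping argument, exploiting the boundedness of $T$ to sidestep all integrability subtleties. Let $(\cF_t)_{t\ge0}$ be the filtration with respect to which $(X_t)$ is a super-martingale, and fix a deterministic integer $C$ with $\Prb(T\le C)=1$. The first step is to record the pointwise identity
\[
 X_T = X_0 + \sum_{t=1}^{C}\big(X_t - X_{t-1}\big)\,\id[T\ge t],
\]
which holds for every outcome, since for a fixed sample point the indicator equals $1$ precisely for $t\le T$ and the resulting sum telescopes to $X_T-X_0$. Because $\E[|X_t|]<\infty$ for each $t\le C$ (this is part of the definition of a super-martingale) and the sum has finitely many terms, every random variable appearing here is integrable; in particular $\E[|X_T|]\le\sum_{t=0}^{C}\E[|X_t|]<\infty$, so $\E[X_T]$ is well defined.

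Next I would exploit the stopping-time property in the form $\{T\ge t\}=\{T\le t-1\}^c\in\cF_{t-1}$. Conditioning on $\cF_{t-1}$ and pulling out this non-negative $\cF_{t-1}$-measurable factor gives
\[
 \E\big[(X_t - X_{t-1})\,\id[T\ge t]\bmid\cF_{t-1}\big]
 = \id[T\ge t]\,\E\big[X_t - X_{t-1}\bmid\cF_{t-1}\big]\le 0,
\]
using the super-martingale inequality $\E[X_t\mid\cF_{t-1}]\le X_{t-1}$ for the last step. Taking expectations, $\E\big[(X_t-X_{t-1})\id[T\ge t]\big]\le0$ for each $t$, and summing over $t=1,\dots,C$ and inserting the telescoping identity above yields $\E[X_T]\le\E[X_0]$, as claimed. (Equivalently, one could argue that the stopped process $Y_t:=X_{\min(T,t)}$ is itself a super-martingale and note $\E[Y_C]\le\E[Y_0]=\E[X_0]$ with $Y_C=X_T$; this is just a repackaging of the same computation.)

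There is essentially no obstacle here: every step is a direct manipulation of the defining properties of super-martingales and stopping times. The only two points that warrant a moment's care are the integrability of $X_T$ — which is immediate once $T$ is bounded, since $|X_T|\le\sum_{t=0}^{C}|X_t|$ — and the fact that $\{T\ge t\}$ is $\cF_{t-1}$-measurable, which is precisely the condition that $\{T\le t-1\}\in\cF_{t-1}$ in the definition of a stopping time, and is the one place where boundedness alone would not suffice without this hypothesis.
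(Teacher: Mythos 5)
Your argument is correct: the telescoping identity $X_T = X_0 + \sum_{t=1}^{C}(X_t - X_{t-1})\id[T\ge t]$, the observation that $\{T\ge t\}=\{T\le t-1\}^c\in\cF_{t-1}$, and pulling that indicator out of the conditional expectation before invoking the super-martingale inequality, is exactly the standard textbook proof. The paper itself gives no proof of this lemma and simply cites it to Williams' \emph{Probability with Martingales}, so there is nothing to compare against beyond noting that your write-up is the canonical argument and is complete — including the integrability check on $X_T$, which is the one detail sometimes skipped.
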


\section{Approximation by an independent hypergraph model}\label{sec:compare}

In order to control the evolution of the variables $m(z)$ and $s_k(z)$ as $z$ decreases, we shall
need to understand the structure of the 2-core $\cC_A(z)$ of $\cH_A(z)$, conditioned on~$\cF^+_z$.
More precisely, we shall need to prove good approximations for the probability that certain
substructures occur in~$A$, conditioned on the column sums (over the rows $M(z)$) of the columns
$[z,\pi(x)]$. We shall show that, up to relatively small error, these probabilities are the same
as they would be if the columns were independent.

Note that without any conditioning the rows are independent, and the entries in the rows are
almost independent, meaning that we can estimate the probability of given hypergraph structures
using~\eqref{eq:probs}. The problem is that we wish to condition on an event $\cE\in\cF^+_z$ of
the form
\begin{equation}\label{def:eventE}
 \cE:=\bigg\{M(z)=M\text{ and }\sum_{i\in M(z)}A_{ij}=d_j,\text{ for }j\ge z\bigg\},
\end{equation}
which specifies $M(z)$ and all the degrees of vertices in $\cC_A(z)$ as well as~$d(z)$.
Thus both rows and columns are now dependent. If the entries in each row of $A$ were independent
then we could just forget the probabilities $p_j(x)$ and model the matrix as placing $d_j$ 1s in
column $j$ uniformly at random. Unfortunately this is not the case, so we need to be a bit more
careful.

The substructures that we shall need to consider involve a (typically small) subset
$I\subseteq M(z)$ of rows of~$A$, and we shall need to estimate the probability that the entries
in these rows are of a given form. The most general version of this requirement is that the
submatrix obtained by just considering the set $I$ of rows and a (usually larger)
set $C\subseteq [z,\pi(x)]$ of
columns forms a specific $I\times C$ matrix~$R$. This corresponds to specifying exactly which
vertices of $C$ lie in a fixed set $I$ of edges in~$\cC_A(z)$.

In order to state the main result of this section (\Th{compare}, below), we shall need some
notation. Given any matrix $B$ and subsets $I$ and $C$ of the rows and columns of $B$
respectively, define $B[I\times C]$ to be the submatrix of $B$ given by the set $I$ of rows
and the set $C$ of columns. Given $z\in [z_-,\pi(x)]$ and an event $\cE\in\cF_z^+$ of the
form~\eqref{def:eventE}, which determines the set $M(z)=M$ and the sequence $(d_j)_{j\ge z}$,
let $\tA_\cE$ denote the random $M\times [z,\pi(x)]$ matrix obtained by choosing (for each~$j$)
column $j$ uniformly among the $\binom{|M|}{d_j}$ possible choices of column with column
sum~$d_j$, independently for each column. We shall write $m:=|M|$ and
$A_M:=A\big[M\times [z,\pi(x)]\big]$ for the corresponding submatrix of~$A$, even when $\cE$
does not hold. Of course, if $\cE$ holds then $m(z)=m$ and $M(z)=M$.

Assume $I\subseteq M$ and $C\subseteq [z,\pi(x)]$. If $R$ is an $I\times C$ 0-1 matrix, define
\[
 |R|_1=\sum_{i\in I} \sum_{j \in C}R_{ij}\qquad\text{and}\qquad
 |R|_2=\sum_{i\in I}\bigg(\sum_{j\in C}R_{ij}\bigg)^2.
\]
We shall prove the following theorem, which allows us to control the (conditional) probability of
each `basic event' of the form $\{A[I\times C]=R\}$ in terms of the probability of the
corresponding event $\{\tA_\cE[I\times C]=R\}$ in the independent model~$\tA_\cE$.

\begin{thm}\label{thm:compare}
 Let\/ $z\in [z_-,\pi(x)]$ and let\/ $\cE\in\cF_z^+$ be an event of the form~\eqref{def:eventE}
 such that\/ $\cK(z)$ holds and\/ $d(z)\le 4u_0$. Let\/ $I\subseteq M$, $C\subseteq [z,\pi(x)]$
 with\/ $|I|=e^{O(u_0)}$, and let\/ $R$ be an\/ $I\times C$ $0$-$1$ matrix with\/
 $|R|_1=O(|M|/u_0)$. Then
 \begin{equation}\label{eq:compare:ineq}
  \Prb\big(A[I\times C]=R\mid\cE\big)
  \le\exp\bigg(\frac{O\big(|I|+|R|_1\big)}{u_0}\bigg)\Prb\big(\tA_\cE[I\times C]=R\big).
 \end{equation}
 Moreover, if every row sum of\/ $R$ is at most\/ $u_0/150$, then
 \begin{equation}\label{eq:compare:eq}
  \Prb\big(A[I\times C]=R\mid\cE\big)
  =\exp\bigg(\frac{O\big(|I|+|R|_2\big)}{u_0}\bigg)\Prb\big(\tA_\cE[I\times C]=R\big).
 \end{equation}
\end{thm}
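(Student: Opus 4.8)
The plan is to reduce the conditional probability $\Prb\big(A[I\times C]=R\mid\cE\big)$ to a product over columns $j\in C$ of the conditional probabilities of seeing the prescribed entries of $R$ in column $j$, and then to compare each such factor with the corresponding ``place $d_j$ ones uniformly at random'' factor appearing in $\tA_\cE$. First I would peel off the columns of $C$ one at a time (say in decreasing order of index), revealing at each step the full column $j$ of $A$ restricted to $M$, not just its restriction to $I$; conditioning on $\cE$ this is legitimate because $\cE$ fixes the column sum $d_j$ and the row set $M$, and, crucially, the distribution of the matrix $A_M$ given $\cE$ is the uniform distribution on multisets $\{a_i : i\in M\}$ with the prescribed column sums $(d_j)_{j\ge z}$ — this is exactly the description established just before Section~\ref{sec:NTfacts}. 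The point of revealing whole columns rather than just the $I$-part is that it keeps the remaining conditional law in the same ``uniform on compatible multisets'' form, so the argument can be iterated; the cost is that one must then sum/integrate over the unrevealed entries, but since we only care about an \emph{upper} bound in~\eqref{eq:compare:ineq} and a two-sided estimate with a multiplicative error in~\eqref{eq:compare:eq}, crude control of these sums suffices.

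The heart of the matter is a single-column estimate: conditioned on everything revealed so far (which is again an event of the form~\eqref{def:eventE} for a sub-multiset, with a possibly smaller value of $x$ coming from dividing out the already-revealed large-prime factors), the probability that column $j$ has a $1$ in a given active row $i$ is $p_j(x\prod q_{j'}^{-1})$ where the product is over the already-revealed primes dividing $a_i$ oddly. By Corollary~\ref{cor:pj}, since $\cK(z)$ controls the total number of such primes (the bound $\sum_{i\in M(z)}\alpha_i = O(m(z))$ from the proof of Lemma~\ref{lem:dz}) and $d(z)\le 4u_0$ bounds the column sums, all these probabilities are $(1+o(1))/j$, matching the ``uniform placement'' probability $d_j/|M|$ up to the factor $(1+o(1))$ — here one uses that the degrees $d_j$ are themselves $\Theta(m(z)/z)$-typical and that $|M|\asymp m(z)$. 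More precisely, I would write the one-column conditional probability of the pattern $R[\cdot,j]$ as a sum over the $\binom{|M|-|I|}{d_j - (\text{ones of }R\text{ in column }j)}$ ways to complete the column, each weighted by a product of $p$'s and $(1-p)$'s, compare term-by-term with the hypergeometric weight $\binom{|M|}{d_j}^{-1}$, and extract a factor $\exp\big(O(\text{number of }1\text{s of }R\text{ in column }j)/u_0\big)$ for the ``odd'' entries and a benign factor for the long run of $0$'s, using $\log(1-p_j) = -p_j + O(p_j^2)$ and $p_j = j^{-1+o(1)}$. Multiplying over $j$ gives the factor $\exp(O(|R|_1)/u_0)$; the separate $\exp(O(|I|)/u_0)$ factor arises from the boundary/normalisation mismatches — each of the $|I|$ rows of $R$ contributes $O(1/u_0)$ worth of correction from the difference between conditioning on the whole column sum $d_j$ versus conditioning incrementally, and from the error terms in the smooth-number estimates of Corollary~\ref{cor:pj} (which are themselves of size $O(1/u_0)$ because $\log(u+1)/\log q_z = O(1/u)$). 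For the sharper estimate~\eqref{eq:compare:eq} one applies the \emph{equality} in Corollary~\ref{cor:pj}, valid when the number of already-removed primes from a given row stays $o(u_0)$; the row-sum hypothesis $\le u_0/150$ of $R$ guarantees this \emph{within} the revealed columns, and one then has to argue that the contribution of rows where the \emph{cumulative} count (over all revealed columns, including those outside $I\cup C$) exceeds $o(u_0)$ is negligible — this is where the quadratic quantity $|R|_2$ enters, since the correction from a row with $s$ ones in $R$ is $O(s^2/u_0)$ rather than $O(s/u_0)$, reflecting the $O(b^2+ab)/u$ error in Theorem~\ref{thm:Psi:ratio} / Theorem~\ref{thm:rho:ratio}.

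The main obstacle I anticipate is the bookkeeping needed to show that revealing columns one at a time does not let the conditioning ``drift'': after revealing some columns, the value of $x$ effectively seen by row $i$ has dropped to $x q_z^{-\alpha_i}$, and one needs $p_j$ at these reduced arguments to still be $(1+o(1))/j$ \emph{uniformly over almost all rows}, which is exactly the content of Corollary~\ref{cor:pj} together with the pigeonhole step already used in Lemma~\ref{lem:dz}; handling the remaining $o(1)$-fraction of ``bad'' rows (those with $\alpha_i$ not $o(u_0)$) requires the one-sided bound $p_j(xq_z^{-a})\le(1+o(1))p_j(x)\le (1+o(1))/j$, which is why~\eqref{eq:compare:ineq} only asserts an upper bound unless the row sums of $R$ are genuinely small. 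A secondary technical point is that the ``independent model'' $\tA_\cE$ places ones \emph{per column} uniformly, so its probability for the pattern $R$ is an explicit product of hypergeometric-type factors $\prod_{j\in C}\binom{|M| - r_j^R}{d_j - c_j^R}\big/\binom{|M|}{d_j}$ (with $c_j^R$ the column sum of $R$ in column $j$ and $r_j^R$ an auxiliary count) — one must simplify this into the form $\prod_j (d_j/|M|)^{c_j^R}(1-d_j/|M|)^{\cdots}\cdot e^{O(\cdot)/|M|}$ before the term-by-term comparison above is meaningful, and since $|M| = m(z) \ge z_0^{1+o(1)} \gg u_0$ all these $O(1/|M|)$ errors are absorbed into the $O(1/u_0)$ budget with room to spare.
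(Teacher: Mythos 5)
Your proposal has the right intuitions --- the comparison should ultimately rest on the $p_j\approx 1/j$ estimates of \Co{pj}, the distinction between the one-sided bound and the two-sided asymptotic should come from whether the cumulative $\alpha_i$'s stay $o(u_0)$, and the $|R|_2$ term should reflect the $O(b^2+ab)/u$ error in \Th{Psi:ratio}. But I think there is a genuine gap: the column-peeling plan, as you describe it, does not have a usable one-column estimate to iterate.

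The problem is that conditioning on $\cE$ destroys the product structure over columns: by \Ob{condce}, the law of $A_M$ on $\cB$ is proportional to $\prod_i\tpsi(x/t_i(B),q_{z-1})$, and each factor $t_i(B)$ involves \emph{all} the columns of row $i$. So after revealing some columns of $C$, the conditional law of the next column is a marginal over all the not-yet-revealed columns with prescribed column sums, and showing that this marginal is close to the hypergeometric distribution is itself a question of exactly the same type and difficulty as the theorem --- not a simpler building block. The pointwise estimates $p_j(xq_z^{-a})=(1+o(1))/z$ from \Co{pj} tell you about the unconditional column increments of a single row of $A$; they do not directly control the conditional law of a column after you have fixed $\cE$ and integrated out the unrevealed entries. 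In effect, your decomposition $\Prb(A[I\times C]=R\mid\cE)=\prod_{k}\Prb(A[I\times\{j_k\}]=R[\cdot,j_k]\mid\cdots,\cE)$ is tautologically correct but does not make progress on its own; and the phrase ``crude control of these sums suffices'' underestimates the difficulty of the normalisation, which is precisely where the work is.

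The device you are missing is the ``swap'' map $\phi_{I,C}$ used in the paper's \Ob{uniform:B} and \Lm{comp:phi}: for an individual matrix $B\in\cB_R$, moving the $1$s of $R$ into uniformly random spots of $(M\setminus I)\times C$ produces a uniform $\phi_{I,C}(B)\in\cB_0$, and the pointwise ratio $\Prb(A_M=B)/\Prb(A_M=\phi_{I,C}(B))$ involves only the $O(|R|_1)$ rows where the two matrices differ, so \Th{Psi:ratio} applies row by row with no marginalisation at all. Averaging over uniform $B\in\cB_R$ then gives a comparison of \emph{events}, not just of matrices, which is the content of Lemmas~\ref{lem:compare:ineq} and~\ref{lem:compare:eq}. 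The paper does use column-peeling (\Lm{compare:zero}), and it produces exactly the $e^{O(|I|/u_0)}$ factor you expect, but it is applied only to $R=0$ and only \emph{after} \Lm{compare:eq} is available as a black box at each step --- the single-column estimate is supplied by the swap map, not derived directly from $p_j\approx 1/j$. A secondary point: your worry about rows with large cumulative $\alpha_i$ is exactly the issue the paper confronts in Claim~2 of \Lm{compare:eq}, and even there the resolution requires a large-deviation argument (using \Lm{compare:ineq} recursively and \Ob{mgf}) rather than anything crude.
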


For each $I\subseteq M$ and $C\subseteq [z,\pi(x)]$ let us define $\cR_\cE(I,C)$ to be the
collection of $I\times C$ 0-1 matrices $R$ whose $j$th column sum does not exceed~$d_j$ for each $j\in C$.
Note that matrices that do not have this property are inconsistent with the event~$\cE$, and so
the theorem holds trivially for such matrices, since all the probabilities are then zero.

If $\cK(z)$ holds then $d_j=0$ for $j>z_0^5$, so if $R\in\cR_\cE(I,C)$ then
the probabilities in \eqref{eq:compare:ineq} and \eqref{eq:compare:eq} are unaffected if we
replace $C$ by $C\cap[z,z_0^5]$. Thus we may assume without loss of generality that
$C\subseteq[z,z_0^5]$. Similarly, we may assume without loss of generality that $z\le z_0^5$.
Thus, for convenience, let us fix for the rest of the section an integer $z\in [z_-,z_0^5]$ and
an event $\cE\in\cF_z^+$ of the form~\eqref{def:eventE} such that $\cK(z)$ holds and
$d(z)\le 4u_0$. We note that the conditions $\cK(z)$ and $d(z)\le 4u_0$ also imply that
\begin{equation}\label{eq:sumdj}
 \sum_{j=z}^{\pi(x)} d_j=d(z)+\sum_{k\ge2}ks_k(z)=O(m(z)),
\end{equation}
and that $m(z)\ge z_0^{1+o(1)}$, by~\eqref{eq:lowerbound:m:using:M}.

The idea of the proof is to randomly permute some of the entries of $A$ in $M\times\{j\}$
for each $j\in C$ so as to obtain a random submatrix in $M\times C$ that is
zero in $I\times C$, and show that in most cases the expected probability of obtaining this
permuted matrix is roughly the same as obtaining the original. This will not always be the case,
but such exceptional cases occur rarely, and the probabilities in these cases are smaller than
normal, so they have little effect on the probability of seeing a particular pattern in
the entries $I\times C$. This allows us to reduce the general case to the case $R=0$.
Moreover, by summing over $R$ we can reduce the trivial `true' event to the case $R=0$, thus
indirectly estimating the probability when $R=0$.
Lemmas \ref{lem:compare:ineq} and \ref{lem:compare:eq} below
will prove \eqref{eq:compare:ineq} and \eqref{eq:compare:eq}
respectively, subject to the result holding for $R=0$. \Lm{compare:zero} will then deal with
the case when $R=0$. One reason for splitting the result into three lemmas is that the proof of
the lower bound in \Lm{compare:eq} actually relies heavily on the upper bound from
\Lm{compare:ineq}, and the proof of \Lm{compare:zero} also relies heavily on \Lm{compare:eq}.

Given an $M\times [z,\pi(x)]$ matrix $B$, we say $B$ is \emph{consistent with} $\cE$
if the column sums $\sum_{i\in M}B_{ij}$ are equal to $d_j$ for all $j\ge z$. Let
$\cB$ be the set of all $M\times[z,\pi(x)]$ 0-1 matrices that are consistent with $\cE$
and for $R\in\cR_\cE(I,C)$, define
\[
 \cB_R:=\{B\in\cB:B[I\times C]=R\}\qquad\text{and}\qquad
 \cB_0:=\{B\in\cB:B[I\times C]=0\}.
\]
We will use the following simple observation several times in the proof below.

\begin{obs}\label{obs:condce}
$\Prb(A_M=B\mid\cE)=\Prb(A_M=B\mid A_M\in\cB)$ for every\/ $B \in \cB$, and hence
$$\Prb\big( A[I\times C] = R \mid \cE \big) = \Prb\big( A_M \in \cB_R \mid A_M \in \cB \big)$$
for every $R \in \cR_\cE(I,C)$.
\end{obs}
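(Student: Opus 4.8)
The plan is to reduce the whole statement to the single assertion that $\Prb(\cE \mid A_M = B)$ takes the same value for every $B \in \cB$, and then to prove that assertion by rewriting the event $\{M(z) = M\}$, after conditioning on $A_M = B$, as an event that concerns only the rows of $A$ outside $M$.

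For the reduction: note that $A[I \times C]$ is a function of $A_M$ and that $\cE \subseteq \{A_M \in \cB\}$, so the second identity will follow from the first by summing over $B \in \cB_R$. For the first identity, Bayes' rule turns it into the claim that $\Prb(\cE \mid A_M = B)$ is constant on $\cB$ (the value of the constant being forced, by summing $\Prb(A_M = B)$ over $B \in \cB$, to equal $\Prb(\cE)/\Prb(A_M \in \cB)$). Finally, conditionally on $A_M = B$ with $B \in \cB$ the column-sum constraints in~\eqref{def:eventE} hold automatically, so there $\cE$ reduces to $\{M(z) = M\}$; hence it suffices to show that $\Prb(M(z) = M \mid A_M = B)$ does not depend on the choice of $B \in \cB$.

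This is the heart of the matter, and I would argue it as follows. Recall that $M(z)$ depends only on the columns $[z+1,\pi(x)]$ of $A$, and write $e'_i = \{j \in [z+1,\pi(x)] : A_{ij} = 1\}$. Since $\cE$ has positive probability, its prescribed degrees satisfy $d_j \in \{0\} \cup [2,\infty)$ for every $j > z$, so once $A_M = B \in \cB$ is fixed the edges $\{e'_i : i \in M\}$ already form a sub-hypergraph of $\cH_A(z)$ of minimum vertex degree at least~$2$; thus these edges always lie in the $2$-core, and any vertex $j$ with $d_j \ge 2$ lies in at least two of them and so can never become a vertex of degree $\le 1$ during the peeling process. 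Consequently $M(z) = M$ holds if and only if every edge $e'_i$ with $i \notin M$ is eventually peeled away, and — the vertices with $d_j \ge 2$ being inert — this in turn holds if and only if the auxiliary hypergraph $\cH'$ with vertex set $V' := \{j \in [z+1,\pi(x)] : d_j = 0\}$ and edge multiset $\{e'_i \cap V' : i \notin M\}$ has empty $2$-core. But $V'$ is determined by $\cE$ alone, and the edges of $\cH'$ depend only on $A\big[([N]\setminus M) \times [z+1,\pi(x)]\big]$, which is independent of $A_M$; hence $\Prb(M(z) = M \mid A_M = B)$ equals the unconditional probability that $\cH'$ has empty $2$-core, and this does not involve $B$ at all.

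The one step that needs care — and the only real obstacle — is this reduction to $\cH'$: one must verify that an edge $e'_i$ with $i \notin M$ is peeled from $\cH_A(z)$ exactly when $e'_i \cap V'$ is peeled from $\cH'$. Granting the standard facts that the $2$-core does not depend on the order of peeling and coincides with the maximal sub-hypergraph of minimum degree at least~$2$, this is a routine induction on the peeling steps; the only subtlety is an edge $e'_i$, $i \notin M$, contained entirely in $\{j : d_j \ge 2\}$, which corresponds to an empty edge of $\cH'$ — such an edge is peeled from neither hypergraph, consistently with the fact that $\{e'_i : i \in M\} \cup \{e'_i\}$ then still has minimum degree at least~$2$ and so lies in the $2$-core of $\cH_A(z)$.
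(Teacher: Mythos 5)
Your proof is correct and follows essentially the same route as the paper: the key step in both is that, since every $d_j$ with $j>z$ is either $0$ or at least $2$, no row of $M$ can be the first to be peeled, so $M(z)=M$ is decided by whether the remaining rows are all removed, and that event does not depend on the choice of $A_M\in\cB$. The only difference is that where the paper asserts this dependence claim somewhat informally, you make it precise by passing to the auxiliary hypergraph $\cH'$ on the vertex set $\{j:d_j=0\}$ (and correctly flag the degenerate case of an edge wholly contained in $\{j:d_j\ge 2\}$, which becomes an empty edge of $\cH'$ and is peeled in neither hypergraph); this is a useful clarification but not a different argument.
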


\begin{proof}
Note that the event $\cE$ is equal to $\{M(z)=M\} \cap \{A_M\in\cB\}$. We claim that,
conditional on the event $A_M \in \cB$, the event $M(z)=M$ depends only on rows outside of $M$,
and so is independent of the choice of $A_M\in\cB$. Indeed, since every $d_j$ (for $j > z$)
is either zero or at least 2, none of the rows of $M$ will be deleted by the algorithm that
constructs the 2-core $\cC_A(z)$ (to see this, consider the first such row to be deleted).
Thus the event $M(z) = M$ holds (conditional on $A_M\in\cB$) if and only if all other rows
are removed by step $z$ of the algorithm, which depends on the sequence $(d_{z+1},\ldots,d_{\pi(x)})$ and on
the rows outside $M$, but not on the choice of $A_M \in \cB$, as claimed. This proves the
first statement, and the second follows immediately since $\{ A[I\times C] = R \} \cap \cB = \cB_R$.
\end{proof}

Given $B\in\cB$, define a random matrix $\phi_{I,C}(B)$ as follows.
For each $j\in C$, remove all 1s in the submatrix $B[I\times\{j\}]$ and place them on a
uniformly chosen random subset of the zero entries in $B[(M\setminus I)\times\{j\}]$, choosing the
random subsets independently for each $j \in C$.
The result is a matrix $\phi_{I,C}(B)$ with the same column sums, and hence $\phi_{I,C}(B)$ is still
consistent with $\cE$, but $\phi_{I,C}(B)[I\times C] = 0$, so $\phi_{I,C}(B)\in\cB_0$.
The choices made in the construction of $\phi_{I,C}(B)$ will always be assumed to be
independent of any random choice of $B$, or the matrix~$A$.
The following observation is then immediate.

\begin{obs}\label{obs:uniform:B}
 For any fixed\/ $R \in\cR_\cE(I,C)$, if\/ $B$ is chosen uniformly at random from\/ $\cB_R$ then
 the distribution of\/ $\phi_{I,C}(B)$ is uniform on\/~$\cB_0$.
\end{obs}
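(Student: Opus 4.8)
The statement to prove is \Ob{uniform:B}: if $B$ is chosen uniformly at random from $\cB_R$, then $\phi_{I,C}(B)$ is uniformly distributed on $\cB_0$.

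My plan is to exhibit $\phi_{I,C}$ as a composition of column-wise operations that, for each fixed column $j \in C$, send a uniform distribution over the possible configurations of that column (given the fixed pattern $R[\cdot\times\{j\}]$ on rows $I$) to the uniform distribution over the possible configurations with $0$s on $I$. Since the construction operates independently on each column, and since the columns of a uniform $B \in \cB_R$ are independent (each column being chosen uniformly among those with the prescribed column sum $d_j$ and the prescribed restriction $R[I\times\{j\}]$ to rows in $I$, all independently across $j$), it suffices to prove the one-column statement.

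So fix a column $j \in C$. Write $r_j = \sum_{i \in I} R_{ij}$ for the number of $1$s that $R$ forces into rows $I$ of this column. A uniform $B \in \cB_R$ has, in column $j$: the entries on $I$ equal to $R[I\times\{j\}]$ (deterministic), and the remaining $d_j - r_j$ ones placed uniformly at random among the $|M| - |I|$ positions in $M \setminus I$ — so the restriction $B[(M\setminus I)\times\{j\}]$ is a uniform random subset of size $d_j - r_j$ of $M \setminus I$. The map $\phi_{I,C}$ removes the $r_j$ ones on $I$, then redistributes them onto a uniform random subset of the currently-empty positions in $M \setminus I$. After this, rows $I$ are all zero in column $j$, and rows $M \setminus I$ contain $d_j$ ones total, so $\phi_{I,C}(B)$ has the correct column sum and lies in $\cB_0$. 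What must be checked is that the resulting random subset of $M\setminus I$ of size $d_j$ is uniform. This is a standard two-stage sampling fact: choosing a uniform $(d_j - r_j)$-subset $S$ of an $n$-set (here $n = |M|-|I|$), then adding a uniform $r_j$-subset of the complement of $S$, produces a uniform $d_j$-subset — because each target $d_j$-subset $T$ arises from exactly $\binom{d_j}{r_j}$ pairs $(S, T\setminus S)$, a number independent of $T$, and each such pair has the same probability $\binom{n}{d_j-r_j}^{-1}\binom{n-d_j+r_j}{r_j}^{-1}$.

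Assembling: the one-column claim gives that the $j$th column of $\phi_{I,C}(B)$ is uniform over subsets of $M\setminus I$ of size $d_j$ (equivalently, uniform over columns consistent with $\cE$ and vanishing on $I$), for each $j \in C$. For columns $j \notin C$, $\phi_{I,C}$ does nothing, and the $j$th column of $B$ is already uniform over the relevant set (uniform over size-$d_j$ subsets of $M$). All columns are independent throughout, so $\phi_{I,C}(B)$ has independent columns with exactly the marginal laws that characterise the uniform distribution on $\cB_0$, and hence is uniform on $\cB_0$. I do not expect any genuine obstacle here — the only point requiring care is the elementary two-stage uniform-sampling identity in the one-column reduction, and the bookkeeping that a uniform element of $\cB_R$ really does have independent columns with the stated conditional laws, which is immediate from the definition of $\tA_\cE$-style column-wise uniform sampling restricted to the pattern $R$.
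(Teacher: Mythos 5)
Your proof is correct; the paper's one-line justification --- that the distribution is invariant under permutations of the rows $M\setminus I$ --- is the symmetry shortcut for the same column-wise argument you spell out in full. The only difference is in flavour: you verify the within-column uniformity by a direct two-stage-sampling count, while the paper appeals to an orbit/invariance observation, and both rest on the same column-independence of $\cB_R$ and $\cB_0$ and on the fact that $\phi_{I,C}$ leaves the columns outside $C$ untouched.
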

\begin{proof} Indeed, the distribution is invariant under any permutation of the rows $M\setminus I$.
\end{proof}

Now, for any 0-1 matrix $B$ and each row $i$ of $B$, let
\[
 t_i(B)=\prod_{j\in[z,\pi(x)]}q_j^{B_{ij}},
\]
and define the \emph{weight} of the $i$th row, $w_i(B)$, by
$$t_i(B)=q_{z-1}^{w_i(B)}.$$
For completeness, define $w_i(B)=0$ if $i$ is not a row of $B$.

\begin{obs}\label{obs:weight}
 If\/ $B_{ij}=0$ for all\/ $j\notin[z,z_0^5]$, then\/
 $\sum_j B_{ij}\le w_i(B)\le 6\sum_j B_{ij}$.
\end{obs}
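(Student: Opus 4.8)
The plan is to unwind the definition of the row weight and reduce everything to an elementary estimate on the ratio $\log q_j/\log q_{z-1}$. Taking logarithms in the defining relation $t_i(B)=q_{z-1}^{w_i(B)}$ and using $t_i(B)=\prod_{j\in[z,\pi(x)]}q_j^{B_{ij}}$ gives
\[
 w_i(B)=\sum_{j\in[z,\pi(x)]}B_{ij}\,\frac{\log q_j}{\log q_{z-1}},
\]
and, since by hypothesis $B_{ij}=0$ for $j\notin[z,z_0^5]$, the sum runs effectively over $z\le j\le z_0^5$. So it suffices to show that each coefficient $\log q_j/\log q_{z-1}$ lies in $[1,6]$ for all such~$j$.

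The lower bound is immediate: for $j\ge z$ we have $q_j\ge q_z>q_{z-1}$, so every coefficient is at least~$1$, whence $w_i(B)\ge\sum_j B_{ij}$. For the upper bound I would bound each coefficient by its maximal value $\log q_{z_0^5}/\log q_{z-1}$, using $q_j\le q_{z_0^5}$ and $q_{z-1}\ge q_{z_--1}$ (recall that $z\in[z_-,z_0^5]$ is fixed for the whole section). By the prime number theorem together with $z_\pm=z_0^{1+o(1)}$ from~\eqref{eq:zpm:rough}, we have $q_{z_--1}=z_0^{1+o(1)}$ and $q_{z_0^5}=z_0^{5+o(1)}$, so the ratio is $5+o(1)$, which is at most~$6$ once $x$ is large enough. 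Hence $w_i(B)\le 6\sum_j B_{ij}$, as required.

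I do not anticipate any real obstacle here; the only point requiring a little care is that $z$ may be as small as $z_-$, so one should control $\log q_{z-1}$ via the prime number theorem estimate $\log q_{z-1}=(1+o(1))\log z_0$ rather than a cruder bound, and then check that the gap between the true ratio $5+o(1)$ and the claimed constant~$6$ comfortably absorbs the error term for all sufficiently large~$x$.
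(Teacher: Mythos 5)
Your proof is correct and uses the same argument as the paper: write $w_i(B)=\sum_j B_{ij}\log q_j/\log q_{z-1}$, note each nonzero coefficient is at least $1$ since $q_j\ge q_z>q_{z-1}$, and bound it above by $\log q_{z_0^5}/\log q_{z-1}$, which is $5+o(1)\le 6$ for large $x$ since $q_{z_0^5}=z_0^{5+o(1)}$ and $q_{z-1}\ge z_0^{1+o(1)}$ (the paper phrases the upper bound as $q_{z_0^5}=z_0^{5+o(1)}\le z_-^6\le q_{z-1}^6$, which is the same estimate).
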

\begin{proof}
All primes dividing $t_i(B)$ are in
the range $[q_z,q_{z_0^5}]$, and $q_{z_0^5}=z_0^{5+o(1)}\le z_-^6\le q_{z-1}^6$.
\end{proof}

Assume $B\in\cB_R$. Let $B^-$ be the matrix obtained from $B$
by setting all entries in $I\times C$ to~0. Note that $B^-$ is not in general consistent
with~$\cE$. Write
\[
 \delta_i:=w_i(B)-w_i(B^-),\qquad\text{and}\qquad
 \delta_i^\phi:=w_i(\phi_{I,C}(B))-w_i(B^-).
\]
Note that $\delta_i=w_i(R)$ depends only on $R$ and that $\delta_i=0$ for $i\notin I$ while
$\delta_i^\phi=0$ for $i\in I$. Moreover, by \Ob{weight}, we have $\delta_i \in \{0\} \cup [1,\infty)$ for every $i \in M$, and $\delta_i$ (respectively $\delta_i^\phi$) is, up to a constant factor, equal to the number of 1s removed
from (respectively added to) row $i$ by the map~$\phi_{I,C}$. Define
$$\Delta:=\sum_{i\in I}\delta_i=\sum_{i\in M\setminus I}\delta^\phi_i,$$
and note that $\Delta=\Theta(|R|_1)$ and $\Delta$ depends only on~$R$.
Also observe that, by \eqref{eq:sumdj} and \Ob{weight},
\begin{equation}\label{eq:sumalpha}
 \sum_{i\in M}w_i(B)=O(|M|)
\end{equation}
for any $B \in \cB_R$. We shall also write
\begin{equation}\label{eq:alphaR}
 \alpha_R(B):=\sum_{i\in I}\delta_i w_i(B^-).
\end{equation}
Our first challenge will be to prove the following lemma.

\begin{lemma}\label{lem:comp:phi}
 Let\/ $z\in [z_-,z_0^5]$ and let\/ $\cE\in\cF_z^+$ be an event of the form~\eqref{def:eventE}
 such that\/ $\cK(z)$ holds and\/ $d(z)\le 4u_0$. Let\/ $I\subseteq M$
 and\/ $C\subseteq [z,z_0^5]$ be such that\/ $|I|=e^{O(u_0)}$, and let\/ $R\in\cR_\cE(I,C)$.
 If\/ $|R|_1=O(|M|/u_0)$, then for all\/ $B\in\cB_R$,
 \begin{equation}\label{eq:comp:phi:ineq}
  \Prb\big(A_M=B)\le\exp\bigg(\frac{O\big(|R|_1\big)}{u_0}\bigg)\Prb\big(A_M=\phi_{I,C}(B) \big),
 \end{equation}
 Moreover, if no row\/ $i\in I$ of\/ $B$ contains more than\/ $u/12$ $1$s, where $q_{z-1}=x^{1/u}$, then
 \begin{equation}\label{eq:comp:phi:eq}
  \Prb\big(A_M=B)=\exp\bigg(\frac{O\big(\alpha_R(B)+|R|_2\big)}{u_0}\bigg)\Prb\big(A_M=\phi_{I,C}(B)).
 \end{equation}
\end{lemma}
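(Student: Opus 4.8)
The plan is to exploit the independence of the rows of $A$. Put $g(w):=\tpsi\big(xq_{z-1}^{-w},q_{z-1}\big)/x$; a short computation (telescope the conditional probabilities in~\eqref{eq:probs}, or count directly) shows that for each $i\in M$ the probability that row $i$ of $A_M$ equals a prescribed $0$-$1$ vector $v$ is exactly $g(w_i(v))$. Hence $\Prb(A_M=B)=\prod_{i\in M}g(w_i(B))$, while $\Prb(A_M=\phi_{I,C}(B))=\E[\prod_{i\in M}g(w_i(\phi_{I,C}(B)))]$, the expectation being over the internal randomness of $\phi_{I,C}$. Since $\phi_{I,C}$ leaves all rows outside $C$ untouched, agrees with $B^-$ on the rows of $I$, and only redistributes the excess weight among the rows of $M\setminus I$ --- so $w_i(\phi_{I,C}(B))=w_i(B^-)$ for $i\in I$ and $w_i(\phi_{I,C}(B))=w_i(B)+\delta^\phi_i$ for $i\notin I$, with $\sum_{i\in I}\delta_i=\sum_{i\notin I}\delta^\phi_i=\Delta$ for every outcome --- cancelling the common factor $\prod_{i\in I}g(w_i(B^-))$ yields
\[
 \frac{\Prb(A_M=B)}{\Prb(A_M=\phi_{I,C}(B))}
 =\bigg(\prod_{i\in I}\frac{g(w_i(B))}{g(w_i(B^-))}\bigg)\bigg/\;\E\bigg[\prod_{i\notin I}\frac{g(w_i(B)+\delta^\phi_i)}{g(w_i(B))}\bigg].
\]
Writing $\zeta:=\xi(u)-\log q_{z-1}$, every single factor here is $g(w+\delta)/g(w)=e^{\delta\zeta}e^{E}$ by \Th{Psi:ratio} (applied with $q_{z-1}$ in place of $q_z$); the key point is that, because the total excess weight $\Delta$ is the same in numerator and denominator, the leading factors $e^{\Delta\zeta}$ cancel, and it remains only to control the product of the much smaller correction factors $e^{E}$, which reflect the curvature of $\log g$.

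For the numerator I would use \Ob{weight} (so $\delta_i\ge1$ whenever $\delta_i>0$) together with the inequality in \Th{Psi:ratio}, which gives $E=O(1)/u$; since at most $|R|_1$ rows of $I$ have $\delta_i>0$ and $u=\Theta(u_0)$ by~\eqref{eq:ulogu}, the numerator is at most $\exp(\Delta\zeta+O(|R|_1)/u_0)$. Estimating the denominator from \emph{above} in exactly the same way --- now deterministically, using $\sum_{i\notin I}\delta^\phi_i=\Delta$ for every outcome and that at most $\Delta$ rows of $M\setminus I$ receive a $1$ --- gives a matching upper bound, and together these prove~\eqref{eq:comp:phi:ineq}. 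Under the extra hypothesis of~\eqref{eq:comp:phi:eq}, \Ob{weight} forces $w_i(B)\le u/2$ for $i\in I$, so the two-sided estimate in \Th{Psi:ratio} applies to the numerator with $E=O(\delta_i^2+w_i(B^-)\delta_i+1)/u$; summing, and using $\delta_i=w_i(R)=\Theta(\sum_jR_{ij})$ (so $\sum_i\delta_i^2=\Theta(|R|_2)$), the identity $\sum_{i\in I}w_i(B^-)\delta_i=\alpha_R(B)$ from~\eqref{eq:alphaR}, and $|R|_1\le|R|_2$, shows the numerator equals $\exp(\Delta\zeta+O(\alpha_R(B)+|R|_2)/u_0)$ in both directions.

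The main obstacle is the matching lower bound $\E[\prod_{i\notin I}g(w_i(B)+\delta^\phi_i)/g(w_i(B))]\ge\exp(\Delta\zeta-O(|R|_1)/u_0)$, for which the two-sided estimate in \Th{Psi:ratio} is unavailable on the ``heavy'' rows --- those with $w_i(B)>u/4$, say --- since then $w_i(B)+\delta^\phi_i$ need not be at most $u/2$; indeed for a row with $w_i(B)$ close to $u$ the ratio $g(w+\delta)/g(w)$ can be as small as $q_{z-1}^{-\delta}=e^{\delta\zeta-\delta\xi(u)}$. I would handle this by bounding the expectation below by its restriction to the event $G$ that $\phi_{I,C}$ sends no $1$ into a heavy row and loads no row of $M\setminus I$ with more than $u/4$ of extra weight. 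On $G$ every receiving row satisfies $w_i(B)+\delta^\phi_i\le u/2$, so the two-sided estimate applies throughout, the leading term is again exactly $e^{\Delta\zeta}$, and the correction is $e^{-E}$ with $E=O\big(\tfrac1u\sum_{i\notin I}((\delta^\phi_i)^2+w_i(B)\delta^\phi_i+\id[\delta^\phi_i>0])\big)$. Because $\phi_{I,C}$ distributes the excess weight almost uniformly over the $\ge|M|/2$ rows of $M\setminus I$, these three sums have expectation $O(\Delta)$ --- with $\E[\sum_i(\delta^\phi_i)^2]=O(\Delta)$ by a short second-moment computation, $\E[\sum_iw_i(B)\delta^\phi_i]\le\tfrac{\Delta}{|M|}\sum_iw_i(B)=O(\Delta)$ by~\eqref{eq:sumalpha}, and $\sum_i\id[\delta^\phi_i>0]\le\Delta$ --- and since conditioning on $G$ only makes them smaller, $\E[E\mid G]=O(|R|_1)/u_0$, so Jensen's inequality gives $\E[\prod_{i\notin I}(\cdots)\mid G]\ge e^{\Delta\zeta}e^{-O(|R|_1)/u_0}$. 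Finally, \eqref{eq:sumalpha} forces the number of heavy rows to be $O(|M|/u)$, and each column of $R$ has sum $O(u_0)$ (as $d(z)\le4u_0$ and $\cK(z)$ bounds every $2$-core degree by $4u_0$), so within each column the $1$s of $R$ avoid the heavy rows with probability at least $(1-O(1/u))^{O(u_0)}$; multiplying over the independent columns --- and checking by a routine occupancy estimate that the overloading event is even rarer --- gives $\Prb(G)\ge(1-O(1/u))^{|R|_1}\ge e^{-O(|R|_1)/u_0}$. Combining these bounds shows the denominator is at least $\exp(\Delta\zeta-O(|R|_1)/u_0)$, and dividing (the $e^{\Delta\zeta}$ factors cancelling, and $|R|_1\le|R|_2$ absorbing the coarser errors into $O(\alpha_R(B)+|R|_2)/u_0$) completes the proof of both~\eqref{eq:comp:phi:ineq} and~\eqref{eq:comp:phi:eq}.
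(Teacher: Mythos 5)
Your overall strategy matches the paper's: write $\Prb(A_M=B)/\Prb(A_M=\phi_{I,C}(B))$ as a ratio of row-products via the reference matrix $B^-$, observe that the leading factors $e^{\Delta\zeta}$ cancel, control the numerator with \Th{Psi:ratio}, and handle the hard lower bound on the denominator by restricting to a good event and applying Jensen. That is exactly the paper's architecture, including the definition of the good event via a ``no heavy receiving rows'' condition and the estimate $\Prb(G)\ge e^{-O(|R|_1)/u_0}$.

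However, there is a genuine gap in your treatment of the denominator lower bound. Your good event $G$ permits $\delta_i^\phi$ as large as $u/4$, so $\sum_{i\notin I}(\delta_i^\phi)^2$ is \emph{not} deterministically $O(\Delta)$ on $G$, and your argument leans on the assertion that ``conditioning on $G$ only makes them smaller.'' That claim is not justified and is in fact doubtful: conditioning on avoiding heavy rows restricts the $1$s to a smaller set of rows, which can \emph{increase} the concentration and hence the conditional second moment. The obvious repair --- $\E[\sum(\delta_i^\phi)^2\mid G]\le\E[\sum(\delta_i^\phi)^2]/\Prb(G)$ --- does not help either, because the lemma is asserted for $|R|_1$ as large as $O(|M|/u_0)$, in which regime $\Prb(G)$ can be exponentially small. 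The paper sidesteps this cleanly by building the bound $\delta_i^\phi\le 6$ directly into the definition of the good event $\cG_B$ (equivalently: each receiving row gets at most one new $1$), which makes $\sum(\delta_i^\phi)^2$ and the row-count terms \emph{deterministically} $O(\Delta)$ on $\cG_B$; the only remaining conditional expectation, $\E[\alpha_\phi\mid\cG_B]$, is then computed directly from the (tractable) conditional placement law rather than by a monotonicity heuristic. You should strengthen $G$ in the same way. A separate, minor point: the sentence claiming that the numerator upper bound ``together with'' an upper bound on the denominator proves~\eqref{eq:comp:phi:ineq} is a logical slip (upper bounds on both sides of a ratio say nothing about it); your later lower bound on the denominator is what is actually needed, and you do eventually state it, but the intermediate sentence as written is false.
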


Note that the probabilities here are \emph{unconditional}, and are both over the choice of $A$
and the (uniform and independent) choice of~$\phi_{I,C}(B)$.

\begin{proof}
For any $B\in\cB_R$ there are exactly $\tpsi(x/t_i(B),q_{z-1})$ choices of integer
$a_i\in [x]$ such that $A\big[\{i\}\times[z,\pi(x)]\big]=B\big[\{i\}\times[z,\pi(x)]\big]$.
Thus, by counting the number of choices for $a_i$, $i\in M$, we have
\begin{equation}\label{eq:comp:phi:basicid}
 \frac{\Prb(A_M=B)}{\Prb(A_M=B^-)}=\prod_{i\in M}
 \frac{\tpsi\big(x/t_i(B),q_{z-1}\big)}{\tpsi\big(x/t_i(B^-),q_{z-1}\big)}
 =\prod_{i\colon\delta_i>0}\frac{\tpsi\big(xq_{z-1}^{-w_i(B^-)-\delta_i},q_{z-1}\big)}
 {\tpsi\big(xq_{z-1}^{-w_i(B^-)},q_{z-1}\big)}.
\end{equation}
Thus, by \Th{Psi:ratio}, and recalling that $\delta_i \in \{0\} \cup [1,\infty)$ for every $i \in M$, we have
\begin{align}
 \log\frac{\Prb(A_M=B)}{\Prb(A_M=B^-)}
 &\le\sum_{i\colon\delta_i>0}\bigg(\delta_i \big( \xi(u)-\log q_{z-1} \big)+\frac{O(1)}{u}\bigg)\notag\\
 &=\Delta \big( \xi(u)-\log q_{z-1} \big)+\frac{O(|R|_1)}{u_0}.\label{eq:comp:B:lb}
\end{align}
The error bound in the last line follows as there are clearly at most $|R|_1$ rows where
$B$ and $B^-$ differ, and since $u=\Theta(u_0)$, which holds because $z \in [z_-,z_0^5]$. 

Similarly, conditioned on the choice of $\phi_{I,C}(B)$,
\begin{align}
 \log\frac{\Prb(A_M=\phi_{I,C}(B)\mid\phi_{I,C}(B))}{\Prb(A_M=B^-)}
 &\le\sum_{i\colon\delta_i^\phi>0}\bigg(\delta_i^\phi \big(\xi(u)-\log q_{z-1} \big)+\frac{O(1)}{u}\bigg)\notag\\
 &=\Delta \big(\xi(u)-\log q_{z-1} \big)+\frac{O(|R|_1)}{u_0}.\label{eq:comp:pB:lb}
\end{align}
Again, the error bound follows as there are at most $|R|_1$ rows where $\phi_{I,C}(B)$
and $B^-$ differ.

We now aim to deduce corresponding lower bounds from \Th{Psi:ratio}. To do so, note first that if no row $i \in I$ of $B$ contains more than $u/12$ $1$s, then, by \Ob{weight},
$$w_i(B^-) + \delta_i = w_i(B) \le \frac{u}{2}.$$
Hence, by~\eqref{eq:comp:phi:basicid} and \Th{Psi:ratio}, we have
\begin{align}
 \log\frac{\Prb(A_M=B)}{\Prb(A_M=B^-)}
 &=\log\prod_{i\in M}\frac{\tpsi\big(x,q_{z-1}^{-w_i(B^-)-\delta_i},q_{z-1}\big)}
 {\tpsi\big(x,q_{z-1}^{-w_i(B^-)},q_{z-1}\big)}\notag\\
 &=\sum_{i\colon\delta_i>0}\bigg(\delta_i(\xi(u)-\log q_{z-1})
 +\frac{O\big( \delta_i^2+\delta_iw_i(B^-)+1 \big)}{u_0}\bigg)\notag\\
 &=\Delta(\xi(u)-\log q_{z-1})+\frac{O(\alpha_R(B)+|R|_2)}{u_0}.\label{eq:comp1:B:ub}
\end{align}
Note that the last equality follows by \Ob{weight}, since
\[
 \sum_{i\colon\delta_i>0}(\delta_i^2+\delta_iw_i(B^-)+1)\le 6^2|R|_2+\alpha_R(B)+|R|_1
\]
and $|R|_1\le |R|_2$. 

Proving a similar bound for $\phi_{I,C}(B)$ (and without assuming that the rows of $B$ have few 1s) is a little more complicated, since $w_i(B^-) + \delta^\phi_i \le u/2$ does not necessarily hold for all $\phi_{I,C}(B)$. To get around this problem, we use the following event (which depends on the choice of $\phi_{I,C}(B)$):
\[
 \cG_B:= \Big\{ w_i(B^-) \le u/3\text{ and }\delta_i^\phi\le 6
 \text{ for each $i\in M$ such that }\delta_i^\phi>0 \Big\}.
\]
The first step is to show that this event occurs for most choices of~$\phi_{I,C}(B)$.

\claim{Claim 1:} For every $B\in\cB_R$, we have
$$\Prb(\cG_B) = \exp\bigg( - \ds\frac{O(|R|_1)}{u_0} \bigg).$$

\begin{proof}[Proof of Claim 1.]
Let $M'=\{i\in M:w_i(B^-)\le u/3\}$, and note that if $\phi_{I,C}(B)$ places 1s only in rows of $M'$,
and places no more than a single 1 in each row, then $\cG_B$ holds. Indeed,
only those rows $i$ where a 1 is inserted have $\delta_i^\phi>0$, and if only a single 1 is inserted
in row $i$ then by \Ob{weight} we have $\delta_i^\phi\le6$.

We can construct $\phi_{I,C}(B)$ with the correct distribution by processing each one of the
$|R|_1$ 1 entries of $R=B[I\times C]$ in turn, removing it from $B$ and then adding a 1 to a uniformly
chosen non-zero entry of the same column of $B$, outside of the rows~$I$. The number of possible
choices in this process is clearly at most $|M|^{|R|_1}$. If we instead consider only
those choices where 1s are placed only in the rows $M'\setminus I$, and no two 1s are placed
in the same row, the number of choices will still be at least $(|M'|-|I|-4u_0-|R|_1)^{|R|_1}$.
Indeed, at each step we have $|M'\setminus I|$ choices of row, but must avoid the at most $4u_0$
original 1s of $B$ in that column, and the at most $|R|_1$ rows where we have already placed
a~1. Since the choice of $\phi_{I,C}(B)$ is uniform, we have
$$\Prb(\cG_B) \ge \bigg( \frac{|M'|-|I|-4u_0-|R|_1}{|M|} \bigg)^{|R|_1} \ge \bigg( 1 - \frac{O(1)}{u_0} \bigg)^{|R|_1} = e^{-O(|R|_1/u_0)},$$
since $|I| + 4u_0 + |R|_1 = O(|M|/u_0)$, by our assumptions and using~\eqref{eq:ulogu} and~\eqref{eq:lowerbound:m:using:M}, and since
$$|M \setminus M'| \le \frac{3}{u} \sum_{i \in M \setminus M'} w_i(B^-) \le \frac{3}{u} \sum_{i \in M} w_i(B) = \frac{O(|M|)}{u_0}$$
by \eqref{eq:sumalpha} and since $u = \Theta(u_0)$.
\end{proof}

We are now ready to prove our final lower bound; the lemma follows easily from the following claim, together with~\eqref{eq:comp:B:lb},~\eqref{eq:comp:pB:lb} and~\eqref{eq:comp1:B:ub}.

\claim{Claim 2:} For every $B\in\cB_R$, we have
\[
 \log\frac{\Prb\big(A_M=\phi_{I,C}(B)\big)}{\Prb\big(A_M=B^-\big)}
 \ge\Delta \big( \xi(u)-\log q_{z-1} \big)+\frac{O(|R|_1)}{u_0}.
\]
\begin{proof}[Proof of Claim 2.]
Note first that
$$\frac{\Prb\big(A_M=\phi_{I,C}(B)\big)}{\Prb\big(A_M=B^-\big)} \ge \Prb\big(\cG_B\big)\,\E\bigg[\frac{\Prb\big(A_M=\phi_{I,C}(B)\mid\phi_{I,C}(B)\big)}{\Prb\big(A_M=B^-\big)}\Bmid\cG_B\bigg],$$
and observe that if $\cG_B$ holds, then $w_i(B^-) + \delta^\phi_i \le u/3 + 6 \le u/2$. Hence, by \Th{Psi:ratio}, it follows (cf.~\eqref{eq:comp1:B:ub}) that
\begin{align*}
 \log\frac{\Prb\big(A_M=\phi_{I,C}(B)\mid\phi_{I,C}(B)\big)}{\Prb\big(A_M=B^-\big)}
 &=\log\prod_{i\in M}\frac{\tpsi\big(x,q_{z-1}^{-w_i(B^-)-\delta_i^\phi},q_{z-1}\big)}
 {\tpsi\big(x,q_{z-1}^{-w_i(B^-)},q_{z-1}\big)}\\
 &=\sum_{i\colon\delta_i^\phi>0}\bigg(\delta_i^\phi(\xi(u)-\log q_{z-1})+\frac{O(w_i(B^-)+1)}{u_0}\bigg)\\
 &=\Delta \big( \xi(u)-\log q_{z-1} \big)+\frac{O(\alpha_\phi+|R|_1)}{u_0},
\end{align*}
whenever $\cG_B$ holds, where
\[
 \alpha_\phi:=\sum_{\delta_i^\phi>0}w_i(B^-).
\]
Thus, by the convexity of the exponential function,
\begin{align}
 \frac{\Prb\big(A_M=\phi_{I,C}(B)\big)}{\Prb\big(A_M=B^-\big)} & \ge \Prb\big(\cG_B\big)\E\bigg[\exp\bigg(\Delta(\xi(u)-\log q_{z-1})+\frac{O(\alpha_\phi+|R|_1)}{u_0}\bigg)\Bmid\cG_B\bigg] \notag\\
 & \ge \Prb\big(\cG_B\big) \exp\bigg(\E\bigg[\Delta(\xi(u)-\log q_{z-1})+\frac{O(\alpha_\phi+|R|_1)}{u_0}\Bmid\cG_B\bigg]\bigg).\label{eq:newclaimtwo:convexity}
\end{align}

We claim that
\begin{equation}\label{eq:old:claimtwo}
\E\big[ \alpha_\phi \mid \cG_B\big] = O(|R|_1).
\end{equation}
Indeed, as in the proof of Claim 1, the probability that a given 1 entry in $R$
is moved to row $i$ is at most $\big( |M'| - |I| - 4u_0 - |R|_1 \big)^{-1} = O(1/|M|)$ for each $i \in M$. Thus the probability
that $\delta_i^\phi > 0$ (i.e., that row $i$ receives some 1) is at most $O(|R|_1/|M|)$, and so
$$\E\big[\alpha_\phi\mid\cG\big] = \frac{O(|R|_1)}{|M|} \sum_{i\in M}w_i(B^-) = O(|R|_1),$$
by~\eqref{eq:sumalpha}, as claimed.

Hence, combining~\eqref{eq:newclaimtwo:convexity} and~\eqref{eq:old:claimtwo}, and using Claim 1, we obtain
$$\frac{\Prb\big(A_M=\phi_{I,C}(B)\big)}{\Prb\big(A_M=B^-\big)} \ge \exp\bigg(\Delta \big(\xi(u)-\log q_{z-1}\big)+\frac{O(|R|_1)}{u_0}\bigg),$$
as required.
\end{proof}

To complete the proof, simply observe that combining Claim 2 with~\eqref{eq:comp:B:lb} gives~\eqref{eq:comp:phi:ineq}, and that combining Claim 2 with~\eqref{eq:comp:pB:lb} and~\eqref{eq:comp1:B:ub} gives~\eqref{eq:comp:phi:eq}.
\end{proof}

We can now easily deduce the following lemma.

\begin{lemma}\label{lem:compare:ineq}
 Let\/ $z\in [z_-,z_0^5]$ and let\/ $\cE\in\cF_z^+$ be an event of the form~\eqref{def:eventE}
 such that\/ $\cK(z)$ holds and\/ $d(z)\le 4u_0$. Let\/ $I\subseteq M$
 and\/ $C\subseteq [z,z_0^5]$ be such that\/ $|I|=e^{O(u_0)}$, and let\/ $R\in\cR_\cE(I,C)$.
 If\/ $|R|_1=O(|M|/u_0)$, then
 \begin{equation}\label{eq:lem:compare:ineq}
  \frac{\Prb\big(A[I\times C]=R\mid\cE\big)}{\Prb\big(A[I\times C]=0\mid\cE\big)}
  \le\exp\bigg(\frac{O(|R|_1)}{u_0}\bigg)
  \frac{\Prb\big(\tA_\cE[I\times C]=R\big)}{\Prb\big(\tA_\cE[I\times C]=0\big)}.
 \end{equation}
\end{lemma}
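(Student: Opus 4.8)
The plan is to reduce the statement entirely to Lemma~\ref{lem:comp:phi} by a double-counting argument organised around the auxiliary map $\phi_{I,C}$, using Observations~\ref{obs:condce} and~\ref{obs:uniform:B}; essentially all of the analytic work has already been done in Lemma~\ref{lem:comp:phi}, and what remains is combinatorial book-keeping.

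First I would rewrite both ratios in \eqref{eq:lem:compare:ineq} purely in terms of the set $\cB$ of matrices consistent with $\cE$. By Observation~\ref{obs:condce}, for every $R\in\cR_\cE(I,C)$ we have $\Prb\big(A[I\times C]=R\mid\cE\big)=\Prb\big(A_M\in\cB_R\bmid A_M\in\cB\big)$, and since $\cB_R\subseteq\cB$ this equals $\Prb(A_M\in\cB_R)/\Prb(A_M\in\cB)$; applying this with $R$ and then with $0$, the left-hand side of \eqref{eq:lem:compare:ineq} equals $\Prb(A_M\in\cB_R)/\Prb(A_M\in\cB_0)$. (The denominator is positive, since $\cB_0$ contains matrices of bounded row weight --- for instance the one obtained by greedily placing the $1$s of each column on the currently lowest-weight rows of $M\setminus I$, which by Observation~\ref{obs:weight} and~\eqref{eq:sumdj} has every row of weight $O(1)$ --- and any such matrix has positive probability under $A_M$.) For the right-hand side, observe that $\tA_\cE$ is by construction a uniformly random element of $\cB$, since choosing each column independently and uniformly among the columns with the prescribed sum is exactly the product description of the uniform measure on $\cB$. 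Hence $\Prb\big(\tA_\cE[I\times C]=R\big)=|\cB_R|/|\cB|$ and $\Prb\big(\tA_\cE[I\times C]=0\big)=|\cB_0|/|\cB|$, so the right-hand ratio is $|\cB_R|/|\cB_0|$. It therefore suffices to show that
\[
 \Prb\big(A_M\in\cB_R\big)\le\exp\bigg(\frac{O(|R|_1)}{u_0}\bigg)\frac{|\cB_R|}{|\cB_0|}\,\Prb\big(A_M\in\cB_0\big).
\]

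To prove this I would sum \eqref{eq:comp:phi:ineq} over all $B\in\cB_R$ --- legitimate because the hypothesis $|R|_1=O(|M|/u_0)$ here is precisely the hypothesis of Lemma~\ref{lem:comp:phi}, and $R\in\cR_\cE(I,C)$ --- to obtain
\[
 \Prb\big(A_M\in\cB_R\big)=\sum_{B\in\cB_R}\Prb(A_M=B)\le\exp\bigg(\frac{O(|R|_1)}{u_0}\bigg)\sum_{B\in\cB_R}\Prb\big(A_M=\phi_{I,C}(B)\big),
\]
where each probability on the right is over both $A$ and the independent random choices defining $\phi_{I,C}$. Expanding $\Prb\big(A_M=\phi_{I,C}(B)\big)=\sum_{B'\in\cB_0}\Prb\big(\phi_{I,C}(B)=B'\big)\Prb(A_M=B')$ (the sum runs over $\cB_0$ since $\phi_{I,C}(B)\in\cB_0$ always) and exchanging the order of summation, the crucial input is Observation~\ref{obs:uniform:B}: for each fixed $B'\in\cB_0$, choosing $B$ uniformly from $\cB_R$ makes $\phi_{I,C}(B)$ uniform on $\cB_0$, and hence $\sum_{B\in\cB_R}\Prb\big(\phi_{I,C}(B)=B'\big)=|\cB_R|/|\cB_0|$. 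Therefore $\sum_{B\in\cB_R}\Prb\big(A_M=\phi_{I,C}(B)\big)=\frac{|\cB_R|}{|\cB_0|}\sum_{B'\in\cB_0}\Prb(A_M=B')=\frac{|\cB_R|}{|\cB_0|}\Prb(A_M\in\cB_0)$, which is exactly the displayed inequality.

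I do not expect a serious obstacle at this stage: the only points that need care are keeping track of which sources of randomness each probability is taken over once $\phi_{I,C}$ enters the picture, and invoking Observation~\ref{obs:uniform:B} in the correct direction (pushing the uniform measure on $\cB_R$ forward to the uniform measure on $\cB_0$) so that the double sum collapses cleanly. All the genuine difficulty --- controlling the $\tpsi$-ratios caused by the row-weight changes under $\phi_{I,C}$, and taming the exceptional configurations via the event $\cG_B$ --- is confined to Lemma~\ref{lem:comp:phi}.
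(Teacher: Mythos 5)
Your proof is correct and follows essentially the same route as the paper: apply Observation~\ref{obs:condce} to reduce both sides to ratios of the form $\Prb(A_M\in\cB_R)/\Prb(A_M\in\cB_0)$ and $|\cB_R|/|\cB_0|$, then sum the bound~\eqref{eq:comp:phi:ineq} of Lemma~\ref{lem:comp:phi} over $B\in\cB_R$ and collapse the resulting double sum via Observation~\ref{obs:uniform:B}. The only cosmetic difference is that the paper phrases the averaging step as $\E_{B\in\cB_R}\Prb(A_M=\phi_{I,C}(B))=\E_{B\in\cB_0}\Prb(A_M=B)$ rather than spelling out the sum $\sum_{B'\in\cB_0}\Prb(\phi_{I,C}(B)=B')\Prb(A_M=B')$, but the content is identical.
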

\begin{proof}
Write $\E_{B\in\cB_R}$ for the expectation over a uniform random choice of $B\in\cB_R$,
and similarly for $\E_{B\in\cB_0}$. Note that, by Observation~\ref{obs:condce},
$$ \Prb\big(A[I\times C] = R\mid\cE\big) = \Prb\big(A_M\in\cB_R\mid A_M\in\cB\big)
= \frac{|\cB_R|\,\E_{B\in\cB_R}\Prb\big(A_M=B\big)}{\Prb\big(A_M\in\cB\big)},$$
and that by \Lm{comp:phi} and Observation~\ref{obs:uniform:B},
\begin{align*}
\E_{B\in\cB_R}\Prb\big(A_M=B\big)
& \le \exp\bigg(\frac{O\big(|R|_1\big)}{u_0}\bigg) \E_{B\in\cB_R} \Prb\big(A_M=\phi_{I,C}(B)) \\
& = \exp\bigg(\frac{O\big(|R|_1\big)}{u_0}\bigg) \E_{B\in\cB_0}\Prb\big(A_M=B).
\end{align*}
Thus, using Observation~\ref{obs:condce} again, it follows that
$$\Prb\big(A[I\times C]=R\mid\cE\big) \le \exp\bigg(\frac{O\big(|R|_1\big)}{u_0}\bigg) \frac{|\cB_R|}{|\cB_0|} \Prb\big(A[I\times C]=0\mid\cE).$$
Now, since $A_\cE$ is distributed uniformly on $\cB$, we have
\[
 \frac{\Prb\big(A_\cE[I\times C]=R\big)}{\Prb\big(A_\cE[I\times C]=0\big)} = \frac{|\cB_R|}{|\cB_0|},
\]
and so the lemma follows.
\end{proof}

Our next task, which will be somewhat harder, is to prove an almost-matching lower bound when the row sums of $R$ are not too large. In order to do so we will use the following simple observation, which will also be useful in Section~\ref{sec:branching}.

\begin{obs}\label{obs:mgf}
 Let\/ $X_1,\dots,X_n$ be independent Bernoulli random variables, and let\/ $X=\sum_{i=1}^n X_i$.
 Then for any\/ $\lambda>0$,
 \[
  \E\big[e^{\lambda X}\big]\le\exp\big((e^\lambda-1)\E[X]\big).
 \]
\end{obs}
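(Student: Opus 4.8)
The plan is to exploit independence to factor the moment generating function, evaluate each factor exactly, and then apply the elementary inequality $1+t\le e^t$.

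Concretely, write $p_i:=\Prb(X_i=1)$, so that $\E[X]=\sum_{i=1}^n p_i$. By independence of the $X_i$,
\[
 \E\big[e^{\lambda X}\big]=\prod_{i=1}^n\E\big[e^{\lambda X_i}\big]=\prod_{i=1}^n\big(1-p_i+p_ie^\lambda\big)=\prod_{i=1}^n\big(1+p_i(e^\lambda-1)\big).
\]
Since $\lambda>0$ we have $e^\lambda-1\ge 0$, and hence $p_i(e^\lambda-1)\ge 0$ for each $i$; applying $1+t\le e^t$ with $t=p_i(e^\lambda-1)$ gives
\[
 \E\big[e^{\lambda X}\big]\le\prod_{i=1}^n\exp\big(p_i(e^\lambda-1)\big)=\exp\bigg((e^\lambda-1)\sum_{i=1}^n p_i\bigg)=\exp\big((e^\lambda-1)\E[X]\big),
\]
as required.

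There is no real obstacle here: the only points requiring (minimal) care are the use of independence to split the expectation of the product into a product of expectations, and the sign of $e^\lambda-1$, which is what makes $1+t\le e^t$ applicable term by term.
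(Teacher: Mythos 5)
Your proof is correct and is essentially the same as the paper's: both factor the moment generating function by independence, observe that each factor equals $1+p_i(e^\lambda-1)$, and apply $1+t\le e^t$ term by term. (One small remark: the inequality $1+t\le e^t$ holds for all real $t$, so the sign of $e^\lambda-1$ is not actually needed for its application — indeed the observation holds for all $\lambda\in\R$.)
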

\begin{proof}
We have, for each Bernoulli random variable $X_i$,
\[
 \E\big[e^{\lambda X_i}\big]=1+(e^\lambda-1)\Prb(X_i=1)\le\exp\big((e^\lambda-1)\E[X_i]\big).
\]
Thus by independence of the $X_i$,
\[
 \E\big[e^{\lambda X}\big]=\prod_{i=1}^n\E\big[e^{\lambda X_i}\big]
 \le\prod_{i=1}^n\exp\big((e^\lambda-1)\E[X_i]\big)
 =\exp\big((e^\lambda-1)\E[X]\big),
\]
as required.
\end{proof}

The following lemma provides us with the lower bound on $\Prb\big(A[I\times C]=R\mid\cE\big)$ that we require in order to prove the second statement in \Th{compare}.

\begin{lemma}\label{lem:compare:eq}
 Under the same assumptions as \Lm{compare:ineq}, but with the extra condition
 that no row sum of\/ $R$ exceeds\/ $u/24$, where $q_{z-1} = x^{1/u}$, we have
 \begin{equation}\label{eq:lem:compare:eq}
  \frac{\Prb\big(A[I\times C]=R\mid\cE\big)}{\Prb\big(A[I\times C]=0\mid\cE\big)}
  =\exp\bigg(\frac{O\big(|R|_2\big)}{u_0}\bigg)
  \frac{\Prb\big(\tA_\cE[I\times C]=R\big)}{\Prb(\tA_\cE[I\times C]=0\big)}
 \end{equation}
\end{lemma}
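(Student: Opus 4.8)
The plan is to rerun the proof of \Lm{compare:ineq}, using the sharper estimate~\eqref{eq:comp:phi:eq} of \Lm{comp:phi} in place of~\eqref{eq:comp:phi:ineq}. Exactly as there, $\Ob{condce}$ together with the fact that $\tA_\cE$ is uniform on $\cB$ reduces~\eqref{eq:lem:compare:eq} to the statement that $\E_{B\in\cB_R}\Prb(A_M=B)=\exp\big(O(|R|_2)/u_0\big)\,\E_{B\in\cB_0}\Prb(A_M=B)$, the averages being over uniformly random $B$ in $\cB_R$, respectively in $\cB_0$; the upper bound here is already \Lm{compare:ineq} (since $|R|_1\le|R|_2$), so only the matching lower bound is needed. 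Write $g(B):=\Prb\big(A_M=\phi_{I,C}(B)\big)$ for the probability taken over $A$ and over the random map $\phi_{I,C}$; by $\Ob{uniform:B}$ we have $\E_{B\in\cB_R}[g(B)]=\E_{B\in\cB_0}\Prb(A_M=B)$, so it suffices to prove $\E_{B\in\cB_R}\Prb(A_M=B)\ge\exp(-O(|R|_2)/u_0)\,\E_{B\in\cB_R}[g(B)]$. Call $B\in\cB_R$ \emph{light} if every row $i\in I$ has at most $u/24$ ones in columns outside $C$; since every row sum of $R$ is at most $u/24$, a light $B$ has at most $u/12$ ones in each row $i\in I$, so~\eqref{eq:comp:phi:eq} applies and gives $\Prb(A_M=B)\ge\exp\big(-O(\alpha_R(B)+|R|_2)/u_0\big)\,g(B)$. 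Summing this over the light $B$ only (the remaining ones contribute $\ge0$), and writing $\nu$ for the probability measure on $\cB_R$ with $\nu(B)\propto g(B)$, we obtain $\E_{B\in\cB_R}\Prb(A_M=B)\ge\exp(-O(|R|_2)/u_0)\,\E_{B\in\cB_R}[g(B)]\cdot\E_\nu\big[\id\{B\text{ light}\}\,e^{-c\alpha_R(B)/u_0}\big]$, so everything reduces to showing
\[
 \E_\nu\Big[\id\{B\text{ light}\}\cdot e^{-c\,\alpha_R(B)/u_0}\Big]\ \ge\ \exp\bigg(\frac{-O(|R|_2)}{u_0}\bigg).
\]

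The next step is to transfer this inequality to the true model. Both the event $\{B\text{ light}\}$ and the number $\alpha_R(B)=\sum_{i\in I}w_i(R)\,w_i(B^-)$ depend on $B$ only through the entries in rows $I$ and columns outside $C$; since $\phi_{I,C}$ leaves those entries untouched, they agree with the corresponding entries of $B_0:=\phi_{I,C}(B)$, which in turn vanish on $I\times C$ because $B_0\in\cB_0$. Combining this with $\Ob{uniform:B}$ — which gives $\sum_{B\in\cB_R}\Prb(\phi_{I,C}(B)=B_0)=|\cB_R|/|\cB_0|$ for each $B_0\in\cB_0$ — a short computation shows that for any function $F$ of those entries one has $\E_\nu[F(B)]=\E\big[\hat F(A_M)\mid A_M\in\cB_0\big]$, with $\hat F$ the corresponding function of $A_M$. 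Applying this with $F(B)=\id\{B\text{ light}\}\,e^{-c\alpha_R(B)/u_0}$, the task becomes to show that
\[
 \E\Big[\id\{\text{no row }i\in I\text{ of }A_M\text{ has more than }u/24\text{ ones}\}\cdot e^{-c\alpha/u_0}\ \Big|\ A_M\in\cB_0\Big]\ \ge\ \exp\bigg(\frac{-O(|R|_2)}{u_0}\bigg),
\]
where $\alpha:=\sum_{i\in I}w_i(R)\,w_i(A_M)$.

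For this I would establish two facts about the conditional law $\Prb(\,\cdot\mid A_M\in\cB_0)$. First, that the probability that some row $i\in I$ of $A_M$ has more than $u/24$ ones is $o(1)$ — in fact $z_0^{-\Theta(1)}$, which also beats any $\Theta(|R|_2/u_0)$ since $|R|_2\ge1$ whenever $R\ne0$. Unconditionally this is clear: such a row has $\Omega(u_0)$ distinct prime divisors in $[q_z,z_0^5]$, which by $\Ob{weight}$ and \Th{Psi:ratio} costs a factor $z_0^{-\Theta(1)}$, and there are only $|I|=e^{O(u_0)}$ rows; the content is to show that conditioning on $\cB_0$ inflates this probability by no more than a sub-polynomial factor. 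Second, that $\E[\alpha\mid A_M\in\cB_0]=O(|R|_1)$: as $\sum_{i\in I}w_i(R)=\Theta(|R|_1)$, this follows once one knows $\E[w_i(A_M)\mid A_M\in\cB_0]=O(1)$ for each $i\in I$, which unconditionally is immediate ($O(1)$ expected prime divisors in $[q_z,z_0^5]$, and $\Ob{weight}$). Granting these two facts, one truncates to the event $\{\alpha\le K|R|_1\}$ for a large absolute constant $K$ (Markov), intersects with the ``no heavy row'' event (the resulting conditional probability is $\ge\tfrac12$), and uses $e^{-c\alpha/u_0}\ge e^{-cK|R|_1/u_0}\ge e^{-cK|R|_2/u_0}$ there; enlarging the implied constant to cover the range $|R|_2<u_0$ yields the displayed bound, and hence the lemma.

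The main obstacle is exactly the pair of conditional estimates above: conditioning on $\cB_0$ could a priori distort the distribution of the rows indexed by $I$ badly, and the difficulty is that $\Prb(A_M\in\cB_0)$ may itself be extremely small, so one cannot simply divide the unconditional bounds by it. What is needed is a comparison of $\Prb(A_M\in\cB_0)$ with the probability of the same event after altering a single row of $A_M$ (for instance, replacing row $i_0$ by the all-zero row), carried out column by column — the same kind of local analysis that underlies \Lm{comp:phi} and \Th{Psi:ratio} — and crucially this must be done \emph{without} invoking the $R=0$ case of \Th{compare} (our \Lm{compare:zero}), which is proved later and itself relies on \Lm{compare:eq}. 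Everything else in the argument — the reductions via $\Ob{condce}$, $\Ob{uniform:B}$ and the $\phi_{I,C}$-reversal, the appeal to~\eqref{eq:comp:phi:eq}, and the concluding Markov/Jensen step — is routine given the machinery already in place.
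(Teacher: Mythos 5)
Your reductions track the paper's proof closely: applying~\eqref{eq:comp:phi:eq} to the ``light'' matrices $B\in\cB_R$, pushing the expectation through $\phi_{I,C}$, and recasting the remaining task as a lower bound on a conditional expectation are exactly what the paper does (it works with $\cB'_R := \cB'\cap\cB_R$ and $\cB'_0 := \cB'\cap\cB_0$ rather than conditioning on all of $\cB_0$, but that is a cosmetic reordering, and your Markov-truncation ending in place of the paper's Jensen step is also fine). You also correctly isolate the two conditional estimates that carry the whole lemma, and you are right that they are not obtainable by naive division (since $\Prb(A_M\in\cB_0)$ may be tiny) and that \Lm{compare:zero} may not be invoked.

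The genuine gap is in how you propose to prove those estimates. For the first one (the heavy-row tail under conditioning on $\cB_0$ --- the paper's Claim~2), your sketch of ``a comparison of $\Prb(A_M\in\cB_0)$ with the probability after altering a single row, carried out column by column, the same local analysis that underlies \Lm{comp:phi}'' does not match, and it is not clear it can be made to work: the difficulty is precisely that the entries $A_{ij}$, $j\notin C$, within a fixed row are correlated under the conditioning, so single-entry probability bounds do not by themselves control the probability that a whole row is heavy. The paper handles this with a different device: it covers the heavy-row event by events of the form $\{A[I\times C']=R'\}$ with $|C'|=\lceil u/24\rceil$ and $R'$ having a full row, applies the \emph{already-proved} upper bound \Lm{compare:ineq} to each, then exploits the column-independence of $\tA_\cE$ together with the MGF bound of \Ob{mgf} to sum the resulting Bernoulli tail, yielding $\Prb(A_M\notin\cB'_0\mid A_M\in\cB_0)=O(e^{-u})$. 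For the second estimate ($\E[\alpha\mid A_M\in\cB_0]=O(|R|_1)$), your local-analysis instinct is right and is essentially the paper's Claim~1, but note that the paper's single-entry argument --- applying \Lm{comp:phi} with $I=\{i\}$, $C=\{j\}$ --- is carried out conditionally on $\cB'_0$ rather than $\cB_0$, precisely because one needs $B$ to be light for~\eqref{eq:comp:phi:eq} to apply; so your version of this estimate is not independent of the first one, and the logical order must be: prove the heavy-row tail bound first, then use it to transfer the Claim~1 estimate from $\cB'_0$ to $\cB_0$.
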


To prove Lemma~\ref{lem:compare:eq} we would like to repeat the calculation in the
proof of \Lm{compare:ineq}, except using the second statement in \Lm{comp:phi}.
However, there is a problem: we have no control over the entries
of $B^-$ in the rows of~$I$. Indeed, if $B^-$ contains too many 1s in some row $i\in I$
that is non-zero in~$R$, then we will be unable to find any non-trivial lower bound on
$\Prb(A_M=B)$. Fortunately however, very few matrices $B$ have this property, so we
can simply use the trivial lower bound (i.e., zero) for those matrices. The main
challenge is to show that we also do not have a large contribution to
$\Prb(A[I\times C]=0\mid\cE)$ in this case.

\begin{proof}[Proof of \Lm{compare:eq}]
We are only required to prove the lower bound, since the upper bound follows from Lemma~\ref{lem:compare:ineq}. Define
\[
 \cB' := \Big\{ B\in\cB:B^-\text{ has no more than $u/24$ 1s in row $i$ for all }i\in I \Big\},
\]
and set $\cB'_R=\cB'\cap\cB_R$ and $\cB'_0=\cB'\cap\cB_0$. Note that if $B\in\cB'_R$, then, since no row of $R$
contains more than $u/24$~1s, we have at most $u/12$ 1s in each row $i\in I$ of $B$.
Thus we can apply \Lm{comp:phi} to deduce that
\begin{equation}\label{eq:compare:lba0}
 \Prb\big(A_M=B\big) = \exp\bigg(\frac{O(\alpha_R(B)+|R|_2)}{u_0}\bigg)
 \Prb\big(A_M=\phi_{I,C}(B)\big)
\end{equation}
for every $B\in\cB'_R$. Note that $\alpha_R(B)=\alpha_R(\phi_{I,C}(B))$, since $\delta_i$ only depends on $R$, and $w_i(B^-) = w_i(\phi_{I,C}(B)^-)$ for every $i \in I$, and observe that, as in \Ob{uniform:B}, if $B$ is chosen uniformly from $\cB'_R$ then $\phi_{I,C}(B)$ is uniform on $\cB'_0$, since the extra condition that $B\in\cB'$ does not depend on the columns $C$ of the matrix~$B$. Thus, taking the expectation of \eqref{eq:compare:lba0} over a uniform choice of $B\in\cB'_R$, we obtain
\begin{align}
\frac{\Prb\big(A_M\in\cB'_R\big)}{ |\cB'_R|}
 &=\E_{B\in\cB'_R} \bigg[ \exp\bigg(\frac{O\big( \alpha_R(\phi_{I,C}(B)) +|R|_2 \big)}{u_0}\bigg)\Prb\big(A_M=\phi_{I,C}(B)\big) \bigg] \notag\\
 &=\E_{B\in\cB'_0} \bigg[ \exp\bigg(\frac{O\big( \alpha_R(B)+|R|_2 \big)}{u_0}\bigg)\Prb\big(A_M=B\big) \bigg] \notag\\
 &= \frac{1}{|\cB'_0|} \E\bigg[\exp\bigg(\frac{O\big( \alpha_R(A_M)+|R|_2 \big)}{u_0}\bigg)\Bmid A_M\in\cB'_0\bigg]\,\Prb\big(A_M\in\cB'_0\big). \label{eq:compare:eq:takingexp}
\end{align}
The following claim will allow us to bound the right-hand side of~\eqref{eq:compare:eq:takingexp}.

\claim{Claim 1:}
$$\E\big[\alpha_R(A_M)\mid A_M\in\cB'_0\big]=O(|R|_1).$$
\begin{proof}[Proof of Claim 1]
Since $\delta_i$ depends only on the fixed matrix $R$, the definition \eqref{eq:alphaR}
of $\alpha_R$ and \Ob{weight} imply that
\begin{align}
 \E\big[\alpha_R(A_M)\mid A_M\in\cB'_0\big]
 &=\sum_{i\in I}\delta_i\,\E\big[w_i(A_M^-)\mid A_M\in\cB'_0\big]\notag\\
 &\le 6\sum_{i\in I}\delta_i\sum_{j\in[z,\pi(x)]\setminus C}\Prb\big(A_{ij}=1\mid A_M\in\cB'_0\big).
 \label{eq:expwi}
\end{align}
We claim that
\begin{equation}\label{eq:expaij}
 \Prb\big(A_{ij}=1\mid A_M\in\cB'_0\big)\le e^{O(1/u_0)}\frac{d_j}{|M|}
\end{equation}
for every $i\in I$ and $j\in [z,z_0^5]\setminus C$. Indeed,
if $B$ is chosen uniformly at random from the set $\cC_1 := \{ B\in\cB'_0 : B_{ij}=1\}$,
then $\phi_{\{i\},\{j\}}(B)$ is uniform on $\cC_0 := \{ B\in\cB'_0 : B_{ij}=0\}$.
\Lm{comp:phi} then implies (cf.~\eqref{eq:compare:eq:takingexp}) that
\begin{align*}
 \Prb\big(A_M\in\cC_1\big) & = |\cC_1| \cdot \E_{B \in \cC_1} \Prb\big(A_M=B\big) \le |\cC_1| e^{O(1/u_0)} \E_{B \in \cC_1} \Prb\big(A_M = \phi_{I,C}(B) \big)\\
& = |\cC_1| e^{O(1/u_0)} \E_{B \in \cC_0} \Prb\big(A_M=B\big) = \frac{|\cC_1|}{|\cC_0|}e^{O(1/u_0)}\Prb\big(A_M\in\cC_0\big).
\end{align*}
Now $|\cC_1|/|\cC_0|=d_j/(|M|-d_j)$, so
\[
 \frac{\Prb\big(A_{ij}=1\mid A_M\in\cB'_0\big)}{\Prb\big(A_{ij}=0\mid A_M\in\cB'_0\big)}
 =\frac{\Prb\big(A_M\in\cC_1\big)}{\Prb\big(A_M\in\cC_0\big)}
 \le e^{O(1/u_0)}\frac{d_j}{|M|-d_j},
\]
which implies~\eqref{eq:expaij}, because $d_j \le 4u_0$ (since $\cK(z)$ holds) and $u_0^2 = o(|M|)$, by~\eqref{eq:lowerbound:m:using:M}.

Now, combining \eqref{eq:expwi} with \eqref{eq:expaij} gives
\begin{align*}
 \E\big[\alpha_R(A_M)\mid A_M\in\cB'_0\big]
 &\le O(1)\sum_{i\in I}\delta_i\sum_{j=z}^{\pi(x)}\frac{d_j}{|M|}=O(|R|_1),
\end{align*}
where the final step follows by~\eqref{eq:sumdj}, and since $\sum_{i \in I} \delta_i = O(|R|_1)$.
\end{proof}

By Claim~1 and the convexity of the exponential function, we obtain
\[
 \E\bigg[\exp\bigg(\frac{O\big(\alpha_R(A_M)\big)}{u_0}\bigg)
 \bmid A_M\in\cB'_0\bigg]\ge\exp\bigg(\frac{O(|R|_1)}{u_0}\bigg).
\]
Now, combining this with~\eqref{eq:compare:eq:takingexp}, and noting that $|R|_1 \le |R|_2$, gives
\begin{equation}\label{eq:compare:eq:almostdone}
 \Prb\big(A_M\in\cB_R\big)\ge\Prb\big(A_M\in\cB'_R\big)
 \ge e^{O(|R|_2/u_0)} \frac{|\cB'_R|}{|\cB'_0|}\Prb\big(A_M\in\cB'_0\big)
\end{equation}
Note also that
\begin{equation}\label{eq:compare:ratio:bRbzero}
 \frac{|\cB'_R|}{|\cB'_0|}=\frac{|\cB_R|}{|\cB_0|}
 =\frac{\Prb\big(\tA_\cE[I\times C]=R\big)}{\Prb\big(\tA_\cE[I\times C]=0\big)},
\end{equation}
since the condition that $B\in\cB$ lies in $\cB'$ does not affect the columns in~$C$.
The following claim will therefore be sufficient to complete the proof of the lemma.

\claim{Claim 2:}
$$\Prb\big(A_M \not\in\cB'_0\mid A_M\in\cB_0\big) = O(e^{-u}).$$

\begin{proof}[Proof of Claim 2.]
Recall that if $B \in \cB_0\setminus\cB'_0$, then $B\big[\{i\}\times([z,z_0^5]\setminus C)\big]$
contains at least $u/24$ 1s for some $i\in I$. Our strategy will be to use \Lm{compare:ineq}
to bound the probability that this property is satisfied by $A_M$ in terms of the probability that
it is satisfied by $\tA_\cE$, and then use the independence of the columns of $\tA_\cE$ to
deduce the desired bound.

In order to apply \Lm{compare:ineq}, we will first have to cover the event
$A_M  \in \cB_0\setminus\cB'_0$ with a suitable collection of events. To do so, let $\cC$ be
the collection of subsets $C'\subseteq [z,z_0^5]\setminus C$ of size exactly $\lceil u/24\rceil$,
and let $\cR(C')$ be the set of $R'\in\cR(I,C')$ with some row $i\in I$ of $R'$ consisting entirely of~1s.
Now if $A_M\in\cB_0\setminus\cB'_0$, then there must exist some $C'\in\cC$ and $R'\in\cR(C')$
such that $A[I\times C']=R'$. Thus, by the union bound, we have
\begin{equation}\label{eq:compare:claim2union}
\Prb\big(A_M \not\in\cB'_0\mid A_M\in\cB_0\big) \le \sum_{C'\in\cC}\sum_{R'\in\cR(C')}\Prb\big(A[I\times C']=R'\mid A_M\in\cB_0\big).
\end{equation}
We claim that
\begin{equation}\label{eq:compare:claim2obsap}
\Prb\big(A[I\times C'] = R'\mid A_M\in\cB_0\big)
= \frac{\Prb\big(A[I\times C']=R',\,A[I\times C]=0\mid\cE\big)}{\Prb\big(A[I\times C]=0\mid\cE\big)}
 \end{equation}
for every $C' \in \cC$ and $R' \in \cR(C')$. Indeed, by \Ob{condce} both sides are equal to
$$\frac{\Prb\big(A[I\times C']=R', A[I\times C]
= 0 \mid A_M\in\cB\big)}{\Prb\big(A[I\times C] = 0\mid A_M \in\cB \big)}.$$
We are now ready to use \Lm{compare:ineq} to show that
\begin{equation}\label{eq:compare:claim2lemap}
\frac{\Prb\big(A[I\times C']=R',\,A[I\times C]=0\mid\cE\big)}{\Prb\big(A[I\times (C'\cup C)]=0\mid\cE\big)}
\le e^{O(u_0)}\frac{\Prb\big(\tA_\cE[I\times C']=R', \, \tA_\cE[I\times C]=0\big)}{\Prb\big(\tA_\cE[I\times (C'\cup C)]=0\big)}.
\end{equation}
Indeed, this follows by applying \Lm{compare:ineq} with $R$ an $I \times ( C \cup C' )$ matrix
that is $R'$ on $I \times C'$ and zero on $I\times C$, noting that $|R'|_1\le 4u_0|C'|\le u_0^2$.

Note that the right-hand side of~\eqref{eq:compare:claim2obsap} is at most the left-hand side
of~\eqref{eq:compare:claim2lemap}, since we have added the condition that $A[I\times C']=0$
in the denominator. Observe also that
\begin{equation}\label{eq:compare:claim2indepcols}
\frac{\Prb\big(\tA_\cE[I\times C']=R', \, \tA_\cE[I\times C]=0\big)}{\Prb\big(\tA_\cE[I\times (C'\cup C)]=0\big)}
= \frac{\Prb\big(\tA_\cE[I\times C']=R'\big)}{\Prb\big(\tA_\cE[I\times C']=0\big)},
\end{equation}
since the columns of $\tA_\cE$ are independent, and also that
\begin{align}
 \Prb\big(\tA_\cE[I\times C']=0\big)
 &=\prod_{j\in C'}\binom{|M|-|I|}{d_j}\binom{|M|}{d_j}^{-1}
 =\prod_{j\in C'}\prod_{t=0}^{d_j-1}\frac{|M|-|I|-t}{|M|-t} \notag\\
 &=\prod_{j\in C'}\exp\bigg(-\frac{O(|I|d_j)}{|M|}\bigg)
 \ge\exp\bigg(-\frac{O(|I|u_0^2)}{|M|}\bigg) = 1 - o(1),\label{eq:compare:claim2indepzero}
\end{align}
since $\sum_{j\in C'} d_j \le 4u_0 |C'| \le u_0^2$, and $|I| = e^{O(u_0)} = o(|M| / u_0^2)$, by~\eqref{eq:ulogu} and~\eqref{eq:lowerbound:m:using:M}.

Combining~\eqref{eq:compare:claim2union},~\eqref{eq:compare:claim2obsap},~\eqref{eq:compare:claim2lemap},~\eqref{eq:compare:claim2indepcols} and~\eqref{eq:compare:claim2indepzero}, we obtain
\begin{align}
\Prb\big(A_M \not\in\cB'_0\mid A_M\in\cB_0\big)
& \le e^{O(u_0)} \sum_{C'\in\cC}\sum_{R'\in\cR(C')} \Prb\big(\tA_\cE[I\times C']=R'\big) \notag\\
& \le e^{O(u_0)} \sum_{C'\in\cC}\sum_{i\in I} \Prb\big(\tA_\cE[\{i\}\times C']=1\big),\label{eq:compare:claim2main}
\end{align}
where $1$ indicates the all 1s vector, since the events $\Prb\big(\tA_\cE[I\times C']=R'\big)$ are disjoint.

Now, let $X = \sum_{j=z}^{z_0^5} X_j$, where $X_z,\dots,X_{z_0^5}$ are independent Bernoulli random variables with $\Prb( X_j = 1) = d_j / |M|$. We claim that
\begin{equation}\label{eq:compare:claim2X}
\sum_{C'\in\cC} \Prb\big(\tA_\cE[\{i\}\times C']=1\big) = e^{O(u_0)} \Prb\big(X=\lceil u/24\rceil\big)
\end{equation}
for every $i \in I$. Indeed, simply note that
\begin{align*}
\Prb\big(X=\lceil u/24\rceil\big)
& = \sum_{C' \in \cC} \prod_{j \in C'} \frac{d_j}{|M|} \prod_{j \in [z,z_0^5] \setminus C'} \bigg( 1 - \frac{d_j}{|M|} \bigg)\\
& = e^{O(u_0)} \sum_{C' \in \cC} \prod_{j \in C'} \frac{d_j}{|M|} = e^{O(u_0)} \sum_{C'\in\cC} \Prb\big(\tA_\cE[\{i\}\times C']=1\big)
\end{align*}
for each $i \in I$, since the columns of $\tA_\cE$ are independent, and using~\eqref{eq:sumdj}.

Recalling that $|I| = e^{O(u_0)}$ and $u = \Theta(u_0)$, it follows from~\eqref{eq:compare:claim2main} and~\eqref{eq:compare:claim2X} that
$$\Prb\big(A_M \not\in\cB'_0\mid A_M\in\cB_0\big) \le e^{\lambda u} \cdot \Prb\big(X=\lceil u/24\rceil\big)$$
for some constant $\lambda > 0$. Noting that
\[
\E[X] = \sum_{j = z}^{z_0^5} \frac{d_j}{|M|} = O(1)
\]
by~\eqref{eq:sumdj}, and that $e^{24(\lambda+1)X} \ge e^{(\lambda+1)u}$ for $X=\lceil u/24\rceil$,
it follows by \Ob{mgf} that
\begin{align*}
\Prb\big(A_M \not\in\cB'_0\mid A_M\in\cB_0\big)
& \le \E\big[e^{24(\lambda+1)X}\big]e^{-u}\\
& \le \exp\Big( \big( e^{24(\lambda+1)} - 1 \big) \E[X] \Big) e^{-u}
= O\big(e^{-u}\big),
\end{align*}
as required.
\end{proof}

To complete the proof, we simply combine Claim~2 with~\eqref{eq:compare:eq:almostdone} and~\eqref{eq:compare:ratio:bRbzero}. This gives
\begin{align*}
\Prb\big(A_M\in\cB_R\big) \ge e^{O(|R|_2/u_0)} \frac{\Prb\big(\tA_\cE[I\times C]=R\big)}{\Prb\big(\tA_\cE[I\times C]=0\big)} \Prb\big(A_M \in \cB_0\big).
\end{align*}
But, by \Ob{condce}, we have
$$\frac{\Prb\big(A[I\times C]=R\mid\cE\big)}{\Prb\big(A[I\times C]=0\mid\cE\big)} = \frac{\Prb\big( A_M \in \cB_R \mid A_M \in \cB \big)}{\Prb\big( A_M \in \cB_0 \mid A_M \in \cB \big)} = \frac{\Prb\big( A_M \in \cB_R \big)}{\Prb\big( A_M \in \cB_0 \big)},$$
and so the required lower bound follows.
\end{proof}

To prove \Th{compare}, it just remains to estimate $\Prb(A[I\times C]=0\mid\cE)$.
To do so we prove the following lemma, which follows from $|C|$ applications of \Lm{compare:eq}.

\begin{lemma}\label{lem:compare:zero}
 Under the same assumptions as in \Lm{compare:ineq},
 \begin{equation}\label{eq:cmpz}
  \Prb\big(A[I\times C]=0\mid\cE\big)=\exp\big(O(|I|/u_0)\big)\Prb\big(\tA_\cE[I\times C]=0\big).
 \end{equation}
\end{lemma}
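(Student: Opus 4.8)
The plan is to reveal the columns of $C$ one at a time and apply \Lm{compare:eq} at each step. Write $C=\{j_1,\dots,j_c\}$ and set $C_\ell:=\{j_1,\dots,j_\ell\}$, with $C_0=\emptyset$; we may assume $c\ge 1$, as otherwise both sides of \eqref{eq:cmpz} equal $1$. Telescoping over columns gives
\[
 \Prb\big(A[I\times C]=0\mid\cE\big)=\prod_{\ell=1}^c\frac{\Prb\big(A[I\times C_\ell]=0\mid\cE\big)}{\Prb\big(A[I\times C_{\ell-1}]=0\mid\cE\big)},
\]
while the independence of the columns of $\tA_\cE$ yields $\Prb\big(\tA_\cE[I\times C]=0\big)=\prod_{\ell=1}^c\Prb\big(\tA_\cE[I\times\{j_\ell\}]=0\big)$ and $\Prb\big(\tA_\cE[I\times C_\ell]=0\big)\big/\Prb\big(\tA_\cE[I\times C_{\ell-1}]=0\big)=\Prb\big(\tA_\cE[I\times\{j_\ell\}]=0\big)$. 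Thus it suffices to compare, for each $\ell$, the $\ell$th factors of these two products.

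Fix $\ell$ and decompose $\Prb\big(A[I\times C_{\ell-1}]=0\mid\cE\big)$ according to the value $\rho\in\{0,1\}^I$ of column $j_\ell$ restricted to the rows $I$. Writing $R_\rho$ for the $I\times C_\ell$ matrix whose $j_\ell$th column equals $\rho$ and which vanishes on $C_{\ell-1}$, only the $\rho$ with $|\rho|\le d_{j_\ell}$ contribute a nonzero term, the rest being inconsistent with $\cE$; for these, $R_\rho$ has every row sum at most $1\le u/24$ (for $x$ large), and $|R_\rho|_1=|R_\rho|_2=|\rho|\le d_{j_\ell}\le 4u_0=O(|M|/u_0)$, using $\cK(z)$, $d(z)\le 4u_0$, and $m(z)\ge z_0^{1+o(1)}$ from \eqref{eq:lowerbound:m:using:M}. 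Hence \Lm{compare:eq} applies to each $R_\rho$, and, together with the independence of the columns of $\tA_\cE$ (which cancels the factors from $C_{\ell-1}$), each term of the decomposition lies between $e^{-C_1|\rho|/u_0}$ and $e^{C_1|\rho|/u_0}$ times $\Prb\big(\tA_\cE[I\times\{j_\ell\}]=\rho\big)\big/\Prb\big(\tA_\cE[I\times\{j_\ell\}]=0\big)$, where $C_1$ is the constant implied by \Lm{compare:eq}. Summing over $\rho$, and letting $r_\ell$ denote the (hypergeometric) number of $1$s that $\tA_\cE$ places in $I\times\{j_\ell\}$, the $\ell$th factor $\Prb\big(A[I\times C_{\ell-1}]=0\mid\cE\big)\big/\Prb\big(A[I\times C_\ell]=0\mid\cE\big)$ lies between $\E\big[e^{-C_1 r_\ell/u_0}\big]$ and $\E\big[e^{C_1 r_\ell/u_0}\big]$ times $\Prb\big(\tA_\cE[I\times C_{\ell-1}]=0\big)\big/\Prb\big(\tA_\cE[I\times C_\ell]=0\big)$.

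Multiplying these bounds over $\ell=1,\dots,c$ (and using $C_0=\emptyset$) reduces \eqref{eq:cmpz} to the claim that $\prod_{\ell=1}^c\E\big[e^{\pm C_1 r_\ell/u_0}\big]^{\mp1}=e^{O(|I|/u_0)}$. Since $r_\ell\le d_{j_\ell}\le 4u_0$, the quantity $C_1 r_\ell/u_0$ lies in a fixed bounded interval, on which $e^t\le 1+O(t)$, so $\E\big[e^{C_1 r_\ell/u_0}\big]\le\exp\big(O(\E[r_\ell]/u_0)\big)$, while Jensen's inequality gives $\E\big[e^{-C_1 r_\ell/u_0}\big]\ge\exp\big(-C_1\E[r_\ell]/u_0\big)$. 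As $\E[r_\ell]=|I|d_{j_\ell}/|M|$ and $\sum_{\ell=1}^c d_{j_\ell}\le\sum_{j=z}^{\pi(x)}d_j=O(m(z))=O(|M|)$ by \eqref{eq:sumdj}, both products equal $\exp\big(O(|I|/u_0)\big)$, which proves \eqref{eq:cmpz}. The only delicate point is the absorption of the $R$-dependent error $\exp(O(|R_\rho|_2/u_0))$ coming from \Lm{compare:eq}: this succeeds precisely because, after summing against the independent-model weights, it becomes $\E\big[e^{O(r_\ell/u_0)}\big]$ with $r_\ell$ bounded by $4u_0$ — so it linearizes — and because the resulting first-order errors $\sum_\ell\E[r_\ell]/u_0$ telescope, via the estimate $\sum_j d_j=O(m(z))$, down to $O(|I|/u_0)$.
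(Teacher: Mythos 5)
Your proof is correct, and it follows the same overall strategy as the paper's: telescope $\Prb(A[I\times C]=0\mid\cE)$ over single-column increments and invoke \Lm{compare:eq} on the increments. The one genuine difference is in how the per-column error factor from \Lm{compare:eq} is absorbed. The paper extracts a worst-case factor $e^{O(d_{j_i}/u_0)}$ from the ratio $(p_{i+1}-p_i)/p_i$, then uses the fact that the independent-model increment $(\tp_{i+1}-\tp_i)/\tp_i = O(d_{j_i}|I|/|M|)$ is $o(1)$ together with the expansion $\log(1+e^a b)=\log(1+b)+O(ab)$, which produces an error term of size $O(d_{j_i}^2|I|/(u_0|M|))$, and closes via $\sum_j d_j^2 = O(|M|)$. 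You instead keep the $R$-dependent factor $e^{O(|R_\rho|_2/u_0)}$ inside the sum over column-values $\rho$, recognise the result as $\E\big[e^{O(r_\ell/u_0)}\big]$ for the hypergeometric count $r_\ell$ of 1s placed by $\tA_\cE$ in $I\times\{j_\ell\}$, and linearise using the boundedness $C_1 r_\ell/u_0 \le 4C_1$ (which, like the paper's $a=O(1)$, relies on $d_j\le 4u_0$ from $\cK(z)$). This reduces the cumulative error to $O\big(\sum_\ell\E[r_\ell]/u_0\big)=O(|I|/u_0)$, needing only the first-moment bound $\sum_j d_j=O(|M|)$ from~\eqref{eq:sumdj} rather than the second-moment bound. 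The result is a marginally cleaner computation that avoids the $\log(1+e^ab)$ manipulation, though both proofs ultimately rest on the same structural inputs ($\cK(z)$, $d_j\le 4u_0$, and \Lm{compare:eq}).
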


\begin{proof}
Enumerate the elements of $C$ as $\{j_1,\dots,j_t\}$ and write $C_i=\{j_i,j_{i+1},\dots,j_t\}$.
Let $p_i:=\Prb(A[I\times C_i]=0\mid\cE)$ and $\tp_i:=\Prb(\tA_\cE[I\times C_i]=0)$, and
observe that
\begin{equation}\label{eq:compare:breakintosteps}
\log \Prb(A[I\times C]=0\mid\cE) = - \sum_{i=1}^t \log\frac{p_{i+1}}{p_i} = - \sum_{i=1}^t \log\bigg(1+\frac{p_{i+1}-p_i}{p_i}\bigg),
\end{equation}
and similarly for $\tA_\cE$ and $\tp_i$, since $p_{t+1}=\tp_{t+1}=1$. We will use \Lm{compare:eq} to show that
\begin{equation}\label{eq:ratiozero}
 \frac{p_{i+1}-p_i}{p_i}=e^{O(d_{j_i}/u_0)}\cdot\frac{\tp_{i+1}-\tp_i}{\tp_i}
\end{equation}
for each $i \in [t]$. To prove this, define a family
$$\cR(i) := \Big\{ R \in \cR_\cE(I,C_i) : R[ I \times \{j_i\} ] \ne 0 \textup{ and } R[ I \times C_{i+1} ] = 0 \Big\},$$
and observe that we can write
$$p_{i+1} - p_i = \sum_{R \in \cR(i)} \Prb\big( A[I\times C_i]=R \mid\cE \big),$$
and similarly for $\tp_{i+1}-\tp_i$. Note that each row sum of each $R \in \cR(i)$ is at most~1, and so $|R|_2 = |R|_1 \le d_{j_i} \le 4u_0 = o(|M|/u_0)$. Hence, by applying \Lm{compare:eq} (with $C=C_i$), we obtain
\[
\frac{\Prb\big(A[I\times C_i]=R\mid\cE\big)}{p_i}
= e^{O(d_{j_i}/u_0)} \frac{\Prb(\tA_\cE[I\times C_i]=R\big)}{\tp_i}
\]
for each $R \in \cR(i)$. It follows that
\begin{align*}
 \frac{p_{i+1}-p_i}{p_i}
 & = e^{O(d_{j_i}/u_0)}  \sum_{R \in \cR(i)} \frac{\Prb(\tA_\cE[I\times C_i]=R\big)}{\tp_i} =e^{O(d_{j_i}/u_0)}\cdot\frac{\tp_{i+1}-\tp_i}{\tp_i},
 \end{align*}
as claimed.

Next, observe that $\tp_i / \tp_{i+1} = \Prb(\tA_\cE[I\times \{j_i\}]= 0 \big)$, since the columns of $\tA_\cE$ are independent, and so, recalling that $d_{j_i} |I| = e^{O(u_0)} = o(|M|)$, by~\eqref{eq:ulogu} and~\eqref{eq:lowerbound:m:using:M}, we have
\begin{equation}\label{eq:tp:est}
\frac{\tp_{i+1}-\tp_i}{\tp_i} = \binom{|M|}{d_{j_i}}\binom{|M| - |I|}{d_{j_i}}^{-1}-1 = \frac{O(d_{j_i}|I|)}{|M|} = o(1).
\end{equation}
Now $\log(1+e^a b)=\log(1+b+O(ab))=\log(1+b)+O(ab)$ for all
$a=O(1)$ and $b=o(1)$. Applying this with $b=(\tp_{i+1}-\tp_i)/\tp_i$, and using~\eqref{eq:ratiozero} and~\eqref{eq:tp:est}, gives
\begin{align*}
\log\bigg(1+\frac{p_{i+1}-p_i}{p_i}\bigg)
 &=\log\bigg(1+e^{O(d_{j_i}/u)}\cdot\frac{\tp_{i+1}-\tp_i}{\tp_i}\bigg)\\
 &=\log\bigg(1+\frac{\tp_{i+1}-\tp_i}{\tp_i}\bigg)+O\bigg(\frac{d_{j_i}}{u_0}\cdot \frac{\tp_{i+1}-\tp_i}{\tp_i}\bigg)\\
 &=\log\frac{\tp_{i+1}}{\tp_i}+O\bigg(\frac{d_{j_i}^2 |I|}{u_0 |M|}\bigg).
\end{align*}


Finally, recall that $\sum_{i=1}^t d_{j_i}^2\le d(z)^2+\sum_{k\ge 2}k^2s_k(z)=O(|M|)$, since $\cK(z)$ holds and $d(z)\le 4u_0$. Thus, using~\eqref{eq:compare:breakintosteps}, we obtain
\begin{align*}
\log\Prb(A[I\times C]=0\mid\cE)
 &=-\sum_{i=1}^t\log\frac{\tp_{i+1}}{\tp_i}+O\bigg(\frac{|I|}{u_0 |M|}\sum_{i=1}^t d_{j_i}^2\bigg)\\
 &=\log\Prb\big( \tA_\cE[I\times C]=0 \big) + \frac{O(|I|)}{u_0},
\end{align*}
as required.
\end{proof}

\begin{proof}[Proof of \Th{compare}.]
The reduction to the case $z\in[z_-,z_0^5]$, $C\subseteq [z,z_0^5]$ and
$R\in\cR_\cE(I,C)$ was given after the statement of the theorem,
so we may assume these conditions hold.
Multiplying \eqref{eq:cmpz}
by \eqref{eq:lem:compare:ineq} gives \eqref{eq:compare:ineq}. Similarly, multiplying~\eqref{eq:cmpz} 
by~\eqref{eq:lem:compare:eq}, and noting that $u > u_0/6$ for all $z \in [z_-,z_0^5]$, gives~\eqref{eq:compare:eq}.
\end{proof}

\section{The Exploration Process}\label{sec:branching}

When there is exactly one active non-zero entry in column~$z$ (i.e., when $d(z)=1$), a chain
reaction is set off that reduces the number of active rows, and can significantly alter the
hypergraph $\cS_A(z)$. In order to control the evolution of the variables $s_k(z)$ (and hence
$m(z)$) we shall need a very precise understanding of this \emph{deterministic} process. In this
section we shall use techniques from the theory of branching processes to control the expected
change of various key parameters of the hypergraph $\cS_A(z)$, where we average over the possible
matrices $A$ that are consistent with the information observed so far in the filtration, see
\Al{algorithm}. Importantly, we shall also obtain strong bounds on the probability of
large deviations.

We begin by defining the various parameters that we shall need to control.

\begin{defn}\label{def:DandDelta}
 For each $z\in [\pi(x)]$, and each $k\ge 2$, define the following random variables.
 \begin{itemize}
  \item[$(i)$] $D(z):=m(z)-m(z-1)$, the number of rows removed in step $z$.
  \item[$(ii)$] $\Delta_k(z):=|S_k(z)\setminus S_k(z-1)|$, the number of edges of size $k$ that
   contain a vertex removed in step~$z$.
  \item[$(iii)$] $R_1(z):=d(z)+\sum_{k\ge2}k(s_k(z)-s_k(z-1))$, the number of 1s removed from the
   matrix in step~$z$ (including those in column $z$) if $d(z) = 1$.
  \item[$(iv)$] $\Delta'(z)$, the number of edges of size at least three that have at least
   two vertices removed in step~$z$.
 \end{itemize}
\end{defn}

The following theorem will play a key role in the proof of \Th{track:s}.

\begin{thm}\label{thm:branching}
 Suppose that\/ $z\in[z_-,\pi(x)]$, $\cK(z)$ holds, and\/ $2s_2(z)\le (1-\eps_1)m(z)$. Then
 \begin{itemize}
  \item[$(a)$] $\ds\E\big[D(z)\mid\cF_z,\,d(z)=1\big]
   =\bigg(1-\frac{2s_2(z)}{m(z)}+o(1)\bigg)^{-1}$,
  \item[$(b)$] $\ds\E\big[\Delta_k(z)\mid\cF_z,\,d(z)=1\big]
   =\bigg(1-\frac{2s_2(z)}{m(z)}+o(1)\bigg)^{-1}\frac{ks_k(z)}{m(z)}+O\big(m(z)^{-1/3}\big)$
   for all\/ $k\ge 2$,
  \item[$(c)$] $\E\big[D(z)^2\mid\cF_z,\,d(z)=1\big]=O(1)$,
  \item[$(d)$] $\E\big[\Delta'(z)\mid\cF_z,\,d(z)=1\big]=O\big(m(z)^{-1/3}\big)$,
 \end{itemize}
 where the bounds implicit in the\/ $o(\cdot)$ and\/ $O(\cdot)$ notation are uniform in\/ $k$ and\/~$z$.
 Moreover, 
 \begin{equation}\label{eq:Dplus:tailevent}
  \Prb\big(R_1(z)\ge u_0^2\mid\cF_z,\,d(z)=1\big)\le z_0^{-20}.
 \end{equation}
\end{thm}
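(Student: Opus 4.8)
The plan is to couple the deterministic avalanche triggered in step~$z$ with a subcritical Galton--Watson process, just as in the proofs of parts (a)--(d), and then extract an exponential-moment bound from this coupling. The single idea that makes the constant $z_0^{-20}$ achievable is that $u_0^2=\omega(\log z_0)$ by~\eqref{eq:ulogu} (recall $u_0\to\infty$), so \emph{any} bound on $\Prb(R_1(z)\ge u_0^2\mid\cF_z,\,d(z)=1)$ of the form $\exp(-c\,u_0^2)$ with $c>0$ constant already yields the claim for all sufficiently large~$x$. The main difficulty is that the crude inequality $R_1(z)\le 1+2u_0D(z)$ (which follows from Observation~\ref{obs:onesinarow}) is \emph{not} good enough: it forces only $D(z)\ge u_0/3$, and a branching-process argument would then give a bound of order $\exp(-c\,u_0)$, which does not beat $z_0^{-20}$. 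The resolution is to note that the bulk of $R_1(z)$ is carried by ``tree'' incidences proportional to $D(z)$, so that $R_1(z)\ge u_0^2$ in fact forces \emph{either} $D(z)$ \emph{or} a degree-$\ge3$ ``excess'' term to be of order $u_0^2$.

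First I would bound $\Prb(D(z)\ge T\mid\cF_z,\,d(z)=1)$. Explore the avalanche one row at a time, maintaining a queue of ``forced'' rows: remove the unique active row $i_0$ of column $z$, and whenever a column drops to a single active row add its remaining row to the queue. When a queued row $i$ is processed, reveal its $1$s in columns $>z$ one entry at a time; by the $2$-core property each such $1$ lies in $\bigcup_{k\ge2}S_k(z)$, and there are at most $2u_0$ of them by Observation~\ref{obs:onesinarow}. The number of new forced rows contributed by $i$ is at most the number $\hat X_i$ of columns with exactly two active rows that $i$ hits; if at most $T$ rows have been processed then at most $2u_0T$ columns can have been pushed down to two active rows, so $\hat X_i$ is a sum of indicators over at most $s_2(z)+2u_0T$ columns, each of which, conditionally on everything revealed so far, is $1$ with probability at most $(1+o(1))\cdot 2/m(z)$ by \Th{compare} (revealing one entry at a time keeps its error factors equal to $1+o(1)$, and the conditioning amounts to removing a few rows and adjusting column sums). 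Since $2s_2(z)\le(1-\eps_1)m(z)$ and $u_0T/m(z)=o(1)$ whenever $T\le m(z)/u_0^3$ (which holds for all $T$ we use, as $m(z)\ge z_0^{1+o(1)}$ by~\eqref{eq:lowerbound:m:using:M}), Observation~\ref{obs:mgf} gives $\E\big[e^{\theta\hat X_i}\mid\cdots\big]\le\exp\big((e^\theta-1)(1-\eps_1/2)\big)$ for \emph{every} $\theta>0$. On $\{D(z)\ge T\}$ the queue is nonempty after each of the first $T-1$ rows, so $\sum_{i=1}^{T-1}\hat X_i\ge T-1$; iterating the conditional moment estimate and applying a Chernoff bound with $\theta=\log\frac1{1-\eps_1/2}$ (for which $(e^\theta-1)(1-\eps_1/2)-\theta\le-\eps_1^2/16$), we obtain
\[
\Prb\big(D(z)\ge T\mid\cF_z,\,d(z)=1\big)\le\exp\!\big(-\tfrac1{16}\eps_1^2\,(T-1)\big)\qquad\text{for every }T\le m(z)/u_0^3 .
\]

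Next I would pass from $D(z)$ to $R_1(z)$. Every removed row lies in $M(z)$ and, being an edge of the $2$-core, has all of its $1$s in columns $>z$ inside $\bigcup_{k\ge2}S_k(z)$; hence $R_1(z)-1=\sum_{i\in R}w_i$ with $w_i\le 2u_0$, where $R$ is the set of removed rows. Separating, for each removed row, the $1$s in columns it drops to a single active row from the rest, one checks that $R_1(z)=2D(z)-1+Y$ with $Y\ge 0$, where $Y$ is bounded by the number of $1$s of removed rows lying in columns of degree at least three (plus a lower-order contribution from cycles in the avalanche). Conditional on $\cF_z$, $d(z)=1$, $D(z)=t$ and the set $R$, \Th{compare} together with the bound $\sum_{k\ge2}2^ks_k(z)\le 2e^{C_0}m(z)$ from $\cK(z)$ shows that $Y$ is dominated by a sum of independent binomials with total mean at most $Ke^{C_0}t$ for an absolute constant $K$, and with moment generating function at most $\exp\big((e^\theta-1)Ke^{C_0}t\big)$ for all $\theta>0$. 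Fix a small constant $\beta=\beta(\eps_1,C_0)>0$, small enough that $(e-1)Ke^{C_0}\beta<1/2$. If $R_1(z)\ge u_0^2$ then, since $2D(z)-1+Y\ge u_0^2$, either $D(z)\ge\beta u_0^2$, which has probability at most $\exp(-\tfrac1{16}\eps_1^2\beta u_0^2+O(1))$ by the displayed bound; or $D(z)<\beta u_0^2$ and $Y\ge u_0^2/2$, in which case $\E[Y\mid\cdots]\le Ke^{C_0}\beta u_0^2<u_0^2/8$ and a Chernoff bound with $\theta=1$ gives probability at most $\exp(-u_0^2/4)$. Summing the second case over the at most $u_0^2$ values of $t$ costs only a polynomial factor, so altogether
\[
\Prb\big(R_1(z)\ge u_0^2\mid\cF_z,\,d(z)=1\big)\le\exp(-c\,u_0^2)\le z_0^{-20}
\]
for a constant $c=c(\eps_1,C_0)>0$ and all sufficiently large~$x$, since $u_0^2=\omega(\log z_0)$.

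The two delicate points are exactly those flagged above: keeping the Galton--Watson domination valid over the first $\Theta(u_0^2)$ steps of the exploration (where one needs $m(z)\ge z_0^{1+o(1)}$ and the entrywise form of \Th{compare}), and — the heart of the matter — decomposing $R_1(z)$ as the tree part $2D(z)-1$ plus the degree-$\ge3$ excess $Y$, rather than using the lossy bound $R_1(z)\le 1+2u_0D(z)$; it is only through this decomposition that $R_1(z)\ge u_0^2$ becomes an event whose probability decays like $\exp(-c\,u_0^2)$ instead of merely $\exp(-c\,u_0)$.
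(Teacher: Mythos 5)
Your proposal addresses only the tail bound \eqref{eq:Dplus:tailevent}, and even there it contains a genuine gap: the exploration is run directly in the dependent model $A$ conditional on $\cE$, and the pivotal claim is that each successive indicator (``does removed row $i$ hit column $j$?'') has conditional probability at most $(1+o(1))\cdot 2/m(z)$ ``by \Th{compare}''. But \Th{compare} controls $\Prb(A[I\times C]=R\mid\cE)$ only up to a factor $\exp\big(O(|I|+|R|_2)/u_0\big)$ relative to $\tA_\cE$; dividing two such estimates to extract a per-entry \emph{conditional} probability leaves an error of the same shape, which is nowhere near $1+o(1)$ once the revealed pattern carries weight $\omega(u_0)$ --- precisely what happens over the first $\Theta(u_0^2)$ removed rows, each of which can have up to $2u_0$ ones. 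The parenthetical ``revealing one entry at a time keeps its error factors equal to $1+o(1)$, and the conditioning amounts to removing a few rows and adjusting column sums'' implicitly asserts that the accumulated error resets when you re-condition on the mid-avalanche information, but that conditional law is not an event of the form \eqref{def:eventE}, so \Th{compare} does not apply to it off the shelf; one would need a fresh analogue of \Ob{condce} and a re-verification of the $\cK$-type hypotheses, none of which is supplied. The same difficulty undermines the assertion that $Y$ is ``dominated by a sum of independent binomials'' given $D(z)=t$ and the removal set, and the appeal to \Ob{mgf} (which requires actual independence) for both $\hat X_i$ and $Y$.

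The paper sidesteps all of this by carrying out the entire exploration and branching analysis in the genuinely independent model $\tA_\cE$ of \Df{tilde:variables}, where the columns are independent by construction, so \Al{exploration} and \Lm{branching:main} go through cleanly with no comparison error at all; \Th{compare} is then invoked \emph{once}, applied to the full revealed pattern (a ``3-acceptable'' matrix with $|R|_1 = O(u_0^2)$), paying a one-time factor $e^{O(u_0)}$ that is swallowed by the $e^{-\lambda u_0^2}$ bound from \Lm{branching:main}. Your quantitative target is exactly right ($e^{-cu_0^2}\le z_0^{-20}$ since $u_0^2=\omega(\log z_0)$), and your decomposition of $R_1(z)$ into a tree part of order $D(z)$ plus a degree-$\ge3$/cycle excess $Y$ mirrors what \Lm{branching:main} does via $\tW_\infty$ and $\sum_k k\tR(k)$; the missing idea is simply that this analysis must be run in $\tA_\cE$ rather than in $A$, with the comparison to $A$ made once at the end rather than step by step. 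Finally, parts (a)--(d) --- which occupy most of \Lm{branching:rest} and require two-sided, not just upper-tail, estimates --- are not actually addressed by the proposal.
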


The idea is to prove a corresponding theorem in the simpler (independent) model~$\tA_\cE$,
and then use \Th{compare} to deduce the statement in $\cS_A(z)$. We now recall this
model and define some additional notation.

\begin{defn}\label{def:tilde:variables}
Fix $z\in[z_-,\pi(x)]$ and an event $\cE\in\cF^+_z$ of the form \eqref{def:eventE}
such that $\cK(z)$ holds and
$d(z)=1$, and define $\tcS_\cE$ to be the random hypergraph with vertex set $M$ and edge set
\[
 E(\tcS_\cE) = \big\{ \te_j : j\in [z+1,\pi(x)],\,d_j>0 \big\},
\]
 where $\te_j$ is a random subset of $M$ chosen uniformly
 over all $\binom{|M|}{d_j}$ choices of subsets of $M$ of size~$d_j$, independently for each~$j$.
 For $k\ge2$, let
 \[
 \tS_k= \big\{ j\in[z+1,\pi(x)]:d_j=k \big\}
 \]
 denote the set of columns corresponding to the $k$-edges in~$\tcS_\cE$, and let $\tS=\bigcup_{k\ge2}\tS_k$.
 We also define $\te_z=\{v\}$ where $v$ is chosen uniformly at random from~$M$, independently of $\te_j$,
 $j>z$.

 We define a deterministic process on the random hypergraph $\tcS_\cE$ by `infecting' vertex $v$
 at time zero, and then at each subsequent step infecting any vertex that is the last non-infected
 vertex in an edge of $\tS$. To be precise, we set $\tD_0=\{v\}$ and, for each $t\ge 1$,
 \[
  \tD_t:=\tD_{t-1}\cup\big\{w\in M:\text{there exists }j\in \tS\text{ with }
  w\in\te_j\subseteq\tD_{t-1}\cup\{w\}\big\}.
 \]
 We remark that such processes are usually referred to as `bootstrap percolation', and have been extensively studied in both deterministic and random settings, see e.g.~\cite{BBDM,JLTV}. 

 We now define $\tD$, $\tR_1$, $\tDelta_k$, and $\tDelta'$ to be the quantities corresponding to
 $D(z)$, $R_1(z)$, $\Delta_k(z)$, and $\Delta'(z)$ respectively. That is,
 \begin{itemize}
  \item[$(i)$] $\tD:=|\tD_\infty|$, where $\tD_\infty:=\bigcup_{t\ge 0}\tD_t$.
  \item[$(ii)$] $\tDelta_k:=\big|\big\{j\in\tS_k:\te_j\cap\tD_\infty\ne\emptyset\big\}\big|$
   for each $k\ge 2$.
  \item[$(iii)$] $\tR_1:=1+\sum_{j\in\tS}|\te_j\cap\tD_\infty|$. (Note that the extra 1 is for the single 1
   in column~$z$.)
  \item[$(iv)$] $\tDelta':=\big|\big\{j\in\bigcup_{k\ge3}\tS_k:
   |\te_j\cap\tD_\infty|\ge 2\big\}\big|$.
 \end{itemize}
  For $k\ge3$ we also define $\tDelta_k^{(1)}$ by
 \[
  \tDelta_k^{(1)}:=\big|\big\{j\in\tS_k:|\te_j\cap\tD_\infty|=1\big\}\big|.
 \]
 Note that for $k\ge3$, $\tDelta_k^{(1)}\le\tDelta_k\le\tDelta_k^{(1)}+\tDelta'$.
\end{defn}

The hypergraph $\tcS_\cE$ is the hypergraph corresponding to $\cS_A(z)$, except that we use the
matrix $\tA_\cE$ from \Sc{compare} in place of~$A$. In particular, if $\cE$ holds then
$\tS_k=S_k(z)$, and $M=M(z)$. We remark that, for emphasis, we shall put
tildes over all random variables that are functions of the random hypergraph~$\tcS_\cE$,
and write $s_k=|\tS_k|$ and $m=|M|$, so that if $\cE$ holds then $m=m(z)$ and $s_k=s_k(z)$.
We label the hyperedges of $\tcS_\cE$ by the column indices $j\in [z+1,\pi(x)]$; the
realizations of these edges as subsets of vertices are then given by the random variables
$\te_j=\{i\in M:(\tA_\cE)_{ij}=1\}$. However, as we analyse the progress of the `avalanche'
we shall reveal information about these random subsets only when necessary.
Although technically not part of the hypergraph~$\tcS_\cE$, we also define $\te_z$ as
the random 1-edge corresponding to column~$z$. We remark also that, since we will always assume 
that $\cK(z)$ (and hence $\cM(z)$) holds, it follows from~\eqref{eq:lowerbound:m:using:M} that $m \ge z_0^{1+o(1)}$. 

Our first main task will be to prove the following bound on the probability (in the independent
random hypergraph model) of large deviations of~$\tR_1$.

\begin{lemma}\label{lem:branching:main}
 Suppose that\/ $\cE$ is such that\/ $\cK(z)$ holds, $d(z)=1$, and\/
 $2s_2\le (1-\eps_1)m$. Then there exists a constant\/ $\lambda>0$, depending only
 on\/~$\eps_1$, such that
 \begin{equation}\label{eq:branching:main:a}
  \Prb\big(\tR_1\ge t\big)=\frac{O(s_2)}{m}e^{-\lambda t}
 \end{equation}
 and
  \begin{equation}\label{eq:branching:main:b}
  \E\Big[\tDelta_k^{(1)}\id_{\{t\le \tR_1<m^{1/2}\}}\Big]=\frac{O(ks_k)}{m}e^{-\lambda t}
 \end{equation}
 for all\/ $2\le t\le m^{1/2}$, uniformly in\/ $t$ and\/ $k\ge3$.
\end{lemma}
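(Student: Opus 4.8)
The idea is to analyse the infection (bootstrap) process on $\tcS_\cE$ as a subcritical branching process, exploiting the hypothesis $2s_2\le(1-\eps_1)m$ to obtain an exponential tail. First I would set up an exploration of the process vertex by vertex, revealing edges only as needed. Start with the single infected vertex $v=\te_z$, and maintain a queue of infected-but-unexplored vertices together with a set of ``half-open'' edges (edges known to contain at least one infected vertex but whose realization as a subset of $M$ has not been fully revealed). When we process an infected vertex $w$, we reveal which of the not-yet-examined columns $j\in\tS$ have $w\in\te_j$; since the columns are independent and $\te_j$ is a uniform $d_j$-subset of $M$, the number of new half-open edges created is stochastically dominated by a sum of independent Bernoulli variables with total mean $\sum_{j\in\tS}d_j/m=O(1)$ (by~\eqref{eq:sumdj}), and is in particular dominated by $O(1)$ in expectation. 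For each half-open $k$-edge we then reveal its other $k-1$ vertices one at a time; a new vertex becomes infected precisely when it is the \emph{last} unrevealed vertex of that edge, i.e. essentially only the $2$-edges (those $j\in S_2(z)$) reliably produce a new infection, while larger edges contribute a new infection only with probability $O(|\tD_t|/m)=O(\tR_1/m)$, which is negligible while $\tR_1<m^{1/2}$.

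**Key steps.** (1) Formalise the domination: couple $\tR_1$ with $1+Z$, where $Z$ is the total progeny of a Galton–Watson process whose offspring distribution has mean $\mu:=\E[\,\#\text{new 1s per infected vertex}\,]$. Using Corollary~\ref{cor:pj}-type estimates already packaged into $\sum_j d_j/m=O(1)$, and the fact that only $2$-edges pass infection forward (up to an $O(\tR_1/m)$ correction), one gets $\mu=\frac{2s_2}{m}+o(1)\le 1-\eps_1/2$ for $x$ large. (2) Since the number of $1$s removed per step is $O(1)$ in expectation with exponential tails (each step reveals $O(1)$ Bernoulli incidences), a standard total-progeny bound for subcritical branching processes gives $\Prb(Z\ge t)\le C e^{-\lambda t}$ for some $\lambda=\lambda(\eps_1)>0$; the prefactor $O(s_2)/m$ in~\eqref{eq:branching:main:a} comes from the fact that the process only grows at all if the \emph{initial} vertex $v$ lies in some $2$-edge (or larger edge), an event of probability $\sum_j d_j/m=O(1)$ — wait, more precisely the probability that the root has any incident edge giving a child is $O(s_2/m)$ once one tracks that the first generation must be nonempty. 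So $\Prb(\tR_1\ge t)=\Prb(\text{root fertile})\cdot\Prb(Z\ge t-1\mid \text{fertile})=\frac{O(s_2)}{m}e^{-\lambda t}$. (3) For~\eqref{eq:branching:main:b}, note $\tDelta_k^{(1)}$ counts $k$-edges with exactly one infected vertex; each such edge is ``discovered'' by one of its vertices becoming infected, and conditioned on $\{t\le\tR_1<m^{1/2}\}$ the number of infected vertices is at least $t$ but at most $m^{1/2}$, so each fixed $k$-edge has exactly one infected vertex with probability $O(k\,\tR_1/m)$; summing over the $s_k$ such edges and over the tail event, using $\E[\tR_1\id_{\{t\le\tR_1\}}]=\frac{O(s_2)}{m}e^{-\lambda t}\cdot O(1)$ from step (2) (integrating the tail), yields $\E[\tDelta_k^{(1)}\id_{\{t\le\tR_1<m^{1/2}\}}]=\frac{O(ks_k)}{m}e^{-\lambda t}$. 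One must be a little careful that the $O(\cdot)$ constants are uniform in $k$ and $z$; this follows because all the inputs ($\sum_j d_j/m$, $m\ge z_0^{1+o(1)}$, $s_2/m\le(1-\eps_1)/2$) are uniform.

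**Main obstacle.** The delicate point is making the subcritical domination rigorous \emph{with the correct prefactor} $s_2/m$ rather than merely $O(1)$: a naive first-moment bound on the size of the avalanche gives only $O(1)$ or $O(m^{1/2})$ tails, not the $\frac{O(s_2)}{m}e^{-\lambda t}$ needed. The resolution is that the process is genuinely subcritical (offspring mean $\le 1-\eps_1/2$, crucially using the hypothesis $2s_2\le(1-\eps_1)m$), so one should build the exploration as an honest branching process and apply the exponential bound for the total progeny of a subcritical Galton–Watson tree — e.g. via a Chernoff/Azuma argument on the depleted random-walk encoding of the queue size, where the step distribution has negative drift $-\eps_1/2$ and $O(1)$ exponential moments (here Observation~\ref{obs:mgf} is exactly the tool for bounding $\E[e^{\lambda(\text{offspring})}]$). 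The second subtlety is controlling the contribution of $k$-edges with $k\ge3$ to the growth: these create ``extra'' infections with probability $O(\tR_1/m)$, so on the event $\tR_1<m^{1/2}$ their cumulative effect on the offspring mean is $O(m^{-1/2})=o(1)$ and can be absorbed into the $o(1)$ in $\mu$; one formalises this by a union bound / optional-stopping argument (Lemma~\ref{lem:OST}) stopped at the first time the process would exceed $m^{1/2}$, which also explains why the statement~\eqref{eq:branching:main:b} carries the truncation $\tR_1<m^{1/2}$.
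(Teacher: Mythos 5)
Your plan follows the paper's strategy closely: explore the avalanche vertex by vertex, observe that the subcriticality $2s_2\le(1-\eps_1)m$ gives the exploration queue negative drift, control the offspring distribution via moment generating functions (exactly Observation~\ref{obs:mgf}), use optional stopping with a $m^{1/2}$ truncation, and extract the $O(s_2)/m$ prefactor from the probability that the first round produces any child (Observation~\ref{obs:step1} in the paper). Your treatment of the $k\ge 3$ edges (negligible offspring contribution of order $O(m^{-1/2})$ while $|\tV_\infty|<m^{1/2}$) is also in line with the paper's Lemma~\ref{lem:prob:kedge}. So for part (a) the route is essentially the paper's.

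There is, however, a real gap in your sketch of part (b). You bound $\E\big[\tDelta_k^{(1)}\id_{\{t\le\tR_1<m^{1/2}\}}\big]$ by summing, over the $s_k$ edges, ``probability $O(k\tR_1/m)$ of exactly one intersection'' and then multiplying by $\E[\tR_1\id_{\{t\le\tR_1\}}]$. This implicitly treats $\tDelta_k^{(1)}$ and the event $\{\tR_1\ge t\}$ as if they factor, but they are positively correlated: an edge $\te_j$ with $|\te_j\cap\tV_\infty|=1$ itself contributes $+1$ to $\tR_1$, so conditioning on $\tR_1$ being large changes the distribution of the very edges you are counting. The paper resolves this by conditioning on the exploration $\sigma$-algebra $\tcF_\infty$, under which the unused edges $\te_j$ ($j\notin\tE_\infty$) are conditionally independent (\Lm{R:binomial}), noting that for $k\ge3$ the event $\{|\te_j\cap\tV_\infty|=1\}$ forces $j\notin\tE_\infty$ and that $\te_j$ then contributes \emph{exactly} one to $\tR_1$, and working with the tilted quantity $\E\big[\tDelta_k^{(1)} e^{\lambda\tR_1}\id_{\{2\le\tR_1<m^{1/2}\}}\big]$ edge by edge: the factor $e^{\lambda}$ from $\te_j$'s own contribution pulls out, and what remains is an independent MGF of $\tW_\infty$ plus the other unused edges, controlled by \Lm{Winfty}. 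Without this decoupling step the bound in~\eqref{eq:branching:main:b} does not follow from~\eqref{eq:branching:main:a}. (A minor slip: $\tR_1\ge t$ does \emph{not} imply $|\tV_\infty|\ge t$, only $|\tV_\infty|\le\tR_1$, so ``the number of infected vertices is at least $t$'' should be removed from your sketch.)
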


We shall next define precisely the process via which we reveal the set of vertices removed in the
independent model. For each integer $t\ge 0$, let $\tA_t$ and $\tV_t$ (the \emph{active} and
\emph{visited} vertices respectively) be subsets of $M$ given by the following algorithm. For
technical reasons, we shall need an upper bound on the number of vertices we visit during the
process.

\pagebreak 

\begin{alg}\label{alg:exploration}
 We start with $t:=1$, $\tA_0=\tV_0:=\{v\}$ and $\tE_0:=\{z\}$, where $v$ is the vertex
 corresponding to the unique 1 in column~$z$. Define $\tj(v)=z$ and repeat the following
 steps until $|\tA_t|=0$.
 \begin{itemize}
  \item[$1.$] Pick $u\in\tA_{t-1}$ with the smallest value of $\tj(u)$
   and list the elements of $M\setminus\tV_{t-1}$ (in increasing order, say) as $w_1,\dots,w_r$.
   Set $\tA^{(0)}:=\tA_{t-1}$, $\tV^{(0)}:=\tV_{t-1}$, $\tE^{(0)}:=\tE_{t-1}$ and $\ell:=0$.
  \item[$2.$] While $|\tV^{(\ell)}|<m^{1/2}$ and $\ell<r$, repeat the following steps.
  \smallskip
  \begin{itemize}
   \item[$(a)$] Set $\ell:=\ell+1$.
   \item[$(b)$] Let $\tj(w_\ell)$ be the smallest $j\in\tS$
    with $\{u,w_\ell\}\subseteq\te_j\subseteq\tV_{t-1}\cup\{w_\ell\}$,
    if such a $j$ exists. Set
    $\tV^{(\ell)}:=\tV^{(\ell-1)}\cup\{w_\ell\}$,
    $\tA^{(\ell)}:=\tA^{(\ell-1)}\cup\{w_\ell\}$, and
    $\tE^{(\ell)}:=\tE^{(\ell-1)}\cup\{\tj(w_\ell)\}$.
    If no such $j$ exists, then set $\tV^{(\ell)}:=\tV^{(\ell-1)}$,
    $\tA^{(\ell)}:=\tA^{(\ell-1)}$ and $\tE^{(\ell)}:=\tE^{(\ell-1)}$.
   \end{itemize}
  \item[$3.$] Set $\tA_t:=\tA^{(\ell)}\setminus\{u\}$, $\tV_t:=\tV^{(\ell)}$ and
   $\tE_t:=\tE^{(\ell)}$.
  \item[$4.$] If $|\tA_t|=0$ then set $\tV_\infty:=\tV_t$ and $\tE_\infty:=\tE_t$;
   otherwise set $t:=t+1$ and return to Step~1.
 \end{itemize}
\end{alg}

Let us begin by making a couple of simple but key observations about this algorithm.

\begin{obs}\label{obs:alg:basicproperties}
 $|\tA_t|=|\tV_t|-t$ for every\/ $0\le t\le |\tV_\infty|$, and\/
 $|\tV_\infty|=\min\big\{\tD,\lceil m^{1/2}\rceil\big\}$.
\end{obs}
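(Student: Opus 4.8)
The plan is to verify both claims directly from the definitions in Algorithm~\ref{alg:exploration}, tracking how $|\tA_t|$ and $|\tV_t|$ evolve and observing that the algorithm is essentially a queue-based exploration of the connected component of $v$ in the (truncated) bootstrap process. For the first identity, I would argue by induction on $t$. At $t=0$ we have $\tA_0=\tV_0=\{v\}$, so $|\tA_0|=|\tV_0|-0=1$. For the inductive step, observe that in each iteration Step~1 removes exactly one vertex $u$ from the active set, while Step~2 adds to both $\tV^{(\ell)}$ and $\tA^{(\ell)}$ exactly the same vertices $w_\ell$ (precisely those for which a suitable $j\in\tS$ was found), so that $|\tA^{(\ell)}|-|\tA_{t-1}|=|\tV^{(\ell)}|-|\tV_{t-1}|$. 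After Step~3 we have $|\tA_t|=|\tA^{(\ell)}|-1$ and $|\tV_t|=|\tV^{(\ell)}|$, so $|\tA_t|-|\tA_{t-1}|=|\tV_t|-|\tV_{t-1}|-1$, and the identity $|\tA_t|=|\tV_t|-t$ follows. One must check this holds for all $0\le t\le|\tV_\infty|$, which is exactly the range for which the algorithm runs (the process stops when $|\tA_t|=0$, equivalently when $t=|\tV_t|=|\tV_\infty|$).

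For the second identity, the key point is to compare $\tV_\infty$ (the set of vertices visited by the algorithm, which deliberately halts expansion once $|\tV^{(\ell)}|\ge m^{1/2}$) with $\tD_\infty$ (the full bootstrap closure of $\{v\}$ in $\tcS_\cE$), whose size is $\tD$. I would argue that $\tV_\infty\subseteq\tD_\infty$ always: every vertex $w_\ell$ added in Step~2(b) satisfies $\{u,w_\ell\}\subseteq\te_j\subseteq\tV_{t-1}\cup\{w_\ell\}$ for some $j\in\tS$, so $w_\ell$ is forced to be infected once $\tV_{t-1}$ is (a straightforward induction shows $\tV_t\subseteq\tD_\infty$ for all $t$). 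Conversely, if the truncation never triggers---i.e.\ if $|\tV_\infty|<\lceil m^{1/2}\rceil$---then the algorithm explores without restriction and must exhaust the closure: any vertex $w\in\tD_\infty$ lies in some $\te_j\subseteq\tD_{s-1}\cup\{w\}$, and by induction on the bootstrap time $s$ such a $w$ eventually becomes active and hence visited (here one uses that $u$ is always chosen with the smallest index $\tj(u)$, so no active vertex is neglected forever). Therefore in this case $\tV_\infty=\tD_\infty$ and $|\tV_\infty|=\tD$. If instead the truncation does trigger, then $|\tV_\infty|=\lceil m^{1/2}\rceil$ and, since $\tV_\infty\subseteq\tD_\infty$, also $\tD\ge\lceil m^{1/2}\rceil$; in either case $|\tV_\infty|=\min\{\tD,\lceil m^{1/2}\rceil\}$.

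I expect the main obstacle to be the careful bookkeeping in the truncation case---specifically, making sure that once $|\tV^{(\ell)}|$ first reaches $m^{1/2}$ the inner \textbf{while} loop stops immediately, so that $|\tV_\infty|$ equals exactly $\lceil m^{1/2}\rceil$ and not something slightly larger (the loop condition $|\tV^{(\ell)}|<m^{1/2}$ is checked before each increment of $\ell$, so at most one vertex past the threshold could be an issue, and one should confirm the condition is tested at the right moment). The complementary direction---that an untruncated run reaches the entire closure---requires only the observation that the queue discipline (smallest $\tj$ first) guarantees every active vertex is eventually processed before the active set empties, which is routine. Neither part requires any probabilistic input; this is a deterministic statement about the algorithm, so no appeal to Section~\ref{sec:compare} or the hypotheses $\cK(z)$, $d(z)=1$ is needed here.
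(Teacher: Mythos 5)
Your proposal is correct and follows essentially the same route as the paper's proof: an induction showing that $\tA_t$ and $\tV_t$ gain the same elements while $\tA_t$ additionally loses one element per round, and then the containment $\tV_t\subseteq\tD_t$ (hence $\tV_\infty\subseteq\tD_\infty$) combined with a case split on whether the truncation is ever triggered. Your worry about whether $|\tV_\infty|$ might overshoot $\lceil m^{1/2}\rceil$ is easily dispelled by noting that the loop condition is checked before each single-vertex increment and that, for integers $n$, $n<m^{1/2}$ is equivalent to $n<\lceil m^{1/2}\rceil$, so the first time $|\tV^{(\ell)}|$ fails the test it equals exactly $\lceil m^{1/2}\rceil$; the paper leaves this as implicit bookkeeping.
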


\begin{proof}
The equation $|\tA_t|=|\tV_t|-t$ follows since we add the same elements to $\tA_t$ and~$\tV_t$,
but remove one element from $\tA_t$ at each time step. To see that $|\tV_\infty|\le\tD$, observe
that in fact we have $\tV_t\subseteq\tD_t$ for all $t\ge0$ by induction, as we only add
vertices to $\tV_t$ that are added to~$\tD_t$. Moreover, if $|\tV_t|<\lceil m^{1/2}\rceil$
for every $t\ge 0$, then the algorithm discovers all vertices that are included in~$\tD_\infty$,
so in that case $\tV_\infty=\tD_\infty$, and so $|\tV_\infty|=\tD$.
On the other hand, if $|\tV_t|=\lceil m^{1/2}\rceil$ for some $t$ then we visit no new
vertices after that point, and therefore $\tV_{t'}=\tV_t$ for all $t<t'\le |\tV_\infty|$,
and hence $|\tV_\infty|=\lceil m^{1/2}\rceil$, as claimed.
\end{proof}

We think of $(|\tA_t|)_{t\ge 0}$ as a random walk, and of $|\tV_\infty|$ as the hitting time of~0.
However, the steps of this random walk might be large, and are not independent. Therefore, in
order to control the walk we shall need to break the steps up into smaller pieces. Let us define
random variables $\tX_{t,w}\in\{0,1\}$ for each $1\le t\le |\tV_\infty|$ and
$w\in M\setminus\tV_{t-1}$ by setting
\[
 \tX_{t,w}=1\quad\Leftrightarrow\quad w\in\tV_t\setminus\tV_{t-1}.
\]
Abusing notation slightly, let us define a filtration
$\cF_z^+=\tcF_0\subseteq\tcF_1\subseteq \dots\subseteq\tcF_{|\tV_\infty|}=\tcF_\infty$ by
defining $\tcF_t$ to be the information observed (about the independent model) at the moment
$\tV_t$ is defined.\footnote{Note that after we have visited $m^{1/2}$ vertices, the algorithm does not observe any further new information, and so the $\sigma$-algebras of the filtration are all the same from that point on.} For each $1\le t\le |\tV_\infty|$, let us define a further filtration
\[
 \tcF_{t-1}=\tcF_{t,<w_1}\subseteq\tcF_{t,<w_2}\subseteq\dots\subseteq\tcF_{t,<w_r}\subseteq\tcF_t
\]
by defining $\tcF_{t,<w}$ to be the information observed just before we begin Step~$2(b)$ in the
round of \Al{exploration} in which we discover whether or not $w\in\tV_t\setminus\tV_{t-1}$.

\begin{rmk}\label{rem:filtration}
 Note that in Step~$2(b)$ of the algorithm we only need to observe whether or not the hyperedge
 $\te_j$ satisfies $\{u,w_\ell\}\subseteq\te_j\subseteq\tV_{t-1}\cup\{w_\ell\}$ in 
 turn\footnote{Since $\tj(w_\ell)$ is the smallest $j\in\tS$ with this property, we consider the elements of $\tS$ in increasing order.} for 
 each $j$ until we find one that does, or we have exhausted all~$j \in \tS$. Moreover, if we do find such
 an edge then we do not test this condition for larger~$j$. We emphasize that this is the
 only (new) information contained in $\tcF_{t,<w_{\ell+1}}$, and therefore, for edges
 $\te_j\in E(\tcS_\cE)$ that are not used in the process, we only have `negative' information
 (i.e., information of the form ``the hyperedge $\te_j$ does not satisfy
 $\{u,w_\ell\}\subseteq\te_j\subseteq\tV_{t-1}\cup\{w_\ell\}$''). This fact will play an
 important role in the proof, see Lemmas~\ref{lem:prob:kedge},~\ref{lem:R:binomial}
 and~\ref{lem:branching:rest}, below.
\end{rmk}

In order to bound $\tR_1$ and the other variables introduced in \Df{tilde:variables}, we shall
need to consider both edges $\te_j$ with $j\in\tE_\infty$ (i.e., edges of $\tcS_\cE$ that are used
in the algorithm), and edges $\te_j$, $j\in\tS\setminus\tE_\infty$, that nonetheless
intersect~$\tV_\infty$. Before embarking on the proof of \Lm{branching:main}, we shall use
\Rk{filtration} to control the distributions of the number of both types of edges.

We begin with the edges $\te_j$, $j\in\tE_\infty$. Let us define a random variable
$\tX^{(k)}_{t,w}\in\{0,1\}$ for each $k\ge 2$, $1\le t\le |\tV_\infty|$ and
$w\in M\setminus\tV_{t-1}$ by setting
\[
 \tX^{(k)}_{t,w}=1\qquad\Leftrightarrow\qquad w\in\tV_t\setminus\tV_{t-1}
 \quad\text{and}\quad\tj(w)\in\tS_k.
\]
Note that $\tX_{t,w} = \sum_{k\ge 2} \tX^{(k)}_{t,w}$ for every $t$ and~$w$. Define $\tcX_{t,w}$ to
be the event that $\tV^{(\ell)}<m^{1/2}$ just before we test vertex $w$ in time step~$t$.
Thus $\tX_{t,w}=1$ is only possible if $\tcX_{t,w}$ holds.

We write $\id_{\cA}$ to denote the indicator function of an event $\cA$. 

\begin{lemma}\label{lem:prob:kedge}
 Suppose that\/ $\cE$ is such that\/ $\cK(z)$ holds, $d(z)=1$, and\/
 $2s_2\le (1-\eps_1)m$. Then
 \[
  \E\big[\tX^{(2)}_{t,w}\mid\tcF_{t,<w}\big] = \left( \frac{2s_2}{m^2} + O\big( m^{-3/2} \big) \right)\id_{\tcX_{t,w}}
 \]
 and if\/ $k\ge 3$ then
 \[
  \E\big[\tX^{(k)}_{t,w}\mid\tcF_{t,<w}\big]\le\frac{2k^2 s_k}{m^{(k+2)/2}}\id_{\tcX_{t,w}}
 \]
 for every\/ $t\ge1$ and\/ $w\in M\setminus \tV_{t-1}$. As a consequence,
 \[
  \E\big[\tX_{t,w}\mid\tcF_{t,<w}\big]= \bigg( \frac{2s_2}{m^2} + O\big( m^{-3/2} \big) \bigg) \id_{\tcX_{t,w}}.
 \]
 Moreover, the constants implicit in the\/ $O(\cdot)$ notation are uniform in\/ $z$, $t$ and\/ $w$.
\end{lemma}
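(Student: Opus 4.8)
The key point is that in Step~$2(b)$ of \Al{exploration}, when we test vertex $w$ against the remaining hyperedges, the conditioning encoded in $\tcF_{t,<w}$ is almost entirely \emph{negative} information (see \Rk{filtration}): for each $j\in\tS$ not yet used, all we know is that $\te_j$ failed to be of the form $\{u',w'\}\subseteq\te_j\subseteq\tV_{t'-1}\cup\{w'\}$ at the relevant earlier moments. I would first argue that, conditioned on $\tcF_{t,<w}$ and on $\tcX_{t,w}$, the hyperedge $\te_j$ is still uniform over the choices of $d_j$-subsets of $M$ \emph{avoiding} some set of ``forbidden'' vertices, or more precisely that the conditional probability $\te_j$ has any prescribed intersection pattern with a fixed small set is within a $(1+o(1))$ factor of the unconditional one. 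The crucial quantitative input is that we have visited fewer than $m^{1/2}$ vertices so far (this is exactly what $\tcX_{t,w}$ guarantees), and that $m\ge z_0^{1+o(1)}$ by~\eqref{eq:lowerbound:m:using:M}; so $|\tV_{t-1}| < m^{1/2}$ and the ``forbidden'' sets have size $o(m)$, making each relevant hypergeometric probability $\big(1+O(m^{-1/2})\big)$ times the corresponding ``with-replacement'' probability.

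**Main computation.** For $\tX^{(k)}_{t,w}$ with $k\ge2$: the event $\{\tX^{(k)}_{t,w}=1\}\cap\tcX_{t,w}$ requires some $j\in\tS_k$ with $\{u,w\}\subseteq\te_j\subseteq\tV_{t-1}\cup\{w\}$. For a fixed $j\in\tS_k$, the (conditional) probability that $\te_j$ contains both $u$ and $w$ and has its remaining $k-2$ vertices inside $\tV_{t-1}\setminus\{u\}$ is, up to a $1+O(m^{-1/2})$ factor,
\[
 \frac{\binom{|\tV_{t-1}|-1}{k-2}}{\binom{m}{k}}\bigg/\binom{k}{2}^{-1}\cdot\frac{k(k-1)}{2}\ \text{type terms};
\]
more cleanly, $\Prb(\{u,w\}\subseteq\te_j,\ \te_j\setminus\{u,w\}\subseteq\tV_{t-1}) = \binom{k}{2}\binom{|\tV_{t-1}|-1}{k-2}\big/\binom{m}{k} = \big(1+O(m^{-1/2})\big)\frac{k(k-1)}{2}\cdot\frac{|\tV_{t-1}|^{k-2}}{m^k}$. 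Summing over the $s_k$ choices of $j\in\tS_k$ and using $|\tV_{t-1}|<m^{1/2}$ kills all $k\ge3$ contributions at rate $m^{-(k+2)/2}$; the factor $k^2$ absorbs the $\binom{k}{2}$. For $k=2$ there is no constraint on interior vertices, so the sum is $s_2\cdot\binom{2}{2}\big/\binom{m}{2}=\big(1+O(m^{-1/2})\big)\frac{2s_2}{m^2}$, giving the stated first display. (One should be slightly careful that $\te_j$ is required to have been \emph{not already used}; but an edge $\te_j$ with $\{u\}\cup\tV_{t-1}\supseteq\te_j$ containing $u$ would have been used when $u$ was processed, so the already-used edges are exactly excluded by requiring $w\notin\tV_{t-1}$, which holds here.) The dependence on prior edges is handled by the negative-information observation: each already-tested edge $\te_{j'}$, $j'\in\tS_k$, is now known only to \emph{not} be ``completable'', which can only decrease the chance it equals a specific forbidden set, hence the one-sided bounds for $k\ge3$ and the $(1+o(1))$ two-sided estimate for $k=2$ after one checks both directions. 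Finally $\tX_{t,w}=\sum_{k\ge2}\tX^{(k)}_{t,w}$, and summing the per-$k$ estimates, with $\sum_{k\ge3} k^2 s_k / m^{(k+2)/2} = O(m^{-3/2})$ since $\sum_k k^2 s_k = O(m)$ (this follows from $\cK(z)$, cf.~\eqref{def:K} and \Lm{track:K}), yields $\E[\tX_{t,w}\mid\tcF_{t,<w}] = \big(\tfrac{2s_2}{m^2}+O(m^{-3/2})\big)\id_{\tcX_{t,w}}$.

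**The main obstacle.** The delicate part is making rigorous the claim that conditioning on $\tcF_{t,<w}$ changes the distribution of each unused edge $\te_j$ by only a $1+O(m^{-1/2})$ factor on the events of interest. This is not quite a clean ``fresh randomness'' statement because many edges have been partially examined; the right way to phrase it is that $\te_j$, conditioned on all the negative information accumulated so far about it, is uniform over $d_j$-subsets of $M$ that avoid a certain ``excluded'' configuration, and since that excluded set has been built from $o(m)$ visited vertices the conditional law of $\te_j\cap(\tV_{t-1}\cup\{u,w\})$ is within the stated factor of uniform. For the \emph{upper} bounds ($k\ge3$, and one side of $k=2$) one can simply drop the conditioning entirely (negative information only helps), which is the cheap route and suffices for everything except the two-sided $k=2$ estimate; for the matching lower bound when $k=2$ one has to check that the edges examined and rejected in earlier steps, together with the at-most-$m^{1/2}$ visited vertices, reduce the conditional probability by at most $1+O(m^{-1/2})$ — this is where the bound $|\tV_\infty|\le\lceil m^{1/2}\rceil$ from \Ob{alg:basicproperties} and $m = z_0^{1+o(1)}$ are used, and where I expect to spend most of the effort. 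Uniformity of the $O(\cdot)$ in $z,t,w$ is then immediate since all the error terms depend only on $m^{-1/2}$ and on $\sum_k k^2 s_k = O(m)$.
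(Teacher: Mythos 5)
Your approach matches the paper's proof of \Lm{prob:kedge} closely: you rely on the same negative-information observation from \Rk{filtration} to reduce to a hypergeometric computation, use $|\tV_{t-1}|<m^{1/2}$ (guaranteed by $\tcX_{t,w}$) and $m\ge z_0^{1+o(1)}$ to show that the excluded configurations and the already-used edges cause only a $1+O(m^{-1/2})$ distortion, and sum over $k$ using $\sum_k k^2 s_k = O(m)$ from $\cK(z)$. One small slip worth fixing: the hypergeometric identity should read $\Prb\big(\{u,w\}\subseteq\te_j\subseteq\tV_{t-1}\cup\{w\}\big)=\binom{|\tV_{t-1}|-1}{k-2}\big/\binom{m}{k}$ (no $\binom{k}{2}$ factor), which evaluates to $(1+o(1))\,k(k-1)|\tV_{t-1}|^{k-2}/m^k$; your version has a spurious $\binom{k}{2}$ and a compensating error in the evaluation, so the stated per-edge estimate is off by a constant factor, although the final bounds $\frac{2s_2}{m^2}+O(m^{-3/2})$ and $2k^2s_k/m^{(k+2)/2}$ still hold.
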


\begin{proof}
Let $\tV^{(\ell)}$ be the set of visited vertices just before we ask whether or not
$w\in\tV_t\setminus\tV_{t-1}$. If $|\tV^{(\ell)}|\ge m^{1/2}$ then $\tX_{t,w}=\id_{\tcX_{t,w}}=0$,
so we may assume that $|\tV^{(\ell)}|<m^{1/2}$. As $\cK(z)$ holds, we may also assume $k\le 4u_0$,
as otherwise $\tS_k=\emptyset$ and hence $\tX^{(k)}_{t,w}=0$. By~\eqref{eq:lowerbound:m:using:M},
we may also assume that $m \ge z_0^{1+o(1)}$, so $u_0 = o(\log m)$. We shall prove the first two
statements by bounding (for each $k\ge 2$) the probability that $w\in\tV_t\setminus\tV_{t-1}$
and $\tj(w)\in\tS_k$ by the expected number of edges $j\in\tS_k\setminus\tE^{(\ell)}$ of size $k$ with
$\{u,w\}\subseteq\te_j\subseteq\tV_{t-1}\cup\{w\}$.

Indeed, by \Rk{filtration}, conditioned on $\tcF_{t,<w}$, the edges~$\te_j$,
$j\in\tS_k\setminus\tE^{(\ell)}$, are each chosen independently and uniformly from the collection
of sets that fail to satisfy the test in Step~$2(b)$ in any prior time step where the edge
$\te_j$ was actually tested. This (crucially) includes all $k$-subsets of $M$ that contain at
least two vertices of $M\setminus\tV_{t-1}$. Since $|\tV_{t-1}|<m^{1/2}$, the number of such sets is therefore
\[
 \binom{m}{k}-O(m)\binom{|\tV_{t-1}|}{k-1}= \big( 1 + o(1) \big) \binom{m}{k}
 = \big( 1 + o(1) \big) \frac{m^k}{k!},
\]
where we have used the fact that $2\le k\le 4u_0=o(m^{1/2})$, by~\eqref{eq:lowerbound:m:using:M},
and the general fact that $\binom{n}{r}=(1-O(r^2/n))n^r/r!$. It follows that, for each $k \ge 2$ and each
$j\in\tS_k\setminus\tE^{(\ell)}$, the edge $\te_j$ satisfies
$\{u,w\}\subseteq\te_j\subseteq\tV_{t-1}\cup\{w\}$ with (conditional) probability
\begin{equation}\label{eq:prob:edge:w}
\big( 1 + o(1) \big) \frac{k!}{m^k} \cdot {|\tV_{t-1}| \choose k-2}
 \le \big(1+o(1)\big)\frac{k(k-1)|\tV_{t-1}|^{k-2}}{m^k},
\end{equation}
so, in particular, for each $k\ge3$ we have
\[
 \E\big[\tX^{(k)}_{t,w}\mid\tcF_{t,<w}\big]
 \le(1+o(1))\frac{k(k-1)|\tV_{t-1}|^{k-2}}{m^k}\cdot s_k
 \le\frac{2k^2s_k}{m^{(k+2)/2}}
\]
since $|\tV_{t-1}|<m^{1/2}$, as claimed. When $k = 2$, on the other hand, we can replace~\eqref{eq:prob:edge:w} by
$$\bigg( \binom{m}{2} - O\big( m \cdot |\tV_{t-1}| \big) \bigg)^{-1} = \frac{2}{m^2} + O\big( m^{-5/2} \big),$$
since $|\tV_{t-1}|<m^{1/2}$, and that at most $|\tV_{t-1}|$ edges have already been used in the process (since every time a new edge is used, we visit a new vertex). Hence the probability that some 2-edge satisfies $\te_j = \{u,w\}$ is
\[
\bigg( \frac{2}{m^2} + O\big( m^{-5/2} \big) \bigg)\Big(  s_2 + O\big( m^{1/2} \big) \Big) = \frac{2s_2}{m^2} + O\big( m^{-3/2} \big),
\]
as claimed, since $s_2 = O(m)$.

For the last part we note that $\tX_{t,w}=\sum_{k\ge2}\tX^{(k)}_{t,w}$ and so, assuming
$\tcX_{t,w}$ holds,
\begin{align*}
 \E\big[\tX_{t,w} - \tX^{(2)}_{t,w} \mid\tcF_{t,<w}\big] \le \sum_{k\ge3} \frac{2k^2 s_k}{m^{(k+2)/2}} = O\big( m^{-3/2} \big)
\end{align*}
as $\cK(z)$ holds, so $\sum_{k\ge 2}s_k=O(m)$. Uniformity in $z$, $t$ and $w$ follows as
all the $o()$ terms are in fact bounded by functions of~$m$, and since $\cK(z)$ holds we
have $m \ge z_0^{1+o(1)}$, by~\eqref{eq:lowerbound:m:using:M}.
\end{proof}

Next, define
\[
 \tf(t,w) := \begin{cases}
 \; k &\text{if $\,\tX_{t,w}=1\,$ and $\,\tj(w)\in\tS_k$, and}\\[8pt]
 \; 0 &\text{if $\,\tX_{t,w} = 0$.}
 \end{cases}
\]
Using \Lm{prob:kedge}, we can easily deduce the following bounds, which will be needed in the
proof of \Lm{branching:main}, below. 

\begin{lemma}\label{lem:exp:Xtw}
 Suppose \/ $\cE$ is such that\/ $\cK(z)$ holds, $d(z)=1$, and\/ 
 $2s_2\le (1-\eps_1)m$. Then
 for any\/ $0<\lambda\le\eps_1$, we have
 \[
  \E\big[e^{\lambda\tX_{t,w}}\mid\tcF_{t,<w}\big]
  \le\exp\bigg(\frac{\lambda-\lambda^2/2}{m}\bigg)
 \]
 and
 \[
  \E\big[e^{\lambda\tf(t,w)}\mid\tcF_{t,<w}\big]
  \le\exp\bigg(\frac{2\lambda}{m}\bigg)
 \]
 for every\/ $t\ge1$ and\/ $w\in M\setminus\tV_{t-1}$.
\end{lemma}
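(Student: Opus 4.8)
The plan is to compute the two conditional moment generating functions essentially exactly, using that $\tX_{t,w}$ and $\tf(t,w)$ take only finitely many values, controlling the relevant conditional probabilities by Lemma~\ref{lem:prob:kedge}, and extracting the slack needed from the hypotheses $2s_2\le(1-\eps_1)m$ and $\lambda\le\eps_1$, together with the inequality $1+y\le e^y$. Throughout I would use the bound $m\ge z_0^{1+o(1)}$, which follows from $\cK(z)$ (hence $\cM(z)$) via~\eqref{eq:lowerbound:m:using:M}, so that $m\to\infty$ and $u_0=o(\log m)$ by~\eqref{eq:ulogu}.

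For the first bound, since $\tX_{t,w}\in\{0,1\}$ we have, writing $\mu:=\E[\tX_{t,w}\mid\tcF_{t,<w}]$,
\[
 \E\big[e^{\lambda\tX_{t,w}}\mid\tcF_{t,<w}\big]=1+(e^\lambda-1)\mu\le\exp\big((e^\lambda-1)\mu\big),
\]
so it is enough to show $(e^\lambda-1)\mu m\le\lambda-\lambda^2/2$. By Lemma~\ref{lem:prob:kedge} and $2s_2\le(1-\eps_1)m$ we have $\mu m\le 1-\eps_1+O(m^{-1/2})$. Writing $1-\eps_1=(1-\lambda)-(\eps_1-\lambda)$ and using the elementary inequality $(e^\lambda-1)(1-\lambda)\le\lambda-\tfrac{\lambda^2}{2}-\tfrac{\lambda^3}{6}$ (obtained by integrating $\tfrac{d^2}{d\lambda^2}(e^\lambda-1)(1-\lambda)=-e^\lambda(1+\lambda)\le-(1+\lambda)$ twice from~$0$), the claim reduces to
\[
 \frac{\lambda^3}{6}+(e^\lambda-1)(\eps_1-\lambda)\ \ge\ (e^\lambda-1)\cdot O(m^{-1/2}).
\]
Dividing by $e^\lambda-1\le 2\lambda$, the left-hand side is at least $\tfrac{\lambda^2}{12}+(\eps_1-\lambda)$, and the function $\lambda\mapsto\tfrac{\lambda^2}{12}+\eps_1-\lambda$ is decreasing on $(0,\eps_1]$, hence bounded below by its value $\eps_1^2/12>0$ at $\lambda=\eps_1$; as $\eps_1$ is a fixed constant, this beats $O(m^{-1/2})$ for all $x$ sufficiently large, uniformly in $\lambda\in(0,\eps_1]$.

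The bound on $\tf(t,w)$ follows the same template. Since $\tf(t,w)=\sum_{k\ge2}k\,\tX^{(k)}_{t,w}$ with at most one summand nonzero, $e^{\lambda\tf(t,w)}=1+\sum_{k\ge2}(e^{\lambda k}-1)\tX^{(k)}_{t,w}$, and using $1+y\le e^y$ it suffices to show $\sum_{k\ge2}(e^{\lambda k}-1)\,\E[\tX^{(k)}_{t,w}\mid\tcF_{t,<w}]\le 2\lambda/m$. The $k=2$ term is handled exactly as before, now using $(e^{2\lambda}-1)(1-\lambda)\le 2\lambda-\tfrac{2\lambda^3}{3}$ (from $\tfrac{d^2}{d\lambda^2}(e^{2\lambda}-1)(1-\lambda)=-4\lambda e^{2\lambda}\le-4\lambda$) to produce a slack of order $\eps_1^2$. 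For $k\ge3$, Lemma~\ref{lem:prob:kedge} gives $\E[\tX^{(k)}_{t,w}\mid\tcF_{t,<w}]\le 2k^2s_k m^{-(k+2)/2}$; since $\cK(z)$ forces $s_k=0$ for $k\ge 4u_0$ and $s_k=O(m)$ otherwise, and $e^{\lambda k}-1\le\lambda k\,e^{\lambda k}\le\lambda k\,m^{o(1)}$ (as $\lambda\le\eps_1$ and $u_0=o(\log m)$), the tail $\sum_{k\ge3}(e^{\lambda k}-1)\E[\tX^{(k)}_{t,w}\mid\tcF_{t,<w}]$ is only $\lambda\cdot O(m^{-3/2+o(1)})$, negligible against the slack.

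The one step that needs care, and which I expect to be the main obstacle, is making all the estimates uniform over the whole range $\lambda\in(0,\eps_1]$: for a fixed $\lambda$ the computation is routine, but the room left after the leading term is only $O(\lambda^2)$ (or $O(\lambda^3)$), which for small $\lambda$ need not dominate the $O(m^{-3/2})$ errors coming from Lemma~\ref{lem:prob:kedge}. The resolution, as indicated above, is that the available slack splits into a piece linear in $\eps_1-\lambda$ (large when $\lambda$ is small) plus a piece of order $\lambda^2$ (large when $\lambda$ is near $\eps_1$), so it is bounded below by a positive constant depending only on~$\eps_1$; a single threshold on~$x$ then works throughout.
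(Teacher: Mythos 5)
Your proof is correct and follows essentially the same route as the paper: both use Lemma~\ref{lem:prob:kedge} to control the conditional moments, then $1+y\le e^y$, and then the power-series bounds $(e^\lambda-1)(1-\lambda)<\lambda-\lambda^2/2$ and $(e^{2\lambda}-1)(1-\lambda)<2\lambda$ together with $1-\eps_1\le 1-\lambda$ to close the argument. Your explicit treatment of uniformity over $\lambda\in(0,\eps_1]$ --- showing that the slack, normalized by $e^\lambda-1$, is bounded below by $\eps_1^2/12$ on the whole interval, so that the $O(m^{-1/2})$ error is absorbed for a single threshold on $x$ --- makes rigorous a point the paper passes over more tersely via a strict inequality and an $o(1)$.
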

\begin{proof}
Since $2s_2\le (1-\eps_1)m$, it follows by \Ob{mgf} and \Lm{prob:kedge} that
\begin{align*}
 \E\big[e^{\lambda\tX_{t,w}}\mid\tcF_{t,<w}\big]
 &\le\exp\bigg(\big(e^\lambda-1\big)\left( \frac{2s_2}{m^2} + O\big( m^{-3/2} \big) \right)\bigg)\\
 &\le\exp\bigg(\frac{\big(e^\lambda-1\big)(1-\eps_1+o(1))}{m}\bigg)
  \le\exp\bigg(\frac{\lambda-\lambda^2/2}{m}\bigg),
\end{align*}
since $0<\lambda\le\eps_1<1$ and so $(e^\lambda-1)(1-\eps_1)
\le(\lambda+\frac{\lambda^2}{2}+\dots)(1-\lambda)<\lambda-\frac{\lambda^2}{2}$.
Similarly, recalling that $m \ge z_0^{1+o(1)}$ and that $\sum_{k \ge 2} 2^k s_k = O(m)$, since $\cK(z)$ holds, we have
\begin{align*}
 \E\big[e^{\lambda\tf(t,w)}\mid\tcF_{t,<w}\big]
 &=1+\sum_{k=2}^{4u_0}\big(e^{k\lambda}-1\big)\Prb\big(\tX^{(k)}_{t,w}=1\mid\tcF_{t,<w}\big)\\
 &\le 1+\big(e^{2\lambda}-1\big)\left( \frac{2s_2}{m^2} + O\big( m^{-3/2} \big) \right)
  +\sum_{k\ge3}\frac{2k^2(e^{k\lambda}-1)}{m^{(k+2)/2}}s_k\\
 &\le 1+\big(e^{2\lambda}-1\big)\frac{1-\eps_1}{m}+ O\big( \lambda m^{-3/2} \big)\\
 &\le1+\frac{2\lambda}{m}\le\exp\bigg(\frac{2\lambda}{m}\bigg),
\end{align*}
since $0<\lambda\le\eps_1<1$ and so $(e^{2\lambda}-1)(1-\eps_1)
\le(2\lambda+\frac{4\lambda^2}{2}+\frac{8\lambda^3}{6}+\dots)(1-\lambda)<2\lambda$.
\end{proof}

In the proof of~\eqref{eq:branching:main:b} we shall use the inequality
\begin{equation}\label{eq:branching:main:b:first:step}
 \E\big[\Delta_k^{(1)}\id_{\{t\le\tR_1<m^{1/2}\}}\big]
 \le e^{-\lambda t} \cdot \E\big[\Delta_k^{(1)}e^{\lambda\tR_1}\id_{\{2\le\tR_1<m^{1/2}\}}\big],
\end{equation}
so it will be important that we have some control over the distributions of $\Delta_k^{(1)}$ and $\tR_1$ conditioned on the `positive' information that $\tR_1 > 1$. The next observation provides us with this control.

\begin{obs}\label{obs:step1} 
The random variable \/ $|\tV_1|-1$ is stochastically dominated by the binomial random variable 
$\Bin(s_2,2/m)$. In particular, $\E\big[ e^{|\tV_1|}\mid\tR_1>1 \big]=O(1)$.
\end{obs}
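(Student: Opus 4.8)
The plan is to compare the first round of \Al{exploration} with the $2$-edges of $\tcS_\cE$ through the root~$v$. Since $\tV_0=\{v\}$, a vertex $w\in M\setminus\{v\}$ is added to $\tV_1$ only if some $j\in\tS$ satisfies $\{v,w\}\subseteq\te_j\subseteq\{v,w\}$, i.e.\ $\te_j=\{v,w\}$; as every edge of $\tcS_\cE$ has size at least two this forces $j\in\tS_2$, and distinct discovered vertices use distinct such edges (the edge determines the vertex). Hence, writing $Y:=\big|\{j\in\tS_2:v\in\te_j\}\big|$, we have $|\tV_1|-1\le Y$ pointwise. Conditional on $\cE$ the edges $\te_j$, $j\in\tS_2$, are independent uniform $2$-subsets of~$M$ and $v$ is independent of them, so each contains $v$ with probability exactly $2/m$; thus $Y\sim\Bin(s_2,2/m)$, and the pointwise bound $|\tV_1|-1\le Y$ yields the stochastic domination.

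For the exponential moment, the main obstacle is that conditioning on $\{\tR_1>1\}$ may be conditioning on a very unlikely event (when $s_2/m$ is small), so the naive unconditional bound $\E[e^{|\tV_1|}]\le e\,\E[e^Y]=O(1)$ — which follows from $|\tV_1|\le1+Y$, \Ob{mgf}, and $\E[Y]=2s_2/m=O(1)$, the last from $\cK(z)$ and~\eqref{def:K} — does not suffice on its own. The resolution is to show that the `heavy' part of $e^{|\tV_1|}$ is supported on an event comparable in probability to $\{\tR_1>1\}$. First I would record the inclusions $\{|\tV_1|>1\}\subseteq\{Y\ge1\}\subseteq\{\tR_1>1\}$: the first because a discovered vertex comes from an edge of $\tS_2$ through~$v$, the second because $v\in\te_j$ together with $v\in\tD_\infty$ already forces $\tR_1\ge2$. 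These give the pointwise inequality
\[
 e^{|\tV_1|-1}\le1+e^Y\id_{\{Y\ge1\}}.
\]
Multiplying by $\id_{\{\tR_1>1\}}$, taking expectations, and using $\{Y\ge1\}\subseteq\{\tR_1>1\}$ reduces everything to proving $\E\big[e^Y\id_{\{Y\ge1\}}\big]=O\big(\Prb(\tR_1>1)\big)$, and since $\{Y\ge1\}\subseteq\{\tR_1>1\}$ it is in turn enough to establish $\E\big[e^Y\mid Y\ge1\big]=O(1)$.

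The last point is a routine binomial estimate. Writing $\mu:=\E[Y]=2s_2/m$, which satisfies $\mu\le e^{C_0}$ since $\cK(z)$ and~\eqref{def:K} give $4s_2\le2e^{C_0}m$, \Ob{mgf} together with $e^t-1\le te^t$ gives $\E[e^Y]-1\le(e-1)\mu\,e^{(e-1)\mu}$, while $1-e^{-t}\ge te^{-t}$ gives $\Prb(Y\ge1)=1-(1-2/m)^{s_2}\ge\mu e^{-\mu}$; dividing, and using $\E[e^Y\id_{\{Y\ge1\}}]=(\E[e^Y]-1)+\Prb(Y\ge1)$, yields $\E[e^Y\id_{\{Y\ge1\}}]\le\big((e-1)e^{e\mu}+1\big)\Prb(Y\ge1)$ with the constant depending only on~$C_0$. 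Combining the pieces gives $\E[e^{|\tV_1|-1}\id_{\{\tR_1>1\}}]=O(\Prb(\tR_1>1))$; dividing by $\Prb(\tR_1>1)$ and multiplying by~$e$ gives $\E[e^{|\tV_1|}\mid\tR_1>1]=O(1)$, as required. Beyond this, the only thing to watch is the bookkeeping ensuring the final constant is genuinely independent of $\Prb(\tR_1>1)$, which is exactly what the chain of inclusions buys us.
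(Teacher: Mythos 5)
Your proof is correct, and the overall strategy is the same for the first claim (count $2$-edges of $\tcS_\cE$ through~$v$ and note each discovered vertex of round~$1$ uses a distinct such edge). For the exponential moment your route differs slightly from the paper's, and is arguably more careful. The paper asserts that $\{\tR_1>1\}$ is \emph{equivalent} to $\{|\tV_1|>1\}$, conditions on at least one $2$-edge through $v$, and then dominates $|\tV_1|-2$ by a fresh $\Bin(s_2,2/m)$ random variable. Strictly speaking the claimed equivalence is imprecise: if $v$ lies in some edge of size at least~$3$ but in no $2$-edge, then $\tD_\infty=\{v\}$ so $|\tV_1|=1$, yet $\tR_1\ge 2$; the paper's bound still goes through, but the wording overstates the dichotomy. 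Your chain $\{|\tV_1|>1\}\subseteq\{Y\ge 1\}\subseteq\{\tR_1>1\}$ uses only the inclusion that is actually true, and your pointwise inequality $e^{|\tV_1|-1}\le 1+e^Y\id_{\{Y\ge 1\}}$ cleanly reduces everything to $\E[e^Y\mid Y\ge 1]=O(1)$, which you then settle by an explicit ratio estimate for binomial tails using \Ob{mgf} and the bound $\mu=2s_2/m\le e^{C_0}$ from $\cK(z)$. Net effect: same ingredients ($\Bin(s_2,2/m)$ domination, \Ob{mgf}, $s_2=O(m)$ from $\cK(z)$), but your decomposition makes explicit why conditioning on $\{\tR_1>1\}$, a potentially rare event, does not inflate the bound, whereas the paper handles this by a one-step ``consume the first success'' argument. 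Both are fine; yours is a bit more robust.
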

\begin{proof}
Only 2-edges can be included in $\tE_1$ as $j\in\tE_1$ implies $|\te_j\setminus\{v\}|=1$.
Each 2-edge is included precisely when it contains $v$ and is not identical to a previously
encountered 2-edge, and this occurs with probability at most $2/m$. Thus
$|\tV_1|$ is stochastically dominated by a $1+\Bin(s_2,2/m)$ random variable.
The condition that $\tR_1>1$ is precisely the condition that the exploration process
does not immediately die out, so is equivalent to the condition that $|\tV_1|>1$.
Now conditioning on $\tR_1>1$ is equivalent to conditioning on at least one
2-edge containing~$v$. But conditioned on that, $|\tV_1|-2$ is stochastically bounded
by a $\Bin(s_2,2/m)$ random variable, as there are at most $s_2$ remaining edges
to test, and each adds 1 to $|\tV_1|$ with probability at most $2/m$.
By \Ob{mgf}, $\E[e^{\Bin(s_2,2/m)}]\le \exp((e-1)2s_2/m)=O(1)$, so the second result
follows.
\end{proof}

We are ready to bound the contribution to $\tR_1$ of the edges used in \Al{exploration}. Let
\[
 \tW_\infty:=\sum_{w\in\tV_\infty}|\te_{\tj(w)}|=\sum_{j\in\tE_\infty}d_j
\]
denote the sum of the sizes of these edges; as these edges all lie inside~$\tV_\infty$, this is
precisely their contribution to $\tR_1$. The following lemma controls the size of $\tW_\infty$.

\begin{lemma}\label{lem:Winfty}
 There exists some\/ $\lambda>0$, depending only on\/~$\eps_1$, such that
 \[
  \E\big[e^{\lambda\tW_\infty}\mid\tR_1>1\big]=O(1)
 \]
\end{lemma}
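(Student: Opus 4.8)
The plan is to run the exploration process of \Al{exploration} and track an exponential potential that combines the total size $\tW^{(t)}:=\sum_{j\in\tE_t}d_j$ of the edges discovered by the end of step $t$ with the size $|\tA_t|$ of the active set. Since the first step only ever uses $2$-edges (as noted in the proof of \Ob{step1}), and since $\tj$ is injective on $\tV_t$ with $\tj(v)=z$, we have $\tW^{(1)}=1+2(|\tV_1|-1)=2|\tV_1|-1$, while $\tW^{(\tau)}=\sum_{j\in\tE_\infty}d_j=\tW_\infty$, where $\tau:=|\tV_\infty|=\min\{\tD,\lceil m^{1/2}\rceil\}$ is the (bounded) number of steps, by \Ob{alg:basicproperties}. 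For small constants $0<\lambda\le\eps_1$ and $\mu>0$, to be fixed below, set $\tilde Y_t:=\exp\bigl(\lambda\tW^{(t)}+\mu|\tA_t|\bigr)$ and show that $(\tilde Y_t)_{t\ge1}$ is a supermartingale for $(\tcF_t)_{t\ge1}$. Since the process ends with $|\tA_\tau|=0$ and $\tW^{(\tau)}=\tW_\infty$, the Optional Stopping Theorem (\Lm{OST}) then gives $\E\bigl[e^{\lambda\tW_\infty}\bigr]=\E[\tilde Y_\tau]\le\E[\tilde Y_1]$, and a short split over $\{\tR_1=1\}$ and $\{\tR_1>1\}$ reduces this to the claimed conditional bound.

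The heart of the argument is the drift estimate. Over step $t$ we have $\tW^{(t)}-\tW^{(t-1)}=\sum_w\tf(t,w)$ and $|\tA_t|-|\tA_{t-1}|=-1+\sum_w\tX_{t,w}$, both sums being over the (at most $m$) vertices $w$ tested in that step. Peeling these tests off one at a time along the refined filtration $\tcF_{t,<w}$ and using \Lm{prob:kedge}, one gets
\[
 \E\bigl[e^{\lambda\tf(t,w)+\mu\tX_{t,w}}\mid\tcF_{t,<w}\bigr]\le 1+\Bigl(\tfrac{2s_2}{m^2}+O(m^{-3/2})\Bigr)(e^{2\lambda+\mu}-1)+\sum_{k\ge3}\tfrac{2k^2s_k}{m^{(k+2)/2}}(e^{\lambda k+\mu}-1),
\]
and since $\cK(z)$ forces $\sum_k s_k=O(m)$ and $s_k=0$ for $k\ge4u_0$, while $u_0=o(\log m)$ by \eqref{eq:lowerbound:m:using:M} and \eqref{eq:ulogu}, the $k\ge3$ sum is $o(1/m)$. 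Multiplying the per-vertex bounds over the tested vertices and inserting the subcriticality hypothesis $2s_2\le(1-\eps_1)m$ gives $\E[\tilde Y_t\mid\tcF_{t-1}]\le\tilde Y_{t-1}\exp\bigl(-\mu+(1-\eps_1)(e^{2\lambda+\mu}-1)+o(1)\bigr)$. Taking $\mu=\Theta(\lambda/\eps_1)$ (for instance $\mu=\tfrac{3(1-\eps_1)}{\eps_1}\lambda$) makes $-\mu+(1-\eps_1)(e^{2\lambda+\mu}-1)$ strictly negative with a margin of order $\eps_1\lambda$, so for $\lambda=\Theta(\eps_1^2)$ chosen small enough (depending only on $\eps_1$) and $x$ large the supermartingale property holds; note also $2\lambda+\mu\le1$ for such $\lambda$.

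To finish, Optional Stopping gives $\E[e^{\lambda\tW_\infty}]\le\E[\tilde Y_1]$. On $\{\tR_1=1\}$ the exploration dies at once, so $\tW_\infty=\tW^{(1)}=1$ and $|\tA_1|=0$, whence $e^{\lambda\tW_\infty}=\tilde Y_1=e^\lambda$ there; subtracting these equal contributions yields $\E[e^{\lambda\tW_\infty}\id_{\{\tR_1>1\}}]\le\E[\tilde Y_1\id_{\{\tR_1>1\}}]$. On $\{\tR_1>1\}$, $\tW^{(1)}=2|\tV_1|-1$ and $|\tA_1|=|\tV_1|-1$ (by \Ob{alg:basicproperties}), so $\lambda\tW^{(1)}+\mu|\tA_1|\le(2\lambda+\mu)|\tV_1|\le|\tV_1|$, and dividing by $\Prb(\tR_1>1)$ gives $\E[e^{\lambda\tW_\infty}\mid\tR_1>1]\le\E[e^{|\tV_1|}\mid\tR_1>1]=O(1)$ by \Ob{step1}, as required.

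The main obstacle is the drift balance in the second paragraph: the $+\mu|\tA_t|$ term is essential (it is what makes the stopped value $\tilde Y_\tau$ equal to $e^{\lambda\tW_\infty}$), but it injects a positive drift roughly equal to the per-step number of fresh $2$-edges, which must be dominated by the $-\mu$ coming from removing one active vertex each step — and this works precisely because of the subcriticality hypothesis $2s_2\le(1-\eps_1)m$, with the available margin forcing $\lambda$ as small as $\Theta(\eps_1^2)$. A secondary point is that the conditioning on $\{\tR_1>1\}$ cannot be absorbed into an unconditional estimate (that event may have probability $o(1)$), which is why the first step is peeled off and controlled via \Ob{step1}; the cap at $\lceil m^{1/2}\rceil$ visited vertices is harmless, since once it is reached $\tW^{(t)}$ is frozen and $|\tA_t|$ only decreases.
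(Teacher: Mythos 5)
Your argument is correct, and it takes a genuinely different route from the paper's. The paper splits the work into two separate supermartingales and then recombines: Claim~1 shows $\Prb\big(|\tV_\infty|\ge t\mid\tR_1>1\big)=O\big(e^{-\eps_1^2 t/2}\big)$ via the supermartingale $e^{\eps_1|\tV_t|}$, Claim~2 shows $e^{\lambda\tW_t-2\lambda t}$ is a supermartingale, and the two are then glued together with the convexity bound $e^{\lambda\tW_\infty/2}\le\tfrac12\big(e^{\lambda\tW_\infty-2\lambda|\tV_\infty|}+e^{2\lambda|\tV_\infty|}\big)$. You instead run a \emph{single} potential $e^{\lambda\tW_t+\mu|\tA_t|}$: the compensator $\mu|\tA_t|$ does the job of the paper's $-2\lambda t$, but --- because $|\tA_\tau|=0$ at the hitting time --- it disappears exactly at stopping, so optional stopping directly yields $\E\big[e^{\lambda\tW_\infty}\big]\le\E\big[\tilde Y_1\big]$ with no residual term to control. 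This removes the need for Claim~1 and the AM--GM step entirely, at the modest price of a slightly more constrained parameter pair $(\lambda,\mu)$ (one now has to balance the active-set drift $(1-\eps_1)(e^{2\lambda+\mu}-1)$ against the $-\mu$ deletion rather than simply against $-2\lambda$). Your per-vertex drift estimate via \Lm{prob:kedge} and the iterated expectation along $\tcF_{t,<w}$ is exactly the mechanism the paper uses in its own Claims~1 and~2 (cf.\ \Lm{exp:Xtw}), and the peeling-off of the $\{\tR_1=1\}$ contribution followed by the appeal to \Ob{step1} matches the paper's treatment of the conditioning; the only small inaccuracy is your side remark that the drift margin is ``of order $\eps_1\lambda$'' --- with your parametrisation $\mu=3(1-\eps_1)\lambda/\eps_1$ the leading margin is in fact $\Theta(\lambda)$, though this does not affect the conclusion that $\lambda=\Theta(\eps_1^2)$ suffices. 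Net effect: your version is a little more self-contained, while the paper's gives the side benefit of an explicit tail bound on $|\tV_\infty|$ conditional on $\tR_1>1$, which it happens not to reuse.
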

\begin{proof}
Note that the condition $\tR_1>1$ is equivalent to $\tW_\infty>0$, and
is $\tcF_1$-measurable, as one discovers whether or not $\tR_1>1$ in the
first round of \Al{exploration}.
We will first need to control the large deviations of $|\tV_\infty|$.

\claim{Claim 1:} For every $t\ge 1$,
\[
 \Prb\big(|\tV_\infty|\ge t\mid\tR_1>1\big)=O\big(e^{-\eps_1^2 t/2}\big).
\]
\begin{proof}[Proof of Claim~1]
Since $|\tV_t|=|\tA_t|+t\ge t$ for $t\le|\tV_\infty|$, we have
\begin{equation}\label{eq:vinfbound}
 \Prb\big(|\tV_\infty|\ge t\mid\tR_1>1\big)=\Prb\big(|\tV_t|\ge t\mid\tR_1>1\big)
 \le e^{-\eps_1 t}\,\E\big[e^{\eps_1|\tV_t|}\mid\tR_1>1\big].
\end{equation}
We shall bound $\E\big[e^{\eps_1|\tV_t|}\mid\tR_1>1\big]$ using \Lm{exp:Xtw} and the law of iterated
expectations. Indeed, setting $\tX_t=\sum_w\tX_{t,w}$, so that $\tX_t=|\tV_t\setminus\tV_{t-1}|$,
we have
\begin{equation}\label{eq:iteratedexpectation:V}
 \E\big[e^{\eps_1|\tV_t|}\mid\tR_1>1\big]=\E\big[e^{\eps_1 (1+\tX_1+\dots+\tX_t)}\mid\tR_1>1\big]
 =\E\Big[e^{\eps_1|\tV_{t-1}|}\cdot\E\big[e^{\eps_1\tX_t}\mid\tcF_{t-1}\big]\bmid\tR_1>1\Big]
\end{equation}
and similarly
\begin{align}
 \E\big[e^{\eps_1\tX_t}\mid\tcF_{t-1}\big]
 &=\E\big[e^{\eps_1 (\tX_{t,w_1}+\dots+\tX_{t,w_r})}\mid\tcF_{t-1}\big]\nonumber\\
 &=\E\Big[e^{\eps_1 (\tX_{t,w_1}+\dots+\tX_{t,w_{r-1}})}\cdot
 \E\big[e^{\eps_1\tX_{t,w_r}}\mid\tcF_{t,<w_r}\big]\mid\tcF_{t-1}\Big].\label{eq:iteratedexpectation}
\end{align}
Now, applying \Lm{exp:Xtw} with $\lambda=\eps_1$, we have
\begin{equation}\label{eq:Bernoulli:inequality}
 \E\big[e^{\eps_1\tX_{t,w}}\mid\tcF_{t,<w}\big]\le\exp\bigg(\frac{\eps_1-\eps_1^2/2}{m}\bigg)
\end{equation}
for every $t\ge 1$ and $w\in M\setminus\tV_{t-1}$. Combining this
with~\eqref{eq:iteratedexpectation}, and iterating the procedure, we obtain
\begin{align*}
 \E\big[e^{\eps_1\tX_t}\mid\tcF_{t-1}\big]
 &\le\exp\bigg(\frac{\eps_1-\eps_1^2/2}{m}\bigg)
 \E\big[e^{\eps_1 (\tX_{t,w_1}+\dots+\tX_{t,w_{r-1}})}\mid\tcF_{t-1}\big]\\
 &\le\dots\le\exp\bigg(\frac{(\eps_1-\eps_1^2/2)r}{m}\bigg)
 \le\exp\big(\eps_1-\eps_1^2/2\big),
\end{align*}
since $r=|M\setminus\tV_{t-1}|\le m$. Hence, using~\eqref{eq:iteratedexpectation:V},
and iterating again, we have
\begin{align*}\label{eq:iteratedexpectation:E}
\E\big[e^{\eps_1|\tV_t|}\mid\tR_1>1\big]
& \le e^{\eps_1-\eps_1^2/2}\E\big[e^{\eps_1|\tV_{t-1}|}\mid\tR_1>1\big]\\
 & \le\dots\le e^{(\eps_1-\eps_1^2/2)(t-1)}\E\big[e^{\eps_1|\tV_1|}\mid\tR_1>1\big].
\end{align*}
Thus by \Ob{step1},
\[
 \E\big[e^{\eps_1|\tV_t|}\mid\tR_1>1\big]=O\big(e^{(\eps_1-\eps_1^2/2)t}\big).
\]
Finally, it follows from~\eqref{eq:vinfbound} that
\[
 \Prb\big(|\tV_\infty|\ge t\mid\tR_1>1\big)
 \le e^{-\eps_1 t}\cdot\E\big[e^{\eps_1|\tV_t|}\mid\tR_1>1\big]
 =O\big(e^{-\eps_1^2 t/2}\big)
\]
as claimed.
\end{proof}

We shall next use a similar argument to control
\[
 \tW_t:=\sum_{w\in\tV_t}|\te_{\tj(w)}|=\sum_{j\in\tE_t}d_j,
\]
the sum of the sizes of the edges used in the first $t$ iterations of \Al{exploration}.

\claim{Claim 2:} If $0<\lambda\le\eps_1$, then
\[
 e^{\lambda\tW_t-2\lambda t}
\]
is a super-martingale with respect to the filtration $(\cF_t)_{t \ge 0}$.

\begin{proof}[Proof of Claim~2]
The proof is similar to that of Claim~1, except the bound~\eqref{eq:Bernoulli:inequality} is
replaced by
\[
 \E\big[e^{\lambda\tf(t,w)}\mid\tcF_{t,<w}\big]\le\exp\bigg(\frac{2\lambda}{m}\bigg),
\]
which also follows from \Lm{exp:Xtw}. Indeed,
\begin{align*}\label{eq:iteratedexpectation:E}
 \E\big[e^{\lambda (\tW_t-\tW_{t-1})}\mid\tcF_{t-1}\big]
 &=\E\Big[e^{\lambda (\tf(t,w_1)+\dots+\tf(t,w_{r-1}))}
 \cdot\E\big[e^{\lambda\tf(t,w_r)}\mid\tcF_{t,<w_r}\big]\mid\tcF_{t-1}\Big]\\
  & \le \exp\bigg(\frac{2\lambda}{m}\bigg) \E\Big[e^{\lambda (\tf(t,w_1)+\dots+\tf(t,w_{r-1}))} \mid\tcF_{t-1}\Big]\\
 &\le\dots\le\exp\bigg(\frac{2\lambda r}{m}\bigg)\le e^{2\lambda},
\end{align*}
since $r\le m$. Since $\tW_{t-1}$ is $\tcF_{t-1}$-measurable, it follows immediately that
\[
 \E\big[e^{\lambda\tW_t-2\lambda t}\mid\tcF_{t-1}\big] 
 \le e^{\lambda\tW_{t-1}-2\lambda(t-1)},
\]
as required.
\end{proof}

We are now ready to bound the expectation of $e^{\lambda\tW_\infty}$. Observe first that
\[
 \E\big[e^{\lambda\tW_\infty/2}\mid\tR_1>1\big]
 \le\frac{1}{2} \Big( \E\big[e^{\lambda\tW_\infty-2\lambda|\tV_\infty|}\mid\tR_1>1\big]+
 \E\big[e^{2\lambda|\tV_\infty|}\mid\tR_1>1\big] \Big),
\]
by the convexity of $e^x$. Now, if $\lambda<\eps_1^2/4$ then
$$\E\big[e^{2\lambda|\tV_\infty|}\mid\tR_1>1]
\le\sum_{t = 0}^\infty e^{2\lambda t}\Prb(|\tV_\infty|\ge t\mid\tR_1>1)=O(1)$$
by Claim~1. Moreover, since the event $\tR_1>1$ is $\tcF_1$-measurable, it follows from Claim~2 by the optional stopping theorem that
\[
 \E\big[e^{\lambda\tW_\infty-2\lambda|\tV_\infty|}\mid\tR_1>1\big]
 \le\E\big[e^{\lambda\tW_1}\mid\tR_1>1\big]=O(1)
\]
for every $\lambda < 1/2$. Indeed, since $\tW_1=2(|\tV_1|-1)$ is twice the number of 2-edges used in the first round
of \Al{exploration}, the last equality follows by \Ob{step1}. Thus
\[
 \E\big[e^{\lambda \tW_\infty}\mid\tR_1>1\big]=O(1)
\]
for every $\lambda < \eps_1^2/8$, as required.
\end{proof}

We shall next use \Rk{filtration} to control the distribution of the number of remaining edges
that nonetheless intersect~$\tV_\infty$. For each $k\ge 2$, define
\[
 \tR(k):=\big|\big\{j\in\tS_k\setminus\tE_\infty:\te_j\cap\tV_\infty\ne\emptyset\big\}\big|
\]
to be the number of edges of $\tcS_\cE$ of size $k$ that have at least one vertex removed but are
not used in \Al{exploration}. For each $k\ge 2$, define binomial random variables
\[
 Z(k)\sim\Bin\bigg(s_k,\frac{k|\tV_\infty|}{m-k}\bigg).
\]

\begin{lemma}\label{lem:R:binomial}
 The random variables\/ $\tR(k)$ are conditionally independent given\/ $\tcF_\infty$ and,
 conditioned on\/ $\tcF_\infty$, are stochastically dominated by\/ $Z(k)$ for each\/~$k\ge2$.
\end{lemma}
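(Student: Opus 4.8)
The plan is to condition on the full history $\tcF_\infty$ of \Al{exploration} and read off the conditional law of the edges that the algorithm never uses. The starting point is \Rk{filtration}: every step of the algorithm queries a single edge $\te_j$ at a time, asking ``does $\te_j$ satisfy $\{u,w_\ell\}\subseteq\te_j\subseteq\tV_{t-1}\cup\{w_\ell\}$?'' for a pair $(u,w_\ell)$ fixed by the information revealed so far, and for an index $j\notin\tE_\infty$ every such query necessarily returned ``no'' (a ``yes'' would have caused $j$ to be used). Since the edges $\{\te_j:j\in[z+1,\pi(x)],\,d_j>0\}$ are independent in the model $\tcS_\cE$, conditioning on an adaptively chosen sequence of single-edge queries preserves their independence; I would prove this by a short induction on the number of queries, using the product structure of the law of $(\te_j)_j$. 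Hence, conditioned on $\tcF_\infty$, the edges $\{\te_j:j\in\tS\setminus\tE_\infty\}$ are independent, with $\te_j$ (for $j\in\tS_k\setminus\tE_\infty$) uniform on the family $\cS_j$ of $k$-subsets of $M$ that avoid all configurations ruled out by the queries against~$j$. As $\tR(k)$ is a function of the $\tcF_\infty$-measurable set $\tV_\infty$ together with $\{\te_j:j\in\tS_k\setminus\tE_\infty\}$, and these edge-families are disjoint across~$k$, the conditional independence of $(\tR(k))_{k\ge2}$ is then immediate.

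For the domination I would fix $k$, write $\tR(k)=\sum_{j\in\tS_k\setminus\tE_\infty}\id_{\{\te_j\cap\tV_\infty\ne\emptyset\}}$, and bound each summand's conditional success probability by $k|\tV_\infty|/(m-k)$. The key observation is that every configuration ruled out for~$j$ has the form $\{e:\{u,w\}\subseteq e\subseteq\tV_{t-1}\cup\{w\}\}$ with $u\in\tA_{t-1}\subseteq\tV_{t-1}\subseteq\tV_\infty$, so every excluded set meets $\tV_\infty$; equivalently, $\cS_j$ contains \emph{every} $k$-subset of~$M$ disjoint from~$\tV_\infty$. Setting $b:=\binom{m-|\tV_\infty|}{k}=|\{e\in\cS_j:e\cap\tV_\infty=\emptyset\}|$ and $a:=|\{e\in\cS_j:e\cap\tV_\infty\ne\emptyset\}|\le\binom{m}{k}-b$, and using that $t\mapsto t/(t+b)$ is increasing, I get
\[
 \Prb\big(\te_j\cap\tV_\infty\ne\emptyset\mid\tcF_\infty\big)=\frac{a}{a+b}
 \le 1-\frac{\binom{m-|\tV_\infty|}{k}}{\binom{m}{k}}
 \le 1-\Big(1-\tfrac{|\tV_\infty|}{m-k}\Big)^k\le\frac{k|\tV_\infty|}{m-k},
\]
where the second inequality uses $\frac{m-|\tV_\infty|-i}{m-i}\ge\frac{m-|\tV_\infty|-k}{m-k}$ for $0\le i<k$ together with $m-k>|\tV_\infty|$ — which holds since $|\tV_\infty|\le\lceil m^{1/2}\rceil$ while $\cK(z)$ forces $k<4u_0=o(m^{1/2})$ and $m\ge z_0^{1+o(1)}$ — and the last is Bernoulli's inequality. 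A routine coupling then shows that a conditional sum of at most $s_k$ independent Bernoullis each of parameter at most $k|\tV_\infty|/(m-k)$ is stochastically dominated by $\Bin(s_k,k|\tV_\infty|/(m-k))=Z(k)$; for $k\ge4u_0$ both $\tR(k)$ and $Z(k)$ vanish, by $\cK(z)$, so this covers all $k\ge2$.

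The only delicate point is the first step — that conditioning on the entire exploration history $\tcF_\infty$ leaves the unused edges mutually independent with the purely ``negative'' supports $\cS_j$ described above. Everything hinges on \Rk{filtration}, namely that $\tcF_\infty$ is generated by an adaptive sequence of single-edge membership queries; granted that, the independence and the description of $\cS_j$ follow by induction on the queries, and the remainder is elementary binomial estimation together with a standard stochastic-domination coupling.
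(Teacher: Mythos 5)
Your proposal is correct and takes essentially the same route as the paper: conditional on $\tcF_\infty$, the unused edges remain independent with $\tcF_\infty$-measurable supports $\cS_j$ that contain all $k$-subsets of $M\setminus\tV_\infty$ (Remark~\ref{rem:filtration}), so the probability that $\te_j$ meets $\tV_\infty$ is at most that of a uniform $k$-subset of $M$ meeting $\tV_\infty$, and the disjointness of the index sets $\tS_k\setminus\tE_\infty$ gives conditional independence across $k$. The only difference is cosmetic: you bound the uniform probability as $1-\binom{m-|\tV_\infty|}{k}/\binom{m}{k}$ and pass through Bernoulli's inequality, whereas the paper uses the cruder overcount $|\tV_\infty|\binom{m}{k-1}/\binom{m}{k}$; both give $k|\tV_\infty|/(m-k)$.
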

\begin{proof}
Since $\cK(z)$ holds, we have $s_k=0$ for all $k>4u_0$, so we may assume that $k\le 4u_0$.
Run the algorithm to reveal $\tV_\infty$ and $\tE_\infty$. By \Rk{filtration}, we have not
revealed any edge of $\tS_k\setminus\tE_\infty$, though we have gained some `negative' information
about the events $\{\te_j\cap\tV_\infty\ne\emptyset\}$. To be precise, conditioned on the
information we have observed during the process (i.e.,~$\tcF_\infty$), the edges $\te_j$,
$j\in\tS_k\setminus\tE_\infty$, are each chosen independently and uniformly from the collection
of sets of size $k$ that would not have resulted in $j$ being picked as some~$\tj(w)$. The crucial
observation in this case is that this collection is $\tcF_\infty$-measurable, and
includes all $k$-subsets of $M\setminus\tV_\infty$,
cf.\ the proof of \Lm{prob:kedge}. Thus, the (conditional) probability that $\te_j$ meets
$\tV_\infty$ is at most the probability that a uniformly chosen $k$-set of $M$
meets $\tV_\infty$.

The events $\{\te_j\cap\tV_\infty\ne\emptyset\}$,  $j\in\tS_k\setminus\tE_\infty$, are therefore
conditionally independent given $\tcF_\infty$, and each has (conditional) probability at most
\[
 \frac{|\tV_\infty|\binom{m}{k-1}}{\binom{m}{k}}=\frac{k|\tV_\infty|}{m-k}.
\]

As $|\tS_k\setminus\tE_\infty|\le s_k$, $\tR(k)$ is stochastically dominated by~$Z(k)$.
The $\tR(k)$ are conditionally independent given $\tcF_\infty$ as they depend on disjoint
sets $\tS_k\setminus\tE_\infty$ of random edges that are themselves conditionally
independent given~$\tcF_\infty$.
\end{proof}

We are finally ready to prove the key lemma of this section.

\begin{proof}[Proof of \Lm{branching:main}]
Recall that
\[
 \tW_\infty:=\sum_{w\in\tV_\infty}|\te_{\tj(w)}|=\sum_{j\in\tE_\infty}d_j
\]
is the sum of the sizes of the edges used in \Al{exploration}. Observe that
\[
 \min\big\{\tR_1,m^{1/2}\big\}\le\tW_\infty+\sum_{k=2}^\infty k\cdot\tR(k),
\]
since each edge counted by $\tR(k)$ can contribute at most $k$ to~$\tR_1$, so
this holds when $|\tV_\infty|<m^{1/2}$, and $\tW_\infty\ge |\tV_\infty|\ge m^{1/2}$
otherwise.
Recall that $\tW_\infty$ is $\tcF_\infty$-measurable, and that $\cK(z)$ implies that $\tR(k)=0$
for all $k\ge 4u_0$. Given~$\tcF_\infty$, the random variables $\tR(k)$ are conditionally independent,
so \Lm{R:binomial} implies that 
\begin{align}
 \E\big[e^{\lambda\min\{\tR_1,m^{1/2}\}}\mid\tR_1>1\big]
 &\le\E\Big[\E\big[e^{\lambda\tW_\infty+\sum_{k\ge2}\lambda k\tR(k)}\mid\tcF_\infty\big]\bmid\tR_1>1\Big]\notag\\
 &=\E\bigg[e^{\lambda\tW_\infty}
 \prod_{k=2}^{4u_0}\E\big[e^{\lambda k\tR(k)}\mid\tcF_\infty\big]\Bmid\tR_1>1\bigg].\label{eq:expR1}
\end{align}
Moreover, it follows from \Lm{R:binomial} and \Ob{mgf} that
\[
 \E\big[e^{\lambda k\tR(k)}\mid\tcF_\infty\big]\le\E\big[e^{\lambda kZ(k)}\mid\tcF_\infty\big]
 \le\exp\bigg(\big(e^{\lambda k}-1\big)\frac{k|\tV_\infty|}{m-k}\cdot s_k\bigg),
\]
since $Z(k)$ is a sum of $s_k$ independent Bernoulli random variables, each of expectation
$k|\tV_\infty|/(m-k)$. Since $e^w-1=(1-e^{-w})e^w\le we^w$ for all $w\ge0$, and $\sum_{k\ge 2}2^k s_k=O(m)$ (since $\cK(z)$ holds), it follows that 
\[
 \sum_{k=2}^{4u_0}\big(e^{\lambda k}-1\big)\frac{k|\tV_\infty|}{m-k}\cdot s_k
 \le\frac{|\tV_\infty|}{m-4u_0}
 \sum_{k=2}^{4u_0}\lambda k^2 e^{\lambda k}s_k=O(\lambda|\tV_\infty|)
\]
for $\lambda\le\eps_1<\log 2$. Thus, recalling that $\tW_\infty\ge |\tV_\infty|$, we have
\begin{equation}\label{eq:expW:calc:other}
 \E\big[e^{\lambda\min\{\tR_1,m^{1/2}\}}\mid\tR_1>1\big]
 \le\E\big[e^{\lambda\tW_\infty+O(\lambda |\tV_\infty|)}\mid\tR_1>1\big]
 =\E\big[e^{O(\lambda)\tW_\infty}\mid\tR_1>1\big]
\end{equation}
for all $0<\lambda\le\eps_1$. Hence, by \Lm{Winfty}, it follows that
\[
 \E\big[e^{\lambda\min\{\tR_1,m^{1/2}\}}\mid\tR_1>1\big]
 \le \E\big[e^{O(\lambda)\tW_\infty}\mid\tR_1>1\big]=O(1),
\]
for sufficiently small $\lambda>0$. Now, by \Ob{step1}, $\Prb(\tR_1>1) = \Prb( |\tV_1| > 1) \le 2s_2/m$. Thus
\[
 \Prb(\tR_1\ge t)\le e^{-\lambda t}\,
 \E\big[e^{\lambda\min\{\tR_1,m^{1/2}\}}\mid\tR_1>1\big]\,\Prb\big(\tR_1>1\big)
 =\frac{O(s_2)}{m}e^{-\lambda t}
\]
for all sufficiently small $\lambda>0$ and all $2\le t\le m^{1/2}$, as required.

For the second part, observe first that, as noted in~\eqref{eq:branching:main:b:first:step}, we have 
\begin{align*}
\E\big[\Delta_k^{(1)}\id_{\{t\le\tR_1<m^{1/2}\}}\big]
& \le e^{-\lambda t} \cdot \E\big[\Delta_k^{(1)}e^{\lambda\tR_1}\id_{\{2\le\tR_1<m^{1/2}\}}\big]\\
& \le e^{-\lambda t} \cdot \E\big[\Delta_k^{(1)}e^{\lambda\tR_1}\id_{\{\tR_1<m^{1/2}\}}\mid\tR_1>1\big].
\end{align*}
Recall that  $\tDelta_k^{(1)} = |\{ j\in\tS_k : |\te_j\cap\tD_\infty|=1 \}|$, and note that therefore
$$\E\Big[\Delta_k^{(1)}e^{\lambda\tR_1}\id_{\{\tR_1<m^{1/2}\}}\mid\tR_1>1\Big] \le \sum_{j \in \tS_k} \E\Big[ \id_{\{|\te_j\cap\tV_\infty|=1\}}e^{\lambda\min\{\tR_1,m^{1/2}\}}\bmid\tR_1>1 \Big],$$
since $\tV_\infty=\tD_\infty$ when $\tR_1 < m^{1/2}$. Now, repeating the argument of~\eqref{eq:expR1}--\eqref{eq:expW:calc:other}, we obtain
$$\E\Big[ \id_{\{|\te_j\cap\tV_\infty|=1\}}e^{\lambda\min\{\tR_1,m^{1/2}\}}\bmid\tR_1>1 \Big] \le \E\Big[\id_{\{|\te_j\cap\tV_\infty|=1\}}e^{\lambda+O(\lambda)\tW_\infty}\bmid\tR_1>1\Big]$$
for every $j \in \tS_k$ and $0 < \lambda \le \eps_1$, since $|\te_j \cap\tV_\infty|=1$ and $k \ge 3$ imply that $j \not\in \tE_\infty$ (and that $\te_j$ contributes exactly one to~$\tR_1$), and the edges not used in the algorithm are conditionally independent given $\tcF_\infty$. Combining the above inequalities, we obtain
\begin{align*}
\E\big[\Delta_k^{(1)}\id_{\{t\le\tR_1<m^{1/2}\}}\big]
& \le e^{-\lambda t} \sum_{j \in \tS_k} \E\Big[\id_{\{|\te_j\cap\tV_\infty|=1\}}e^{\lambda+O(\lambda)\tW_\infty}\bmid\tR_1>1\Big]\\
& \le e^{-\lambda t} \sum_{j \in \tS_k} \E\bigg[\frac{k|\tV_\infty|}{m-k} e^{\lambda+O(\lambda)\tW_\infty}\Bmid\tR_1>1\bigg], 
\end{align*}
cf. the proof of Lemma~\ref{lem:R:binomial}. Since $|\tS_k| = s_k$ and $|\tV_\infty|\le\tW_\infty=O(e^{\lambda\tW_\infty})$, it follows by \Lm{Winfty} that
$$\E\big[\Delta_k^{(1)}\id_{\{t\le\tR_1<m^{1/2}\}}\big] \le \frac{O(k s_k)}{m} e^{-\lambda t} \cdot \E\big[e^{O(\lambda)\tW_\infty}\mid\tR_1>1\big] = \frac{O(ks_k)}{m} e^{-\lambda t}$$
for all sufficiently small $\lambda > 0$, as required. 
\end{proof}

Having done the hard part, it is now relatively straightforward to deduce \Th{branching}, using
\Th{compare}. The next step is to use Lemmas \ref{lem:branching:main}, \ref{lem:prob:kedge}
and~\ref{lem:R:binomial} to deduce the following estimates for the other quantities introduced
in \Df{tilde:variables}.

\begin{lemma}\label{lem:branching:rest}
 Suppose that\/ $\cE$ is such that\/ $\cK(z)$ holds, $d(z)=1$, and\/
 $2s_2\le (1-\eps_1)m$. Then, in the independent random hypergraph model,
 \begin{itemize}
  \item[$(a)$] $\ds\E\big[\tD\big]=\bigg(1-\frac{2s_2}{m}+o(1)\bigg)^{-1}$,
   \item[$(b)$] $\ds\E\big[\tDelta_k\big]=\bigg(1-\frac{2s_2}{m}+o(1)\bigg)^{-1}\frac{k s_k}{m} + O\big( m^{-1/2} \big)$
   for all\/ $k\ge 2$,  
   \item[$(c)$] $\E\big[\tD^2\big]=O(1)$,
  \item[$(d)$] $\E\big[\tDelta'\big]=O(m^{-1/2})$,
 \end{itemize}
 where the bounds implicit in the\/ $o(\cdot)$ and\/ $O(\cdot)$ notation are uniform in\/ $k$ and\/~$z$.
\end{lemma}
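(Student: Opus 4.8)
The plan is to prove the four estimates by analysing the exploration process of \Al{exploration}, in the order (c), (a), (d), (b), since each part relies on the earlier ones. Throughout I would write $m=|M|$, $s_k=|\tS_k|$, and use that $\cK(z)$ together with~\eqref{eq:lowerbound:m:using:M} forces $m\ge z_0^{1+o(1)}\to\infty$, $\sum_{k\ge2}k^2s_k=O(m)$ (with $s_k=0$ for $k>4u_0$) and $u_0=o(\log m)$; every error term will be a function of $m$ alone, hence uniform in $k$ and $z$. First, two reductions. Since each non-root vertex of $\tD_\infty$ is infected by a distinct edge of size at least two, $\tR_1\ge 2\tD-1\ge\tD$, so \Lm{branching:main} gives $\Prb(\tD\ge t)\le\Prb(\tR_1\ge t)=O(e^{-\lambda t})$ for $2\le t\le m^{1/2}$ (using $s_2=O(m)$); with the trivial bound $\tD\le\tR_1=O(m)$, summation by parts yields $\E[\tD^2]\le\E[\tR_1^2]=O(1)$, which is part~(c). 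The same tail bound shows the event $\mathcal N:=\{\tD<m^{1/4}\}$ holds with probability $1-O(e^{-\lambda m^{1/4}})$; on $\mathcal N$ the truncation in \Al{exploration} never fires, so by \Ob{alg:basicproperties} one has $|\tV_\infty|=\tD$ and $\tD_\infty=\tV_\infty$, whereas on $\mathcal N^c$ every quantity of \Df{tilde:variables} is $O(m)$ and so contributes only a super-polynomially small amount (i.e.\ $o(m^{-C})$ for every $C$) to each expectation below.

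For part~(a) I would view $(|\tA_t|)_{t\ge0}$ as a random walk started at $|\tA_0|=1$ with increments $|\tA_t|-|\tA_{t-1}|=\tX_t-1$, $\tX_t=|\tV_t\setminus\tV_{t-1}|$, absorbed at $0$ at the stopping time $\tau:=|\tV_\infty|\le\lceil m^{1/2}\rceil$ (by \Ob{alg:basicproperties}). Iterating \Lm{prob:kedge} over the vertices tested in one step, and using \Lm{exp:Xtw} (via Markov's inequality on $e^{\eps_1\tX_t}$) to see that, conditional on $\{|\tV_{t-1}|<m^{1/4}\}$, truncation inside step $t$ has super-polynomially small probability, a short computation gives
\[
 \E\big[\tX_t\mid\tcF_{t-1}\big]=\Big(\frac{2s_2}{m^2}+O(m^{-3/2})\Big)\big(m-|\tV_{t-1}|\big)+O(m^{-1/2})=\frac{2s_2}{m}+O(m^{-1/2})
\]
on the $\tcF_{t-1}$-measurable event $\{|\tV_{t-1}|<m^{1/4}\}$, using $s_2=O(m)$. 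Setting $\sigma:=\tau\wedge\min\{t:|\tV_t|\ge m^{1/4}\}$ (a stopping time bounded by $\lceil m^{1/2}\rceil$) and taking $C$ large, the processes $|\tA_{t\wedge\sigma}|-\big(1-\frac{2s_2}{m}\mp Cm^{-1/2}\big)(t\wedge\sigma)$ are respectively a super- and a sub-martingale, so the optional stopping theorem, together with $|\tA_\sigma|=0$ on $\mathcal N$ and $\E[|\tA_\sigma|\id_{\mathcal N^c}]$ super-polynomially small, yields $\big(1-\frac{2s_2}{m}\big)\E[\sigma]=1+O(m^{-1/2})$. As $1-\frac{2s_2}{m}\ge\eps_1$, this rearranges to $\E[\sigma]=\big(1-\frac{2s_2}{m}\big)^{-1}+O(m^{-1/2})$, and $\E[\tD]=\E[\sigma]+o(1)$ since $\tau=\sigma$ on $\mathcal N$; this is part~(a), in fact with error $O(m^{-1/2})$, which I shall use below.

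For part~(d): an edge of size at least three contributing to $\tDelta'$ is either used in \Al{exploration} --- and by \Lm{prob:kedge}, since $\sum_{k\ge3}\frac{2k^2s_k}{m^{(k+2)/2}}=O(m^{-3/2})$ while at most $m$ vertices are tested in each of at most $|\tV_\infty|$ steps, the expected number of such edges is $O(m^{-1/2})\,\E[|\tV_\infty|]=O(m^{-1/2})$ by part~(a) --- or else unused but meeting $\tV_\infty$ in at least two vertices, in which case the lazy-revelation argument of \Lm{R:binomial} bounds the conditional expected number by $O\big(\sum_{k\ge3}k^2s_k|\tV_\infty|^2/m^2\big)=O(|\tV_\infty|^2/m)$, with expectation $O(\E[\tD^2]/m)=O(1/m)$ by part~(c). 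For part~(b) I would work on $\mathcal N$ and decompose $\tDelta_k=U_k+\tR(k)$, where $U_k:=|\tS_k\cap\tE_\infty|$ counts used $k$-edges and $\tR(k)$ is the quantity from \Lm{R:binomial}. Since each non-root infected vertex is triggered by a unique edge, $\sum_{k\ge2}U_k=\tD-1$, so with the bound $\E\big[\sum_{k\ge3}U_k\big]=O(m^{-1/2})$ just obtained, $\E[U_2]=\E[\tD]-1-O(m^{-1/2})$ and $\E[U_k]=O(m^{-1/2})$ for $k\ge3$. Refining \Lm{R:binomial}, a lazily-revealed unused $k$-edge meets $\tV_\infty$ with conditional probability $\frac{k|\tV_\infty|}{m}\big(1+O(k|\tV_\infty|/m)\big)$ up to a super-polynomially small correction, so for $k\ge3$, using parts~(a) and~(c),
\[
 \E\big[\tR(k)\big]=\frac{ks_k}{m}\E\big[|\tV_\infty|\big]+O\Big(\frac{k^2s_k}{m^2}\E\big[|\tV_\infty|^2\big]\Big)+o(m^{-1/2})=\Big(1-\frac{2s_2}{m}+o(1)\Big)^{-1}\frac{ks_k}{m}+O(m^{-1/2}),
\]
which with $\E[U_k]=O(m^{-1/2})$ gives part~(b) for $k\ge3$. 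For $k=2$, on $\mathcal N$ a $2$-edge meeting $\tD_\infty$ must lie inside it (its missing endpoint would be pulled in), so $\tR(2)$ counts unused $2$-edges contained in $\tV_\infty$, of which there are $O(|\tV_\infty|^2/m)$ in conditional expectation, whence $\E[\tR(2)]=O(1/m)$ by part~(c); hence $\E[\tDelta_2]=\E[U_2]+\E[\tR(2)]=\E[\tD]-1+O(m^{-1/2})$, and substituting part~(a) together with the identity $\big(1-\frac{2s_2}{m}\big)^{-1}-1=\big(1-\frac{2s_2}{m}\big)^{-1}\frac{2s_2}{m}$ gives part~(b) for $k=2$.

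The hard part will be part~(a): obtaining the exact constant $\big(1-\frac{2s_2}{m}+o(1)\big)^{-1}$ requires the $O(m^{-1/2})$-precise drift estimate from \Lm{prob:kedge} (rather than just an $o(1)$ one), a careful accounting of the rare truncation events so that the optional-stopping step uses a genuinely bounded stopping time with vanishing boundary term, and the bound $1-\frac{2s_2}{m}\ge\eps_1$ to turn the additive error into a clean reciprocal. This same precision is what the $k=2$ case of part~(b) needs, since there $\tDelta_2=\tD-1$ up to $O(m^{-1/2})$; by contrast parts~(c) and~(d) are routine consequences of the exponential tail bound in \Lm{branching:main} and part~(a).
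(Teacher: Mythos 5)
Your proof is correct and follows the paper's overall strategy closely: part~(c) from the exponential tail of~$\tR_1$ (the paper uses the cruder $\tD\le\tR_1$, your $\tR_1\ge 2\tD-1$ is sharper but unnecessary); part~(a) by viewing $(|\tA_t|)$ as a random walk, estimating the per-step drift from \Lm{prob:kedge}, and applying the optional stopping theorem to a suitably stopped/frozen process; parts~(b) and~(d) by splitting used edges from unused edges intersecting $\tV_\infty$, and invoking \Lm{R:binomial} for the latter. Where you genuinely diverge is the $k=2$ case of part~(b): the paper runs a second super-martingale argument (defining processes $\tZ^\pm_t$ tracking $|\tS_2\cap\tE_\infty|$ directly), whereas you derive $\E[U_2]$ from the conservation identity $\sum_{k\ge2}U_k=\tD-1$ on the non-truncated event, combined with the bound $\E[\sum_{k\ge3}U_k]=O(m^{-1/2})$ from part~(d). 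This is a cleaner route, and it is enabled by the fact that you extract the stronger $\E[\tD]=(1-2s_2/m)^{-1}+O(m^{-1/2})$ from part~(a) (using a slack of order $m^{-1/2}$ in the super/sub-martingale) rather than the paper's $o(1)$ error, which arises because they take the slack $\eps$ to be an arbitrary constant. Your $O(m^{-1/2})$ claim is legitimate: the drift error in \Lm{prob:kedge} times $m$ is $O(m^{-1/2})$, the contribution from rounds where the truncation event is close is $O(1/m)$, and the boundary correction on the super-polynomially rare truncation event is negligible, exactly as the paper verifies in~\eqref{eq:mainlemma:application}.

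The one place you should be a little more careful if writing this out in full is the ``refined'' version of \Lm{R:binomial} used in the $k\ge3$ lower bound of part~(b): the stated lemma only gives stochastic domination from above, and the matching lower bound requires observing (as the paper does in~\eqref{eq:condprob:oneinV}) that conditional on $\tcF_\infty$ the unrevealed $k$-edges are uniform on a collection that \emph{includes} all $k$-subsets of $M\setminus\tV_\infty$, so the probability of $|\te_j\cap\tV_\infty|=1$ is at least $(1+o(1))k|\tV_\infty|/m$. You allude to this, and it is exactly the paper's argument, but it is the one step in your sketch that requires more than a citation of the stated lemma.
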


\begin{proof}
Parts $(a)$ and~$(c)$ are easier: we shall prove them first. Part $(c)$ follows immediately from \Lm{branching:main}, since $\tD\le\tR_1$, so
\[
 \E\big[\tD^2\big] \le \E\big[\tR_1^2\big] \le \sum_{t = 1}^\infty t^2 \cdot \Prb( \tR_1 \ge t) = O(1),
\]
as required.

To prove~$(a)$, recall that $\tX_t=|\tV_t\setminus\tV_{t-1}|=\sum_w \tX_{t,w}$, and that
$\tcX_{t,w}$ is the event that $\tV^{(\ell)}<m^{1/2}$ just before we test whether or
not $\tX_{t,w}=1$. Also, by \Lm{prob:kedge}, we have 
\begin{equation}\label{eq:Ex:Xtw}
 \E\big[\tX_{t,w}\mid\tcF_{t,<w}\big] = \bigg( \frac{2s_2}{m^2} + O\big( m^{-3/2} \big) \bigg) \id_{\tcX_{t,w}}
\end{equation}
for every $t\in\N$ and $w\in M\setminus\tV_{t-1}$. Let us define, for each $t\ge 1$, an $\tcF_{t-1}$-measurable event
\[
 \tcX_t:=\bigg\{\Prb\Big(|\tV_t|\ge m^{1/2}\mid\tcF_{t-1}\Big)\le\frac{1}{m^2}\bigg\},
\]
and observe that, if $\tcX_t$ holds, then $\E\big[ \id_{\tcX_{t,w}^c} \mid \tcF_{t-1} \big] \le 1/m^2$ for each $w \in M\setminus\tV_{t-1}$, which in turn implies that 
\begin{equation}\label{eq:Ex:Xt}
 \E\big[\tX_t\mid\tcF_{t-1}\big] = \sum_{w \in M\setminus\tV_{t-1}} \E\big[ \tX_{t,w} \mid \tcF_{t-1}\Big] = \frac{2s_2 + o(m)}{m^2} \cdot |M \setminus \tV_{t-1}| = \frac{2s_2}{m}+o(1),
\end{equation}
by~\eqref{eq:Ex:Xtw} and since $|\tV_{t-1}| = o(m)$. 
Observe also that, by \Lm{branching:main}, we have
\begin{equation}\label{eq:mainlemma:application}
 \Prb\big(\tcX_t^c\big)\le m^2\Prb\big(|\tV_\infty|\ge m^{1/2}\big)
 \le m^2\Prb\big(\tR_1\ge m^{1/2}\big)=O\big(m^2e^{-\lambda m^{1/2}}\big),
\end{equation}
since $\tD\le\tR_1$ and $|\tV_\infty|=\min\big\{\tD,\lceil m^{1/2}\rceil\big\}$,
by \Ob{alg:basicproperties}.

We now define sequences $\tY^+_t$ and $\tY^-_t$ as follows. Let $0 < \eps < \eps_1$ 
be an arbitrarily small constant, set $\tY^+_0=1$ and $\tY^-_0=-1$, and for each
$1\le t\le|\tV_\infty|$, define
\[
 \tY_t^+:=\begin{cases}
  \ds|\tA_t|+\bigg(1-\frac{2s_2}{m}-\eps\bigg)t,&\text{if $\tcX_t$ holds;}\\[8pt]
  \tY^+_{t-1}, &\text{otherwise.}
 \end{cases}
\]
Similarly, define $\tY^-_t:=-|\tA_t|-\big(1-\frac{2s_2}{m}+\eps\big)t$ if $\tcX_t$ holds,
and $\tY^-_t:=\tY^-_{t-1}$ otherwise. We claim that $\tY^+_t$ and $\tY^-_t$ are both
super-martingales with respect to the filtration $(\tcF_t)_{t \ge 0}$. Indeed, for each
$1\le t\le |\tV_\infty|$, if $\tcX_t$ holds then
\[
\E\big[\tY^+_t - \tY^+_{t-1} \mid\tcF_{t-1}\big] = \E\big[|\tA_t| - |\tA_{t-1}| \mid\tcF_{t-1}\big] + \bigg(1-\frac{2s_2}{m} - \eps \bigg) \le 0
\]
by~\eqref{eq:Ex:Xt}, since $|\tA_t| - |\tA_{t-1}| = \tX_t-1$, and similarly for $\tY^-_t$. 
Therefore, by the optional stopping
theorem applied to $\tY^+_t$, and writing $\tY^+_\infty:=\tY^+_{|\tV_\infty|}$, it follows that
\[
 \E\big[\tY^+_\infty\big]\le\tY^+_0=1.
\]
Now, recalling that $\tA_{|\tV_\infty|} = 0$, it follows that
\[
 \tY^+_\infty=\bigg(1-\frac{2s_2}{m}-\eps\bigg)|\tV_\infty|
\]
if $\tcX_t$ holds for every $1\le t\le|\tV_\infty|$, and in general $|\tY^+_\infty|\le m^{1/2}$, by \Ob{alg:basicproperties}. Recall also that, by \Ob{alg:basicproperties}, we have $\tD=|\tV_\infty|$ if
$|\tV_\infty|<m^{1/2}$, and otherwise $\tD\le m$. Hence, by~\eqref{eq:mainlemma:application},
it follows that
\[
 \E\big[\tY^+_\infty\big]\ge\bigg(1-\frac{2s_2}{m}-2\eps\bigg)\E\big[\tD\big].
\]
Applying the same argument to $\tY^-_t$, we obtain
$1\le\big(1-\frac{2s_2}{m}+2\eps\big)\E\big[\tD\big]$. Since $\eps>0$ was arbitrary,
and $2s_2\le(1-\eps_1)m$, it follows that
\[
 \E\big[\tD\big]=\bigg(1-\frac{2s_2}{m}+o(1)\bigg)^{-1},
\]
as required.

To prove parts~$(b)$ and~$(d)$, we shall need to consider separately edges of
$\tS_k\cap\tE_\infty$ (i.e., edges that are used in the process), and edges of
$\tS_k\setminus\tE_\infty$ that intersect $\tV_\infty$. We will show that
$\tE_\infty$ is unlikely to contain any edges of size at least three, but
expects to have the required number of edges of size~2.

The proof of part~$(b)$ in the case $k=2$ is very similar to that of part~$(a)$, so we shall be
somewhat brief with the details. Let $\lambda > 0$ be a sufficiently large constant, set $\tZ^+_0=0$, and for each $1\le t\le |\tV_\infty|$, define
\[
 \tZ_t^+:=\begin{cases}
  \ds|\tS_2\cap\tE_t|-\left( \frac{2s_2}{m} + \frac{\lambda}{m^{1/2}} \right) \cdot t,&\text{if $\tcX_t$ holds;}\\[8pt]
  \tZ^+_{t-1},&\text{otherwise.}
 \end{cases}
\]
Similarly, define $\tZ_t^-:=-|\tS_2 \cap \tE_t| + \big( \frac{2s_2}{m} - \lambda m^{-1/2} \big) t$ 
if $\tcX_t$ holds, and $\tZ_t^-:=\tZ^-_{t-1}$ otherwise. Recall that $\tX^{(k)}_{t,w}=1$ if and only if 
$w\in\tV_t\setminus\tV_{t-1}$ and $\tj(w)\in\tS_k$, and that
\[
 \E\big[\tX^{(2)}_{t,w}\mid\tcF_{t,<w}\big]= \left( \frac{2s_2}{m^2} + O\big( m^{-3/2} \big) \right)\id_{\tcX_{t,w}}
\]
for every $t\in\N$ and $w\in M\setminus\tV_{t-1}$, by \Lm{prob:kedge}. It follows that
\[
\E\big[\tZ^+_t - \tZ^+_{t-1} \mid\tcF_{t-1}\big] =  \E\big[|\tS_2\cap\tE_t| - |\tS_2\cap\tE_{t-1}| \mid\tcF_{t-1}\big] -\left( \frac{2s_2}{m} + \frac{\lambda}{m^{1/2}} \right) \le 0
\]
if $\tcX_t$ holds, since $s_2 = O(m)$. 
Thus $\tZ^+_t$ is a super-martingale with
respect to the filtration $(\tcF_t)_{t \ge 0}$, and hence, by the optional stopping theorem,
\[
 \E\big[\tZ^+_\infty\big]\le\tZ^+_0= 0,
\]
where $\tZ^+_\infty:=\tZ^+_{|\tV_\infty|}$. Now,
\begin{equation}\label{eq:Zplusinfty}
 \tZ^+_\infty=|\tS_2\cap\tE_\infty| - \left( \frac{2s_2}{m} + \frac{\lambda}{m^{1/2}} \right) |\tV_\infty|
\end{equation}
if $\tcX_t$ holds for every $1\le t\le|\tV_\infty|$, and otherwise $|\tZ^+_\infty|\le s_2 \le m$.
Thus, by~\eqref{eq:mainlemma:application}, it follows that
\begin{equation}\label{eq:used:twoedges:upper}
 \E\big[|\tS_2\cap\tE_\infty|\big]\le \left( \frac{2s_2}{m} + \frac{\lambda}{m^{1/2}} \right)\E\big[|\tV_\infty|\big]+\frac{1}{m^2}.
\end{equation}
Now, using part~$(a)$, and recalling that $\eps>0$ was arbitrary and $2\le 2s_2\le (1-\eps_1)m$
(since if $s_2=0$ then we trivially have $\E[\tDelta_2]=0$), it follows that
\[
 \E\big[|\tS_2\cap\tE_\infty|\big]\le\frac{2s_2}{m}\bigg(1-\frac{2s_2}{m}+o(1)\bigg)^{-1} + \/ O\big( m^{-1/2} \big).
\]
Repeating the argument for $\tZ_t^-$, we obtain a corresponding lower bound, and hence
\begin{equation}\label{eq:used:twoedges}
 \E\big[|\tS_2\cap\tE_\infty|\big]=\frac{2s_2}{m}\bigg(1-\frac{2s_2}{m}+o(1)\bigg)^{-1}+ \/ O\big( m^{-1/2} \big).
\end{equation}

When $k\ge 3$ we only need a weaker bound, and as a consequence the argument is simpler.
Recall first that
\[
 \E\big[\tX^{(k)}_{t,w}\mid\tcF_{t,<w}\big]\le\frac{2k^2 s_k}{m^{(k+2)/2}}\id_{\tcX_{t,w}}
\]
for every $t\in\N$, $k \ge 3$ and $w\in M\setminus V_{t-1}$, by \Lm{prob:kedge}. Note also that
\[ 
 |\tS_k\cap\tE_t|=|\tS_k\cap\tE_{t-1}|+\sum_{w\in M\setminus\tV_{t-1}}\tX^{(k)}_{t,w},
\]
and therefore $\tU^{(k)}_t := |\tS_k\cap\tE_t| - 2k^2 s_k m^{-k/2} t$ is a super-martingale, since
\[
\E\big[\tU^{(k)}_t - \tU^{(k)}_{t-1} \mid\tcF_{t-1}\big] = \E\big[|\tS_k\cap\tE_t| - |\tS_k\cap\tE_{t-1}| \mid\tcF_{t-1}\big] - \frac{2k^2 s_k}{m^{k/2}} \le 0
\]
for every $t\ge 1$. Hence, by the optional stopping theorem, we have
\begin{equation}\label{eq:used:threeedges:kfixed}
 \E\big[|\tS_k\cap\tE_\infty|\big] \le \frac{2k^2 s_k}{m^{k/2}} \cdot \E\big[|\tV_\infty|\big].
\end{equation}
Since $\E\big[|\tV_\infty|\big]=O(1)$ (by \Lm{branching:main}, or by part $(a)$), it follows,
using the event $\cK(z)$, that
\begin{equation}\label{eq:used:threeedges}
 \sum_{k=3}^\infty\E\big[|\tS_k\cap\tE_\infty|\big] = O\big(m^{-1/2}\big).
\end{equation}

We shall next deal with edges that are not used in the process, but nevertheless intersect
$\tV_\infty$ in at least two vertices. To be precise, we shall show that the expected size of
\[
 \tR_2(k):=\big|\big\{j\in\tS_k\setminus\tE_\infty:|\te_j\cap\tV_\infty|\ge 2\big\}\big|
\]
is small for each $2\le k\le 4u_0$. Indeed, recall (see \Rk{filtration} and the proofs of
Lemmas~\ref{lem:prob:kedge} and~\ref{lem:R:binomial}) that conditional on the information we have
observed during the process, each edge $\te_j$, $j\in\tS_k\setminus\tE_\infty$, is chosen
uniformly from a collection of sets of size $k$ that depends on $j$ but always
includes all $k$-subsets that contain at least two elements of $M\setminus\tV_\infty$.
The (conditional) probability of the event $\{|\te_j\cap\tV_\infty|\ge 2\}$ is therefore at most
\begin{equation}\label{eq:condprob:twoinV}
 \binom{|\tV_\infty|}{2}\binom{m}{k-2}\binom{m-|\tV_\infty|}{k}^{-1}
 \le\frac{k^2\cdot |\tV_\infty|^2}{m^2}
\end{equation}
since $|\tV_\infty|\le m^{1/2}$, so $k\cdot |\tV_\infty|=o(m)$. Note also that if $k\ge 3$ then
the (conditional) probability of the event $\{|\te_j\cap\tV_\infty|=1\}$ is at least
\[
 |\tV_\infty|\binom{m-|\tV_\infty|}{k-1}\binom{m}{k}^{-1}
 =\big(1+o(1)\big)\frac{k\cdot |\tV_\infty|}{m}.
\]
Thus the expected number of edges of size $k$ that intersect $\tV_\infty$ is at least
\begin{equation}\label{eq:condprob:oneinV}
 \big(1+o(1)\big)\frac{k s_k}{m}\E\big[|\tV_\infty|\big],
\end{equation}
which together with part~$(a)$ proves the lower bound of part~$(b)$ when $k \ge 3$.

Now, using part~$(c)$ and $\cK(z)$ to bound $\E\big[|\tV_\infty|^2\big]=O(1)$ and
$s_k=O(m)$, respectively, it follows from~\eqref{eq:condprob:twoinV} that
\begin{equation}\label{eq:expectation:Rtwok}
 \E[\tR_2(k)]\le\frac{k^2 s_k}{m^2}\cdot\E\big[|\tV_\infty|^2\big]
 =O\bigg(\frac{k^2 s_k}{m^2}\bigg).
\end{equation}
Note that if $|\tV_\infty|<m^{1/2}$ then every edge of size 2 is (by definition) either
contained in or disjoint from $\tD_\infty$, and that otherwise $\tDelta_2=O(m)$, by $\cK(z)$.
Hence, combining \eqref{eq:expectation:Rtwok} with~\eqref{eq:used:twoedges}, and
using~\eqref{eq:mainlemma:application}, we obtain the case $k=2$ of part~$(b)$. Moreover,
combining \eqref{eq:expectation:Rtwok} with~\eqref{eq:used:threeedges}, and recalling that
$\sum_{k\ge2}k^2s_k=O(m)$, by $\cK(z)$, it follows that
\[
 \E\big[\tDelta'\big]=O\big(m^{-1}\big)+O\big(m^{-1/2}\big)=O\big(m^{-1/2}\big),
\]
which proves $(d)$.

It only remains to prove the upper bound in part~$(b)$ when $k\ge 3$.
By~\eqref{eq:used:threeedges:kfixed}, it is sufficient to show that the expected number of
edges of $\tS_k\setminus\tE_\infty$ that intersect $\tV_\infty$ in at least one vertex is at most
\[
 \bigg(1-\frac{2s_2}{m}+o(1)\bigg)^{-1}\frac{k s_k}{m}
\]
for each $k\ge 3$. This follows by \Lm{R:binomial} and part~$(a)$, since
\[
 \E\big[Z(k)\big]=\E\Big[\E\big[Z(k)\mid\tcF_\infty\big]\Big]
 =\big(1+o(1)\big)\frac{ks_k}{m}\E\big[|\tV_\infty|\big]
\]
for every $3\le k\le 4u_0$. This completes the proof of part~$(b)$, and therefore of the lemma.
\end{proof}

We are finally ready to deduce \Th{branching}. The key observation, which allows us to apply
\Th{compare}, is that each of the variables $D(z)$, $R_1(z)$, $\Delta_k(z)$ and $\Delta'(z)$
depends only on $\cF_z^+$ and the sub-matrix $R(I):=A\big[I\times[z,\pi(x)]\big]$, where $I$
is the set of rows that are removed in step~$z$. Note also that we can deduce from $\cF_z^+$ and
$R(I)$ whether or not $I$ is the set of removed rows. Indeed, in \Al{algorithm} we
only need to look at the entries of a row once we know we shall remove it. The event $\{D(z)=t\}$
(for example) is therefore a disjoint union of events of the form
$\big\{A\big[I\times [z,\pi(x)]\big]=R\big\}$ with $R$ having exactly $t$ rows.

It follows that we would be able to deduce \Th{branching} from Lemmas \ref{lem:branching:main}
and~\ref{lem:branching:rest} if we could restrict to events $\cE\in\cF_z^+$ and
$\big\{A\big[I\times [z,\pi(x)]\big]=R\big\}$ for which
\[
 \Prb\big(A\big[I\times [z,\pi(x)]\big]=R\mid\cE\big)
 =\big(1+o(1)\big)\Prb\big(\tA_\cE\big[I\times [z,\pi(x)]\big]=R\big).
\]
\Th{compare} provides us with such a bound as long as $|R|_1$ is not too large, since
$|I|\le |R|_1 \le |R|_2\le (|R|_1)^2$ for the matrices we shall be dealing with. 
Moreover, observe that if $I$ is the set of removed rows, then $|R(I)|_1=R_1(z)$, 
so we shall be able to use \Th{compare}
and \Lm{branching:main} to bound from above the probability that $|R|_1$ is large.

\begin{proof}[Proof of \Th{branching}]
We partition the probability space into three pieces, using the events
\[
 \cD_1=\big\{R_1(z)<u_0^{1/3}\big\},\quad
 \cD_2=\big\{u_0^{1/3}\le R_1(z)<u_0^2\big\}\quad\text{and}\quad
 \cD_3=\big\{R_1(z)\ge u_0^2\big\}.
\]
Let us fix $z\in [z_-,\pi(x)]$ and an event $\cE\in\cF_z^+$ of the form~\eqref{def:eventE} such
that $\cK(z)$ holds, $2s_2(z)\le (1-\eps_1)m(z)$ and $d(z)=1$, and say that an
$I\times [z,\pi(x)]$ matrix $R$ is \emph{$1$-acceptable} (with respect to~$\cE$) if it is
consistent with $\cD_1\cap\cE$ and if the event
$\big\{A\big[I\times[z,\pi(x)]\big]=R\big\}\cap\cE$ implies that $I$ is the set of removed rows
in step~$z$.\footnote{To be precise, an $I\times [z,\pi(x)]$ matrix $R$ with this latter property is said to be consistent with $\cD_1\cap\cE$ if and only if the column sums of $R$ satisfy $\sum_{i \in I} R_{ij} \le d_j$ for each $j \in [z,\pi(x)]$, and $|R|_1 < u_0^{1/3}$.} Note that $\cD_1$ is the disjoint union of the events
$\big\{A\big[I\times [z,\pi(x)]\big]=R\big\}$ over the family $\cU_1$ of all $1$-acceptable
matrices~$R$. We claim that, for every 1-acceptable matrix $R\in\cU_1$, we have
\begin{equation}\label{eq:1acceptable}
 \Prb\big(A\big[I\times [z,\pi(x)]\big]=R\mid\cE\big)
 =\big(1+o(1)\big)\Prb\big(\tA_\cE\big[I\times[z,\pi(x)]\big]=R\big).
\end{equation}
Indeed, this follows by applying \Th{compare} since
\[
 \frac{|I|+|R|_2}{u_0}=o(1),
\]
which holds because every row of $R$ is non-empty, so $|I|\le |R|_2\le (|R|_1)^2<u_0^{2/3}$,
and also every row sum of $R$ is at most $|R|_1=o(u_0)$.

We shall next prove that $\Prb(\cD_3\mid\cF_z,\,d(z)=1)\le z_0^{-20}$. To do so, let us
say that an $I\times [z,\pi(x)]$ matrix $R$ is \emph{$3$-acceptable} (with respect to~$\cE$) if
it is consistent with $\cE$ and if $I$ is the set of rows removed in \Al{algorithm} at the first
point at which $\cD_3$ is guaranteed to hold, i.e., the first point at which
$|R|_1\ge u_0^2$. Thus $\cD_3$ is the disjoint union of the events
$\big\{A\big[I\times [z,\pi(x)]\big]=R\big\}$ over the family $\cU_3$ of all $3$-acceptable
matrices~$R$. Now, by \Th{compare}, we have
\[
 \frac{\Prb\big(A\big[I\times [z,\pi(x)]\big]=R\mid\cE\big)}
 {\Prb\big(\tA_\cE[I\times [z,\pi(x)]\big]=R\big)}
 \le\exp\bigg(\frac{O\big(|I|+|R|_1\big)}{u_0}\bigg)=e^{O(u_0)}
\]
for any $3$-acceptable matrix~$R$, since $|I|\le |R|_1=O(u_0^2)$. Indeed, we have $|I|\le |R|_1$
(as before) since $R$ has no empty rows, and $|R|_1=O(u_0^2)$ since $I$ is minimal, and since
each row of $R$ contains at most $2u_0$ non-zero entries (by \Ob{onesinarow}), so each row can
increase $|R|_1$ by at most $O(u_0)$.

Now, by \Lm{branching:main}, we have
\[
 \sum_{R\in\cU_3}\Prb\big(\tA_\cE[I\times [z,\pi(x)]\big]=R\big)
 =\Prb\big(\tR_1\ge u_0^2\big)=O\big(e^{-\lambda u_0^2}\big),
\]
and hence, noting that $\log z_0=o(u_0^2)$, by~\eqref{eq:ulogu}, we obtain
\[
 \Prb\big(\cD_3\mid\cF_z,\,d(z)=1\big)\le\exp\big(O(u_0)-\lambda u_0^2\big)\le z_0^{-20},
\]
as claimed, which proves~\eqref{eq:Dplus:tailevent}. Note that this also implies that
\begin{equation}\label{eq:3acceptable}
 \E\big[R_1(z)^2\id_{\cD_3}\mid\cF_z,\,d(z)=1\big]\le z_0^{-9},
\end{equation}
since the event $\cK(z)$ implies that $R_1(z)^2\le(4u_0 z_0^5)^2\le z_0^{11}$.

Finally, let $\cU_2$ denote the family of (\emph{$2$-acceptable} with respect to~$\cE$)
$I\times [z,\pi(x)]$ matrices $R$ that are consistent with $\cD_2\cap\cE$ and are such that the event
$\big\{A\big[I\times [z,\pi(x)]\big]=R\big\}\cap\cE$ implies that $I$ is the set of removed rows
in step~$z$. By \Th{compare}, we have
\[
 \frac{\Prb\big(A\big[I\times [z,\pi(x)]\big]=R\mid\cE\big)}
 {\Prb\big(\tA_\cE[I\times [z,\pi(x)]\big]=R\big)}\le\exp\bigg(\frac{O(|R|_1)}{u_0}\bigg)
\]
for any $2$-acceptable matrix~$R$, since $|I|\le |R|_1$, and hence
\begin{align*}
 \Prb\big(\cD_2\mid\cF_z,\,d(z)=1\big)
 &\le\sum_{R\in\cU_2}\exp\bigg(\frac{O(|R|_1)}{u_0}\bigg)
  \Prb\big(\tA_\cE[I\times [z,\pi(x)]\big]=R\big)\\
 &\le\sum_{t=u_0^{1/3}}^{u_0^2}e^{O(t/u_0)}\Prb\big(\tR_1\ge t\big)
  \le\exp\bigg(-\frac{\lambda u_0^{1/3}}{2}\bigg),
\end{align*}
by \Lm{branching:main}. Since (by definition) $R_1(z)\le u_0^2$ if $\cD_2$ holds, it follows that
\begin{equation}\label{eq:2acceptable}
 \E\big[R_1(z)^2\cdot\id_{\cD_2}\mid\cF_z,\,d(z)=1\big]
 \le u_0^4\exp\bigg(-\frac{\lambda u_0^{1/3}}{2}\bigg)=o(1).
\end{equation}
Combining~\eqref{eq:2acceptable} with~\eqref{eq:1acceptable},~\eqref{eq:3acceptable} and
\Lm{branching:rest}, this completes the proof of parts $(a)$ and $(c)$ of \Th{branching}. Moreover,
for part $(d)$, we have
\begin{align*}
 \E\big[\Delta'(z)\big]
 &=\E\big[\Delta'(z)\id_{\cD_1}\big]+\E\big[\Delta'(z)\id_{\cD_2}\big]+\E\big[\Delta'(z)\id_{\cD_3}\big]\\
 &\le (1+o(1))\E\big[\tDelta'\big]+e^{O(u_0)}\E\big[\tDelta'\big]+z_0^{-9}=O\big(m(z)^{-1/3}\big),
\end{align*}
by \Lm{branching:rest}, since $u_0=o(\log m(z))$ and $m(z)\le N=z_0^{2+o(1)}$.

For part $(b)$, we need to take more care in the case that $s_k(z)$ is small. For $k\ge3$
we have
\[
 \E\big[\Delta_k(z)\id_{\cD_2}\big] \le \sum_{t = u_0^{1/3}}^{u_0^2} \E\big[\Delta_k^{(1)}(z)\id_{\{R_1(z)=t\}}\big] + \E\big[\Delta'(z)\big],
\]
and applying \Th{compare} to each $2$-acceptable matrix $R$ with $|R|_1 = t$ gives
\[ 
 \E\big[\Delta_k^{(1)}(z)\id_{\{R_1(z)=t\}}\big]\le\E\big[\tDelta_k^{(1)}e^{O(t/u_0)}\id_{\{\tR_1=t\}}\big]
 \le\frac{O(ks_k(z))}{m(z)}e^{-\lambda t/2}
 \]
for each $u_0^{1/3} \le t \le u_0^2$, where the second inequality follows by \Lm{branching:main}. The claimed bound now follows by~\eqref{eq:1acceptable},~\eqref{eq:3acceptable} and \Lm{branching:rest}, and using part~$(d)$ to bound $\E\big[\Delta'(z)\big]$. For $k=2$ we note that $\Delta_2(z)\le R_1(z)$, so by \Th{compare} and \Lm{branching:main},
\[
 \E\big[\Delta_2(z)\id_{\{R_1(z)=t\}}\big]\le \E\big[\tR_1 e^{O(t/u_0)}\id_{\{\tR_1=t\}}\big]
 \le\frac{O(s_2(z))}{m(z)}e^{-\lambda t/2}.
\]
Summing over $t\in[u_0^{1/3},u_0^2]$ gives $\E[\Delta_2(z)\id_{\cD_2}]=o(s_2(z)/m(z))$, so we are done as before.
\end{proof}

\section{Tracking the process when most columns are empty}\label{sec:big:z}

In this section we shall track $s_k(z)$ and $m(z)$ above the `critical' range $[z_-,z_+]$,
by showing that for $z\ge z_+$ there are few edges in $\cS_A(z)$ and as a consequence that
$m(z)\approx\eta\Lambda(z)z$. For $z\ge z_0^3$ these results follow almost immediately from
\Lm{skzero}, so for most of this section we shall be interested in the range $z\in[z_+,z_0^3]$.

Set $\tdelta:=(1-2\eps_1)\eta\delta$ and, recalling
Definition~\ref{def:eps}, define
\begin{equation}\label{def:sigmak}
 \sigma_k:=\frac{\eps(k,z_+)}{k!}\cdot\frac{\tdelta^{k-1}}{2k}
 =\frac{\eps_1^k}{\Lambda(z_+)}\cdot\frac{\tdelta^{k-1}}{2k}.
\end{equation}
We remark that the fact that $\sigma_k$ decreases only exponentially fast (as a function
of~$k$) will play an important role in the proofs of Lemmas \ref{lem:deltaSstar:bigz}
and~\ref{lem:maxstep:bigz}, below. Note that \Lm{m0} implies that
$m(z_+)\ge m_0(z_+)\ge\tdelta z_+$ with high probability, since $\Lambda(z_+)\ge\delta$.

Recall that, by \Df{MandTk}, $\cM^*(z)=\bigcap_{w\ge z}\cM(z)$
implies that $m(w)$ is well controlled for all $w\ge z$, and $\cT_k(z)$ implies
that $s_k(z)$ is well controlled.
Define $\cD(z)$ to be the event that $m(z)-m(z-1)\le u_0^2$ and set
\begin{equation}\label{eq:Devent}
 \cD^*(z):=\bigcap_{w=z+1}^{\pi(x)}\cD(w).
\end{equation}
Note that $\cD^*(z)$ does \emph{not} include the event $\cD(z)$.
We shall prove the following upper bound on $s_k(z)$ when $z\ge z_+$.

\begin{prop}\label{prop:largez}
 With high probability, $\cM^*(z_+)$ and $\cD^*(z_+)$ hold, and moreover
 \begin{equation}\label{eq:largez}
  s_k(z)\le\sigma_k m(z)
 \end{equation}
 for every\/ $k\ge 2$ and every\/ $z\in [z_+,\pi(x)]$.
\end{prop}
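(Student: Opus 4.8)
The plan is to run a downward induction on $z$, from $z=\pi(x)$ to $z=z_+$, governed by a stopping time. First I would dispose of the range $z\ge z_0^3$, where everything is easy. By Lemmas~\ref{lem:skzero} and~\ref{lem:skeasy}, with high probability $s_k(z)=0$ for all $k\ge4$ and $s_2(z)+s_3(z)\le z_0^{-1/2}m(z)$ for every $z\ge z_0^3$; since each $\sigma_k$ is a positive constant we have $\sigma_k m(z)=\Theta_k(m(z))\gg z_0^{-1/2}m(z)$, so~\eqref{eq:largez} holds for $z\ge z_0^3$. On the same event $\cD(w)$ holds for every $w\ge z_0^5$ deterministically (the column revealed in step $w$ then has at most one active entry, so its avalanche removes a single row), while for $z_0^3\le w<z_0^5$ we have $\Prb(D(w)\ge u_0^2\mid\cF_w)\le\Prb(R_1(w)\ge u_0^2\mid\cF_w,\,d(w)=1)\le z_0^{-20}$ by~\eqref{eq:Dplus:tailevent} (applicable because $\cK(w)$ then holds, see below), and a union bound over the $z_0^{5+o(1)}$ relevant values costs only $o(1)$. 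The key structural remark, used repeatedly, is that $\cM(z)$ is a \emph{consequence} of~\eqref{eq:largez} together with the high-probability bound $m_0(z)\in(1\pm\eps_1)\eta\Lambda(z)z$ of Lemma~\ref{lem:m0}: indeed $m_0(z)\le m(z)\le m_0(z)+\sum_k k\,s_k(z)\le m_0(z)+\big(\sum_k k\sigma_k\big)m(z)$ and $\sum_k k\sigma_k=O(\eps_1^2\eta)$ by~\eqref{def:sigmak}, so $m(z)=(1+O(\eps_1))\eta\Lambda(z)z$, whence $\frac{m(z)}{z}e^{-\Ein(m(z)/z)}=(1+O(\delta))\frac{m(z)}{z}\in(1\pm\eps_0)\eta\Lambda(z)$ by the choice of constants in~\eqref{def:small}. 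In the same way~\eqref{eq:largez}, together with Lemmas~\ref{lem:m0} and~\ref{lem:easy}, forces $\cQ(z)$, $\cK(z)$ and $2s_2(z)\le(1-\eps_1)m(z)$, so that Theorem~\ref{thm:branching} and Lemma~\ref{lem:dkvsd1:bigz} become available at step~$z$. It remains to prove that, with high probability, \eqref{eq:largez} holds for all $z\in[z_+,z_0^3]$ and $k\ge2$, and $D(z+1)\le u_0^2$ for all $z\in[z_+,z_0^3]$.

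For this, fix the high-probability event $\cW$ on which Lemmas~\ref{lem:m0},~\ref{lem:skzero},~\ref{lem:skeasy} and~\ref{lem:easy} hold for all relevant $z$, and on which~\eqref{eq:largez}, $\cM^*(z_0^3)$ and $\cD^*(z_0^3)$ hold, and let $\tau$ be the largest $z\in[z_+,z_0^3]$ at which $\cW$ holds but either $s_k(z)>\sigma_k m(z)$ for some $k\ge2$ or $D(z+1)>u_0^2$, with $\tau:=z_+-1$ if there is no such~$z$. The goal is to show $\Prb(\tau\ge z_+)=o(1)$. On $\{\tau=z\}$ all of $\cM(w)$, $\cK(w)$, $\cD(w)$ and~\eqref{eq:largez} hold for $w>z$ by the previous paragraph, so Theorem~\ref{thm:branching} applies at step $z+1$ and hence $\Prb(D(z+1)>u_0^2\mid\cF_{z+1})\le z_0^{-20}$ by~\eqref{eq:Dplus:tailevent}; summing over $z$ this contributes $o(1)$. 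So it suffices to bound the probability that, for some $k$, the variable $s_k(z)$ first exceeds $\sigma_k m(z)$ at $z=\tau$.

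This last point is the main obstacle, and it is where \emph{self-correction} enters: a first-moment count of the columns that ever enter $S_{\ge2}$ would be far larger than $\sigma_k m(z)$, so one must genuinely use that the avalanches triggered by the events $\{d(w)=1\}$ destroy most of these edges. Write the one-step recursion
\[
 s_k(z-1)=s_k(z)+\id_{\{d(z)=k\}}+\Delta_{k+1}^{(1)}(z)-\Delta_k(z),
\]
where $\Delta_{k+1}^{(1)}(z)$ is the number of $(k{+}1)$-edges of $\cS_A(z)$ that lose exactly one vertex in step~$z$ (and so enter $S_k$), modulo the $O(m(z)^{-1/3})$ contribution of edges losing two or more vertices, which is controlled by part~$(d)$ of Theorem~\ref{thm:branching}; also $m(z-1)=m(z)-D(z)$. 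Taking conditional expectations given $\cF_z$ using parts~$(a)$ and~$(b)$ of Theorem~\ref{thm:branching} for the terms that arise when $d(z)=1$, and Lemma~\ref{lem:dkvsd1:bigz} for $\Prb(d(z)=k\mid\cF_z)\le\frac{(2\delta\eta)^{k-1}}{k!}\Prb(d(z)=1\mid\cF_z)$, one finds that when $s_j(z)\le\sigma_j m(z)$ for all $j$,
\[
 \E\big[\big(s_k(z-1)-\sigma_k m(z-1)\big)-\big(s_k(z)-\sigma_k m(z)\big)\bmid\cF_z\big]\le\Prb\big(d(z)=1\mid\cF_z\big)\Big((k+1)\sigma_{k+1}-(k-1)\sigma_k+o(\sigma_k)\Big)+\Prb\big(d(z)=k\mid\cF_z\big).
\]
Here the right-hand side is \emph{strictly negative}, of order $-\Theta\big(\sigma_k\,\Prb(d(z)=1\mid\cF_z)\big)$, because each of the three positive contributions is only a small fraction of $(k-1)\sigma_k\,\Prb(d(z)=1\mid\cF_z)$: the level-$(k{+}1)$ inflow since $(k{+}1)\sigma_{k+1}=O(\eps_1\tdelta)(k{-}1)\sigma_k$; the $o(\sigma_k)$ term for $x$ large, uniformly in $k\le 4u_0$ since $m(z)^{-1/3}=o(\sigma_{4u_0})$; and the input rate, since $\frac{(2\delta\eta)^{k-1}}{k!}\ll(k-1)\sigma_k$ precisely because $\delta<\eps_1 e^{-3/\eps_1}$ while $\sigma_k$ carries the factor $1/\Lambda(z_+)\sim1/\delta$. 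Thus once $s_k(z)$ reaches its target it is pushed back down.

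I would record this as a drift lemma (Lemma~\ref{lem:deltaSstar:bigz}) for a suitably weighted excess, e.g.\ $\sum_{k\ge2}\sigma_k^{-1}\big(s_k(z)-\sigma_k m(z)\big)_+$; here the fact that $\sigma_k$ decreases only exponentially in $k$ is exactly what keeps the weights $\sigma_k^{-1}$, and hence the whole sum and its drift, summable against the super-exponentially small probabilities $\Prb(d(z)=k\mid\cF_z)$. Pairing this with a bound on a single step (Lemma~\ref{lem:maxstep:bigz}), which is $O(R_1(z))$ and so, by Lemma~\ref{lem:branching:main} and Theorem~\ref{thm:compare}, has an exponential tail below the cutoff $u_0^2$, one feeds both into a self-correcting super-martingale of the type used in~\cite{FGM} --- for instance an exponential martingale $\exp\big(\mu\,X(z)\big)$, where $X(z)$ is the weighted excess measured against a target lowered by a small buffer, stopped at $\tau$ and at the rare large-avalanche events --- and concludes via the Azuma--Hoeffding inequality and the optional stopping theorem that the excess, which starts negligibly small at $z=z_0^3$ (where $s_k(z)\le z_0^{-1/2}m(z)\ll\sigma_k m(z)$), is exponentially unlikely ever to reach~$0$. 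A union bound over $k\le 4u_0$ (larger $k$ contribute nothing on $\cW$, by Lemma~\ref{lem:skeasy}) and over $z\in[z_+,z_0^3]$ then completes the proof. I expect the delicate part to be the robust construction of this super-martingale: reconciling the moving target $\sigma_k m(z)$, the coupling between consecutive levels, and the fact that the individual increments are controlled only with high probability rather than deterministically.
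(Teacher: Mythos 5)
Your proposal follows essentially the same route as the paper's proof: dispose of $z\ge z_0^3$ via Lemmas~\ref{lem:skzero} and~\ref{lem:skeasy}, observe that \eqref{eq:largez} together with Lemma~\ref{lem:m0} forces $\cM(z)$, $\cQ(z)$, $\cK(z)$ and $2s_2(z)\le(1-\eps_1)m(z)$, then establish a negative drift near the boundary plus a small maximum step and close with a stopped super-martingale and Azuma--Hoeffding. That is precisely the paper's plan (Lemmas~\ref{lem:deltaSstar:bigz} and~\ref{lem:maxstep:bigz}, then the Claim in the proof of Proposition~\ref{prop:largez}).

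However, the drift inequality you display is wrong as stated. You claim that whenever $s_j(z)\le\sigma_j m(z)$ for all $j$,
\[
 \E\big[\big(s_k(z-1)-\sigma_k m(z-1)\big)-\big(s_k(z)-\sigma_k m(z)\big)\bmid\cF_z\big]
 \le\Prb\big(d(z)=1\mid\cF_z\big)\Big((k+1)\sigma_{k+1}-(k-1)\sigma_k+o(\sigma_k)\Big)+\Prb\big(d(z)=k\mid\cF_z\big),
\]
with a right-hand side independent of $s_k(z)$ and strictly negative. This cannot be right: applying \Th{branching}, part~$(b)$, the term that generates $-(k-1)\sigma_k$ is really $-\E[\Delta_k\mid\cF_z,d(z)=1]\approx -\big(1-2s_2/m\big)^{-1}k\,s_k(z)/m(z)$, which is only $-k\sigma_k$ when $s_k(z)$ sits exactly at the target; for $s_k(z)=0$ the removal term vanishes and the expected change of $s_k-\sigma_k m$ is manifestly positive (new $k$-edges arrive, while $\sigma_k m$ can only fall). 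The hypothesis ``$s_j\le\sigma_j m$ for all $j$'' makes the inflow terms small but does nothing to make the outflow large. Your own summary (``once $s_k$ reaches its target it is pushed back down'') is the correct mechanism, but the inequality must keep the $s_k$-dependence. This is exactly why the paper works with the normalised quantity $s^*_k(z)=s_k(z)/\sigma_k m(z)$ and proves the drift bound in the form $(k-1)p_z(x)\big(-s^*_k(z)+16\eps_1\big)$ (\Lm{deltaSstar:bigz}), which is negative only once $s^*_k(z)$ is bounded away from $0$; the Azuma argument then freezes the walk whenever $s^*_k$ drops below $3/4$. Your proposed Lyapunov function $\sum_k\sigma_k^{-1}(s_k-\sigma_k m)_+$ would in principle encode the same idea, but one would have to carry the $-s_k/\sigma_k m$ term through the one-step analysis to see the self-correction; dropping it, as your displayed inequality does, breaks the argument.

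Two smaller remarks. First, you propose an exponential martingale; the paper instead proves a \emph{deterministic} one-step bound on $|s^*_k(z-1)-s^*_k(z)|$ under $\cD^*(z)$ (\Lm{maxstep:bigz}) and applies ordinary Azuma--Hoeffding, pushing the exponential tail of $R_1(z)$ into a separate union bound over the $z_0^{-20}$ failure probability of $\cD(z)$; given that deterministic bound, the exponential-martingale machinery is unnecessary. Second, note that in the one-step recursion the contribution of $\Delta'(z)$ must be included in the drift \emph{with either sign} (edges of size $\ge k+2$ can also \emph{enter} $S_k$ after losing several vertices, see \Ob{change:ins:using:deltas}), so the $O(m^{-1/3})$ error term sits on both sides; that is fine, but worth stating correctly.
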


We remark that~\eqref{eq:largez} is extremely weak at the beginning of the process, but becomes
progressively stronger as time goes on (i.e., as $z$ decreases). We shall begin by showing that
at $z=z_+$ it implies the events $\cT_k(z_+)$.

\begin{cor}\label{cor:zp}
 If\/ $s_k(z_+)\le\sigma_k m(z_+)$ and\/ $m(z_+)\ge\tdelta z_+$, then
 \[
  s_k(z_+)\in\bigg(1\pm\frac{\eps(k,z_+)}{2}\bigg)\ts_k(z_+).
 \]
 In particular, with high probability\/ $\cT_k(z_+)$ holds for every\/ $k\ge 2$.
\end{cor}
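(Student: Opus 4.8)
The plan is to derive the two-sided estimate $s_k(z_+)\in\big(1\pm\tfrac12\eps(k,z_+)\big)\ts_k(z_+)$ directly from the two hypotheses, and then to read off the ``in particular'' clause by feeding in \Pp{largez} and \Lm{m0}. Throughout write $\lambda:=m(z_+)/z_+$, so that $\lambda\ge\tdelta$ by assumption, and recall from \Df{tsk} that $\ts_k(z_+)=\frac{m(z_+)}{k(k-1)}\,\Prb\big(\Po(\lambda)\ge k-1\big)$, so in particular $\ts_k(z_+)\ge0$. The lower bound $s_k(z_+)\ge\big(1-\tfrac12\eps(k,z_+)\big)\ts_k(z_+)$ is then essentially free: since $s_k(z_+)\ge0$ and $\ts_k(z_+)\ge0$, it suffices to check that $\eps(k,z_+)\ge2$ for every $k\ge2$, as then the right-hand side is non-positive. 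This is a short computation from \eqref{def:small}: by \Df{eps} and \eqref{eq:Lambda:delta:plusminus:littleoone}, $\eps(k,z_+)=\eps_1^k k!/\Lambda(z_+)$ with $\Lambda(z_+)=\delta+o(1)$, and $\delta<\eps_1 e^{-3/\eps_1}$, so $\eps(k,z_+)\ge\tfrac12(1+o(1))\,\eps_1^{k-1}k!\,e^{3/\eps_1}$; using $k!\ge(k/e)^k$ one sees that $\eps_1^{k-1}k!\ge\eps_1^{-1}e^{-1/\eps_1}$ for all $k$ (the minimum being near $k=1/\eps_1$), whence $\eps(k,z_+)\ge\tfrac12(1+o(1))\,\eps_1^{-1}e^{2/\eps_1}>2$ uniformly in $k\ge2$.

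For the upper bound — which is the real content — it suffices, by the hypothesis $s_k(z_+)\le\sigma_k m(z_+)$, to prove $\sigma_k m(z_+)\le\big(1+\tfrac12\eps(k,z_+)\big)\ts_k(z_+)$. I would lower-bound $\ts_k(z_+)$ by dropping all but the leading term of its Poisson sum and using $\lambda\ge\tdelta$: by monotonicity of the Poisson upper tail in the mean, $\Prb\big(\Po(\lambda)\ge k-1\big)\ge\Prb\big(\Po(\tdelta)\ge k-1\big)\ge e^{-\tdelta}\tdelta^{k-1}/(k-1)!$, so $\ts_k(z_+)\ge m(z_+)\tdelta^{k-1}e^{-\tdelta}/\big((k-1)\,k!\big)$. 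Inserting this together with \Df{eps} and the definition \eqref{def:sigmak} of $\sigma_k$ reduces the claim to a purely arithmetic inequality in $k$, $\eps_1$, $\lambda$ and $\tdelta=(1-2\eps_1)\eta\delta$; the factor $\frac1{2k}$ built into $\sigma_k$ and the smallness of $\delta$ — hence of $\tdelta$, and also of $\lambda$, which in the setting where the corollary is applied is in addition at most $C_0\Lambda(z_+)=C_0\delta+o(1)$ by \Ob{m:bounded} — supply the necessary room. The point demanding care is to obtain the inequality uniformly over all $k\ge2$: the relevant competition is between the (roughly geometric) decay of $\sigma_k$ in $k$, like $(\eps_1\tdelta)^k$, and the decay of $\ts_k(z_+)$, like $\lambda^{k-1}/k!$. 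For large $k$ one can be lavish: in the application the sharper bound $\lambda\ge(1+\eps_1)\tdelta$ is available (see below), contributing an exponentially shrinking factor $(\tdelta/\lambda)^{k-1}$; alternatively, once $\sigma_k m(z_+)<1$ the hypothesis forces $s_k(z_+)=0$ and there is nothing to prove.

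Finally, the ``in particular'' assertion. By \Pp{largez}, with high probability $\cM^*(z_+)$ holds and $s_k(z_+)\le\sigma_k m(z_+)$ for every $k\ge2$; by \Lm{m0}, with high probability $m_0(z_+)\ge(1-\eps_1)\eta\Lambda(z_+)z_+$. On the intersection of these events, using $m(z_+)\ge m_0(z_+)$ and $\Lambda(z_+)\ge\delta$ (which holds by the definition \eqref{def:zpm} of $z_+$), we get $m(z_+)\ge(1-\eps_1)\eta\Lambda(z_+)z_+\ge(1-2\eps_1)\eta\delta z_+=\tdelta z_+$, and in fact $m(z_+)/z_+\ge(1-\eps_1)\eta\Lambda(z_+)>(1+\eps_1)\tdelta$ as needed above; moreover $\cM(z_+)$ and \Ob{m:bounded} give $m(z_+)\le C_0\Lambda(z_+)z_+$. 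Thus the hypotheses of the first statement hold, and its conclusion $s_k(z_+)\in\big(1\pm\tfrac12\eps(k,z_+)\big)\ts_k(z_+)\subseteq\big(1\pm\eps(k,z_+)\big)\ts_k(z_+)$ is precisely the event $\cT_k(z_+)$ of \Df{MandTk}, for every $k\ge2$.

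The one genuine obstacle I anticipate is exactly the uniform-in-$k$ arithmetic underlying the upper bound: matching the constants of \eqref{def:small} against the small-$k$ regime (where the $\frac1{2k}$ factor in $\sigma_k$ alone suffices) and the large-$k$ regime (where one must lean on the improved lower bound for $\lambda$, or on integrality of $s_k(z_+)$) so that the estimate closes without loss.
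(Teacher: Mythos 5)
Your proposal follows essentially the same route as the paper: the lower bound is dispatched by noting $\eps(k,z_+)\ge 2$, and the upper bound by comparing $\sigma_k m(z_+)$ with a lower bound on $\ts_k(z_+)$ obtained by retaining only the leading term of the Poisson sum. The small technical divergence is that you push the leading-term lower bound all the way down to the Poisson tail at mean $\tdelta$ (using the hypothesis $m(z_+)\ge\tdelta z_+$ and monotonicity), whereas the paper keeps $\lambda=m(z_+)/z_+$ in place — yielding the slightly sharper $\ts_k(z_+)\ge m(z_+)e^{-\lambda}\lambda^{k-1}/((k-1)\,k!)$. Both estimates work and reduce the claim to the same ratio $\tfrac{k-1}{k}\,e^{\lambda}(\tdelta/\lambda)^{k-1}\le 1+\tfrac{2}{\eps(k,z_+)}$ (up to lower-order corrections).

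What you have added, and rightly so, is an explicit discussion of the uniform-in-$k$ arithmetic. The paper's final displayed equality ``$\ldots=\tfrac{\eps(k,z_+)}{2}\,\ts_k(z_+)$'' is in fact short by a factor $e^{m(z_+)/z_+}$, because the sum $\sum_{\ell\ge k-1}\frac{1}{\ell!}(m(z_+)/z_+)^\ell$ appearing there equals $e^{m(z_+)/z_+}\,\Prb(\Po(m(z_+)/z_+)\ge k-1)$, not the probability itself, so the left-hand side is $\frac{\eps(k,z_+)}{2}e^{m(z_+)/z_+}\ts_k(z_+)$. That extra factor is harmless when $\eps(k,z_+)$ is bounded, since it can be absorbed into the ``$1+$'' of the target bound, but for $k$ large (where $\eps(k,z_+)$ grows super-exponentially) one needs the exponentially shrinking factor $(\tdelta/\lambda)^{k-1}$ to close the argument, exactly as you observe. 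Your two proposed remedies are both valid and available: in the application \Lm{m0} gives $m(z_+)/z_+\ge(1-\eps_1)\eta\Lambda(z_+)>(1+\eps_1)\tdelta$, making $(\tdelta/\lambda)^{k-1}$ decay geometrically; and, independently, once $\sigma_k m(z_+)<1$ the integrality of $s_k(z_+)$ forces $s_k(z_+)=0$, whence the bound is trivial since $\eps(k,z_+)\ge 2$. Your treatment of the ``in particular'' clause, feeding in \Pp{largez} and \Lm{m0} and using $\Lambda(z_+)\ge\delta$ from \eqref{def:zpm}, is also correct and matches the paper's intent. In short: same proof at heart, with a more careful — and in fact necessary — treatment of the large-$k$ regime.
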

Note that our bound on $s_k(z_+)$ is slightly stronger than necessary for $\cT_k(z_+)$ here.
This is because we shall require the stronger bound in \Sc{proof:tracking}. To prove
\Co{zp}, we first note the following observation, which will also be used later in
the proof of \Lm{deltas:bigz}.

\begin{obs}\label{obs:delta}
 For every\/ $k\ge2$, we have
 \[
  \frac{\eps_1^{k+1}k!}{3^k}\ge\Lambda(z_+),
 \]
 and hence $\eps(k,z_+)\ge 3^k/\eps_1$.
\end{obs}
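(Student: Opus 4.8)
The plan is as follows. First, recall that by Definition~\ref{def:eps} we have $\eps(k,z_+) = \eps_1^k\, k!/\Lambda(z_+)$, so the second assertion is simply the first one rearranged: dividing both sides of $\eps_1^{k+1}k!/3^k \ge \Lambda(z_+)$ by $\eps_1\Lambda(z_+)$ yields $\eps(k,z_+)\ge 3^k/\eps_1$. Hence it suffices to establish the first inequality, and in fact I would prove the (slightly stronger) bound $\eps_1^{k+1}k!/3^k \ge \eps_1 e^{-3/\eps_1}$ for every $k\ge 1$.

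To do this, I would bound $k!$ from below by $(k/e)^k$ (which follows from $e^k \ge k^k/k!$), giving
\[
 \frac{\eps_1^{k+1}k!}{3^k} \ge \eps_1\left(\frac{\eps_1 k}{3e}\right)^k.
\]
Viewed as a function of a real variable $k>0$, the right-hand side has logarithm $\log\eps_1 + k\log(\eps_1 k/(3e))$, whose second derivative in $k$ equals $1/k>0$; it is therefore convex, and minimized where its first derivative $\log(\eps_1 k/3)$ vanishes, i.e.\ at $k=3/\eps_1$, where $\eps_1 k/(3e)=1/e$ and the value is $\eps_1 e^{-3/\eps_1}$. Consequently $\eps_1^{k+1}k!/3^k \ge \eps_1 e^{-3/\eps_1}$ for every $k\ge 1$.

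Finally, I would invoke~\eqref{def:small}, which gives $\delta < \eps_1 e^{-3/\eps_1}$, together with~\eqref{eq:Lambda:delta:plusminus:littleoone}, which gives $\Lambda(z_+) = \delta + o(1)$. Since $\eps_1$ and $\delta$ are fixed constants, $\eps_1 e^{-3/\eps_1} - \delta$ is a fixed positive quantity, so for all sufficiently large $x$ we have $\Lambda(z_+) \le \eps_1 e^{-3/\eps_1} \le \eps_1^{k+1}k!/3^k$ for every $k\ge 2$, as required. There is no genuine obstacle here; the only subtlety worth flagging is that $\Lambda(z_+)$ depends on $x$, so the $o(1)$ error in~\eqref{eq:Lambda:delta:plusminus:littleoone} must be absorbed into the constant-size gap between $\delta$ and $\eps_1 e^{-3/\eps_1}$ that is guaranteed by~\eqref{def:small}.
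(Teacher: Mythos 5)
Your proof is correct and follows essentially the same approach as the paper: bound $k!$ below by a power of $k/e$, reduce to minimizing $\eps_1(\eps_1 k/(3e))^k$, evaluate at $k=3/\eps_1$ to get $\eps_1 e^{-3/\eps_1}$, and then use $\delta<\eps_1 e^{-3/\eps_1}$ together with $\Lambda(z_+)=\delta+o(1)$. The only cosmetic difference is that the paper uses the slightly sharper bound $k!\ge 2(k/e)^k$ (valid for $k\ge 2$), giving a factor-of-$2$ margin $2\eps_1 e^{-3/\eps_1}\ge 2\delta$, so it can absorb the $o(1)$ error without appealing to the strictness of the inequality $\delta<\eps_1 e^{-3/\eps_1}$; your version works just as well because that inequality is indeed strict.
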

\begin{proof}
Note that the expression $\eps_1^{k+1}k!/3^k$ is minimized by taking $k=3/\eps_1$
(which we have assumed is an integer), and that $k!\ge 2(k/e)^k$ for all $k\ge2$. Thus
\[
 \frac{\eps_1^{k+1}k!}{3^k}\ge 2\eps_1\bigg(\frac{3}{e\eps_1}\bigg)^{3/\eps_1}
 \bigg(\frac{\eps_1}{3}\bigg)^{3/\eps_1}=2\eps_1 e^{-3/\eps_1}\ge 2\delta
\]
by \eqref{def:small}. The first result follows since $\Lambda(z_+)=\delta+o(1)$, by~\eqref{eq:Lambda:delta:plusminus:littleoone}. It now follows immediately from Definition~\ref{def:eps} that 
\[
 \eps(k,z_+)=\frac{\eps_1^kk!}{\Lambda(z_+)}\ge \frac{3^k}{\eps_1},
\]
 as required.
\end{proof}

\begin{proof}[Proof of \Co{zp}, assuming \Pp{largez}]
The lower bound on $s_k(z_+)$ holds trivially as $\eps(k,z_+)\ge2$ by \Ob{delta}.
To prove the upper bound, observe that
\begin{equation}\label{eq:zp}
 \frac{s_k(z_+)}{\eps(k,z_+)}\le\frac{\sigma_k m(z_+)}{\eps(k,z_+)}
 =\frac{\tdelta^{k-1}m(z_+)}{k!\cdot 2k}
 \le\frac{m(z_+)}{2(k-1)k!}\bigg(\frac{m(z_+)}{z_+}\bigg)^{k-1},
\end{equation}
by~\eqref{def:sigmak} and the assumption that $m(z_+)\ge\tdelta z_+$. Thus,
by considering just the first term in the sum,
\[
 s_k(z_+)\le\frac{\eps(k,z_+)m(z_+)}{2k(k-1)}\sum_{\ell=k-1}^{\infty}\frac{1}{\ell!}
 \bigg(\frac{m(z_+)}{z_+}\bigg)^\ell=\frac{\eps(k,z_+)}{2}\cdot\ts_k(z_+),
\]
as required.

The last part follows as by \Pp{largez}, $s_k(z_+)\le\sigma_k m(z_+)$, and by \Lm{m0},
$m(z_+)\ge\tdelta z_+$, with high probability.
\end{proof}

Note that $m(z)-m(z-1)=D(z)\le R_1(z)$ when $d(z)=1$ and $m(z)-m(z-1)=0$ otherwise. So by
\Th{branching}, if $\cK(z)$ holds and $2s_2(z)\le(1-\eps_1)m(z)$, then
\begin{equation}\label{eq:cDlikely}
 \Prb\big(\cD(z)^c\mid\cF_z\big)\le\Prb\big(\cD(z)^c\mid\cF_z,\,d(z)=1\big)\le z_0^{-20}.
\end{equation}
Let
\[
 s^*_k(z):=\begin{cases}
 \ds\frac{s_k(z)}{\sigma_k m(z)}&\text{if $\cD^*(z)$ holds;}\\[8pt]
 s^*_k(z+1)&\text{otherwise;}
 \end{cases}
\]
for each $k\ge 2$ and $z\in [z_+,\pi(x)]$, and define
\begin{equation}\label{def:L}
 \cL^*(z):=\cD^*(z)\cap\bigcap_{w\in[z,\pi(x)]}
 \Big(\cQ(w)\cap\bigcap_{k\ge 2}\big\{s^*_k(w)\le 1\big\}\Big).
\end{equation}
for each $z\ge z_+$. We shall in fact show that the event $\cL^*(z_+)$ holds with high probability,
which will be sufficient to prove \Pp{largez}. Let us quickly note, for future reference, that
the event $\cL^*(z)$ implies that the conditions of \Th{branching} are satisfied.

\begin{lemma}\label{lem:largezK}
 Let\/ $z\in [z_+,\pi(x)]$. If\/ $\cD^*(z)$ holds and\/ $s^*_k(z)\le 1$ for every\/ $k\ge 2$, then
 \begin{equation}\label{eq:largezK}
  \sum_{k\ge 2}2^k s_k(z)\le\eps_1 m(z).
 \end{equation}
 In particular, if\/ $\cL^*(z)$ holds, then\/ $2s_2(z)\le\eps_1 m(z)$
 and\/ $\cK(z)$ holds.
\end{lemma}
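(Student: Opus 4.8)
The plan is to derive~\eqref{eq:largezK} from a convergent geometric series and then read off the two stated consequences. Since $\cD^*(z)$ holds and $s^*_k(z)\le 1$, the definition of $s^*_k(z)$ gives $s_k(z)\le\sigma_k m(z)$ for every $k\ge 2$ (this is vacuous, and the asserted bound trivial, in the degenerate case $m(z)=0$, in which $M(z)=\emptyset$ and hence $S_k(z)=\emptyset$ for all $k$). It therefore suffices to show $\sum_{k\ge 2}2^k\sigma_k\le\eps_1$.

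First I would plug in the definition~\eqref{def:sigmak} of $\sigma_k$ and bound it crudely. We have $\tdelta=(1-2\eps_1)\eta\delta\le\delta$, since $\eta<e^{-\gamma}<1$, and $\Lambda(z_+)\ge\delta$ directly from~\eqref{def:zpm} (the set $\{z:\Lambda(z)\ge\delta\}$ contains $z_0$ and is a finite set of integers, so its maximum lies in it). Hence
\[
 2^k\sigma_k=\frac{(2\eps_1)^k\tdelta^{k-1}}{2k\,\Lambda(z_+)}\le\frac{(2\eps_1)^k\delta^{k-2}}{2k}
\]
for every $k\ge 2$. Now~\eqref{def:small} forces $\delta<\eps_1<1/16$, so $\delta^{k-2}\le 1$: the $k=2$ term is at most $\eps_1^2$, and each term with $k\ge 3$ is at most $(2\eps_1)^k$. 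Summing,
\[
 \sum_{k\ge 2}2^k\sigma_k\le\eps_1^2+\sum_{k\ge 3}(2\eps_1)^k=\eps_1^2+\frac{(2\eps_1)^3}{1-2\eps_1},
\]
and a one-line estimate using $\eps_1<1/16$ (so $1-2\eps_1>7/8$) shows the right-hand side is at most $2\eps_1^2\le\eps_1$; this is the only point where the precise smallness of $\eps_1$ is used. This yields $\sum_{k\ge 2}2^k s_k(z)\le m(z)\sum_{k\ge 2}2^k\sigma_k\le\eps_1 m(z)$, which is~\eqref{eq:largezK}.

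For the final sentence, if $\cL^*(z)$ holds then, by~\eqref{def:L}, so do $\cD^*(z)$, $\cQ(z)$, and $\{s^*_k(z)\le 1\}$ for every $k\ge 2$, so~\eqref{eq:largezK} applies; in particular $4s_2(z)\le\sum_{k\ge 2}2^k s_k(z)\le\eps_1 m(z)$, so certainly $2s_2(z)\le\eps_1 m(z)$. Finally, by \Df{Q} the event $\cK(z)$ is the conjunction of $\cQ(z)$ (which holds, since $\cL^*(z)$ does) and the inequality $\sum_{k\ge 2}2^k s_k(z)\le 2e^{C_0}m(z)$ (which follows from~\eqref{eq:largezK}, since $\eps_1<1<2e^{C_0}$), so $\cK(z)$ holds. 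There is no genuine obstacle here; the only care needed is in propagating the constants from~\eqref{def:small} and in observing that the defining property of $z_+$ gives the clean lower bound $\Lambda(z_+)\ge\delta$ on the denominator in~\eqref{def:sigmak}.
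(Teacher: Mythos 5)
Your proof is correct and follows essentially the same route as the paper's: the display in the paper bounds $\sum_{k\ge 2}2^k\sigma_k$ using exactly the inequalities $\tilde\delta\le\delta\le\Lambda(z_+)\le 1$ and $\eps_1\le 1/16$, and then reads off the two consequences from Definition~\ref{def:Q} just as you do. You have merely made the geometric-series arithmetic and the derivation of $\Lambda(z_+)\ge\delta$ from~\eqref{def:zpm} explicit, which the paper compresses into a ``since\ldots'' clause.
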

\begin{proof}
Note first that if $\cD^*(z)$ holds and $s^*_k(z)\le 1$ for every $k\ge 2$, then
\[
 \sum_{k\ge 2}2^k s_k(z)\le\sum_{k\ge 2}2^k\sigma_k m(z)
 =\sum_{k\ge 2}\frac{(2\eps_1)^k\tdelta^{k-1}}{2k\Lambda(z_+)}m(z)\le\eps_1 m(z),
\]
since $\tdelta\le\delta\le\Lambda(z_+)\le1$ and $\eps_1\le 1/16$, which proves~\eqref{eq:largezK}. It follows that $\cL^*(z)$ also implies~\eqref{eq:largezK}, which in turn implies that $2s_2(z) \le \eps_1 m(z)$ and that~\eqref{def:K} holds. Since $\cL^*(z)$ also implies $\cQ(z)$, this is sufficient to show that $\cK(z)$ holds, as claimed.
\end{proof}

In order to apply the method of self-correcting martingales, we shall need (for each $k\ge 2$)
an estimate of the expected change in $s^*_k(z)$ as well as a bound on the largest jump in $s^*_k(z)$.
To be precise, we shall prove the following two lemmas.

\begin{lemma}\label{lem:deltaSstar:bigz}
 Let\/ $z\in [z_+,z_0^3]$. If\/ $\cL^*(z)$ holds, then
 \[
  \E\big[s^*_k(z-1)-s^*_k(z)\mid\cF_z\big]\le (k-1)p_z(x)\big(-s^*_k(z)+16\eps_1\big)
 \]
 for every\/ $k\ge 2$.
\end{lemma}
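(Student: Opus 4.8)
The plan is to compute, conditioned on $\cF_z$, the expected one-step change $\E[s_k(z-1)-s_k(z)\mid\cF_z]$ and the expected change $\E[m(z-1)-m(z)\mid\cF_z]=\E[D(z)\mid\cF_z]$, then combine these to control the drift of the ratio $s_k^*(z)=s_k(z)/(\sigma_k m(z))$. Since $\cL^*(z)$ implies $\cK(z)$ and $2s_2(z)\le\eps_1 m(z)\le(1-\eps_1)m(z)$ (by \Lm{largezK}), the hypotheses of \Th{branching} are satisfied, so I may use parts (a)--(d) of that theorem. Also $\cL^*(z)$ implies $\cD^*(z)$, and I will need $\cD^*(z-1)$ to hold, i.e.\ $\cD(z)$ holds, in order for $s_k^*(z-1)$ to be given by the ``real'' formula rather than frozen; the event $\cD(z)^c$ has conditional probability at most $z_0^{-20}$ by~\eqref{eq:cDlikely}, and on that event $s_k^*$ does not change, so it contributes nothing to the drift (and I should check the frozen case is harmless, which it is since then $s_k^*(z-1)-s_k^*(z)=0$).

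First I would condition further on the value of $d(z)$. If $d(z)=0$ then no row is removed, $D(z)=0$, and $S_k(z-1)=S_k(z)$ unless $z\in S_{k+1}(z)$ wait---more carefully, when $d(z)=0$ nothing changes except possibly column $z$ itself is not added to any $S_d$; so $m(z-1)=m(z)$ and $s_k(z-1)=s_k(z)$, giving zero change. If $d(z)=d\ge 2$, then again no avalanche occurs: $m(z-1)=m(z)$, but now column $z$ is added to $S_d(z-1)$, so $s_d(z-1)=s_d(z)+1$ and all other $s_k$ are unchanged; this contributes a $+1$ to $s_k(z-1)-s_k(z)$ precisely when $d=k$, an event of probability $\Prb(d(z)=k\mid\cF_z)$. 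If $d(z)=1$, the avalanche fires: $m(z-1)=m(z)-D(z)$ and $s_k(z-1)=s_k(z)-\Delta_k(z)$ (column $z$ itself, a $1$-edge, is consumed in the avalanche so contributes nothing to any $S_k$ with $k\ge 2$). Using \Lm{dz} and \Ob{m:bounded}, $\Prb(d(z)=1\mid\cF_z)=(1+o(1))m(z)/z\cdot e^{-m(z)/z}+O(1/z)$, and since $m(z)/z=\Theta(\Lambda(z))$ with $\Lambda(z)=\delta+o(1)$ bounded away from $0$ for $z\le z_0^3$... actually for $z\in[z_+,z_0^3]$ I should instead use \Co{pj}: $\Prb(d(z)=1\mid\cF_z)=(1+o(1))m(z)p_z(x)$ by \Lm{dkvsd1:bigz} together with the lower bound; in any case the leading term is $(1+o(1))m(z)p_z(x)$.

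Now I assemble the drift. Writing $p:=p_z(x)$ and $\lambda:=m(z)/z$, and abbreviating $\pi_1:=\Prb(d(z)=1\mid\cF_z)$, $\pi_k:=\Prb(d(z)=k\mid\cF_z)$, I get
\[
\E\big[s_k(z-1)-s_k(z)\mid\cF_z\big]=\pi_k-\pi_1\,\E\big[\Delta_k(z)\mid\cF_z,d(z)=1\big]+O(z_0^{-20}\cdot z_0^{11}),
\]
the last term absorbing the $\cD(z)^c$ contingency (bounded crudely using $\cK(z)$). By \Th{branching}(b), $\E[\Delta_k(z)\mid\cF_z,d(z)=1]=(1-2s_2(z)/m(z)+o(1))^{-1}ks_k(z)/m(z)+O(m(z)^{-1/3})$, and $2s_2(z)/m(z)\le\eps_1$, so this factor is $1+O(\eps_1)$. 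Next I compute $\E[m(z-1)-m(z)\mid\cF_z]=-\pi_1\E[D(z)\mid\cF_z,d(z)=1]=-(1+O(\eps_1))\pi_1$ by \Th{branching}(a). To pass from $s_k$ and $m$ to $s_k^*=s_k/(\sigma_k m)$, I would write
\[
s_k^*(z-1)-s_k^*(z)=\frac{1}{\sigma_k}\left(\frac{s_k(z-1)}{m(z-1)}-\frac{s_k(z)}{m(z)}\right)
=\frac{1}{\sigma_k m(z)}\Big(\big(s_k(z-1)-s_k(z)\big)-s_k^*(z)\sigma_k\big(m(z-1)-m(z)\big)\Big)+(\text{second order}),
\]
where the second-order term comes from expanding $1/m(z-1)=1/m(z)\cdot(1+D(z)/m(z)+\dots)$ and is $O(\E[D(z)^2]/m(z)^2)=O(1/m(z))$ by \Th{branching}(c), hence negligible after dividing appropriately (here one should be a little careful, bounding $s_k(z-1)\le s_k(z)+O(u_0^2)$ on $\cD(z)$ so the product $s_k(z-1)D(z)/m(z)^2$ has controlled expectation). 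Plugging in,
\[
\E\big[s_k^*(z-1)-s_k^*(z)\mid\cF_z\big]=\frac{1}{\sigma_k m(z)}\Big(\pi_k-(1+O(\eps_1))\tfrac{k s_k(z)}{m(z)}\pi_1+(1+O(\eps_1))s_k^*(z)\sigma_k\pi_1\Big)+o(p/z)\cdot(\cdots).
\]
Using $k s_k(z)/m(z)=k\sigma_k s_k^*(z)$, the middle and last terms combine to $(1+O(\eps_1))(k-1)\sigma_k\pi_1 s_k^*(z)$ with the correct sign, namely $-(1+O(\eps_1))(k-1)\sigma_k\pi_1 s_k^*(z)$ after accounting for signs; and $\pi_1=(1+o(1))m(z)p$, so this term is $-(1+O(\eps_1))(k-1)p\,s_k^*(z)$. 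For the $\pi_k$ term I need $\pi_k/(\sigma_k m(z))\le(k-1)p\cdot C\eps_1$ for a suitable constant; using \Lm{dkvsd1:bigz}, $\pi_k\le\frac{(2\delta\eta)^{k-1}}{k!}\pi_1=(1+o(1))\frac{(2\delta\eta)^{k-1}}{k!}m(z)p$, so $\pi_k/(\sigma_k m(z))\le(1+o(1))\frac{(2\delta\eta)^{k-1}}{k!\,\sigma_k}p$, and $\frac{(2\delta\eta)^{k-1}}{k!\,\sigma_k}=\frac{(2\delta\eta)^{k-1}}{k!}\cdot\frac{\Lambda(z_+)\,2k}{\eps_1^k\tdelta^{k-1}}=\frac{\Lambda(z_+)\,2k}{\eps_1^k}\cdot\big(\tfrac{2\delta\eta}{\tdelta}\big)^{k-1}/k!$; since $\tdelta=(1-2\eps_1)\eta\delta$ and $\eps_1\le1/16$ we have $2\delta\eta/\tdelta=2/(1-2\eps_1)\le 3$, so this is at most $\frac{6k\Lambda(z_+)}{\eps_1^k k!}\cdot 3^{k-1}=\frac{2k}{\eps_1^k k!}\cdot 3^k\Lambda(z_+)$, and by \Ob{delta} (i.e.\ $\eps_1^{k+1}k!/3^k\ge\Lambda(z_+)$, equivalently $3^k\Lambda(z_+)/(\eps_1^k k!)\le\eps_1$) this is at most $2k\eps_1\le\dots$; dividing by $(k-1)p$ and being slightly generous with constants gives the bound $16\eps_1$.

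The main obstacle is bookkeeping the error terms: I must verify that every $o(\cdot)$ and $O(\cdot)$ appearing---from \Lm{dz}/\Co{pj} (the $O(1/z)$ additive error and the $(1+o(1))$ multiplicative errors in $\Prb(d(z)=k)$), from \Th{branching} (the $O(m(z)^{-1/3})$ and $O(1)$ terms), from the $\cD(z)^c$ event, and from the second-order expansion of $1/m(z-1)$---is genuinely negligible relative to the target $(k-1)p_z(x)$, \emph{uniformly in $k$} up to $k=4u_0$ (beyond which $s_k=0$ by $\cK(z)$) and uniformly in $z\in[z_+,z_0^3]$. The key facts making this work are $p_z(x)=z^{-1+o(1)}$ (\Co{pj}), $z\le z_0^3$ so $1/z\ge z_0^{-3}$ dominates the $z_0^{-20}$-type errors comfortably, $m(z)=\Theta(\Lambda(z)z)\ge z_0^{1+o(1)}$ (\eqref{eq:lowerbound:m:using:M}), and the uniformity claims built into \Th{branching}; the slightly delicate point is that for large $k$ the quantity $\sigma_k$ is exponentially small, so the additive errors $O(m(z)^{-1/3})$ in \Th{branching}(b), once divided by $\sigma_k m(z)$, become $O(m(z)^{-1/3}/(\sigma_k m(z)))$---this is where I use that $\sigma_k$ decreases only \emph{exponentially} in $k$ (not faster) while $m(z)=z_0^{\Theta(1)}$ is polynomially large in $z_0$ and $k\le 4u_0=o(\log z_0)$, so $1/\sigma_k=e^{O(k)}=z_0^{o(1)}$ is swamped. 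I would state and use this $\sigma_k=e^{-\Theta(k)}$ fact explicitly. The remaining steps---the sign accounting in the ratio expansion, and absorbing the $O(\eps_1)$ factors into the final constant $16\eps_1$ (noting $\eps_1$ is small so $(1+O(\eps_1))(k-1)p\ge (k-1)p$ up to lower order, which matters for the sign of the dominant $-s_k^*(z)$ term)---are routine.
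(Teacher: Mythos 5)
Your overall route is the same as the paper's: you express $s_k^*(z-1)-s_k^*(z)$ in terms of the changes in $s_k$ and $m$, compute the expected changes via \Th{branching} and \Lm{dkvsd1:bigz}, and use $\sigma_k=e^{-\Theta(k)}$, the lower bound $m(z)\ge z_0^{1+o(1)}$, and \Ob{delta} to absorb all the small errors. The final constant chase also mirrors the paper's (compare to Lemmas~\ref{lem:branching:bigz} and~\ref{lem:deltas:bigz} and the paper's proof of \Lm{deltaSstar:bigz}). However, there is a genuine error in your case analysis.

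When $d(z)=1$ you write ``$s_k(z-1)=s_k(z)-\Delta_k(z)$.'' This is false: $\Delta_k(z)=|S_k(z)\setminus S_k(z-1)|$ counts only those $k$-edges that \emph{leave} $S_k$, but $s_k$ can also \emph{gain} edges during the avalanche, because an edge of size $k+1$ that loses exactly one vertex becomes an edge of size $k$. The correct decomposition is Observation~\ref{obs:change:ins:using:deltas} of the paper:
\[
 s_k(z-1)-s_k(z)\in \id_{\{d(z)=k\}}+\bigl(-\Delta_k(z)+\Delta_{k+1}(z)\pm\Delta'(z)\bigr)\id_{\{d(z)=1\}}.
\]
The new term $\Delta_{k+1}(z)$ is not an error term you can wave away: by \Th{branching}(b) its conditional expectation is $(1+O(\eps_1))(k+1)s_{k+1}(z)/m(z)$, and since $s_{k+1}(z)\le\sigma_{k+1}m(z)$ with $(k+1)\sigma_{k+1}=\eps_1\tdelta\cdot k\sigma_k$, after dividing by $\sigma_k m(z)p_z(x)$ it contributes $O(\eps_1 k)$, i.e.\ a quantity of exactly the same order as the other $O(\eps_1 k)$ pieces you are tracking in the final $16\eps_1$. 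So the term must be included, and once it is, it does fit inside the $16\eps_1$ budget — but as written your proof contains a false identity. (The error term $\Delta'(z)$ you also drop, but this one really is negligible: $\E[\Delta'(z)\mid\cF_z,d(z)=1]=O(m(z)^{-1/3})$ by \Th{branching}(d), which is swamped by $\sigma_k p_z(x)$ by the same $e^{O(k)}=z_0^{o(1)}$ reasoning you already invoke.)

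Everything else you sketch — the expansion of $1/m(z-1)$, the use of $\E[D(z)^2\mid\cF_z,d(z)=1]=O(1)$ to bound the second-order remainder, the bound $\pi_k/(\sigma_k m(z))=O(\eps_1 k)p_z(x)$ via \Lm{dkvsd1:bigz} and \Ob{delta}, and the uniformity-in-$k$ bookkeeping using $k\le 4u_0=o(\log z_0)$ — is sound and matches the paper's treatment. Fix the displayed identity and include $\Delta_{k+1}(z)$ and the argument is essentially the paper's.
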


\begin{lemma}\label{lem:maxstep:bigz}
 Let\/ $z=z_0^\beta\in [z_+,z_0^3]$. If\/ $\cL^*(z)$ holds, then
 \[
  |s^*_k(z-1)-s^*_k(z)|\le z_0^{-2+1/\beta+\eps_1}
 \]
 for every\/ $k\ge 2$.
\end{lemma}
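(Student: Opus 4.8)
\textbf{Proof proposal for Lemma~\ref{lem:maxstep:bigz}.}

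The plan is to bound the single-step change $|s^*_k(z-1) - s^*_k(z)|$ by separately controlling the changes in the numerator $s_k(z)$ and the denominator $\sigma_k m(z)$ of the ratio defining $s^*_k(z)$, on the event $\cL^*(z)$. Since $\cL^*(z)$ implies $\cD^*(z)$ (so $\cD(w)$ holds for all $w > z$, though not necessarily $\cD(z)$), and implies $\cK(z)$ by \Lm{largezK}, we have good deterministic control on all the relevant quantities. First I would note that if $\cD^*(z-1)$ fails then $s^*_k(z-1) = s^*_k(z)$ by definition and there is nothing to prove; so we may assume $\cD(z)$ holds, i.e.\ $m(z) - m(z-1) = D(z) \le u_0^2$. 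Writing $s^*_k(z-1) - s^*_k(z) = \frac{s_k(z-1)}{\sigma_k m(z-1)} - \frac{s_k(z)}{\sigma_k m(z)}$, the numerator changes by $|s_k(z) - s_k(z-1)| = \Delta_k(z) \le R_1(z) \le u_0^2$ (using $\cD(z)$ and the fact that $R_1(z) \le u_0^2$ when $d(z)=1$, and $\Delta_k(z) = 0$ otherwise), while the denominator changes by $\sigma_k(m(z) - m(z-1)) = \sigma_k D(z) \le \sigma_k u_0^2$.

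The key estimate is then the elementary bound
\[
 \bigg| \frac{a'}{b'} - \frac{a}{b} \bigg| \le \frac{|a' - a|}{b'} + \frac{a}{b}\cdot\frac{|b - b'|}{b'}
\]
applied with $a = s_k(z)$, $b = \sigma_k m(z)$, $a' = s_k(z-1)$, $b' = \sigma_k m(z-1)$. The first term is at most $u_0^2 / (\sigma_k m(z-1))$, and the second is at most $s^*_k(z) \cdot \frac{D(z)}{m(z-1)} \le 1 \cdot \frac{u_0^2}{m(z-1)}$, using $s^*_k(z) \le 1$ from $\cL^*(z)$. So altogether $|s^*_k(z-1) - s^*_k(z)| \le \frac{u_0^2}{m(z-1)}\big(\sigma_k^{-1} + 1\big) \le \frac{2 u_0^2}{\sigma_k m(z-1)}$. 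Now I would lower-bound $m(z-1)$: by~\eqref{eq:lowerbound:m:using:M} (since $\cM(z-1)$ holds, being part of $\cL^*(z) \subseteq \cM^*(z)$... actually $\cL^*(z)$ contains $\cQ(w)$ hence $\cM(w)$ for all $w \ge z$; for $w = z-1$ use instead the trivial bound $m(z-1) \ge m_0(z-1) = (1+o(1))\eta\Lambda(z-1)(z-1)$, or better, just $m(z-1) \ge z_0^{1+o(1)}$ directly). In fact, since $z = z_0^\beta \in [z_+, z_0^3]$, \Co{Lambda:rough} gives $\Lambda(z) = z_0^{2-\beta-1/\beta+o(1)}$, and $m(z-1) \ge m_0(z-1) \ge (1-\eps_1)\eta\Lambda(z)z = z_0^{3-1/\beta+o(1)}$ (being increasing: $\Lambda(z)z = J(x)\Psi(x,q_z)/x$).

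For the $\sigma_k$ factor, recall from~\eqref{def:sigmak} that $\sigma_k = \frac{\eps_1^k}{\Lambda(z_+)}\cdot\frac{\tdelta^{k-1}}{2k}$; since $\Lambda(z_+) = \delta + o(1)$, $\tdelta = (1-2\eps_1)\eta\delta = \Theta(1)$, and $\eps_1 = \Theta(1)$, we have $\sigma_k = e^{-O(k)}$, so $\sigma_k^{-1} = e^{O(k)}$. Since $\cK(z)$ holds, $s_k(z) = 0$ for $k \ge 4u_0$ (so there is nothing to prove in that range), hence we may take $k \le 4u_0$, giving $\sigma_k^{-1} = e^{O(u_0)} = z_0^{o(1)}$ by~\eqref{eq:ulogu}. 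Combining, $|s^*_k(z-1) - s^*_k(z)| \le \frac{2u_0^2 z_0^{o(1)}}{z_0^{3-1/\beta+o(1)}} = z_0^{-3 + 1/\beta + o(1)}$. This is stronger than the claimed bound $z_0^{-2+1/\beta+\eps_1}$ provided $z_0$ is large (the $o(1)$ exponent error is absorbed into the $\eps_1$ slack, and $-3 < -2$). I do not foresee a serious obstacle here; the only mild care needed is in justifying the lower bound on $m(z-1)$ cleanly (making sure the monotonicity of $\Lambda(z)z$ and the relation $z = z_0^\beta$ are used consistently) and in confirming that restricting to $k \le 4u_0$ is legitimate — both of which follow immediately from $\cK(z)$ (via $\cL^*(z)$) and the results already established.
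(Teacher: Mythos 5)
Your overall decomposition is exactly the paper's: split $\big|s^*_k(z-1)-s^*_k(z)\big|$ via the elementary inequality for ratios into a "numerator change" term plus a "denominator change" term, reduce to $k\le 4u_0$ via $\cQ(z)$, note $\sigma_k = e^{O(k)} = z_0^{o(1)}$, and lower-bound $m(z-1)$ by $z_0^{2-1/\beta+o(1)}$. The final conclusion goes through. However, there are several specific errors in the details that you should fix.

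First, the claim $|s_k(z)-s_k(z-1)| = \Delta_k(z) \le R_1(z)$ is false on two counts. The change in $s_k$ is \emph{not} $\Delta_k(z)$: by \Ob{change:ins:using:deltas}, when $d(z)=1$ we have $s_k(z-1)-s_k(z) = -\Delta_k(z)+\Delta_{k+1}(z)\pm\Delta'(z)$ (edges of higher degree that drop into $S_k$ also contribute). The cleaner route, which the paper uses, is that each of the $D(z)$ removed rows has at most $2u_0$ non-zero entries to the right of $z$ (\Ob{onesinarow}), so $|s_k(z-1)-s_k(z)| \le 2u_0\,D(z)$ directly. Second, $\cD(z)$ only says $D(z) = m(z)-m(z-1) \le u_0^2$; it does \emph{not} say $R_1(z)\le u_0^2$. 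Since $R_1(z) \le 2u_0\,D(z)$, the correct ceiling is $2u_0^3$, not $u_0^2$. Luckily both are $z_0^{o(1)}$, so the final bound is unaffected, but as written your numerator bound is simply wrong.

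Third, you have an arithmetic slip: $\Lambda(z)z = z_0^{2-\beta-1/\beta+o(1)}\cdot z_0^\beta = z_0^{2-1/\beta+o(1)}$, not $z_0^{3-1/\beta+o(1)}$; consequently your claimed intermediate bound $z_0^{-3+1/\beta+o(1)}$ should be $z_0^{-2+1/\beta+o(1)}$, which still fits inside the $\eps_1$ slack but is not "stronger" as you assert. Fourth, to lower-bound $m(z-1)$ you should not invoke \Lm{m0} — that is a high-probability statement, whereas the lemma claims a deterministic implication of $\cL^*(z)$. Instead use \Ob{m:rough}, which is implied by $\cM(z)\subseteq\cL^*(z)$, to get $m(z) = z_0^{2-1/\beta+o(1)}$, and then $m(z-1) \ge m(z)-u_0^2 = z_0^{2-1/\beta+o(1)}$ since $\cD(z)$ bounds the one-step drop. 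With these corrections your argument reduces to the paper's.
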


We shall first deduce \Lm{maxstep:bigz}, which is relatively straightforward, and then
\Lm{deltaSstar:bigz}, the proof of which will require a little more work. The first step is to
recall the following simple facts about~$m(z)$.

\begin{obs}\label{obs:m:rough}
 Let\/ $z\in [z_+,\pi(x)]$. If\/ $\cM(z)$ holds, then\/ $z_0^{1+o(1)}\le m(z)\le z_0^{2+o(1)}$.
 More precisely, if\/ $z=z_0^\beta$ then\/ $m(z)=z_0^{2-1/\beta+o(1)}$.
\end{obs}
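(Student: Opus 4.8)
\textbf{Proof proposal for Observation~\ref{obs:m:rough}.}
The plan is to read off both inequalities from the bounds on $m(z)$ that are already available in the excerpt, namely the deterministic lower bound~\eqref{eq:lowerbound:m:using:M} that follows from $\cM(z)$, together with the obvious upper bound $m(z)\le N$, and then to sharpen these using \Co{Lambda:rough} when $z=z_0^\beta$. First I would record that $m(z)\le N=\eta J(x)=z_0^{2+o(1)}$ by~\eqref{eq:J:rough}, which gives the upper bound $m(z)\le z_0^{2+o(1)}$ directly, with no dependence on whether $z\ge z_+$. For the lower bound, I would invoke~\eqref{eq:lowerbound:m:using:M}, which states that if $\cM(z)$ holds then $m(z)\ge(1-\eps_0)\eta\Lambda(z)z\ge z_0^{1+o(1)}$ for every $z\in[z_-,\pi(x)]$; since $z_+\ge z_-$, this applies verbatim here and gives $m(z)\ge z_0^{1+o(1)}$.

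For the more precise statement, suppose $z=z_0^\beta$. From~\eqref{eq:mab} (a consequence of \Th{track:m}, but note that here we only need the deterministic lower and upper bounds coming from $\cM(z)$, so that the conclusion holds on the event $\cM(z)$ rather than merely with high probability) together with~\eqref{eq:alpha:bounds} and \Ob{m:bounded}, we have
\[
 (1-\eps_0)\eta\Lambda(z)z\le m(z)\le C_0\Lambda(z)z,
\]
so that $m(z)=\Theta\big(\Lambda(z)z\big)$. Now \Co{Lambda:rough} gives $\Lambda(z)=z_0^{2-\beta-1/\beta+o(1)}$ for $\beta$ bounded away from $0$ (and for $z\in[z_+,z_0^3]$ we indeed have $\beta\ge 1+o(1)$ bounded away from $0$, by~\eqref{eq:zpm:rough}; for larger $z$ the statement $m(z)=z_0^{2-1/\beta+o(1)}$ degenerates appropriately and is anyway covered by $m(z)\le z_0^{2+o(1)}$). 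Hence
\[
 m(z)=z_0^{2-\beta-1/\beta+o(1)}\cdot z_0^{\beta}=z_0^{2-1/\beta+o(1)},
\]
as claimed.

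There is no real obstacle here: the observation is a bookkeeping consequence of results already in hand, and the only mild point of care is to make sure the constants $\eps_0$, $C_0$ get absorbed into the $z_0^{o(1)}$ error (which is fine since they are fixed constants, while $\log z_0\to\infty$), and to note that the claimed equality $m(z)=z_0^{2-1/\beta+o(1)}$ should be read on the event $\cM(z)$ — exactly as in the statement — rather than unconditionally. I would also remark that when $\beta\to\infty$ (i.e.\ $z$ close to $\pi(x)$) the exponent $2-1/\beta$ tends to $2$, consistently with $m(z)\le N=z_0^{2+o(1)}$, so the two parts of the observation are compatible.
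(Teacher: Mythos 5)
Your proof is correct and takes essentially the same route as the paper: both deduce $m(z)=\Theta(\Lambda(z)z)$ from $\cM(z)$ (via \Ob{m:bounded} and~\eqref{eq:lowerbound:m:using:M}) and then feed in \Co{Lambda:rough}, the only cosmetic difference being that the paper derives the crude bounds $z_0^{1+o(1)}\le m(z)\le z_0^{2+o(1)}$ from the precise exponent (using $\beta\ge1$ on $[z_+,\pi(x)]$) whereas you establish them independently. One small note: your parenthetical worry about ``larger $z$'' is unnecessary — \Co{Lambda:rough} requires only that $\beta$ be bounded away from $0$, which holds for all $z\ge z_+\ge z_0$ (so $\beta\ge1$), even as $\beta\to\infty$; the asymptotic $m(z)=z_0^{2-1/\beta+o(1)}$ is valid throughout the range and simply tends to $z_0^{2+o(1)}$, consistent with your upper bound.
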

\begin{proof}
Recall first that if $\cM(z)$ holds then $m(z) = \Theta(\Lambda(z)z)$, by Observation~\ref{obs:m:bounded} and~\eqref{eq:lowerbound:m:using:M}. Moreover, $\Lambda(z) = z_0^{2 - \beta - 1/\beta+o(1)}$ for $z = z_0^\beta$, by \Co{Lambda:rough}, so $m(z) = z_0^{2 - 1/\beta+o(1)}$, as claimed. Since $\beta\ge 1$ for every $z_+\ge z_0$, the bounds $z_0^{1+o(1)}\le m(z)\le z_0^{2+o(1)}$ follow.
\end{proof}

\begin{proof}[Proof of \Lm{maxstep:bigz}]
Recall first that $\cL^*(z)$ implies $\cQ(z)$, which implies that $s_k(z)=0$ if $k>4u_0$ (see
\Df{Q}), so we may assume that $k\le 4u_0$.
Also, by definition of $s^*_k(z)$, we may assume $\cD(z)$ holds.
Recall that $u_0=o(\log z_0)$, by~\eqref{eq:ulogu},
and note that therefore
\begin{equation}\label{eq:sigma:small}
 \sigma_k=\frac{\eps_1^k}{\Lambda(z_+)}\cdot\frac{\tdelta^{k-1}}{2k}=e^{O(k)}
 =z_0^{o(1)}=m(z)^{o(1)}.
\end{equation}
By \Ob{onesinarow}, each row of $A$ contains at most $2u_0$ non-zero entries to the right of~$z$,
and hence $|s_k(z-1)-s_k(z)|\le 2u_0|m(z-1)-m(z)|$ if $d(z)=1$. On the other hand,
$|s_k(z-1)-s_k(z)|\le 1$ and $m(z-1)=m(z)$ if $d(z)\ne1$. Thus $\cD(z)$ implies
\begin{equation}\label{eq:schange:vs:mchange}
 |s_k(z-1)-s_k(z)|\le 2u_0^3.
\end{equation}

Finally, note that, by the definition of $s_k^*(z)$, we have
\begin{align*}
 \big|\sigma_k\big(s^*_k(z-1)-s^*_k(z)\big)\big|
 &=\bigg|\frac{s_k(z-1)}{m(z-1)}-\frac{s_k(z)}{m(z)}\bigg|\\
 &\le\frac{|s_k(z-1)-s_k(z)|}{m(z-1)}+\frac{s_k(z)|m(z)-m(z-1)|}{m(z)m(z-1)}\\
 &\le\frac{2u_0^3}{m(z-1)}+\frac{s_k(z)u_0^2}{m(z)m(z-1)}
 \le\frac{3u_0^3}{m(z)-u_0^2}=m(z)^{-1+o(1)},
\end{align*}
since $s_k(z)\le\eps_1 m(z)$, by $\cL^*(z)$ and \Lm{largezK}, and since
$u_0\le\log z_0$ and $m(z)\ge z_0^{1+o(1)}$.
The lemma now follows from \eqref{eq:sigma:small} and \Ob{m:rough} as $\cL^*(z)$ implies~$\cM(z)$.
\end{proof}

We shall now prove \Lm{deltaSstar:bigz}. The first step is to note the following immediate
consequence of \Th{branching} and \Lm{largezK}. The key observation is that since
$s_2(z)$ is small, if $d(z)=1$ then we are likely to only remove a single row.

\begin{lemma}\label{lem:branching:bigz}
 Let\/ $z\in [z_+,\pi(x)]$. If\/ $\cL^*(z)$ holds, then
 \[
  \E\big[ \big( m(z)-m(z-1) \big)\id_{\cD(z)}\mid\cF_z,\,d(z)=1\big]\in 1\pm 2\eps_1.
 \]
\end{lemma}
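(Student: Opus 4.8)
The plan is to deduce this directly from Theorem~\ref{thm:branching}(a), which gives a precise estimate for $\E\big[D(z)\mid\cF_z,\,d(z)=1\big]$, together with the crude bound in part~$(c)$ and the tail estimate~\eqref{eq:Dplus:tailevent}, which control the contribution of the rare event $\cD(z)^c$. The key point is that, since $\cL^*(z)$ holds, Lemma~\ref{lem:largezK} tells us that $2s_2(z)\le\eps_1 m(z)$ and that $\cK(z)$ holds, so the hypotheses of Theorem~\ref{thm:branching} are met. In particular, from part~$(a)$ we get
\[
 \E\big[D(z)\mid\cF_z,\,d(z)=1\big]=\bigg(1-\frac{2s_2(z)}{m(z)}+o(1)\bigg)^{-1}\in 1\pm 2\eps_1,
\]
where the last containment holds because $0\le 2s_2(z)/m(z)\le\eps_1$ and $\eps_1$ is small (so $(1-\eps_1+o(1))^{-1}\le 1+2\eps_1$, and the lower bound $\ge 1$ is trivial since $D(z)\ge 1$ when $d(z)=1$; allowing the $\pm 2\eps_1$ slack absorbs the $o(1)$).

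First I would write $D(z)=m(z)-m(z-1)$ and split according to whether $\cD(z)$ holds:
\[
 \E\big[D(z)\id_{\cD(z)}\mid\cF_z,\,d(z)=1\big]
 =\E\big[D(z)\mid\cF_z,\,d(z)=1\big]-\E\big[D(z)\id_{\cD(z)^c}\mid\cF_z,\,d(z)=1\big].
\]
It remains to bound the second term on the right by $o(1)$, which together with the displayed estimate for $\E[D(z)\mid\cF_z,d(z)=1]$ gives the lemma (the $o(1)$ being absorbed into the $\pm 2\eps_1$). For this I would apply Cauchy--Schwarz: writing $\Prb_1(\cdot):=\Prb(\cdot\mid\cF_z,\,d(z)=1)$ and $\E_1$ for the corresponding expectation,
\[
 \E_1\big[D(z)\id_{\cD(z)^c}\big]\le\sqrt{\E_1\big[D(z)^2\big]}\cdot\sqrt{\Prb_1\big(\cD(z)^c\big)}.
\]
By Theorem~\ref{thm:branching}(c) the first factor is $O(1)$. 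For the second factor, recall that when $d(z)=1$ we have $D(z)\le R_1(z)$, so $\cD(z)^c=\{D(z)>u_0^2\}\subseteq\{R_1(z)\ge u_0^2\}$, and hence $\Prb_1(\cD(z)^c)\le z_0^{-20}$ by~\eqref{eq:Dplus:tailevent}; indeed this is exactly the observation recorded in~\eqref{eq:cDlikely}. Therefore $\E_1[D(z)\id_{\cD(z)^c}]=O(z_0^{-10})=o(1)$, as needed.

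I do not expect any genuine obstacle here: the hard work has already been done in Sections~\ref{sec:compare} and~\ref{sec:branching}, and this lemma is essentially a bookkeeping corollary of Theorem~\ref{thm:branching}. The only mildly delicate point is making sure the constants line up, i.e.\ that $(1-2s_2(z)/m(z)+o(1))^{-1}$ together with the $O(z_0^{-10})$ error genuinely lies in $1\pm 2\eps_1$ for all large $x$; this follows since $2s_2(z)/m(z)\in[0,\eps_1]$ and $\eps_1<1/16$, so $(1-\eps_1)^{-1}<1+2\eps_1$ with room to spare for the vanishing error terms.
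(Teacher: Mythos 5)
Your proof is correct and follows essentially the same route as the paper: reduce to Theorem~\ref{thm:branching}(a) via Lemma~\ref{lem:largezK}, then show the contribution of $\cD(z)^c$ is negligible using~\eqref{eq:Dplus:tailevent}. The only cosmetic difference is that you bound $\E\big[D(z)\id_{\cD(z)^c}\mid\cF_z,d(z)=1\big]$ by Cauchy--Schwarz together with the second-moment bound from part~(c), whereas the paper uses the trivial deterministic bound $D(z)\le m(z)$ and the probability bound $z_0^{-20}$ directly; both yield an $o(1)$ error.
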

\begin{proof}
By \Lm{largezK}, $2s_2(z)\le\eps_1 m(z)$. Thus by \Th{branching}, and recalling that
$D(z)=m(z)-m(z-1)$, it follows that
\begin{align*}
 \E\big[(m(z)-m(z-1))\id_{\cD(z)}\mid\cF_z,\,d(z)=1\big]
 &=\bigg(1-\frac{2s_2(z)}{m(z)}+o(1)\bigg)^{-1}+O(m(z)z_0^{-20})\\
 &\in 1\pm 2\eps_1
\end{align*}
by \eqref{eq:cDlikely}, as claimed.
\end{proof}

Next, we shall calculate the expected change in $s_k(z)$.

\begin{lemma}\label{lem:deltas:bigz}
 Let\/ $z\in [z_+,\pi(x)]$. If\/ $\cL^*(z)$ holds, then
 \begin{equation}\label{eq:deltastildek:bigz}
  \E\big[ \big( s_k(z-1)-s_k(z) \big)\id_{\cD(z)}\mid\cF_z\big]
  \le k\sigma_k\big(-s^*_k(z)+5\eps_1\big)\Prb\big(d(z)=1\mid\cF_z\big)
 \end{equation}
 for every\/ $k\ge 2$.
\end{lemma}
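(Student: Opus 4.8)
The plan is to combine a deterministic combinatorial identity for the one-step change $s_k(z-1)-s_k(z)$ with the branching estimates of Theorem~\ref{thm:branching} and the bound on $\Prb(d(z)=k\mid\cF_z)$ from Lemma~\ref{lem:dkvsd1:bigz}. The key identity is the following. At step $z$ of Algorithm~\ref{alg:algorithm} the unique new vertex is $z$, no row is removed unless $d(z)=1$, and during the ensuing avalanche degrees only decrease. Hence $S_k$ changes only as follows: if $d(z)=k\ge2$ then $z$ enters $S_k$ and nothing else changes (a degree-$k$ column triggers no avalanche); if $d(z)=1$ then exactly the columns of $S_k(z)$ that meet $M(z)\setminus M(z-1)$ — there are $\Delta_k(z)$ of them — leave $S_k$, a column of $S_k(z-1)\setminus S_k(z)$ is precisely a column of degree $\ell>k$ at step $z$ losing exactly $\ell-k$ vertices, and the new vertex $z$ itself has degree $1$ and is removed, so never lies in $S_j$ with $j\ge2$. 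Since the case $\ell=k+1$ accounts for at most $\Delta_{k+1}(z)$ such columns and the cases $\ell\ge k+2$ (each an edge of size $\ge3$ losing $\ge2$ vertices) for at most $\Delta'(z)$ in total, this gives
\[
 s_k(z-1)-s_k(z)\ \le\ \id_{\{d(z)=k\}}\ +\ \id_{\{d(z)=1\}}\big(\Delta_{k+1}(z)+\Delta'(z)-\Delta_k(z)\big),
\]
where $\Delta_k(z)=\Delta_{k+1}(z)=\Delta'(z)=0$ unless $d(z)=1$.

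Next I would take $\E[\,\cdot\,\id_{\cD(z)}\mid\cF_z]$ of this inequality. The first term contributes $\Prb(d(z)=k\mid\cF_z)$, since $d(z)=k\ge2$ forces $D(z)=0$ and hence $\cD(z)$; by Lemma~\ref{lem:dkvsd1:bigz} (applicable since $\cL^*(z)$ implies $\cK(z)$ by Lemma~\ref{lem:largezK}) this is at most $\frac{(2\delta\eta)^{k-1}}{k!}\Prb(d(z)=1\mid\cF_z)$, and Observation~\ref{obs:delta}, together with $\tdelta=(1-2\eps_1)\eta\delta$ and $\eps_1<1/16$, shows $\frac{(2\delta\eta)^{k-1}}{k!}\le\eps_1 k\sigma_k$. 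The remaining terms contribute $\Prb(d(z)=1\mid\cF_z)$ times $\E[(\Delta_{k+1}(z)+\Delta'(z)-\Delta_k(z))\id_{\cD(z)}\mid\cF_z,d(z)=1]$. For the two nonnegative terms I would drop $\id_{\cD(z)}$ and apply Theorem~\ref{thm:branching}(b),(d); for the $-\Delta_k(z)$ term I would write $\E[\Delta_k(z)\id_{\cD(z)}\mid\cF_z,d(z)=1]\ge\E[\Delta_k(z)\mid\cF_z,d(z)=1]-\E[\Delta_k(z)\id_{\cD(z)^c}\mid\cF_z,d(z)=1]$, bound $\Delta_k(z)\le s_k(z)=O(m(z))\le z_0^{2+o(1)}$ using $\cK(z)$, and use $\Prb(\cD(z)^c\mid\cF_z,d(z)=1)\le z_0^{-20}$ from~\eqref{eq:cDlikely} to make the second expectation $z_0^{-18+o(1)}$ and negligible, before applying Theorem~\ref{thm:branching}(b).

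The arithmetic then goes as follows. Using $\cL^*(z)$ one has $2s_2(z)\le\eps_1 m(z)$ and $s^*_{k+1}(z)\le1$ (Lemma~\ref{lem:largezK}), so $\big(1-\tfrac{2s_2(z)}{m(z)}+o(1)\big)^{-1}\le(1-2\eps_1)^{-1}$ and $\ge1-\eps_1$ for $x$ large; and $\tfrac{ks_k(z)}{m(z)}=k\sigma_k s^*_k(z)$ since $\cD^*(z)$ holds. Combining this with the identity $(k+1)\sigma_{k+1}=\eps_1\tdelta\,k\sigma_k$ and $\eta\delta<1$ gives $\E[\Delta_{k+1}(z)\mid\cF_z,d(z)=1]\le\eps_1\eta\delta\,k\sigma_k+O(m(z)^{-1/3})\le\eps_1 k\sigma_k+O(m(z)^{-1/3})$, similarly $-\E[\Delta_k(z)\id_{\cD(z)}\mid\cF_z,d(z)=1]\le-k\sigma_k s^*_k(z)+\eps_1 k\sigma_k+O(m(z)^{-1/3})$, and $\E[\Delta'(z)\mid\cF_z,d(z)=1]=O(m(z)^{-1/3})$. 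For $k\le4u_0$ one has $\sigma_k\ge z_0^{-o(1)}$ (as in~\eqref{eq:sigma:small}, using $u_0=o(\log z_0)$ from~\eqref{eq:ulogu}) while $m(z)^{-1/3}\le z_0^{-1/3+o(1)}$ by~\eqref{eq:lowerbound:m:using:M}, so every $O(m(z)^{-1/3})$ is $\le\eps_1 k\sigma_k$; for $k>4u_0$ the event $\cK(z)$ forces $s_k(z)=s_{k+1}(z)=0$, so $s^*_k(z)=0$ and the $\Delta$'s of size $k$ or $k+1$ vanish, leaving only the first term. Summing the four contributions yields $k\sigma_k(-s^*_k(z)+5\eps_1)\Prb(d(z)=1\mid\cF_z)$, with room to spare.

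The hard part will be the combinatorial step: pinning down exactly which columns enter and leave $S_k$ across a step and getting the inequality the right way round — in particular that the gains come only from the new column $z$ (when $d(z)=k$) or from avalanche-driven degree drops (when $d(z)=1$), with the degree-$(k{+}1)$ drops controlled by $\Delta_{k+1}(z)$ and the steeper drops by $\Delta'(z)$, while the new column $z$ contributes nothing when it triggers an avalanche. A secondary subtlety is that the constant $5\eps_1$ closes only because of the precise choice of $\sigma_k$ in~\eqref{def:sigmak}: the $\Prb(d(z)=k)$ term must be charged against it via Observation~\ref{obs:delta}, and the feed-down of $s_{k+1}(z)$ against the relation $(k+1)\sigma_{k+1}=\eps_1\tdelta\,k\sigma_k$ and the bound $s^*_{k+1}(z)\le1$.
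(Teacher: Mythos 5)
Your proposal is correct and follows essentially the same route as the paper: you rederive Observation~\ref{obs:change:ins:using:deltas}, bound the $\Prb(d(z)=k\mid\cF_z)$ contribution via Lemma~\ref{lem:dkvsd1:bigz} and Observation~\ref{obs:delta}, and treat the three $\Delta$-terms via Theorem~\ref{thm:branching} with the relation $(k+1)\sigma_{k+1}=\eps_1\tdelta\,k\sigma_k$; the only cosmetic difference is that you handle the $\id_{\cD(z)}$ factor on the $-\Delta_k(z)$ term by an explicit subtraction and the $z_0^{-20}$ tail bound, whereas the paper implicitly absorbs it via~\eqref{eq:cDlikely}, and your constant bookkeeping is very slightly different but lands on the same $5\eps_1$.
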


Recall from \Df{DandDelta} that $\Delta_k(z)=|S_k(z)\setminus S_k(z-1)|$ denotes the number
of edges of size $k$ that contain a vertex removed in step~$z$, and $\Delta'(z)$ denotes the
number of edges of size at least three that have at least two vertices removed in step~$z$.
We shall use the following simple observation to prove \Lm{deltas:bigz}, and again in
\Sc{critical}.

\begin{obs}\label{obs:change:ins:using:deltas}
 For every\/ $z\in [z_-,\pi(x)]$ and\/ $k\ge 2$, we have
 \[
  s_k(z-1)-s_k(z)\in \id_{\{d(z)=k\}}
  +\big(-\Delta_k(z)+\Delta_{k+1}(z)\pm\Delta'(z)\big)\id_{\{d(z)=1\}}.
 \]
\end{obs}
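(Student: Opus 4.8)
The statement is a deterministic bookkeeping identity for one step of Algorithm~\ref{alg:algorithm}, so the plan is to trace through step $z$ and count how columns move in and out of $S_k$. First I would dispose of the cases $d(z)\ne1$. If $d(z)=0$ then step~4 is never entered and nothing changes, so $s_k(z-1)-s_k(z)=0$, which matches the right-hand side since $k\ge2$. If $d(z)=k'\ge2$ then column $z$ is placed in $S_{k'}$ and step~4 is again not entered, so $s_\ell(z-1)-s_\ell(z)=\id_{\{\ell=k'\}}$ for every $\ell\ge2$; for $\ell=k$ this equals $\id_{\{d(z)=k\}}$, and the right-hand side equals the same since $\Delta_k(z)=\Delta_{k+1}(z)=\Delta'(z)=0$ and $d(z)\ne1$.

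The substance is the case $d(z)=1$, in which the avalanche in step~4 runs and removes a set $I\subseteq M(z)$ of $|I|=D(z)=m(z)-m(z-1)\ge1$ rows. For a column $j\in[z+1,\pi(x)]$ write $c_j:=|\{i\in I:A_{ij}=1\}|$ for the number of removed rows meeting~$j$. The active degree of $j$ (with respect to the set of active rows) falls by exactly $c_j$ during the step, and since step~4 terminates only when $S_1=\emptyset$, the columns with degree exactly $1$ at any stage get peeled; thus a column $j\in S_\ell(z)$ ends the step in $S_{\ell-c_j}(z-1)$ if $\ell-c_j\ge2$, and is peeled out of the $2$-core otherwise. (Column $z$ itself is placed in $S_1$ and peeled at once, so it lies in no $S_k(z-1)$ with $k\ge2$.) I would then count the two contributions to $s_k(z-1)-s_k(z)$ separately. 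The columns \emph{leaving} $S_k$ are exactly those $j\in S_k(z)$ with $c_j\ge1$, i.e.\ the set $S_k(z)\setminus S_k(z-1)$, which has size $\Delta_k(z)$. The columns \emph{entering} $S_k$ are: column $z$, which contributes $\id_{\{d(z)=k\}}=0$ in this case; the columns $j\in S_{k+1}(z)$ with $c_j=1$; and the columns $j\in S_\ell(z)$ with $\ell\ge k+2$ and $c_j=\ell-k\ge2$.

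To package the last two groups, let $p$ be the number of $j\in S_{k+1}(z)$ with $c_j\ge2$, and let $q$ be the number of $j\in\bigcup_{\ell\ge k+2}S_\ell(z)$ with $c_j=\ell-k$. Since $\Delta_{k+1}(z)$ counts the $j\in S_{k+1}(z)$ with $c_j\ge1$ and exactly $p$ of these have $c_j\ge2$, the number of columns entering $S_k$ from above is $(\Delta_{k+1}(z)-p)+q$. Now every column counted by $p$ has size $k+1\ge3$ with at least two removed vertices, and every column counted by $q$ has size $\ell\ge k+2\ge3$ with $c_j=\ell-k\ge2$ removed vertices, so $p$ and $q$ count disjoint sub-collections of the edges counted by $\Delta'(z)$; hence $p,q\ge0$ and $p+q\le\Delta'(z)$, so $q-p\in[-\Delta'(z),\Delta'(z)]$. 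Therefore
\[
 s_k(z-1)-s_k(z)=-\Delta_k(z)+\Delta_{k+1}(z)+(q-p)\in-\Delta_k(z)+\Delta_{k+1}(z)\pm\Delta'(z),
\]
and combining this with the cases $d(z)\ne1$ (on which $\id_{\{d(z)=k\}}$ is supported and all the $\Delta$-terms vanish) gives the claimed two-sided inclusion.

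I expect the only genuinely delicate points to be (i) confirming that a column of current degree exactly $m\ge2$ at the end of the avalanche really is recorded in $S_m(z-1)$ rather than peeled further — which follows from the termination condition of step~4 — and (ii) the small-index boundary cases, in particular $k=2$, where a column leaving $S_2$ drops into $S_1$ (and is then peeled) rather than into an $S_{k-1}$ with $k-1\ge2$; but such a column still contributes to $\Delta_2(z)$ and to none of $\Delta_3(z),\Delta'(z)$, so the count is unaffected. Everything else is routine case-checking against the steps of Algorithm~\ref{alg:algorithm}.
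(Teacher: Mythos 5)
Your proof is correct and follows essentially the same argument as the paper's, which simply asserts that $\Delta_k(z)$ edges leave $S_k(z)$ and $\Delta_{k+1}(z)\pm\Delta'(z)$ edges enter, with $\Delta'(z)$ bounding both the $(k+1)$-edges losing two or more vertices (your $p$) and the edges of size $\ge k+2$ collapsing down to size $k$ (your $q$). Your explicit decomposition into $p$ and $q$, together with the disjointness giving $p+q\le\Delta'(z)$ and hence $|q-p|\le\Delta'(z)$, is exactly the accounting the paper's parenthetical sentence is implicitly invoking.
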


\begin{proof}
Recall from \Al{algorithm} that if $d(z)\notin\{1,k\}$ then $S_k(z-1)=S_k(z)$,
while if $d(z)=k$ then $S_k(z-1)=S_k(z)\cup\{z\}$. If $d(z)=1$, then exactly $\Delta_k(z)$
edges are removed from $S_k(z)$, and $\Delta_{k+1}(z)\pm\Delta'(z)$ edges are added to $S_k(z)$,
as required. Note that the $\Delta'(z)$ bounds both the number of $(k+1)$-edges that lose more
than one vertex, as well as edges of size at least $k+2$ that lose enough vertices to become
$k$-edges.
\end{proof}

\begin{proof}[Proof of \Lm{deltas:bigz}]
Note first that, since $\cL^*(z)$ holds, we have $s^*_k(z)\le 1$ for each $k\ge 2$,
$2s_2(z)\le\eps_1 m(z)$ and $\cK(z)$ holds (by \Lm{largezK}), and $s_k(z)=\sigma_k m(z)s^*_k(z)$
(since $\cL^*(z)$ implies $\cD^*(z)$). Thus, by \Th{branching},
\begin{align*}
 \E\big[\Delta_k(z)\mid\cF_z,\,d(z)=1\big]
 &=\bigg(1-\frac{2s_2(z)}{m(z)}+o(1)\bigg)^{-1}\frac{ks_k(z)}{m(z)}+O(m(z)^{-1/3})\\
 &\in\big(1\pm 2\eps_1\big)k\sigma_k s^*_k(z)+O(m(z)^{-1/3})
\end{align*}
for each $k\ge 2$, and also
\[
 \E\big[\Delta'(z)\mid\cF_z,\,d(z)=1\big]=O(m(z)^{-1/3}).
\]
Note also that \eqref{def:sigmak} implies
\[
 (k+1)\sigma_{k+1}=\eps_1\tdelta\cdot k\sigma_k.
\]
By \Ob{change:ins:using:deltas}, and using~\eqref{eq:cDlikely} and~\eqref{eq:sigma:small},
it follows that
\begin{align*}
 &\E\big[\big(s_k(z-1)-s_k(z)\big)\id_{\cD(z)}\mid\cF_z,\,d(z)=1\big]\\
 &\hspace{3cm}\le -(1-2\eps_1)k\sigma_k s^*_k(z)
 +(1+2\eps_1)(k+1)\sigma_{k+1}s^*_{k+1}(z)+O\big(m(z)^{-1/3}\big)\\
 &\hspace{3cm}\le k\sigma_k\big(-s^*_k(z)+2\eps_1+(1+2\eps_1)\eps_1\tdelta+o(1)\big)\\
 &\hspace{3cm}\le k\sigma_k\big(-s^*_k(z)+3\eps_1\big).
\end{align*}
Finally, by \Lm{dkvsd1:bigz} and \Ob{delta}, we have
\[
 \frac{\Prb\big(d(z)=k\mid\cF_z\big)}{\Prb\big(d(z)=1\mid\cF_z\big)}
 \le\frac{(2\delta\eta)^{k-1}}{k!}
 \le\frac{(3\tdelta)^{k-1}}{k!}
 \le\frac{\eps_1^{k+1}\tdelta^{k-1}}{\Lambda(z_+)}
 =2\eps_1 k\sigma_k
\]
for every $z\in [z_+,\pi(x)]$ and every $k\ge 2$. Hence
\[
 \E\big[(s_k(z-1)-s_k(z))\id_{\cD(z)}\mid\cF_z\big]
 \le k\sigma_k\big(-s^*_k(z)+5\eps_1\big)\Prb\big(d(z)=1\mid\cF_z\big),
\]
as required.
\end{proof}

\Lm{deltaSstar:bigz} will now follow as a straightforward consequence of
Lemmas \ref{lem:branching:bigz} and~\ref{lem:deltas:bigz}.

\begin{proof}[Proof of \Lm{deltaSstar:bigz}]
As in the proof of \Lm{maxstep:bigz}, we may assume that $k\le 4u_0$, and hence that
$\sigma_k=z_0^{o(1)}$, by~\eqref{eq:sigma:small}. Observe that
\begin{align}
 \sigma_k\big(s^*_k(z-1)-s^*_k(z)\big)
 &=\bigg(\frac{s_k(z-1)}{m(z-1)}-\frac{s_k(z)}{m(z)}\bigg)\id_{\cD(z)}\nonumber\\
 &=\frac{m(z)}{m(z-1)}\bigg(\frac{s_k(z-1)-s_k(z)}{m(z)}
 +\frac{s_k(z)\big(m(z)-m(z-1)\big)}{m(z)^2}\bigg)\id_{\cD(z)}.\label{eq:deltasstar:rewrite}
\end{align}
Since $\cL^*(z)$ implies $\cK(z)$, by \Lm{largezK}, it follows from Lemmas~\ref{lem:dkvsd1:bigz} and~\ref{lem:branching:bigz} that
\[
 \E\big[\big(m(z)-m(z-1)\big)\id_{\cD(z)}\mid\cF_z\big]\le(1+3\eps_1)m(z)p_z(x),
\]
and since $\cL^*(z)$ implies that $s_k(z)=\sigma_k m(z)s^*_k(z)$ and $s^*_k(z)\le 1$, it follows that
\begin{equation}\label{eq:exp:s:times:mchange}
 \E\big[s_k(z)\big(m(z)-m(z-1)\big)\id_{\cD(z)}\mid\cF_z\big]
 \le\big(s^*_k(z)+3\eps_1\big)\sigma_k m(z)^2 p_z(x).
\end{equation}
Similarly, by Lemmas~\ref{lem:dkvsd1:bigz} and~\ref{lem:deltas:bigz}, we have
\begin{equation}\label{eq:exp:schange:rewrite}
 \E\big[ \big( s_k(z-1)-s_k(z) \big)\id_{\cD(z)}\mid\cF_z\big]
 \le\big(-k s^*_k(z)+6\eps_1 k\big)\sigma_k m(z)p_z(x).
\end{equation}
It remains to bound
\begin{equation}\label{eq:sstar:remainder}
 \bigg(\frac{m(z)-m(z-1)}{m(z-1)}\bigg)
 \bigg(\frac{s_k(z-1)-s_k(z)}{m(z)}+\frac{s_k(z)\big(m(z)-m(z-1)\big)}{m(z)^2}\bigg)
\end{equation}
under the assumption that $\cD(z)$ holds. To do so, recall that $s_k(z)\le\eps_1 m(z)$, by \Lm{largezK}, 
that $m(z)=z_0^{2-1/\beta+o(1)}$, by \Ob{m:rough}, where $z=z_0^\beta$, and that $1\le\beta\le 3$, 
since $z\in [z_+,z_0^3]$. It follows from \eqref{eq:schange:vs:mchange} that~\eqref{eq:sstar:remainder} is at most
\[
 \bigg(\frac{u_0^2}{m(z-1)}\bigg)\bigg(\frac{2u_0^3}{m(z)}+\frac{s_k(z)u_0^2}{m(z)^2}\bigg)
 \le\frac{3u_0^5}{m(z)(m(z)-u_0^2)}\le z_0^{-4+2/\beta+\eps_1}.
\]
Now $p_z(x) = z^{-1+o(1)} = z_0^{-\beta+o(1)}$, by \Co{pj}, and $\sigma_k=z_0^{o(1)}$, by~\eqref{eq:sigma:small}. Thus
\[
 z_0^{-4+2/\beta+\eps_1}\le z_0^{-\beta-\eps_1}\le \eps_1\sigma_k p_z(x),
\]
where we have used the fact that $\beta+2/\beta\le\frac{11}{3}<4-2\eps_1$ for $1\le \beta\le 3$.
Hence, using \eqref{eq:deltasstar:rewrite},
\eqref{eq:exp:s:times:mchange} and~\eqref{eq:exp:schange:rewrite}, we have
\begin{align*}
 \E\big[s^*_k(z-1)-s^*_k(z)\mid\cF_z\big]
 &\le\big(-(k-1)s^*_k(z)+6\eps_1 k+4\eps_1\big)p_z(x)\\
 &\le(k-1)p_z(x)\big(-s^*_k(z)+16\eps_1\big)
\end{align*}
for every $k\ge 2$, as required.
\end{proof}

We can now deduce the main result of the section.

\begin{proof}[Proof of \Pp{largez}]
We shall show that $\cL^*(z_+)$ holds with high probability, which implies both the events
$\cM^*(z_+)$ and $\cD^*(z_+)$, and the inequality~\eqref{eq:largez}. The idea is (roughly speaking) to bound,
for each $a\ge z_+$, the probability that $a$ is maximal such that $\cL^*(a)$ does not hold.
The main step is the proof of the following claim.

\claim{Claim:} For each $z_+\le a\le z_0^3$ and $k\ge 2$, we have
\[
 \Prb\Big(\cL^*(a+1)\cap\big\{s^*_k(a)>1\big\}\cap\big\{s^*_k(z_0^3)\le 3/4\big\}\Big)
 \le z_0^{-20}.
\]

\begin{proof}[Proof of claim]
For each $z_+\le a<b\le z_0^3$ and $k\ge 2$, let us define $\cU_k(a,b)$ to be the event that the
following all occur:
\begin{itemize}
\item[$(a)$] $s^*_k(a)>1$,
\item[$(b)$] $s^*_k(z)>3/4$ for every $a<z<b$,
\item[$(c)$] $s^*_k(b)\le 3/4$,
\item[$(d)$] $\cL^*(a+1)$ holds.
\end{itemize}
Note that if $\cL^*(a+1)$ holds, $s^*_k(a)>1$ and $s^*_k(z_0^3)\le 3/4$,
then the event $\cU_k(a,b)$ occurs for some (unique)~$b$, $a<b\le z_0^3$.
By the union bound, it will therefore suffice to prove that
\[
 \Prb\big(\cU_k(a,b)\big)\le z_0^{-23}
\]
for every $z_+\le a<b\le z_0^3$.

By \Lm{maxstep:bigz} we may assume $s^*_k(b)\ge 3/4-\eps_1$, as otherwise $s^*_k(b-1)\le 3/4$
and so $\cU_k(a,b)$ is impossible. For each $t\in\{0,\dots,b-a\}$, define
\[
 X_t:=\begin{cases}
  s^*_k(b-t)-s^*_k(b),&\text{if }X_{t-1}\ge0\text{ or }t=0;\\
  X_{t-1},&\text{otherwise.}
 \end{cases}
\]
We claim that $X_t$ is a super-martingale with respect to the filtration $(\cF_{b-t})_{t = 0}^{b-a}$. Indeed,
if $X_t<0$ then $X_{t+1}=X_t$ and if $X_t\ge0$ then
\[
 \E\big[X_{t+1}-X_t\mid\cF_{b-t}\big]\le(k-1)p_{b-t}(x)\big(-s^*_k(b-t)+16\eps_1\big)\le 0
\]
by \Lm{deltaSstar:bigz}, since $X_t\ge0$ and \eqref{def:small} imply
$s^*_k(b-t)\ge s^*_k(b)\ge 3/4-\eps_1\ge 16\eps_1$. Write $c_t=z_0^{-2+1/\beta+\eps_1}$,
where $b-t=z_0^\beta$. Then
\[
 |X_{t+1}-X_t|\le c_t,
\]
by \Lm{maxstep:bigz}. Also,
\[
 \sum_{t=0}^{b-a-1}c_t^2
 \le\sum_{z=z_+}^{z_0^3}z_0^{-4+2/\beta+2\eps_1}
 \le\sum_{z=z_+}^{z_0^3}z_0^{-4/3+2\eps_1}z^{-2/3}
 \le z_0^{-1/3+3\eps_1},
\]
where $\beta=\beta(z)$ is defined by $z=z_0^\beta$,
and the second inequality holds as $2/\beta\le (8-2\beta)/3$
for $1\le\beta\le 3$. But $\cU_k(a,b)$ implies that $X_{b-a}>1/4$, so by the Azuma--Hoeffding inequality we obtain
\[
 \Prb\big(\cU_k(a,b)\big)\le\Prb\big(X_{b-a}>1/4\big)
 \le\exp\big(-z_0^{1/3-4\eps_1}\big)\le z_0^{-23},
\]
as claimed.
\end{proof}

To complete the proof of the proposition, we will show that with high probability there does not
exist $z\in [z_+,\pi(x)]$ such that $\cL^*(z+1)$ holds but $\cL^*(z)$ does not hold.
(Here $\cL^*(\pi(x)+1)$ holds vacuously.)
Note first that, by \Lm{largezK} and \eqref{eq:cDlikely}, with high probability there does not
exist $z\in [z_+,\pi(x)]$ such that $\cL^*(z+1)$ holds but $\cD(z+1)$ does not; since $\cD^*(z+1)$
and $\cD(z+1)$ imply $\cD^*(z)$, we may assume that $\cD^*(z)$ holds.

Now consider the condition $s^*_k(z)\le 1$. \Lm{skzero} implies that with high probability we have
$s_2(z)+s_3(z)\le z_0^{-1/2} m(z)$ for every $z\ge z_0^3$, and $s_k(z)=0$ for every $k\ge 4$ and
every $z\ge z_0^3$. Since $\sigma_k=\Theta(1)$ for $k\in\{2,3\}$, it follows that with high
probability~\eqref{eq:largez} holds, and moreover $s^*_k(z)\le 3/4$, for all $z\ge z_0^3$ and every
$k\ge 2$. But if $s^*_k(z_0^3)\le 3/4$, then the claim implies that with high probability there does
not exist $z\in[z_+,z_0^3]$ and $k\ge2$ such that $\cL^*(z+1)$ and $s^*_k(z)>1$.

We next consider the event $\cM(z)$. Note first that, by \Lm{largezK}, and using the assumptions that $\cD^*(z)$ holds and that $s^*_k(z)\le 1$ for every $k\ge 2$, we have
\begin{equation}\label{eq:mm0}
 m_0(z)\le m(z)\le m_0(z)+\sum_{k\ge 2}ks_k(z)\le m_0(z)+\eps_1 m(z),
\end{equation}
since the number of non-isolated vertices is at most the sum of the degrees. Recall that with high
probability we have $m_0(z)\in(1\pm\eps_1)\eta\Lambda(z)z$ for every $z\in[z_-,\pi(x)]$, by \Lm{m0}.
Assuming this holds, it follows from~\eqref{eq:mm0} that
\[
 (1-\eps_1)\eta\Lambda(z)
 \le \frac{m(z)}{z}\le \frac{1+\eps_1}{1-\eps_1}\eta\Lambda(z).
\]
Recalling from~\eqref{def:zpm} and~\eqref{eq:Lambda:delta:plusminus:littleoone} that $\Lambda(z) \le \delta+o(1)$ when $z\ge z_+$, and that
\[
w(1-w) \le we^{-w} \le we^{-\Ein(w)} \le w
\]
for $0\le w\le 1$, since $0 \le \Ein(w)\le w$, by~\eqref{def:Ein}, we obtain
\[
 (1-2\delta\eta)(1-\eps_1)\eta\Lambda(z)\le
 \frac{m(z)}{z}e^{-\Ein(m(z)/z)}\le \frac{1+\eps_1}{1-\eps_1}\eta\Lambda(z).
\]
This implies that
\begin{equation}\label{eq:m:largez}
 \frac{m(z)}{z} e^{-\Ein(m(z)/z)} \in \big(1\pm 3\eps_1\big)\eta\Lambda(z),
\end{equation}
which implies that $\cM(z)$ holds. As $\cL^*(z+1)$ implies $\cM^*(z+1)$, this implies $\cM^*(z)$
holds. However, by \Lm{easy}, with high probability there does not exist $z\in [z_+,\pi(x)]$ such
that $\cM^*(z)$ holds but $\cQ(z)$ does not. It follows that with high probability, there does not
exist $z\ge z_+$ such that $\cL^*(z+1)$ but $\cL^*(z)$ fails to hold, and the proof is complete.
\end{proof}

\section{Tracking the process in the critical range}\label{sec:critical}

In the next two sections we shall track $s_k(z)$ and $m(z)$ in the `critical' range $[z_-,z_+]$.
The main aim of this section is to prove two lemmas corresponding to
Lemmas~\ref{lem:deltaSstar:bigz} and~\ref{lem:maxstep:bigz} from the previous section. We shall
need these lemmas in \Sc{proof:tracking} in order to track $s_k(z)$ using the method of
self-correcting martingales.

The first step is to define the event we shall use to prove our key lemmas.
Recall that $\cD(z)$ denotes the event that $m(z)-m(z-1)\le u_0^2$ and
$\cD^*(z)$ denotes the event that $\cD(w)$ holds for all $w>z$.
Now define, for each $z\in [z_-, z_+]$ and each $k\ge 2$,
\begin{equation}\label{def:sstar:critical}
 s^*_k(z):=\begin{cases}
 \ds\frac{s_k(z)-\ts_k(z)}{\eps(k,z)\ts_k(z)},&\text{if $\cD^*(z)$ holds;}\\[8pt]
 s^*_k(z+1),&\text{otherwise.}
 \end{cases}
\end{equation}
Recall that $\cT_k(z)$ denotes the event that $s_k(z)\in(1\pm\eps(k,z))\ts_k(z)$ and define
\[
 \cT^*(z):=\cD^*(z)\cap\bigcap_{w=z}^{z_+}\bigg(\cQ(w)\cap\bigcap_{k=2}^{4u_0}\cT_k(w)\bigg)
\]
for each $z\in [z_-, z_+]$. Note that $\cQ(z)$ implies $\cT_k(z)$ for every $k\ge 4u_0$,
since for such $k$ we have $\eps(k,z)>1$ (since $u_0=\omega(1)$), and $s_k(z)=0$ (see \Df{Q}).

In this section we shall prove the following two lemmas.

\begin{lemma}\label{lem:deltaSstar}
 Let\/ $z\in [z_-, z_+]$. If\/ $\cT^*(z)$ holds, then
 \[
  \E\big[s^*_k(z-1)-s^*_k(z)\mid\cF_z\big]\in - \/ \frac{k-1}{z}\bigg( s^*_k(z)\pm\frac{1}{2}\bigg)
 \]
 for every\/ $2 \le k \le 4u_0$.
\end{lemma}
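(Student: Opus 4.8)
The plan is to compute the expected one-step change in $s_k(z)$ exactly, compare it with the expected change in the deterministic target $\ts_k(z)$, and show that the difference is governed by a self-correcting term $-(k-1)s^*_k(z)/z$ plus small errors.

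First I would write, using \Ob{change:ins:using:deltas} and conditioning on whether $d(z)=1$, $d(z)=k$, or neither:
\[
 \E\big[s_k(z-1)-s_k(z)\mid\cF_z\big] = \Prb\big(d(z)=k\mid\cF_z\big)
 + \Prb\big(d(z)=1\mid\cF_z\big)\,\E\big[-\Delta_k(z)+\Delta_{k+1}(z)\pm\Delta'(z)\bmid\cF_z,\,d(z)=1\big],
\]
modulo the contribution of the low-probability event $\cD(z)^c$, which by~\eqref{eq:cDlikely} contributes at most $z_0^{-20}$ times the trivial bound on the jump (at most $O(u_0 m(z))$, by \Ob{onesinarow}), hence is negligible on the scale $\eps(k,z)\ts_k(z)/z = z_0^{1+o(1)}/z$ (using \Ob{eps:tsk:rough}). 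Since $\cT^*(z)$ implies $\cK(z)$ (this needs a short argument paralleling \Lm{largezK}: $\cQ(z)$ together with $\cT_k(z)$ for all $k$ gives~\eqref{def:K} via \Lm{track:K}) and $2s_2(z)\le(1-\eps_1)m(z)$ (because $s_2(z)\in(1\pm\eps(2,z))\ts_2(z)$ and $2\ts_2(z)=m(z)(1-e^{-m(z)/z})\le m(z)^2/z \le C_0\Lambda(z)m(z)$, which is safely below $(1-\eps_1)m(z)$ after accounting for $\eps(2,z)\ts_2(z)$), I can apply \Th{branching}. Plugging in parts $(a)$–$(d)$ of \Th{branching}, with $\Prb(d(z)=k\mid\cF_z) = \frac{(1+o(1))^k}{k!}e^{-m(z)/z}(m(z)/z)^k + O(1)/z$ from \Lm{dz} and $\Prb(d(z)=1\mid\cF_z) = (1+o(1))m(z)/z^2$, gives
\[
 \E\big[s_k(z-1)-s_k(z)\mid\cF_z\big] = k(k-1)\ts_k(z)\cdot\frac{z}{m(z)}\cdot\frac{1}{z^2}\cdot\Big(\text{normalisation}\Big)
 - \frac{m(z)}{z^2}\Big(1-\tfrac{2s_2(z)}{m(z)}+o(1)\Big)^{-1}\frac{k s_k(z)}{m(z)} + \text{errors},
\]
where the first term comes from the "birth" $\{d(z)=k\}$ and the second from the "death" $-\Delta_k(z)$; the $\Delta_{k+1}$ and $\Delta'$ terms are lower order (the former because $\ts_{k+1}(z)\le\frac{m(z)}{z(k+1)}\ts_k(z)$ by \Ob{stildes:ineq} and the factor $\frac{m(z)}{z^2}$, the latter by \Th{branching}$(d)$). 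The key cancellation I expect is that the "birth" rate $\frac{m(z)}{z^2}\cdot k\cdot\frac{\ts_k(z)}{m(z)}\cdot(1-2s_2(z)/m(z))^{-1}$, i.e. the death rate with $\ts_k(z)$ in place of $s_k(z)$, is precisely what $\E[\ts_k(z-1)-\ts_k(z)]$ would be if $\ts_k$ were the true equilibrium; so subtracting, the drift in $s_k(z)-\ts_k(z)$ becomes $-\frac{k m(z)}{z^2}(1-2s_2(z)/m(z))^{-1}\cdot\frac{s_k(z)-\ts_k(z)}{m(z)} + \text{errors}$, and since $(1-2s_2(z)/m(z))^{-1}\cdot\frac{m(z)}{z^2} = (1+o(1))/z\cdot\frac{1}{1-2\ts_2(z)/m(z)}\cdot(\text{something})$ — here I must be careful, since $m(z)e^{-\Ein(m(z)/z)}/z \in (1\pm\eps_0)\eta\Lambda(z)$ from $\cM(z)$ links $m(z)/z$ to the dynamics, and in fact $1-2\ts_2(z)/m(z) = e^{-m(z)/z}$, so the factor simplifies.

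The main obstacle, and the step I would spend the most care on, is the bookkeeping to convert the above into a statement about $s^*_k(z)$ rather than $s_k(z)$. We have
\[
 s^*_k(z-1)-s^*_k(z) = \frac{s_k(z-1)-\ts_k(z-1)}{\eps(k,z-1)\ts_k(z-1)} - \frac{s_k(z)-\ts_k(z)}{\eps(k,z)\ts_k(z)},
\]
so I need to control the (deterministic) relative change of $\eps(k,z)\ts_k(z)$ over one step. Here $\eps(k,z) = \eps_1^k k!/\Lambda(z)$ changes by a factor $\Lambda(z)/\Lambda(z-1) = 1+o(1)/z$ (by \Lm{Lambda:onestep}, valid since $z\in[z_-,z_+]$ so $z = z_0^{1+o(1)}$), and $\ts_k(z)$ changes in a controlled way that I would extract from \Df{tsk}, differentiating in $m(z)/z$ and using the bound $m(z)-m(z-1)=O(1)$ on the event $\cD(z)$ together with $\cM(z)$; crucially the leading behaviour of $\E[\ts_k(z-1)-\ts_k(z)\mid\cF_z]$ must match the "birth" term identified above, so that the $s^*_k$-drift is $-\frac{k-1}{z}(s^*_k(z)+o(1))$. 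The appearance of the coefficient $k-1$ rather than $k$ is the subtle point: one power of the death rate $\sim k/z$ is "used up" cancelling against the birth term's $s_k\mapsto\ts_k$ replacement, and the residual is multiplied by an extra $\frac{z}{m(z)}\cdot\frac{m(z)}{z}$-type factor that must be tracked to first order — I would verify the arithmetic against the known $k=2$ case ($\ts_2(z) = \tfrac{m(z)}{2}(1-e^{-m(z)/z})$) as a sanity check. The $\pm\frac12$ slack in the statement gives room to absorb: (i) the $O(m(z)^{-1/3})$ and $O(1/z)$ additive errors from \Th{branching} and \Lm{dz}, which are $o(1/z \cdot \eps(k,z)\ts_k(z)/(\eps(k,z)\ts_k(z))) = o(1/z)$ after normalising (here \Ob{eps:tsk:rough} ensures $\eps(k,z)\ts_k(z)$ is not too small, namely $z_0^{1+o(1)}$, so dividing by it is safe); (ii) the $o(1)$ from $\cM(z)$ in the relation $m(z)/z \leftrightarrow \eta\Lambda(z)$; (iii) the influence of the error in $s_{k+1}(z)$ on the $\Delta_{k+1}$ term, which is where the rapid decay of $\eps(k,z)$ in $k$ — precisely, $\eps(k+1,z)/\eps(k,z) = (k+1)\eps_1$, small for the relevant range of $k$ — is exploited to bound $s_{k+1}(z) \le (1+\eps(k+1,z))\ts_{k+1}(z)$ and hence keep this cross-term below $\tfrac14 \cdot \tfrac{k-1}{z}$ in the normalised scale. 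Assembling these pieces and checking that all errors fit inside the $\tfrac12$ budget is the bulk of the work; the probabilistic input is entirely off-the-shelf from \Th{branching}, \Lm{dz}, \Lm{dkvsd1:bigz} and \eqref{eq:cDlikely}.
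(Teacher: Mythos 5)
Your plan is essentially the paper's: compute $\E[\Delta s_k]$ via \Ob{change:ins:using:deltas}, \Th{branching}, \Lm{dz} and \eqref{eq:cDlikely}, compute $\E[\Delta\ts_k]$ by a deterministic Taylor expansion of $\ts_k$ together with \Th{branching}, subtract to expose the self-correcting term, and then account separately for the drift of the normalising factor $\eps(k,z)\ts_k(z)$. That two-piece decomposition of $s^*_k(z-1)-s^*_k(z)$ is exactly how the paper's proof of \Lm{deltaSstar} is organised, with the subtraction packaged into Lemmas~\ref{lem:deltastilde} and~\ref{lem:deltas} via the auxiliary quantities $\gamma(z)$ and $g_k(z)$.

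But your explanation of where $k-1$ (rather than $k$) comes from --- which you flag as the subtle point --- is not right, and this is a genuine gap. After the birth term $\Prb(d(z)=k\mid\cF_z)$ cancels against the deterministic $\ts_k$-drift and the $-\ts_k(z)/z-\gamma(z)g_k(z)$ part of $\E[\Delta s_k]$, the numerator piece yields the full coefficient $-\frac{k}{z}\big(s^*_k(z)\pm\frac15\big)$; nothing is ``used up''. The reduction from $k$ to $k-1$ comes entirely from the \emph{denominator} piece: in expectation $\eps(k,z)\ts_k(z)$ shrinks by a factor $1-\frac{1\pm2\eps_1 k}{z}$ per step (because $\E[\ts_k(z)-\ts_k(z-1)]\approx\ts_k(z)/z$ while $\eps(k,z-1)/\eps(k,z)=1+o(1/z)$ by \Lm{Lambda:onestep}), and the quotient rule converts this into an additive contribution $+\frac{1}{z}\big(s^*_k(z)\pm 3\eps_1 k\big)$ which offsets exactly one unit of $k/z$. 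The ``extra $\frac{z}{m(z)}\cdot\frac{m(z)}{z}$-type factor'' you invoke is unity and has no bearing. Separately, your displayed formulas contain slips that would derail a detailed write-up: $\Prb(d(z)=1\mid\cF_z)=(1+o(1))e^{-m(z)/z}\frac{m(z)}{z}+O(1/z)$, which is $\Theta(1)$ in the critical range, not $(1+o(1))m(z)/z^2$; the event $\cD(z)$ controls $D(z)\le u_0^2$, not $O(1)$; and your bound $2s_2(z)\le C_0\Lambda(z)m(z)$ does not by itself give $2s_2(z)\le(1-\eps_1)m(z)$ when $\Lambda(z)$ is near 1 --- the paper's \Lm{Kcritical} must split into the cases $\Lambda(z)\le 2\eps_1$ and $\Lambda(z)>2\eps_1$ to make this work.
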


\begin{lemma}\label{lem:maxstep}
 Let\/ $z\in [z_-, z_+]$. If\/ $\cT^*(z)$ holds, then
 \[
  |s^*_k(z-1)-s^*_k(z)|\le z_0^{-1+\eps_1}
 \]
 for every\/ $2 \le k \le 4u_0$.
\end{lemma}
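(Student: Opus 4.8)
The plan is to bound the change $|s^*_k(z-1)-s^*_k(z)|$ in the relative error by separately controlling the change in $s_k(z)$, the change in $\ts_k(z)$, and the change in $\eps(k,z)\ts_k(z)$ over one step of the process, under the assumption that $\cT^*(z)$ holds (so in particular $\cD(z)$ holds, $\cK(z)$ holds by an analogue of \Lm{largezK}, and $m(z)=z_0^{1+o(1)}$ by~\eqref{eq:lowerbound:m:using:M}). We may assume $k\le 4u_0$ throughout, and recall from \Ob{eps:tsk:rough} that $\eps(k,z)\ts_k(z)=z_0^{1+o(1)}$, while $\ts_k(z)$ itself is at least $z_0^{1-o(1)}$ by a similar crude bound (since $m(z)/z=\Theta(1)$ and $\eps(k,z)=e^{O(k)}=z_0^{o(1)}$). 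The plan is essentially to show every numerator change is $z_0^{o(1)}$ and every denominator is $z_0^{1-o(1)}$.

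First I would bound the one-step change in the raw count $s_k(z)$. By \Ob{change:ins:using:deltas}, $|s_k(z-1)-s_k(z)|\le 1+\Delta_k(z)+\Delta_{k+1}(z)+\Delta'(z)$, and when $d(z)=1$ this is at most (a constant times) $R_1(z)$; since $\cD(z)$ holds we have $m(z)-m(z-1)\le u_0^2$, and by \Ob{onesinarow} each removed row contributes $O(u_0)$ ones, so in fact $|s_k(z-1)-s_k(z)|\le 2u_0^3 = z_0^{o(1)}$. Next I would control the change in the deterministic functions. From \Df{tsk}, $\ts_k(z)$ is a smooth function of $m(z)/z$; since $m(z)/z$ changes by $O(u_0^2/m(z))+O(1/z)=z_0^{-1+o(1)}$ in one step, and $m(z)$ changes by at most $u_0^2$, a direct estimate gives $|\ts_k(z-1)-\ts_k(z)|=z_0^{o(1)}$. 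For $\eps(k,z)=\eps_1^k k!/\Lambda(z)$, the only $z$-dependence is through $\Lambda(z)$, and by \Lm{Lambda:onestep} we have $\Lambda(z-1)/\Lambda(z)=1+o(1)/z$, so $|\eps(k,z-1)-\eps(k,z)|=o(1)\eps(k,z)/z$, which combined with the bound on $\ts_k$ shows $|\eps(k,z-1)\ts_k(z-1)-\eps(k,z)\ts_k(z)|=z_0^{o(1)}$ as well.

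Putting these together: writing $s^*_k(z)=(s_k(z)-\ts_k(z))/(\eps(k,z)\ts_k(z))$ and using the telescoping identity
\[
 s^*_k(z-1)-s^*_k(z)=\frac{(s_k-\ts_k)(z-1)-(s_k-\ts_k)(z)}{\eps(k,z-1)\ts_k(z-1)}
 +(s_k(z)-\ts_k(z))\bigg(\frac{1}{\eps(k,z-1)\ts_k(z-1)}-\frac{1}{\eps(k,z)\ts_k(z)}\bigg),
\]
the first term has numerator $z_0^{o(1)}$ (by the above) and denominator $z_0^{1+o(1)}$ (by \Ob{eps:tsk:rough}), hence is at most $z_0^{-1+o(1)}\le z_0^{-1+\eps_1}$. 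For the second term, $\cT_k(z)$ gives $|s_k(z)-\ts_k(z)|\le\eps(k,z)\ts_k(z)=z_0^{1+o(1)}$, while the difference of reciprocals is $z_0^{o(1)}/(\eps(k,z)\ts_k(z))^2=z_0^{-2+o(1)}$, so this term is also $z_0^{-1+o(1)}$. Since there is always at least a factor $z_0^{\eps_1-o(1)}$ of room, the total is at most $z_0^{-1+\eps_1}$, as required.

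I do not expect a genuine obstacle here — the lemma is deliberately a crude "no big jump" bound, and every ingredient (the avalanche size bound via $\cD(z)$ and \Ob{onesinarow}, the smoothness of $\ts_k$ and $\eps(k,z)$, and the lower bounds $\ts_k(z),\eps(k,z)\ts_k(z)=z_0^{1-o(1)}$) is already available. The only mildly delicate point is making sure the $e^{O(k)}$ factors coming from $\eps(k,z)$ and the tail sums in $\ts_k$ are genuinely $z_0^{o(1)}$ uniformly over $k\le 4u_0$; this is exactly the content of \Ob{eps:tsk:rough} together with $u_0=o(\log z_0)$ from~\eqref{eq:ulogu}, so it causes no difficulty. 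The argument is close to that of \Lm{maxstep:bigz}, with $\eps(k,z)\ts_k(z)$ playing the role of $\sigma_k m(z)$.
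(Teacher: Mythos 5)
Your overall strategy is the same as the paper's: telescope $s^*_k(z-1)-s^*_k(z)$ into a numerator change divided by the normalization $\eps(k,z)\ts_k(z)$, plus a correction from the change in the normalization itself, and then show both contributions are $z_0^{-1+o(1)}$. But there is a genuine gap in how you control the normalization term, and it occurs precisely in the large-$k$ regime that you dismiss as causing ``no difficulty.''

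You establish only the \emph{absolute} bound $|\ts_k(z-1)-\ts_k(z)|=z_0^{o(1)}$, and then conclude that $|\eps(k,z-1)\ts_k(z-1)-\eps(k,z)\ts_k(z)|=z_0^{o(1)}$. This does not follow. For $k$ near $4u_0$, the factor $\eps(k,z)=\eps_1^k k!/\Lambda(z)$ is of order $z_0^{4+o(1)}$ (since $\log k!\approx 4u_0\log u_0=(4+o(1))\log z_0$ by~\eqref{eq:ulogu}), and correspondingly $\ts_k(z)$ is of order $z_0^{-3+o(1)}$ by \Ob{eps:tsk:rough}. Multiplying a factor $\eps(k,z-1)=z_0^{4+o(1)}$ by your absolute bound $z_0^{o(1)}$ on $|\ts_k(z-1)-\ts_k(z)|$ gives $z_0^{4+o(1)}$, not $z_0^{o(1)}$. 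And since $\ts_k(z)$ is only $z_0^{-3+o(1)}$ for such $k$, your absolute bound on the increment tells you nothing about whether $\ts_k(z-1)$ is close to $\ts_k(z)$ in relative terms, so you also cannot justify that the denominator $\eps(k,z-1)\ts_k(z-1)$ is still $z_0^{1+o(1)}$ --- you are in effect applying \Ob{eps:tsk:rough} at $z-1$, which it does not cover (in particular when $z=z_-$).

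What is actually needed is the \emph{multiplicative} estimate $\ts_k(z-1)/\ts_k(z)=1+O(z^{-1+o(1)})$, i.e.~\eqref{eq:adjacent:shats:theta}, which follows from the deterministic expansion in \Lm{deterministic:deltastilde} together with the key inequality~\eqref{eq:g:bound:by:tsk}, namely $g_k(z)z\le k\ts_k(z)$. That inequality is precisely what makes the one-step change in $\ts_k$ proportional to $\ts_k$ itself (times $O(ku_0^2/z)$), so that the $\eps(k,z)$ factor is cancelled rather than magnified. Once this is in place, the normalization changes by a relative $O(z^{-1+o(1)})$ factor (using your $\Lambda(z-1)/\Lambda(z)=1+o(1/z)$ estimate from \Lm{Lambda:onestep}), the denominator stays $z_0^{1+o(1)}$, and the rest of your telescope goes through as written.
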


The proofs of these two lemmas are, in outline, similar to those of
Lemmas~\ref{lem:deltaSstar:bigz} and~\ref{lem:maxstep:bigz}, but the calculation is more delicate
in the critical range, and as a consequence the details are somewhat more complicated. We begin
by noting that $\cT^*(z)$ implies that the conditions of \Th{branching} are satisfied, and so
in particular that \eqref{eq:cDlikely} holds.

\begin{lemma}\label{lem:Kcritical}
 Let\/ $z\in [z_-,z_+]$. If\/ $\cT^*(z)$ holds, then\/ $2s_2(z)\le (1-\eps_1)m(z)$
 and\/ $\cK(z)$ holds.
\end{lemma}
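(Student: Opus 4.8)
The plan is to establish the two conclusions separately; both turn out to be short, and the only calculation with any content is a bookkeeping check on the constants. I would first dispose of $\cK(z)$. By the definition of $\cT^*(z)$ it contains $\cQ(z)$ together with $\cT_k(z)$ for every $2\le k\le 4u_0$; moreover, as recorded earlier in this section, $\cQ(z)$ implies $\cT_k(z)$ for every $k\ge 4u_0$ as well (in that range $s_k(z)=0$ and $\eps(k,z)>1$). Hence $\cQ(z)$ holds and $\cT_k(z)$ holds for every $k\ge 2$, and so $\cK(z)$ follows at once from \Lm{track:K}.

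The work is in the bound $2s_2(z)\le(1-\eps_1)m(z)$. I would start from $\cT_2(z)$ (again part of $\cT^*(z)$), which combined with \eqref{eq:ts2} gives
\[
 2s_2(z)\le\big(1+\eps(2,z)\big)\,m(z)\big(1-e^{-\lambda}\big),\qquad\text{where }\lambda:=\frac{m(z)}{z}.
\]
Since $\cT^*(z)$ implies $\cQ(z)$, and hence $\cM(z)$, \Ob{m:bounded} gives $\lambda\le C_0\Lambda(z)\le C_0$. I would then split $\big(1+\eps(2,z)\big)\big(1-e^{-\lambda}\big)$ into a main term and an error term: for the main term use $1-e^{-\lambda}\le 1-e^{-C_0}$, and for the error term use $1-e^{-\lambda}\le\lambda\le C_0\Lambda(z)$ together with $\eps(2,z)=2\eps_1^2/\Lambda(z)$ (from \Df{eps}), which yields $\eps(2,z)\big(1-e^{-\lambda}\big)\le 2\eps_1^2 C_0$. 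Thus
\[
 2s_2(z)\le m(z)\big(1-e^{-C_0}+2\eps_1^2 C_0\big).
\]

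It then suffices to verify the elementary inequality $2\eps_1^2 C_0+\eps_1\le e^{-C_0}$, which is exactly what the constants in \eqref{def:small} are arranged to make true: from $\eps_1<\tfrac{1}{16}e^{-C_0}$ one gets $\eps_1 C_0<\tfrac{1}{16}C_0 e^{-C_0}\le\tfrac{1}{16e}$, whence $2\eps_1^2 C_0+\eps_1=\eps_1\big(1+2\eps_1 C_0\big)<2\eps_1<\tfrac{1}{8}e^{-C_0}\le e^{-C_0}$. Substituting into the previous display gives $2s_2(z)\le(1-\eps_1)m(z)$, as required. I do not expect any genuine obstacle here; the only step needing attention is this last constant-chasing, and the choice of $\eps_1$ small relative to $e^{-C_0}$ in \eqref{def:small} was made precisely so that an error of relative size $\eps(2,z)$ in $s_2(z)$ perturbs the bound by only an $O(\eps_1)$-fraction of $m(z)$.
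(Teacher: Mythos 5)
Your proof is correct. The deduction of $\cK(z)$ from \Lm{track:K} is identical to the paper's. For the bound $2s_2(z)\le(1-\eps_1)m(z)$, the paper instead splits into the cases $\Lambda(z)\le 2\eps_1$ and $\Lambda(z)\ge 2\eps_1$: in the first case it bounds $1-e^{-\lambda}\le\lambda\le C_0\Lambda(z)$ throughout, and in the second it uses $\eps(2,z)\le\eps_1$ (valid only when $\Lambda(z)$ is not too small) together with $e^{-C_0}\ge 2\eps_1$. Your observation --- that the factor $1/\Lambda(z)$ in $\eps(2,z)$ is exactly cancelled if one bounds the \emph{product} $\eps(2,z)\big(1-e^{-\lambda}\big)\le\eps(2,z)\lambda\le 2\eps_1^2 C_0$, while leaving the main term $1-e^{-\lambda}\le 1-e^{-C_0}$ alone --- unifies the two cases into one estimate. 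This is a mild but genuine streamlining: both proofs reduce to the same family of constraints in \eqref{def:small}, but yours avoids the case split and makes it clearer that only the single inequality $2\eps_1^2 C_0+\eps_1\le e^{-C_0}$ is needed.
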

\begin{proof}
Since $\cT^*(z)$ implies that $\cQ(z)$ holds and that $\cT_k(z)$ holds for every $k\ge 2$, it
follows from \Lm{track:K} that $\cK(z)$ holds. To prove the bound on $s_2(z)$, note that
$\cT_2(z)$ implies that
\begin{equation}\label{eq:lems2:firstbound}
 2s_2(z)\le 2\big(1+\eps(2,z)\big)\ts_2(z)
 =\bigg(1+\frac{2\eps_1^2}{\Lambda(z)}\bigg)m(z)\big(1-e^{-m(z)/z}\big),
\end{equation}
by~\eqref{eq:ts2} and \Df{eps}. Since
\[
 1-e^{-m(z)/z}\le\frac{m(z)}{z}\le C_0\Lambda(z),
\]
by \Ob{m:bounded}, it follows that for $\Lambda(z)\le 2\eps_1$,
\[
 2s_2(z)\le\big(C_0\Lambda(z)+2\eps_1^2 C_0\big)m(z)\le 2\eps_1 C_0(1+\eps_1)m(z)\le\frac{m(z)}{2}
\]
since $\eps_1 C_0\le \eps_1e^{C_0}<1/16$ by~\eqref{def:small}. On the other hand,
if $2\eps_1\le\Lambda(z)\le 1$ then
\[
 2s_2(z)\le (1+\eps_1)\big(1-e^{-C_0}\big)m(z)\le (1+\eps_1)(1-2\eps_1)m(z)\le (1-\eps_1)m(z),
\]
as required, as $e^{-C_0}\ge2\eps_1$.
\end{proof}

\subsection{The expected change in $m(z)$}

The next step is to use \Th{branching} to bound the expected number of vertices removed in
each step. Recall that if $\cT^*(z)$ holds, then the average degree in the graph $S_2(z)$ is close
to $2\ts_2(z)/m(z)=1-e^{-m(z)/z}$, and so the expected size of $D(z)=m(z)-m(z-1)$ should be about
$e^{m(z)/z}\cdot\Prb(d(z)=1)\approx m(z)/z$, by \Lm{dz}. The following lemma makes this precise.

\begin{lemma}\label{lem:branching:critical}
 Let\/ $z\in [z_-,z_+]$. If\/ $\cT^*(z)$ holds, then
 \[
  \E\big[m(z)-m(z-1)\mid\cF_z\big]=\big(1+\gamma(z)+o(1)\big)\frac{m(z)}{z}
 \]
 and
 \[
  \E\big[ \big( m(z)-m(z-1) \big)\id_{\cD(z)}\mid\cF_z\big]=\big(1+\gamma(z)+o(1)\big)\frac{m(z)}{z},
 \]
 where\/ $\gamma(z)$ is defined by
 \begin{equation}\label{def:gamma}
  \gamma(z):=\frac{\eps(2,z)s^*_2(z)\big(e^{m(z)/z}-1\big)}
  {1-\eps(2,z)s^*_2(z)\big(e^{m(z)/z}-1\big)}.
 \end{equation}
\end{lemma}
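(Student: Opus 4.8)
\textbf{Proof plan for Lemma~\ref{lem:branching:critical}.}
The plan is to apply Theorem~\ref{thm:branching}(a) together with Lemma~\ref{lem:dz} to compute the expected change in $m(z)$. Since $\cT^*(z)$ holds, Lemma~\ref{lem:Kcritical} gives that $\cK(z)$ holds and $2s_2(z)\le(1-\eps_1)m(z)$, so all the hypotheses of both Theorem~\ref{thm:branching} and Lemma~\ref{lem:dz} are satisfied. The starting point is the identity $m(z)-m(z-1)=D(z)\cdot\id_{\{d(z)=1\}}$ (recall $m(z)=m(z-1)$ unless $d(z)=1$). Taking expectations and conditioning on $d(z)=1$ first,
\[
 \E\big[m(z)-m(z-1)\mid\cF_z\big]
 =\E\big[D(z)\mid\cF_z,\,d(z)=1\big]\cdot\Prb\big(d(z)=1\mid\cF_z\big).
\]
By Theorem~\ref{thm:branching}(a) the first factor equals $\big(1-2s_2(z)/m(z)+o(1)\big)^{-1}$, and by Lemma~\ref{lem:dz} the second factor equals $e^{-m(z)/z}\cdot\frac{m(z)}{z}\cdot(1+o(1))+O(1/z)$ (taking $k=1$). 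Since $m(z)\ge z_0^{1+o(1)}$ by~\eqref{eq:lowerbound:m:using:M}, the additive $O(1/z)$ error is negligible compared with $m(z)/z=\Theta(1)$, so it may be absorbed into a multiplicative $(1+o(1))$ factor.

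The remaining task is purely algebraic: rewrite the product $\big(1-2s_2(z)/m(z)+o(1)\big)^{-1}e^{-m(z)/z}$ in the form $(1+\gamma(z)+o(1))$. Here I would use the hypothesis $\cT_2(z)$, which gives $s_2(z)=\big(1+\eps(2,z)s^*_2(z)\big)\ts_2(z)$ with $2\ts_2(z)=m(z)(1-e^{-m(z)/z})$ by~\eqref{eq:ts2}. Substituting,
\[
 \frac{2s_2(z)}{m(z)}=\big(1+\eps(2,z)s^*_2(z)\big)\big(1-e^{-m(z)/z}\big),
\]
so that
\[
 1-\frac{2s_2(z)}{m(z)}=e^{-m(z)/z}-\eps(2,z)s^*_2(z)\big(1-e^{-m(z)/z}\big)
 =e^{-m(z)/z}\Big(1-\eps(2,z)s^*_2(z)\big(e^{m(z)/z}-1\big)\Big).
\]
Hence $\big(1-2s_2(z)/m(z)\big)^{-1}e^{-m(z)/z}=\big(1-\eps(2,z)s^*_2(z)(e^{m(z)/z}-1)\big)^{-1}$, which is exactly $1+\gamma(z)$ by the definition~\eqref{def:gamma} of $\gamma(z)$. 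Tracking the $o(1)$ errors through this manipulation (they stay $o(1)$ since $m(z)/z=\Theta(1)$ and the denominator $1-\eps(2,z)s^*_2(z)(e^{m(z)/z}-1)$ is bounded away from $0$, which one checks from $|s^*_2(z)|\le 1$ on $\cT^*(z)$, together with $\eps(2,z)=2\eps_1^2/\Lambda(z)$ being small since $\eps_1$ is small) yields the first claimed formula.

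For the second formula, note that $m(z)-m(z-1)\le R_1(z)$ when $d(z)=1$, so the difference between $\E[(m(z)-m(z-1))\mid\cF_z]$ and $\E[(m(z)-m(z-1))\id_{\cD(z)}\mid\cF_z]$ is at most $\E[R_1(z)\id_{\cD(z)^c}\mid\cF_z]$, which by the tail bound~\eqref{eq:Dplus:tailevent} of Theorem~\ref{thm:branching} and the crude deterministic bound $R_1(z)=O(u_0 z_0^5)$ on $\cK(z)$ (since each removed row has $O(u_0)$ ones and there are at most $O(z_0^5)$ active columns) is at most $O(u_0 z_0^5)\cdot z_0^{-20}=z_0^{-14+o(1)}$. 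This is $o(m(z)/z)$, so the event $\cD(z)$ may be freely inserted. The main obstacle is simply bookkeeping: one must be slightly careful that all the $o(1)$ terms produced by Theorem~\ref{thm:branching}, Lemma~\ref{lem:dz} and the rewriting are genuinely uniform in $z\in[z_-,z_+]$ — which they are, since the relevant error bounds in those statements are uniform and $m(z)/z\in[\,(1-\eps_0)\eta\delta,\,C_0\,]$ is bounded above and below on this range — and that the denominator of $\gamma(z)$ does not come too close to zero, which follows from the smallness of $\eps_1$ together with $|s^*_2(z)|\le1$.
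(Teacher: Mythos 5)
Your proof is correct and follows essentially the same route as the paper's: both condition on $d(z)=1$, apply Theorem~\ref{thm:branching}(a) for $\E[D(z)\mid\cF_z,d(z)=1]$ together with Lemma~\ref{lem:dz} for $\Prb(d(z)=1\mid\cF_z)$, and then use $\cT_2(z)$ (via $\cD^*(z)$) to rewrite $1-2s_2(z)/m(z)=e^{-m(z)/z}\big(1-\eps(2,z)s_2^*(z)(e^{m(z)/z}-1)\big)$ and absorb the error terms using Observation~\ref{obs:gamma:small}. The only cosmetic difference is that you compute the unconditioned expectation first and then insert $\id_{\cD(z)}$ via the tail bound~\eqref{eq:Dplus:tailevent}, whereas the paper works the other way round via~\eqref{eq:cDlikely}; both give the same negligible correction.
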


\begin{obs}\label{obs:gamma:small}
 Let\/ $z\in [z_-, z_+]$. If\/ $\cT^*(z)$ holds, then\/ $|\gamma(z)|\le\eps_1$.
\end{obs}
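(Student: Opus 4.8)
The plan is to unwind the definition~\eqref{def:gamma} and reduce to an estimate on its numerator. Set
\[
 X := \eps(2,z)\,s^*_2(z)\big(e^{m(z)/z}-1\big),
\]
so that $\gamma(z) = X/(1-X)$. Since $|\gamma(z)| \le \eps_1$ will follow as soon as we know $|X| \le \eps_1/2$ (as then $1-X \ge 1-|X| \ge 1/2$, so $|\gamma(z)| \le |X|/(1-|X|) \le \eps_1$), the whole task is to bound $|X|$.

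To do this, observe first that $\cT^*(z)$ implies both $\cT_2(z)$ and $\cQ(z)$, hence in particular $\cM(z)$. From $\cT_2(z)$ we get $|s^*_2(z)| \le 1$, and from $\cM(z)$ together with \Ob{m:bounded} we get $0 \le m(z)/z \le C_0\Lambda(z) \le C_0$; also $\eps(2,z) = 2\eps_1^2/\Lambda(z)$, by \Df{eps}. The one place a careless estimate would cost us is the factor $\big(e^{m(z)/z}-1\big)/\Lambda(z)$: the point is to use that $t \mapsto (e^t-1)/t$ is increasing. Writing $\mu = C_0\Lambda(z)$ and using $m(z)/z \le \mu \le C_0$,
\[
 e^{m(z)/z}-1 \;\le\; e^\mu-1 \;=\; \mu\sum_{k\ge 1}\frac{\mu^{k-1}}{k!} \;\le\; \mu\cdot\frac{e^{C_0}-1}{C_0} \;=\; \Lambda(z)\big(e^{C_0}-1\big),
\]
so the factor $\Lambda(z)$ cancels and $|X| \le 2\eps_1^2\big(e^{C_0}-1\big) < 2\eps_1^2 e^{C_0}$. (Bounding instead $e^{m(z)/z}-1 \le \tfrac{m(z)}{z}e^{m(z)/z} \le C_0\Lambda(z)e^{C_0}$ would leave a spurious factor $C_0 e^{C_0}$ and force one to argue separately that $\eps_0 C_0$ is bounded, which this route avoids.)

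Finally, invoke~\eqref{def:small}: since $\eps_1 < \frac{\eps_0}{16}e^{-C_0}$ and $\eps_0 < e^{-\gamma} < 1$, we have $\eps_1 e^{C_0} < 1/16$, whence $|X| < 2\eps_1 \cdot \eps_1 e^{C_0} < \eps_1/8 < 1/2$. Therefore $1-X > 1-|X| > 1/2$, and
\[
 |\gamma(z)| = \frac{|X|}{1-X} \le \frac{|X|}{1-|X|} < \frac{\eps_1/8}{7/8} < \eps_1,
\]
as required. There is no genuine obstacle here: the entire content is the cancellation of $\Lambda(z)$ in the middle display, after which the smallness hypothesis on $\eps_1$ closes the argument immediately.
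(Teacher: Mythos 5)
Your proof is correct and follows essentially the same route as the paper: both bound $\eps(2,z)\big(e^{m(z)/z}-1\big)$ by combining $m(z)/z\le C_0\Lambda(z)$ with the monotonicity of $(e^t-1)/t$ to cancel the $\Lambda(z)$ in $\eps(2,z)$, arriving at $2\eps_1^2(e^{C_0}-1)\le\eps_1/8$ via~\eqref{def:small}, then conclude using $|s^*_2(z)|\le 1$. The only difference is that you spell out the final step $(\eps_1/8)/(7/8)<\eps_1$, which the paper leaves to the reader.
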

\begin{proof}
Since $m(z)/z\le C_0\Lambda(z)$, by \Ob{m:bounded}, it follows that
\[
 \eps(2,z)\big(e^{m(z)/z}-1\big)\le\frac{2\eps_1^2}{\Lambda(z)}\big(e^{C_0\Lambda(z)}-1\big).
\]
Now $(e^x-1)/x$ is an increasing function of $x$ and $\Lambda(z)\le 1$, so
\[
 \eps(2,z)\big(e^{m(z)/z}-1\big)\le 2\eps_1^2(e^{C_0}-1)\le\eps_1/8
\]
by \eqref{def:small}. The result follows from the definition~\eqref{def:gamma}
of $\gamma(z)$ and the fact that $\cT^*(z)$ implies $\cT_2(z)$, so $|s^*_2(z)|\le1$.
\end{proof}

In the proof below, and also several times later in the section, we will need the fact that $m(z) = \Theta(z)$ uniformly in $z \in [z_-,z_+]$, which follows from $\cM(z)$ (and hence from $\cT^*(z)$) by Observation~\ref{obs:m:bounded}, and by~\eqref{eq:Lambda:delta:plus:littleoone} and~\eqref{eq:lowerbound:m:using:M}. 

\begin{proof}[Proof of \Lm{branching:critical}]
Note first that $\cT^*(z)$ implies that $\cK(z)$ and $\cM(z)$ hold, by \Lm{Kcritical} and \Df{Q}.
It follows that $\Prb(d(z)=1\mid\cF_z)=(1+o(1))\frac{m(z)}{z}e^{-m(z)/z}=\Theta(1)$, by \Lm{dz}
and since $m(z) = \Theta(z)$. Recall also that $D(z)=m(z)-m(z-1)=0$ if $d(z)\ne 1$. By \Th{branching},
\Lm{Kcritical}, and \eqref{eq:cDlikely} it follows that
\begin{align*}
 \E\big[D(z)\id_{\cD(z)}\mid\cF_z\big]
 &=\Big(\E\big[D(z)\mid\cF_z,\,d(z)=1\big] - O\big(m(z)z_0^{-20}\big)\Big)
 \Prb\big(d(z)=1\mid\cF_z\big)\\
 &=\bigg(1-\frac{2s_2(z)}{m(z)}+o(1)\bigg)^{-1}\frac{m(z)}{z}e^{-m(z)/z}.
\end{align*}
Now, $\cT^*(z)$ implies $\cD^*(z)$, so
\[
 \frac{2s_2(z)}{m(z)}=\big(1+\eps(2,z)s^*_2(z)\big)\frac{2\ts_2(z)}{m(z)}
 =\big(1+\eps(2,z)s^*_2(z)\big)\big(1-e^{-m(z)/z}\big),
\]
by \eqref{eq:ts2} and~\eqref{def:sstar:critical}. Thus we obtain
\begin{align*}
 \E\big[D(z)\id_{\cD(z)}\mid\cF_z\big]
 &=\Big(e^{m(z)/z}-\big(1+\eps(2,z)s^*_2(z)\big)\big(e^{m(z)/z}-1\big)
  +o(1)\Big)^{-1}\frac{m(z)}{z}\\
 &=\Big(1-\eps(2,z)s^*_2(z)\big(e^{m(z)/z}-1\big)+o(1)\Big)^{-1}\frac{m(z)}{z}\\
 &=\big(1+\gamma(z)+o(1)\big)\frac{m(z)}{z},
\end{align*}
and similarly for $\E\big[D(z)\mid\cF_z\big]$, as required.
\end{proof}

\subsection{The expected change in $\ts_k(z)$}

The next step is to bound the expected change of~$\ts_k(z)$. We shall use \Lm{branching:critical}
to bound the first moment of $D(z)$, and \Th{branching} to bound its second moment. To simplify
the statement, let us define
\begin{equation}\label{def:gk}
 g_k(z):=\frac{\ts_k(z)}{z}+\frac{e^{-m(z)/z}}{k!}\bigg(\frac{m(z)}{z}\bigg)^k
\end{equation}
for each $k\ge 2$. Note that $g_k(z)=O(1)$ if the event $\cT^*(z)$ holds, since $\ts_k(z)\le m(z)$
and $m(z)=O(z)$, by \Ob{m:bounded}. We shall prove the following lemma.

\begin{lemma}\label{lem:deltastilde}
 Let\/ $z\in [z_-, z_+]$. If\/ $\cT^*(z)$ holds, then
 \begin{equation}\label{eq:deltastilde}
  \E\big[ \big( \ts_k(z-1)-\ts_k(z) \big)\id_{\cD(z)}\mid\cF_z\big]
  =-\frac{\ts_k(z)}{z}-\big(\gamma(z)+o(1)\big)g_k(z)
 \end{equation}
 for every\/ $2\le k\le 4u_0$.
\end{lemma}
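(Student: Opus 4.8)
The plan is to treat $\ts_k(m,z)=\frac{m}{k(k-1)}e^{-m/z}\sum_{\ell\ge k-1}\frac1{\ell!}(m/z)^\ell$ as a (real-analytic) function of two real variables $m>0$, $z>0$, and to Taylor-expand
\[
 \ts_k(z-1)-\ts_k(z)=\ts_k\big(m(z)-D(z),\,z-1\big)-\ts_k\big(m(z),z\big)
\]
about the point $\big(m(z),z\big)$ in the direction $\big(-D(z),-1\big)$, where $D(z)=m(z)-m(z-1)$ (so $D(z)=0$ unless $d(z)=1$). On the event $\cD(z)$ we have $D(z)\le u_0^2$, so the segment joining the two points stays inside the region where $m=\Theta(z)$ and $\lambda:=m/z=\Theta(1)$; here I use that $\cT^*(z)$ implies $\cM(z)$, hence $m(z)=\Theta(z)$ by \Ob{m:bounded} and \eqref{eq:Lambda:delta:plus:littleoone}, and that $\lambda$ is bounded away from $0$ and $\infty$. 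Writing $\lambda=m/z$, a short computation gives $\partial_m\ts_k=\frac{\ts_k}{m}+\frac{e^{-\lambda}\lambda^{k-1}}{k!}$ and $\partial_z\ts_k=-\frac{e^{-\lambda}\lambda^k}{k!}$, so Taylor's theorem yields
\[
 \ts_k(z-1)-\ts_k(z)=-D(z)\bigg(\frac{\ts_k(z)}{m(z)}+\frac{e^{-\lambda}\lambda^{k-1}}{k!}\bigg)+\frac{e^{-\lambda}\lambda^k}{k!}+E,
\]
where $E$ is the second-order remainder.

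Next I would apply $\E\big[\,\cdot\,\id_{\cD(z)}\mid\cF_z\big]$. Since $\cT^*(z)$ implies $\cK(z)$ and $2s_2(z)\le(1-\eps_1)m(z)$ (by \Lm{Kcritical}), \Lm{branching:critical} gives $\E\big[D(z)\id_{\cD(z)}\mid\cF_z\big]=\big(1+\gamma(z)+o(1)\big)\frac{m(z)}{z}$, while the coefficient of $\frac{e^{-\lambda}\lambda^k}{k!}$ is $\cF_z$-measurable and picks up only the factor $\Prb\big(\cD(z)\mid\cF_z\big)=1-O(z_0^{-20})$, by \eqref{eq:cDlikely}. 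Using the identity $\frac{m(z)}{z}\big(\frac{\ts_k(z)}{m(z)}+\frac{e^{-\lambda}\lambda^{k-1}}{k!}\big)=\frac{\ts_k(z)}{z}+\frac{e^{-\lambda}\lambda^k}{k!}=g_k(z)$, the first-order terms combine, up to errors of size $o(1)\,g_k(z)$, to give exactly
\[
 -\big(1+\gamma(z)\big)g_k(z)+\frac{e^{-\lambda}\lambda^k}{k!}=-\frac{\ts_k(z)}{z}-\gamma(z)g_k(z),
\]
which is the claimed main term. Thus the task reduces to showing that every error contribution is $o(1)\cdot g_k(z)$, uniformly in $2\le k\le 4u_0$ and $z\in[z_-,z_+]$.

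There are three error sources: (i) the $o(1)\frac{m(z)}{z}$ term in \Lm{branching:critical}, which after multiplication by $\partial_m\ts_k$ contributes $o(1)\big(\frac{\ts_k(z)}{z}+\frac{e^{-\lambda}\lambda^k}{k!}\big)=o(1)\,g_k(z)$; (ii) the $O(z_0^{-20})$ loss from the indicator on the $\frac{e^{-\lambda}\lambda^k}{k!}$ term, contributing $O(z_0^{-20})g_k(z)$; and (iii) the remainder $\E\big[E\,\id_{\cD(z)}\mid\cF_z\big]$. The hard part is (iii): since $g_k(z)$ can be as small as $z_0^{-O(1)}$ when $k$ is of order $u_0$, a crude bound such as $|E|=O(1/z)$ is not good enough, and one must exploit that the second derivatives of $\ts_k$ are themselves of order $g_k(z)$. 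Explicitly,
\[
 \partial_m^2\ts_k=\frac{e^{-\lambda}\lambda^{k-2}}{z\,k!}(k-\lambda),\qquad
 \partial_m\partial_z\ts_k=-\frac{e^{-\lambda}\lambda^{k-1}}{z\,k!}(k-\lambda),\qquad
 \partial_z^2\ts_k=\frac{e^{-\lambda}\lambda^k}{z\,k!}(k-\lambda),
\]
and each is $O(k/z)\,g_k(z)$ because $\lambda=\Theta(1)$ and $\frac{e^{-\lambda}\lambda^{k-j}}{k!}=O(g_k(z))$ for $j\in\{0,1,2\}$; moreover these bounds persist (up to a factor $1+o(1)$) at any point of the segment, since on $\cD(z)$ the intermediate value of $\lambda$ differs from $\lambda$ by a factor $1+z_0^{-1+o(1)}$ and $\big(1+z_0^{-1+o(1)}\big)^{4u_0}=1+o(1)$. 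By Taylor's theorem this gives $|E|=O(k/z)\,g_k(z)\big(D(z)+1\big)^2$ on $\cD(z)$, and combining with $\E\big[D(z)^2\id_{\cD(z)}\mid\cF_z\big]=O(1)$ (from \Th{branching}$(c)$, as $D(z)=0$ unless $d(z)=1$) and $\E\big[D(z)\id_{\cD(z)}\mid\cF_z\big]=O(1)$ yields $\E\big[E\,\id_{\cD(z)}\mid\cF_z\big]=O(k/z)\,g_k(z)=o(1)\,g_k(z)$, since $k\le 4u_0=o(z_-)$. Assembling (i)--(iii) completes the proof.
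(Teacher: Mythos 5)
Your proof is correct and follows essentially the same approach as the paper's: a Taylor expansion of $\ts_k$ to second order, with the second-derivative bounds of order $O(k/z)\,g_k(z)$ controlling the remainder on the event $\cD(z)$, after which expectations are taken via \Lm{branching:critical} and \Th{branching}$(c)$ together with~\eqref{eq:cDlikely}. The only cosmetic difference is that you expand $\ts_k(m,z)$ as a function of two variables, whereas the paper (in \Lm{deterministic:deltastilde}) first reduces to the one-variable function $f_k(w)=\ts_k(z)/z$ with $w=m(z)/z$, which keeps the second-order remainder to a single term $f_k''$ rather than three second partials; the resulting derivative identities and error estimates are equivalent.
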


Note that, since $g_k(z)=O(1)$, the error term is $o(1)$. However, it will be important in the
proof of \Lm{deltaSstar} that $g_k(z)$ is significantly smaller than this when $k\to\infty$.
The first step in the proof of \Lm{deltastilde} is to obtain deterministic bounds on
$\ts_k(z-1)-\ts_k(z)$, which follow via some easy algebra. We give the details for completeness.

\begin{lemma}\label{lem:deterministic:deltastilde}
 Let\/ $z\in [z_-, z_+]$. If\/ $\cT^*(z)$ and\/ $\cD(z)$ hold, then
 \begin{equation}\label{eq:deterministic:deltastilde}
  \ts_k(z)-\ts_k(z-1)=\frac{\ts_k(z)}{z}+\frac{zg_k(z)}{m(z)}
  \bigg(D(z)-\frac{m(z)}{z}+\frac{O\big(k(D(z)^2+1)\big)}{z}\bigg).
 \end{equation}
 for\/ $2\le k\le 4u_0$.
\end{lemma}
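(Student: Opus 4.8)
The goal is a purely algebraic identity: I want to write $\ts_k(z)-\ts_k(z-1)$ in terms of $D(z) = m(z)-m(z-1)$, plus error terms. The plan is to start from the definition of $\ts_k$ in \Df{tsk}, treat it as a smooth function of the single combined variable $\lambda(z) := m(z)/z$ (together with the overall prefactor $m(z)$), and Taylor-expand in the one-step change.

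First I would write $\ts_k(z) = m(z) \cdot h_k(\lambda(z))$ where $h_k(\lambda) = \frac{1}{k(k-1)} e^{-\lambda} \sum_{\ell \ge k-1} \lambda^\ell/\ell!$, so that $h_k'(\lambda) = -\frac{1}{k(k-1)}e^{-\lambda}\frac{\lambda^{k-2}}{(k-2)!} + h_k(\lambda)$ after differentiating the series termwise and telescoping — more usefully, $m(z)\big(h_k(\lambda) - (-h_k'(\lambda))\big)$ should be arranged to recover the combination $\ts_k + $ (the Poisson-type term in $g_k$). Then I compute the change $\ts_k(z-1) - \ts_k(z) = m(z-1)h_k(\lambda(z-1)) - m(z)h_k(\lambda(z))$. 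Since $m(z-1) = m(z) - D(z)$ and $\lambda(z-1) = m(z-1)/(z-1)$, I would expand $\lambda(z-1) - \lambda(z) = \frac{m(z)-D(z)}{z-1} - \frac{m(z)}{z} = \frac{m(z)/z - D(z)}{z-1} = \frac{1}{z}\big(\frac{m(z)}{z} - D(z)\big) + O(D(z)^2+1)/z^2 \cdot$(bounded), using that $m(z),D(z) = O(z)$ and $\cD(z)$ gives $D(z) \le u_0^2$.

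The main work is then a two-variable first-order Taylor expansion: $m(z-1)h_k(\lambda(z-1)) = (m(z)-D(z))(h_k(\lambda(z)) + h_k'(\lambda(z))(\lambda(z-1)-\lambda(z)) + O(\|h_k''\|(\lambda(z-1)-\lambda(z))^2))$. I need to control the second derivative of $h_k$: differentiating the series a second time produces another Poisson-type term with an extra factor roughly $k$ (from shifting the index and the $1/(k(k-1))$ prefactor interacting with $\lambda^{k-2}, \lambda^{k-3}$), which is the source of the $O(k(\cdots))$ in the error. Collecting the leading terms: the $-D(z)h_k(\lambda(z))$ piece together with the $m(z)h_k'(\lambda(z))\cdot\frac{1}{z}(\lambda(z)-D(z))$ piece should reorganize, using the identity relating $h_k$, $-h_k'$ and the Poisson term, into $\frac{\ts_k(z)}{z} + \frac{z g_k(z)}{m(z)}\big(D(z) - \frac{m(z)}{z}\big)$, with everything else absorbed into $\frac{z g_k(z)}{m(z)} \cdot \frac{O(k(D(z)^2+1))}{z}$. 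I would double-check the bookkeeping by verifying the identity in the special case $k=2$ against \eqref{eq:ts2}, where $\ts_2(z) = \frac{m(z)}{2}(1-e^{-\lambda})$ and the computation is elementary.

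The hard part will be the clean identification of coefficients: matching the combination of derivative terms to exactly $\frac{z g_k(z)}{m(z)}$ times $(D(z) - m(z)/z)$ requires the precise relation $-h_k'(\lambda) = h_k(\lambda)/\lambda \cdot (\text{something}) $... more carefully, one needs $m(z) \cdot \big(h_k(\lambda) + \lambda h_k'(\lambda)\big)$ or a similar combination to equal $z g_k(z)/$ a bounded factor — so I would first nail down the exact ODE-type identity satisfied by $h_k$ (analogous to \eqref{eq:rho:ddiff} for $\rho$, or simply by direct series manipulation: $\lambda h_k'(\lambda) = -\frac{1}{k(k-1)}\frac{e^{-\lambda}\lambda^{k-1}}{(k-2)!} + \lambda h_k(\lambda)$, hence $\frac{d}{d\lambda}(e^{\lambda}\cdot k(k-1) h_k) = \frac{\lambda^{k-2}}{(k-2)!}$ gives a handle), and only then substitute. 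This is routine but error-prone, so the verification against $k=2$ is essential. No genuine analytic obstacle arises — the lemma is deterministic and the randomness enters only through the as-yet-unconstrained value of $D(z)$, which is why the statement is an identity in $D(z)$ rather than an estimate.
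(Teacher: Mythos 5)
Your approach is essentially the paper's: the paper also Taylor-expands $\ts_k$ as a smooth function of the single combined variable $w = m(z)/z$, using the exact first derivative and a bound on the second derivative to produce the $O(k(\cdot))$ error. The only packaging difference is that the paper works with $f_k(w) := w h_k(w) = \ts_k(z)/z$ and does a one-variable expansion of $f_k$, then multiplies by $z-1$ at the end, whereas you keep $m$ and $\lambda = m/z$ separate and do a two-variable expansion of $m\,h_k(\lambda)$; these are trivially equivalent. Two small corrections to your sketched computations, in case you carry them out: the correct derivative is $h_k'(\lambda) = e^{-\lambda}\lambda^{k-2}/k!$ (your formula has the wrong sign on the Poisson term and a spurious $+h_k(\lambda)$; likewise $\tfrac{d}{d\lambda}\big(e^\lambda k(k-1) h_k\big) = \frac{\lambda^{k-2}}{(k-2)!} + e^\lambda k(k-1) h_k$, not just the first term), and the identity you are reaching for is $h_k(\lambda) + \lambda h_k'(\lambda) = f_k'(\lambda) = zg_k(z)/m(z)$, which is exactly what makes the coefficient of $\big(D(z)-m(z)/z\big)$ come out cleanly. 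Your $k=2$ sanity check would have caught the slip, and the rest of the argument (bounding $f_k''$ by $O(k f_k')$ for $w$ bounded away from $0$, and using $\cD(z)$ to keep $|w-w'| = O(u_0^2/z) = o(1/k)$ so this holds uniformly on the interval) is precisely what the paper does.
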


\begin{proof}
For each $k\ge 2$ and $w\ge0$, set
\[
 f_k(w):=\frac{we^{-w}}{k(k-1)}\sum_{\ell=k-1}^\infty\frac{w^\ell}{\ell!},
\]
so that $\ts_k(z)/z=f_k(m(z)/z)$. Observe that
\[
 f'_k(w)=\frac{(1-w)e^{-w}}{k(k-1)}\sum_{\ell=k-1}^\infty\frac{w^\ell}{\ell!}
 +\frac{we^{-w}}{k(k-1)}\sum_{\ell=k-2}^\infty\frac{w^\ell}{\ell!}
 =\frac{1}{w}\bigg(f_k(w)+\frac{w^k e^{-w}}{k!}\bigg)
\]
so that $f'_k(m(z)/z)=zg_k(z)/m(z)$. Observe also that
\begin{equation}\label{eq:fpp}
 f''_k(w)=\frac{1}{w}\frac{d}{dw}\big(wf'_k(w)-f_k(w)\big)
 =\frac{1}{w}\frac{d}{dw}\frac{w^k e^{-w}}{k!}
 =\frac{w^{k-2}e^{-w}}{(k-1)!}-\frac{w^{k-1}e^{-w}}{k!}.
\end{equation}
Thus, if $w$ is bounded away from~0, $f''_k(w)=O(kf'_k(w))$.
Moreover, if $|w'-w|=O(u_0^2/z)=o(1/k)$ then we still have
$f''_k(w'')=O(kf'_k(w))$ for all $w''\in[w,w']$ as neither term
in \eqref{eq:fpp} changes by more than a constant factor. Thus, by Taylor's Theorem,
\begin{equation}\label{eq:df}
 f_k(w)-f_k(w')=f'_k(w)\Big( w-w'+O\big( k(w-w')^2 \big) \Big).
\end{equation}
Writing $w=m(z)/z$ and $w'=m(z-1)/(z-1)$ we have
\[
 w-w'=\frac{m(z)}{z}-\frac{m(z)-D(z)}{z-1}=\frac{1}{z-1}\bigg(D(z)-\frac{m(z)}{z}\bigg).
\]
If $\cT^*(z)$ holds then $w$ is indeed bounded away from zero as $\cT^*(z)$ implies
$\cQ(z)$, which implies $\cM(z)$.
Also $\cD(z)$ implies $D(z)\le u_0^2$, so $|w'-w|=O(u_0^2/z)$. Thus substituting
these values of $w$ and $w'$ into \eqref{eq:df} gives
\[
 \frac{\ts_k(z)}{z}-\frac{\ts_k(z-1)}{z-1}=
 \frac{zg_k(z)}{(z-1)m(z)}\bigg(D(z)-\frac{m(z)}{z}+\frac{O(k(D(z)^2+1))}{z}\bigg).
\]
Multiplying by $z-1$ then gives \eqref{eq:deterministic:deltastilde}.
\end{proof}

We note here, for future reference, the following identity, which follows immediately from the
definition~\eqref{eq:tsk} of $\ts_k(z)$.

\begin{obs}\label{obs:ksk}
 For each\/ $k\ge 2$, and every\/ $z\in [\pi(x)]$,
 \[
  (k-1)\ts_k(z)-(k+1)\ts_{k+1}(z)=\frac{e^{-m(z)/z}}{k!}\frac{m(z)^{k}}{z^{k-1}}.
 \]
\dispqed
\end{obs}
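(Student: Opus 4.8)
The plan is to verify the identity by a direct computation starting from the definition~\eqref{eq:tsk} of $\ts_k(z)$; no auxiliary results are needed. Writing $\lambda := m(z)/z$ for brevity, the first step is to clear the denominators in~\eqref{eq:tsk}: multiplying by $k(k-1)$ gives
\[
 (k-1)\ts_k(z) = \frac{m(z)}{k}\,e^{-\lambda}\sum_{\ell=k-1}^\infty\frac{\lambda^\ell}{\ell!},
\]
and applying~\eqref{eq:tsk} with $k$ replaced by $k+1$ and multiplying by $(k+1)k$ gives
\[
 (k+1)\ts_{k+1}(z) = \frac{m(z)}{k}\,e^{-\lambda}\sum_{\ell=k}^\infty\frac{\lambda^\ell}{\ell!}.
\]
The point of writing both expressions this way is that they share the common prefactor $m(z)/k$, so that on subtracting them the two power series telescope, leaving only the $\ell = k-1$ term.

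Carrying out the subtraction then yields
\[
 (k-1)\ts_k(z) - (k+1)\ts_{k+1}(z) = \frac{m(z)}{k}\,e^{-\lambda}\cdot\frac{\lambda^{k-1}}{(k-1)!} = \frac{m(z)}{k!}\,\lambda^{k-1},
\]
and the final step is simply to substitute $\lambda = m(z)/z$, which turns $m(z)\lambda^{k-1}/k!$ into $m(z)^k/(k!\,z^{k-1})$ and thus produces exactly the right-hand side of the claimed identity (the factor $e^{-\lambda} = e^{-m(z)/z}$ being carried through unchanged). There is no genuine obstacle in this argument — it is a routine manipulation of the exponential series — and the only place calling for a moment's care is matching up the prefactors $m(z)/\big(k(k-1)\big)$ and $m(z)/\big((k+1)k\big)$ so that the two series are correctly aligned for the telescoping.
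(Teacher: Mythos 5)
Your computation is correct and is precisely the direct verification from Definition~\ref{def:tsk} that the paper itself invokes (the paper states the observation with an immediate QED, describing it as following immediately from the definition). The telescoping of the two tail sums after clearing the $k(k-1)$ and $(k+1)k$ denominators is exactly the intended argument.
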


Note that it follows, as an immediate corollary of this observation, that
\begin{equation}\label{eq:g:bound:by:tsk}
 g_k(z)z=k\ts_k(z)-(k+1)\ts_{k+1}(z)\le k\ts_k(z).
\end{equation}
Moreover, it follows from Lemma~\ref{lem:deterministic:deltastilde} and~\eqref{eq:g:bound:by:tsk} that if the events $\cT^*(z)$ and $\cD(z)$ hold for some $z \in [z_-, z_+]$, then
\begin{equation}\label{eq:adjacent:shats:theta}
\frac{\ts_k(z-1)}{\ts_k(z)} = 1 + O\big( z^{- 1 + o(1)} \big),
\end{equation}
since $\cD(z)$ implies $D(z)\le u_0^2$ and $\cM(z)$ implies $m(z) = \Theta(z)$, as noted above. 

To deduce \Lm{deltastilde} from Lemma~\ref{lem:deterministic:deltastilde}, we just need to take expectations of both sides and apply \Th{branching} and \Lm{branching:critical}.

\begin{proof}[Proof of Lemma~\ref{lem:deltastilde}]
Recall that
\[
 \E\big[D(z)\id_{\cD(z)}\mid\cF_z\big]=\big(1+\gamma(z)+o(1)\big)\frac{m(z)}{z},
\]
by \Lm{branching:critical}, and that $\E[D(z)^2\mid\cF_z]=O(1)$ by \Th{branching}
and \Lm{Kcritical}. It therefore follows from \Lm{deterministic:deltastilde} and \eqref{eq:cDlikely} that
\begin{align*}
 \E\big[ \big( \ts_k(z)-\ts_k(z-1) \big)\id_{\cD(z)}\mid\cF_z\big]
 &=\frac{\ts_k(z)}{z}+\bigg(\gamma(z)+o(1)+\frac{O(k)}{m(z)}\bigg)g_k(z) + O\big( z_0^{-20} \big), 
\end{align*}
since $\ts_k(z) / z \le g_k(z) = O(1)$. The result follows since $k\le 4u_0=o(m(z))$.
\end{proof}

\subsection{The expected change in $s_k(z)$}

We now arrive at the main calculation: that of the expected change in the number of edges
of size~$k$.

\begin{lemma}\label{lem:deltas}
 Let\/ $z\in [z_-,z_+]$. If\/ $\cT^*(z)$ holds, then
 \[
  \E\big[ \big( s_k(z-1)-s_k(z) \big) \id_{\cD(z)}\mid\cF_z\big]\in-\frac{\ts_k(z)}{z}-\gamma(z)g_k(z)
  -\frac{k\cdot\eps(k,z)\ts_k(z)}{z}\bigg(s^*_k(z)\pm\frac{1}{6}\bigg)
 \]
 for every\/ $2\le k\le 4u_0$.
\end{lemma}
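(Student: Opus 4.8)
The plan is to write $s_k(z-1) - s_k(z)$ using the deterministic identity from \Ob{change:ins:using:deltas}, namely
\[
 s_k(z-1) - s_k(z) \in \id_{\{d(z)=k\}} + \big( -\Delta_k(z) + \Delta_{k+1}(z) \pm \Delta'(z) \big)\id_{\{d(z)=1\}},
\]
and then take conditional expectations of each term, multiplying $\cD(z)$ through. For the first term, $\Prb(d(z)=k \mid \cF_z) = \frac{(1+o(1))^k}{k!} e^{-m(z)/z}(m(z)/z)^k + O(1/z)$ by \Lm{dz} (applicable since $\cT^*(z)$ implies $\cK(z)$ by \Lm{Kcritical}); since $m(z) = \Theta(z)$ and $k \le 4u_0 = o(\log z_0)$, the factor $(1+o(1))^k$ contributes only a $(1+o(1))$ factor, so this term is $(1+o(1))\frac{e^{-m(z)/z}}{k!}(m(z)/z)^k + O(1/z)$, which should be matched against part of $\ts_k(z)/z$ and $g_k(z)$ via \Ob{ksk}. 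For the $\Delta$ terms, I would use \Th{branching} (parts $(b)$ and $(d)$), which gives, with $\beta(z) := \big(1 - \frac{2s_2(z)}{m(z)} + o(1)\big)^{-1}$,
\[
 \E\big[\Delta_k(z) \mid \cF_z, d(z)=1\big] = \beta(z)\frac{k s_k(z)}{m(z)} + O(m(z)^{-1/3}), \qquad \E\big[\Delta'(z) \mid \cF_z, d(z)=1\big] = O(m(z)^{-1/3}),
\]
and multiply by $\Prb(d(z)=1 \mid \cF_z) = (1+o(1))\frac{m(z)}{z}e^{-m(z)/z}$, again using \Lm{dz}. Passing from the conditional-on-$d(z)=1$ expectation to the $\id_{\cD(z)}$-weighted expectation costs only $O(m(z) z_0^{-20})$ by~\eqref{eq:cDlikely}, which is negligible.

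Assembling these pieces, the $\Delta_k$ contribution is $-\beta(z)\frac{k s_k(z)}{m(z)} \cdot (1+o(1))\frac{m(z)}{z}e^{-m(z)/z} = -(1+o(1))\beta(z) e^{-m(z)/z}\frac{k s_k(z)}{z}$; the key step is then to substitute $s_k(z) = (1 + \eps(k,z)s^*_k(z))\ts_k(z)$ (valid since $\cT^*(z)$ implies $\cD^*(z)$, so~\eqref{def:sstar:critical} applies) and $\frac{2 s_2(z)}{m(z)} = (1 + \eps(2,z)s^*_2(z))(1 - e^{-m(z)/z})$ from~\eqref{eq:ts2}, so that $\beta(z) e^{-m(z)/z} = (1 + \gamma(z) + o(1))$ exactly as in the proof of \Lm{branching:critical}, with $\gamma(z)$ as defined in~\eqref{def:gamma}. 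The $-\Delta_k(z)$ term thus becomes $-(1+\gamma(z)+o(1))\frac{k(1+\eps(k,z)s^*_k(z))\ts_k(z)}{z}$, whose "main part" $-\frac{k\ts_k(z)}{z}$ should, together with the $\id_{\{d(z)=k\}}$ term $(1+o(1))\frac{e^{-m(z)/z}}{k!}(m(z)/z)^k$ and the $+\Delta_{k+1}(z)$ term $+(1+\gamma(z)+o(1))\frac{(k+1)\ts_{k+1}(z)}{z}$, collapse to exactly $-\frac{\ts_k(z)}{z} - \gamma(z)g_k(z) + o(g_k(z))$ via the identity $(k-1)\ts_k(z) - (k+1)\ts_{k+1}(z) = \frac{e^{-m(z)/z}}{k!}\frac{m(z)^k}{z^{k-1}} = g_k(z)z - \ts_k(z)$ of \Ob{ksk} and definition~\eqref{def:gk} of $g_k(z)$.

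The remaining (genuinely new-error) terms are: the $\eps(k,z)s^*_k(z)$ part of $\Delta_k$, namely $-(1+\gamma(z)+o(1))\frac{k\eps(k,z)s^*_k(z)\ts_k(z)}{z}$, which gives the leading $-\frac{k\eps(k,z)\ts_k(z)}{z}s^*_k(z)$ (the factor $(1+\gamma(z))$ costs an extra $|\gamma(z)| \le \eps_1$ times $|s^*_k(z)| \le 1$, absorbed into the $\pm 1/6$); the $\eps(k+1,z)s^*_{k+1}(z)$ part of $\Delta_{k+1}$, which is bounded by $(1+\gamma(z)+o(1))\frac{(k+1)\eps(k+1,z)\ts_{k+1}(z)}{z}|s^*_{k+1}(z)|$, and here I must use that $\eps(k+1,z)/\eps(k,z) = \eps_1(k+1)$ is tiny, together with $(k+1)\ts_{k+1}(z) \le \frac{m(z)}{z}\ts_k(z) = O(\ts_k(z))$ from \Ob{stildes:ineq}, so this is at most (a constant times) $\eps_1 \cdot \frac{k\eps(k,z)\ts_k(z)}{z}$, again absorbed into $\pm \frac16$ by taking $\eps_1$ small; the $\Delta'(z)$ term, which contributes $O(m(z)^{-1/3}) \cdot \frac{m(z)}{z} = O(z_0^{-1/3+o(1)})$, negligible compared to $\frac{k\eps(k,z)\ts_k(z)}{z}$ because $\eps(k,z)\ts_k(z) = z_0^{1+o(1)}$ by \Ob{eps:tsk:rough}; the $O(1/z)$ and $O(m(z)^{-1/3})$ error terms from \Lm{dz} and \Th{branching}, handled the same way; and the $o(g_k(z))$ from \Lm{branching:critical}-style expansions, which is $o(1)$ and hence also small relative to $\frac{k\eps(k,z)\ts_k(z)}{z} = k z_0^{o(1)}$.

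\textbf{Expected main obstacle.} The delicate point is bookkeeping the cancellation in the second paragraph — one must verify that the $\id_{\{d(z)=k\}}$ term, the $-\frac{k\ts_k(z)}{z}$ from $\Delta_k$, and the $+\frac{(k+1)\ts_{k+1}(z)}{z}$ from $\Delta_{k+1}$ combine (using \Ob{ksk} and the definitions of $g_k$, $\gamma(z)$, $\ts_k$) precisely into $-\frac{\ts_k(z)}{z} - \gamma(z)g_k(z)$ with genuinely $o$-size leftover, \emph{and simultaneously} that every error term is small not in absolute terms but relative to $\frac{k\eps(k,z)\ts_k(z)}{z} \approx k z_0^{o(1)}$ — so one needs the lower bound $\eps(k,z)\ts_k(z) \ge z_0^{1-o(1)}$ from \Ob{eps:tsk:rough} uniformly over $2 \le k \le 4u_0$, and the rapid decay $\eps(k+1,z) \ll \eps(k,z)$ (which is exactly why the function $\eps(k,z)$ was defined as in \Df{eps}) to control the influence of the $s^*_{k+1}(z)$ error on the $s^*_k(z)$ drift. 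Everything else is routine substitution and Taylor estimates already packaged in Lemmas~\ref{lem:dz}, \ref{lem:branching:critical}, \ref{lem:deltastilde} and \Th{branching}.
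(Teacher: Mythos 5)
Your proposal is correct and follows essentially the same route as the paper: decompose via \Ob{change:ins:using:deltas}, estimate $\Prb(d(z)=k\mid\cF_z)$ with \Lm{dz}, estimate $\E[\Delta_k(z)\mid\cF_z,d(z)=1]$ and $\E[\Delta'(z)\mid\cF_z,d(z)=1]$ with \Th{branching}, convert $\beta(z)e^{-m(z)/z}$ to $1+\gamma(z)+o(1)$ via the same substitution that drives \Lm{branching:critical}, and then cancel using \Ob{ksk} and~\eqref{def:gk} while controlling every error against $k\eps(k,z)\ts_k(z)/z$ via \Ob{eps:tsk:rough} and the decay $\eps(k+1,z)\ts_{k+1}(z)\le\eps_1 C_0(k+1)\eps(k,z)\ts_k(z)$. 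The only stylistic difference is that you carry out the cancellation of the main terms inline after substituting $s_k(z)=(1+\eps(k,z)s^*_k(z))\ts_k(z)$, whereas the paper first arrives at an expression in terms of $(k+1)s_{k+1}(z)-ks_k(z)$ (its~\eqref{eq:change:sk:claim}) and then packages exactly your algebra into the separate deterministic \Lm{deltas:deterministic} before applying it at the end — a bookkeeping choice, not a different argument.
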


We shall first prove the following deterministic lemma.

\begin{lemma}\label{lem:deltas:deterministic}
 Let\/ $z\in [z_-,z_+]$. If\/ $\cT^*(z)$ holds, then
 \[
  ks_{k}(z)-(k+1)s_{k+1}(z)
  \in g_k(z)z+k\cdot\eps(k,z)\ts_k(z)\bigg(s^*_k(z)\pm\frac{1}{8}\bigg)
 \]
 for every\/ $k\ge 2$.
\end{lemma}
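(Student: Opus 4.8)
The plan is to expand $ks_k(z)-(k+1)s_{k+1}(z)$ using the relations $s_k(z) = \ts_k(z) + \eps(k,z)\ts_k(z)s^*_k(z)$ (valid since $\cT^*(z)$ implies $\cD^*(z)$, so the definition~\eqref{def:sstar:critical} applies) and the corresponding expression for $s_{k+1}(z)$. The ``main term'' $k\ts_k(z)-(k+1)\ts_{k+1}(z)$ is exactly $g_k(z)z$ by \Ob{ksk}, so the whole task reduces to controlling the error term
\[
 k\cdot\eps(k,z)\ts_k(z)s^*_k(z) - (k+1)\cdot\eps(k+1,z)\ts_{k+1}(z)s^*_{k+1}(z).
\]
The first summand is already of the desired shape; the entire content of the lemma is to show that the second summand is at most $\tfrac{1}{8}k\cdot\eps(k,z)\ts_k(z)$ in absolute value. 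This is precisely where the rapid decay of $\eps(k,z)$ in $k$ (for small $k$) is used, as advertised in the discussion after \Df{eps}.

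First I would record that $\cT^*(z)$ implies $\cT_{k+1}(z)$ for every $k$ (either directly, if $k+1\le 4u_0$, or because $\eps(k+1,z)>1$ and $s_{k+1}(z)=0$ via $\cQ(z)$ when $k+1 > 4u_0$), so $|s^*_{k+1}(z)|\le 1$. Then
\[
 (k+1)\eps(k+1,z)\ts_{k+1}(z)|s^*_{k+1}(z)| \le (k+1)\eps(k+1,z)\ts_{k+1}(z).
\]
Next I would combine \Df{eps}, which gives $\eps(k+1,z)/\eps(k,z) = \eps_1(k+1)$, with \Ob{stildes:ineq}, which gives $(k+1)\ts_{k+1}(z)\le (m(z)/z)\ts_k(z)$. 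Putting these together,
\[
 (k+1)\eps(k+1,z)\ts_{k+1}(z) = \eps_1(k+1)\cdot\eps(k,z)\cdot(k+1)\ts_{k+1}(z)\cdot\frac{1}{k+1}
 \le \eps_1(k+1)\,\eps(k,z)\ts_k(z)\cdot\frac{m(z)}{z}\cdot\frac{1}{k}.
\]
Wait — I need to be a little more careful with the combinatorial factors; the clean way is: $(k+1)\eps(k+1,z)\ts_{k+1}(z) = \eps_1\,\eps(k,z)\cdot(k+1)^2\ts_{k+1}(z)$, and by \Ob{stildes:ineq} applied with the index shift, $(k+1)^2\ts_{k+1}(z) = (k+1)\cdot(k+1)\ts_{k+1}(z) \le (k+1)(m(z)/z)\ts_k(z)$. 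Since $\cM(z)$ holds, $m(z)/z\le C_0$ by \Ob{m:bounded}, so this is at most $\eps_1 C_0 (k+1)\,\eps(k,z)\ts_k(z)$. Comparing with the target bound $\tfrac18 k\,\eps(k,z)\ts_k(z)$, it suffices that $\eps_1 C_0 (k+1) \le \tfrac18 k$, i.e. $\eps_1 C_0 \le \tfrac18\cdot\tfrac{k}{k+1}$; since $k\ge 2$ this needs $\eps_1 C_0 \le \tfrac1{12}$, which holds because $\eps_1 C_0 \le \eps_1 e^{C_0} \le 1/16$ by~\eqref{def:small}. This gives
\[
 (k+1)\eps(k+1,z)\ts_{k+1}(z)|s^*_{k+1}(z)| \le \frac{k}{8}\,\eps(k,z)\ts_k(z),
\]
which is exactly the claimed $\pm\tfrac18$ error, and the lemma follows by assembling the pieces: $ks_k(z)-(k+1)s_{k+1}(z) = g_k(z)z + k\eps(k,z)\ts_k(z)s^*_k(z) + \theta$ with $|\theta|\le\tfrac18 k\eps(k,z)\ts_k(z)$.

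I expect no serious obstacle here — this is a routine algebraic manipulation of the explicit formulas for $\ts_k(z)$ and $\eps(k,z)$, and the one genuine ``idea'' (using the exponential decay of $\eps(k,z)$ to absorb the $s^*_{k+1}$ term) is already baked into the choice of $\eps(k,z)$. The only points requiring mild care are: tracking the combinatorial constants $k$ versus $k+1$ correctly so that the $\tfrac18$ threshold is comfortably met for all $k\ge 2$ (the worst case is $k=2$); confirming that $\cT^*(z)$ indeed controls $s^*_{k+1}(z)$ even at the boundary $k+1=4u_0$ and beyond; and noting that $\ts_k(z)\le m(z)$, $m(z)=\Theta(z)$ (via \Ob{m:bounded} and~\eqref{eq:lowerbound:m:using:M}), so that all quantities are genuinely of the stated orders. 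Once Lemma~\ref{lem:deltas:deterministic} is in hand, Lemma~\ref{lem:deltas} follows by taking expectations of $ks_k(z-1)-(k+1)s_{k+1}(z-1)$ versus $ks_k(z)-(k+1)s_{k+1}(z)$, feeding in \Th{branching} for $\E[\Delta_k(z)]$, $\E[\Delta_{k+1}(z)]$, $\E[\Delta'(z)]$, and \Lm{deltastilde} for the change in $\ts_k(z)$, but that is the content of the next lemma, not this one.
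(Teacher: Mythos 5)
Your proof is correct and follows essentially the same route as the paper: expand $s_k$ and $s_{k+1}$ via the normalised variables, identify the main term $k\ts_k(z)-(k+1)\ts_{k+1}(z)=g_k(z)z$, and absorb the $s^*_{k+1}$ contribution using $\eps(k+1,z)=\eps_1(k+1)\eps(k,z)$ together with \Ob{stildes:ineq} and $m(z)/z\le C_0$. The paper phrases the $s^*_{k+1}$ control simply as $s_{k+1}(z)\in(1\pm\eps(k+1,z))\ts_{k+1}(z)$ and uses the slightly cruder chain $\eps_1 C_0(k+1)\le 2k\eps_1 e^{C_0}<k/8$, but these are cosmetic differences.
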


\begin{proof}
Observe first that, since the event $\cT^*(z)$ holds, we have
\begin{align*}
 ks_{k}(z)-(k+1)s_{k+1}(z)
 &\in k\big(1+\eps(k,z)s^*_k(z)\big)\ts_k(z)-(k+1)\big(1\pm\eps(k+1,z)\big)\ts_{k+1}(z)\\
 &=g_k(z)z+k\cdot\eps(k,z)s^*_k(z)\ts_k(z)\pm (k+1)\eps(k+1,z)\ts_{k+1}(z),
\end{align*}
by \eqref{eq:g:bound:by:tsk}. But by \Df{eps} and Observations \ref{obs:m:bounded}
and~\ref{obs:stildes:ineq}, we have
\begin{equation}\label{eq:epstsk:change}
 (k+1)\eps(k+1,z)\ts_{k+1}(z)\le\frac{m(z)}{z}\eps(k+1,z)\ts_k(z)
 \le\eps_1 C_0(k+1)\cdot\eps(k,z)\ts_k(z),
\end{equation}
so since $\eps_1 C_0(k+1)\le 2k\eps_1e^{C_0}<k/8$, by \eqref{def:small},
the claimed bounds follow.
\end{proof}

We can now use \Th{branching} to bound the expected change in $s_k(z)$.

\begin{proof}[Proof of \Lm{deltas}]
Let $z\in [z_-, z_+]$, and suppose that $\cT^*(z)$ holds. Recall that
\begin{equation}\label{eq:recall:deltas}
 s_k(z-1)-s_k(z)\in \id_{\{d(z)=k\}}
 +\big(-\Delta_k(z)+\Delta_{k+1}(z)\pm\Delta'(z)\big)\id_{\{d(z)=1\}}
\end{equation}
for each $k\ge 2$, by \Ob{change:ins:using:deltas}. Also,
\begin{equation}
 \E\big[\Delta_k(z)\id_{\cD(z)}\mid\cF_z,\,d(z)=1\big]
 =\frac{ks_k(z)}{m(z)}\Big(\E\big[D(z)\mid\cF_z,\,d(z)=1\big]+o(1)\Big)+O\big( m(z)^{-1/3} \big)
\end{equation}
for each $k\ge 2$, and
\begin{equation}
 \E\big[\Delta'(z)\mid\cF_z,\,d(z)=1\big]=O\big(m(z)^{-1/3}\big),
\end{equation}
by \Th{branching} and \Lm{Kcritical}, where we have used the fact that
$\Delta_k(z)\le s_k(z)$ deterministically and \eqref{eq:cDlikely} to
bound the error when $\cD(z)$ fails. Moreover,
\begin{equation}\label{eq:recall:last}
 \E\big[D(z)\mid\cF_z,\,d(z)=1\big]\,\Prb\big(d(z)=1 \mid\cF_z\big)
 =\big(1+\gamma(z)+o(1)\big)\frac{m(z)}{z},
\end{equation}
by \Lm{branching:critical}. We claim that 
\begin{align}
 &\E\big[\big(s_k(z-1)-s_k(z)\big)\id_{\cD(z)}\mid\cF_z,\,d(z)=1\big]
 \,\Prb\big(d(z)=1\mid\cF_z\big)\notag\\
 &\hspace{3cm}=\frac{1+\gamma(z)}{z}\big((k+1)s_{k+1}(z)-k s_{k}(z)\big)
 +\frac{o\big(k\eps(k,z)\ts_k(z)\big)}{z}.\label{eq:change:sk:claim}
\end{align}
Indeed, in order to deduce this from~\eqref{eq:recall:deltas}--\eqref{eq:recall:last}, we just need 
to show that the various error terms are all $o\big( k\eps(k,z)\ts_k(z)/z \big)$. To see this, note first
\begin{equation}\label{eq:epskz:boundedbelow}
 \eps(k,z)\ge \eps_1^kk!\ge \eps_1^{1/\eps_1}(1/\eps_1)!
\end{equation}
is bounded below by a positive constant, since $\Lambda(z) \le 1$ for every $z \in [z_-,z_+]$, and recall that $\eps(k,z)\ts_k(z)=z_0^{1+o(1)}$, by \Ob{eps:tsk:rough} and our assumption that $k\le 4u_0$, that $\eps(k+1,z)\ts_{k+1}(z) = O(\eps(k,z)\ts_k(z))$ by~\eqref{eq:epstsk:change}, that $|s^*_k(z)| + |s^*_{k+1}(z)| = O(1)$, by $\cT^*(z)$, and that $m(z) = \Theta(z)$. The error terms are therefore at most
$$\frac{o\big(k s_{k}(z) + (k+1)s_{k+1}(z) \big)}{z} + O\big(m(z)^{-1/3}\big) = \frac{o\big(k\eps(k,z)\ts_k(z)\big)}{z},$$
as claimed, so we have proved~\eqref{eq:change:sk:claim}. 


To deal with the case when $d(z)>1$, observe also that, by Lemmas \ref{lem:dz}
and~\ref{lem:Kcritical}, we have
\[
 \Prb\big(d(z)=k\mid\cF_z\big)
 = \big(1+o(1)\big)^k \frac{e^{-m(z)/z}}{k!} \bigg(\frac{m(z)}{z}\bigg)^k+\frac{O(1)}{z}.
\]
We claim that in fact
\begin{equation}\label{eq:prob:dzk:recalled}
 \Prb\big(d(z)=k\mid\cF_z\big)=\frac{e^{-m(z)/z}}{k!}\bigg(\frac{m(z)}{z}\bigg)^k
 +\frac{o\big(\eps(k,z)\ts_k(z)\big)}{z}.
\end{equation}
To see this, note that if $k=O(1)$ then $(1+o(1))^k=1+o(1)$ and
$\frac{e^{-m(z)/z}}{k!}\big(\frac{m(z)}{z}\big)^k=O(\ts_k(z)/z)$, by~\eqref{eq:tsk}.
On the other hand, if $k=\omega(1)$ then $\Prb(d(z)=k\mid\cF_z)\le (2C_0)^k/k!+O(1/z)$ decreases
super-exponentially with~$k$, while $\eps(k,z)\ts_k(z)=e^{O(k)}z$, by~\eqref{eq:eps:tsk:rough}
and our assumption that $k\le 4u_0$.

Now, noting that $d(z)=k>1$ implies $\cD(z)$, it follows from~\eqref{eq:recall:deltas},
\eqref{eq:change:sk:claim} and~\eqref{eq:prob:dzk:recalled} that
\begin{align*}
 \E\big[\big(s_k(z-1)-s_k(z)\big)\id_{\cD(z)}\mid\cF_z\big]
 &=\frac{1+\gamma(z)}{z}\big((k+1)s_{k+1}(z)-k s_{k}(z)\big)\\
 &\hspace{2.5cm}+\frac{e^{-m(z)/z}}{k!}\frac{m(z)^k}{z^k}+\frac{o\big(k\eps(k,z)\ts_k(z)\big)}{z}.
\end{align*}
By \Lm{deltas:deterministic}, this is contained in
\[
 \frac{e^{-m(z)/z}}{k!}\frac{m(z)^k}{z^k}-\frac{1+\gamma(z)}{z}
 \bigg(g_k(z)z+k\cdot\eps(k,z)\ts_k(z)\bigg(s^*_k(z)\pm\frac{1}{7}\bigg)\bigg)
\]
which is equal to
\[
 -\frac{\ts_k(z)}{z}-\gamma(z)g_k(z)-\frac{1+\gamma(z)}{z}
 \bigg(k\cdot\eps(k,z)\ts_k(z)\bigg(s^*_k(z)\pm\frac{1}{7}\bigg)\bigg).
\]
Since $|\gamma(z)|\le\eps_1$ and $|s^*_k(z)| \le 1$, by \Ob{gamma:small} and $\cT^*(z)$, the lemma follows.
\end{proof}

\subsection{The proof of Lemmas~\ref{lem:deltaSstar} and~\ref{lem:maxstep}}

We're finally ready to prove the two main lemmas of the section. We'll prove \Lm{maxstep} first,
since we shall need (a weak form of) it in the proof of \Lm{deltaSstar}.


\begin{proof}[Proof of \Lm{maxstep}]
If $\cD(z)$ fails to hold then $s^*_k(z-1) = s^*_k(z)$ by definition, so we may assume that 
$|m(z)-m(z-1)|\le u_0^2$, and hence $|s_k(z-1)-s_k(z)|\le 2u_0^3$, by~\eqref{eq:schange:vs:mchange}. We claim first that 
\begin{align*}
 s^*_k(z-1)-s^*_k(z)
 &=\frac{s_k(z-1)-\ts_k(z-1)}{\eps(k,z-1)\ts_k(z-1)}-\frac{s_k(z)-\ts_k(z)}{\eps(k,z)\ts_k(z)}\\
 &=\frac{s_k(z-1)-\ts_k(z-1)-s_k(z)+\ts_k(z)}{\eps(k,z)\ts_k(z)}+ O\bigg(\frac{s_k^*(z-1)}{z^{1+o(1)}} \bigg).
\end{align*}
Indeed, to see this simply recall that, by \Df{eps}, \Lm{Lambda:onestep} and~\eqref{eq:adjacent:shats:theta}, the events $\cT^*(z)$ and $\cD(z)$ imply that 
$$\frac{\eps(k,z-1)\ts_k(z-1)}{\eps(k,z)\ts_k(z)} = \frac{\Lambda(z)}{\Lambda(z-1)} \cdot \frac{\ts_k(z-1)}{\ts_k(z)} = 1 + O\big( z^{-1+o(1)} \big).$$
Now, note that $\ts_k(z-1)-\ts_k(z)=O(u_0^2)$, by \Lm{deterministic:deltastilde}, since $m(z)=\Theta(z)$, 
$\ts_k(z)/z\le g_k(z)=O(1)$ and $u_0=z_0^{o(1)}$. Thus
\[
 |s^*_k(z-1)-s^*_k(z)|=\frac{O(u_0^3)}{\eps(k,z)\ts_k(z)} + O\big( z^{-1+o(1)} \big)
\]
as $\cT^*(z)$ implies $|s_k^*(z)|\le 1$.
Now $z=z_0^{1+o(1)}$ for every $z\in [z_-,z_+]$, by \Lm{Lambda:precise}, and
$\eps(k,z)\ts_k(z)=z_0^{1+o(1)}$ for all $k\le 4u_0$, by \Ob{eps:tsk:rough}. Hence
\[
 |s^*_k(z-1)-s^*_k(z)|\le z_0^{-1+o(1)},
\]
as required.
\end{proof}

To finish the section, we shall deduce \Lm{deltaSstar} from
Lemmas~\ref{lem:maxstep},~\ref{lem:deltastilde} and~\ref{lem:deltas}.

\begin{proof}[Proof of Lemma~\ref{lem:deltaSstar}]
Let $z\in [z_-,z_+]$ and suppose that $\cT^*(z)$ holds. We shall break
\[
 s^*_k(z-1)-s^*_k(z)=\bigg(\frac{s_k(z-1)-\ts_k(z-1)}{\eps(k,z-1)\ts_k(z-1)}
 -\frac{s_k(z)-\ts_k(z)}{\eps(k,z)\ts_k(z)}\bigg)\id_{\cD(z)}
\]
into two pieces (see~\eqref{eq:notquitetrue} and~\eqref{eq:deltaSstar:error:term}, below) and
bound the expected size of each of them in turn. We note that, by
Lemmas~\ref{lem:deltastilde} and~\ref{lem:deltas}, we have
\[
 \E\Big[\big(s_k(z-1)-\ts_k(z-1)-s_k(z)+\ts_k(z)\big)\id_{\cD(z)}\bmid\cF_z\Big]
 \in-\frac{k\eps(k,z)\ts_k(z)}{z}\bigg(s^*_k(z)\pm\frac{1}{6}\bigg)+o(g_k(z)).
\]
But $g_k(z)\le k\ts_k(z)/z=O(k\eps(k,z)\ts_k(z)/z)$ by \eqref{eq:g:bound:by:tsk}
and the fact that $\eps(k,z)$ is bounded away from~0. Thus
\begin{equation}\label{eq:notquitetrue}
 \E\bigg[\frac{s_k(z-1)-\ts_k(z-1)-s_k(z)+\ts_k(z)}{\eps(k,z)\ts_k(z)}\id_{\cD(z)}\bmid\cF_z\bigg]
 \in-\frac{k}{z}\bigg(s^*_k(z)\pm\frac{1}{5}\bigg).
\end{equation}

We claim that
\begin{equation}\label{eq:deltaSstar:error:term}
 \E\bigg[\bigg(\frac{\eps(k,z)\ts_k(z)-\eps(k,z-1)\ts_k(z-1)}{\eps(k,z)\ts_k(z)}\bigg)
 s_k^*(z-1)\id_{\cD(z)}\bmid\cF_z\bigg]\in\frac{s^*_k(z)\pm 3\eps_1 k}{z},
\end{equation}
To prove~\eqref{eq:deltaSstar:error:term}, observe first that if $\cD(z)$ holds then
\[
 \eps(k,z)\ts_k(z)-\eps(k,z-1)\ts_k(z-1)
  =\eps(k,z)\big(\ts_k(z)-\ts_k(z-1)\big)+o\bigg(\frac{\eps(k,z)\ts_k(z)}{z}\bigg),
\]
since $\eps(k,z-1) / \eps(k,z) = 1 + o(1/z)$, by \Df{eps} and \Lm{Lambda:onestep}, and $\ts_k(z-1) = \Theta\big( \ts_k(z) \big)$ if $\cT^*(z)$ and $\cD(z)$ hold, by~\eqref{eq:adjacent:shats:theta}. Therefore, by \Lm{deltastilde},
\begin{align*}
 &\E\Big[\big(\eps(k,z)\ts_k(z)-\eps(k,z-1)\ts_k(z-1)\big)\id_{\cD(z)}\mid\cF_z\Big]\\
 &\hspace{3cm}=\frac{\eps(k,z)\ts_k(z)}{z}\bigg(1+\frac{(\gamma(z)+o(1))g_k(z)z}{\ts_k(z)}+o(1)\bigg)\\
 &\hspace{3cm}\in\big(1\pm 2\eps_1 k\big)\frac{\eps(k,z)\ts_k(z)}{z},
\end{align*}
since $g_k(z)\le k\ts_k(z)/z$, by~\eqref{eq:g:bound:by:tsk}, and $|\gamma(z)|\le\eps_1$,
by \Ob{gamma:small}. Since $|s^*_k(z-1)-s^*_k(z)|\le\eps_1$ by \Lm{maxstep}, it follows that
\begin{align*}
 \E\bigg[\bigg(\frac{\eps(k,z)\ts_k(z)-\eps(k,z-1)\ts_k(z-1)}{\eps(k,z)\ts_k(z)}\bigg)
 s_k^*(z-1)\id_{\cD(z)}\bmid\cF_z\bigg]&\in\frac{(1\pm 2\eps_1 k)(s_k^*(z)\pm\eps_1)}{z}\\
 &\in\frac{s^*_k(z)\pm 3\eps_1 k}{z},
\end{align*}
as claimed. As noted above, adding \eqref{eq:notquitetrue} and~\eqref{eq:deltaSstar:error:term}
we obtain
\[
 \E\big[s^*_k(z-1)- s^*_k(z)\mid\cF_z\big]
 \in-\frac{k}{z}\bigg(s^*_k(z)\pm\frac{1}{5}\bigg)+\frac{s^*_k(z)\pm 3\eps_1 k}{z}
 \subseteq -\frac{k-1}{z}\bigg(s^*_k(z)\pm\frac{1}{2}\bigg)
\]
for every $k\ge 2$, which completes the proof of the lemma.
\end{proof}

\section{The proof of Theorems~\ref{thm:track:m} and~\ref{thm:track:s}}\label{sec:proof:tracking}

In this section we shall use the method of self-correcting martingales to deduce
Theorems~\ref{thm:track:m} and~\ref{thm:track:s} from Lemmas~\ref{lem:deltaSstar}
and~\ref{lem:maxstep}. Recall that \Th{track:m} states that the event $\cM^*(z_-)$ holds
with high probability, and \Th{track:s} states that with high probability the event $\cT_k(z)$
holds for every $k\ge 2$ and every $z\in [z_-,z_+]$. Since we already know that $\cM^*(z_+)$
holds with high probability, by \Pp{largez}, it will suffice to prove that the event
$\cT^*(z_-)$ holds with high probability, since $\cT^*(z_-)$ implies that $\cM(z)$ holds for every
$z\in [z_-,z_+]$, and that $\cT_k(z)$ holds for every $k\ge 2$ and every $z\in [z_-,z_+]$.

As in \Sc{big:z}, the rough idea is to bound the probability, for each $a\in [z_-,z_+]$, that $a$
is maximal such that $\cT^*(a)$ fails to hold. Let us begin by proving the base case.

\begin{lemma}\label{lem:Tzplus:holds}
 $\cT^*(z_+)$ holds with high probability.
\end{lemma}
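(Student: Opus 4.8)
\textbf{Proof proposal for Lemma~\ref{lem:Tzplus:holds}.}

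The plan is to unfold the definition of $\cT^*(z_+)$ and show that each of its constituent events holds with high probability. Recall that
\[
 \cT^*(z_+)=\cD^*(z_+)\cap\cQ(z_+)\cap\bigcap_{k=2}^{4u_0}\cT_k(z_+),
\]
since the intersection $\bigcap_{w=z_+}^{z_+}$ ranges over the single value $w=z_+$. We must therefore verify that, with high probability, all four types of event hold at $z_+$. The point is that all of these have essentially already been established in \Sc{big:z}.

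First I would dispose of the two easy pieces. The event $\cD^*(z_+)=\bigcap_{w>z_+}\cD(w)$ is precisely the statement proved (with high probability) in \Pp{largez}, so there is nothing to do. The events $\cT_k(z_+)$ for $2\le k\le 4u_0$ are exactly the conclusion of \Co{zp}: indeed \Pp{largez} gives $s_k(z_+)\le\sigma_k m(z_+)$ with high probability for all $k\ge2$, and \Lm{m0} gives $m(z_+)\ge\tdelta z_+$ with high probability (since $\Lambda(z_+)\ge\delta$, using~\eqref{eq:Lambda:delta:plus:littleoone} or the definition~\eqref{def:zpm} of $z_+$); combining these two facts, \Co{zp} yields $s_k(z_+)\in\big(1\pm\tfrac12\eps(k,z_+)\big)\ts_k(z_+)\subseteq\big(1\pm\eps(k,z_+)\big)\ts_k(z_+)$, which is the event $\cT_k(z_+)$. (For $k\ge4u_0$ the event $\cT_k(z_+)$ follows from $\cQ(z_+)$, as noted in the text just after the definition of $\cT^*$, but these $k$ are not even part of the intersection defining $\cT^*(z_+)$, so we do not need them here.)

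It remains to handle $\cQ(z_+)$. By \Df{Q}, $\cQ(z_+)$ asks that $\cM(z_+)$ holds, that $S(z_+)$ contains no element exceeding $z_0^5$, and that $s_k(z_+)=0$ for all $k\ge4u_0$. The first of these follows since \Pp{largez} gives $\cM^*(z_+)$ with high probability, hence in particular $\cM(z_+)$. For the other two, the cleanest route is to invoke \Lm{easy}: with high probability $\cQ(z)\cup\cM^*(z)^c$ holds for every $z\in[z_-,\pi(x)]$, and in particular for $z=z_+$; since $\cM^*(z_+)$ holds with high probability by \Pp{largez}, it follows that $\cQ(z_+)$ holds with high probability. (Alternatively, one could cite Lemmas~\ref{lem:skzero} and~\ref{lem:skeasy} directly, exactly as in the proof of \Lm{easy}.) Taking a union bound over the $O(u_0)=z_0^{o(1)}$ events $\cT_k(z_+)$ together with $\cD^*(z_+)$, $\cQ(z_+)$ and $\cM^*(z_+)$ — all of which individually hold with high probability — we conclude that $\cT^*(z_+)$ holds with high probability.

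I do not expect any real obstacle here: this lemma is a bookkeeping step that simply assembles the base case of the downward induction on $z$ from the results already proved for the range $z\ge z_+$. The only mild subtlety is making sure the stronger bound $s_k(z_+)\le\sigma_k m(z_+)$ from \Pp{largez} (rather than merely $\cT_k(z_+)$) is what is carried forward, since it is used again in \Sc{proof:tracking}; but for the statement of \Lm{Tzplus:holds} as written, the weaker conclusion $\cT_k(z_+)$ suffices.
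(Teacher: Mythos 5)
Your proof is correct and follows essentially the same route as the paper: it decomposes $\cT^*(z_+)$ into $\cD^*(z_+)\cap\cQ(z_+)\cap\bigcap_{k=2}^{4u_0}\cT_k(z_+)$, gets $\cD^*(z_+)$, $\cM^*(z_+)$ and the $\cT_k(z_+)$ from \Pp{largez} and \Co{zp}, and deduces $\cQ(z_+)$ from $\cM^*(z_+)$ together with \Lm{easy}. One small caveat: the closing "union bound over the $O(u_0)$ events $\cT_k(z_+)$" is not literally valid if each merely holds with probability $1-o(1)$ (a union over $z_0^{o(1)}$ such events need not be $o(1)$), but this is harmless here since \Co{zp} and \Pp{largez} already give all the $\cT_k(z_+)$ simultaneously with high probability, so the union is really over a bounded number of events.
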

\begin{proof}
Recall that $\cQ(z_+)\cup\cM^*(z_+)^c$ holds with high probability, by \Lm{easy}; and that
$\cD^*(z_+)\cap\cM^*(z_+)\cap\bigcap_{k=2}^{4u_0}\cT_k(z_+)$ holds with high probability, by \Pp{largez} and
\Co{zp}. Thus
\[
\cT^*(z_+)=\cD^*(z_+)\cap\cQ(z_+)\cap\bigcap_{k=2}^{4u_0}\cT_k(z_+)
\]
holds with high probability, as claimed.
\end{proof}

By \Lm{Tzplus:holds}, we can assume that $z_-\le a< z_+$ and that $\cT^*(a+1)$ holds. For each
pair $z_-\le a<b\le z_+$ and each $k\ge 2$, let $\cW_k(a,b)$ denote the event that the following all occur:
\begin{itemize}
\item[$(a)$] $s^*_k(a)>1$,
\item[$(b)$] $s^*_k(z)>3/4$ for every $a<z<b$,
\item[$(c)$] $s^*_k(b)\le 3/4$,
\item[$(d)$] $\cT^*(a+1)$ holds.
\end{itemize}
Note that if $\cT^*(a+1)\cap\big\{s^*_k(z_+)\le 3/4\big\}$ holds and $s^*_k(a)>1$, then the event
$\cW_k(a,b)$ holds for some $a<b\le z_+$. We shall prove the following lemma.

\begin{lemma}\label{lem:Wab}
 If\/ $z_-\le a<b\le z_+$, then
\[
 \Prb\big(\cW_k(a,b)\big)\le z_0^{-20}
\]
 for every\/ $2\le k\le 4u_0$.
\end{lemma}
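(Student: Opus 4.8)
\textbf{Proof proposal for Lemma~\ref{lem:Wab}.} The plan is to apply the Azuma--Hoeffding inequality to a stopped version of $s^*_k(z)$ run backwards from $z=b$ to $z=a$, exactly as in the proof of the analogous claim inside \Pp{largez}, but now using the critical-range estimates Lemmas~\ref{lem:deltaSstar} and~\ref{lem:maxstep} in place of Lemmas~\ref{lem:deltaSstar:bigz} and~\ref{lem:maxstep:bigz}. First I would note that, by \Lm{maxstep}, if $s^*_k(b-1)\le 3/4$ then $s^*_k(b)\le 3/4+z_0^{-1+\eps_1}<1$, contradicting~(a) when $a=b-1$; more importantly, we may assume $s^*_k(b)\ge 3/4-z_0^{-1+\eps_1}\ge 3/4-\eps_1$, since otherwise condition~(c) forces $s^*_k(b-1)\le 3/4$ and $\cW_k(a,b)$ is already impossible in one step. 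Then for $0\le t\le b-a$ I would set
\[
 X_t:=\begin{cases}
  s^*_k(b-t)-s^*_k(b),&\text{if $X_{t-1}\ge 0$ or $t=0$;}\\
  X_{t-1},&\text{otherwise,}
 \end{cases}
\]
with respect to the filtration $(\cF_{b-t})_{t=0}^{b-a}$. The key point is that whenever $X_t\ge 0$ we have $s^*_k(b-t)\ge s^*_k(b)\ge 3/4-\eps_1$, and since $\cW_k(a,b)$ implies $\cT^*(b-t+1)$ holds for every relevant $t$ (because $\cT^*(a+1)$ holds and $\cT^*$ is monotone, and conditions (a)--(c) keep all the $\cT_k(b-t)$ and $\cQ(b-t)$ in force), we may apply \Lm{deltaSstar} to get
\[
 \E\big[X_{t+1}-X_t\mid\cF_{b-t}\big]\le-\frac{k-1}{b-t}\bigg(s^*_k(b-t)-\frac12\bigg)\le 0,
\]
using $3/4-\eps_1\ge 1/2$ from~\eqref{def:small}; when $X_t<0$ the increment is deterministically zero. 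Hence $X_t$ is a super-martingale.

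Next I would bound the increments and their squares. By \Lm{maxstep}, $|X_{t+1}-X_t|\le z_0^{-1+\eps_1}=:c$ uniformly, and the number of steps is $b-a\le z_+\le z_0^{1+o(1)}$, by~\eqref{eq:zpm:precise}; therefore $\sum_{t=0}^{b-a-1}c_t^2\le z_0^{1+o(1)}\cdot z_0^{-2+2\eps_1}=z_0^{-1+2\eps_1+o(1)}$. On the event $\cW_k(a,b)$ we have $X_{b-a}=s^*_k(a)-s^*_k(b)>1-3/4=1/4$, so by Azuma--Hoeffding,
\[
 \Prb\big(\cW_k(a,b)\big)\le\Prb\big(X_{b-a}>1/4\big)
 \le\exp\bigg(\frac{-(1/4)^2}{2z_0^{-1+2\eps_1+o(1)}}\bigg)
 \le\exp\big(-z_0^{1-3\eps_1}\big)\le z_0^{-20}
\]
for all sufficiently large $x$, since $\eps_1$ is small (in particular $\eps_1<1/3$), which is the required bound.

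The main obstacle is bookkeeping rather than anything deep: one must check carefully that the event $\cW_k(a,b)$ genuinely implies $\cT^*(b-t)$ (or at least enough of it — $\cD^*(b-t)$, $\cQ(b-t)$, and $\cT_j(b-t)$ for all $j$) at every step $b-t$ with $a<b-t\le b$, so that \Lm{deltaSstar} and \Lm{maxstep} are applicable there. Conditions (b) and (c) give $s^*_k(z)\le 1$ hence $\cT_k(z)$ for the particular $k$ in the range $a<z<b$, but we also need $\cT_j(z)$ for the \emph{other} values of $j$ and we need $\cQ(z)$; these come from $\cT^*(a+1)$ together with the fact that we are looking at the \emph{first} index (reading downward from $z_+$) at which $s^*_k$ exceeds $1$ — that is, the union-bound setup should be arranged (as in \Pp{largez}) so that $\cW_k(a,b)$ is applied inside the larger event ``$a$ is maximal such that $\cT^*(a)$ fails, and it fails because of $s_k$''. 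I would spell out that $\cT^*(a+1)$ holding, plus $s^*_j(z)\le 1$ for all $j$ and $z\in(a,z_+]$ (which is what being the first violation means), plus $\cD^*(z)$ and $\cQ(z)$ inherited from $\cT^*(a+1)$, together give $\cT^*(z)$ throughout $(a,b]$. The only genuinely quantitative inputs are $\eps_1<1/3$ and $3/4-\eps_1\ge 16\eps_1$ (cf.\ the analogous step in \Pp{largez}), both of which follow from~\eqref{def:small}, and the crude bound $z_+\le z_0^{1+o(1)}$ from~\eqref{eq:zpm:precise}; everything else is a direct transcription of the $\cU_k(a,b)$ argument into the critical range.
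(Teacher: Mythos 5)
Your proposal is correct and essentially identical to the paper's proof: same reduction to the case $s^*_k(b)\ge 3/4-\eps_1$ via \Lm{maxstep}, same frozen super-martingale $X_t$, same appeal to \Lm{deltaSstar} (using $3/4-\eps_1>1/2$) for the drift, same \Lm{maxstep} step-size bound and Azuma--Hoeffding with $\ell\le z_+=z_0^{1+o(1)}$. One small clarification: the bookkeeping you worry about in the final paragraph is already handled by condition (d) alone, since $\cT^*(a+1)$ is defined as an intersection over $w\in[a+1,z_+]$ and so directly implies $\cT^*(b-t)$ for every $a<b-t\le b$ --- no appeal to the ``first violation'' or the union-bound setup of the theorem is needed inside the lemma; and your initial sentence should attribute the implication $s^*_k(b-1)<3/4$ to the hypothesis $s^*_k(b)<3/4-\eps_1$ together with \Lm{maxstep}, not to condition~(c).
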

\begin{proof}
We may assume $s^*_k(b)\ge 3/4-\eps_1$ as otherwise $s^*_k(b-1)<3/4$ by \Lm{maxstep}, and so
$\cW_k(a,b)$ fails to hold. Set $\ell=b-a$, and for each $0\le t\le\ell$, define
\[
 X_t:=\begin{cases}
  s^*_k(b-t)-s^*_k(b),&\text{if }X_{t-1}\ge0\text{ or }t=0;\\
  X_{t-1},&\text{otherwise.}
 \end{cases}
\]
We claim that $X_t$ is a super-martingale with respect to the filtration $(\cF_{b-t})_{t = 0}^\ell$.
Indeed, since $\cT^*(a+1)$ holds, we have either $X_t<0$, in which case $X_{t+1}=X_t$,
or $X_t\ge0$, in which case
\[
\E\big[X_{t+1}-X_t\mid\cF_{b-t}\big]\le\frac{k-1}{b-t}\bigg(-s^*_k(b-t)+\frac{1}{2}\bigg)
\]
by \Lm{deltaSstar}. But if $X_t\ge0$ then $s^*_k(b-t)\ge s^*_k(b)\ge 3/4-\eps_1>1/2$,
so in all cases $\E[X_{t+1}-X_t\mid\cF_{b-t}]\le0$. Recalling that $\ell\le z_+=z_0^{1+o(1)}$,
it follows by the Azuma--Hoeffding inequality that
\[
 \Prb\big(\cW_k(a,b)\big)\le\exp\big(-z_0^{1-3\eps_1}\big)\le z_0^{-20},
\]
as claimed.
\end{proof}

It is easy to see that one can deal with the case $s^*_k(a)<-1$ in exactly the same way, so we
leave the details to the reader. Using the union bound over all choices of~$b$, it follows that
\[
\Prb\Big(\big\{|s^*_k(a)|>1\big\}\cap\cT^*(a+1)\cap\big\{s^*_k(z_+)\le 3/4\big\}\Big)\le z_0^{-18}
\]
for every $a\in [z_-,z_+]$ and $2\le k\le 4u_0$. Moreover, it follows from \Co{zp}
that, with high probability, $s^*_k(z_+)\le 1/2$ for every $k\ge 2$.

It therefore only remains to bound the probability that $\cD(z+1)^c\cup\cQ(z)^c\cup\cT^*(z+1)$
holds for some $z\in [z_-,z_+]$. By~\eqref{eq:cDlikely} and Lemmas~\ref{lem:easy} and~\ref{lem:Kcritical}, 
it will therefore suffice
to show that $m(z)$ is unlikely to be the first variable to go off track. Unfortunately,
unlike with $s_k(z)$, $m(z)$ is not self-correcting, and so a simple martingale approach
will not work. Instead we shall prove this using a two stage approach: we shall show, using
super-martingales, that $m(z)$ can only drift off track slowly, but every
so often (and well before it drifts so far as to cause $\cM(z)$ to fail) we shall use the
following lemma to put it firmly back on track. The following lemma, which follows from \Lm{m0} and
\Th{compare}, ensures that $m(z)$ is far closer to its target value than required by
$\cM(z)$, but unfortunately has a relatively large failure probability. Thus we
cannot use it for very many values of~$z$.

\begin{lemma}\label{lem:empty}
 For every\/ $z\in [z_-, z_+]$,
 \[
  \Prb\bigg(\cK(z)\cap\bigg\{m(z)\exp\bigg(-\sum_{k\ge 2}\frac{k s_k(z)}{m(z)}\bigg)
  \notin (1\pm 3\eps_1)\eta\Lambda(z)z\bigg\}\bigg)=\frac{O(1)}{u_0}.
 \]
\end{lemma}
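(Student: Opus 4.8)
The plan is to combine the unconditional concentration of $m_0(z)$, the number of isolated vertices of the hypergraph $\cS_A(z)$, given by \Lm{m0}, with a conditional second-moment estimate showing that $m_0(z)$ is also close to $m(z)e^{-\lambda}$, where $\lambda:=\sum_{k\ge2}ks_k(z)/m(z)$ (so that $m(z)e^{-\lambda}$ is precisely the quantity appearing in the statement); putting the two facts together pins down $m(z)e^{-\lambda}$. Recall, as in the proof of \Lm{m0}, that $m_0(z)$ is exactly the number of rows $i\in M(z)$ with $A\big[\{i\}\times[z+1,\pi(x)]\big]=0$. We may assume that $\cK(z)$ holds, since otherwise there is nothing to prove; it is $\cF_z$-measurable, and it implies both $m(z)\ge z_0^{1+o(1)}$, by~\eqref{eq:lowerbound:m:using:M}, and $\lambda=O(1)$, by~\eqref{def:K}, so in particular $m(z)e^{-\lambda}=z_0^{1+o(1)}=\omega(u_0)$. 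We may also assume that $m_0(z)\in(1\pm\eps_1)\eta\Lambda(z)z$, since by \Lm{m0} this fails with probability at most $x^{-2}=o(1/u_0)$.

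Next I would reveal $\cF_z$ together with the value of $d(z)$. Since $\cK(z)$ implies $\cM(z)$ and $z\in[z_-,z_+]$, \Lm{dz} (whose proof bounds, via \Lm{LeCam}, the total variation distance between $d(z)$ and a Poisson random variable of mean $\Theta(1)$ by $O(1/z)$) gives $\Prb(d(z)>4u_0\mid\cF_z)=O(1/z)=o(1/u_0)$; hence we may further condition on an event $\cE\in\cF_z^+$ of the form~\eqref{def:eventE} for which $\cK(z)$ holds and $d(z)\le4u_0$, which is exactly what is needed to apply \Th{compare}. Applying \Th{compare} with $R=0$ and $|I|\in\{1,2\}$ (all hypotheses are then trivial, and the error factor is $\exp(O(|I|/u_0))=1+O(1/u_0)$) and evaluating the corresponding probabilities in the independent model $\tA_\cE$, one obtains, for all distinct $i,i'\in M(z)$,
\[
 \Prb\big(A[\{i\}\times C]=0\mid\cE\big)=\big(1+O(1/u_0)\big)\prod_{j\in S(z)}\Big(1-\frac{d_j}{m(z)}\Big)=\big(1+O(1/u_0)\big)e^{-\lambda},
\]
where $C:=[z+1,\pi(x)]$, and similarly $\Prb\big(A[\{i,i'\}\times C]=0\mid\cE\big)=\big(1+O(1/u_0)\big)e^{-2\lambda}$; here one uses $\sum_{j\in S(z)}d_j=\sum_{k\ge2}ks_k(z)=\lambda m(z)$ and $\sum_{j\in S(z)}d_j^2=\sum_{k\ge2}k^2s_k(z)=O(m(z))$, both valid on $\cK(z)$, to pass from the products to $e^{-\lambda}$ and $e^{-2\lambda}$ up to a factor $1+O(1/m(z))$. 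Summing over rows and over ordered pairs of rows then gives $\E\big[m_0(z)\mid\cE\big]=\big(1+O(1/u_0)\big)m(z)e^{-\lambda}$ and
\[
 \Var\big[m_0(z)\mid\cE\big]\le\E\big[m_0(z)\mid\cE\big]+O(1/u_0)\cdot\big(m(z)e^{-\lambda}\big)^2.
\]

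Finally, Chebyshev's inequality (applied with threshold $\tfrac{\eps_1}{2}m(z)e^{-\lambda}$), together with $m(z)e^{-\lambda}=\omega(u_0)$ and the fact that $\eps_1$ is a fixed constant, yields $\Prb\big(m_0(z)\notin(1\pm\eps_1)m(z)e^{-\lambda}\mid\cE\big)=O(1/u_0)$. This bound is uniform over the admissible events $\cE$, and the events $\cK(z)^c$, $\{d(z)>4u_0\}$ and $\{m_0(z)\notin(1\pm\eps_1)\eta\Lambda(z)z\}$ have each already been discarded at cost $o(1/u_0)$; so, except on an event of probability $O(1/u_0)$, $\cK(z)$ fails or else $m_0(z)$ lies in both $(1\pm\eps_1)m(z)e^{-\lambda}$ and $(1\pm\eps_1)\eta\Lambda(z)z$. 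In the latter case these two containments force $m(z)e^{-\lambda}\in\big[\tfrac{1-\eps_1}{1+\eps_1},\tfrac{1+\eps_1}{1-\eps_1}\big]\eta\Lambda(z)z\subseteq(1\pm3\eps_1)\eta\Lambda(z)z$, which is the claimed event. I expect the main obstacle to be the variance bound: one must verify that the pairwise correlations between the events ``row $i$ is empty'' ($i\in M(z)$) inflate the second moment of $m_0(z)$ only by a factor $1+O(1/u_0)$ relative to $\E[m_0(z)\mid\cE]^2$, which hinges on the sharp multiplicative error $\exp\big(O((|I|+|R|_2)/u_0)\big)$ in \Th{compare}. It is exactly this error term that produces the $O(1/u_0)$ in the statement, and hence the fact (noted in the text) that \Lm{empty} can be invoked for only $o(u_0)$ values of~$z$.
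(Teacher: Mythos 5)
Your proposal is correct and follows essentially the same route as the paper: both apply \Lm{m0} to concentrate $m_0(z)$ around $\eta\Lambda(z)z$, then use \Th{compare} with $R=0$ and $|I|\in\{1,2\}$ to compute the conditional mean and second moment of $m_0(z)$ given $\cE\in\cF_z^+$, obtaining via Chebyshev that $m_0(z)\in(1\pm\eps_1)\E[m_0(z)\mid\cE]$ with conditional probability $1-O(1/u_0)$, and finally combine the two concentration statements. The only cosmetic difference is that you invoke \Lm{dz} to bound $\Prb(d(z)>4u_0\mid\cF_z)$, whereas the paper uses~\eqref{eq:nottoomanyinacolumn}; both give the needed $o(1/u_0)$ bound.
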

\begin{proof}
Recall that, by \Lm{m0}, the number of isolated vertices $m_0(z)$ in the hypergraph $\cS_A(z)$
satisfies
\begin{equation}\label{eq:isolatedvertices:firstestimate}
 m_0(z)\in\big(1\pm\eps_1\big)\eta\Lambda(z)z
\end{equation}
with probability at least $1-1/x^2$. We shall use \Th{compare} to give another way of
approximating~$m_0(z)$, and together these estimate will imply the lemma.

Recall that a vertex $i$ in $\cS_A(z)$ is isolated if the submatrix
$A\big[\{i\}\times[z+1,\pi(x)]\big]$ is the all zero vector. We bound the number $m_0(z)$ of
isolated vertices using the second moment method, using \Th{compare} to estimate both the
mean and variance of $m_0(z)$. First, let $\cE\in\cF^+_z$ be an event of the
form~\eqref{def:eventE} for which $\cK(z)$ holds and $d(z) \le 4u_0$, and recall from \Df{tilde:variables} that the
random matrix $\tA_\cE$ is obtained by choosing each column uniformly at random from all
$\binom{m(z)}{d_j}$ possible choices, independently in each column. Thus for $j>z$,
\[
 \Prb\big(\tA_\cE[\{i\}\times\{j\}]=0\big)=1-\frac{d_j}{m(z)}
 =\exp\bigg(-\frac{d_j}{m(z)}+\frac{O(d_j^2)}{m(z)^2}\bigg).
\]
As the columns of $\tA_\cE$ are independent,
\[
 \Prb\big(\tA_\cE\big[\{i\}\times[z+1,\pi(x)]\big]=0\big)
 =\exp\bigg(-\frac{1}{m(z)}\sum_{j>z}d_j+\frac{O(1)}{m(z)^2}\sum_{j>z}d_j^2\bigg).
\]
Now $\sum_{j>z}d_j=\sum_{k\ge2}ks_k(z)$ and $\sum_{j>z}d_j^2=\sum_{k\ge2}k^2s_k(z)=O(m(z))$
by condition $\cK(z)$. Thus
\[
 \Prb\big(\tA_\cE\big[\{i\}\times[z+1,\pi(x)]\big]=0\big)
 =\exp\bigg(-\sum_{k\ge2}\frac{ks_k(z)}{m(z)}+\frac{O(1)}{m(z)}\bigg).
\]
Applying \Th{compare} with $I=\{i\}$, $C=[z+1,\pi(x)]$, and $R=0$, gives
\begin{align*}
 \Prb\big[i\text{ is isolated in }\cS_A(z)\mid\cE\big]
 &=\Prb\big(A\big[\{i\}\times[z+1,\pi(x)]\big]=0\mid\cE\big)\\
 &=\exp\bigg(-\sum_{k\ge2}\frac{ks_k(z)}{m(z)}+\frac{O(1)}{u_0}\bigg).
\end{align*}
Thus, summing over $i\in M(z)$,
\begin{equation}\label{eq:m0:first}
 \mu:=\E\big[m_0(z)\mid\cE\big]=m(z)
 \exp\bigg(-\sum_{k\ge2}\frac{ks_k(z)}{m(z)}+\frac{O(1)}{u_0}\bigg).
\end{equation}
For the second moment of $m_0(z)$ we consider the probability that two distinct vertices
$i_1,i_2$ are both isolated. In the independent model we have
\[
 \Prb\big(\tA_\cE[\{i_1,i_2\}\times\{j\}]=0\big)
 =\bigg(1-\frac{d_j}{m(z)}\bigg)\bigg(1-\frac{d_j}{m(z)-1}\bigg)
 =\exp\bigg(-\frac{2d_j}{m(z)}+\frac{O(d_j^2)}{m(z)^2}\bigg).
\]
As the columns of $\tA_\cE$ are independent, a similar argument to the above yields
\[
 \Prb\big(\tA_\cE\big[\{i_1,i_2\}\times[z+1,\pi(x)]\big]=0\big)
 =\exp\bigg(-\sum_{k\ge2}\frac{2ks_k(z)}{m(z)}+\frac{O(1)}{m(z)}\bigg).
\]
Applying \Th{compare} with $I=\{i_1,i_2\}$, $C=[z+1,\pi(x)]$, and $R=0$, gives
\[
 \Prb\big(A\big[\{i_1,i_2\}\times[z+1,\pi(x)]\big]=0\mid\cE\big)
 =\exp\bigg(-\sum_{k\ge2}\frac{2ks_k(z)}{m(z)}+\frac{O(1)}{u_0}\bigg).
\]
Summing over all ordered pairs $(i_1,i_2)$ we obtain
\begin{align}
 \E\big[m_0(z)(m_0(z)-1)\mid\cE\big]&=m(z)(m(z)-1)
 \exp\bigg(-\sum_{k\ge2}\frac{2ks_k(z)}{m(z)}+\frac{O(1)}{u_0}\bigg)\nonumber\\
 &=\bigg(1+\frac{O(1)}{u_0}\bigg)\mu^2.\label{eq:m0:second}
\end{align}
Combining \eqref{eq:m0:first} and \eqref{eq:m0:second} we have
\[
 \Var\big(m_0(z)\mid\cE\big)=\E\big[m_0(z)(m_0(z)-1)\mid\cE]+\mu-\mu^2=\mu+O(1/u_0)\mu^2.
\]
Now $\sum ks_k(z)=O(m(z))$ by $\cK(z)$ and so $\mu=\Theta(m(z))$. Thus
$\Var\big(m_0(z)\mid\cE\big)=O(\mu^2/u_0)$ and hence, by Chebychev's inequality,
\[
 \Prb\big( m_0(z)\notin(1\pm\eps_1)\mu\mid\cE \big)
 \le\frac{\Var\big( m_0(z)\mid\cE\big)}{\eps_1^2\mu^2}
 =\frac{O(1)}{u_0}.
\]
Since the event $\cE$ was chosen arbitrarily amongst those consistent with~$\cK(z)$ and satisfying $d(z) \le 4u_0$, and using~\eqref{eq:nottoomanyinacolumn} to bound the probability that $d(z) > 4u_0$, it follows that
\[
 \Prb\big( m_0(z)\notin(1\pm\eps_1)\mu\mid\cK(z)\big) =\frac{O(1)}{u_0}.
\]
Combining this with~\eqref{eq:isolatedvertices:firstestimate} and~\eqref{eq:m0:first} 
completes the proof of the lemma.
\end{proof}

We shall also need the following simple identity.

\begin{lemma}\label{lem:sksk}
 For every\/ $z\in [\pi(x)]$,
 \[
  \sum_{k\ge 2} k\ts_k(z)=m(z)\Ein\bigg(\frac{m(z)}{z}\bigg).
 \]
\end{lemma}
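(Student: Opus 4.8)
The plan is to reduce the identity to a standard power-series computation. Write $\lambda := m(z)/z$. If $m(z) = 0$ then $\ts_k(z) = 0$ for all $k$ and $\Ein(0) = 0$, so the statement is trivial; hence I may assume $\lambda > 0$.

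First I would rewrite the summand. By \Df{tsk} (exactly as in the computation in \Ob{stildes:ineq}) one has
\[
 k\,\ts_k(z) = \frac{m(z)}{k-1}\,e^{-\lambda}\sum_{\ell=k-1}^{\infty}\frac{\lambda^\ell}{\ell!}
\]
for every $k\ge 2$. Summing over $k\ge 2$, substituting $j = k-1$, and interchanging the order of summation (legitimate since every term is nonnegative) gives
\[
 \sum_{k\ge 2} k\,\ts_k(z)
 = m(z)\,e^{-\lambda}\sum_{j\ge 1}\frac{1}{j}\sum_{\ell\ge j}\frac{\lambda^\ell}{\ell!}
 = m(z)\,e^{-\lambda}\sum_{\ell\ge 1}\frac{\lambda^\ell}{\ell!}\sum_{j=1}^{\ell}\frac{1}{j}
 = m(z)\,e^{-\lambda}\sum_{\ell\ge 1}\frac{\lambda^\ell}{\ell!}\,H_\ell,
\]
where $H_\ell := \sum_{j=1}^{\ell} 1/j$ is the $\ell$th harmonic number (and $H_0 := 0$).

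It then remains to verify the analytic identity $F(\lambda) := e^{-\lambda}\sum_{\ell\ge 1}\frac{\lambda^\ell}{\ell!}H_\ell = \Ein(\lambda)$ for $\lambda \ge 0$. Since $F(0) = 0 = \Ein(0)$, it suffices to check $F'(\lambda) = \Ein'(\lambda) = \frac{1-e^{-\lambda}}{\lambda}$ (using \eqref{def:Ein}). I would differentiate termwise (the series converges locally uniformly), obtaining $F'(\lambda) = e^{-\lambda}\big(-\sum_{\ell\ge1}\frac{\lambda^\ell}{\ell!}H_\ell + \sum_{m\ge0}\frac{\lambda^m}{m!}H_{m+1}\big)$; applying $H_{m+1} = H_m + \frac{1}{m+1}$ makes the two harmonic sums cancel, leaving $F'(\lambda) = e^{-\lambda}\sum_{m\ge0}\frac{\lambda^m}{(m+1)!} = \frac{e^{-\lambda}(e^\lambda - 1)}{\lambda} = \frac{1-e^{-\lambda}}{\lambda}$, as required. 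Combining this with the displayed formula for $\sum_{k\ge2} k\,\ts_k(z)$ finishes the proof.

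There is no real obstacle here — the argument is entirely routine; the only points needing a word of justification are the interchange of the two sums (immediate from nonnegativity) and the telescoping cancellation after termwise differentiation. (An alternative would be to sum the telescoping identity in \Ob{ksk}, but the direct power-series route above is cleaner.)
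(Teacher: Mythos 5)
Your proof is correct and takes essentially the same route as the paper's: both express the sum as $m(z) f(\lambda)$ with $\lambda = m(z)/z$, observe $f(0)=0$, and verify $f'(\lambda) = (1-e^{-\lambda})/\lambda = \Ein'(\lambda)$ by termwise differentiation and a telescoping cancellation (you rearrange into a harmonic-number series before differentiating, the paper differentiates the double sum directly and telescopes the inner $\ell$-sum, but these are cosmetic variants of the same argument).
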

\begin{proof}
Define
\[
 f(w):=\sum_{k\ge 2}\frac{1}{k-1}\sum_{\ell=k- 1}^\infty\frac{e^{-w}w^\ell}{\ell!},
\]
and note that
\[
 \sum_{k\ge 2}k\ts_k(z)=\sum_{k\ge 2}\frac{m(z)}{k-1}e^{-m(z)/z}
 \sum_{\ell=k-1}^\infty\frac{1}{\ell!}\bigg(\frac{m(z)}{z}\bigg)^\ell
 =m(z)f\big(m(z)/z\big).
\]
Now, we have $f(0)=0$ and
\begin{align*}
 f'(w)
 &=\sum_{k\ge 2}\frac{e^{-w}}{k-1}\bigg(\sum_{\ell=k-1}^\infty\frac{w^{\ell-1}}{(\ell-1)!}
  -\sum_{\ell=k-1}^\infty\frac{w^{\ell}}{\ell!}\bigg)\\
 &=e^{-w}\sum_{k\ge 2}\frac{w^{k-2}}{(k-1)!}
 =e^{-w}\bigg(\frac{e^w-1}{w}\bigg)=\frac{1-e^{-w}}{w}.
\end{align*}
It follows that $f(w)=\ds\int_0^w\frac{1-e^{-t}}{t}\,dt=\Ein(w)$, as required.
\end{proof}

We can now complete the proof of our main auxiliary results.

\begin{proof}[Proof of Theorems~\ref{thm:track:m} and~\ref{thm:track:s}]
As noted above, it will suffice to prove that the event $\cT^*(z_-)$
holds with high probability. Recall that $\cT^*(z_+)$ holds with high probability, by
\Lm{Tzplus:holds}, and let $a\in [z_-,z_+]$ be maximal such that $\cT^*(a)$ fails to hold.
As $\cT^*(a+1)$ holds, one of the events $\cD^*(a)$, $\cQ(a)$, or $\cT_k(a)$ for some
$2\le k\le 4u_0$, must fail. By \Lm{Kcritical} and \eqref{eq:cDlikely}, with high probability
there is no $a$ such that $\cT^*(a+1)$ holds but $\cD(a+1)$ fails, so we may
assume that $\cD(a+1)$, and hence $\cD^*(a)$, holds. By \Lm{Wab},
with high probability there is no $a$ such that $\cT^*(a+1)$ and $\cD^*(a)$ hold,
but $\cT_k(a)$ fails for some $k\ge2$. By \Lm{easy}, with high probability there is
no $a$ such that $\cM^*(a)$ holds but $\cQ(a)$ fails. As $\cT^*(a+1)$ implies
$\cM^*(a+1)$, we deduce that with high probability there is no $a$ such that
$\cT^*(a+1)$ and $\cM(a)$ hold, but $\cT^*(a)$ fails.
It will therefore suffice to bound the probability that $\cM(a)$ fails to hold, assuming
that $\cD^*(a)$ holds, and that $\cT_k(a)$ holds for every $2 \le k \le 4u_0$.

To do so, let us choose a set $W=\big\{w_0,w_1,\dots,w_\ell\big\}$, where
$z_-=w_0<w_1<\dots<w_\ell=z_+$, such that $\ell=O(\log(z_+/z_-))$ and $w_i\le 2w_{i-1}$
for each $i\in [\ell]$. Since $\log(z_+/z_-)=\Theta(\sqrt{\log z_0})=o(u_0)$, by
\Lm{Lambda:precise} and \eqref{eq:ulogu}, it follows from \Lm{empty} that with high
probability either
\begin{equation}\label{eq:empty:application}
 m(w)\exp\bigg(-\sum_{k\ge 2}\frac{k s_k(w)}{m(w)}\bigg)\in (1\pm 3\eps_1)\eta\Lambda(w)w,
\end{equation}
or $\cK(w)$ fails to hold, for every $w\in W$. Since $\cT^*(z)$ implies $\cK(z)$,
by \Lm{track:K}, it will suffice to bound the probability that~\eqref{eq:empty:application}
holds for $w=w_i$, say, and
\begin{equation}\label{eq:trackingproofs:finalbound}
 \cM(a)^c\cap\cD^*(a)\cap\cT^*(a+1)\cap\bigcap_{k=2}^{4u_0}\cT_k(a)
\end{equation}
holds for some $w_{i-1}\le a<w_i$. We shall show that this has probability at
most $z_0^{-15}$.

To bound the probability of the event~\eqref{eq:trackingproofs:finalbound}, we shall use a
martingale approach to control $m(z)$ in the interval $a\le z\le w$. Define $X_0:=0$ and
for $0\le t:=w-z<w-a$,
\[
 X_{t+1}:=X_t+\bigg(\frac{m(w-t-1)}{w-t-1}-\frac{m(w-t)}{w-t}\bigg)
 \id_{\cD(w-t)\cap\cT^*(w-t)}-5\eps_1\frac{m(w)}{w^2}.
\]
We claim that $X_t$ is a super-martingale with respect to the filtration $(\cF_{w-t})_{t = 0}^{w-a}$.
Indeed, $\cT^*(w-t)$ is $\cF_{w-t}$-measurable and clearly $X_{t+1}\le X_t$ when
$\cT^*(w-t)$ fails. Assuming $\cT^*(w-t)=\cT^*(z)$ holds, we have
\[
 \E\big[m(z)-m(z-1)\mid\cF_z\big]=\big(1+\gamma(z)+o(1)\big)\frac{m(z)}{z}
\]
by \Lm{branching:critical}. Recalling that $\cT^*(z)$ implies that $|\gamma(z)|\le\eps_1$,
by \Ob{gamma:small}, it follows (using~\eqref{eq:cDlikely}) that
\begin{align*}
 \E\big[X_{t+1}-X_t\mid\cF_z\big]
 &=\E\bigg[\frac{m(z-1)-m(z)}{z-1}+\frac{m(z)}{z(z-1)}-\frac{5\eps_1 m(w)}{w^2}\Bmid\cF_z\bigg]
 +O\big(z_0^{-20}\big)\\
 &=-\big(\gamma(z)+o(1)\big)\frac{m(z)}{z(z-1)}-\frac{5\eps_1 m(w)}{w^2}\le 0
\end{align*}
for every $a<z\le w$, as claimed, since $w\le 2a$ and $m(z)\le m(w)$.

Now, $|X_{t+1}-X_t|\le z_0^{-1+\eps_1}$ for every $a<z\le w$, since $m(z)=O(z)$ (by \Ob{m:bounded}) and  
$z=z_0^{1+o(1)}$ (by~\eqref{eq:zpm:rough}), and since $\cD(z)$ implies that $|m(z)-m(z-1)|\le u_0^2=z_0^{o(1)}$. 
Thus, noting that $w-a\le z_+=z_0^{1+o(1)}$,
by the Azuma--Hoeffding inequality we obtain
\[
 \Prb\Big(\big\{X_{w-a}>z_0^{-\eps_1}\big\}\cap\cT^*(a+1)\Big)
 \le\exp\big(-z_0^{1-5\eps_1}\big)\le z_0^{-20}.
\]

Observe that if $\cD^*(a)\cap\cT^*(a+1)$ holds, then $X_{w-a}\le z_0^{-\eps_1}$ implies that
\begin{equation}\label{eq:upperbound:ma}
 \frac{m(a)}{a}\le\big(1+5\eps_1\big)\frac{m(w)}{w}+z_0^{-\eps_1}.
\end{equation}
Now, to finish the proof, we shall show that if $\cT^*(w)$ and~\eqref{eq:empty:application}
hold, then
\begin{equation}\label{eq:upperbound:mw}
 \frac{m(w)}{w}\le\alpha\big((1+5\eps_1)\eta\Lambda(w)\big).
\end{equation}
To prove this, observe first that if $\cT^*(w)$ holds then
\[
 \sum_{k\ge 2}ks_k(w)\le\sum_{k\ge2}\big(1+\eps(k,z)\big)k\ts_k(w)
 \le\bigg(\Ein\bigg(\frac{m(w)}{w}\bigg)+\eps_1\bigg)m(w),
\]
by Lemmas~\ref{lem:sum:eps:tsk} and~\ref{lem:sksk}. Thus~\eqref{eq:empty:application} implies that
\[
 \frac{m(w)}{w}\exp\bigg(-\Ein\bigg(\frac{m(w)}{w}\bigg)\bigg)
 \le e^{\eps_1}\big(1+3\eps_1\big)\eta\Lambda(w),
\]
which implies~\eqref{eq:upperbound:mw}, since $\alpha$ is increasing and
$e^{\eps_1}(1+3\eps_1)\le 1+5\eps_1$. Combining~\eqref{eq:upperbound:ma}
and~\eqref{eq:upperbound:mw}, and recalling that $\Lambda(a)=(1+o(1))\Lambda(w)\ge\delta+o(1)$
by \Lm{Lambda:onestep} and~\eqref{eq:Lambda:delta:plus:littleoone}, and that $\alpha(t)/t$ is
increasing, by~\eqref{def:alpha}, it follows that,
\[
 \frac{m(a)}{a}\le\alpha\big((1+11\eps_1)\eta\Lambda(a)\big).
\]
By the definition of $\alpha$ (and again using the fact that $\alpha$ is increasing), this implies that
\[
 \frac{m(a)}{a}\exp\bigg(-\Ein\bigg(\frac{m(a)}{a}\bigg)\bigg)
 \le\big(1+11\eps_1\big)\eta\Lambda(a).
\]
A corresponding lower bound can be proved similarly, and thus
\[
 \Prb\bigg(\cM(a)^c\cap\cD^*(a)\cap\cT^*(a+1)\cap\bigcap_{k=2}^{4u_0}\cT_k(a)\bigg)\le z_0^{-15},
\]
as claimed. As explained above, this implies that $\cT^*(z_-)$ holds with high probability, and
hence this completes the proof of Theorems~\ref{thm:track:m} and~\ref{thm:track:s}.
\end{proof}

\section{The proof of \Th{squares:sharp}}\label{sec:squares:proof}

Once we have \Th{track:m}, it is straightforward to deduce \Th{squares:sharp}. Indeed, the
deduction of the lower bound follows from the results of~\cite{CGPT}, the extra ingredient
provided by \Th{track:m} being that any linear relation can only involve at most
$m(z_-)\approx\eta\Lambda(z_-)z_-$ rows. We shall use the following result, which was proved
in~\cite{CGPT}.

\begin{prop}[Croot, Granville, Pemantle and Tetali]\label{prop:CGPT:lower}
 There exists\/ $c>0$ such that if\/ $N\le e^{-\gamma}J(x)$, then with high probability
 there does not exist a set\/ $I\subseteq [N]$ with
\[
  0<|I|\le z_0\exp\Big(-c\sqrt{\log z_0}\Big)
\]
 such that\/ $\prod_{i\in I}a_i$ is a square.
\end{prop}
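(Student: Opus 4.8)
\textbf{Proof proposal for Proposition~\ref{prop:CGPT:lower}.}
This statement is due to Croot, Granville, Pemantle and Tetali~\cite{CGPT}, so I would give a proof sketch that follows their first-moment argument and explain why it yields precisely the stated range. The plan is to bound the expected number of non-empty sets $I \subseteq [N]$ with $|I| = \ell$ such that $\prod_{i \in I} a_i$ is a square, for each $1 \le \ell \le z_0\exp(-c\sqrt{\log z_0})$, and show that the sum over $\ell$ in this range is $o(1)$. Fix $\ell$ and count: a set $I$ of size $\ell$ contributes exactly when the rows $\{A_{i\bullet} : i \in I\}$ of the matrix $A$ sum to zero over $\F_2$, equivalently when the corresponding sub-hypergraph of $\cH_A(0)$ is even. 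Since the rows of $A$ are independent and each row is $A_{i\bullet}$ with a distribution controlled by~\eqref{eq:probs}, one estimates $\Prb(\text{the rows indexed by } I \text{ sum to } 0)$ by conditioning on $\ell-1$ of the rows and bounding the probability that the last row equals their (fixed) sum $v$; this probability is at most $\prod_{j : v_j = 1} p_j(x') \le \prod_{j : v_j=1} (1+o(1))/j$ for an appropriate $x'$, using the upper bounds on $p_j(x)$ from~\Co{pj} (extended as in~\cite{CGPT} to all primes, not just $j \ge z_-$, via the number-theoretic estimates of \Sc{NTfacts}). Multiplying by $\binom{N}{\ell} \le N^\ell/\ell!$ and summing over the possible supports, one obtains a bound of the shape $\E[\#\{I : |I|=\ell\}] \le \frac{1}{\ell}\big(C \cdot N/J(x)\big)^{\ell \cdot h(\ell)}$ for a suitable function, where the key point is that when $N \le e^{-\gamma} J(x)$ the effective base of the exponential stays below~1 as long as $\ell$ is not too large.

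The main technical content is identifying the threshold in $\ell$ at which the first moment ceases to be small. The heuristic is that a random even sub-hypergraph on $\ell$ edges, after removing isolated vertices, lives in the 2-core, and the expected number of such configurations is governed by a quantity of the form $(\ell/z)^{\Theta(\ell)}$ times smooth-number factors; this is $o(1)$ precisely while $\ell \le z_0 e^{-c\sqrt{\log z_0}}$ for a small enough constant $c > 0$, because $\log z_0 = (1+o(1))u_0\log u_0$ by~\eqref{eq:ulogu} and the smooth-number density $\Psi(x,y_0)/x = \rho(u_0) = u_0^{-(1+o(1))u_0}$ decays at exactly the rate needed to beat the $N^\ell$ growth in this window. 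So I would: (i) set up the first-moment sum carefully, using the $\F_2$ linear-algebra reformulation and independence of rows; (ii) bound each term using $p_j(x) \le (1+o(1))/j$ together with $p_j(x) = j^{-1+o(1)}$ from \Co{pj} and the de Bruijn/Hildebrand estimates; (iii) organize the count by the support size of the relation (equivalently the number of active primes involved) and optimize, showing the total is dominated by a geometric-type series that converges to $o(1)$ in the claimed range; (iv) take the union bound over $\ell$.

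The hard part will be step~(iii): making the optimization over the support of the relation precise enough to extract the exact exponent $z_0 e^{-c\sqrt{\log z_0}}$ rather than a weaker bound like $z_0^{1-\eps}$, which requires tracking the smooth-number factors $\Psi(x, q_j)$ through the calculation and using that the minimum in~\eqref{def:J} is attained at $y_0$ with the sharp asymptotics~\eqref{eq:J:rough} and~\eqref{eq:Psi:rough}. Since all of this is carried out in detail in~\cite{CGPT}, I would present the argument at the level of a proof sketch, citing~\cite[Section~4]{CGPT} (or the relevant section) for the routine but lengthy estimates, and emphasize only the two inputs that matter for our application: that the relation must be non-trivial (so $|I| \ge 1$) and that the first moment remains $o(1)$ up to $|I| = z_0 e^{-c\sqrt{\log z_0}}$, which is exactly the upper endpoint we need in order to combine with the bound $m(z_-) < z_-$ from \Th{track:m} (valid since $z_- = z_0 e^{(\sqrt{\log(1/\delta)}+o(1))\sqrt{\log z_0}} \cdot$, wait — more precisely, since $z_-$ exceeds $z_0 e^{-c\sqrt{\log z_0}}$ for the relevant $c$, by~\eqref{eq:zpm:precise}) in the proof of \Th{squares:sharp}.
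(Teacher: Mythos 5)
The paper does not prove this proposition; it simply cites Croot, Granville, Pemantle and Tetali, remarking that it ``follows from the proof of \cite[Theorem~1.3]{CGPT}, see \cite[Section~3.5]{CGPT}, for any constant $c > \sqrt{2 - \log 2}$.'' Your plan to defer to CGPT for the heavy lifting is therefore the same as the paper's, and your sketch of the underlying first-moment calculation is a broadly accurate description of what CGPT do (their Section~3.5 is the relevant one, not Section~4).

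Two points deserve correction. First, the direction on $c$: your phrase ``small enough constant $c>0$'' is misleading. A larger $c$ makes $z_0\exp(-c\sqrt{\log z_0})$ smaller and the claim weaker; the first moment of small relations is $o(1)$ at $N = e^{-\gamma}J(x)$ provided the cutoff is small enough, which means $c$ must be \emph{above} a threshold (CGPT establish $c > \sqrt{2-\log 2}$). What the paper actually needs is only that some fixed constant $c = O(1)$ works. Second, your final parenthetical aside is backward and would break the application. You wrote that ``$z_-$ exceeds $z_0 e^{-c\sqrt{\log z_0}}$ for the relevant $c$''; but from
\[
 z_- = z_0\exp\Big(-\big(\sqrt{\log(1/\delta)}+o(1)\big)\sqrt{\log z_0}\Big)
\]
the point is that $z_- \le z_0 e^{-c\sqrt{\log z_0}}$ once $\delta$ is chosen small enough that $\sqrt{\log(1/\delta)} > c$. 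This direction is essential: \Th{track:m} gives $m(z_-)\le z_-$ with high probability, and one then needs $z_-$ to fall \emph{inside} the range $|I|\le z_0e^{-c\sqrt{\log z_0}}$ that \Pp{CGPT:lower} controls, which is exactly why $\delta$ is taken small in the proof of \Th{squares:sharp}. If $z_-$ exceeded that cutoff there would be a gap between the two bounds and the argument would not close.
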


We remark that \Pp{CGPT:lower} follows from the proof of~\cite[Theorem~1.3]{CGPT},
see~\cite[Section~3.5]{CGPT}, for any constant $c>\sqrt{2-\log 2}$. However, we shall only use
the fact that $c=O(1)$.

As noted in the introduction, we shall also give a new proof of the upper bound, which was
originally proved in~\cite{CGPT}. For the upper bound we shall need the following simple identity.

\begin{lemma}\label{lem:number:of:columns}
 For every\/ $z\in [\pi(x)]$,
\[
 \sum_{k\ge 2}\ts_k(z)=m(z)-z\big(1-e^{-m(z)/z}\big).
\]
\end{lemma}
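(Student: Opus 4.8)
The plan is to prove this identity by the same elementary generating-function manipulation used in the proof of \Lm{sksk}: express $\sum_{k\ge2}\ts_k(z)$ as $m(z)$ times a function of the single real parameter $\lambda := m(z)/z$, and then evaluate that function in closed form. Concretely, recalling \Df{tsk}, I would write
\[
 \sum_{k\ge 2}\ts_k(z) = m(z)\,e^{-\lambda}\sum_{k\ge 2}\frac{1}{k(k-1)}\sum_{\ell=k-1}^{\infty}\frac{\lambda^\ell}{\ell!} = m(z)\,h(\lambda),
\]
where $h(\lambda) := e^{-\lambda}\sum_{k\ge 2}\frac{1}{k(k-1)}\sum_{\ell=k-1}^{\infty}\frac{\lambda^\ell}{\ell!}$, and the goal is to show $h(\lambda) = 1 - \lambda^{-1}(1-e^{-\lambda})$, which is exactly $\big(m(z)-z(1-e^{-m(z)/z})\big)/m(z)$.

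The main step is to identify $h$. I would proceed as in \Lm{sksk}: check that $h(0)=0$ (every inner sum starts at $\ell=k-1\ge1$, so each term has a factor $\lambda$, and $e^{-\lambda}\to1$ times something vanishing), and then compute $h'(\lambda)$. Differentiating term by term (justified by uniform convergence on compact subsets of $[0,\infty)$), the $e^{-\lambda}$ prefactor contributes a $-h(\lambda)$ term, and differentiating the inner power series shifts the index, giving
\[
 h'(\lambda) = -h(\lambda) + e^{-\lambda}\sum_{k\ge 2}\frac{1}{k(k-1)}\sum_{\ell=k-2}^{\infty}\frac{\lambda^\ell}{\ell!}.
\]
Now I would swap the order of summation in the double sum $\sum_{k\ge2}\frac{1}{k(k-1)}\sum_{\ell\ge k-2}\frac{\lambda^\ell}{\ell!}$: for fixed $\ell\ge0$ the inner sum ranges over $2\le k\le \ell+2$, so it becomes $\sum_{\ell\ge0}\frac{\lambda^\ell}{\ell!}\sum_{k=2}^{\ell+2}\frac{1}{k(k-1)} = \sum_{\ell\ge0}\frac{\lambda^\ell}{\ell!}\big(1-\frac{1}{\ell+2}\big)$, using the telescoping identity $\sum_{k=2}^{n}\frac{1}{k(k-1)} = 1-\frac1n$. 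This splits as $e^{\lambda} - \sum_{\ell\ge0}\frac{\lambda^\ell}{(\ell+2)!} = e^\lambda - \lambda^{-2}(e^\lambda - 1 - \lambda)$. Multiplying by $e^{-\lambda}$ and simplifying gives $h'(\lambda) = -h(\lambda) + 1 - \lambda^{-2}(1-e^{-\lambda}) + \lambda^{-1}e^{-\lambda}$; a short check shows this is exactly $-h(\lambda) + \frac{d}{d\lambda}\big(\lambda - (1-e^{-\lambda})\big)\big/1$... more cleanly, one verifies that the candidate $h_0(\lambda) := 1 - \lambda^{-1}(1-e^{-\lambda})$ satisfies $h_0(0)=0$ and $h_0'(\lambda) = -h_0(\lambda) + (\text{the right-hand expression above})$, so by uniqueness of solutions to the linear ODE $h' + h = (\text{known function})$ with $h(0)=0$, we get $h=h_0$.

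I expect the only real obstacle to be bookkeeping: getting the index shift after differentiation and the subsequent interchange of summation exactly right, and matching the resulting elementary function with $h_0$. An alternative (perhaps cleaner) route that avoids the ODE is a direct interchange-of-summation argument on $\sum_{k\ge2}\ts_k(z)$ itself, combined with the identity from \Ob{ksk}, namely $(k-1)\ts_k(z) - (k+1)\ts_{k+1}(z) = \frac{e^{-\lambda}}{k!}\lambda^k z$: summing $\sum_{k\ge2}\big((k-1)\ts_k - (k+1)\ts_{k+1}\big)$ telescopes to $\ts_2(z) - \lim_{k\to\infty}(k+1)\ts_{k+1}(z) + \sum_{k\ge3}\ts_k(z)$... (one must be a little careful tracking which terms survive), while the right-hand side sums to $z\,e^{-\lambda}(e^\lambda - 1 - \lambda) = z(1 - e^{-\lambda}) - \lambda z e^{-\lambda}$. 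Combining this with \Lm{sksk} (which evaluates $\sum_{k\ge2} k\ts_k(z) = m(z)\Ein(\lambda)$) and $\ts_2(z) = \frac{m(z)}{2}(1-e^{-\lambda})$ from~\eqref{eq:ts2} should pin down $\sum_{k\ge2}\ts_k(z)$ by linear algebra on these partial sums; I would try this second route first, and fall back on the ODE computation if the telescoping tail terms prove fiddly.
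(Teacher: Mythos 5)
Your proposal is correct, though it takes a different (actually, two different) route(s) from the paper. The paper's proof is a one-page Fubini swap: write $\sum_{k\ge2}\ts_k(z) = m(z)e^{-\lambda}\sum_{k\ge2}\frac{1}{k(k-1)}\sum_{\ell\ge k-1}\frac{\lambda^\ell}{\ell!}$ with $\lambda=m(z)/z$, interchange the order of summation, telescope $\sum_{k=2}^{\ell+1}\bigl(\frac{1}{k-1}-\frac{1}{k}\bigr)=1-\frac{1}{\ell+1}$ for each fixed $\ell$, and evaluate the two resulting exponential series. Your first approach (the ODE mimicking the paper's proof of \Lm{sksk}) would work but is heavier than necessary: you would be differentiating, swapping summation, recognising a linear ODE, and invoking uniqueness, where the paper simply swaps once and reads off the answer. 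It is a genuine route but not the one I would recommend.

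Your second approach --- telescoping the identity from \Ob{ksk} and feeding the result into \eqref{eq:ts2} --- is clean, correct, and genuinely different from the paper, and I would rate it as slick as the paper's argument. Working out the bookkeeping you flagged as uncertain: writing $b_k := k\ts_k(z)$ so that $(k-1)\ts_k - (k+1)\ts_{k+1} = (b_k - b_{k+1}) - \ts_k$, summing from $k=2$ to $K$ gives $b_2 - b_{K+1} - \sum_{k=2}^K\ts_k \to 2\ts_2(z) - \sum_{k\ge2}\ts_k(z)$ (not $\ts_2 + \sum_{k\ge3}\ts_k$ as in your sketch --- the sign on the tail sum flips). Meanwhile the right-hand side of \Ob{ksk} sums to $ze^{-\lambda}(e^\lambda - 1 - \lambda) = z(1-e^{-\lambda}) - m(z)e^{-\lambda}$. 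Equating and substituting $2\ts_2(z)=m(z)(1-e^{-\lambda})$ from \eqref{eq:ts2} gives $\sum_{k\ge2}\ts_k(z) = m(z) - z(1-e^{-\lambda})$ directly. Note that \Lm{sksk} is not needed for this --- \Ob{ksk} and \eqref{eq:ts2} alone suffice --- so the ``linear algebra on partial sums'' you anticipated is even shorter than you expected. The one advantage of the paper's Fubini proof is that it is self-contained (it does not lean on \Ob{ksk}); the advantage of yours is that it reuses machinery already established and avoids re-deriving the telescoping-in-$k$ step.
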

\begin{proof}
Using the fact that $\frac{1}{k(k-1)}=\frac{1}{k-1}-\frac{1}{k}$, we have
\begin{align*}
 \sum_{k\ge 2}\ts_k(z)
 &=m(z)e^{-m(z)/z}\sum_{k\ge 2}\frac{1}{k(k-1)}
 \sum_{\ell=k-1}^\infty\frac{1}{\ell!}\bigg(\frac{m(z)}{z}\bigg)^\ell\\
 &=m(z)e^{-m(z)/z}\sum_{\ell=1}^\infty\frac{1}{\ell!}\bigg(\frac{m(z)}{z}\bigg)^\ell
 \sum_{k=2}^{\ell+1}\bigg(\frac{1}{k-1}-\frac{1}{k}\bigg)\\
 &=m(z)e^{-m(z)/z}\sum_{\ell=1}^\infty\frac{1}{\ell!}\bigg(\frac{m(z)}{z}\bigg)^\ell
 \bigg(1-\frac{1}{\ell+1}\bigg)\\
 &=m(z)e^{-m(z)/z}\sum_{\ell=1}^\infty\frac{1}{\ell!}\bigg(\frac{m(z)}{z}\bigg)^\ell
 -ze^{-m(z)/z}\sum_{\ell=1}^\infty\frac{1}{(\ell+1)!}\bigg(\frac{m(z)}{z}\bigg)^{\ell+1}\\
 &=m(z)e^{-m(z)/z}\big(e^{m(z)/z}-1\big)-z e^{-m(z)/z}\big(e^{m(z)/z}-1-m(z)/z\big)\\
 &=m(z)-z\big(1-e^{-m(z)/z}\big),
\end{align*}
as claimed.
\end{proof}

\begin{proof}[Proof of Theorem~\ref{thm:squares:sharp}]
To prove the lower bound, it is enough to show that for $\eta<e^{-\gamma}$ there is, with high
probability, no linear relation between the rows of~$A$. Any such relation would correspond to
an even sub-hypergraph of $\cH_A(z_-)$ without isolated vertices, and so must lie in the 2-core
$\cC_A(z_-)$. In particular, the number of rows involved is at most $m(z_-)$, which satisfies
\begin{equation}\label{eq:mtrack:application}
 m(z_-)\le\alpha\big((1+\eps_0)\Lambda(z_-)\eta\big)z_-
\le\alpha\big(2\delta e^{-\gamma}\big)z_-\le z_-
\end{equation}
with high probability, by \Th{track:m}. We may therefore assume that there is no linear
relation involving more than $z_-$ rows. However, by \Lm{Lambda:precise} we have
\[
 z_-=z_0\exp\Big(- \big(1+o(1)\big)\sqrt{\log(1/\delta) \log z_0}\Big),
\]
and so by \Pp{CGPT:lower} there is with high probability no linear relation involving at most
$z_-$ rows if $\delta$ is taken sufficiently small.
Hence there is, with high probability, no linear relation between the rows of~$A$,
as required.

To prove the upper bound, assume that we have $\eta' J(x)$ numbers with $\eta'=e^{-\gamma}+\nu$,
$\nu>0$. Pick $\eta<e^{-\gamma}$ and construct the 2-core $\cC_A(z_0)$ starting with just the
first $N=\eta J(x)$ numbers. Observe that, by Theorems~\ref{thm:track:m} and~\ref{thm:track:s},
and Lemmas~\ref{lem:sum:eps:tsk} and~\ref{lem:number:of:columns}, the number
of columns of $A$ that either have a non-zero entry in one of the rows of $M(z_0)$, or are to
the left of $z_0$, is with high probability at most
\begin{align}
 z_0+\sum_{k\ge2}s_k(z_0)
 &\le z_0+\sum_{k\ge2}\big(1+\eps(k,z_0)\big)\ts_k(z_0)\notag\\
 &\le (1+\eps_1)m(z_0)+z_0 e^{-m(z_0)/z_0}.\label{eq:active:columns}
\end{align}
On the other hand, we have at least $\nu J(x)$ remaining unused numbers, and among these
there are, with high probability, at least 
$$\frac{\nu J(x)}{2} \cdot \frac{\Psi(x,y_0)}{x} = \frac{\nu z_0}{2}$$ 
$y_0$-smooth numbers. Thus we have a total of at least 
$m(z_0)+ \nu z_0 / 2$ rows of $A$, all of whose non-zero entries lie in a set of columns
of size at most $(1+\eps_1)m(z_0)+z_0 e^{-m(z_0)/z_0}$. Hence, if
\[
 \frac{\nu}{2}>\eps_1 m(z_0)/z_0+ e^{-m(z_0)/z_0}
\]
then we obtain a linear relation between the rows. Now, recall that
$m(z_0)/z_0\ge\alpha((1-\eps_0)\eta)$ with high probability, by
\Th{track:m}, and that $\alpha(w)\to\infty$ as $w\to e^{-\gamma}$. Hence, by choosing $\eta$
sufficiently close to $e^{-\gamma}$, and $\eps_0$ sufficiently small, we can make $m(z_0)/z_0$
arbitrarily large. In particular we can force
$e^{-m(z_0)/z_0}<\nu/4$. Since $m(z_0)/z_0\le C_0$ with high probability, 
with $C_0 = C_0(\eta)$ fixed, the result follows by taking $\eps_1$ sufficiently small.
\end{proof}

The proof of the upper bound in \Th{squares:sharp} can be modified to show that the
expected number of linear relations between the rows of $A$ blows up at some $\eta_0 J(x)$,
with $\eta_0<e^{-\gamma}$, thus demonstrating that a straightforward application of the first
moment method cannot give a sharp lower bound on $T(x)$. To see this, let $\eta<e^{-\gamma}$
and consider $N=\eta J(z)$ integers~$a_i$. The number $m_0(z_0)$ of $y_0$-smooth numbers
is binomially distributed with mean $\eta z_0$, but can be much higher. Indeed,
\[
 \Prb\big(m_0(z_0)=2\eta z_0\big)\approx \frac{(\eta z_0)^{2\eta z_0}}{(2\eta z_0)!}e^{-\eta z_0}
 \approx (e/4)^{(1+o(1))\eta z_0}.
\]
However, the remaining numbers are still uniformly distributed over non-smooth numbers, and smooth
numbers have no effect on the algorithm determining the 2-core. Thus if we remove about $\eta z_0$
smooth numbers, the distribution of the 2-core $\cC_A(z_0)$ of the remaining numbers has
approximately the same distribution as if we had started with $N-\eta z_0=(1+o(1))N$ numbers
initially. Thus with probability $(e/4)^{(1+o(1))\eta z_0}$ we have a submatrix of $A$ with
$m(z_0)+\eta z_0$ rows and $(1+\eps_1)m(z_0)+z_0e^{-m(z_0)/z_0}$ non-zero columns. Taking $\eta$
sufficiently close to $e^{-\gamma}$ and $\eps_0,\eps_1$ sufficiently small, we obtain a submatrix
of $A$ with $(\eta-\eps)z_0$ more rows than non-zero columns. This results in at least
$2^{(\eta-\eps)z_0}-1$ non-trivial linear relations between the rows. As this occurs with
probability $(e/4)^{(1+o(1))\eta z_0}$ and $e/4>1/2$, the expected number of linear relations
grows exponentially with~$z_0$, even though we are below the threshold.

Finally we give a proof of \Co{expectation}.

\begin{proof}[Proof of \Co{expectation}.]
Since finding a square product among $\{a_1,\dots,a_t\}$ is independent of finding one among
$\{a_{t+1},\dots,a_{2t}\}$ we have that $\Prb\big( T(x) \ge 2t \big)\le\Prb\big( T(x) \ge t \big)^2$, and more
generally $\Prb\big( T(x) \ge kt \big) \le \Prb\big( T(x) \ge t \big)^k$ for every $k\in\N$.

Setting $t=\big( e^{-\gamma}+\eps \big)J(x)$ and $\theta=\Prb\big( T(x) \ge t \big)$ we have
\begin{align*}
 \E[T(x)]&\le t\,\Prb\big( T(x) \in[0,t) \big) + 2t\,\Prb\big( T(x) \in[t,2t) \big)+3t\,\Prb\big( T(x) \in[2t,3t) \big) + \dots\\
 & = t + t \,\Prb\big( T(x) \ge t \big) + t \,\Prb\big( T(x) \ge 2t \big) + \dots\\
 & \le \big( 1 + \theta+\theta^2+\dots \big)t = \frac{e^{-\gamma}+\eps}{1-\theta}\cdot J(x).
\end{align*}
Since $\eps>0$ is arbitrary and $\theta\to0$ as $x\to\infty$ for any $\eps>0$,
$\E[T(x)] \le \big( e^{-\gamma}+o(1) \big) J(x)$. On the other hand, taking $t= \big( e^{-\gamma} - \eps \big)J(x)$ we have
\[
 \E[T(x)]\ge t\,\Prb\big( T(x) \ge t \big) = \big( 1 + o(1) \big) t = \big( e^{-\gamma}-\eps-o(1)\big)J(x)
\]
as $x\to\infty$, so, since $\eps>0$ was arbitrary, $\E[T(x)]\ge \big( e^{-\gamma}+o(1) \big)J(x)$, as required.
\end{proof}

 \SkipTocEntry\section*{Acknowledgement}

This research was begun while the authors were visiting IMT, Lucca, and partly carried out while
the first and third authors were Visiting Fellow Commoners of Trinity College, Cambridge.
We would like to thank both institutions for providing a wonderful working environment.

\end{document}